\let\OLDthebibliography\thebibliography
\renewcommand\thebibliography[1]{
	\OLDthebibliography{#1}
	\setlength{\parskip}{0pt}
	\setlength{\itemsep}{2pt} 
}
\theoremstyle{definition}
\newtheorem{df}{Definition}[section]
\newtheorem{eg}[df]{Example}
\newtheorem{rem}[df]{Remark}
\newtheorem{ass}[df]{Assumption}
\newtheorem{cv}[df]{Convention}
\newtheorem{sett}[df]{Setting}
\theoremstyle{plain}
\newtheorem{thm}[df]{Theorem}
\newtheorem{pp}[df]{Proposition}
\newtheorem{co}[df]{Corollary}
\newtheorem{lm}[df]{Lemma}
\newtheorem{que}[df]{Question}
\newcommand{\fk}{\mathfrak}
\newcommand{\mc}{\mathcal}
\newcommand{\wtd}{\widetilde}
\newcommand{\wht}{\widehat}
\newcommand{\wch}{\widecheck}
\newcommand{\ovl}{\overline}
\newcommand{\tr}{\mathrm{t}} 
\newcommand{\End}{\mathrm{End}} 
\newcommand{\idt}{\mathbf{1}}
\newcommand{\Hom}{\mathrm{Hom}}
\newcommand{\Conf}{\mathrm{Conf}}
\newcommand{\Res}{\mathrm{Res}}
\newcommand{\SV}{\mathscr{V}}
\newcommand{\Span}{\mathrm{Span}}
\newcommand{\scr}{\mathscr}
\newcommand{\im}{\mathbf{i}}
\newcommand{\sgm}{\varsigma}
\newcommand{\SX}{{S_{\fk X}}}
\newcommand{\DX}{D_{\fk X}}
\newcommand{\mbf}{\mathbf}
\newcommand{\blt}{\bullet}
\newcommand{\Vbb}{\mathbb V}
\newcommand{\Ubb}{\mathbb U}
\newcommand{\Xbb}{\mathbb X}
\newcommand{\Wbb}{\mathbb W}
\newcommand{\Mbb}{\mathbb M}
\newcommand{\Gbb}{\mathbb G}
\newcommand{\Cbb}{\mathbb C}
\newcommand{\Nbb}{\mathbb N}
\newcommand{\Zbb}{\mathbb Z}
\newcommand{\Pbb}{\mathbb P}
\newcommand{\Ebb}{\mathbb E}
\newcommand{\cbf}{\mathbf c}
\newcommand{\wt}{\mathrm{wt}}
\newcommand{\btl}{\blacktriangleleft}
\newcommand{\btr}{\blacktriangleright}
\newcommand{\pr}{\mathrm {pr}}
\newcommand{\ibf}{\mathbf 1}
\newcommand{\wbf}{\mathbf w}
\newcommand{\srm}{\mathrm{s}}
\newcommand{\nrm}{\mathrm{n}}
\newcommand{\<}{\left\langle}
\renewcommand{\>}{\right\rangle}
\newcommand{\MO}{\mathcal{O}}
\newcommand{\MU}{\mathcal{U}}
\newcommand{\ME}{\mathcal{E}}
\newcommand{\MC}{\mathcal{C}}
\newcommand{\MB}{\mathcal{B}}
\newcommand{\fm}{\mathfrak{m}}
\newcommand{\MF}{\mathcal{F}}
\newcommand{\fx}{\mathfrak{X}}
\newcommand{\pt}{\mathrm{pt}}
\newcommand{\ST}{\mathscr{T}}
\newcommand{\SJ}{\mathscr{J}}
\newcommand{\SW}{\mathscr{W}}
\newcommand{\MV}{\mathcal{V}}
\newcommand{\MD}{\mathcal{D}}
\newcommand{\MS}{\mathcal{S}}
\newcommand{\FA}{\mathfrak{A}}
\newcommand{\FC}{\mathfrak{C}}
\newcommand{\FP}{\mathfrak{P}}
\newcommand{\cl}{\mathrm{cl}}
\newcommand{\mk}{\mathfrak m}
\newcommand{\pre}{\mathrm{pre}}
\newcommand{\bk}[1]{\langle {#1}\rangle}
\newcommand{\bigbk}[1]{\big\langle {#1}\big\rangle}
\newcommand{\Bigbk}[1]{\Big\langle {#1}\Big\rangle}
\newcommand{\bbs}{\boxbackslash}
\newcommand{\fq}{{\mathfrak Q}}
\newcommand{\Mod}{\mathrm{Mod}}
\DeclareMathOperator{\shom}{\mathscr{H}\text{\kern -3pt {\calligra\large om}}\,}
\DeclareMathOperator{\sext}{\mathscr{E}\text{\kern -3pt {\calligra\large xt}}\,}
\DeclareMathOperator{\Rel}{\mathscr{R}\text{\kern -3pt {\calligra\large el}~}\,}
\DeclareMathOperator{\sann}{\mathscr{A}\text{\kern -3pt {\calligra\large nn}}\,}
\DeclareMathOperator{\send}{\mathscr{E}\text{\kern -3pt {\calligra\large nd}}\,}
\DeclareMathOperator{\stor}{\mathscr{T}\text{\kern -3pt {\calligra\large or}}\,}
\DeclareMathOperator{\VVir}{\text{\Fontlukas V}\text{\kern -0pt {\Fontlukas\large ir}}\,}
\numberwithin{equation}{section}
\title{Analytic Conformal Blocks of $C_2$-cofinite Vertex Operator Algebras I: Propagation and Dual Fusion Products}
\author{{\sc Bin Gui, Hao Zhang}
}
\date{}
\begin{document}\sloppy 
	\pagenumbering{arabic}
	\setcounter{section}{-1}

	\maketitle

\newcommand\blfootnote[1]{%
	\begingroup
	\renewcommand\thefootnote{}\footnote{#1}%
	\addtocounter{footnote}{-1}%
	\endgroup
}


\begin{abstract}
This is the first paper of a three-part series in which we develop a theory of conformal blocks for $C_2$-cofinite vertex operator algebras (VOAs) that are not necessarily rational. The ultimate goal of this series is to prove a sewing-factorization theorem (and in particular, a factorization formula) for conformal blocks over holomorphic families of compact Riemann surfaces, associated to grading-restricted generalized modules of $C_2$-cofinite VOAs.

In this paper, we prove that if $\Vbb=\bigoplus_{n\in\Nbb}\Vbb(n)$ is a $C_2$-cofinite VOA, if $\fx$ is a compact Riemann surface with $N$ incoming and $M$ outgoing marked points, each  equipped with a local coordinate, and if $\Wbb$ is a grading-restricted generalized $\Vbb^{\otimes N}$-module, then the ``dual fusion product" $(\bbs_\fx(\Wbb),\gimel)$ satisfying a natural universal property exists. Here, $\bbs_\fx(\Wbb)$ is a grading-restricted $\Vbb^{\otimes M}$-module, and the linear functional $\gimel:\Wbb\otimes_\Cbb\bbs_\fx(\Wbb)\rightarrow\Cbb$ is a conformal block associated to $\fx$. Indeed, we prove a more general version of this result without assuming $\Vbb$ to be $C_2$-cofinite. Our main method is the propagation of partial conformal blocks, a generalization of the standard propagation of conformal blocks.

Assuming that $\Vbb$ is $C_2$-cofinite, our (dual) fusion product recovers known constructions in genus-0 cases: When $N=2,M=1$, our (dual) fusion product agrees with that of Huang-Lepowsky-Zhang. When $N=1,M=2$, our dual fusion product coincides with Li's regular representations.
\end{abstract}

\tableofcontents





	
	

	

\section{Introduction}

\nocite{HLZ1,HLZ2,HLZ3,HLZ4,HLZ5,HLZ6,HLZ7,HLZ8}
This is the first paper in a three-part series in which we develop a theory of conformal blocks for $C_2$-cofinite VOAs that are not necessarily rational. In this introduction, we first give a general overview of the series and present the main theorem to be proved in the third part \cite{GZ3}. We then explain what will be accomplished in this paper.

$C_2$-cofiniteness and rationality are two crucial conditions on a vertex operator algebra $\Vbb=\bigoplus_{n\in\Nbb}\Vbb(n)$ that rigorize physicists' notion of ``rational chiral conformal field theory". The $C_2$-cofinite condition, introduced in the seminal work \cite{Zhu-modular-invariance}, guarantees that $\Vbb$ has finitely many irreducibles (up to equivalence) and that the spaces of conformal blocks are finite-dimensional. It is also crucial to the proof (using methods from differential equations) that the sewing of conformal blocks is convergent. See \cite{Zhu-modular-invariance,AN03-finite-dimensional,Miy-modular-invariance,Hua-differential-genus-0,Hua-differential-genus-1,NT-P1_conformal_blocks,Fio-genus-1,DGT2,Gui-sewingconvergence} for instance. If a $C_2$-cofinite VOA $\Vbb$ is also rational, then the genus-1 conformal blocks satisfy a modular invariance property \cite{Zhu-modular-invariance,DLM-modular-invariance,Hua-differential-genus-1}. If $\Vbb$ is strongly rational (i.e., $C_2$-cofinite, rational, self-dual, and $\dim\Vbb(0)=1$), then the category $\Mod(\Vbb)$ of grading-restricted (generalized) $\Vbb$-modules forms a braided tensor category \cite{HL-tensor-1,HL-tensor-2,HL-tensor-3,Hua-tensor-4,Hua-differential-genus-0,NT-P1_conformal_blocks} which is rigid and modular \cite{Hua-rigidity-modularity}.

Crucial to the proofs and the applications of the above result about $\Mod(\Vbb)$ is the proof of the \emph{sewing-factorization theorem} of conformal blocks, mainly in genera $0$ and $1$. For instance, the genus-$0$ sewing-factorization theorem implies that the tensor functor of $\Mod(\Vbb)$ satisfies associativity and pentagon axioms, and (together with the braiding defined by the connections on bundles of conformal blocks) satisfies hexagon axioms \cite{Hua-tensor-4,NT-P1_conformal_blocks,Hua-rigidity-modularity}. Modular invariance \cite{Zhu-modular-invariance,DLM-modular-invariance,Hua-differential-genus-1} is essentially the sewing-factorization theorem in genus-$1$, and is the key to proving the rigidity and modularity of $\Mod(\Vbb)$. 


\subsection{Sewing-factorization in the rational world}\label{lb84}

The goal of this series of papers is to state and prove a sewing-factorization theorem for conformal blocks associated to $C_2$-cofinite VOAs in the complex-analytic setting. (See \cite{DGK-presentations,DGK2} for an approach from the algebro-geometric point of view.) We first give a brief account of this theorem in the special case where $\Vbb$ is both $C_2$-cofinite and rational. Let
\begin{align}
\fx=(C|x_1,\dots,x_N;\eta_1,\dots,\eta_N\Vert y',y'';\xi,\varpi)
\end{align}
be a compact Riemann surface $C$ with distinct marked points $x_1,\dots,x_N,y',y''\in C$. Each $\eta_i$ is a \textbf{local coordinate} of $C$ at $x_i$, i.e. a univalent (i.e. holomorphic and injective) map from a neighborhood $U_i$ of $x_i$ to $\Cbb$ satisfying $\eta_i(x_i)=0$. Similarly, $\xi$ and $\varpi$ are local coordinates at $y'$ and $y''$ respectively. We do not assume that $C$ is connected. But we do assume that each connected component of $C$ contains at least one of $x_1,\dots,x_N$. 

The pair of points $y',y''$ are for sewing. Let $W'$ and $W''$ be neighborhoods of $y',y''$ such that $\xi(W')$ equals $\mc D_r=\{z\in\Cbb:|z|<r\}$ and $\varpi(W'')$ equals $\mc D_\rho$ for some $r,\rho>0$, and that $x_1,\dots,x_N,W',W''$ are mutually disjoint. Suppose $q\in \mc D_{r\rho}^\times=\{z\in\Cbb:0<|z|<r\rho\}$. Then we can cut off small closed discs centered at $y'$ and $y''$, and glue the remaining parts of $\mc D_r$ and $\mc D_\rho$ via the rule that $p'\in\mc D_r$ and $p''\in\mc D_\rho$ are identified iff
\begin{align}
\xi(p')\varpi(p'')=q.
\end{align}
In this way, we get a new pointed surface
\begin{align}
\mc S_q\fx=(\mc C_q|x_1,\dots,x_N;\eta_1,\dots,\eta_N)
\end{align}
Depending on whether $y',y''$ belong to the same component of $C$ or not, we call the sewing either a \textbf{self-sewing} or a \textbf{disjoint sewing}. See Fig. \ref{fig2}. (As we will see, at least in the beginning, self-sewing is not an appropriate consideration for conformal blocks of irrational VOAs.)
\begin{figure}[h]
	\centering
\scalebox{0.9}{

\tikzset{every picture/.style={line width=0.75pt}} 

\begin{tikzpicture}[x=0.75pt,y=0.75pt,yscale=-1,xscale=1]

\draw   (107.52,25.9) .. controls (129.12,24.3) and (157.12,26.7) .. (175.52,46.7) .. controls (193.92,66.7) and (188.32,89) .. (179.52,105.1) .. controls (170.72,121.2) and (122.72,119.5) .. (122.72,106.7) .. controls (122.72,93.9) and (138.72,96.37) .. (139.52,85.9) .. controls (140.32,75.43) and (125.92,72.37) .. (115.52,72.37) .. controls (105.12,72.37) and (93.12,73.17) .. (94.72,85.17) .. controls (96.32,97.17) and (104.32,88.3) .. (105.12,106.7) .. controls (105.92,125.1) and (54.72,108.23) .. (48.32,98.7) .. controls (41.92,89.17) and (37.12,73.1) .. (47.52,55.5) .. controls (57.92,37.9) and (85.92,27.5) .. (107.52,25.9) -- cycle ;
\draw    (65.12,68.26) .. controls (75.82,58.51) and (84.28,61.11) .. (91.6,69.56) ;
\draw    (68.5,66.31) .. controls (74.7,71.51) and (81.46,70.86) .. (87.66,66.31) ;
\draw    (136.27,66.02) .. controls (148.94,56.02) and (158.94,58.69) .. (167.6,67.36) ;
\draw    (140.27,64.02) .. controls (147.6,69.36) and (155.6,68.69) .. (162.94,64.02) ;
\draw  [fill={rgb, 255:red, 0; green, 0; blue, 0 }  ,fill opacity=1 ] (107.4,46.78) .. controls (107.4,45.88) and (108.13,45.14) .. (109.03,45.14) .. controls (109.94,45.14) and (110.67,45.88) .. (110.67,46.78) .. controls (110.67,47.68) and (109.94,48.42) .. (109.03,48.42) .. controls (108.13,48.42) and (107.4,47.68) .. (107.4,46.78) -- cycle ;
\draw  [fill={rgb, 255:red, 0; green, 0; blue, 0 }  ,fill opacity=1 ] (140.2,50.78) .. controls (140.2,49.88) and (140.93,49.14) .. (141.83,49.14) .. controls (142.74,49.14) and (143.47,49.88) .. (143.47,50.78) .. controls (143.47,51.68) and (142.74,52.42) .. (141.83,52.42) .. controls (140.93,52.42) and (140.2,51.68) .. (140.2,50.78) -- cycle ;
\draw  [fill={rgb, 255:red, 0; green, 0; blue, 0 }  ,fill opacity=1 ] (98.6,106.78) .. controls (98.6,105.88) and (99.33,105.14) .. (100.23,105.14) .. controls (101.14,105.14) and (101.87,105.88) .. (101.87,106.78) .. controls (101.87,107.68) and (101.14,108.42) .. (100.23,108.42) .. controls (99.33,108.42) and (98.6,107.68) .. (98.6,106.78) -- cycle ;
\draw  [fill={rgb, 255:red, 0; green, 0; blue, 0 }  ,fill opacity=1 ] (132.2,105.98) .. controls (132.2,105.08) and (132.93,104.34) .. (133.83,104.34) .. controls (134.74,104.34) and (135.47,105.08) .. (135.47,105.98) .. controls (135.47,106.88) and (134.74,107.62) .. (133.83,107.62) .. controls (132.93,107.62) and (132.2,106.88) .. (132.2,105.98) -- cycle ;
\draw [color={rgb, 255:red, 74; green, 144; blue, 226 }  ,draw opacity=1 ] [dash pattern={on 2.25pt off 2.25pt}]  (100.31,114.09) .. controls (103.54,117) and (107.51,118.76) .. (111.6,119.53) .. controls (119.55,121.03) and (127.95,118.82) .. (132.18,114.18) ;
\draw [shift={(133.36,112.64)}, rotate = 134.19] [color={rgb, 255:red, 74; green, 144; blue, 226 }  ,draw opacity=1 ][line width=0.75]    (6.56,-1.97) .. controls (4.17,-0.84) and (1.99,-0.18) .. (0,0) .. controls (1.99,0.18) and (4.17,0.84) .. (6.56,1.97)   ;
\draw [shift={(98.86,112.64)}, rotate = 39.48] [color={rgb, 255:red, 74; green, 144; blue, 226 }  ,draw opacity=1 ][line width=0.75]    (6.56,-1.97) .. controls (4.17,-0.84) and (1.99,-0.18) .. (0,0) .. controls (1.99,0.18) and (4.17,0.84) .. (6.56,1.97)   ;
\draw   (303.18,39.08) .. controls (324.04,26.23) and (339.47,39.5) .. (357.3,47.2) .. controls (375.12,54.89) and (384.35,49.9) .. (393.94,53.28) .. controls (403.52,56.67) and (407.47,66.81) .. (406.9,77.63) .. controls (406.34,88.45) and (398.45,94.54) .. (380.97,93.19) .. controls (363.5,91.84) and (374.77,89.13) .. (355.04,92.51) .. controls (335.31,95.89) and (334.75,109.42) .. (308.25,101.98) .. controls (281.76,94.54) and (282.32,51.93) .. (303.18,39.08) -- cycle ;
\draw   (437.45,56.05) .. controls (449.95,44.05) and (465.95,55.55) .. (479.45,53.05) .. controls (482.55,52.47) and (485.61,50.98) .. (488.75,49.21) .. controls (499.32,43.24) and (510.7,34.09) .. (526.68,46) .. controls (547.41,61.45) and (545.68,93) .. (523.95,101.55) .. controls (502.23,110.09) and (489.45,95.05) .. (477.45,89.05) .. controls (465.45,83.05) and (448.95,92.55) .. (440.95,86.05) .. controls (432.95,79.55) and (424.95,68.05) .. (437.45,56.05) -- cycle ;
\draw    (303.27,61.56) .. controls (313.65,52.29) and (321.85,54.76) .. (328.95,62.79) ;
\draw    (306.55,59.7) .. controls (312.56,64.65) and (319.12,64.03) .. (325.13,59.7) ;
\draw    (322.77,83.56) .. controls (333.15,74.29) and (341.35,76.76) .. (348.45,84.79) ;
\draw    (326.05,81.7) .. controls (332.06,86.65) and (338.62,86.03) .. (344.63,81.7) ;
\draw    (495.77,73.06) .. controls (506.15,63.79) and (514.35,66.26) .. (521.45,74.29) ;
\draw    (499.05,71.2) .. controls (505.06,76.15) and (511.62,75.53) .. (517.63,71.2) ;
\draw  [fill={rgb, 255:red, 0; green, 0; blue, 0 }  ,fill opacity=1 ] (347.9,61.78) .. controls (347.9,60.88) and (348.63,60.14) .. (349.53,60.14) .. controls (350.44,60.14) and (351.17,60.88) .. (351.17,61.78) .. controls (351.17,62.68) and (350.44,63.42) .. (349.53,63.42) .. controls (348.63,63.42) and (347.9,62.68) .. (347.9,61.78) -- cycle ;
\draw  [fill={rgb, 255:red, 0; green, 0; blue, 0 }  ,fill opacity=1 ] (398.9,74.28) .. controls (398.9,73.38) and (399.63,72.64) .. (400.53,72.64) .. controls (401.44,72.64) and (402.17,73.38) .. (402.17,74.28) .. controls (402.17,75.18) and (401.44,75.92) .. (400.53,75.92) .. controls (399.63,75.92) and (398.9,75.18) .. (398.9,74.28) -- cycle ;
\draw  [fill={rgb, 255:red, 0; green, 0; blue, 0 }  ,fill opacity=1 ] (439.4,71.28) .. controls (439.4,70.38) and (440.13,69.64) .. (441.03,69.64) .. controls (441.94,69.64) and (442.67,70.38) .. (442.67,71.28) .. controls (442.67,72.18) and (441.94,72.92) .. (441.03,72.92) .. controls (440.13,72.92) and (439.4,72.18) .. (439.4,71.28) -- cycle ;
\draw  [fill={rgb, 255:red, 0; green, 0; blue, 0 }  ,fill opacity=1 ] (494.4,83.78) .. controls (494.4,82.88) and (495.13,82.14) .. (496.03,82.14) .. controls (496.94,82.14) and (497.67,82.88) .. (497.67,83.78) .. controls (497.67,84.68) and (496.94,85.42) .. (496.03,85.42) .. controls (495.13,85.42) and (494.4,84.68) .. (494.4,83.78) -- cycle ;
\draw [color={rgb, 255:red, 74; green, 144; blue, 226 }  ,draw opacity=1 ] [dash pattern={on 2.25pt off 2.25pt}]  (404.31,79.59) .. controls (407.54,82.5) and (411.51,84.26) .. (415.6,85.03) .. controls (419.95,85.85) and (429.71,83.98) .. (435.14,79.01) ;
\draw [shift={(436.45,77.64)}, rotate = 141.28] [color={rgb, 255:red, 74; green, 144; blue, 226 }  ,draw opacity=1 ][line width=0.75]    (6.56,-1.97) .. controls (4.17,-0.84) and (1.99,-0.18) .. (0,0) .. controls (1.99,0.18) and (4.17,0.84) .. (6.56,1.97)   ;
\draw [shift={(402.86,78.14)}, rotate = 42.16] [color={rgb, 255:red, 74; green, 144; blue, 226 }  ,draw opacity=1 ][line width=0.75]    (6.56,-1.97) .. controls (4.17,-0.84) and (1.99,-0.18) .. (0,0) .. controls (1.99,0.18) and (4.17,0.84) .. (6.56,1.97)   ;

\draw (89.47,35.82) node [anchor=north west][inner sep=0.75pt]  [font=\small]  {$x_{1}$};
\draw (122.47,40.42) node [anchor=north west][inner sep=0.75pt]  [font=\small]  {$x_{2}$};
\draw (85.57,91.82) node [anchor=north west][inner sep=0.75pt]  [font=\small]  {$y’$};
\draw (135.77,91.62) node [anchor=north west][inner sep=0.75pt]  [font=\small]  {$y''$};
\draw (332.47,45.32) node [anchor=north west][inner sep=0.75pt]  [font=\small]  {$x_{1}$};
\draw (386.07,62.32) node [anchor=north west][inner sep=0.75pt]  [font=\small]  {$y'$};
\draw (475.47,73.42) node [anchor=north west][inner sep=0.75pt]  [font=\small]  {$x_{2}$};
\draw (444.77,56.12) node [anchor=north west][inner sep=0.75pt]  [font=\small]  {$y''$};

\end{tikzpicture}

}
	\caption{~~Self-sewing and disjoint sewing}
	\label{fig2}
\end{figure}

Associate grading-restricted $\Vbb$-modules $\Wbb_1,\dots,\Wbb_N,\Mbb,\Mbb'$ to the marked points $x_1,\dots,x_N,y',y''$ respectively where $\Mbb'$ is the contragredient module of $\Mbb$. Let $\Wbb_\blt=\Wbb_1\otimes\cdots\otimes\Wbb_N$. Then a \textbf{conformal block} associated to $\fx$ and $\Wbb_\blt\otimes\Mbb\otimes\Mbb'$ (cf. \cite{Zhu-global,FB04}) is a linear functional $\uppsi:\Wbb_\blt\otimes\Mbb\otimes\Mbb'\rightarrow\Cbb$ that is ``invariant under the action of $\Vbb$". (See Def. \ref{lb25} for the precise definition.) Then the \textbf{(standard) sewing of $\uppsi$} is defined by taking contraction:
\begin{gather}
\mc S_q\uppsi:\Wbb_\blt\rightarrow\Cbb\qquad w_\blt\mapsto \wick{\uppsi(w_\blt\otimes q^{L(0)}\c1-\otimes \c1-)}
\end{gather}
provided that this series about the variable $q$ converges absolutely. (Cf. Sec. \ref{lb76} or \cite[Sec. 10]{Gui-sewingconvergence} for details.)

\begin{rem}
Note that $\mc S_q\uppsi$ depends on the choice of the argument $\arg q$. So sometimes it is more convenient to consider the \textbf{normalized sewing of $\uppsi$}, defined by
\begin{gather}
\wtd{\mc S}_q\uppsi:\Wbb_\blt\rightarrow\Cbb\qquad w_\blt\mapsto \wick{\uppsi(w_\blt\otimes q^{\wtd L(0)}\c1-\otimes \c1-)}
\end{gather}
where $\wtd L(0)$ is a suitable shift of $L(0)$ (or a shift of the semisimple part of $L(0)$ in case $\Vbb$ is not rational and hence $L(0)$ is not necessarily diagonalizable on $\Mbb$) so that $\wtd L(0)$ is diagonalizable on $\Mbb$ with eigenvalues in $\Nbb$. Note that $\mc S_q$ and $\wtd{\mc S}_q$ are equal when $q=1$ and $\arg q=0$. So these two types of sewing are closely related. In the main body of this paper, we will use $\wtd{\mc S}_q$ to deal with the propagation of (partial) conformal blocks. But in the introduction we stick to $\mc S_q$.
\end{rem}

Now we state the sewing-factorization theorem for a $C_2$-cofinite and rational VOA $\Vbb$. Let $\scr T_{\fx}^*(\Wbb_\blt\otimes\Mbb\otimes\Mbb')$ be the space of conformal blocks associated to $\fx$ and $\Wbb_\blt\otimes\Mbb\otimes\Mbb'$, which is finite-dimensional \cite{AN03-finite-dimensional,DGT2}. Likewise, let $\scr T_{\mc S_q\fx}^*(\Wbb_\blt)$ be the space of conformal blocks associated to $\fx$ and $\Wbb_\blt$. Let $\mc E$ be a set of representatives of equivalence classes of irreducible grading-restricted $\Vbb$-modules, which is a finite set.

\begin{thm}[\textbf{Sewing-factorization theorem}, cf. {\cite[Thm. 12.1]{Gui-sewingconvergence}}]  \label{lb77}
Choose $q\in\mc D_{r\rho}^\times=\{z\in\Cbb:0<|z|<r\rho\}$. Then we have a well-defined linear map
\begin{gather}\label{eq170}
\begin{gathered}
\fk S_q:\bigoplus_{\Mbb\in\mc E} \scr T_{\fx}^*(\Wbb_\blt\otimes\Mbb\otimes\Mbb')\rightarrow \scr T_{\mc S_q\fx}^*(\Wbb_\blt)\\
\bigoplus_{\Mbb\in\mc E}\uppsi_\Mbb\mapsto \sum_{\Mbb\in\mc E} \mc S_q\uppsi_\Mbb
\end{gathered}
\end{gather}
By ``well-defined" we mean that  $\mc S_q\uppsi_\Mbb(w_\blt)$ converges absolutely at $q$ for every $w_\blt\in\Wbb_\blt$, and that the linear functional $\mc S_q\uppsi_\Mbb:\Wbb_\blt\rightarrow\Cbb$ is a conformal block (i.e., is an element of $\scr T_{\mc S_q\fx}^*(\Wbb_\blt)$). 

Moreover, $\fk S_q$ is an isomorphism of vector spaces. In particular, we have
\begin{align}
\sum_{\Mbb\in\mc E} \dim\scr T_{\fx}^*(\Wbb_\blt\otimes\Mbb\otimes\Mbb')=\dim \scr T_{\mc S_q\fx}^*(\Wbb_\blt)  \label{eq165}
\end{align}
\end{thm}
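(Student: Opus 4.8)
The plan is to prove the sewing-factorization theorem in two stages: first establish that $\fk S_q$ is well-defined (convergence plus the conformal-block property), then prove it is an isomorphism by exhibiting an inverse built from the dual fusion product. For well-definedness, the convergence of $\mc S_q\uppsi_\Mbb(w_\blt)$ is precisely the sewing-convergence theorem for $C_2$-cofinite VOAs (this is the content cited from \cite{Gui-sewingconvergence}, and in the present series rests on the $C_2$-cofiniteness hypothesis), so I would invoke it directly. That the resulting functional $\mc S_q\uppsi_\Mbb$ is again a conformal block is a standard computation: one checks the invariance condition of Def.~\ref{lb25} on $\mc S_q\fx$ by pulling the $\Vbb$-action through the contraction, using that $\uppsi_\Mbb$ is invariant on $\fx$ and that the sewing identification $\xi(p')\varpi(p'')=q$ is exactly the relation making the two local pictures of the $\Vbb$-action agree; the only subtlety is justifying the interchange of the (convergent) $\Mbb$-sum with the $\Vbb$-action, which follows from absolute convergence on compact subsets.

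For injectivity and surjectivity, the key is the universal property of the dual fusion product $(\bbs_\fx(\Wbb_\blt),\gimel)$ from the main theorem of this paper, applied to the surface $\fx$ viewed as having incoming points $x_1,\dots,x_N$ and outgoing points $y',y''$. The idea is that a conformal block $\uppsi_\Mbb\in\scr T_\fx^*(\Wbb_\blt\otimes\Mbb\otimes\Mbb')$ is the same data as a $\Vbb^{\otimes 2}$-module morphism $\Mbb\otimes\Mbb'\to\bbs_\fx(\Wbb_\blt)'$, equivalently (by contragredience and the self-duality packaged into $\gimel$) a morphism $\Mbb\to$ (a suitable completion/graded piece of) $\bbs_\fx(\Wbb_\blt)$. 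Summing over the irreducibles $\Mbb\in\mc E$ and using that $\bbs_\fx(\Wbb_\blt)$ is a grading-restricted $\Vbb^{\otimes 2}$-module of a semisimple category (here rationality enters), one identifies $\bigoplus_\Mbb \scr T_\fx^*(\Wbb_\blt\otimes\Mbb\otimes\Mbb')$ with $\Hom$ from $\bbs_\fx(\Wbb_\blt)$ into itself composed appropriately, hence with its full structure. On the other side, a conformal block on the sewn surface $\mc S_q\fx$ should, again by a universal property (or by the genus-reduction/sewing compatibility of $\bbs$), be identified with the same $\Hom$-space; the map realizing this identification is exactly $\fk S_q$, because the standard sewing $\mc S_q$ is the geometric operation that corresponds on conformal blocks to composing along the $\Mbb\otimes\Mbb'$ slot, i.e.\ to the coevaluation/evaluation pairing. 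Chasing the universal properties then shows $\fk S_q$ is a bijection, and \eqref{eq165} follows by taking dimensions.

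Concretely I would organize the isomorphism part as: (i) show $\fk S_q$ factors as a composite of canonical maps, one of which is the sewing map into $\scr T^*_{\mc S_q\fx}(\Wbb_\blt)$ and another of which is a ``desewing'' map reconstructing a conformal block on $\fx$ from one on $\mc S_q\fx$ together with the choice of irreducible $\Mbb$ running through the sewn neck; (ii) prove desewing is inverse to sewing by a residue/contour argument extracting the $\Mbb$-isotypic component of the state propagated through the neck — this is where the propagation of conformal blocks developed in the body of the paper does the real work, since desewing a block means propagating it along a path that crosses the vanishing cycle and reading off the module attached there; (iii) conclude that the two maps are mutually inverse on each summand and assemble over $\mc E$. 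The main obstacle I anticipate is step (ii): controlling the desewing map rigorously requires knowing that the conformal block on $\mc S_q\fx$, when restricted near the neck, decomposes as a convergent sum over $\mc E$ of ``sewable'' pieces with no cross terms, and that each piece is itself genuinely a conformal block on the cut surface — this is essentially a converse to sewing-convergence and is the technically hardest input; it is where both $C_2$-cofiniteness (finiteness of $\mc E$, good convergence) and rationality (semisimplicity, so the neck decomposition is a direct sum over irreducibles rather than something with nontrivial extensions) are indispensable, and it is the reason the general irrational case is deferred to later papers in the series.
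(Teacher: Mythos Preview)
The paper does not prove this theorem. Thm.~\ref{lb77} appears in the introduction as background from the literature, and Rem.~\ref{lb78} immediately following it attributes the proof to external sources: the dimension formula \eqref{eq165} (and hence surjectivity of $\fk S_q$) is \cite[Thm.~7.0.1]{DGT2}, while well-definedness and injectivity are \cite[Thm.~12.1]{Gui-sewingconvergence}. The present paper (Part~I of the series) builds the machinery of dual fusion products and propagation, but the sewing-factorization theorem itself---even in the generalized $C_2$-cofinite form of Thm.~\ref{lb81}---is explicitly deferred to Part~III.

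Your proposal is therefore not comparable to any proof in this paper, but it is worth noting where it diverges from the cited literature. The proof of \eqref{eq165} in \cite{DGT2} and its predecessors proceeds via the $A(\Vbb)$-module structure on $\scr T_{\fx,0,\dots,0}(\Wbb_\blt)$ and an algebraic factorization over the nodal fiber $q=0$, not via the universal property of the dual fusion product. Your step~(ii), the ``desewing'' map that reads off the $\Mbb$-isotypic component at the neck, is indeed the crux, and you correctly flag it as the main obstacle---but you have not supplied an argument. In particular, propagation as developed in Ch.~\ref{lb89} moves conformal blocks around on a \emph{fixed} smooth fiber; it does not let you ``propagate along a path that crosses the vanishing cycle,'' so the mechanism you invoke is not actually available from the results of this paper. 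The literature proofs work instead at the nodal limit, where the neck module becomes visible directly as the Zhu-algebra representation, and then use flatness/parallel transport to pass to nonzero $q$.
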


\begin{rem}\label{lb78}
\eqref{eq165} is due to \cite[Thm. 7.0.1]{DGT2}. See the introduction of \cite{DGT2} for a brief review of the history of the proof of \eqref{eq165}. The rest of Thm. \ref{lb77} (namely, the well-definedness and the injectivity of the linear map $\fk S_q$) is due to \cite[Thm. 12.1]{Gui-sewingconvergence} and does not require $\Vbb$ to be rational.
\end{rem}

Formula \eqref{eq165} is commonly referred to as the \textbf{factorization property} in the literature of the algebro-geometric approach to conformal blocks (cf. \cite{TUY,BFM-conformal-blocks,NT-P1_conformal_blocks}). It gives an efficient way of computing the dimensions of spaces of conformal blocks by reducing the genus and the number of marked points on each component. Also, a version of Thm. \ref{lb77} was proved in \cite[Thm. 8.5.1]{DGT2} for ``infinitesimally small $q$".

\subsection{Sewing-factorization beyond rationality}

As pointed out in the introduction of \cite{HLZ1}, from the viewpoint of the representation theory of VOAs, it is unnatural to restrict attention to rational VOAs. In fact, most of the proof of the sewing-factorization theorem does not rely on the rationality of $\Vbb$, i.e., it does not require the grading-restricted $\Vbb$-modules to be completely reducible. The assumption of rationality appears to be more of a technical convenience than an essential requirement of the proof. Moreover, there are important $C_2$-cofinite irrational VOAs, such as the triplet $\mc W$-algebras \cite{AM-triplet}, their tensor products, and their subalgebras fixed by finite solvable automorphism groups \cite{Miy-C2-orbifold}. Thus, it is natural to consider generalizing the sewing-factorization theorem to $C_2$-cofinite VOAs that are not necessarily rational.

So now we assume that $\Vbb$ is $C_2$-cofinite but not necessarily rational. By Rem. \ref{lb78}, $\fk S_q$ is still well-defined and is injective. But $\fk S_q$ is not necessarily surjective. Thus we only have ``$\leq$" in \eqref{eq165}. For instance, if we let $\Mbb$ be a grading-restricted $\Vbb$-module which is not completely reducible, and if we choose $\uppsi\in\scr T_{\fx}^*(\Wbb_\blt\otimes\Mbb\otimes \Mbb')$, then $\mc S_q\uppsi$ is an element of $\scr T_{\mc S_q\fx}^*(\Wbb_\blt)$ which is not necessarily in the range of $\fk S_q$. Thus, a preliminary step to understanding the sewing-factorization property is to answer the following question: 

\begin{que}\label{lb79}
Is $\scr T_{\mc S_q\fx}^*(\Wbb_\blt)$ spanned by elements of the form $\mc S_q\uppsi$ where $\uppsi\in\scr T_{\fx}^*(\Wbb_\blt\otimes\Mbb\otimes \Mbb')$ and $\Mbb$ is a grading-restricted $\Vbb$-module?
\end{que}

The answer is known in the low genus cases:
\begin{enumerate}[align=left, label=Case \arabic*.]
\item If $\fx$ is the disjoint union of $\fk P_1=(\Pbb^1;0,z_1,\infty)$ and $\fk P_2=(\Pbb^1;0,z_2,\infty)$ (where $z_1,z_2\in\Cbb$ are non-zero) and if the sewing is along the point $0$ of $\fk P_1$ and $\infty$ of $\fk P_2$ (so that $\mc S_q\fk X$ is a sphere), then Question \ref{lb79} is answered \emph{affirmatively} by \cite{HLZ1,HLZ2}-\cite{HLZ8}.
\item If $\fk X$ is $(\Pbb^1;0,z,\infty)$ where $z\neq 0$, and if the sewing is along $0,\infty$ (so that $\mc S_q\fx$ is a torus), then the answer to Question \ref{lb79} is \emph{negative}, as suggested by the replacement of traces with pseudo-traces in the study of the modular invariance of genus-$1$ conformal blocks in \cite{Miy-modular-invariance,AN-pseudo-trace,Fio-genus-1,Hua-modular-C2}.
\end{enumerate}

It is surprising that the answer to Question \ref{lb79} depends on the type of the sewing: In Case 1, the sewing is disjoint, and in Case 2, the sewing is a self-sewing (cf. Fig. \ref{fig2}). Worse still, the pseudo-trace construction in Case 2 seems to lack a geometric meaning. On the other hand, \cite{Gui-permutation}, though focusing mainly on the rational case, suggests that the genus-$0$ conformal blocks of permutation-twisted $\Vbb^{\otimes N}$-modules (which are not necessarily of the form $\Wbb_1\otimes\cdots\otimes\Wbb_N$ or a direct sum of such modules !) should correspond to higher genus conformal blocks of untwisted $\Vbb$-modules via a branched covering of $\Pbb^1$. This correspondence gives us a hint on how to generalize the sewing-factorization property in Case 1 to the \emph{disjoint sewing} of higher genus surfaces. Motivated by these observations, we outline below several key features that our sewing-factorization theory for $C_2$-cofinite VOAs is designed to satisfy.
\begin{enumerate}[align=left, label=R\arabic*.]
\item It is easy to see why the rational version (i.e. Thm. \ref{lb77}) is a special case of our theory.
\item Our theory will generalize both Case 1 (Huang-Lepowsky-Zhang's vertex tensor category theory) and Case 2 (Miyamoto's pseudo-trace theory). In particular, it will give the pseudo-trace construction in \cite{Miy-modular-invariance,AN-pseudo-trace,Fio-genus-1,Hua-modular-C2} a geometric explanation.\footnote{Part of this geometric interpretation of pseudo-traces will be presented in the second paper of this series. A more comprehensive treatment will appear in a future work outside this three-part series.}
\item Self-sewing will not be considered. Only disjoint sewing is allowed. To compensate, we also consider disjoint sewing along several pairs of points (cf. Fig. \ref{fig3}).
\item We consider a grading-restricted $\Vbb^{\otimes N}$-module not necessarily \textbf{tensor-factorizable} (cf. \cite{Gui-permutation}), i.e., not necessarily a direct sum of those of the form $\Wbb_1\otimes\cdots\otimes\Wbb_N$ where each $\Wbb_i$ is a grading-restricted $\Vbb$-module. (This consideration is closely related to the pseudo-trace mentioned in Case 2.)
\item Our sewing-factorization theorem is compatible with the permutation-twisted/untwisted correspondence established in \cite{Gui-permutation}. More precisely, the translation of the permutation-twisted version of the genus-$0$ sewing-factorization theorem from \cite{HLZ1,HLZ2}-\cite{HLZ8} to the higher genus untwisted case via the construction in \cite{Gui-permutation} will be a special case of our sewing-factorization theorem.
\end{enumerate}

\subsection{The sewing-factorization theorem for $C_2$-cofinite VOAs}\label{lb83}

Fix an $(N+M)$-pointed compact Riemann surface and an $(K+M)$-pointed one
\begin{gather} \label{eq166}
\fx=(y'_1,\dots,y'_M|C_1|x_1,\dots,x_N)\qquad \fk Y=(y_1'',\dots,y_M''|C_2|\varkappa_1,\dots,\varkappa_K)
\end{gather}
Assume that each component of $C_1$ (resp. $C_2$) intersects $x_1,\dots,x_N$ (resp. $\varkappa_1,\dots,\varkappa_K$). We choose local coordinates $\eta_i,\mu_k,\xi_j,\varpi_j$ at each $x_i,\varkappa_k,y_j',y_j''$ respectively. The reason we write $y_\blt'$ and $y_\blt''$ on the left of $C_1,C_2$ is that we regard them as ``outgoing marked points". Those written respectively on the right of $C_1,C_2$ are regarded as ``incoming marked points".

Let $\Vbb$ be $C_2$-cofinite. Associate a grading-restricted $\Vbb^{\otimes N}$-module $\Wbb$ to the ordered marked points $x_1,\dots,x_N$. Associate a grading-restricted $\Vbb^{\otimes K}$-module $\Mbb$ to the ordered marked points $\varkappa_1,\dots,\varkappa_K$. The following theorem is the main result of this paper. Its proof follows automatically from the results established in this paper.

\begin{thm} \label{lb80}
There exist uniquely (up to equivalences) a grading-restricted $\Vbb^{\otimes M}$-module $\bbs_{\fx}(\Wbb)$ and a conformal block $\gimel\in\scr T_{\fx}^*(\Wbb\otimes \bbs_{\fx}(\Wbb))$ (so $\gimel:\Wbb\otimes\bbs_{\fx}(\Wbb)\rightarrow\Cbb$ is a linear functional ``invariant under the action of $\Vbb$") satisfying the following condition: For every grading-restricted $\Vbb^{\otimes M}$-module $\Xbb$ and every $\Gamma\in\scr T_{\fx}^*(\Wbb\otimes\Xbb)$ there is a unique morphism $T:\Xbb\rightarrow\bbs_\fx(\Wbb)$ such that $\Gamma=\gimel\circ(\idt\otimes T)$.

Similarly, such $\bbs_{\fk Y}(\Mbb)$ and $\daleth\in\scr T_{\fk Y}^*(\Mbb\otimes\bbs_{\fk Y}(\Mbb))$ exist uniquely (up to equivalences) for $\fk Y$.
\end{thm}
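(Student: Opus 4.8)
The plan is to construct $\bbs_\fx(\Wbb)$ and $\gimel$ by a representability argument, realizing the functor $\Xbb\mapsto\scr T_\fx^*(\Wbb\otimes\Xbb)$ on the category of grading-restricted $\Vbb^{\otimes M}$-modules as $\Hom(\Xbb,-)$ applied to a module to be built out of conformal blocks themselves. First I would assemble the ``universal'' object abstractly: form the space $\scr T_\fx^*(\Wbb\otimes\Vbb^{\otimes M})$ — more precisely, since we want a module and not just a vector space, I would use the propagation of conformal blocks (the paper's main technical tool, advertised in the abstract and introduction) to equip the restricted dual of a suitable completion of $\bigoplus_n\scr T_\fx^*$-type spaces with a $\Vbb^{\otimes M}$-action. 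Concretely, a conformal block $\Gamma\in\scr T_\fx^*(\Wbb\otimes\Xbb)$ is, for each $w\in\Wbb$, a linear functional on $\Xbb$, hence an element of the algebraic completion $\overline\Xbb=\prod_n\Xbb(n)^*$; the invariance conditions cut this down and, crucially, the propagation gives the compatibility needed to define vertex operators $Y_{\bbs}(v,z)$ for $v\in\Vbb^{\otimes M}$ acting on the span of these functionals. So the candidate $\bbs_\fx(\Wbb)$ is the maximal grading-restricted $\Vbb^{\otimes M}$-submodule of this naturally-constructed space, and $\gimel$ is the tautological pairing $(w,\upphi)\mapsto\upphi(w)$ coming from evaluation.

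Next I would verify that $\gimel$ is genuinely a conformal block associated to $\fx$ and $\Wbb\otimes\bbs_\fx(\Wbb)$: the invariance of $\gimel$ under the action of $\Vbb$ localized at the marked points of $C_1$ must be checked against the very construction of the $\Vbb^{\otimes M}$-action on $\bbs_\fx(\Wbb)$, and this is exactly where propagation enters — the propagated conformal block on the enlarged surface (with an extra variable point) controls how the $\Vbb$-action at the $y_j$'s interacts with the $\Vbb$-action at the $x_i$'s. Then, given any grading-restricted $\Vbb^{\otimes M}$-module $\Xbb$ and $\Gamma\in\scr T_\fx^*(\Wbb\otimes\Xbb)$, I define $T:\Xbb\to\bbs_\fx(\Wbb)$ by $T(\upchi)=\Gamma(-\otimes\upchi)\in\overline\Wbb{}^*\cdots$ — i.e. $T$ sends $\upchi\in\Xbb$ to the functional $w\mapsto\Gamma(w\otimes\upchi)$, which by construction lands in the space out of which $\bbs_\fx(\Wbb)$ was carved; one checks it lands in the grading-restricted part because $\Xbb$ is grading-restricted and $\Gamma$ respects the $L(0)$-gradings. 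The identity $\Gamma=\gimel\circ(\id\otimes T)$ is then immediate from the definitions, and $T$ is a morphism of $\Vbb^{\otimes M}$-modules precisely because $\Gamma$ is a conformal block (the module structure on $\bbs_\fx(\Wbb)$ was defined to make this automatic). Uniqueness of $T$ follows because $\gimel$ is ``non-degenerate in the second variable'' by construction — if $\gimel\circ(\id\otimes T)=\gimel\circ(\id\otimes T')$ then $T-T'$ lands in the kernel of all evaluations, which is zero in $\bbs_\fx(\Wbb)$. Uniqueness of the pair $(\bbs_\fx(\Wbb),\gimel)$ up to equivalence is the usual Yoneda argument.

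The main obstacle I expect is showing that the naturally-constructed space of functionals actually contains a \emph{grading-restricted} $\Vbb^{\otimes M}$-module that is large enough — i.e. that $\bbs_\fx(\Wbb)$ is nonzero and in fact represents the whole functor, not merely a subfunctor. Closely tied to this is establishing that the $\Vbb^{\otimes M}$-action defined via propagation satisfies the Jacobi identity / all the VOA-module axioms and that the $L(0)$-grading has finite-dimensional homogeneous pieces with bounded-below real parts; this is where $C_2$-cofiniteness of $\Vbb$ (or, per the abstract, a weaker hypothesis) is needed, presumably to guarantee finite-dimensionality via a Nakayama/$C_2$-type argument on the relevant spaces of conformal blocks, together with the convergence and regularity properties that propagation of conformal blocks supplies. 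Making the transition from the formal/algebraic completion to an honest grading-restricted module — controlling the grading, proving lower-truncation of vertex operators, and verifying the module is finitely generated — is the technical heart of the argument, and I would expect it to occupy the bulk of the proof referenced as Thm. \ref{lb55}.
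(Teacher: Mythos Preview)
Your outline matches the paper's approach closely: $\bbs_\fx(\Wbb)$ is built as a space of linear functionals on $\Wbb$, the $\Vbb^{\otimes M}$-action comes from propagation, $\gimel$ is the evaluation pairing, $T(\upchi)=\Gamma(-\otimes\upchi)$, and $C_2$-cofiniteness is what makes the module grading-restricted. Two points deserve sharpening, however.

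First, your description of the underlying vector space is too vague. The paper does \emph{not} start from $\scr T_\fx^*(\Wbb\otimes\Vbb^{\otimes M})$ or take a ``maximal grading-restricted submodule'' of some completion. Instead it defines, for each multi-level $a_\star=(a_1,\dots,a_M)\in\Nbb^M$, a space $\scr T_{\fx,a_\star}^*(\Wbb)$ of \emph{partial conformal blocks}: linear functionals $\upphi:\Wbb\to\Cbb$ vanishing on the residue action of $H^0(C,\scr V_{\fx,a_\star}\otimes\omega_C(\bullet S_\fx))$, where $\scr V_{\fx,a_\star}$ is $\scr V_C$ twisted by prescribed pole orders $\wt(v)+a_j$ at each $y_j$. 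Then $\bbs_\fx(\Wbb)=\varinjlim_{a_\star}\scr T_{\fx,a_\star}^*(\Wbb)$. This filtration is exactly what makes ``$T(\upchi)$ lands in $\bbs_\fx(\Wbb)$'' work: since $\Xbb$ is grading-restricted, each $\upchi$ satisfies $Y_j(v)_n\upchi=0$ for $n\ge\wt(v)+a_j$ with some $a_\star$ depending on $\upchi$, and this lower-truncation forces $\Gamma(-\otimes\upchi)\in\scr T_{\fx,a_\star}^*(\Wbb)$ --- not merely an $L(0)$-grading argument as you suggest.

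Second, the passage from ``each $\scr T_{\fx,a_\star}^*(\Wbb)$ is finite-dimensional'' (which is indeed a $C_2$-spanning argument \`a la Miyamoto) to ``$\bbs_\fx(\Wbb)$ is grading-restricted'' is not Nakayama. The paper first shows $\bbs_\fx(\Wbb)$ is a generalized $\Vbb^{\otimes M}$-module, then applies the already-proved universal property to get $\dim\Hom_{\Vbb^{\otimes M}}(\Pbb,\bbs_\fx(\Wbb))=\dim\scr T_\fx^*(\Wbb\otimes\Pbb)<\infty$ for every projective $\Pbb$, and finally invokes a lemma (modeled on Huang's argument with projective covers) that a generalized module with this property has finite length.
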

\begin{proof}
By Thm. \ref{lb46}, grading-restricted $\Vbb^{\otimes M}$-modules are equivalent to finitely-generated admissible $\Vbb^{\times M}$-modules. The existence of $(\bbs_\fx(\Wbb),\gimel)$ is due to Thm. \ref{lb57} and Thm. \ref{lb55}. The uniqueness is due to Rem. \ref{fusion-uniqueness}.
\end{proof}

We call $(\bbs_\fx(\Wbb),\gimel)$ (or simply call $\bbs_\fx(\Wbb)$) the \textbf{dual fusion product} associated to $\fx$ and $\Wbb$. Its contragredient module is denoted by $\boxtimes_\fx(\Wbb)$ and called the \textbf{fusion product} associated to $\fx$ and $\Wbb$. As an immediate consequence of Thm. \ref{lb80}, we have
\begin{align}\label{eq167}
\dim\Hom_{\Vbb^{\otimes M}}(\Xbb,\bbs_\fx(\Wbb))=\dim\scr T_\fx^*(\Wbb\otimes\Xbb)
\end{align}

For each $1\leq j\leq M$, choose neighborhoods $W_j'$ of $y_j'$ and  $W_j''$ of $y_j''$ on which $\xi_j$ and $\varpi_j$ are defined respectively. We assume that there exist $r_j,\rho_j>0$ such that
\begin{align}
\xi_j(W_j')=\mc D_{r_j}\qquad \varpi_j(W_j'')=\mc D_{\rho_j}
\end{align}
Note that $\xi_j(y_j')=\varpi_j(y_j'')=0$.

For each $j$, choose $q_j\in\Cbb$ satisfying
\begin{align}
0<|q_j|<r_j\rho_j
\end{align}
Remove small discs centered at $y_j',y_j''$ respectively, and glue the remaining part by the rule that $p_j'\in W_j'$ and $r_j''\in W_j''$ are identified iff
\begin{align}
\xi_j(p_j')\varpi_j(p_j'')=q_j
\end{align} 
By performing this gluing construction for all $1\leq j\leq M$ we obtain a new pointed surface
\begin{align}
\fx\#_{q_\blt}\fk Y=(\mc C_{q_\blt}|x_1,\dots,x_N,\varkappa_1,\dots,\varkappa_K)
\end{align}
with local coordinates $\eta_1,\dots,\eta_N,\mu_1,\dots,\mu_K$. See Figure \ref{fig3} for example.

\begin{figure}[h]
	\centering
\scalebox{0.95}{

\tikzset{every picture/.style={line width=0.75pt}} 

\begin{tikzpicture}[x=0.75pt,y=0.75pt,yscale=-1,xscale=1]

\draw   (196.18,105.08) .. controls (217.04,92.23) and (232.47,105.5) .. (250.3,113.2) .. controls (268.12,120.89) and (278.84,119.13) .. (286.94,119.28) .. controls (295.03,119.44) and (299.7,116.3) .. (300.03,124.77) .. controls (300.36,133.25) and (279.36,126.83) .. (279.36,133.5) .. controls (279.36,140.17) and (300.86,130) .. (302.11,139.5) .. controls (303.36,149) and (281.36,143.5) .. (281.36,148.5) .. controls (281.36,153.5) and (302.86,145) .. (302.86,155) .. controls (302.86,165) and (281.19,159.75) .. (273.97,159.19) .. controls (266.75,158.63) and (267.77,155.13) .. (248.04,158.51) .. controls (228.31,161.89) and (227.75,175.42) .. (201.25,167.98) .. controls (174.76,160.54) and (175.32,117.93) .. (196.18,105.08) -- cycle ;
\draw   (345.86,123.75) .. controls (345.61,113.5) and (377.11,124) .. (386.36,122.5) .. controls (395.61,121) and (397.14,120.17) .. (403.25,117.21) .. controls (409.36,114.25) and (425.2,102.09) .. (441.18,114) .. controls (457.16,125.91) and (460.18,161) .. (438.45,169.55) .. controls (416.73,178.09) and (403.95,163.05) .. (391.95,157.05) .. controls (379.95,151.05) and (351.36,169.25) .. (348.86,157) .. controls (346.36,144.75) and (361.36,151.25) .. (361.11,145.75) .. controls (360.86,140.25) and (343.11,150.25) .. (342.36,139.75) .. controls (341.61,129.25) and (360.36,138.5) .. (361.11,133.25) .. controls (361.86,128) and (346.11,134) .. (345.86,123.75) -- cycle ;
\draw    (196.27,127.56) .. controls (206.65,118.29) and (214.85,120.76) .. (221.95,128.79) ;
\draw    (199.55,125.7) .. controls (205.56,130.65) and (212.12,130.03) .. (218.13,125.7) ;
\draw    (215.77,149.56) .. controls (226.15,140.29) and (234.35,142.76) .. (241.45,150.79) ;
\draw    (219.05,147.7) .. controls (225.06,152.65) and (231.62,152.03) .. (237.63,147.7) ;
\draw    (410.27,141.06) .. controls (420.65,131.79) and (428.85,134.26) .. (435.95,142.29) ;
\draw    (413.55,139.2) .. controls (419.56,144.15) and (426.12,143.53) .. (432.13,139.2) ;
\draw  [fill={rgb, 255:red, 0; green, 0; blue, 0 }  ,fill opacity=1 ] (240.9,127.78) .. controls (240.9,126.88) and (241.63,126.14) .. (242.53,126.14) .. controls (243.44,126.14) and (244.17,126.88) .. (244.17,127.78) .. controls (244.17,128.68) and (243.44,129.42) .. (242.53,129.42) .. controls (241.63,129.42) and (240.9,128.68) .. (240.9,127.78) -- cycle ;
\draw  [fill={rgb, 255:red, 0; green, 0; blue, 0 }  ,fill opacity=1 ] (293.9,123.53) .. controls (293.9,122.63) and (294.63,121.89) .. (295.53,121.89) .. controls (296.44,121.89) and (297.17,122.63) .. (297.17,123.53) .. controls (297.17,124.43) and (296.44,125.17) .. (295.53,125.17) .. controls (294.63,125.17) and (293.9,124.43) .. (293.9,123.53) -- cycle ;
\draw  [fill={rgb, 255:red, 0; green, 0; blue, 0 }  ,fill opacity=1 ] (295.9,140.14) .. controls (295.9,139.23) and (296.63,138.5) .. (297.53,138.5) .. controls (298.44,138.5) and (299.17,139.23) .. (299.17,140.14) .. controls (299.17,141.04) and (298.44,141.77) .. (297.53,141.77) .. controls (296.63,141.77) and (295.9,141.04) .. (295.9,140.14) -- cycle ;
\draw  [fill={rgb, 255:red, 0; green, 0; blue, 0 }  ,fill opacity=1 ] (295.9,155.14) .. controls (295.9,154.23) and (296.63,153.5) .. (297.53,153.5) .. controls (298.44,153.5) and (299.17,154.23) .. (299.17,155.14) .. controls (299.17,156.04) and (298.44,156.77) .. (297.53,156.77) .. controls (296.63,156.77) and (295.9,156.04) .. (295.9,155.14) -- cycle ;
\draw  [fill={rgb, 255:red, 0; green, 0; blue, 0 }  ,fill opacity=1 ] (349.4,124.89) .. controls (349.4,123.98) and (350.13,123.25) .. (351.03,123.25) .. controls (351.94,123.25) and (352.67,123.98) .. (352.67,124.89) .. controls (352.67,125.79) and (351.94,126.52) .. (351.03,126.52) .. controls (350.13,126.52) and (349.4,125.79) .. (349.4,124.89) -- cycle ;
\draw  [fill={rgb, 255:red, 0; green, 0; blue, 0 }  ,fill opacity=1 ] (346.15,139.39) .. controls (346.15,138.48) and (346.88,137.75) .. (347.78,137.75) .. controls (348.69,137.75) and (349.42,138.48) .. (349.42,139.39) .. controls (349.42,140.29) and (348.69,141.02) .. (347.78,141.02) .. controls (346.88,141.02) and (346.15,140.29) .. (346.15,139.39) -- cycle ;
\draw  [fill={rgb, 255:red, 0; green, 0; blue, 0 }  ,fill opacity=1 ] (352.65,155.39) .. controls (352.65,154.48) and (353.38,153.75) .. (354.28,153.75) .. controls (355.19,153.75) and (355.92,154.48) .. (355.92,155.39) .. controls (355.92,156.29) and (355.19,157.02) .. (354.28,157.02) .. controls (353.38,157.02) and (352.65,156.29) .. (352.65,155.39) -- cycle ;
\draw  [fill={rgb, 255:red, 0; green, 0; blue, 0 }  ,fill opacity=1 ] (258.9,146.78) .. controls (258.9,145.88) and (259.63,145.14) .. (260.53,145.14) .. controls (261.44,145.14) and (262.17,145.88) .. (262.17,146.78) .. controls (262.17,147.68) and (261.44,148.42) .. (260.53,148.42) .. controls (259.63,148.42) and (258.9,147.68) .. (258.9,146.78) -- cycle ;
\draw  [fill={rgb, 255:red, 0; green, 0; blue, 0 }  ,fill opacity=1 ] (429.4,123.78) .. controls (429.4,122.88) and (430.13,122.14) .. (431.03,122.14) .. controls (431.94,122.14) and (432.67,122.88) .. (432.67,123.78) .. controls (432.67,124.68) and (431.94,125.42) .. (431.03,125.42) .. controls (430.13,125.42) and (429.4,124.68) .. (429.4,123.78) -- cycle ;
\draw  [fill={rgb, 255:red, 0; green, 0; blue, 0 }  ,fill opacity=1 ] (442.4,154.28) .. controls (442.4,153.38) and (443.13,152.64) .. (444.03,152.64) .. controls (444.94,152.64) and (445.67,153.38) .. (445.67,154.28) .. controls (445.67,155.18) and (444.94,155.92) .. (444.03,155.92) .. controls (443.13,155.92) and (442.4,155.18) .. (442.4,154.28) -- cycle ;
\draw  [fill={rgb, 255:red, 0; green, 0; blue, 0 }  ,fill opacity=1 ] (400.9,143.78) .. controls (400.9,142.88) and (401.63,142.14) .. (402.53,142.14) .. controls (403.44,142.14) and (404.17,142.88) .. (404.17,143.78) .. controls (404.17,144.68) and (403.44,145.42) .. (402.53,145.42) .. controls (401.63,145.42) and (400.9,144.68) .. (400.9,143.78) -- cycle ;
\draw [color={rgb, 255:red, 74; green, 144; blue, 226 }  ,draw opacity=1 ] [dash pattern={on 1.5pt off 1.5pt}]  (310.27,123.21) -- (337.77,123.65) ;
\draw [shift={(339.77,123.68)}, rotate = 180.91] [color={rgb, 255:red, 74; green, 144; blue, 226 }  ,draw opacity=1 ][line width=0.75]    (6.56,-1.97) .. controls (4.17,-0.84) and (1.99,-0.18) .. (0,0) .. controls (1.99,0.18) and (4.17,0.84) .. (6.56,1.97)   ;
\draw [shift={(308.27,123.18)}, rotate = 0.91] [color={rgb, 255:red, 74; green, 144; blue, 226 }  ,draw opacity=1 ][line width=0.75]    (6.56,-1.97) .. controls (4.17,-0.84) and (1.99,-0.18) .. (0,0) .. controls (1.99,0.18) and (4.17,0.84) .. (6.56,1.97)   ;
\draw [color={rgb, 255:red, 74; green, 144; blue, 226 }  ,draw opacity=1 ] [dash pattern={on 1.5pt off 1.5pt}]  (308.77,139.18) -- (335.77,139.18) ;
\draw [shift={(337.77,139.18)}, rotate = 180] [color={rgb, 255:red, 74; green, 144; blue, 226 }  ,draw opacity=1 ][line width=0.75]    (6.56,-1.97) .. controls (4.17,-0.84) and (1.99,-0.18) .. (0,0) .. controls (1.99,0.18) and (4.17,0.84) .. (6.56,1.97)   ;
\draw [shift={(306.77,139.18)}, rotate = 0] [color={rgb, 255:red, 74; green, 144; blue, 226 }  ,draw opacity=1 ][line width=0.75]    (6.56,-1.97) .. controls (4.17,-0.84) and (1.99,-0.18) .. (0,0) .. controls (1.99,0.18) and (4.17,0.84) .. (6.56,1.97)   ;
\draw [color={rgb, 255:red, 74; green, 144; blue, 226 }  ,draw opacity=1 ] [dash pattern={on 1.5pt off 1.5pt}]  (311.27,155.18) -- (339.27,155.18) ;
\draw [shift={(341.27,155.18)}, rotate = 180] [color={rgb, 255:red, 74; green, 144; blue, 226 }  ,draw opacity=1 ][line width=0.75]    (6.56,-1.97) .. controls (4.17,-0.84) and (1.99,-0.18) .. (0,0) .. controls (1.99,0.18) and (4.17,0.84) .. (6.56,1.97)   ;
\draw [shift={(309.27,155.18)}, rotate = 0] [color={rgb, 255:red, 74; green, 144; blue, 226 }  ,draw opacity=1 ][line width=0.75]    (6.56,-1.97) .. controls (4.17,-0.84) and (1.99,-0.18) .. (0,0) .. controls (1.99,0.18) and (4.17,0.84) .. (6.56,1.97)   ;

\draw (225.47,111.32) node [anchor=north west][inner sep=0.75pt]  [font=\small]  {$x_{1}$};
\draw (239.97,136.42) node [anchor=north west][inner sep=0.75pt]  [font=\small]  {$x_{2}$};
\draw (410.47,113.42) node [anchor=north west][inner sep=0.75pt]  [font=\small]  {$\varkappa _{1}$};
\draw (423.47,143.92) node [anchor=north west][inner sep=0.75pt]  [font=\small]  {$\varkappa _{3}$};
\draw (381.97,133.42) node [anchor=north west][inner sep=0.75pt]  [font=\small]  {$\varkappa _{2}$};
\draw (162,139) node [anchor=north west][inner sep=0.75pt]    {$\mathfrak{X}$};
\draw (459,142) node [anchor=north west][inner sep=0.75pt]    {$\mathfrak{Y}$};

\end{tikzpicture}
}
	\caption{~~An example of $\fx\#_{q_\blt}\fk Y$ where $N=2,K=3,M=3$. $\fx\#_{q_\blt}\fk Y$ has genus-$5$}
	\label{fig3}
\end{figure}

Write $Y(\idt\otimes\cdots\otimes\cbf\otimes\cdots\otimes\idt,z)=\sum_{n\in\Zbb} L_j(n)z^{-n-2}$ where the conformal vector $\cbf\in\Vbb(2)$ is at the $j$-th tensor component of $\idt\otimes\cdots\otimes\cbf\otimes\cdots\otimes\idt$. The following theorem and corollary will be proved in \cite[Thm. 3.4, 3.5]{GZ3}.

\begin{thm}[\textbf{Sewing-factorization}]\label{lb81}
There is a well-defined linear map
\begin{gather}
\begin{gathered}
\Psi_{q_\blt}: \Hom_{\Vbb^{\otimes M}}\big(\boxtimes_{\fk Y}(\Mbb),\bbs_\fx(\Wbb)\big) \rightarrow\scr T_{\fk X\#_{q_\blt}\fk Y}^*(\Wbb\otimes\Mbb)\\
T\mapsto \mc S_{q_\blt}\big((\gimel\circ T)\otimes \daleth\big)
\end{gathered}
\end{gather}
By ``well-defined" we mean that for each $w\in\Wbb,m\in\Mbb$,
\begin{align}
\begin{aligned}
\mc S_{q_\blt}\big((\gimel\circ T)\otimes \daleth\big)(w\otimes m):=&\wick{\gimel\big(w\otimes T(\c2-)\big)\cdot \daleth\big(m\otimes q_1^{L_1(0)}\cdots q_M^{L_M(0)}\c2-\big)}\\
=&\sum_{\alpha} \gimel\big(w\otimes T\wch e_\alpha\big)\cdot \daleth\big(m\otimes q_1^{L_1(0)}\cdots q_M^{L_M(0)}e_\alpha\big)
\end{aligned}
\end{align}
(where $\{e_\alpha\}$ is a homogeneous basis of $\bbs_{\fk Y}(\Mbb)$ and $\{\wch e_\alpha\}$ is its dual basis), as a formal series of $q_1,\dots,q_M$ and $\log q_1,\dots,\log q_M$, converges absolutely (in an appropriate sense) to a conformal block associated to $\fx\#_{q_\blt}\fk Y$ and $\Wbb\otimes\Mbb$. 

Moreover, $\Psi_{q_\blt}$ is an isomorphism of (finite-dimensional) vector spaces. 
\end{thm}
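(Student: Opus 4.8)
The plan is to show that $\Psi_{q_\blt}$ is well defined, injective, and surjective, the last being the real content. In the intermediate steps I pass to the normalized sewing $\wtd{\mc S}_{q_\blt}$ (the variant with $\wtd L(0)$ in place of $L(0)$ described before Thm.~\ref{lb77}), which differs from $\mc S_{q_\blt}$ by an invertible transformation on the relevant spaces and is normalized so that each $\wtd L_j(0)$ acts semisimply on $\bbs_{\fk Y}(\Mbb)$ with spectrum in $\Nbb$; then the series below are honest power series in $q_1,\dots,q_M$. For fixed $T$, the function $w\otimes m\mapsto\sum_\alpha\gimel(w\otimes T\wch e_\alpha)\,\daleth(m\otimes q_1^{L_1(0)}\cdots q_M^{L_M(0)}e_\alpha)$ is the sewing, along the $M$ pairs $(y_j',y_j'')$, of the product conformal block $\bigl(\gimel\circ(\id_\Wbb\otimes T)\bigr)\boxtimes\daleth$ on the disjoint union of $C_1$ and $C_2$; since every gluing circle separates a neighborhood of $y_j'$ in $C_1$ from a neighborhood of $y_j''$ in $C_2$, this is a disjoint sewing, never a self-sewing, so its absolute convergence and the $\Vbb^{\otimes(N+K)}$-invariance of the limit on $\fx\#_{q_\blt}\fk Y$ follow from the sewing-convergence results of \cite{Gui-sewingconvergence} together with the standard fact that contracting at the glued points preserves invariance. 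This gives well-definedness. Next, by the universal property of $(\bbs_\fx(\Wbb),\gimel)$ in Thm.~\ref{lb80} applied with $\Xbb=\boxtimes_{\fk Y}(\Mbb)$, the assignment $T\mapsto\gimel\circ(\id_\Wbb\otimes T)$ is a linear isomorphism $\Hom_{\Vbb^{\otimes M}}\!\bigl(\boxtimes_{\fk Y}(\Mbb),\bbs_\fx(\Wbb)\bigr)\xrightarrow{\sim}\scr T_\fx^*\!\bigl(\Wbb\otimes\boxtimes_{\fk Y}(\Mbb)\bigr)$, and under it $\Psi_{q_\blt}$ is identified with the partial-sewing map
$$\mathrm P_{q_\blt}\colon\scr T_\fx^*\!\bigl(\Wbb\otimes\boxtimes_{\fk Y}(\Mbb)\bigr)\longrightarrow\scr T_{\fx\#_{q_\blt}\fk Y}^*(\Wbb\otimes\Mbb),\qquad \phi\longmapsto\Bigl(w\otimes m\mapsto\textstyle\sum_\alpha\phi(w\otimes\wch e_\alpha)\,\daleth\bigl(m\otimes q_1^{L_1(0)}\cdots q_M^{L_M(0)}e_\alpha\bigr)\Bigr).$$
So it suffices to prove $\mathrm P_{q_\blt}$ is a bijection. (Both spaces are finite-dimensional — by \cite{AN03-finite-dimensional,DGT2} and the finiteness of $\Hom$-spaces of grading-restricted modules over a $C_2$-cofinite VOA — but since I do not see how to compute either dimension independently, I prove injectivity and surjectivity directly.)

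For injectivity, the uniqueness clause in Thm.~\ref{lb80} forces the functionals $\{\daleth(m\otimes-):m\in\Mbb\}$ to separate the points of $\bbs_{\fk Y}(\Mbb)$: the subset $K=\{e:\daleth(m\otimes e)=0\ \forall m\in\Mbb\}$ is a $\Vbb^{\otimes M}$-submodule (by invariance of $\daleth$ under the $\Vbb$-action at the outgoing points $y_j''$), and if it contained a nonzero submodule $\Nbb$ then the inclusion $\Nbb\hookrightarrow\bbs_{\fk Y}(\Mbb)$ and the zero morphism would both represent $\daleth|_{\Mbb\otimes\Nbb}=0$, contradicting uniqueness; hence $K=0$, and likewise $\{\gimel(w\otimes-)\}$ separates the points of $\bbs_\fx(\Wbb)$. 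Now if $\mathrm P_{q_\blt}(\phi)=0$, then collecting the coefficient of each monomial $q_1^{n_1}\cdots q_M^{n_M}$ yields $\sum_{\alpha}\phi(w\otimes\wch e_\alpha)\daleth(m\otimes e_\alpha)=0$ for all $m$, the sum running over the finitely many $\alpha$ with $\wtd L_\blt(0)$-weight $(n_1,\dots,n_M)$; on that finite-dimensional weight space point separation makes $\{\daleth(-\otimes e_\alpha)\}$ linearly independent in $\Mbb^*$, so every $\phi(w\otimes\wch e_\alpha)=0$, i.e. $\phi=0$.

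For surjectivity — the crux — let $\phi\in\scr T_{\fx\#_{q_\blt}\fk Y}^*(\Wbb\otimes\Mbb)$; I must ``un-sew'' it. Placing a point $p_j$ in the $j$-th gluing annulus of $\fx\#_{q_\blt}\fk Y$ and applying the generalized propagation of conformal blocks — the main technical tool of the paper — to $\phi$ produces conformal blocks with an additional $\Vbb$-insertion at $p_\blt$, which exhibits $\phi$ as a partial conformal block on the subsurface obtained by deleting formal discs around the $y_j''$. The Laurent-type dependence of these propagated blocks as the $p_j$ run over the annuli is degreewise finite — because near each annulus $\phi$ is governed by the grading-restricted module $\bbs_{\fk Y}(\Mbb)$, which is precisely the finiteness that $C_2$-cofiniteness supplies and that underlies the sewing-convergence theorem — so one can reconstruct, for every $w\in\Wbb$, an element of the graded completion of $\boxtimes_{\fk Y}(\Mbb)$ whose contraction against $\daleth(m\otimes q_1^{L_1(0)}\cdots q_M^{L_M(0)}\cdot)$ returns $\phi(w\otimes m)$; the universal property of $(\bbs_{\fk Y}(\Mbb),\daleth)$ shows this datum is honestly valued in $\boxtimes_{\fk Y}(\Mbb)$. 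Propagating in the same way on the $C_1$-side and invoking the universal property of $(\bbs_\fx(\Wbb),\gimel)$ then assembles the assignment $w\mapsto(\cdots)$ into a genuine $\phi_\fx\in\scr T_\fx^*(\Wbb\otimes\boxtimes_{\fk Y}(\Mbb))$, and comparing the two $q_\blt$-expansions gives $\mathrm P_{q_\blt}(\phi_\fx)=\phi$.

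The main obstacle is surjectivity: that \emph{every} conformal block on the sewn surface lies in the image of the sewing map. This is exactly the affirmative answer to Question~\ref{lb79} for disjoint sewing, now in arbitrary genus and for $\Vbb^{\otimes N}$-modules that need not be tensor-factorizable — the property that genuinely fails for self-sewing (Case~2) and whose proof needs the full strength of the generalized-propagation machinery together with the finiteness coming from $C_2$-cofiniteness. By comparison, well-definedness is essentially citable from \cite{Gui-sewingconvergence} and injectivity is a soft point-separation argument.
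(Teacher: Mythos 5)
This paper does not prove Theorem \ref{lb81}: it is stated in the introduction as the goal of the three-part series, and the authors explicitly defer the convergence issues to Part II and the sewing-factorization proof itself to Part III. What this paper actually establishes is Theorem \ref{lb80} (the dual fusion product and its universal property), which is only the input to the theorem you are trying to prove. So there is no proof in this paper to compare yours against, and your proposal must be judged on its own; as it stands it has genuine gaps at each of the three stages. For well-definedness, the a.l.u.\ convergence of the multi-variable sewing series for a non-rational $C_2$-cofinite VOA in higher genus is not ``essentially citable from \cite{Gui-sewingconvergence}'' --- that reference treats convergence under specific hypotheses, and the present series reserves an entire paper (Part II, on connections and differential equations in the $q_\blt$-directions) precisely to establish it. For injectivity, your step ``if $\mathrm P_{q_\blt}(\phi)=0$ then collecting the coefficient of each monomial yields\dots'' silently upgrades vanishing of the series \emph{at the single point} $q_\blt$ to vanishing of all its Taylor coefficients; these are not equivalent, and bridging them is exactly why the injectivity of $\fk S_q$ in \cite[Thm.~12.1]{Gui-sewingconvergence} requires the differential-equation machinery rather than the soft point-separation argument you give. (Your point-separation lemma for $\daleth$ via the uniqueness clause of Thm.~\ref{lb80} is fine and would finish the job \emph{once} coefficientwise vanishing is known.)

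The surjectivity sketch is where the real content should be, and it is not yet an argument. The two load-bearing assertions --- that the propagated data reconstruct, for each $w$, an element of the graded completion of $\boxtimes_{\fk Y}(\Mbb)$ rather than merely of the algebraic dual, and that ``the universal property shows this datum is honestly valued in $\boxtimes_{\fk Y}(\Mbb)$'' --- are precisely what must be proved, and the universal property of $(\bbs_{\fk Y}(\Mbb),\daleth)$ as stated in Thm.~\ref{lb80} takes as input a conformal block on $\fk Y$ paired with a grading-restricted module, not a formal family of functionals indexed by $q_\blt$; you cannot invoke it until you have already shown that the un-sewn object is such a block. Likewise ``comparing the two $q_\blt$-expansions'' presupposes that the given $\phi$, defined at one fixed $q_\blt$, extends to a (flat) family over the punctured polydisc --- again the subject of Part II. Your framing of the problem is reasonable (reduce via Thm.~\ref{lb80} to the partial-sewing map $\mathrm P_{q_\blt}$, then un-sew by propagation), but a correct proof along these lines needs the differential equations/connection on $\scr T^*_{\fx\#_{q_\blt}\fk Y}(\Wbb\otimes\Mbb)$ over the $q_\blt$-polydisc and a regular-singular-point analysis at $q_\blt=0$, none of which is available from the results of this paper alone.
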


Thm. \ref{lb81} can be stated in the following equivalent way, thanks to Thm. \ref{lb80}.

\begin{co}[\textbf{Sewing-factorization}]\label{lb82}
There is a well-defined isomorphism of vector spaces
\begin{gather}
\begin{gathered}
\Phi_{q_\blt}: \scr T_\fx^*\big(\Wbb\otimes\boxtimes_{\fk Y}(\Mbb)\big) \rightarrow\scr T_{\fk X\#_{q_\blt}\fk Y}^*(\Wbb\otimes\Mbb)\\
\uppsi\mapsto \mc S_{q_\blt}\big(\uppsi\otimes \daleth\big)
\end{gathered}
\end{gather}
where, for each $w\in\Wbb,m\in\Mbb$,
\begin{align}
\mc S_{q_\blt}\big(\uppsi\otimes \daleth\big)(w\otimes m)=\wick{\uppsi\big(w\otimes \c2-\big)\cdot \daleth\big(m\otimes q_1^{L_1(0)}\cdots q_M^{L_M(0)}\c2-\big)}
\end{align}
\end{co}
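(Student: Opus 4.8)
The plan is to deduce Corollary~\ref{lb82} directly from Theorem~\ref{lb81}, using Theorem~\ref{lb80} to rewrite the source $\Hom_{\Vbb^{\otimes M}}(\boxtimes_{\fk Y}(\Mbb),\bbs_\fx(\Wbb))$ of $\Psi_{q_\blt}$ as the space of conformal blocks $\scr T_\fx^*(\Wbb\otimes\boxtimes_{\fk Y}(\Mbb))$. Both Theorems~\ref{lb80} and~\ref{lb81} are taken as given.

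First I would isolate the piece of Theorem~\ref{lb80} that is needed. Applying its universal property to the module $\Xbb=\boxtimes_{\fk Y}(\Mbb)$, the assignment
\begin{align}
\Theta:\ \Hom_{\Vbb^{\otimes M}}\big(\boxtimes_{\fk Y}(\Mbb),\,\bbs_\fx(\Wbb)\big)\ \longrightarrow\ \scr T_\fx^*\big(\Wbb\otimes\boxtimes_{\fk Y}(\Mbb)\big),\qquad T\ \longmapsto\ \gimel\circ(\id_\Wbb\otimes T)
\end{align}
is well defined --- precomposing the conformal block $\gimel$ with a $\Vbb^{\otimes M}$-module morphism on the $\bbs_\fx(\Wbb)$-leg again yields a conformal block --- and Theorem~\ref{lb80} asserts precisely that $\Theta$ is a bijection: the existence of $T$ with $\Gamma=\gimel\circ(\id\otimes T)$ gives surjectivity, and the uniqueness of such $T$ gives injectivity. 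Since $\Theta$ is visibly linear, it is an isomorphism of vector spaces; both spaces are finite dimensional because $\Vbb$ is $C_2$-cofinite (so $\scr T_\fx^*$ is finite dimensional by \cite{AN03-finite-dimensional,DGT2}), which also accounts for the parenthetical ``finite-dimensional'' in the statement.

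Next I would verify that $\Psi_{q_\blt}=\Phi_{q_\blt}\circ\Theta$. In Theorem~\ref{lb81} the symbol ``$\gimel\circ T$'' abbreviates $\gimel\circ(\id_\Wbb\otimes T)=\Theta(T)$, and the contraction $\mc S_{q_\blt}(\,\cdot\,\otimes\daleth)$ depends on its first argument only as a linear functional on $\Wbb\otimes\boxtimes_{\fk Y}(\Mbb)$ paired against $\daleth$. Hence, writing $\uppsi:=\Theta(T)$, for all $w\in\Wbb$ and $m\in\Mbb$,
\begin{align}
\mc S_{q_\blt}\big((\gimel\circ T)\otimes\daleth\big)(w\otimes m)
&=\sum_\alpha\gimel\big(w\otimes T\wch e_\alpha\big)\cdot\daleth\big(m\otimes q_1^{L_1(0)}\cdots q_M^{L_M(0)}e_\alpha\big)\\
&=\sum_\alpha\uppsi\big(w\otimes\wch e_\alpha\big)\cdot\daleth\big(m\otimes q_1^{L_1(0)}\cdots q_M^{L_M(0)}e_\alpha\big)=\mc S_{q_\blt}\big(\uppsi\otimes\daleth\big)(w\otimes m).
\end{align}
Thus $\Phi_{q_\blt}=\Psi_{q_\blt}\circ\Theta^{-1}$, and Corollary~\ref{lb82} follows at once: every $\uppsi\in\scr T_\fx^*(\Wbb\otimes\boxtimes_{\fk Y}(\Mbb))$ is $\Theta(T)$ for a (unique) $T$, so the well-definedness clause of Theorem~\ref{lb81} gives the absolute convergence of the series defining $\mc S_{q_\blt}(\uppsi\otimes\daleth)$ and its membership in $\scr T^*_{\fk X\#_{q_\blt}\fk Y}(\Wbb\otimes\Mbb)$; and $\Phi_{q_\blt}$, being a composite of the two isomorphisms $\Theta^{-1}$ and $\Psi_{q_\blt}$, is itself an isomorphism.

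There is no genuine obstacle in this argument: all the analytic and representation-theoretic content sits in Theorems~\ref{lb80} and~\ref{lb81}, and what remains is bookkeeping --- pinning down the notational conventions ``$\gimel\circ T$'' versus ``$\gimel\circ(\id\otimes T)$'', the precise meaning of the wick contraction $\mc S_{q_\blt}$, and the independence of the resulting identifications from the chosen homogeneous basis $\{e_\alpha\}$ of $\bbs_{\fk Y}(\Mbb)$ and from the particular realization of the universal pair $(\bbs_\fx(\Wbb),\gimel)$, which Theorem~\ref{lb80} determines only up to canonical isomorphism.
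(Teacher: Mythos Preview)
Your proposal is correct and follows exactly the route the paper indicates: the paper gives no written proof of Corollary~\ref{lb82} beyond the single sentence ``Thm.~\ref{lb81} can be stated in the following equivalent way, thanks to Thm.~\ref{lb80}'' (together with the immediate consequence \eqref{eq167}), and you have simply made that one-line remark explicit by exhibiting the isomorphism $\Theta$ of \eqref{eq167} and verifying $\Phi_{q_\blt}=\Psi_{q_\blt}\circ\Theta^{-1}$.
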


\subsection{Self-sewing regained; the factorization formula}

Although we have only considered disjoint sewing in Sec. \ref{lb83}, we can still prove a sewing-factorization theorem for any type of sewing by transforming it to a disjoint sewing as in Fig. \ref{fig4}.

\begin{figure}[h]
	\centering
\scalebox{0.85}{

\tikzset{every picture/.style={line width=0.75pt}} 

\begin{tikzpicture}[x=0.75pt,y=0.75pt,yscale=-1,xscale=1]

\draw   (230.35,107.87) .. controls (233.06,126.69) and (205.12,113.86) .. (205.57,131.39) .. controls (206.02,148.93) and (235.83,134.45) .. (233.51,152.77) .. controls (231.19,171.1) and (192.1,153.19) .. (162.92,151.53) .. controls (133.73,149.87) and (103.29,182.28) .. (77.61,161.76) .. controls (51.93,141.23) and (67.25,96.32) .. (96.54,87.77) .. controls (125.82,79.22) and (138.23,113.9) .. (164.36,111.16) .. controls (190.49,108.42) and (227.65,89.05) .. (230.35,107.87) -- cycle ;
\draw    (87.77,130.02) .. controls (100.52,116.13) and (110.59,119.83) .. (119.32,131.87) ;
\draw    (91.8,127.24) .. controls (99.18,134.65) and (107.24,133.72) .. (114.62,127.24) ;
\draw    (140.77,133.56) .. controls (151.15,124.29) and (159.35,126.76) .. (166.45,134.79) ;
\draw    (144.05,131.7) .. controls (150.06,136.65) and (156.62,136.03) .. (162.63,131.7) ;
\draw  [fill={rgb, 255:red, 0; green, 0; blue, 0 }  ,fill opacity=1 ] (223.4,109.03) .. controls (223.4,108.13) and (224.13,107.39) .. (225.03,107.39) .. controls (225.94,107.39) and (226.67,108.13) .. (226.67,109.03) .. controls (226.67,109.93) and (225.94,110.67) .. (225.03,110.67) .. controls (224.13,110.67) and (223.4,109.93) .. (223.4,109.03) -- cycle ;
\draw  [fill={rgb, 255:red, 0; green, 0; blue, 0 }  ,fill opacity=1 ] (226.4,152.53) .. controls (226.4,151.63) and (227.13,150.89) .. (228.03,150.89) .. controls (228.94,150.89) and (229.67,151.63) .. (229.67,152.53) .. controls (229.67,153.43) and (228.94,154.17) .. (228.03,154.17) .. controls (227.13,154.17) and (226.4,153.43) .. (226.4,152.53) -- cycle ;
\draw   (499.35,108.87) .. controls (502.06,127.69) and (474.12,114.86) .. (474.57,132.39) .. controls (475.02,149.93) and (504.83,135.45) .. (502.51,153.77) .. controls (500.19,172.1) and (461.1,154.19) .. (431.92,152.53) .. controls (402.73,150.87) and (372.29,183.28) .. (346.61,162.76) .. controls (320.93,142.23) and (336.25,97.32) .. (365.54,88.77) .. controls (394.82,80.22) and (407.23,114.9) .. (433.36,112.16) .. controls (459.49,109.42) and (496.65,90.05) .. (499.35,108.87) -- cycle ;
\draw    (356.77,131.02) .. controls (369.52,117.13) and (379.59,120.83) .. (388.32,132.87) ;
\draw    (360.8,128.24) .. controls (368.18,135.65) and (376.24,134.72) .. (383.62,128.24) ;
\draw    (409.77,134.56) .. controls (420.15,125.29) and (428.35,127.76) .. (435.45,135.79) ;
\draw    (413.05,132.7) .. controls (419.06,137.65) and (425.62,137.03) .. (431.63,132.7) ;
\draw  [fill={rgb, 255:red, 0; green, 0; blue, 0 }  ,fill opacity=1 ] (492.4,110.03) .. controls (492.4,109.13) and (493.13,108.39) .. (494.03,108.39) .. controls (494.94,108.39) and (495.67,109.13) .. (495.67,110.03) .. controls (495.67,110.93) and (494.94,111.67) .. (494.03,111.67) .. controls (493.13,111.67) and (492.4,110.93) .. (492.4,110.03) -- cycle ;
\draw  [fill={rgb, 255:red, 0; green, 0; blue, 0 }  ,fill opacity=1 ] (495.4,153.53) .. controls (495.4,152.63) and (496.13,151.89) .. (497.03,151.89) .. controls (497.94,151.89) and (498.67,152.63) .. (498.67,153.53) .. controls (498.67,154.43) and (497.94,155.17) .. (497.03,155.17) .. controls (496.13,155.17) and (495.4,154.43) .. (495.4,153.53) -- cycle ;
\draw [color={rgb, 255:red, 74; green, 144; blue, 226 }  ,draw opacity=1 ] [dash pattern={on 1.5pt off 1.5pt}]  (236.25,111.65) .. controls (245.26,120.95) and (248.01,135.17) .. (238.63,151.19) ;
\draw [shift={(237.73,152.68)}, rotate = 302.15] [color={rgb, 255:red, 74; green, 144; blue, 226 }  ,draw opacity=1 ][line width=0.75]    (6.56,-1.97) .. controls (4.17,-0.84) and (1.99,-0.18) .. (0,0) .. controls (1.99,0.18) and (4.17,0.84) .. (6.56,1.97)   ;
\draw [shift={(234.73,110.18)}, rotate = 42.14] [color={rgb, 255:red, 74; green, 144; blue, 226 }  ,draw opacity=1 ][line width=0.75]    (6.56,-1.97) .. controls (4.17,-0.84) and (1.99,-0.18) .. (0,0) .. controls (1.99,0.18) and (4.17,0.84) .. (6.56,1.97)   ;
\draw   (535.38,112.21) .. controls (534.55,98.64) and (571.55,97.14) .. (578.05,108.64) .. controls (584.55,120.14) and (589.67,132.45) .. (581.61,147.29) .. controls (573.55,162.14) and (538.05,165.14) .. (537.55,150.64) .. controls (537.05,136.14) and (561.03,151.19) .. (561.03,131.38) .. controls (561.03,111.56) and (536.21,125.78) .. (535.38,112.21) -- cycle ;
\draw  [fill={rgb, 255:red, 0; green, 0; blue, 0 }  ,fill opacity=1 ] (540.4,111.03) .. controls (540.4,110.13) and (541.13,109.39) .. (542.03,109.39) .. controls (542.94,109.39) and (543.67,110.13) .. (543.67,111.03) .. controls (543.67,111.93) and (542.94,112.67) .. (542.03,112.67) .. controls (541.13,112.67) and (540.4,111.93) .. (540.4,111.03) -- cycle ;
\draw  [fill={rgb, 255:red, 0; green, 0; blue, 0 }  ,fill opacity=1 ] (540.9,151.03) .. controls (540.9,150.13) and (541.63,149.39) .. (542.53,149.39) .. controls (543.44,149.39) and (544.17,150.13) .. (544.17,151.03) .. controls (544.17,151.93) and (543.44,152.67) .. (542.53,152.67) .. controls (541.63,152.67) and (540.9,151.93) .. (540.9,151.03) -- cycle ;
\draw [color={rgb, 255:red, 74; green, 144; blue, 226 }  ,draw opacity=1 ] [dash pattern={on 1.5pt off 1.5pt}]  (505.15,110.71) -- (530.55,110.68) ;
\draw [shift={(532.55,110.68)}, rotate = 179.95] [color={rgb, 255:red, 74; green, 144; blue, 226 }  ,draw opacity=1 ][line width=0.75]    (6.56,-1.97) .. controls (4.17,-0.84) and (1.99,-0.18) .. (0,0) .. controls (1.99,0.18) and (4.17,0.84) .. (6.56,1.97)   ;
\draw [shift={(503.15,110.71)}, rotate = 359.95] [color={rgb, 255:red, 74; green, 144; blue, 226 }  ,draw opacity=1 ][line width=0.75]    (6.56,-1.97) .. controls (4.17,-0.84) and (1.99,-0.18) .. (0,0) .. controls (1.99,0.18) and (4.17,0.84) .. (6.56,1.97)   ;
\draw [color={rgb, 255:red, 74; green, 144; blue, 226 }  ,draw opacity=1 ] [dash pattern={on 1.5pt off 1.5pt}]  (507.65,152.71) -- (533.05,152.68) ;
\draw [shift={(535.05,152.68)}, rotate = 179.95] [color={rgb, 255:red, 74; green, 144; blue, 226 }  ,draw opacity=1 ][line width=0.75]    (6.56,-1.97) .. controls (4.17,-0.84) and (1.99,-0.18) .. (0,0) .. controls (1.99,0.18) and (4.17,0.84) .. (6.56,1.97)   ;
\draw [shift={(505.65,152.71)}, rotate = 359.95] [color={rgb, 255:red, 74; green, 144; blue, 226 }  ,draw opacity=1 ][line width=0.75]    (6.56,-1.97) .. controls (4.17,-0.84) and (1.99,-0.18) .. (0,0) .. controls (1.99,0.18) and (4.17,0.84) .. (6.56,1.97)   ;

\draw (277.95,127.54) node [anchor=north west][inner sep=0.75pt]  [font=\LARGE]  {$=$};

\end{tikzpicture}

}
	\caption{~~Transforming self-sewing to disjoint sewing}
	\label{fig4}
\end{figure}

Let $\Vbb$ be $C_2$-cofinite. Let $\fx=(y',y'';\xi,\varpi|C|x_1,\dots,x_N;\eta_1,\dots,\eta_N)$ be as in Sec. \ref{lb84}. We have moved the pair of points $y',y''$ to the left of $C$ to indicate that they may be viewed as ``outgoing points". For each $0<|q|<r\rho$, $\mc S_q\fx$ is as in Sec. \ref{lb84}. Associate a grading-restricted $\Vbb^{\otimes N}$-module $\Wbb$ to the marked points $x_1,\dots,x_N$. 

Let $\zeta$ be the standard coordinate of $\Cbb$. Let
\begin{align}
\fq=(\infty,0;1/\zeta,\zeta|\Pbb^1|1;\zeta-1)   \label{eq171}
\end{align}
be a $3$-pointed sphere with marked points $\infty,0,1$ (where $\infty,0$ are the outgoing ones) and local coordinates $1/\zeta,\zeta,\zeta-1$. Associate $\Vbb$ to the incoming point $1$. By Thm. \ref{lb80}, we have a dual fusion product $(\bbs_\fq(\Vbb),\daleth)$ where $\bbs_\fq(\Vbb)$ is a grading-restricted $\Vbb\otimes\Vbb$-module, and the linear functional $\daleth:\Vbb\otimes \bbs_\fq(\Vbb)\rightarrow\Cbb$ is a conformal block (i.e. $\daleth\in\scr T_\fq^*(\Vbb\otimes\bbs_\fq(\Vbb))$). (In fact, $\bbs_\fq(\Vbb)$ is a subspace of Li's \textbf{regular representation} of $\Vbb$ \cite{Li-regular-rep,LS-twisted-regular-rep}, see Exp. \ref{lb92}.) Recall that $\boxtimes_\fq(\Vbb)$ is the contragredient module of $\bbs_\fq(\Vbb)$. 

Let $\fx\#_{q_1,q_2}\fq$ denote the sewing of $\fx$ and $\fq$ along the pairs of points $(y',0)$ and $(y'',\infty)$ with parameters $q_1,q_2\in\Cbb$ satisfying $q_1q_2=q$. Then $\fx_{q_1,q_2}$ is almost equal to $\mc S_q\fx$ except that $\fx_{q_1,q_2}$ has an extra marked point. Associate $\Vbb$ to this extra point. Then by the propagation of conformal blocks \eqref{eq168} (cf. \cite{Zhu-global,FB04,Cod19,DGT1,Gui-propagation}), we have a canonical isomorphism $\scr T_{\mc S_q\fx}^*(\Wbb)\simeq\scr T_{\fx\#_q\fq}^*(\Wbb\otimes\Vbb)$. Thus, Cor. \ref{lb82} implies the following corollary that will be proved in \cite[Thm. 2.2]{GZ3}.

\begin{co}[\textbf{Sewing-factorization}]\label{lb85}
There is a well-defined isomorphism of vector spaces
\begin{gather}
\begin{gathered}
\fk S_q:\scr T_\fx^*\big(\Wbb\otimes\boxtimes_\fq(\Vbb)\big)\rightarrow\scr T_{\mc S_q\fx}^*(\Wbb)
\end{gathered}
\end{gather}
such that for every $w\in\Wbb$, the following converges absolutely (in an appropriate sense) to a conformal block $\fk S_q\uppsi$ associated to $\mc S_q\fx$ and $\Wbb$:
\begin{align}
\fk S_q\uppsi(w)=\wick{\uppsi(w\otimes \c2 -)\cdot \daleth(\idt\otimes q^{L_1(0)} \c2-)}\label{eq169}
\end{align}
\end{co}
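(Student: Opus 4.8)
The plan is to realize $\fk S_q$ as a composite of three maps already at hand: a geometric identification of the sewn surface, the propagation of conformal blocks, and the sewing-factorization isomorphism of Cor.~\ref{lb82} applied with $\fk Y=\fq$ and $\Mbb=\Vbb$. Throughout, fix $q_1,q_2\in\Cbb^\times$ with $q_1q_2=q$; after shrinking the coordinate discs around $y',y''$ and around $0,\infty\in\Pbb^1$ if necessary, the disjoint sewing $\fx\#_{q_1,q_2}\fq$ along $(y',0)$ and $(y'',\infty)$ is defined, and an elementary estimate on the surviving radii (as in Sec.~\ref{lb83}) shows such $q_1,q_2$ exist precisely when $0<|q|<r\rho$.

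First I would make precise the claim, already stated above, that $\fx\#_{q_1,q_2}\fq$ equals $\mc S_q\fx$ with one extra marked point. Away from a neighbourhood of $p_0$ (the image of $1\in\Pbb^1$), the copy of $\fq$ contributes only the annulus obtained by deleting small discs around $0$ and $\infty$ in the $\zeta$-coordinate; a point of that annulus with $\zeta$-value $\zeta_0$ is identified, through the rules $\xi(p')\zeta(p'')=q_1$ and $\varpi(p')\cdot(1/\zeta)(p'')=q_2$, with the point $p'\in W'$ having $\xi(p')=q_1/\zeta_0$ and with the point $p''\in W''$ having $\varpi(p'')=q_2\zeta_0$, whence $\xi(p')\varpi(p'')=q_1q_2=q$. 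Thus threading this annulus between $y'$ and $y''$ performs exactly the direct identification $\xi\varpi=q$ defining $\mc S_q\fx$, and $p_0$, with its local coordinate $\zeta-1$, survives as an extra marked point; we attach $\Vbb$ to it. Carrying this out with all local coordinates kept straight is, in my view, the only genuinely geometric point and the place where care is needed; note that the position of $p_0$ on $\mc S_q\fx$ depends on the chosen splitting $q=q_1q_2$, but propagation is insensitive to where a point is added.

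Granting this, the remaining steps are formal. By the propagation of conformal blocks \eqref{eq168}, restriction of the $\Vbb$-slot at $p_0$ to the vacuum $\id$ is an isomorphism $\mathrm R:\scr T_{\fx\#_{q_1,q_2}\fq}^*(\Wbb\otimes\Vbb)\xrightarrow{\ \sim\ }\scr T_{\mc S_q\fx}^*(\Wbb)$. Cor.~\ref{lb82}, applied with $\fk Y:=\fq$, $\Mbb:=\Vbb$, $M:=2$ and $q_\blt:=(q_1,q_2)$, provides a well-defined isomorphism $\Phi_{q_1,q_2}:\scr T_\fx^*(\Wbb\otimes\boxtimes_\fq(\Vbb))\xrightarrow{\ \sim\ }\scr T_{\fx\#_{q_1,q_2}\fq}^*(\Wbb\otimes\Vbb)$, $\uppsi\mapsto\mc S_{q_1,q_2}(\uppsi\otimes\daleth)$, where $\daleth$ is the conformal block attached to $\fq$ in Thm.~\ref{lb80}; in particular the series in question converges absolutely in the sense of that corollary. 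Setting $\fk S_q:=\mathrm R\circ\Phi_{q_1,q_2}$ gives an isomorphism $\scr T_\fx^*(\Wbb\otimes\boxtimes_\fq(\Vbb))\xrightarrow{\ \sim\ }\scr T_{\mc S_q\fx}^*(\Wbb)$, and unwinding the definitions yields, for $w\in\Wbb$,
\[
\fk S_q\uppsi(w)=\mc S_{q_1,q_2}(\uppsi\otimes\daleth)(w\otimes\id)=\wick{\uppsi(w\otimes\c2-)\cdot\daleth\big(\id\otimes q_1^{L_1(0)}q_2^{L_2(0)}\c2-\big)}.
\]

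It then remains only to replace $q_1^{L_1(0)}q_2^{L_2(0)}$ by $q^{L_1(0)}$ in this expression. For this I would apply the defining $\Vbb$-invariance of $\daleth$ to the global vector field $\zeta\,\partial_\zeta$ on $\Pbb^1$: it is regular at the incoming point $1$ of $\fq$, while at the two outgoing points its local expansions differ by a sign and are of $L(0)$-type (indeed $\zeta\,\partial_\zeta$ is $\zeta\,\partial_\zeta$ in the coordinate $\zeta$ at $0$ but $-u\,\partial_u$ in the coordinate $u=1/\zeta$ at $\infty$). Hence the invariance relation reads $\daleth(\id\otimes L_1(0)s)=\daleth(\id\otimes L_2(0)s)$ for all $s\in\bbs_\fq(\Vbb)$, the contribution at the point $1$ dropping out because $L(-1)\id=L(0)\id=0$. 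Since $L_1(0)$ and $L_2(0)$ commute on $\bbs_\fq(\Vbb)$, exponentiating gives $\daleth(\id\otimes q_1^{L_1(0)}q_2^{L_2(0)}s)=\daleth(\id\otimes(q_1q_2)^{L_1(0)}s)=\daleth(\id\otimes q^{L_1(0)}s)$, which is exactly \eqref{eq169}; as a byproduct $\fk S_q$ does not depend on the splitting $q=q_1q_2$, as it must since the source space does not. The step I expect to cost the most care is the geometric identification in the first paragraph --- matching the two-pair sewing through $\fq$ with the one-parameter self-sewing, all local coordinates included --- while the genuinely hard analytic input, absolute convergence of the sewing, is not proved here at all but imported wholesale from Cor.~\ref{lb82} (hence from the earlier sewing-convergence and propagation results).
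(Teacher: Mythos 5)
Your proposal is correct and follows exactly the route the paper itself takes (propagation isomorphism \eqref{eq168} to trade the extra $\Vbb$-insertion of $\fx\#_{q_1,q_2}\fq$ for $\mc S_q\fx$, then Cor.~\ref{lb82} with $\fk Y=\fq$, $\Mbb=\Vbb$), the paper's own derivation being essentially that one sentence. Your final paragraph, which uses the invariance of $\daleth$ under the section built from $\zeta\,\partial_\zeta$ to show $\daleth(\id\otimes L_1(0)\,\cdot)=\daleth(\id\otimes L_2(0)\,\cdot)$ and hence to replace $q_1^{L_1(0)}q_2^{L_2(0)}$ by $q^{L_1(0)}$, is a correct filling-in of a step the paper leaves implicit, and it also correctly explains the independence of the splitting $q=q_1q_2$.
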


\begin{rem}
If $\Vbb$ is $C_2$-cofinite and rational, using the universal property in Thm. \ref{lb55}, it is not hard to see that
\begin{align}
\bbs_\fq(\Vbb)=\bigoplus_{\Mbb\in\mc E}\Mbb'\otimes\Mbb
\end{align}
where $\mc E$ is a set of representatives of equivalence classes of irreducible $\Vbb$-modules, and 
\begin{gather}
\begin{gathered}
\daleth:\Vbb\otimes\Big(\bigoplus_{\Mbb\in\mc E}\Mbb'\otimes\Mbb\Big)\rightarrow\Cbb\\
v\otimes m'\otimes m\mapsto \<Y(v,1)m',m\> 
\end{gathered}
\end{gather}
satisfies $\daleth(\idt\otimes m'\otimes m)=\bk{m',m}$ if $m\in\Mbb\in\mc E$ and $m'\in\Mbb'$. Then \eqref{eq169} equals \eqref{eq170}.
\end{rem}

We emphasize that the \textbf{factorization formula}
\begin{align}
\boxed{~\dim \scr T_\fx^*\big(\Wbb\otimes\boxtimes_\fq(\Vbb)\big)=\dim \scr T_{\mc S_q\fx}^*(\Wbb)~}
\end{align}
implied by Cor. \ref{lb85} generalizes \eqref{eq165} and computes the dimensions of spaces of conformal blocks of $\mc S_q\fx$ in terms of those of $\fx$.

\subsection{Construction of the dual fusion product $\bbs_\fx(\Wbb)$}
An explicit construction of the dual fusion product $\bbs_\fx(\Wbb)$ is given in this paper. For the reader’s convenience, we briefly summarize it here.

Choose integers $N\geq 1,M\geq 0$. Consider an $(M,N)$-pointed compact Riemann surface with local coordinates
\begin{align}
\fk X=(y_1,\dots,y_M;\theta_1,\dots,\theta_M|C|x_1,\dots,x_N;\eta_1,\dots,\eta_N)
\end{align}
Namely, $x_\blt,y_\star$ are distinct marked points of the compact Riemann surface $C$. We view $x_\blt$ as the incoming points and $y_\star$ as the outgoing points. We assume that each component of $C$ intersects $\{x_1,\dots,x_N\}$. Each $\eta_i$ resp. $
\theta_j$ is a local coordinate of $C$ at $x_i$ resp. $y_j$.

Let $\Vbb$ be $C_2$-cofinite, and let $\Wbb$ be a grading-restricted $\Vbb^{\otimes N}$-module. Write
\begin{align}
Y_i(v,z)=Y(\idt\otimes\cdots\otimes v\otimes\cdots\otimes \idt,z)
\end{align}
where $v\in\Vbb$ is in the $i$-th component of $\idt\otimes\cdots\otimes v\otimes\cdots\otimes \idt$. The goal of this paper is to give an explicit construction of the dual fusion product $\bbs_\fx(\Wbb)$ (which is a grading-restricted $\Vbb^{\otimes M}$-module) and prove Thm. \ref{lb80}. Note that the existence of dual fusion products satisfying Theorem \ref{lb80} can be readily established using the fact that any left exact functor from a finite linear category to the category of finite-dimensional vector spaces is representable (cf. \cite[Cor. 1.10]{DSPS19-balanced}). However, the explicit construction provided in this paper will be essential in the third part of the series for proving the sewing-factorization theorem.

Our definition of $\bbs_\fx(\Wbb)$ as a vector space is due to Kong and Zheng \cite{KZ-conformal-block}: We set
\begin{align*}
\bbs_\fx(\Wbb)=\varinjlim_{a_1,\dots,a_M\in\Nbb}\scr T_{\fx,a_1,\dots,a_M}^*(\Wbb)
\end{align*}
where $(\scr T_{\fx,a_1,\dots,a_M}^*(\Wbb))_{a_1,\dots,a_M\in\Nbb}$ is an increasing system of subspaces of $\Wbb^*$ described as follows. For each grading-restricted $\Vbb^{\otimes M}$-module $\Mbb$ and each $a_1,\dots,a_M\in\Nbb$, if we let
\begin{align*}
\Omega_{a_\star}(\Mbb)=\{m\in \Mbb:Y_j(v)_km=0~\forall 1\leq j\leq M,\text{ homogeneous }v\in\Vbb,k\geq\wt(v)+a_j\}
\end{align*}
then for each conformal block $\upomega:\Wbb\otimes\Mbb\rightarrow\Cbb$ associated to $\fx$ and each $m\in\Omega_{a_\star}(\Mbb)$, $\upomega(-\otimes m):\Wbb\rightarrow\Cbb$ is a linear functional. We expect that all such linear functionals form the space $\scr T_{\fx,a_\star}^*(\Wbb)=\scr T_{\fx,a_1,\dots,a_M}^*(\Wbb)$. So we define $\scr T_{\fx,a_\star}^*(\Wbb)$ to be the set of linear functionals $\upphi:\Wbb\rightarrow\Cbb$ satisfying an ``invariance condition" that is strong enough and is satisfied by all linear functionals of the form $\upomega(-\otimes m)$. Such a linear functional is called a \textbf{partial conformal block of multi-level $a_1,\dots,a_M$} associated to $\fk X$ and $\Wbb$, because it is a conformal block when $M=0$. Roughly speaking, this invariance condition says that the actions of $Y_i(v,z)$ on $\Wbb$, for all $1\leq i\leq N$, can be extended holomorphically to the same holomorphic section on $C-\{x_\blt,y_\star\}$ which has the desired order of poles (determined by $a_j$) at each $y_j$. We refer the readers to Def. \ref{lb16} for the precise definition which involves the sheaf $\scr V_{\fx,a_\star}=\scr V_{\fx,a_1,\dots,a_M}$ on $C$ (cf. Def. \ref{lb86}), a generalization of the vertex algebra bundles in \cite[Ch. 6]{FB04}. 

$\scr V_{\fx,0,\dots,0}$ and $\scr T_{\fx,0,\dots,0}^*(\Wbb)$ were introduced in \cite[Sec. 7.2]{NT-P1_conformal_blocks} and \cite[Sec. 6.2]{DGT2} to study the factorization property for conformal blocks of $C_2$-cofinite and rational VOAs. When $a_1,\dots,a_M$ are not necessarily equal to $0$ but $C$ has genus-$0$, $\scr T_{\fx,a_\star}^*(\Wbb)$, or rather its (pre-)dual space $\scr T_{\fx,a_\star}(\Wbb)$, has appeared much earlier:

\begin{eg}
Let $\zeta$ be the standard coordinate of $\Cbb$. Choose $z\in\Cbb^\times=\Cbb-\{0\}$. Assume that $\fk X$ is the $3$-pointed sphere
\begin{align}
\fk P_z=(\infty;1/\zeta| \Pbb^1|z,0;\zeta-z,\zeta)
\end{align}
with incoming points $z,0$ and outgoing one $\infty$. Choose grading-restricted $\Vbb$-modules $\Wbb_1,\Wbb_2$. Then $\Wbb_1\otimes\Wbb_2$ is a grading-restricted $\Vbb\otimes\Vbb$-module. In the language of the vertex tensor category theory by Huang-Lepowsky \cite{HL-tensor-1,HL-tensor-2,HL-tensor-3,Hua-tensor-4} and Huang-Lepowsky-Zhang \cite{HLZ1,HLZ2}-\cite{HLZ8}, $\bbs_{\fk P_z}(\Wbb_1\otimes \Wbb_2)$ is equal to the $P(z)$ dual fusion (tensor) product $\Wbb_1\bbs_{P(z)}\Wbb_2$ in the category of grading-restricted $\Vbb$-modules. Moreover, $\scr T_{\fk P_z,a}^*(\Wbb_1\otimes\Wbb_2)$ is the set of all $\uplambda\in(\Wbb_1\otimes\Wbb_2)^*$ satisfying the $P(z)$-compatibility condition (cf. \cite[Sec. 5.2]{HLZ4} the paragraph after Rem. 5.33) and satisfying that for each homogeneous $v\in \Vbb$,
\begin{align}
Y_{P(z)}'(v)_k\cdot\uplambda=0\qquad\forall~k\geq\wt(v)+a
\end{align}
where $Y_{P(z)}'(v,z)=\sum_{k\in\Zbb}Y_{P(z)}'(v)_kz^{-k-1}$ is defined in \cite[Def. 5.3]{HLZ4}. (Note that the notation $Y'$ defined in Def. \ref{lb59} and used extensively in Ch. \ref{lb87} has a different meaning and, in particular, satisfies an \emph{upper} truncation property.)
\end{eg}

\begin{eg}\label{lb92}
Let $\fk X$ be $\fq=\eqref{eq171}$ and $\Wbb=\Vbb$. Then each $\scr T_{m,n}^*(\Vbb)$ is the $\Omega_{m,n}$-subspace of Li's regular representation of $\Vbb$ \cite{Li-regular-rep,LS-twisted-regular-rep}. Note that $\bbs_\fq(\Vbb)=\varinjlim_{n\in\Nbb}\scr T_{\fq,n,n}^*(\Vbb)$.  Let 
\begin{gather}
\wtd O_n(\Vbb)=\Span_\Cbb\big\{\Res_{z=0}~z^{-2n-2}Y((1+z)^{L(0)+n}u,z)vdz:u,v\in\Vbb  \big\}
\end{gather}
Then the (pre-)dual space $\scr T_{\fq,n,n}(\Vbb)$ of $\scr T_{\fq,n,n}^*(\Vbb)$, denoted by $\wtd A_n(\Vbb)$, is equal to
\begin{gather}
\scr T_{\fq,n,n}(\Vbb)=\wtd A_n(\Vbb)=\Vbb/\wtd O_n(\Vbb)
\end{gather}
When $n=0$, $\wtd A_0(\Vbb)$ (as a vector space) is equal to the Zhu algebra $A(\Vbb)=A_0(\Vbb)$ introduced by Zhu in \cite{Zhu-modular-invariance}. (That $\scr T_{\fq,0,0}(\Vbb)$ equals $A(\Vbb)$ was shown in \cite[Prop. 7.2.2 and A.2.7]{NT-P1_conformal_blocks}.) In general, we have
\begin{align}
A_n(\Vbb)=\wtd A_n(\Vbb)/\{L(0)v+L(-1)v:v\in\Vbb\}
\end{align}
where $A_n(\Vbb)$ is the level $n$ Zhu algebra introduced by Dong-Li-Mason in \cite{DLM-Zhu}. See 
\cite{Li-regular-AnV,Li-regular-bimodules} for details.
\end{eg}

Next, we explain how to define a weak $\Vbb^{\otimes M}$-module structure on $\bbs_\fx(\Wbb)$. We need to define $Y_j(v)_n=Y(\idt\otimes\cdots\otimes v\otimes\cdots\otimes\idt)_n$ on $\bbs_{\fx}(\Wbb)$ for each $v\in \Vbb$, show that $Y_j$ satisfies the Jacobi identity in the definition of weak VOA modules, and show that $Y_j$ commutes with $Y_k$ if $j\neq k$. This task is one of the most important and non-trivial steps towards our ultimate goal of proving the sewing-factorization theorem. It clearly has its counterparts in the vertex tensor category theory by Huang-Lepowsky and Huang-Lepowsky-Zhang (cf. \cite[Ch. 6]{HLZ4}) and in the theory of regular representations by Li (cf. \cite{Li-regular-rep}). But it also plays a role similar to that of constructing an $A(\Vbb)^{\otimes M}$-module structure on $\scr T_{\fx,0,\dots,0}(\Wbb)$ in the proof of the factorization property for conformal blocks of $C_2$-cofinite and rational VOAs, cf. \cite[Sec. 7.2]{NT-P1_conformal_blocks} and \cite[Sec. 6]{DGT2}. 

\cite{NT-P1_conformal_blocks,DGT2,KZ-conformal-block} all use some universal algebra of $\Vbb$ to treat this problem. In our paper, instead of using any associative algebra of $\Vbb$ (either the universal algebra or the higher level Zhu algebras), we carry out this task by using the (single and double) propagations of partial conformal blocks, which is similar to the methods in \cite{Zhu-global} and especially in \cite{Gui-propagation}. One may view our method as an analytic-geometric and higher-genus version of Huang-Lepowsky-Zhang's approach in \cite{HLZ4} and Li's approach in \cite{Li-regular-rep}. (Note that Nagatomo-Tsuchiya's approach also uses (single and double) propagations. See for example  \cite[Sec. 5.5]{NT-P1_conformal_blocks}, some arguments of which are used in the proof of \cite[Prop. 7.7.2.]{NT-P1_conformal_blocks}.) 

To explain our method, we first consider the case that $\fk X$ is $\fq=\eqref{eq171}$. Then $\Wbb$ is a $\Vbb$-module and is associated to the incoming marked point $1$. We need to define $Y_+(v,z)=Y(v\otimes \idt,z)$ and $Y_-(v,z)=Y(\idt\otimes v,z)$ if $v\in\Vbb$, where the vertex operations $Y_+$ and $Y_-$ are associated to the outgoing marked points $\infty$ and $0$ respectively. Choose any $\upphi\in\bbs_\fq(\Wbb)$, which is a linear functional $\Wbb\rightarrow\Cbb$. Using the strong residue theorem (Thm. \ref{lb88}), one can show that for each $w\in\Wbb$, the formal Laurent series
\begin{align}
\<\upphi, Y_\Wbb(v,z-1)w\> \label{eq172}
\end{align}
can be extended to a holomorphic function $\wr\upphi(v,w)$ on $\Cbb^\times-\{1\}=\Cbb-\{0,1\}$ with finite poles at $0,1,\infty$. Clearly $\wr\upphi(v,w)$ is bilinear with respect to $v,w$. Choose circles $C_-,C_+$ centered at $0$ with radii $<0$ and $>0$ respectively. Then
\begin{gather*}
\<Y_-(v)_n\upphi,w\>=\oint_{C_-}\wr\upphi(v,w)\cdot z^ndz\qquad
\<Y_+'(v)_n\upphi,w\>=\oint_{C_+}\wr\upphi(v,w)\cdot z^ndz
\end{gather*}
where $Y_+'(v,z)=\sum_n Y_+'(v)_nz^{-n-1}=Y_+(e^{zL(1)}(-z^{-2})^{L(0)}v,z^{-1})$. Now, if $v_1,v_2\in\Vbb$, then the expressions
\begin{subequations}\label{eq173}
\begin{gather}
\wr\upphi(v_1,Y_\Wbb(v_2,z_2-1)w)_{z_1}\qquad (\text{when }|z_2-1|\text{ is small})\\
\wr\upphi(Y(v_2,z_2-z_1)v_1,w)_{z_1}\qquad (\text{when }|z_2-z_1|\text{ is small})
\end{gather}
\end{subequations}
can be extended to the same holomorphic function $\wr^2\upphi(v_2,v_1,w)$ on $\Conf^2(\Cbb^\times-\{1\})=\{(z_1,z_2):z_1,z_2\in\Cbb^\times-\{1\},z_1\neq z_2\}$. By calculating some contour integrals of $\wr^2\upphi(v_2,v_1,w)$, one can show that $Y_\pm(v)_n\upphi$ belongs to $\bbs_\fq(\Wbb)$, that $Y_-$ and $Y_+$ satisfy the Jacobi identity, and that $Y_-$ commutes with $Y_+$.

For a general $\fk X$, the idea is the same, except that one needs more effort to prove that \eqref{eq172} can be extended to a global holomorphic section of a suitable holomorphic vector bundle (of possibly infinite rank) on $C-\{x_\blt,y_\star\}$, and that \eqref{eq173} can be extended to a global holomorphic section on $\Conf^2(C-\{x_\blt,y_\star\})$. These processes are called the \textbf{propagations of partial conformal blocks} and will be studied systematically in Ch. \ref{lb89}.

We end this introduction with a few remarks.

\begin{rem}
In this section, we have assumed that $\Vbb$ is $C_2$-cofinite for simplicity. But $\bbs_\fx(\Wbb)$ can be constructed without assuming that $\Vbb$ is $C_2$-cofinite. (Therefore, our geometric interpretation of $A_n(\Vbb)$ is not restricted to $C_2$-cofinite VOAs.) In the main body of this paper, we only assume that $\Vbb=\bigoplus_{n\in\Nbb}\Vbb(n)$ with $\dim \Vbb(n)<+\infty$. We consider a vector space $\Wbb$ with mutually commuting vertex operations $Y_1,\dots,Y_N$ such that $(\Wbb,Y_i)$ is a weak $\Vbb$-module for each $i$. Then $(\Wbb, Y_1,\dots,Y_N)$ is called a \textbf{weak $\Vbb^{\times N}$-module}. Similar to admissible (i.e. $\Nbb$-gradable) $\Vbb$-modules, an admissible (i.e. $\Nbb^N$-gradable) $\Vbb^{\times N}$-module is a weak $\Vbb^{\times N}$-module $\Wbb$ with $\Nbb^N$-grading $\Wbb=\bigoplus_{n_1,\dots,n_N\in\Nbb}\Wbb(n_1,\dots,n_N)$ compatible with the vertex operations $Y_1,\dots,Y_N$. See Def. \ref{grading3}. If a grading can be chosen such that each $\Wbb(n_1,\dots,n_N)$ has finite dimension, we call $\Wbb$ a \textbf{finitely-admissible $\Vbb^{\times N}$-module}. We shall study the propagation of partial conformal blocks for finitely-admissible $\Vbb^{\times N}$-modules. This finite-dimension condition allows one to sew conformal blocks. (As in \cite{Gui-propagation}, we understand propagation as a sewing construction (cf. Fig. \ref{fig1}) followed by an analytic continuation.) And we shall prove that $\bbs_\fx(\Wbb)$ is a weak $\Vbb^{\times M}$-module. 
\end{rem}

\begin{rem}
We have fixed local coordinates $\eta_\blt$ and $\theta_\star$ at the incoming points $x_\blt$ and the outgoing points $y_\star$ respectively. But $\bbs_\fx(\Wbb)$ can be realized in a coordinate-free way with the help of Huang's change-of-coordinate formulas \cite{Hua97}. The independence of $\theta_\star$ can be realized easily using the universal property in Thm. \ref{lb80}. Thus, we shall fix $\theta_\star$ and define $\bbs_\fx(\Wbb)$ as a set of linear functionals on $\scr W_\fx(\Wbb)$, a coordinate-free version of $\Wbb$. $\scr W_\fx(\Wbb)$ is a vector bundle over a single point such that for each choice of $\eta_\blt$ we have a trivialization $\mc U(\eta_\blt):\scr W_\fx(\Wbb)\xrightarrow{\simeq}\Wbb$, and that the transition functions are given by the exponentials of certain Virasoro operators. See Def. \ref{lb8} for details. The readers should notice that $\scr W_\fx(\Wbb)$ \emph{relies on the choice of grading of $\Wbb$}.
\end{rem}

The task of each section is already indicated by its title. In particular, in Ch. \ref{Ch1} we review the basic properties of conformal blocks and generalize some of the results to the setting considered in this paper. In Ch. \ref{lb89}, we establish the propagation of partial conformal blocks, which, as mentioned earlier, is used crucially in the construction of the dual fusion products in Ch. \ref{Ch3}. A technical subtlety concerning modules for products of $\Vbb$ will be addressed in Appendix Ch. \ref{lb90}. A geometric construction of higher level Zhu algebras using dual fusion products will be discussed in Appendix Ch. \ref{lb87}.

In part II of this series, we will study the connections on sheaves of conformal blocks associated to $C_2$-cofinite VOAs and holomorphic families of compact Riemann surfaces. We will address several convergence issues concerning conformal blocks. In part III we will prove the sewing-factorization theorem.

\subsection*{Acknowledgment}

We are grateful to Liang Kong and Hao Zheng for many enlightening conversations. In particular, we owe to them the definition of the vector space $\bbs_\fx(\Wbb)$ (the dual fusion product). We would also like to thank 
Chiara Damiolini, Angela Gibney, Yi-Zhi Huang, Haisheng Li, and Robert McRae for helpful discussions. B.G. is supported by NSFC Grant 12401159.

\begin{subappendices}

\subsection{Notations}
\begin{itemize}
    \item $\Nbb=\{n\in \Zbb:n\geq 0\}$, $\Zbb_+=\{n\in \Zbb:n\geq 1\}$.
    \item $\delta_{i,j}$ is the Kronecker symbol, which means $\delta_{i,j}=0$ if $i\ne j$ and $\delta_{i,j}=1$ if $i=j$.
    \item All neighborhoods are open. The closure of a subset $E$ is denoted by $E^\cl$. 
    \item All vector spaces are over $\Cbb$.
    \item If $X$ is a set and $Y\subset X$ is a subset, then $X-Y$ denotes $\{x\in X:x\notin Y\}$. And \index{Conf@$\Conf^n(X)$}
\begin{align}
\Conf^n(X)=\{(x_1\dots,x_n)\in X^n:x_i\neq x_j\text{ if }i\neq j\}  \label{eq69}
\end{align}
    \item If $r>0$, then $\MD_r:=\{z\in \Cbb:\vert z\vert <r\}$ is the open disc with radius $r$ and $\MD_r^\times :=\MD_r-\{0\}$. When there are several discs,  we write \index{D@$\mc D_r,\mc D_r^\times,\mc D_{r_\blt},\mc D_{r_\blt}^\times$}
\begin{align}
\mc D_{r_\blt}=\mc D_{r_1}\times\cdots\times\mc D_{r_N}\qquad \mc D_{r_\blt}^\times=\mc D_{r_1}^\times\times\cdots\times\mc D_{r_N}^\times
\end{align}
 \item If $\MF$ is a sheaf on the topological space $X$, then $\MF_x$ denotes the stalk of $\MF$ at $x\in X$ and $H^q(X,\MF)$ denotes the $q$-th sheaf cohomology group of $X$. In particular, $H^0(X,\mc F)=\mc F(X)$.
    \item If $X$ is a complex manifold, then $\MO_X$ denotes the structure sheaf of $X$. $\MO_{X,x}$ denotes the stalk of $\MO_X$ at $x$ and $\fm_{X,x}=\{f\in\mc O_{X,x}:f(x)=0\}$ is the maximal ideal of $\MO_{X,x}$. Then
\begin{align*}
(\mc O_{X,x}/\fk m_{X,x})\simeq\Cbb.
\end{align*}
If $\MF$ is an $\MO_X$-module and $x\in X$ then $\mc F_x$ \index{Fx@$\mc F_x$, the stalk of $\mc F$ at $x$} \index{Fx@$\mc F\lvert_x=\mc F\lvert x=\mc F_x/{\fk m_{X,x}\cdot\mc F_x}$}
\begin{align}\label{eq6}
    \MF\vert_x=\frac{\MF_x}{\fm_{X,x}\cdot \MF_x}\simeq\mc F_x\otimes_{\mc O_{X_x}}(\mc O_{X,x}/\fk m_{X,x})
\end{align}
    is the fiber of $\MF$ at $x$, which is a vector space. The residue class of $s\in\mc F$ in $\mc F|_x$ is denoted by $s(x)$ or $s|_x$:
\begin{align}
s(x)\equiv s|_x\qquad\in\mc F|_x  \label{eq9}
\end{align}
Equivalently,
\begin{align}
s(x)=s\otimes 1\in \mc F_x\otimes(\mc O_{X,x}/\fk m_{X,x})
\end{align}

    \item If $\ME$ is an $\mc O_X$-module (for example, a holomorphic vector bundle of finite rank on a complex manifold $X$), then $\ME^\vee$ and $\ME^*=\shom_{\mc O_X}(\mc E,\mc O_X)$ both denote the \textbf{dual sheaf} of $\ME$, which is the dual bundle of $\mc E$ when $\mc E$ is a vector bundle. Then for each open $U\subset X$,
\begin{align}
\phi\in\mc E^*(U)\qquad\Longleftrightarrow \qquad\phi:\mc E|_U\rightarrow \mc O_U\text{ is an $\mc O_U$-module morphism}   \label{eq18}
\end{align}
Similarly, if $V$ is a vector space, then $V^\vee$ and $V^*$ both denote the dual vector space of $V$.

If $\mc E$ and $\mc F$ are $\mc O_X$-modules, then $\mc E\otimes_{\mc O_X}\mc F$ is often written as $\mc E\otimes\mc F$ for short. (Note that when $\mc E,\mc F$ are vector bundles, then $\mc E\otimes \mc F$ is their tensor product bundle.)

\item Let $\mc E$ be an $\mc O_X$-module. Let $\phi\in\mc E^*(X)$, i.e. $\phi$ is an $\mc O_X$-morphism $\mc E\rightarrow\mc O_X$. For each $x\in X$, let
\begin{align}\label{eq19}
\phi(x)\equiv\phi|_x:=\phi\otimes\idt: \mc E_x\otimes (\mc O_{X,x}/\mk_{X,x})\rightarrow \mc O_{X,x}\otimes (\mc O_{X,x}/\mk_{X,x})\simeq\Cbb
\end{align}
Thus $\phi(x)$ is a linear functional on $\mc E|_x$. Equivalently, $\phi(x)$ is defined by descending $\phi:\mc E_x\rightarrow\mc O_{X,x}$ to $\mc E_x/{\mk_{X,x}\mc E_x}\rightarrow\Cbb$.

    \item Suppose that $S$ is a closed submanifold of $X$ with codimension $1$. Then for each $k\in \Zbb$, $\mc O_X(kS)$ is the $\mc O_X$-submodule of $\MO\vert_{X- S}$ consisting of sections of $\MO\vert_{X- S}$ with poles of order $\leq k$ at $S$. Denote 
    \begin{align*}
        \MO_X(\blt S):=\varinjlim_{k\in \Nbb}\MO_X(k S).
    \end{align*}
If $\ME$ is an $\MO_X$-module, we set \index{ES@$\mc E(kS),\mc E(\blt S)$}
    \begin{gather*}
\mc E(kS)=\mc E\otimes\mc O_X(kS)\qquad \mc E(\blt S)=\varinjlim_{k\in \Nbb}\ME(k S)
    \end{gather*}
    If $\mc E$ is locally free (i.e. a vector bundle), then the sections of $\mc E(\blt S)$ (resp. $\mc E(k S)$) can be viewed as sections of $\mc E|_{X- S}$ with finite poles (resp. with poles of orders at most $k$) at $S$.
    
    \item If $\pi:X\rightarrow Y$ is a holomorphic map between complex manifolds and $\ME$ is an $\MO_X$-module, then $\pi_*(\ME)$ denotes the pushforward of $\ME$. If $\MF$ is an $\MO_Y$-module, then $\pi^*(\MF)=\MF\otimes_{\MO_Y}\MO_X$ denotes the pullback of $\MF$. Moreover, suppose $\MF$ is a holomorphic vector bundle over $Y$, whose trivialization on $U\subset Y$ is 
    $$
    f:\MF\vert_U\xrightarrow{\simeq} F\otimes_\Cbb \MO_U,
    $$
    where $F$ is a vector space. Then the pullback $\pi^*\MF$ has a natural vector bundle structure, whose trivialization on $\pi^{-1}(U)\subset Y$ is 
    $$
    \pi^*f:\pi^*(\MF)\vert_{\pi^{-1}(U)}\xrightarrow{\simeq} F\otimes \MO_{\pi^{-1}(U)}.
    $$
    \item Suppose $W$ is a vector space and $z$ is a formal variable. Then 
    $$
    \begin{aligned}
    W[z]&:=\{\sum_{n=0}^N w_n z^n:w_n\in W,n\in \Nbb\}\\
    W[[z]]&:=\{\sum_{n\in \Nbb}w_nz^n:w_n\in W\}\\
    W((z))&:=\{\sum_{n=-N}^{\infty}w_nz^n:w_n\in W,N\in \Nbb\}\\
    W[[z^{\pm 1}]]&:=\{\sum_{n\in \Zbb}w_nz^n:w_n\in W\}\\
    W((z,w))&:=\{\sum_{k,l\geq N}a_{k,l}z^k w^l:a_{k,l}\in W,N\in \Zbb\}
    \end{aligned}
    $$
    For each $w=\sum_{n\in \Zbb}w_nz^n\in W[[z^{\pm 1}]]$, 
    $$
    \Res_{z=0}wdz:=w_{-1}.
    $$
    If $\Wbb$ is a commutative ring, then so is $W((z,w))$.
\item We use frequently the symbol \index{n@$n_\blt=(n_1,\dots,n_N).$}
\begin{align}\label{eq33}
n_\blt=(n_1,\dots,n_N).
\end{align}
\item Let $X$ be a complex manifold. Choose a formal power series 
\begin{align*}
f=\sum_{n_\blt\in\Nbb^N}a_{n_\blt}\cdot z_1^{n_1}\cdots z_N^{n_N}\qquad\in\scr O(X)[[z_1,\dots,z_N]]
\end{align*}
where each $a_{\blt}\in\scr O(X)$. Let $\Omega$ be an open subset of $\Cbb^N$. We say that \index{00@Converging absolutely and locally uniformly (a.l.u.)}
\begin{align}\label{eq34}
f\textbf{ converges absolutely and locally uniformly (a.l.u.) on } X\times\Omega
\end{align}
if for every compact subsets $K\subset X$ and $\Gamma\subset \Omega$ we have
\begin{align*}
\sup_{x\in K,z_\blt\in\Gamma}\sum_{n_\blt\in\Nbb^N} |a_{n_\blt}(x)|\cdot |z_1^{n_1}\cdots z_N^{n_N}|<+\infty
\end{align*}
\end{itemize}

\end{subappendices}

\section{Vertex operator algebras and conformal blocks}
\label{Ch1}
\subsection{Vertex operator algebras and their modules}
In this article, unless otherwise stated, we assume that a vertex operator algebra (VOA) has $\Nbb$-grading $\Vbb=\bigoplus_{n\in \Nbb}\Vbb(n)$ and $\dim \Vbb(n)<+\infty$. The \textbf{vacuum vector} is denoted by $\idt$, and the \textbf{conformal vector} \index{c@$\cbf$, the conformal vector} is denoted by $\cbf$. The vertex operator is written as $Y(v,z)=\sum_{n\in\Zbb}Y(v)_nz^{-n-1}$. The Virasoro operators are $L(n)=Y(\cbf)_{n+1}$. Recall that $\Vbb$ is called \textbf{$C_2$-cofinite} \index{00@$C_2$-cofinite VOAs}   if $\dim \Vbb/C_2(\Vbb)<\infty$, where $C_2(\Vbb)=\Span\{Y(u)_{-2}v:u,v\in \Vbb\}$.

We recall the notions of weak modules and admissible modules. Let $\Wbb$ be a vector space over $\Cbb$ with a linear map 
$$
\begin{aligned}
    \Vbb&\rightarrow (\End\Wbb)[[z^{\pm 1}]]\\
    u&\mapsto Y_\Wbb(u,z)=\sum_{n\in \Zbb}Y_\Wbb(u)_nz^{-n-1}.
\end{aligned}
$$
\begin{df}\label{finitelyadmissible}
We say $(\Wbb,Y_\Wbb)$ is a \textbf{weak $\Vbb$-module} if \index{00@Weak $\Vbb$-modules, finitely admissible $\Vbb$-modules} it satisfies:
        \begin{enumerate}
            \item [(a)] \textbf{Lower truncation:} For each $u\in \Vbb,w\in \Wbb$, $Y_\Wbb(u,z)w\in \Wbb((z))$.
            \item [(b)] \textbf{Vacuum:} $Y_\Wbb(\ibf,z)=\ibf_\Wbb$.
            \item [(c)] \textbf{Jacobi identity:} For any $u,v\in \Vbb$ and $m,n,h\in \Zbb$, 
        \begin{equation}\label{jacobi}
        \begin{aligned}
        &\sum_{l\in \Nbb}\binom{m}{l}Y_\Wbb(Y(u)_{n+l}\cdot v)_{m+h-l}\\
        =&\sum_{l\in \Nbb}(-1)^l \binom{n}{l}Y_\Wbb(u)_{m+n-l}Y_\Wbb(v)_{h+l}-\sum_{l\in \Nbb}(-1)^{l+n}\binom{n}{l}Y_\Wbb(v)_{n+h-l}Y_\Wbb(u)_{m+l}.
        \end{aligned}
        \end{equation}
        \end{enumerate}
        Set $L(n)=Y_\Wbb(\cbf)_{n+1}$ for a weak $\Vbb$-module $\Wbb$.

A weak $\Vbb$-module $\Wbb$ is called an \textbf{admissible module} if there exists a diagonalizable operator $\widetilde{L}(0)$ on $\Wbb$ with eigenvalues in $\Nbb$, satisfying the grading property
\begin{align}
[\widetilde{L}(0),Y_\Wbb(u)_n]=Y_\Wbb(L(0)u)_n-(n+1)Y_\Wbb(u)_n   \label{eq150}
\end{align}
We fix grading $\Wbb=\bigoplus_{n\in\Nbb}\Wbb(n)$ where
\begin{align}
\Wbb(n)=\{w\in\Wbb:\wtd L(0)w=nw\}
\end{align}
Moreover, if each eigenspace of $\widetilde{L}(0)$ is finite dimensional, then $\Wbb$ is called a \textbf{finitely-admissible module}.

\end{df}

\begin{df}
    Given an admissible $\Vbb$-module $\Wbb=\bigoplus_{n\in \Nbb} \Wbb(n)$, one can define its contragredient module $\Wbb^\prime$\index{00@Contragredient modules} as follows. As a graded vector space,
    \begin{equation}\label{grading2}
    \Wbb^\prime=\bigoplus_{n\in \Nbb}\Wbb(n)^*.
    \end{equation}
    The vertex operator $Y_{\Wbb^\prime}$ is defined by 
    \begin{align}\label{contra1}
    \<Y_{\Wbb^\prime}(v,z)w^\prime,w\>=\<w^\prime ,Y_\Wbb(e^{zL(1)}(-z^{-2})^{L(0)}v,z^{-1})w\>
    \end{align}
    for each $v\in \Vbb,w\in \Wbb,w^\prime\in \Wbb^\prime$.    Define $\wtd{L}(0)$ on $\Wbb^{\prime}$ by setting $\wtd{L}(0)w=nw$ for $w\in \Wbb(n)^*$. Then $\Wbb^\prime$ becomes an admissible $\Vbb$-module. We call $(\Wbb^\prime,Y_{\Wbb^\prime})$ the \textbf{contragredient module} of $\Wbb$.
\end{df}
 
 If $\Wbb$ is furthermore finitely admissible, then so is $\Wbb^\prime$.

\begin{df}\label{lb59}
    Suppose $(\Wbb,Y_\Wbb)$ is a weak $\Vbb$-module. Define $Y_\Wbb^\prime(v,z)\in\End(\Wbb)[[z^{\pm1}]]$ and $Y_\Wbb'(v)_k\in\End(\Wbb)$ by \index{Y@$Y_\Wbb^\prime(v,z),Y_\Wbb^\prime(v)_k$}
    \begin{gather}
        Y_\Wbb^\prime(v,z)=\sum_{n\in\Zbb}Y_\Wbb'(v)_n\cdot z^{-n-1}=Y_\Wbb\big(e^{zL(1)}(-z^{-2})^{L(0)}v,z^{-1}\big)  \label{eq138}
    \end{gather}
If $v$ is homogeneous, it is easy to compute that
\begin{align}
    Y_\Wbb'(v)_n =\sum_{k\in \Nbb} \frac{(-1)^{\wt(v)}}{k!}Y_\Wbb\big(L(1)^k v\big)_{-n-k-2+2\wt(v)}   \label{eq139}
\end{align}
By \eqref{contra1}, if $\Wbb$ is admissible, then $Y_{\Wbb'}'(v)_n$ is the transpose of $Y_\Wbb(v)_n$.
\end{df}

\subsection{Admissible $\Vbb_1\times \cdots \times \Vbb_N$-modules}

Let $\Vbb_1,\cdots,\Vbb_N$ be VOAs. In this section, we introduce the notion of finitely-admissible $\Vbb_1\times\cdots\times\Vbb_N$-modules, which is convenient for proving many analytic properties of conformal blocks. See Thm. \ref{lb46} for a relationship between such modules and grading-restricted generalized $\Vbb_1\otimes\cdots\otimes\Vbb_N$-modules when $\Vbb_1,\dots,\Vbb_N$ are $C_2$-cofinite.

\begin{df}\label{lb33}
Suppose $\Wbb$ is a weak $\Vbb_i$-module with vertex operator $Y_{\Wbb,i}$ (or $Y_i$ for short if the context is clear) for each $1\leq i\leq N$. \index{YW@$Y_{\Wbb,i}=Y_i$} Moreover, if $v\in\Vbb_i$, we set 
$$
Y_{\Wbb,i}(v,z)=\sum_{n\in \Zbb}Y_{\Wbb,i}(v)_n z^{-n-1}
$$
and $L_i(n):=Y_i(\cbf)_{n+1}$. $\Wbb$ is called a \textbf{weak $\Vbb_1\times \cdots \times \Vbb_N$-module} if $[Y_i(u)_m,Y_j(v)_n]=0$ for all $1\leq i\ne j\leq N$ and $m,n\in \Zbb,u\in \Vbb_i,v\in \Vbb_j$.\index{00@Weak $\Vbb_1\times \cdots \times \Vbb_N$-modules} In particular, if $i\ne j$, then $L_i(m)$ and $L_j(n)$ commute with each other.

If $\Mbb$ is also a weak $\Vbb_1\times\cdots\times\Vbb_N$-module, then a linear map $T:\Wbb\rightarrow\Mbb$ intertwining the actions of $\Vbb_1,\dots,\Vbb_N$ is called a \textbf{(homo)morphism of weak $\Vbb_1\times\cdots\times\Vbb_N$-modules}. All such maps form a vector space $\Hom_{\Vbb_1\times\cdots\times\Vbb_N}(\Wbb,\Mbb)$. \index{Hom@$\Hom_{\Vbb_1\times\cdots\times\Vbb_N}(\Wbb,\Mbb)$}
\end{df}

\begin{rem}\label{lb91}
A weak $\Vbb_1\otimes\cdots\otimes \Vbb_N$-module (where $\Vbb_1\otimes\cdots\otimes \Vbb_N$ is viewed as a single VOA) is obviously a weak
$\Vbb_1\times\cdots\times\Vbb_N$-module. A version of the converse can also be proved by checking the weak associativity \cite{LL-introduction}.\footnote{We are grateful to Haisheng Li for informing us of this fact.} We will discuss this in more details in Sec. \ref{lb34}.
\end{rem}

\begin{df}\label{lb15}
If $\Mbb$ is a weak $\Vbb_1\times\cdots\times\Vbb_N$-module and $\Ebb\subset \Mbb$ is a subset, then we say \textbf{$\Ebb$ generates $\Mbb$}\index{00@Generating subsets of weak $\Vbb^{\times N}$-modules} if $\Mbb$ is spanned by
\begin{align} \label{eq60}
\begin{aligned}
\{Y_{j_1}(u_1)_{n_1}\cdots Y_{j_k}(u_k)_{n_k}w: &k\in\Zbb_+,1\leq j_1,\cdots ,j_k\leq N,\\
&u_1\in\Vbb_{j_1},\dots,u_k\in\Vbb_{j_k},n_1,\cdots,n_k\in \Zbb,w\in \Ebb\}  
\end{aligned}
\end{align}
Moreover, if $\Ebb$ is finite, we say that  $\Mbb$ is \index{00@Finitely generated weak $\Vbb^{\times N}$-modules} \textbf{finitely generated}. 
\end{df}

\begin{df}\label{grading3}

Let $\Vbb_1,\cdots,\Vbb_N$ be VOAs and $\Wbb$ be a weak $\Vbb_1\times \cdots \times \Vbb_N$-module. We say that $\Wbb$ is an \textbf{admissible $\Vbb_1\times \cdots \times \Vbb_N$-module} if there exist simultaneously diagonalizable (and hence commuting) operators $\widetilde{L}_j(0)(1\leq j\leq N)$ on $\Wbb$ with eigenvalues in $\Nbb$ such that for each $1\leq i,j\leq N$ we have
\begin{subequations}
\begin{gather}
[\widetilde{L}_j(0),Y_i(v)_n]=\delta_{i,j}\big((Y_i(L(0)v)_n-(n+1)Y_i(v)_n\big)  \label{eq99}
\end{gather}
Then we have
\begin{gather}
[\wtd L(0),Y_i(v)_n]=Y_i(L(0)v)_n-(n+1)Y_i(v)_n\label{eq115}
\end{gather}
\end{subequations}
if we define \index{L0@$\wtd L_j(0),\widetilde{L}(0):=\widetilde{L}_1(0)+\cdots+\widetilde{L}_N(0) $}
\begin{align} 
\widetilde{L}(0):=\widetilde{L}_1(0)+\cdots+\widetilde{L}_N(0) \label{eq83}
\end{align}
Set \index{Wn@$\Wbb(n_\blt)=\Wbb(n_1,\dots,n_N)$} \index{Wn@$W(n)=\bigoplus_{n_1+\cdots+n_N=n}\Wbb(n_1,\dots,n_N)$} \index{W@$\Wbb^{\leq n}=\bigoplus_{k\leq n}\Wbb(k)$}
\begin{subequations}\label{eq32}
\begin{gather}
\Wbb(n_\blt)\equiv\Wbb(n_1,\dots,n_N)=\{w\in \Wbb:\wtd{L}_j(0)w=n_jw\quad(\forall 1\leq j\leq N)\}\\
\Wbb(n)=\bigoplus_{n_1+\cdots+n_N=n}\Wbb(n_1,\dots,n_N)=\{w\in \Wbb:\wtd L(0)w=nw\}  \label{eq114}\\
\Wbb^{\leq n}=\bigoplus_{k\leq n}\Wbb(k),\qquad \Vbb^{\leq n}=\bigoplus_{k\leq n}\Vbb(k)\label{eq127}
\end{gather}
\end{subequations}
Then we have $\Nbb$-grading and $\Nbb^N$-grading
\begin{align}
\Wbb=\bigoplus_{n\in\Nbb}\Wbb(n)=\bigoplus_{n_\blt\in\Nbb^N}\Wbb(n_\blt)
\end{align}
A vector $w\in\Wbb$ is called \textbf{$\wtd L_\blt(0)$-homogeneous} (or simply \textbf{homogeneous}) \index{00@Homogeneous, $\wtd L_\blt(0)$-homogeneous vectors} if it belongs to $\Wbb(n_\blt)$ for some $n_\blt\in\Nbb^N$. If $w\in\Wbb$, we write \index{wt@$\wtd\wt_j(w),\wtd\wt(w)$}
\begin{subequations}\label{eq113}
\begin{gather}
\wtd \wt_j(w)=n_j\qquad \text{if }\wtd L_j(0)w=n_jw\\
\wtd \wt(w)=n\qquad \text{if }\wtd L(0)w=nw
\end{gather}
\end{subequations}
\end{df}

\begin{df}
Let $\Wbb$ be an admissible $\Vbb_1\times\cdots\times\Vbb_N$-module. If each $\Wbb(n)$ is finite dimensional (equivalently, if each $\Wbb(n_1,\dots,n_N)$ is finite dimensional), then $\Wbb$ is called a \textbf{finitely admissible $\Vbb_1\times \cdots \times \Vbb_N$-module}.\index{00@(Finitely) admissible $\Vbb_1\times \cdots \times \Vbb_N$-modules}
\end{df}

\begin{eg}\label{lb47}
\begin{enumerate}[label=(\arabic*)]
    \item An admissible (resp. finitely admissible) $\Vbb^{\times 1}$-module is equivalent to an admissible (resp. finitely admissible) $\Vbb$-module defined in Definition \ref{finitelyadmissible}.
    \item Suppose $\Wbb_i$ is an admissible (resp. finitely admissible) $\Vbb_i$-module for each $1\leq i\leq N$. Then $\Wbb_\blt=\Wbb_1\otimes \cdots \otimes \Wbb_N$ is naturally an admissible (resp. finitely admissible) $\Vbb_1\times \cdots \times \Vbb_N$-module if we define $\wtd L_i(0)$ on $\Wbb_\blt$ to be $\idt\otimes\cdots\otimes \wtd L_i(0)\otimes\cdots\otimes\idt$ where $\wtd L_i(0)$ is at the $i$-th component.   
\item Let $\Wbb$ be an admissible  $\Vbb_1\times \cdots \times \Vbb_N$-module. Define a graded vector space\index{W@$\Wbb'$}
\begin{gather}
\Wbb'=\bigoplus_{n\in \Nbb} \Wbb(n)^*=\bigoplus_{n_1,\dots,n_N}\Wbb(n_1,\dots,n_N)^*
\end{gather}
For each $1\leq j\leq N$, define $Y_{\Wbb',j}:\Vbb_j\otimes\Wbb'\rightarrow\Wbb'((z))$ by
\begin{gather}\label{eq39}
    \<Y_{\Wbb^\prime,j}(v_j,z)w^\prime,w\>=\<w^\prime ,Y_{\Wbb,j}(e^{zL(1)}(-z^{-2})^{L(0)}v_j,z^{-1})w\>
\end{gather}
for each $v\in\Vbb_j,w\in\Wbb,w'\in\Wbb'$. Then $\Wbb'$, together with all $Y_{\Wbb',j}$ and all $\wtd L_j(0)$ (where $1\leq j\leq N)$, is an admissible $\Vbb_1\times\cdots\times\Vbb_N$-module if the $\wtd L_j(0)$ acting on each $\Wbb(n)^*$ is defined to be the transpose of $\wtd L_j(0)$ acting on $\Wbb(n)$. We call $\Wbb'$ the \textbf{contragredient module} of $\Wbb$.
\end{enumerate}
\end{eg}

\begin{cv}
Unless otherwise stated, the grading operators of $\Wbb_1\otimes\cdots\otimes\Wbb_N$ and $\Wbb'$ are always defined as in Exp. \ref{lb47}. The grading operator of $\Vbb$ is set to be $\wtd L(0)=L(0)$.
\end{cv}

\begin{rem}\label{lb48}
Suppose that $\Wbb=\bigoplus_{k=1}^\infty \Wbb_k$ where each $\Wbb_k$ is a finitely-admissible $\Vbb_1\times\cdots\times\Vbb_N$-module with grading operators $\wtd L^k_1(0),\dots,\wtd L^k_N(0)$, then $\Wbb$ is a finitely admissible $\Vbb_1\times\cdots\times\Vbb_N$-module since we can define its grading operators $\wtd L_1(0),\dots,\wtd L_N(0)$ to be
\begin{align}
\wtd L_i(0)w=(k+\wtd L_i^k(0))w\qquad\text{if }w\in \Wbb_k
\end{align}
\end{rem}

\subsection{Change of coordinate}
In order to give the definition of conformal blocks, we recall the change of coordinate formula discovered by \cite{Hua97}. Let $\MO_\Cbb$ be the structure sheaf of complex plane $\Cbb$ and $\MO_{\Cbb,0}$ be its stalk at 0. Define $\Gbb=\{\rho\in \MO_{\Cbb,0}:\rho(0)=0,\rho^\prime(0)\ne 0\}$. Then $\Gbb$ becomes a group if we define the multiplication $\rho_1\cdot \rho_2$ to be their composition $\rho_1\circ \rho_2$. \index{G@$\Gbb$}

Notice that for each $\rho\in \Gbb$, there exist unique $c_1,c_2,\cdots\in \Cbb$, such that 
    \begin{equation}\label{coordinatechange1}
    \rho(z)=\rho'(0)\cdot \exp\big(\sum_{n>0}c_n z^{n+1}\partial_z\big)z.
    \end{equation}
Now, choose an admissible $\Vbb^{\times N}$-module $\Wbb$. Define $\MU_j(\rho)\in \End(\Wbb)(1\leq j\leq N)$ to be \index{U@$\MU_j(\rho)$, $\MU(\rho)$, the change of coordinate operator}
\begin{equation}\label{coordinatechange3}
\MU_j(\rho)=\rho^\prime(0)^{\wtd{L}_j(0)}\cdot \exp\big(\sum_{n>0}c_n L_j(n)\big)
\end{equation}
which is a finite sum when acting on each vector $w\in \Wbb$. Since 
$$
[\wtd{L}_j(0),L_j(n)]=-nL_j(n),
$$
$L_j(n)$ lowers $\wtd{L}_j(0)$-weights, and hence lowers $\wtd{L}(0)$-weights. So (\ref{coordinatechange3}) actually defines $\MU_j(\rho)\in \End(\Wbb^{\leq n})$. Moreover, $\MU_i(\rho_1)$ commutes with $\MU_j(\rho_2)$ for $i\ne j$ and $\rho_1,\rho_2\in \Gbb$, since each operator in $\{\widetilde{L}_i(0),L_i(n):n\in \Zbb\}$ commutes with each one in $\{\widetilde{L}_j(0),L_j(n):n\in \Zbb\}$ if $i\ne j$.

\begin{thm}\label{coordinatechange5}
For a fixed admissible $\Vbb^{\times N}$-module $\Wbb$, $\MU_j$ gives a representation of $\Gbb$ on $\Wbb$ for each $1\leq j\leq N$, i.e., $\MU_j(\rho_1\circ \rho_2)=\MU_j(\rho_1)\MU_j(\rho_2)$ for $\rho_1,\rho_2\in \Gbb$. In particular, $\MU_j(\rho)$ is invertible for each $\rho\in \Gbb$.
\end{thm}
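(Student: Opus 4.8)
The plan is to reduce the statement to the case $N=1$ and then recall (a proof of) Huang's change-of-coordinate theorem \cite{Hua97} (see also \cite[Ch.~6]{FB04}). For a fixed index $j$, the only ingredients of $\MU_j(\rho)$ are the operators $\wtd L_j(0)$ and $L_j(n)$ $(n\geq 1)$ on $\Wbb$ together with the scalars $\rho'(0),c_1,c_2,\dots$ attached to $\rho$ by \eqref{coordinatechange1}. Setting $v=\cbf$ in \eqref{eq99} gives $[\wtd L_j(0),L_j(n)]=-nL_j(n)$, and the Jacobi identity \eqref{jacobi} gives $[L_j(m),L_j(n)]=(m-n)L_j(m+n)$ for $m,n\geq 1$ (the Virasoro central term never contributes, since only strictly positive modes occur). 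Because these are exactly the relations used in the one-variable case, the identity $\MU_j(\rho_1\circ\rho_2)=\MU_j(\rho_1)\MU_j(\rho_2)$ follows from the $N=1$ statement applied to this Virasoro-type data. So I would henceforth assume $N=1$ and drop the subscript $j$.

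I would then factor $\Gbb$ as a semidirect product $\Gbb_{\mathrm m}\ltimes\Gbb_+$, where $\Gbb_{\mathrm m}=\{z\mapsto az:a\in\Cbb^\times\}$ and $\Gbb_+=\{\rho\in\Gbb:\rho'(0)=1\}$ is the normal subgroup of changes of coordinate tangent to the identity, and verify the homomorphism property on the two factors. By \eqref{coordinatechange1}, $\rho=m_{\rho'(0)}\circ\sigma_\rho$ with $m_a(z)=az$ and $\sigma_\rho=\exp\big(\sum_{n>0}c_nz^{n+1}\partial_z\big)\in\Gbb_+$, and correspondingly $\MU(\rho)=\rho'(0)^{\wtd L(0)}\cdot\MU(\sigma_\rho)$ with $\MU(\sigma_\rho)=\exp\big(\sum_{n>0}c_nL(n)\big)$. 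On $\Gbb_{\mathrm m}$ there is nothing to do: since $\wtd L(0)$ is diagonalizable with eigenvalues in $\Nbb$, the operator $a^{\wtd L(0)}$ is unambiguous and $a\mapsto a^{\wtd L(0)}$ is visibly a homomorphism. The real content lies in $\Gbb_+$, which is pro-unipotent --- a projective limit of finite-dimensional unipotent groups --- with pro-nilpotent Lie algebra $\fk n=z^2\Cbb[[z]]\partial_z$ and exponential map $\exp\colon\fk n\xrightarrow{\sim}\Gbb_+$, its group law being governed on each finite quotient by the Baker--Campbell--Hausdorff series. Here I would check that $z^{n+1}\partial_z\mapsto L(n)$ $(n\geq 1)$ defines a Lie algebra homomorphism $\fk n\to\End(\Wbb)$ --- which holds precisely because the $L(n)$ with $n\geq 1$ obey the bracket relations of $\fk n$ with no central correction --- and that, since each $L(n)$ $(n\geq 1)$ strictly lowers $\wtd L(0)$-weight while weights lie in $\Nbb$, these operators act locally nilpotently on $\Wbb$. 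The local nilpotence makes every exponential and every Baker--Campbell--Hausdorff expression a \emph{finite} sum on each $\Wbb^{\leq n}$, so that the Lie algebra homomorphism integrates to a group homomorphism $\Gbb_+\to\End(\Wbb)$, namely $\MU|_{\Gbb_+}$.

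To glue the two factors I would use that $[\wtd L(0),L(n)]=-nL(n)$ yields $a^{\wtd L(0)}L(n)a^{-\wtd L(0)}=a^{-n}L(n)$ (a genuine power, so no branch issue on each graded piece), whence conjugation by $a^{\wtd L(0)}$ intertwines conjugation by $m_a$ on $\Gbb_+$; that is, $\MU(m_a\circ\sigma\circ m_a^{-1})=a^{\wtd L(0)}\MU(\sigma)a^{-\wtd L(0)}$ for $\sigma\in\Gbb_+$. Writing $\rho_i=m_{a_i}\circ\sigma_i$ with $a_i=\rho_i'(0)$ and using the normality of $\Gbb_+$, one finds $\rho_1\circ\rho_2=m_{a_1a_2}\circ\big((m_{a_2}^{-1}\circ\sigma_1\circ m_{a_2})\circ\sigma_2\big)$, and hence
\begin{align*}
\MU(\rho_1\circ\rho_2)
&=(a_1a_2)^{\wtd L(0)}\,\MU\big(m_{a_2}^{-1}\circ\sigma_1\circ m_{a_2}\big)\,\MU(\sigma_2)\\
&=a_1^{\wtd L(0)}\,\MU(\sigma_1)\,a_2^{\wtd L(0)}\,\MU(\sigma_2)
=\MU(\rho_1)\MU(\rho_2),
\end{align*}
using the homomorphism property on each factor and the conjugation identity. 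Invertibility of $\MU(\rho)$ is then immediate, since $\MU(\rho)\MU(\rho^{-1})=\MU(\rho\circ\rho^{-1})=\MU(\mathrm{id})=\id$.

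\textbf{The hard part} is not any of these formal manipulations but the bookkeeping of conventions: one must verify that the normalizations in \eqref{coordinatechange1}--\eqref{coordinatechange3} --- the sign of the flow defining $\exp\big(\sum c_nz^{n+1}\partial_z\big)$ relative to the sign of $\exp\big(\sum c_nL(n)\big)$, and the direction in which $\circ$ composes --- are precisely those for which the integrated map on $\Gbb_+$ is a homomorphism rather than an anti-homomorphism. This matching of conventions is the substance of the argument in \cite{Hua97}; once it is in place, the local-nilpotence observation above makes all the formal series rigorous.
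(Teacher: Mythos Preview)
Your proposal is correct and follows the same route as the paper: reduce to $N=1$ by viewing $\Wbb$ as an admissible $\Vbb$-module via $Y_{\Wbb,j}$, then invoke Huang's change-of-coordinate theorem. The paper simply cites \cite[Sec.~4.2]{Hua97} (and \cite[Ch.~6]{FB04}, \cite[Sec.~10]{GuiLec}) for the $N=1$ case, whereas you additionally sketch a proof of that result via the semidirect product $\Gbb=\Gbb_{\mathrm m}\ltimes\Gbb_+$ and the pro-unipotent/BCH argument; this extra detail is sound but goes beyond what the paper records.
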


\begin{proof}
One may view $\Wbb$ as an admissible $\Vbb$-module defined by $Y_{\Wbb,i}$. Then this theorem follows from \cite[Sec. 4.2]{Hua97}. See also \cite[Ch. 6]{FB04} or \cite[Sec. 10]{GuiLec}.
\end{proof}

Suppose $X$ is a complex manifold and $\rho:X\rightarrow \Gbb,x\mapsto \rho_x$ is a function. 
\begin{df}
    $\rho$ is called a \textbf{holomorphic family of transformations} \index{00@holomorphic family of transformation}if for each $x\in X$, there exists a neighborhood $V$ of $x$ and a neighborhood $U$ of 0, such that $(z,y)\in U\times V\mapsto \rho_y(z)$ is a holomorphic function on $U\times V$.
\end{df}
If $\rho$ is a holomorphic family of transformations, then the coefficients $c_0,c_1,c_2,\cdots$ given by \eqref{coordinatechange1} depend holomorphically on $x\in X$. Therefore, (\ref{coordinatechange3}) gives an isomorphism of $\MO_X$-modules
\begin{equation}\label{coordinatechange4}
\MU_j(\rho):\Wbb^{\leq n}\otimes_\Cbb \MO_X\xrightarrow{\simeq} \Wbb^{\leq n}\otimes_\Cbb \MO_X
\end{equation}
sending each $\Wbb^{\leq n}$-valued function $w$ to the section $x\mapsto \MU_j(\rho_x)w(x)$. When $N=1$ so that $\Wbb$ is an admissible $\Vbb$-module, we write $\mc U_1(\rho)$ as $\mc U(\rho)$.

\begin{eg}\label{changeexample1}
    Suppose $\Wbb$ is an admissible $\Vbb$-module, $X=\Cbb^\times$ and $z\in \Cbb^\times$. Set $\upgamma_z$ as \index{zz@$\upgamma_z$}
    \begin{align*}
        \upgamma_z(t)=\frac{1}{z+t}-\frac{1}{z}.
    \end{align*}
    Then $\upgamma=\upgamma_z$ is a holomorphic family of transformations and it is easy to compute \index{U@$\MU(\upgamma_z)$}
    \begin{align}
        \MU(\upgamma_z)=e^{zL(1)}(-z^{-2})^{\wtd L(0)}.  \label{eq40}
    \end{align}
It is easy to see $\upgamma_z(zt)=z^{-1}\upgamma_1(t)$. Thus
\begin{align}
\mc U(\upgamma_z)z^{\wtd L(0)}=z^{-\wtd L(0)}\mc U(\upgamma_1).  \label{eq43}
\end{align}
\end{eg}

\subsection{Sheaves of VOAs for compact Riemann surfaces}
\label{sheafofvoa}
Let $\Vbb$ be a VOA and $C$ be a compact Riemann surface. We recall the construction of the sheaf $\SV_{C}$ associated to $\Vbb$ and $C$.
 
Let $U,V$ be open subsets of $C$ with univalent (i.e., injective and holomorphic)\index{00@Univalent functions} functions $\eta\in \MO(U),\mu\in \MO(V)$. Define a holomorphic family of transformations $\varrho(\eta\vert \mu):U\cap V\rightarrow \Gbb,p\mapsto \varrho(\eta\vert \mu)_p,$
where $\varrho(\eta\vert \mu)_p(z)=\eta\circ \mu^{-1}\big(z+\mu(p)\big)-\eta(p)$. Equivalently, 
\begin{align}\label{eq1}
\eta-\eta(p)=\varrho(\eta\vert \mu)_p(\mu-\mu(p))
\end{align}
and for univalent functions $\eta_i\in \MO(U_i)(i=1,2,3)$,
\begin{equation}\label{cocycle1}
\varrho(\eta_3\vert \eta_1)=\varrho(\eta_3\vert \eta_2)\varrho(\eta_2\vert \eta_1)
\end{equation}
on $U_1\cap U_2\cap U_3$. By (\ref{coordinatechange4}), we have an isomorphism of $\MO_{U\cap V}$-modules:\index{U@$\MU(\varrho(\eta\vert \mu))$, the transition function of sheaf of VOA}
$$
\MU(\varrho(\eta\vert \mu)):\Vbb^{\leq n}\otimes_\Cbb \MO_{U\cap V}\xrightarrow{\simeq} \Vbb^{\leq n}\otimes_\Cbb \MO_{U\cap V}.
$$
By Thm. \ref{coordinatechange5} and (\ref{cocycle1}), we have the cocycle condition
$$
\MU(\varrho(\eta_3\vert \eta_1))=\MU(\varrho(\eta_3\vert \eta_2))\MU(\varrho(\eta_2\vert \eta_1)).
$$
This allows us to define a locally free $\mc O_C$-module  (i.e. a finite rank holomorphic vector bundle) $\SV_C^{\leq n}$, such that the transition functions are given by $\MU(\varrho(\eta\vert \mu))$. Thus, for any open subset $U\subset C$ with a univalent function $\eta\in \MO(U)$, we have a trivialization\index{U@$\MU_\varrho(\eta)$, the trivialization of sheaf of VOA}
\begin{equation}\label{trivialization}
\MU_\varrho(\eta):\SV_C^{\leq n}\vert_U \xrightarrow{\simeq} \Vbb^{\leq n}\otimes_\Cbb \MO_U.
\end{equation}
Moreover, if $V\subset C$ is another open subset with a univalent function $\mu\in \MO(V)$, then we have transition function
$$
\MU_\varrho(\eta)\MU_\varrho(\mu)^{-1}=\MU(\varrho(\eta\vert \mu))
$$
on $U\cap V$. The \textbf{sheaf of VOA}\index{V@$\SV_C,\SV_C^{\leq n}$, sheaves of VOA} $\SV_C$ is the $\mc O_C$-module defined by $\SV_C=\bigcup_{n\in \Nbb} \SV_C^{\leq n}$, or more precisely,
$$
\SV_C=\varinjlim_{n\in \Nbb} \SV_C^{\leq n}.
$$

\begin{eg}\label{lb60}
    Let $\zeta$ be the standard coordinate of $\Cbb^\times$. Then for each $z\in \Cbb^\times$, 
    \begin{align*}
        \varrho(1/\zeta\vert \zeta)_z=\varrho(\zeta\vert 1/\zeta)_{1/z}=\upgamma_z.
    \end{align*}
    By Exp. \ref{changeexample1}, 
    \begin{align*}
       \big( \MU_\varrho(1/\zeta)\MU_\varrho(\zeta)^{-1}\big)_z=\MU(\varrho(1/\zeta\vert \zeta)_z)=\MU(\upgamma_z)=e^{z L(1)}(-z^{-2})^{L(0)}.
    \end{align*}
\end{eg}
\subsection{Conformal blocks for compact Riemann surfaces}
\begin{df}
$\fx=(C\big| x_1,x_2,\cdots,x_N;\eta_1,\eta_2,\cdots,\eta_N)$ is called an \textbf{$N$-pointed compact Riemann surface with local coordinates}\index{00@$N$-pointed compact Riemann surfaces (with local coordinates)} if 
\begin{enumerate}[label=(\arabic*)]
    \item $x_1,x_2,\cdots,x_N$ are distinct marked points on the compact Riemann surface $C$, and each connected component of $C$ contains one of these points.
    \item For each $1\leq i\leq N$, $\eta_i$ is a \textbf{local coordinate} \index{00@Local coordinates} near $x_i$, i.e., a univalent function on a neighborhood $U_i$ of $x_i$ satisfying $\eta_i(x_i)=0$.
\end{enumerate}
Forgetting the local coordinates, we call $(C\big| x_1,x_2,\cdots,x_N)$ an \textbf{$N$-pointed compact Riemann surface}.
\end{df}
Fix an $N$-pointed compact Riemann surface with local coordinates $\fx=(C\big| x_1,x_2,\cdots,x_N;\eta_1,\eta_2,\cdots,\eta_N)$. Let $\Wbb$ be a weak $\Vbb^{\times N}$-module. Let us recall the definition of the space of conformal block $\ST_\fx^*(\Wbb)$ (cf. \cite[Ch. 9]{FB04}).

Write \index{SX@$\SX=x_1+x_2+\cdots+x_N$}
\begin{align*}
 S_\fx=x_1+x_2+\cdots +x_N   
\end{align*}
and let $\omega_C$\index{zz@$\omega_C$, cotangent sheaf of $C$} be the cotangent sheaf of $C$. For each $\MO_C$-module $\ME$, set 
$$
\ME(\blt S_\fx)=\varinjlim_{k\in \Nbb} \ME(kS_\fx).
$$
More precisely, sections of $\ME(\blt S_\fx)$ are meromorphic sections of $\ME$ whose only possible poles are at $x_1,x_2,\cdots,x_N$. By tensoring with the identity map of $\omega_{U_i}$ with (\ref{trivialization}), we have the sheaf $\SV_C\otimes \omega_C(\blt S_\fx)$ whose trivialization on the neighborhood $U_i$ of $x_i$ is:
$$
\MU_\varrho(\eta_i):\SV_C\vert_{U_i}\otimes \omega_{U_i}(\blt S_\fx)\xrightarrow{\simeq} \Vbb\otimes_\Cbb \omega_{U_i}(\blt S_\fx).
$$
where $\MU_\varrho(\eta_i)$ is short for $\MU_\varrho(\eta_i)\otimes\idt$. This allows us to define the $i$-th residue action of each section $\sigma\in H^0\big(U_i,\SV_{C}\otimes \omega_C(\blt S_\fx)\big)$ on $\Wbb$:
\begin{subequations}\label{eq73}
\begin{equation}\label{eq80}
    \sigma *_i w:=\Res_{\eta_i=0}Y_{i}(\MU_\varrho(\eta_i)\sigma,\eta_i)w.
\end{equation}
The residue action\index{00@Residue action} of $\sigma\in H^0\big(C,\SV_{C}\otimes \omega_C(\blt S_\fx)\big)$ on $w\in \Wbb$ is defined by 
\begin{equation}\label{eq81}
\sigma \cdot w=\sum_{i=1}^N \sigma *_i w.
\end{equation}
\end{subequations}

\begin{df}\label{lb25}
The \textbf{space of coinvariants} $\ST_\fx(\Wbb)$ \index{T@$\ST_\fx(\Wbb)$, the space of coinvariants}is defined by 
    $$
    \ST_\fx(\Wbb)=\frac{\Wbb}{H^0\big(C,\SV_{C}\otimes \omega_C(\blt S_\fx)\big)\cdot \Wbb},
    $$
    where $H^0\big(C,\SV_{C}\otimes \omega_C(\blt S_\fx)\big)\cdot \Wbb$ is the subspace of $\Wbb$ spanned by $\{\sigma \cdot w:\sigma\in H^0\big(C,\SV_{C}\otimes \omega_C(\blt S_\fx)\big),w\in \Wbb\}$.

The \textbf{space of conformal blocks} $\ST_\fx^*(\Wbb)$ is defined to be the dual space of $\ST_\fx(\Wbb)$.\index{T@$\ST_\fx^*(\Wbb)$, the space of conformal blocks} Therefore, elements in $\ST_\fx^*(\Wbb)$, called \textbf{conformal blocks} are linear functionals $\upphi:\Wbb\rightarrow \Cbb$ that vanish on $H^0\big(C,\SV_{C}\otimes \omega_C(\blt S_\fx)\big)\cdot \Wbb$.
\end{df}

The above definition of conformal blocks depend on the choice of local coordinates. See Def. \ref{lb16} for a coordinate-free definition.

\subsection{Sheaves of VOAs for families of compact Riemann surfaces}
\begin{df}
    $\fx=(\pi:\MC\rightarrow \MB)$ is called a \textbf{family of compact Riemann surfaces}\index{00@Families with compact Riemann surfaces} if $\pi:\MC\rightarrow \MB$ is a surjective proper submersion between complex manifolds, and if each fiber $\MC_b=\pi^{-1}(b)$ is a compact Riemann surface.
\end{df}

Fix a family of compact Riemann surfaces $\fx=(\pi:\MC\rightarrow \MB)$. We recall the definition of the sheaf of VOAs on $\fx$. (See \cite[Sec. 5]{Gui-sewingconvergence} for details.) Let $U$ and $V$ be open subsets of $\MC$. $\eta:U\rightarrow \Cbb$ and $\mu:V\rightarrow \Cbb$ are holomorphic functions, which are univalent on each fiber of $U$ and $V$. This is equivalent to saying that $(\eta,\pi)$ and $(\mu,\pi)$ are biholomorphic maps from $U,V$ to open subsets of $\Cbb\times \MB$. Define a holomorphic family of transformation $\varrho(\eta\vert \mu):U\cap V\rightarrow \Gbb$ by 
$$
\varrho(\eta\vert \mu)_p(z)=\eta\circ (\mu,\pi)^{-1}\big(z+\mu(p),\pi(p)\big)-\eta(p).
$$
That this family is holomorphic is clear from the definition. An equivalent but more transparent definition is
$$
\eta-\eta(p)\vert_{(U\cap V)_{\pi(p)}}=\varrho(\eta\vert \mu)_p(\mu-\mu(p)\vert_{(U\cap V)_{\pi(p)}}).
$$
For functions $\eta_i\in \MO(U_i)(i=1,2,3)$ univalent on each fiber, we have
$$
\varrho(\eta_3\vert \eta_1)=\varrho(\eta_3\vert \eta_2)\varrho(\eta_2\vert \eta_1)
$$
on $U_1\cap U_2\cap U_3$. Similar to Subsec. \ref{sheafofvoa}, we can define an $\mc O_{\mc C}$-module  (i.e. a finite rank holomorphic vector bundle) $\SV_{\fx}^{\leq n}$ such that the transition functions are given by $\MU(\varrho(\eta\vert \mu))$. More precisely, if $U\subset \MC$ is an open subset with $\eta\in \MO(U)$ univalent on each fiber, then we have a trivialization
$$
\MU_\varrho(\eta):\SV_\fx^{\leq n}\vert_{U}\xrightarrow{\simeq }\Vbb^{\leq n}\otimes_{\Cbb} \MO_U
$$
such that the transition function is given by
$$
\MU_\varrho(\eta)\MU_\varrho(\mu)^{-1}=\MU(\varrho(\eta\vert \mu)).
$$
The sheaf of VOA $\SV_\fx$ is defined by 
$$
\SV_\fx=\varinjlim_{n\in \Nbb} \SV_\fx^{\leq n},
$$
which is a possible infinite-rank locally free $\mc O_{\mc C}$-module (i.e. vector bundle).

\subsection{Conformal blocks for families of compact Riemann surfaces}

\begin{df}\label{lb1}
We call 
\begin{align}
\fx=(\pi:\mc C\rightarrow\mc B|\sgm_\blt;\eta_\blt)=(\pi:\MC\rightarrow \MB\big| \varsigma_1,\cdots,\varsigma_N;\eta_1,\cdots,\eta_N)   
\end{align}
a \textbf{family of $N$-pointed compact Riemann surfaces with local coordinates}\index{00@Families of $N$-pointed compact Riemann surfaces (with local coordinates)} if 
\begin{enumerate}[label=(\arabic*)]
    \item  $\pi:\MC\rightarrow \MB$ is a family of compact Riemann surfaces.
    \item  For each $i$, $\sgm_i:\mc B\rightarrow\mc C$ is a \textbf{section}, namely, a holomorphic map satisfying $\pi\circ\sgm_i=\idt_{\mc B}$. Moreover, we assume:
    \begin{enumerate}[label=(\alph*)]
        \item $\sgm_i(\mc B)\cap\sgm_j(\mc B)=\emptyset$ if $i\neq j$.
        \item For each $b\in\mc B$, each connected component of $\MC_b$ contains $\varsigma_i(b)$ for some $1\leq i\leq N$.
    \end{enumerate}
    \item  $\eta_1,\cdots,\eta_N$ are \textbf{local coordinates} at $\varsigma_1(\mc B),\cdots,\varsigma_N(\mc B)$. Namely, for each $i$, there exists a neighborhood $U_i$ of $\varsigma_i(\MB)$ such that $\eta_i\in\mc O(U_i)$, and that $\eta_i$ restricts to a univalent function on $U_i\cap \MC_b$  and satisfies $\eta_i(\varsigma_i(b))=0$ for each $b\in \MB$. (Equivalently, $(\eta_i,\pi)$ is a biholomorphism from $U_i$ to a neighborhood of $\{0\}\times\mc B$ in $\Cbb\times\mc B$ sending $\sgm_i(\mc B)$ to $\{0\}\times\mc B$.)
\end{enumerate}
If the local coordinates are forgotten, then $(\pi:\MC\rightarrow \MB\big| \varsigma_1,\cdots,\varsigma_N)$ is called a \textbf{family of $N$-pointed compact Riemann surfaces}. Define the following divisor of $\mc C$ \index{SX@$\SX=\sgm_1(\mc B)+\cdots+\sgm_N(\mc B)$}
   \begin{align}
\SX=\sgm_1(\mc B)+\cdots+\sgm_N(\mc B)
   \end{align}
\end{df}
\begin{rem}\label{remarkfamily}
Suppose $\fx=(\pi:\MC\rightarrow \MB\big| \varsigma_1,\cdots,\varsigma_N;\eta_1,\cdots,\eta_N)$ is a family of $N$-pointed compact Riemann surfaces with local coordinates.
\begin{enumerate}[label=(\arabic*)]
    \item If we choose $\MB$ as a point (viewed as a $0$-dimensional manifold), then $\fx$ is exactly an $N$-pointed compact Riemann surface with local coordinates.
    \item For each $b\in \MB$, each fiber \index{Cb@$\mc C_b=\pi^{-1}(b)$}  \index{Xb@$\fk X_b$, the fiber of $\fk X$ at $b$}
    $$
    \fx_b:=(\MC_b\big| \varsigma_1(b),\cdots,\varsigma_N(b);\eta_1\vert_{\MC_b},\cdots,\eta_N\vert_{\MC_b})
    $$
    is an $N$-pointed compact Riemann surface with local coordinates.
    \item Define \textbf{relative tangent bundle}\index{00@$\Theta_{\MC/\MB}$, the relative tangent bundle} $\Theta_{\MC/\MB}$ as the subbundle of $\Theta_\MC$ containing sections of $\Theta_\MC$ killed by $d\pi:\Theta_{\MC}\rightarrow \Theta_{\MB}$. The \textbf{relative dualizing sheaf}\index{zz@$\omega_{\MC/\MB}$, the relative dualizing sheaf} $\omega_{\MC/\MB}$ is defined as the dual bundle of $\Theta_{\MC/\MB}$. If $U$ is an open subset of $\MC$, then the sections of $\omega_{\MC/\MB}(U)$ are of the form $fd\eta$, where $f\in \MO(U)$ and $\eta\in \MO(U)$ is univalent on each fiber of $U$. If $\mu\in \MO(U)$ is another function univalent on each fiber, then we have transformation rule 
\begin{align}
    fd\eta=f\frac{\partial \eta}{\partial \mu}d\mu.  \label{eq28}
\end{align}
    Moreover, we have natural equivalences
    $$
    \Theta_{\MC/\MB}\vert_{\MC_b}\simeq \Theta_{\MC_b},\quad \omega_{\MC/\MB}\vert_{\MC_b}\simeq \omega_{\MC_b}. 
    $$
\end{enumerate}

\end{rem}
 
Fix an $N$-pointed family $\fx=(\pi:\MC\rightarrow \MB\big| \varsigma_\blt;\eta_\blt)$ and an admissible $\Vbb^{\times N}$-module $\Wbb$. Recall that $\pi_*(\SV_\fx\otimes \omega_{\MC/\MB}(\blt S_\fx))$ is the direct image sheaf of $\SV_\fx\otimes \omega_{\MC/\MB}(\blt S_\fx)$. So each
\begin{align*}
\sigma\in \pi_*(\SV_\fx\otimes \omega_{\MC/\MB}(\blt S_\fx))(V)=(\SV_\fx\otimes \omega_{\MC/\MB}(\blt S_\fx))(\pi^{-1}(V))
\end{align*}
is a meromorphic section of $\SV_\fx\otimes \omega_{\MC/\MB}$ on $\pi^{-1}(V)$ with possible poles only at $\SX$. We define the residue action of $\pi_*(\SV_\fx\otimes \omega_{\MC/\MB}(\blt S_\fx))$ on $\Wbb\otimes_\Cbb \MO_\MB$ as follows. Since $(\eta_i,\pi)$ is a biholomorphism from $U_i$ to $(\eta_i,\pi)(U_i)$, we have two obvious equivalences
\begin{subequations}
  \begin{gather}
(\eta_i,\pi)_*\equiv ((\eta_i,\pi)^{-1})^*:\MO_{U_i}\xrightarrow{\simeq} \MO_{(\eta_i,\pi)(U_i)}\label{eq3}\\
    (\eta_i,\pi)_*\equiv ((\eta_i,\pi)^{-1})^*:\omega_{\MC/\MB}\vert_{U_i}\xrightarrow{\simeq} \omega_{(\eta_i,\pi)(U_i)/\pi(U_i)},\label{eq4}
  \end{gather}
\end{subequations}
where $\omega_{(\eta_i,\pi)(U_i)/\pi(U_i)}$ is the relative dualizing sheaf associated to the family $(\eta_i,\pi)(U_i)\rightarrow \pi(U_i)$ inherited from the projection $\Cbb\times \MB\rightarrow \MB$. We have the following \textbf{pushforward} maps, all denoted by $\MV_\varrho(\eta_i)$ \index{00@$\MV_\varrho(\eta_i)$, the pushforward} by abuse of notation:
\begin{subequations}
\begin{gather}
\MV_\varrho(\eta_i)=(\idt_{\Vbb}\otimes \eqref{eq3})\circ\MU_\varrho(\eta_i):\SV_{\fx}\vert_{U_i}\xrightarrow{\simeq}\Vbb\otimes_\Cbb \MO_{(\eta_i,\pi)(U_i)},\label{eq2}\\
\MV_\varrho(\eta_i)=\eqref{eq2}\otimes \eqref{eq4}:\SV_{\fx}\otimes \omega_{\MC/\MB}\vert_{U_i}\xrightarrow{\simeq}\Vbb\otimes_\Cbb\omega_{(\eta_i,\pi)(U_i)/\pi(U_i)},\\
\MV_\varrho(\eta_i):\SV_{\fx}\otimes \omega_{\MC/\MB}(\blt S_\fx)\vert_{U_i}\xrightarrow{\simeq}\Vbb\otimes_\Cbb\omega_{(\eta_i,\pi)(U_i)/\pi(U_i)}(\blt \{0\}\times \MB).
\end{gather}
\end{subequations}

Let $z$ be the standard coordinate of $\Cbb$.  If $w\in \Wbb\otimes_\Cbb \MO(V)$, the $i$-th residue action of $\sigma$ on $w$ is defined by 
\begin{subequations}\label{eq7}
\begin{equation}\label{residueaction3}
\sigma *_i w=\Res_{z=0}Y_i(\MV_\varrho(\eta_i)\sigma,z)w,
\end{equation}
and the action of $\sigma$ on $w\in \Wbb\otimes_\Cbb \MO(V)$ is defined by 
\begin{equation}\label{residueaction4}
\sigma\cdot w=\sum_{i=1}^N \sigma *_iw. 
\end{equation}
\end{subequations}
\begin{df}
The $\MO_\MB$-module
        $$
        \ST_\fx(\Wbb)=\frac{\Wbb\otimes_\Cbb \MO_\MB}{\pi_*(\SV_\fx\otimes \omega_{\MC/\MB}(\blt S_\fx))\cdot (\Wbb\otimes_\Cbb \MO_\MB)}
        $$
        is called the \textbf{sheaf of coinvariants}, whose dual $\MO_\MB$-module $\ST_\fx^*(\Wbb)$ is called the \textbf{sheaf of conformal blocks}. The global sections of $\ST_\fx^*(\Wbb)$ are called \textbf{conformal blocks} associated to $\fk X$ and $\Wbb$.   
\end{df}

Note that this definition does not rely on the $\wtd L_\blt(0)$-grading of $\Wbb$.

\section{Sewing and propagation of partial conformal blocks}\label{lb89}

\subsection{Partial conformal blocks for compact Riemann surfaces}
\label{dualtensorproduct1}

\begin{df}\label{lb3}
An \textbf{$(M,N)$-pointed compact Riemann surface} is an $(N+M)$-pointed compact Riemann surface  \index{00@$(M,N)$-pointed compact Riemann surface}
$$
\fx=(y_\star|C|x_\blt)=(y_1,\cdots,y_M\big| C\big| x_1,\cdots,x_N)
$$
where  the marked points are split into two groups. Those points  $x_\blt$ on the right are called \textbf{incoming points}, \index{00@Incoming points} and those points $y_\star$ on the left are called \textbf{outgoing points}. \index{00@Outgoing points}   Suppose that for each $j$ a local coordinate $\theta_j$ at $y_j$ is defined. We call
$$
\fx=(y_\star;\theta_\star|C|x_\blt)=(y_1,\cdots,y_M;\theta_1,\cdots,\theta_M\big| C\big| x_1,\cdots,x_N)
$$
an \textbf{$(M,N)$-pointed compact Riemann surface (with outgoing local coordinates)}. Set \index{SX@$\SX=x_1+x_2+\cdots+x_N$}  \index{DX@$D_\fx=y_1+\cdots+y_M$}
\begin{equation}\label{marked2}
S_\fx=x_1+\cdots+x_N,\qquad D_\fx=y_1+\cdots+y_M.
\end{equation}
\end{df}

\begin{ass}\label{ass1}
Unless otherwise stated, we assume that each connected component of $C$ contains at least one of the incoming marked points $x_\blt$ (but not just one of $x_\blt,y_\star$).
\end{ass}
This assumption ensures that certain cohomology groups vanish, cf. \eqref{eq14}.

\begin{df}\label{lb86}
For each $a_1,\cdots,a_M\in \Nbb$, define \index{VX@$\SV_{\fx,a_1,\cdots,a_M}^{\leq n}=\SV_{\fx,a_\star}^{\leq n}$}
\begin{gather*}
\SV_{\fx,a_1,\cdots,a_M}^{\leq n}=\SV_{\fx,a_\star}^{\leq n}:=\SV_{\fx}^{\leq n}\big(-(L(0)D_\fx+a_1y_1+\cdots+a_M y_M)\big)\\
\SV_{\fx,a_1,\cdots,a_M}=\SV_{\fx,a_\star}:=\varinjlim_{n\in \Nbb}\SV_{\fx,a_1,\cdots,a_M}^{\leq n}
\end{gather*}
using the data of $\fx,\Vbb$. More precisely, $\SV_{\fx,a_\star}^{\leq n}$ is a locally free $\MO_C$-submodule of $\SV_C^{\leq n}$ described as follows: outside $y_1,\cdots,y_M$, it is exactly $\SV_C^{\leq n}$; for each $1\leq j\leq M$, if $W_j$ is a neighborhood of $y_j$ on which $\theta_j$ is defined (and univalent), and if $W_j\cap\{y_1,\dots,y_M\}=\{y_j\}$, then $\SV_{\fx,a_\star}^{\leq n}\vert_{W_j}$ is generated by 
\begin{gather}
\MU_\varrho(\theta_j)^{-1}\theta_j^{a_j+L(0)}v   \label{eq65}
\end{gather}
for homogeneous vectors $v\in\Vbb^{\leq n}$. \index{vv@$\SV_{\fx,a_1,\cdots,a_M}$}
\end{df}

\begin{df}\label{lb8}
Choose an admissible $\Vbb^{\times N}$-module $\Wbb$, which is associated to the ordered incoming marked points $x_1,\cdots,x_N$. Define a vector bundle $\SW_\fx(\Wbb)$ \index{WX@$\SW_\fx(\Wbb)$}  over a point $\pt$ (whose structure sheaf is $\Cbb$) as follows. For each set of local coordinates $\eta_1,\cdots,\eta_N$ at the incoming marked points $x_1,\cdots,x_N$, we have a trivialization (i.e. an isomorphism of vector spaces) \index{U@$\mc U(\eta_\blt)$}
\begin{align*}
\MU(\eta_\blt):\SW_\fx(\Wbb)\xrightarrow{\simeq}\Wbb
\end{align*}
satisfying that if there is another set of local coordinates $\mu_1,\cdots,\mu_N$ at $x_1,\cdots,x_N$, then the transition function is
\begin{align}\label{eq47}
\MU(\eta_\blt)\MU(\mu_\blt)^{-1}=\mc U_1(\eta_1\circ\mu_1^{-1})\cdots \mc U_N(\eta_N\circ\mu_N^{-1})
\end{align}
Notice that $\mc U_i(\cdots)$ commutes with $\mc U_j(\cdots)$ if $i\neq j$.
\end{df}

\begin{df}\label{lb17}
Define a linear action of $H^0\big(C,\SV_{\fx,a_1,\cdots,a_M}\otimes \omega_C(\blt S_\fx)\big)$ on $\SW_\fx(\Wbb)$ (called the \textbf{residue action}) as follows. Choose $\sigma\in H^0\big(C,\SV_{\fx,a_1,\cdots,a_M}\otimes \omega_C(\blt S_\fx)\big)$ and $\wbf\in \SW_\fx(\Wbb)$. Choose a set of coordinates $\eta_\blt$ at $x_\blt$. Then 
\begin{subequations}
\begin{gather}
\sigma*_i\wbf=\MU(\eta_\blt)^{-1} \big(\sigma *_i \MU(\eta_\blt)\wbf\big) \label{eq12}\\
\sigma\cdot \wbf=\sum_{i=1}^N\sigma*_i\wbf
\end{gather}
\end{subequations}
Here $\MU(\eta_\blt)\wbf$ is an element of $\Wbb$, and $\sigma *_i \MU(\eta_\blt)\wbf$ is defined by \eqref{eq80}.
\end{df}

\begin{rem}\label{lb2}
The above definition of $\sigma*_i\wbf$ is independent of the choice of $\eta_\blt$. This was proved in \cite[Thm. 6.5.4]{FB04} (see also \cite[Thm. 3.2]{Gui-sewingconvergence}), as a consequence of  Huang's change of coordinate Theorem \cite{Hua97} (see also \cite[Sec. 10]{GuiLec}).
\end{rem}

\begin{df}[Cf. \cite{KZ-conformal-block}]\label{lb16}
Similar to the definition of conformal blocks, we define
\begin{align}\label{eq129}
\ST_{\fx,a_1,\cdots,a_M}(\Wbb)=\ST_{\fx,a_\star}(\Wbb)=\frac{\SW_\fx(\Wbb)}{H^0\big(C,\SV_{\fx,a_1,\cdots,a_M}\otimes \omega_C(\blt S_\fx)\big)\cdot \SW_\fx(\Wbb)}
\end{align}
Then $\ST_{\fx,a_1,\cdots,a_M}(\Wbb)$ is called a \textbf{truncated $\fx$-fusion product} of the admissible $\Vbb^{\times N}$-module $\Wbb$. Its dual space $\ST_{\fx,a_1,\cdots,a_M}^*(\Wbb)$ is called a \textbf{truncated dual $\fx$-fusion product} of $\Wbb$.\index{00@Truncated (dual) $\fx$-fusion product} Moreover, when $a_j^\prime\leq a_j$ for each $1\leq j\leq M$, there is a natural injective linear map $\ST_{\fx,a_1^\prime,\cdots,a_M^\prime}^*(\Wbb)\hookrightarrow \ST_{\fx,a_1,\cdots,a_M}^*(\Wbb)$. Define  the \textbf{dual $\fx$-fusion product} of $\Wbb$ to be \index{00@Dual fusion products $\boxbackslash_\fx(\Wbb)$}  \index{W@$\bbs_\fx(\Wbb)$}
\begin{gather}
\boxbackslash_{\fx}(\Wbb)=\varinjlim_{a_1,\cdots,a_M\in \Nbb}\ST_{\fx,a_1,\cdots,a_M}^*(\Wbb).
\end{gather}
Elements in $\boxbackslash_{\fx}(\Wbb)$, called \textbf{partial conformal blocks},\index{00@Partial conformal blocks} are linear functionals $\upphi:\SW_\fx(\Wbb)\rightarrow \Cbb$ vanishing on $H^0\big(C,\SV_{\fx,a_1,\cdots,a_M}\otimes \omega_C(\blt S_\fx)\big)\cdot \SW_\fx(\Wbb)$ for some $a_1,\cdots,a_M$. 

If $M=0$, we call $\upphi$ a \textbf{conformal block}, and write $\ST_{\fx,a_1,\cdots,a_M}^*(\Wbb)$ as $\ST_\fx^*(\Wbb)$.
\end{df}

We remark that $\scr V_{\fx,0,\dots,0}$ and $\scr T_{\fx,0,\dots,0}(\Wbb)$ have already been considered in \cite[Sec. 7.2]{NT-P1_conformal_blocks} and \cite[Sec. 6.2]{DGT2}.

\subsection{Partial conformal blocks for families of compact Riemann surfaces}\label{lb4}

\begin{df}\label{familydef}
A \textbf{family of $(M,N)$-pointed compact Riemann surfaces (with outgoing local coordinates)} is an  $(M+N)$-pointed family
$$
\fx=(\tau_\star;\theta_\star\big| \pi:\MC\rightarrow \MB\big| \varsigma_\blt)=(\tau_1,\cdots,\tau_M;\theta_1,\cdots,\theta_M\big| \pi:\MC\rightarrow \MB\big| \varsigma_1,\cdots,\varsigma_N)
$$
where $\tau_\star,\sgm_\blt$ are sections and each $\theta_j$ is a local coordinate at $\tau_j(\mc B)$. (Recall Def. \ref{lb1}.) We call $\sgm_\blt$ the \textbf{incoming (families of) marked points} and $\tau_\star$ the \textbf{outgoing (families of) marked points}. Define divisors of $\mc C$:\index{SX@$\SX=\sgm_1(\mc B)+\cdots+\sgm_N(\mc B)$}  \index{DX@$D_\fx=\tau_1(\MB)+\cdots+\tau_M(\MB)$}
\begin{equation}\label{marked} 
S_\fx=\varsigma_1(\MB)+\cdots+\varsigma_N(\MB),\qquad D_\fx=\tau_1(\MB)+\cdots+\tau_M(\MB).
\end{equation}
\end{df}

\begin{ass}\label{lb11}
Unless otherwise stated, we  assume that for each $b\in\mc B$, each connected component of $\MC_b$ contains at least one of the incoming marked points $\sgm_\blt(b)$.
\end{ass}

\begin{df}\label{lb5}
For each $a_1,\cdots,a_M\in \Nbb$, define \index{VX@$\SV_{\fx,a_1,\cdots,a_M}^{\leq n}=\SV_{\fx,a_\star}^{\leq n}$}
$$
\begin{aligned}
\SV_{\fx,a_1,\cdots,a_M}^{\leq n}=\SV_{\fx,a_\star}^{\leq n}:=\SV_{\fx}^{\leq n}&\big(-(L(0)D_\fx+a_1\tau_1+\cdots+a_M \tau_M)\big),\\
\SV_{\fx,a_1,\cdots,a_M}=\SV_{\fx,a_\star}:&=\varinjlim_{n\in \Nbb}\SV_{\fx,a_1,\cdots,a_M}^{\leq n}.
\end{aligned}
$$
More precisely, $\SV_{\fx,a_1,\cdots,a_M}^{\leq n}$ is a locally free $\MO_\MC$-submodule of $\SV_\fx^{\leq n}$ described as follows: outside $\tau_1(\MB),\cdots,\tau_M(\MB)$, it is exactly $\SV_\fx^{\leq n}$; for each $1\leq j\leq M$, let $W_j$ be a neighborhood of $\tau_j(\mc B)$ on which $\theta_j$ is defined such that $W_j$ intersects only $\tau_j(\mc B)$ among $\tau_1(\mc B),\dots,\tau_M(\mc B)$, then $\SV_{\fx,a_1,\cdots,a_M}^{\leq n}\vert_{W_j}$ is generated by 
$$
\MU_\varrho(\theta_j)^{-1}\theta_j^{a_j+L(0)}v
$$
for homogeneous $v\in\Vbb^{\leq n}$.
\end{df}

\begin{df}[Definition of $\SW_\fx(\Wbb)$]\label{lb7}
Choose an admissible $\Vbb^{\times N}$-modules $\Wbb$, which is associated to the ordered incoming marked points $\varsigma_1,\cdots,\varsigma_N$.  Choose an open subset $V\subset \MB$ small enough such that  the restricted family\index{XV@$\fk X_V$, the restriction of $\fk X$ to the open subset $V\subset\mc B$} \index{CV@$\mc C_V=\pi^{-1}(V)$}
\begin{gather}\label{restrictedfamily}
\begin{gathered}
\fx_V:=\Big(\tau_1\vert_V,\cdots,\tau_M\vert_V~\Big|~\pi:\MC_V\rightarrow V~\Big|~\varsigma_1\vert_V,\cdots,\varsigma_N\vert_V\Big)\\
\text{where }\mc C_V=\pi^{-1}(V)
\end{gathered}
\end{gather}
admits local coordinates $\eta_1,\cdots,\eta_N$ at $\varsigma_1(V),\cdots,\varsigma_N(V)$. If there is another set of local coordinates $\mu_1,\cdots,\mu_N$ at $\varsigma_1(V),\cdots,\varsigma_N(V)$ respectively, then we have a family of transformations $(\eta_i\vert \mu_i):V\rightarrow \Gbb$ such that
$$
(\eta_i\vert \mu_i)_b\circ \mu_i\vert_{\MC_b}=\eta_i\vert_{\MC_b}
$$
for each $b\in V$. By (\ref{coordinatechange4}), we have an isomorphism of $\MO_V$-modules
$$
\MU_i(\eta_i\vert \mu_i):\Wbb\otimes \MO_V\xrightarrow{\simeq}\Wbb\otimes \MO_V.
$$
Set 
$$
\MU(\eta_\blt\vert \mu_\blt):=\MU_1(\eta_1\vert \mu_1) \cdots \MU_N(\eta_N\vert \mu_N):\Wbb\otimes \MO_V\xrightarrow{\simeq}\Wbb\otimes \MO_V.
$$
By Thm. \ref{coordinatechange5} and the fact that $\MU_i(\rho_1)$ commutes with $\MU_j(\rho_2)$ for $i\ne j$, $\MU(\eta_\blt\vert \mu_\blt)$ satisfies the cocycle condition, and so we can define a locally free $\MO_\MB$-module \index{WX@$\SW_\fx(\Wbb)$} 
\begin{align*}
\SW_\fx(\Wbb)
\end{align*}
with a trivialization 
$$
\MU(\eta_\blt):\SW_\fx(\Wbb)\vert_V\xrightarrow{\simeq} \Wbb\otimes \MO_V
$$
for each set of local coordinates $\eta_\blt$ at $\varsigma_1(V),\cdots,\varsigma_N(V)$. The transition function of $\SW_\fx(\Wbb)$ is given by 
$$
\MU(\eta_\blt)\MU(\mu_\blt)^{-1}=\MU(\eta_\blt\vert \mu_\blt).
$$
\end{df}


\subsubsection{Restriction to fibers}

For each $b\in \MB$, the fiber 
$$
\fx_b=\big(\tau_1(b),\cdots,\tau_M(b);\theta_1\vert_{W_1\cap \MC_b},\cdots,\theta_M\vert_{W_M\cap \MC_b}\big| \MC_b\big| \varsigma_1(b),\cdots,\varsigma_N(b)\big)
$$
is an $(M,N)$-pointed compact Riemann surface with outgoing local coordinates. Here we assume $\varsigma_1(b),\cdots,\varsigma_N(b)$ are incoming marked points and $\tau_1(b),\cdots,\tau_M(b)$ are outgoing marked points. Define divisors of fibers \index{SX@$\SX(b)=S_{\fx_b}$}  \index{DX@$D_{\fk X}(b)=D_{\fk X_b}$}
$$
S_\fx(b)=S_{\fx_b}=\varsigma_1(b)+\cdots+\varsigma_N(b),\quad D_\fx(b)=D_{\fx_b}=\tau_1(b)+\cdots+\tau_M(b)
$$

\begin{rem}\label{resrem1}
By comparing the transition functions, it is easy to see that for each $n\in \Nbb$, $a_1,\cdots,a_M\in \Nbb$ and $b\in \MB$, there is a natural isomorphism of $\MO_{\MC_b}$-modules
\begin{subequations}\label{eq8}
\begin{align}
    \SV_{\fx,a_1,\cdots,a_M}^{\leq n}\vert_{\MC_b}\simeq \SV_{\fx_b,a_1,\cdots,a_M}^{\leq n},
\end{align}
and a natural isomorphism of vector spaces
\begin{align}
    \SW_\fx(\Wbb)\vert_{b}\simeq \SW_{\fx_b}(\Wbb).  \label{eq10}
\end{align}
    Therefore, we have 
\begin{align}
    \SV_{\fx,a_1,\cdots,a_M}^{\leq n}\otimes\omega_{\MC/\MB}(\blt S_\fx)\vert_{\MC_b}\simeq \SV_{\fx_b,a_1,\cdots,a_M}^{\leq n}\otimes \omega_{\MC_b}(\blt S_\fx(b)).
\end{align}
\end{subequations} \addtocounter{equation}{-1}
\end{rem}

\begin{pp}\label{grauertcor}
Let $n,a_1,\cdots,a_M\in \Nbb$. For each $b\in \MB$, there exists $k_0\in\Nbb$ such that for all $k\geq k_0$ there is an isomorphism of vector spaces 
\begin{subequations}\setcounter{equation}{3}
\begin{align}
   \pi_*\big(\SV_{\fx,a_1,\cdots,a_M}^{\leq n}\otimes \omega_{\MC/\MB}(k S_\fx)\big)\Big\vert_b \simeq H^0\big(\MC_b,\SV_{\fx_b,a_1,\cdots,a_M}^{\leq n}\otimes \omega_{\MC_b}(k S_\fx(b))\big) \label{eq5}
\end{align}
\end{subequations}
(recall the notation \eqref{eq6}) defined by the natural restriction map
\begin{align}\label{eq16}
\pi_*\big(\SV_{\fx,a_1,\cdots,a_M}^{\leq n}\otimes \omega_{\MC/\MB}(k S_\fx)\big)_b\rightarrow H^0\big(\MC_b,\SV_{\fx_b,a_1,\cdots,a_M}^{\leq n}\otimes \omega_{\MC_b}(k S_\fx(b))\big)
\end{align}
In particular, \eqref{eq16} is surjective.
\end{pp}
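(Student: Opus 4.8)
The plan is to derive this as a standard instance of Grauert's base-change theorem, once the correct cohomological vanishing on the fibers is in place. Set $\MF=\SV_{\fx,a_\star}^{\leq n}\otimes\omega_{\MC/\MB}(kS_\fx)$. Since $\SV_\fx^{\leq n}$ is locally free of finite rank (this is exactly where the truncation by $n$ matters) and $\omega_{\MC/\MB}(kS_\fx)$ is invertible, $\MF$ is a finite-rank locally free $\MO_\MC$-module; because $\pi$ is a submersion, $\MO_\MC$ is $\MO_\MB$-flat, so $\MF$ is flat over $\MB$. By Rem. \ref{resrem1} (in particular \eqref{eq8}) the fiber is $\MF\vert_{\MC_b}\simeq\SV_{\fx_b,a_\star}^{\leq n}\otimes\omega_{\MC_b}(kS_\fx(b))$, and unwinding the definition of the fiber of a direct image sheaf shows that the map \eqref{eq16} is precisely the base-change morphism $\varphi_b^0\colon\pi_*\MF\vert_b\to H^0(\MC_b,\MF\vert_{\MC_b})$ given by restricting sections over a neighborhood of $\pi^{-1}(b)$ to $\MC_b$.

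First I would prove the key vanishing: for the given $b,n,a_\star$ there is $k_0\in\Nbb$ with $H^1\big(\MC_b,\SV_{\fx_b,a_\star}^{\leq n}\otimes\omega_{\MC_b}(kS_\fx(b))\big)=0$ for all $k\geq k_0$. By Assumption \ref{lb11} the effective divisor $S_\fx(b)$ has positive degree on every connected component of $\MC_b$, and $\SV_{\fx_b,a_\star}^{\leq n}\otimes\omega_{\MC_b}$ is locally free of finite rank; hence, applying Serre duality component by component, this $H^1$ is dual to the space of global sections of a fixed finite-rank locally free sheaf twisted down by $kS_\fx(b)$, which vanishes once $k$ is large — the usual Serre-vanishing argument on a compact Riemann surface. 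Moreover, the standard exact sequences comparing the $k$-th and $(k-1)$-st twists (whose cokernels are supported on the finite set underlying $S_\fx(b)$, hence have vanishing $H^1$) show that this $H^1$ is non-increasing in $k$, so a single $k_0$ works for all $k\geq k_0$.

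Then I would run the base-change machinery, using that $\pi$ is proper so that all $R^i\pi_*\MF$ are coherent (Grauert's direct image theorem), hence have finitely generated stalks over $\MO_{\MB,b}$. Since the fibers are $1$-dimensional, $H^i$ of any coherent sheaf on $\MC_b$ vanishes for $i\geq 2$, so $R^2\pi_*\MF=0$; Grauert's cohomology-and-base-change theorem (the analytic analogue of the package in Hartshorne III.12.11) then gives that $\varphi_b^1$ is surjective, hence an isomorphism. Feeding in $H^1(\MC_b,\MF\vert_{\MC_b})=0$ from the previous step yields $R^1\pi_*\MF\vert_b=0$, hence $(R^1\pi_*\MF)_b=0$ by Nakayama, hence $R^1\pi_*\MF$ vanishes on a neighborhood of $b$ and is in particular locally free there; the base-change criterion relating local freeness of $R^1\pi_*\MF$ near $b$ to surjectivity of $\varphi_b^0$ then forces $\varphi_b^0$ to be surjective, hence an isomorphism. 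This is exactly the asserted isomorphism \eqref{eq5}, and the final "in particular" is just the surjectivity of $\varphi_b^0$.

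I expect the only genuine work to lie in the first two paragraphs: making the Serre-vanishing estimate precise on a possibly disconnected curve (so that "$k_0$ depending on $b,n,a_\star$" is honestly produced), and keeping track of the fact that the finite-rank hypothesis on $\SV_\fx^{\leq n}$ is indispensable — the statement is false for the infinite-rank sheaf $\SV_\fx$. The base-change step itself is routine once one has the analytic version of Grauert's theorems for proper morphisms, and the identification of the abstract map $\varphi_b^0$ with the concrete restriction map \eqref{eq16} is immediate.
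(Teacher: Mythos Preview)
Your proposal is correct and follows essentially the same approach as the paper's proof: establish flatness of $\mc O_{\mc C}$ over $\mc O_{\mc B}$, invoke Serre vanishing on the fiber $\mc C_b$ (using Asmp.~\ref{lb11} that every component meets $S_\fx(b)$) to kill $H^1$, and then apply the analytic base-change theorem. The paper is much terser---it simply cites the relevant base-change results (\cite{BaSt}, \cite{GPR94}) rather than walking through the $R^2\pi_*\to R^1\pi_*\to R^0\pi_*$ descent you spell out---but the content is the same.
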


\begin{proof}
Since the map $\pi:\mc C\rightarrow\mc B$ is open, $\mc O_{\mc C}$ is $\mc O_{\mc B}$-flat by \cite[Sec. 3.20]{Fis76}. Since we have Asmp. \ref{ass1}, by Serre's vanishing theorem (cf. \cite[Prop. 5.2.7]{Huy} or \cite[Thm. IV.2.1]{BaSt}), there exists $k_0\in\Nbb$ such that for all $k\geq k_0$ we have
\begin{align}\label{eq14}
H^1\big(\MC_b,\SV_{\fx_b,a_1,\cdots,a_M}^{\leq n}\otimes \omega_{\MC_b}(k S_\fx(b))\big)=0.
\end{align}
Therefore, by the base change property for flat families of complex analytic spaces, the restriction map \eqref{eq16} defines an isomorphism \eqref{eq5}, cf. \cite[Cor. III.3.9]{BaSt}. (One can also appeal to Grauert's base change theorem \cite[Thm. III.4.7]{GPR94} or \cite[Thm. III.4.12]{BaSt}. See the proof of \cite[Thm. 5.5]{Gui-sewingconvergence} about how to use this theorem.)
\end{proof}

\begin{co}\label{grauertcor2}
Let $a_1,\cdots,a_M\in \Nbb$. Let $V$ be any Stein open subset of $\mc B$. Then for each $b\in V$, the elements of $\pi_*\big(\SV_{\fx,a_1,\cdots,a_M}\otimes \omega_{\MC/\MB}(\blt S_\fx)\big)(V)$ generate the  $\mc O_{\mc B,b}$-module  $\pi_*\big(\SV_{\fx,a_1,\cdots,a_M}\otimes \omega_{\MC/\MB}(\blt S_\fx)\big)_b$ (the stalk). Thus, their restrictions to $\MC_b$ form the vector space $H^0\big(\MC_b,\SV_{\fx_b,a_1,\cdots,a_M}\otimes \omega_{\MC_b}(\blt S_\fx(b))\big)$.
\end{co}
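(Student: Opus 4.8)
The plan is to reduce the statement to the coherent subsheaves $\SV_{\fx,a_\star}^{\leq n}\otimes\omega_{\MC/\MB}(kS_\fx)$ and then apply Grauert's coherence theorem together with Cartan's Theorem A on the Stein open set $V$. For $n,k\in\Nbb$ write
\begin{align*}
\mathcal F_{n,k}=\SV_{\fx,a_\star}^{\leq n}\otimes\omega_{\MC/\MB}(kS_\fx),
\end{align*}
which by Def. \ref{lb5} is a finite-rank locally free, hence coherent, $\mc O_\MC$-module, and put $\mathcal F=\SV_{\fx,a_\star}\otimes\omega_{\MC/\MB}(\blt S_\fx)=\varinjlim_{(n,k)}\mathcal F_{n,k}$, a filtered colimit with injective transition maps. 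Since $\pi$ is proper, for every $b\in\mc B$ one has $(\pi_*\mathcal F)_b=\varinjlim_{(n,k)}(\pi_*\mathcal F_{n,k})_b$: a germ at $b$ of a section of $\mathcal F$ over $\pi^{-1}(U)$, with $U\ni b$ relatively compact, already lies in some $\mathcal F_{n,k}(\pi^{-1}(U))$, because $\pi^{-1}(\overline U)$ is compact and each $\mathcal F_{n,k}$ is a subsheaf of $\mathcal F$. Moreover, by Grauert's direct image theorem each $\pi_*\mathcal F_{n,k}$ is a coherent $\mc O_\MB$-module.

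For the first assertion, fix $b\in V$ and $s\in(\pi_*\mathcal F)_b$. Then $s$ is the image of some $s_0\in(\pi_*\mathcal F_{n_0,k_0})_b$. Since $V$ is Stein and $\pi_*\mathcal F_{n_0,k_0}$ is coherent, Cartan's Theorem A shows that $(\pi_*\mathcal F_{n_0,k_0})_b$ is generated as an $\mc O_{\mc B,b}$-module by $\Gamma(V,\pi_*\mathcal F_{n_0,k_0})$; the image of the latter lies in $\Gamma(V,\pi_*\mathcal F)$, so $s$ belongs to the $\mc O_{\mc B,b}$-submodule of $(\pi_*\mathcal F)_b$ generated by $\Gamma(V,\pi_*\mathcal F)$.

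For the second assertion I would combine the first with Prop. \ref{grauertcor}. Given $t\in H^0\big(\MC_b,\SV_{\fx_b,a_\star}\otimes\omega_{\MC_b}(\blt S_\fx(b))\big)$, compactness of $\MC_b$ and Rem. \ref{resrem1} show that $t$ lies in $H^0\big(\MC_b,\mathcal F_{n,k}|_{\MC_b}\big)\simeq H^0\big(\MC_b,\SV_{\fx_b,a_\star}^{\leq n}\otimes\omega_{\MC_b}(kS_\fx(b))\big)$ for some $n$ and all sufficiently large $k$. Enlarging $k$ if needed, Prop. \ref{grauertcor} identifies this space, via restriction to $\MC_b$, with the fiber $(\pi_*\mathcal F_{n,k})|_b$; by the first assertion applied to $\mathcal F_{n,k}$, the induced map $\Gamma(V,\pi_*\mathcal F_{n,k})\to(\pi_*\mathcal F_{n,k})|_b$ is surjective. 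Hence there is $s\in\Gamma(V,\pi_*\mathcal F_{n,k})\subset\Gamma(V,\pi_*\mathcal F)=\mathcal F(\pi^{-1}(V))$ whose restriction to $\MC_b$ is $t$. As $t$ was arbitrary, the restrictions to $\MC_b$ of the elements of $\pi_*\big(\SV_{\fx,a_\star}\otimes\omega_{\MC/\MB}(\blt S_\fx)\big)(V)$ exhaust $H^0\big(\MC_b,\SV_{\fx_b,a_\star}\otimes\omega_{\MC_b}(\blt S_\fx(b))\big)$.

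I expect the main obstacle to be the bookkeeping around passing between $\mathcal F$ and its coherent pieces $\mathcal F_{n,k}$: one has to know that forming the stalk $(\pi_*-)_b$, and, for the second part, restricting to $\MC_b$, commute with the colimit $\varinjlim_{(n,k)}$. Both facts rest on the properness of $\pi$ (for the second also on compactness of $\MC_b$ and on Prop. \ref{grauertcor}), and once they are in place the argument proceeds one coherent level at a time, where Grauert's theorem and Theorem A apply directly.
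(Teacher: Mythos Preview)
Your proof is correct and follows essentially the same approach as the paper: reduce to the coherent pieces $\pi_*\mathcal F_{n,k}$ via Grauert's direct image theorem, apply Cartan's Theorem A on the Stein set $V$ for the first assertion, and invoke Prop.~\ref{grauertcor} for the second. The paper's own proof states exactly these ingredients in two sentences; you have simply made explicit the bookkeeping about passing between $\mathcal F$ and its coherent filtration (in particular the commutation of stalks with the colimit via properness), which the paper leaves implicit.
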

\begin{proof}
The first conclusion follows from  Cartan's theorem A (applied to each $\mc O_{\mc B}$-module $\pi_*\big(\SV_{\fx,a_1,\cdots,a_M}^{\leq n}\otimes \omega_{\MC/\MB}(k S_\fx)\big)$, which is coherent by Grauert's direct image theorem \cite[Sec. 10.4]{GR84}). The second conclusion follows from Prop. \ref{grauertcor}.
\end{proof}

\subsubsection{Sheaves of partial conformal blocks}

We assume the isomorphisms \eqref{eq8}.
\begin{df}
Define an $\mc O_{\mc B}$-linear action of $\pi_*\big(\SV_{\fx,a_1,\cdots,a_M}\otimes \omega_{\MC/\MB}(\blt S_\fx)\big)$ on $\SW_\fx(\Wbb)$ (called \textbf{residue action}) \index{00@Residue action} as follows. Choose any $V\subset \MB$ small enough such that there are local coordinates $\eta_1,\cdots,\eta_N$ at $\varsigma_1(V),\cdots,\varsigma_N(V)$. For each section $\sigma\in \pi_*\big(\SV_{\fx,a_1,\cdots,a_M}\otimes \omega_{\MC/\MB}(\blt S_\fx)\big)(V)=H^0\big(\MC_V,\SV_{\fx,a_1,\cdots,a_M}\otimes \omega_{\MC/\MB}(\blt S_\fx)\big)$ and $\wbf\in H^0\big(V,\SW_\fx(\Wbb)\big)$, we have $\MU(\eta_\blt)\wbf\in \Wbb\otimes \MO(V)$. Since $\SV_{\fx,a_1,\cdots,a_M}\otimes \omega_{\MC/\MB}(\blt S_\fx)$ is a subsheaf of $\SV_{\fx}\otimes \omega_{\MC/\MB}(\blt S_\fx)$, we can regard $\sigma$ as an element in $\pi_*\big(\SV_{\fx}\otimes \omega_{\MC/\MB}(\blt S_\fx)\big)(V)$, and the residue action of $\sigma$ on $\MU(\eta_\blt)\wbf$ is defined \eqref{eq7}. The residue actions of $\sigma$ on $\wbf$ are defined by
\begin{subequations}\label{eq13}
\begin{gather}
\sigma *_i \wbf:=\MU(\eta_\blt)^{-1}\big(\sigma *_i \MU(\eta_\blt)\wbf\big)\qquad\in H^0(V,\SW_\fx(\Wbb))  \label{eq50}\\
\sigma \cdot \wbf=\sum_{i=1}^N\sigma*_i \wbf
\end{gather}
\end{subequations}
where $\sigma *_i \MU(\eta_\blt)\wbf$ is defined by \eqref{residueaction3}.
\end{df}

\begin{rem}\label{lb13}
When restricted to each fiber $\mc C_b$, using the natural equivalences \eqref{eq8}, we have (recall notation \eqref{eq9})
\begin{align}
(\sigma *_i\wbf)(b)=\sigma(b) *_i \wbf(b) \label{eq11}
\end{align}
where $\sigma(b)\in\eqref{eq5}$, $\wbf(b),(\sigma *_i\wbf)(b)\in\eqref{eq10}$, and the RHS of \eqref{eq11} is defined by \eqref{eq12}, which is independent of the choice of $\eta_\blt$ due to Rem. \ref{lb2}. Therefore, the definition of residue actions in \eqref{eq13} is also independent of the choice of $\eta_\blt$.
\end{rem}

\begin{df}
Define the \textbf{truncated $\fx$-fusion product} (of \textbf{multi-level $a_\star$})
$$
\ST_{\fx,a_1,\cdots,a_M}(\Wbb)=\frac{\SW_\fx(\Wbb)}{\pi_*\big(\SV_{\fx,a_1,\cdots,a_M}\otimes \omega_{\MC/\MB}(\blt S_\fx)\big)\cdot\SW_\fx(\Wbb)}
$$
where the denominator  \index{J@$\scr J,\scr J^\pre$}
\begin{subequations}\label{eq38}
\begin{align}\label{eq15}
\scr J=\pi_*\big(\SV_{\fx,a_1,\cdots,a_M}\otimes \omega_{\MC/\MB}(\blt S_\fx)\big)\cdot\SW_\fx(\Wbb)    
\end{align} 
is the sheafification of the presheaf $\scr J^\pre$ associating to each open $V\subset\mc B$ the $\mc O(V)$-module
\begin{align}\label{eq37}
\scr J^\pre(V)=\pi_*\big(\SV_{\fx,a_1,\cdots,a_M}\otimes \omega_{\MC/\MB}(\blt S_\fx)\big)(V)\cdot\SW_\fx(\Wbb)(V)
\end{align}
\end{subequations}
The dual sheaf is denoted by \index{TX@$\ST_{\fx,a_1,\cdots,a_M}(\Wbb)$ and its dual sheaf $\ST^*_{\fx,a_1,\cdots,a_M}(\Wbb)$}
\begin{align*}
\ST_{\fx,a_1,\cdots,a_M}^*(\Wbb)=\big(\ST_{\fx,a_1,\cdots,a_M}(\Wbb) \big)^*
\end{align*}
and called the \textbf{truncated dual $\fk X$-fusion product}. Following Subsec. \ref{dualtensorproduct1}, we can define the dual $\fx$-fusion product $\boxbackslash_{\fx}(\Wbb)$, which is an $\MO_\MB$-module. Global sections of $\boxbackslash_{\fx}(\Wbb)$ are called \textbf{partial conformal blocks associated to $\fx$ and $\Wbb$}.
\end{df}

Note that the definition of $\scr T_{\fx,a_\blt}(\Wbb)$ relies on the choice of $\wtd L_\blt(0)$-grading of $\Wbb$ since $\scr W_\fx(\Wbb)$ does. (However, if $\fx$ admits local coordinates $\eta_\blt$ and the identification $\scr W_\fx(\Wbb)\simeq\Wbb\otimes_\Cbb\mc O_\MB$ via $\mc U(\eta_\blt)$ is assumed, then $\scr T_{\fx,a_\blt}(\Wbb)$ is clearly independent of the choice of $\wtd L_\blt(0)$.)

\begin{rem}\label{lb9}
Recall notation \eqref{eq18}. Then we know that the following are equivalent:
\begin{gather}
\begin{gathered}
\upphi\in \ST_{\fx,a_1,\cdots,a_M}^*(\Wbb)(\mc B)\\
\Leftrightarrow\upphi\text{ is an $\mc O_{\mc B}$-module morphism }\SW_\fx(\Wbb)\rightarrow \MO_\MB\text{ vanishing on }\scr J\\
\Leftrightarrow\upphi\text{ is an $\mc O_{\mc B}$-module morphism }\SW_\fx(\Wbb)\rightarrow \MO_\MB\text{ vanishing on }\scr J^\pre
\end{gathered}
\end{gather}
We call such $\upphi$ a \textbf{partial conformal block associated to $\fk X,\Wbb$ with multi-level $a_1,\dots,a_M$}.
\end{rem}

One should keep in mind that outgoing marked points and local coordinates are only used to define the sheaf $\SV_{\fx,a_1,\cdots,a_M}$. When there are no outgoing marked points and local coordinates, i.e., $M=0$, $\SV_{\fx,a_1,\cdots,a_M}$ is exactly the sheaf of VOA $\SV_\fx$ and dual fusion products are exactly conformal blocks.

For the convenience of discussion, we define \index{Jb@$J(b)$}
\begin{gather}\label{eq21}
J(b)=H^0\big(\MC_b,\SV_{\fx_b,a_1,\cdots,a_M}\otimes \omega_{\MC_b}(\blt S_\fx(b))\big)\cdot \SW_{\fx_b}(\Wbb)
\end{gather}

\subsubsection{Basic properties of partial conformal blocks}

\begin{pp}
    For each $b\in \MB$, the evaluation map
    \begin{equation}\label{familyfiber1}
        \SW_\fx(\Wbb)_b\rightarrow \SW_{\fk X}(\Wbb)|_b\simeq\SW_{\fx_b}(\Wbb),\quad w\mapsto w(b)
    \end{equation}
    descends to an isomorphism of vector spaces
    $$
    \ST_{\fx,a_1,\cdots,a_M}(\Wbb)\vert_b \xrightarrow{\simeq} \ST_{\fx_b,a_1,\cdots,a_M}(\Wbb).
    $$
\end{pp}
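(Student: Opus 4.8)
The plan is to express both sides as quotients of the fixed vector space $\SW_{\fx_b}(\Wbb)$ and then match the subspaces being divided out, reducing the assertion to the single non-formal input that fiberwise sections of the twisted VOA sheaf lift to sections over a neighborhood of $b$.

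First I would dispose of the purely homological part. Taking stalks at $b$ is exact, so $\ST_{\fx,a_\star}(\Wbb)_b=\SW_\fx(\Wbb)_b/\scr J_b$, where $\scr J$ is the sheaf in \eqref{eq38}. Since sheafification does not change stalks, and since the residue action is $\MO_\MB$-linear in the $\SW$-variable (so $f\cdot(\sigma\cdot\wbf)=\sigma\cdot(f\wbf)$ for a section $f$ of $\MO_\MB$), the stalk $\scr J_b$ is the $\Cbb$-span of germs $(\sigma\cdot\wbf)_b$ with $\sigma\in\pi_*\big(\SV_{\fx,a_\star}\otimes\omega_{\MC/\MB}(\blt S_\fx)\big)(V)$, $\wbf\in\SW_\fx(\Wbb)(V)$, and $V$ an arbitrarily small Stein neighborhood of $b$ (polydiscs carrying local coordinates $\eta_\blt$ at $\varsigma_\blt(V)$ form a cofinal system). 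Now tensor the exact sequence $\scr J_b\to\SW_\fx(\Wbb)_b\to\ST_{\fx,a_\star}(\Wbb)_b\to 0$ with $\Cbb=\MO_{\MB,b}/\fm_{\MB,b}$ over $\MO_{\MB,b}$: right-exactness of $-\otimes\Cbb$, together with the identification $\SW_\fx(\Wbb)|_b\simeq\SW_{\fx_b}(\Wbb)$ of \eqref{eq10}, yields
\[
\ST_{\fx,a_\star}(\Wbb)\big|_b\;\simeq\;\SW_{\fx_b}(\Wbb)\big/\,\mathrm{ev}_b(\scr J_b),
\]
where $\mathrm{ev}_b\colon\SW_\fx(\Wbb)_b\to\SW_{\fx_b}(\Wbb)$, $w\mapsto w(b)$, is the evaluation map \eqref{familyfiber1}. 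Because $\SW_\fx(\Wbb)$ is locally free, $\mathrm{ev}_b$ is surjective with kernel $\fm_{\MB,b}\SW_\fx(\Wbb)_b$, so the map induced on quotients is precisely the one asserted in the statement, and it is automatically surjective; it remains only to identify $\mathrm{ev}_b(\scr J_b)$ with $J(b)$ (which gives injectivity as well, since then the numerator relation $\mathrm{ev}_b^{-1}(J(b))=\scr J_b+\fm_{\MB,b}\SW_\fx(\Wbb)_b$ follows by a one-line chase).

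For $\mathrm{ev}_b(\scr J_b)\subseteq J(b)$: by Rem. \ref{lb13}, $\mathrm{ev}_b\big((\sigma\cdot\wbf)_b\big)=(\sigma\cdot\wbf)(b)=\sigma(b)\cdot\wbf(b)$, where $\sigma(b)$ is the restriction of $\sigma$ to $\MC_b$, hence an element of $H^0\big(\MC_b,\SV_{\fx_b,a_\star}\otimes\omega_{\MC_b}(\blt S_\fx(b))\big)$, and $\wbf(b)\in\SW_{\fx_b}(\Wbb)$; so this lies in $J(b)$ by \eqref{eq21}. For the reverse inclusion, take any generator $\tau\cdot w$ of $J(b)$ with $\tau\in H^0\big(\MC_b,\SV_{\fx_b,a_\star}\otimes\omega_{\MC_b}(\blt S_\fx(b))\big)$ and $w\in\SW_{\fx_b}(\Wbb)$. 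Choose a Stein neighborhood $V$ of $b$ with local coordinates $\eta_\blt$; by Cor. \ref{grauertcor2} the restriction map $\pi_*\big(\SV_{\fx,a_\star}\otimes\omega_{\MC/\MB}(\blt S_\fx)\big)(V)\to H^0\big(\MC_b,\SV_{\fx_b,a_\star}\otimes\omega_{\MC_b}(\blt S_\fx(b))\big)$ is surjective, so $\tau=\sigma(b)$ for some $\sigma$ in this space; and by local freeness of $\SW_\fx(\Wbb)$, $w=\wbf(b)$ for some $\wbf\in\SW_\fx(\Wbb)(V)$. Then $\tau\cdot w=\sigma(b)\cdot\wbf(b)=\mathrm{ev}_b\big((\sigma\cdot\wbf)_b\big)\in\mathrm{ev}_b(\scr J_b)$. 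Since such products span $J(b)$, we get $J(b)\subseteq\mathrm{ev}_b(\scr J_b)$, hence equality.

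Putting these together gives $\ST_{\fx,a_\star}(\Wbb)|_b\simeq\SW_{\fx_b}(\Wbb)/J(b)=\ST_{\fx_b,a_\star}(\Wbb)$, and by construction this isomorphism is induced by $w\mapsto w(b)$. The genuinely substantive ingredient is Cor. \ref{grauertcor2} — equivalently the base-change statement Prop. \ref{grauertcor}, which rests on Serre vanishing and hence on Asmp. \ref{ass1} via \eqref{eq14} — used to lift every fiberwise section over $\MC_b$; everything else is right-exactness of $-\otimes\Cbb$ and the fiberwise compatibility of the residue action in Rem. \ref{lb13}. I expect that lifting step (and the bookkeeping that one may restrict the colimit defining $\scr J_b$ to Stein neighborhoods where Cor. \ref{grauertcor2} applies) to be the only place demanding genuine care; the rest is a routine diagram chase.
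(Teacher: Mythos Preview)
Your proof is correct and follows essentially the same route as the paper: both reduce to showing that the evaluation map carries $\scr J_b$ onto $J(b)$, with the surjectivity supplied by Prop.~\ref{grauertcor}/Cor.~\ref{grauertcor2} and the containment $\mathrm{ev}_b(\scr J_b)\subset J(b)$ by Rem.~\ref{lb13}. The only cosmetic difference is that the paper packages the final identification of the quotients via a Five Lemma argument on a two-row diagram, whereas you compute $\mathrm{ev}_b(\scr J_b)=J(b)$ directly and conclude $\ST_{\fx,a_\star}(\Wbb)|_b\simeq\SW_{\fx_b}(\Wbb)/J(b)$ by hand; the content is identical.
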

\begin{proof}
Write $\mk_b=\mk_{\mc B,b}$. Recall that $\scr W_{\fk X}(\Wbb)_b/\fk m_{\mc B,b}\scr W_{\fk X}(\Wbb)_b=\scr W_{\fk X}(\Wbb)|_b$. Then $\ST_{\fx,a_1,\cdots,a_M}(\Wbb)_b=\scr W_{\fk X}(\Wbb)_b/\scr J_b$ where $\scr J_b$ is the stalk of $\scr J$ at $b$, and hence 
\begin{align*}
\ST_{\fx,a_1,\cdots,a_M}(\Wbb)|_b=\frac{\ST_{\fx,a_1,\cdots,a_M}(\Wbb)_b}{\mk_b\ST_{\fx,a_1,\cdots,a_M}(\Wbb)_b}=\frac{\scr W_{\fk X}(\Wbb)_b}{\scr J_b+\fk m_b\scr W_{\fk X}(\Wbb)_b}.
\end{align*}
Clearly \eqref{familyfiber1} has kernel $\mk_b\scr W_{\fk X}(\Wbb)_b$. Since \eqref{eq16} is surjective (Prop. \ref{grauertcor}),  \eqref{familyfiber1} restricts to a surjective map 
\begin{gather}
\scr J_b\twoheadrightarrow J(b)  \label{eq17}
\end{gather}
and hence descends to a surjective map $(\scr J_b+\mk_b\scr W_{\fk X}(\Wbb)_b)/\mk_b\scr W_{\fk X}(\Wbb)_b\twoheadrightarrow J(b)$. We thus have a commutative diagram
\begin{equation*}
\begin{tikzcd}
\displaystyle\frac{\scr J_b+\mk_b\scr W_{\fk X}(\Wbb)_b}{\mk_b\scr W_{\fk X}(\Wbb)_b} \arrow[d, two heads] \arrow[r] & \displaystyle \frac{\scr W_{\fk X}(\Wbb)_b}{\mk_b\scr W_{\fk X}(\Wbb)_b}  \arrow[r] \arrow[d,"\simeq"] & \ST_{\fx,a_1,\cdots,a_M}(\Wbb)|_b \arrow[r] \arrow[d] & 0 \\
J(b) \arrow[r]                      & \scr W_{\fk X_b}(\Wbb) \arrow[r]           & \ST_{\fx_b,a_1,\cdots,a_M}(\Wbb) \arrow[r]           & 0
\end{tikzcd}
\end{equation*}
where the horizontal lines are exact sequences. By Five Lemma, the third vertical arrow is an isomorphism.
\end{proof}

\begin{rem}
Let $\upphi:\SW_\fx(\Wbb)\rightarrow \MO_{\mc B}$ be a homomorphism of $\MO_V$-modules. For each $b\in \mc B$, we have the restriction
\begin{align}\label{eq20}
\upphi|_b=\scr W_{\fk X_b}(\Wbb)\simeq \scr W_{\fk X}(\Wbb)|_b\rightarrow\Cbb
\end{align}
defined by \eqref{eq19}. On the one hand, we may ask whether $\upphi$ is a partial conformal block of $\fk X$ with multi-level $a_\star$; namely, whether $\upphi$ vanishes on $\scr J(V)$ for all open $V\subset\mc B$ (equivalently, vanishes on $\scr J^\pre(V)$ for all $V$). On the other hand, one may ask whether for each $b$, $\upphi|_b$ is a partial conformal block of $\fk X_b$ with multi-level $a_\star$, i.e. whether the linear map \eqref{eq20} vanishes on $J(b)=\eqref{eq21}$. This is affirmed by the following proposition.
\end{rem}

\begin{pp}\label{familyfiber5}
Let $\upphi:\SW_\fx(\Wbb)\rightarrow \MO_{\mc B}$ be an $\MO_{\mc B}$-module morphism. Then $\upphi\in \ST_{\fx,a_1,\cdots,a_M}^*(\Wbb)(\mc B)$ if and only if $\upphi\vert_b\in \ST_{\fx_b,a_1,\cdots,a_M}^*(\Wbb)$ for each $b\in \mc B$. 
\end{pp}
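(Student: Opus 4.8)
The plan is to reduce the statement to the fiberwise and pointwise descriptions already set up, and then match them. First I would invoke Remark \ref{lb9}: $\upphi\in\ST_{\fx,a_\star}^*(\Wbb)(\mc B)$ is equivalent to $\upphi(V)$ killing the presheaf value $\scr J^\pre(V)=\pi_*\big(\SV_{\fx,a_\star}\otimes\omega_{\MC/\MB}(\blt S_\fx)\big)(V)\cdot\SW_\fx(\Wbb)(V)$ for every open $V\subset\mc B$, while $\upphi|_b\in\ST_{\fx_b,a_\star}^*(\Wbb)$ means the linear functional $\upphi|_b:\SW_{\fx_b}(\Wbb)\to\Cbb$ kills $J(b)=H^0\big(\MC_b,\SV_{\fx_b,a_\star}\otimes\omega_{\MC_b}(\blt S_\fx(b))\big)\cdot\SW_{\fx_b}(\Wbb)$. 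The two bridges between the family and its fibers that I would use are \eqref{eq11}, which gives $(\sigma\cdot\wbf)(b)=\sigma(b)\cdot\wbf(b)$ with $\sigma(b)$ the restriction of $\sigma$ to $\MC_b$ lying in $H^0\big(\MC_b,\SV_{\fx_b,a_\star}\otimes\omega_{\MC_b}(\blt S_\fx(b))\big)$, and \eqref{eq19}, which says that for $s\in\SW_\fx(\Wbb)(V)$ the value at $b\in V$ of the holomorphic function $\upphi(V)(s)$ equals $\upphi|_b(s(b))$.

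For the ``if'' direction, assume each $\upphi|_b$ kills $J(b)$. Given an open $V\subset\mc B$ and an element $s=\sum_l\sigma_l\cdot\wbf_l\in\scr J^\pre(V)$ (with $\sigma_l$ a section of $\pi_*\big(\SV_{\fx,a_\star}\otimes\omega_{\MC/\MB}(\blt S_\fx)\big)$ and $\wbf_l$ a section of $\SW_\fx(\Wbb)$ over $V$), for each $b\in V$ I would use \eqref{eq11} to get $s(b)=\sum_l\sigma_l(b)\cdot\wbf_l(b)\in J(b)$, hence $\upphi(V)(s)(b)=\upphi|_b(s(b))=0$ by \eqref{eq19}. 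Since this holds at every point of $V$, the holomorphic function $\upphi(V)(s)$ is identically zero; thus $\upphi$ kills $\scr J^\pre$ and lies in $\ST_{\fx,a_\star}^*(\Wbb)(\mc B)$.

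For the ``only if'' direction, assume $\upphi\in\ST_{\fx,a_\star}^*(\Wbb)(\mc B)$, so the germ $\upphi_b:\SW_\fx(\Wbb)_b\to\MO_{\mc B,b}$ kills the stalk $\scr J_b$ for every $b$. Here I would quote the fact, established inside the proof of the preceding proposition (cf. \eqref{eq17}), that the evaluation map restricts to a surjection $\scr J_b\twoheadrightarrow J(b)$: given $x\in J(b)$, lift it to $\tilde x\in\scr J_b$, observe $\upphi_b(\tilde x)=0$, and conclude $\upphi|_b(x)=0$ since $\upphi|_b(x)$ is by construction the image of $\upphi_b(\tilde x)$ under $\MO_{\mc B,b}\to\MO_{\mc B,b}/\mk_{\mc B,b}=\Cbb$. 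I expect this direction to be the only place where something nontrivial enters, namely the surjectivity $\scr J_b\twoheadrightarrow J(b)$, which itself rests on the Serre vanishing / base-change argument behind Prop. \ref{grauertcor}; everything else is routine bookkeeping with residue actions, evaluations at points, and holomorphic functions that vanish pointwise. So the main obstacle is just making sure the sheafified ideal $\scr J$ and its fiberwise counterpart $J(b)$ are compared through the correct surjection.
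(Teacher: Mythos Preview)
Your proposal is correct and follows essentially the same approach as the paper's proof: the ``if'' direction uses that $s(b)\in J(b)$ (via \eqref{eq11}) together with the pointwise evaluation identity $\upphi(s)|_b=\upphi|_b(s(b))$, and the ``only if'' direction is exactly the surjectivity of $\scr J_b\twoheadrightarrow J(b)$ from \eqref{eq17}. The paper's write-up is just more terse.
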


Thus, whether or not $\upphi$ is a partial conformal block for the family $\fk X$ \emph{can be checked fiberwisely}.

\begin{proof}
Suppose that for each $b\in \mc B$, we have $\upphi\vert_b\in \ST_{\fx_b,a_1,\cdots,a_M}^*(\Wbb)$, i.e. the linear map $\upphi|_b:\scr W_{\fk X_b}(\Wbb)\rightarrow\Cbb$ vanishes on $J(b)$. Then for each open $V\subset\mc B$ and each $\sigma\in\scr J^\pre(V)$, $\upphi$ vanishes on $\sigma$ because for each $b\in V$ we have $\sigma(b)\in J(b)$ and hence $\upphi(\sigma)|_b=\upphi|_b(\sigma|_b)=0$. This proves ``$\Leftarrow $". That ``$\Rightarrow$" is true follows from the surjectivity of \eqref{eq17}.
\end{proof}

\begin{pp}\label{localglobal1}
    Let $\Phi:\SW_\fx(\Wbb)\rightarrow \MO_\MB$ be an $\MO_\MB$-module morphism. Suppose that each connected component of $\MB$ contains a non-empty open subset $V$ such that the restriction $\Phi|_V:\SW_{\fx_V}(\Wbb)\rightarrow \MO_V$ is a partial conformal block for $\fk X_V,\Wbb$ with multi-level $a_\star$. Then $\Phi$ is a partial conformal block for $\fx$ and $\Wbb$ with multi-level $a_\star$.
\end{pp}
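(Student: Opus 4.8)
The plan is to reduce the statement to the fiberwise criterion of Prop.~\ref{familyfiber5} and then run an open--closed (identity-theorem) argument on the base $\mc B$. Set $G=\{b\in\mc B:\Phi|_b\in\ST_{\fx_b,a_\star}^*(\Wbb)\}$, the locus of base points over which $\Phi$ restricts to a genuine partial conformal block. By Prop.~\ref{familyfiber5}, showing that $\Phi$ is a partial conformal block for $\fx$ is exactly showing $G=\mc B$. The hypothesis, combined with Prop.~\ref{familyfiber5} applied to each subfamily $\fx_V$, already gives that $G$ contains a non-empty open subset of every connected component of $\mc B$; hence it suffices to prove that its interior $G^\circ$ is \emph{closed} in $\mc B$, for then $G^\circ$ is clopen and meets every component, so $G^\circ=\mc B$ and a fortiori $G=\mc B$.

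To prove $G^\circ$ is closed I would fix $b\in\ovl{G^\circ}$ and pick a connected Stein neighborhood $V'$ of $b$ small enough that $\SW_\fx(\Wbb)|_{V'}$ is trivial and local coordinates at the incoming sections exist over $V'$. Then $V'\cap G^\circ$ is a non-empty open subset of the connected manifold $V'$. The key claim is that for every $\sigma\in\pi_*\big(\SV_{\fx,a_\star}\otimes\omega_{\MC/\MB}(\blt S_\fx)\big)(V')$ and every $\wbf\in\SW_\fx(\Wbb)(V')$ the holomorphic function $h:=\Phi(\sigma\cdot\wbf)\in\MO(V')$ vanishes identically. Indeed, for any $b_0\in V'\cap G^\circ$, Rem.~\ref{lb13} gives $(\sigma\cdot\wbf)(b_0)=\sigma(b_0)\cdot\wbf(b_0)\in J(b_0)$ (since $\sigma(b_0)$ restricts into $H^0(\MC_{b_0},\SV_{\fx_{b_0},a_\star}\otimes\omega_{\MC_{b_0}}(\blt S_\fx(b_0)))$ and $\wbf(b_0)\in\SW_{\fx_{b_0}}(\Wbb)$), and $\Phi|_{b_0}$ annihilates $J(b_0)$ because $b_0\in G$; hence $h(b_0)=\Phi|_{b_0}\big((\sigma\cdot\wbf)(b_0)\big)=0$. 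So $h$ vanishes on a non-empty open subset of $V'$, and the identity theorem forces $h\equiv 0$.

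Finally I would propagate this vanishing over the whole fiber above any $b'\in V'$: by Cor.~\ref{grauertcor2} (valid since $V'$ is Stein) every element of $H^0(\MC_{b'},\SV_{\fx_{b'},a_\star}\otimes\omega_{\MC_{b'}}(\blt S_\fx(b')))$ is $\sigma(b')$ for some $\sigma\in\pi_*\big(\SV_{\fx,a_\star}\otimes\omega_{\MC/\MB}(\blt S_\fx)\big)(V')$, and every element of $\SW_{\fx_{b'}}(\Wbb)$ is $\wbf(b')$ for some $\wbf\in\SW_\fx(\Wbb)(V')$ (triviality of $\SW_\fx(\Wbb)|_{V'}$); hence every generator $\sigma(b')\cdot\wbf(b')$ of $J(b')$ is killed by $\Phi|_{b'}$, since $\Phi|_{b'}\big(\sigma(b')\cdot\wbf(b')\big)=h(b')=0$. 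Thus $\Phi|_{b'}\in\ST_{\fx_{b'},a_\star}^*(\Wbb)$ for all $b'\in V'$, i.e.\ $V'\subseteq G$, so $b\in\mathrm{int}(G)=G^\circ$; this shows $G^\circ$ is closed and finishes the argument.

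The only non-formal ingredient is the middle step $h\equiv 0$ on $V'$, so I expect the main work to be organizing the bookkeeping: that being a partial conformal block is the vanishing of the holomorphic functions $\Phi(\sigma\cdot\wbf)$ (Rem.~\ref{lb9}), which is what lets the identity theorem enter; the fiberwise compatibility $(\sigma\cdot\wbf)(b)=\sigma(b)\cdot\wbf(b)$ of Rem.~\ref{lb13}; and the generation/surjectivity statements of Prop.~\ref{grauertcor} and Cor.~\ref{grauertcor2}. Everything else is point-set topology on $\mc B$.
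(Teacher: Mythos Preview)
Your proof is correct and follows essentially the same approach as the paper's: both use the fiberwise criterion (Prop.~\ref{familyfiber5}), reduce to connected Stein neighborhoods, invoke Cor.~\ref{grauertcor2} to lift fiberwise data to global sections, and then apply the identity theorem to the holomorphic functions $\Phi(\sigma\cdot\wbf)$ to run an open--closed argument on $\mc B$. The only cosmetic difference is that the paper first isolates the Stein case as a standalone step and then feeds it into the clopen argument, whereas you embed the Stein computation directly into the proof that $G^\circ$ is closed; the content is the same.
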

\begin{proof}
It suffices to assume that $\MB$ is connected. Fix a non-empty open subset $V$ such that $\Phi\vert_V\in \ST_{\fx,a_1,\cdots,a_M}^*(\Wbb)(V)$.
     
First assume $\MB$ is Stein. By Prop. \ref{familyfiber5}, it suffices to show $\Phi\vert_b\in \ST_{\fx_b,a_1,\cdots,a_M}^*(\Wbb)$ for each $b\in \MB$. For each $b\in \MB$ and $\sigma_b\in J(b)$, by Cor. \ref{grauertcor2}, we can find $\sigma\in \scr J^\pre(\mc B)$ whose restriction to $b$ is $\sigma_b$. Note that $\Phi(\sigma)\in\mc O(\mc B)$. By Prop. \ref{familyfiber5} (applied to the restricted family $\fk X_V$), $\Phi(\sigma)\vert_t=\Phi\vert_t(\sigma_t)$ equals $0$ for all $t\in V$. Since $\mc B$ is connected, we have $\Phi(\sigma)=0$ by complex analysis. So $\Phi|_b(\sigma_b)=\Phi(\sigma)|_b=0$. This finishes the proof.
     
For the general case, let $A$ be the set of all $b\in \MB$ such that $b$ has a neighborhood $U$ such that $\Phi\vert_U$ is a partial conformal block with multi-level $a_\star$. Then $A$ is non-empty and open. For any $b\in \MB-A$, let $U$ be a connected Stein neighborhood of $b$. By the previous paragraph, $\Phi\vert_U$ is a partial conformal block if $U$ has a non-empty open subset $\wtd U$ such that $\Phi\vert_{\wtd U}$ is a partial conformal block with multi-level $a_\star$. Thus $U$ must be disjoint from $A$. This proves that $\MB-A$ is open. So $\MB=A$ and we are done.
\end{proof}

\subsection{Sewing partial conformal blocks}\label{lb76}

\subsubsection{Sewing a Riemann surface $\wtd C$ along several pairs of points} \label{lb6}

Choose an $(M,N+2R)$-pointed compact Riemann surface (recall Def. \ref{lb3})
\begin{align}
\wtd \fx=(y_1,\cdots,y_M;\theta_1,\dots,\theta_M\big|\wtd C\big|x_1,\cdots,x_N\big\Vert\varsigma_1^\prime,\cdots,\varsigma_R^\prime,\varsigma_1'',\cdots,\varsigma_R'')    \label{eq30}
\end{align}
where each $\theta_i$ is a local coordinate at $y_i$.\footnote{The double vertical line $\Vert$ in \eqref{eq30} emphasizes that the points after it are for sewing. We will sometimes write it as a single vertical line or a comma.} We will \emph{not} assume Asmp. \ref{ass1}. Instead, we shall assume the weaker Asmp. \ref{sewingass}.

We assume $\wtd \fx$ has local coordinates $\xi_1,\cdots,\xi_R$ at $\varsigma_1^\prime,\cdots,\varsigma_R^\prime$ and $\varpi_1,\cdots,\varpi_R$ at $\varsigma_1'',\cdots,\varsigma_R''$. Moreover, we assume $\xi_1,\cdots,\xi_R,\varpi_1,\cdots,\varpi_R$ are defined on open neighborhoods $V_1^\prime,\cdots,V_R^\prime,V_1'',\cdots,V_R''$, with biholomorphisms
\begin{align}\label{geosew2}
    \xi_i:V_i^\prime\xrightarrow{\simeq } \MD_{r_i}\qquad \varpi_i:V_i'' \xrightarrow{\simeq } \MD_{\rho_i}
\end{align}
for some $r_i,\rho_i>0$. 
\begin{ass}
We assume that $y_1,\dots,y_M,x_1,\dots,x_N,V_1',\dots,V_R',V_1'',\dots,V_R''$ are mutually disjoint.
\end{ass}

We can \textbf{sew $\wtd \fx$ along the pairs of points $\sgm_i',\sgm_i''$} \index{00@Sewing Riemann surfaces along pairs of points} for all $1\leq i\leq R$ to get a \textbf{family of $(M,N)$-pointed nodal curves} \index{00@Sewing compact Riemann surfaces} \index{00@{Families of $(M,N)$-pointed nodal curves}}
\begin{gather}\label{geosew5}
\begin{gathered}
    \fx=(y_1,\cdots,y_M;\theta_1,\dots,\theta_M\big|\pi:\MC\rightarrow \MB\big|x_1,\cdots,x_N)\\
\text{where }\MB=\MD_{r_\blt \rho_\blt} =\MD_{r_1\rho_1} \times \cdots \times \MD_{r_R \rho_R}
\end{gathered}
\end{gather}
If $b_\blt=(b_1,\dots,b_R)\in\mc B$ satisfies $b_1\cdots b_R\neq 0$, then the fiber $\fk X_{b_\blt}$ is obtained by removing the closed disks
\begin{gather}
F_{i,b_\blt}'=\Big\{p_i'\in V_i':|\xi_i(p_i')|\leq \frac{|b_i|}{\rho_i}\Big\}\qquad F_{i,b_\blt}''=\Big\{p_i''\in V_i'':|\varpi_i(p_i'')|\leq \frac{|b_i|}{r_i}\Big\}
\end{gather}
and gluing $V_i'-F_{i,b_\blt}'$ and $V_i''-F_{i,b_\blt}''$ (for all $i$) by the rule
\begin{gather}\label{eq45}
p_i'\in V_i'-F_{i,b_\blt}'\text{ is identified with }p_i''\in V_i''-F_{i,b_\blt}''\quad\Longleftrightarrow\quad \xi_i(p_i')\varpi_i(p_i'')=b_i
\end{gather}
This gives $\mc C_{b_\blt}$ which, together with the marked points $y_\star,x_\blt$ (which remain after sewing) and the local coordinates of $\wtd{\fk X}$, forms the pointed surface $\fk X_{b_\blt}$. Letting $b_i\rightarrow 0$ for some $i$, we get the pointed nodal curve $\fk X_{b_\blt}$ if $b_1\cdots b_R=0$.

\begin{ass}\label{sewingass}
    For each $b_\blt=(b_1,\dots,b_R)\in \MB$ such that $b_1\cdots b_R\neq 0$, each connected component of $\MC_{b_\blt}=\pi^{-1}(b_\blt)$ intersects $\{x_1,\cdots,x_N\}$. In other words, we assume that $\fk X_{b_\blt}$ satisfies Asmp. \ref{ass1}.
\end{ass}

\subsubsection{Details of the sewing construction}

Let us describe the construction of the family $\fx$ in details. The general construction can be found in \cite[Sec. 3]{Gui-sewingconvergence}. We make the identifications
\begin{gather}\label{eq22}
\begin{gathered}
V_i'=\mc D_{r_i}\qquad \text{via }\xi_i\\
V_i''=\mc D_{\rho_i}\qquad \text{via }\varpi_i
\end{gathered}
\end{gather}
The $\xi_i$ and $\varpi_i$ become  the standard coordinate of $\Cbb$ (i.e. the identity maps):
\begin{gather*}
\xi_i:\mc D_{r_i}\xrightarrow{=}\mc D_{r_i}\qquad \varpi_i:\mc D_{\rho_i}\xrightarrow{=}\mc D_{\rho_i}
\end{gather*}
We shall freely switch the orders of Cartesian products. Define 
\begin{gather*}
    \MD_{r_\blt}:=\MD_{r_1}\times \cdots \times \MD_{r_M}\qquad\MD_{\rho_\blt}:=\MD_{\rho_1}\times \cdots \times \MD_{\rho_M}\\
q_i=\xi_i\varpi_i:\MD_{r_i}\times \MD_{\rho_i}\rightarrow \MD_{r_i\rho_i}\qquad (z,w)\mapsto zw
\end{gather*}
Define also $W_i$ and its open subsets $W_i',W_i''$ by
\begin{subequations}\label{eq44}
\begin{gather}
W_i=\MD_{r_i}\times\MD_{\rho_i}\times \prod_{j\ne i} \MD_{r_j\rho_j}\\
W_i'=\MD_{r_i}^\times \times \MD_{\rho_i}\times \prod_{j\ne i} \MD_{r_j\rho_j}\\
W_i''=\MD_{r_i} \times \MD_{\rho_i}^\times\times \prod_{j\ne i} \MD_{r_j\rho_j}
\end{gather}
\end{subequations}
Then we can extend $\xi_i,\varpi_i,q_i$ constantly to 
\begin{subequations}\label{eq29}
\begin{gather}
    \xi_i:W_i\rightarrow \MD_{r_i}  \qquad (z,w,*)\mapsto z\\
    \varpi_i:W_i\rightarrow \MD_{\rho_i}\qquad (z,w,*)\mapsto w\\
q_i:W_i\rightarrow \MD_{r_i\rho_i} \qquad (z,w,*)\mapsto zw
\end{gather}
\end{subequations}
Then we have open holomorphic embeddings
\begin{subequations}
\begin{gather}
(\xi_i,\varpi_i,\idt):W_i\xrightarrow{=} \MD_{r_i}\times \MD_{\rho_i}\times \prod_{j\ne i} \MD_{r_j\rho_j}  \label{eq23}\\
(\xi_i,q_i,\idt):W_i'\rightarrow \MD_{r_i}\times \MD_{r_i\rho_i}\times \prod_{j\ne i} \MD_{r_j\rho_j}\simeq \MD_{r_i}\times \MD_{r_\blt\rho_\blt}   \label{eq24}\\
(\varpi_i,q_i,\idt):W_i''\rightarrow \MD_{\rho_i}\times \MD_{r_i\rho_i}\times \prod_{j\ne i} \MD_{r_j\rho_j}\simeq \MD_{\rho_i}\times \MD_{r_\blt\rho_\blt}   \label{eq25}
\end{gather}
\end{subequations}
The image of \eqref{eq24} resp. \eqref{eq25} is precisely the subset of all $(z_i,b_1,\dots,b_R)\in \MD_{r_i}\times \MD_{r_\blt\rho_\blt}$ resp. $(w_i,b_1,\dots,b_R)\in \MD_{\rho_i}\times \MD_{r_\blt\rho_\blt}$ satisfying 
\begin{align*}
\frac{\vert b_i\vert }{\rho_i}<\vert z_i\vert <r_i \qquad \text{resp.}\qquad \frac{\vert b_i\vert }{r_i}<\vert w_i\vert <\rho_i.
\end{align*}
So closed subsets $F_i'\subset \MD_{r_i}\times \MD_{r_\blt\rho_\blt}$ and $F_i''\subset \MD_{\rho_i}\times \MD_{r_\blt\rho_\blt}$ can be chosen such that we have biholomorphisms
\begin{subequations}
\begin{gather}\label{geosew1}
(\xi_i,q_i,\idt):W_i'\xrightarrow{\simeq} \MD_{r_i}\times \MD_{r_\blt\rho_\blt}-F_i'\\
 (\varpi_i,q_i,\idt):W_i''\xrightarrow{\simeq} \MD_{\rho_i}\times \MD_{r_\blt\rho_\blt}-F_i''
\end{gather}
\end{subequations}
By the identifications \eqref{eq22}, we can write the above maps as
\begin{subequations}\label{eq26}
\begin{gather}
(\xi_i,q_i,\idt):W_i'\xrightarrow{\simeq} V_i'\times \MD_{r_\blt\rho_\blt}-F_i'\qquad \subset\wtd C\times\mc D_{r_\blt\rho_\blt}\\
 (\varpi_i,q_i,\idt):W_i''\xrightarrow{\simeq} V_i''\times \MD_{r_\blt\rho_\blt}-F_i''\qquad \subset \wtd C\times\mc D_{r_\blt\rho_\blt}
\end{gather}
\end{subequations}
In particular, we view $F_i'$ and $F_i''$ as closed subsets of $\wtd C\times \mc D_{r_\blt\rho_\blt}$.

The complex manifold $\MC$ is defined by 
\begin{gather}
\MC=\big(W_1\sqcup\cdots\sqcup W_R\big)\bigsqcup \big(\wtd C\times \MD_{r_\blt \rho_\blt}-\bigcup_{i=1}^R( F_i'\cup F_i'')\big)\Big/\sim
\end{gather}
Here, the equivalence $\sim$ is defined by identifying each subsets $W_i',W_i''$ of $W_i$ with the corresponding open subsets of $\wtd C\times \MD_{r_\blt \rho_\blt}-\bigcup_{i=1}^R( F_i'\cup F_i'')$ via the biholomorphisms \eqref{eq26}.

$\pi:\MC\rightarrow \MB$ is defined as follows. The projection
\begin{align*}
    \wtd C\times \MD_{r_\blt \rho_\blt}\rightarrow \MD_{r_\blt \rho_\blt}=\MB
\end{align*}
agrees with
\begin{align*}
q_i:W_i=\MD_{r_i}\times \MD_{\rho_i}\times \prod_{j\ne i} \MD_{r_j\rho_j}\rightarrow \MD_{r_\blt \rho_\blt}=\MB
\end{align*}
when restricted to $W_i^\prime$ and $W_i''$. These two maps give a well-defined surjective holomorphic map $\pi:\MC\rightarrow \MB$.

Extend $x_i,y_j$ constantly to $\MD_{r_\blt \rho_\blt}=\MB\rightarrow \wtd C\times \MD_{r_\blt \rho_\blt}$, whose image is disjoint from $F_i^\prime$ and $F_i''$ for $1\leq i\leq R$. So $x_i,y_j$ can be extended to sections of $\pi:\MC\rightarrow \MB$.
The local coordinate $\theta_j$ of $\wtd \fx$ at $y_j$ extend constantly to that of $\fx$, also denoted by $\theta_j$. (If a local coordinate $\eta_i$ of $\wtd\fx$ at $x_i$ is chosen, we can also extend it constantly to one $\eta_i$ on $\fk X$.) This completes the definition of \eqref{geosew5}.
 \begin{rem}\label{geosew6}
We warn the readers that $\fx$ is NOT a family of compact Riemann surfaces. When $b_1\cdots b_R=0$, the fiber $\MC_{b_\blt}$ of $b_\blt =(b_1,\cdots,b_R)$ is not a compact Riemann surface, but a \textbf{nodal curve}. (See \cite[Sec. 2]{Gui-sewingconvergence}, or see \cite[Ch. 10]{ACG11} for a relatively complete story about families of nodal curves.) We will not consider such fibers in this article.
 \end{rem}
 \begin{df}
The set
\begin{align*}
\Sigma=\{x\in \mc C:\pi\text{ is not a submersion at }x\}
\end{align*}
is called the \textbf{critical locus}\index{00@Critical locus $\Sigma$} of $\fk X$. In other words, $x$ belongs to $\Sigma$ iff $x$ is not a smooth point of the fiber $\mc C_{\pi(x)}$. Write
\begin{align}
W=\bigsqcup_{i=1}^R W_i \qquad   W'=\bigsqcup_{i=1}^R W_i'\qquad W''=\bigsqcup_{i=1}^R W_i''
\end{align}
It is not hard to see that $\pi$ is a submersion outside $W$, and for each $i$ we have
\begin{align}
W_i\cap\Sigma= \big(\{0\}\times\{0\}\big)\times\prod_{j\ne i} \MD_{r_j\rho_j}\qquad \subset\MD_{r_i}\times \MD_{\rho_i}\times \prod_{j\ne i} \MD_{r_j\rho_j}
\end{align}
Thus, we have
\begin{align}
\Sigma=W-(W'\cup W'')=\bigsqcup_{i=1}^R(W_i-(W_i'\cup W_i''))  \label{eq27}
\end{align}
It is clear that the \textbf{discriminant locus} $\Delta=\pi(\Sigma)$\index{00@Discriminant locus $\Delta=\pi(\Sigma)$} satisfies
\begin{align}
\Delta\xlongequal{\mathrm{def}}\pi(\Sigma)=\{(b_1,\dots,b_R)\in\mc D_{r_\blt\rho_\blt}:b_1\cdots b_R=0\}=\mc D_{r_\blt\rho_\blt}-\mc D_{r_\blt\rho_\blt}^\times
\end{align}
 \end{df}

\subsubsection{The sheaf $\scr V_{\fk X,a_1,\dots,a_M}$}

Since $\fk X=\eqref{geosew5}$ is not a smooth family, the definition of $\scr V_{\fk X,a_\star}$ in Sec. \ref{lb4} does not apply to the current situation. Let us explain how to define this sheaf. The idea is similar to that in \cite{DGT1,DGT2}. We follow the approach in \cite[Sec. 5]{Gui-sewingconvergence}.

Define
\begin{align*}
\scr V_{\fk X,a_1,\dots,a_M}=\varinjlim_{n\in\Nbb}\scr V_{\fk X,a_1,\dots,a_M}^{\leq n}
\end{align*}
where each $\scr V_{\fk X,a_1,\dots,a_M}^{\leq n}$ is an $\mc O_{\mc C}$-module  defined as follows. Since $\pi:\mc C-\Sigma\rightarrow\mc B$ is a submersion, the sheaf $\scr V_{\fk X-\Sigma,a_1,\dots,a_M}^{\leq n}$ is defined as in Def. \ref{lb5}. Then $\scr V_{\fk X,a_1,\dots,a_M}^{\leq n}$ is an $\mc O_{\mc C}$-submodule of $\scr V_{\fk X-\Sigma,a_1,\dots,a_M}^{\leq n}$ which agrees with $\scr V_{\fk X-\Sigma,a_1,\dots,a_M}^{\leq n}$ outside $\Sigma$. To define $\scr V_{\fk X,a_1,\dots,a_M}^{\leq n}$ near $\Sigma$, it suffices to describe its restriction to each $W_i$. Recall $W_i-\Sigma=W_i'\cup W_i''$ by \eqref{eq27}. \index{VX@$\SV_{\fx,a_1,\cdots,a_M}^{\leq n}=\SV_{\fx,a_\star}^{\leq n}$}

\begin{df}
$\scr V_{\fk X,a_1,\dots,a_M}^{\leq n}|_{W_i}$ is the (automatically free) $\MO_W$-submodule of $\SV_{\fx-\Sigma,a_1,\cdots,a_M}^{\leq n}\vert_{W_i-\Sigma}$ generated by the sections whose restrictions to $W_i^\prime$ and $W_i''$ are 
\begin{align}\label{geosew7}
    \MU_\varrho(\xi_i)^{-1}(\xi_i^{L(0)}v) \qquad \text{resp.}\qquad \MU_\varrho(\varpi_i)^{-1}(\varpi_i^{L(0)}\MU(\upgamma_1)v)
\end{align}
where $\xi_i,\varpi_i$ are defined by \eqref{eq29} and $v\in \Vbb^{\leq n}$. This is well-defined (i.e. the two expressions in \eqref{geosew7} agree on $W_i'\cap W_i''$). See \cite[Sec. 5]{Gui-sewingconvergence}, especially \cite[Lem. 5.2]{Gui-sewingconvergence}.
\end{df}

Let us recall the definition of the \textbf{relative dualizing sheaf} $\omega_{\MC/\MB}$ \index{zz@$\omega_{\MC/\MB}$, the relative dualizing sheaf} which is similar to that of $\scr V_{\fk X,a_\star}^{\leq n}$. When restricted to $\MC-\Sigma$, $\omega_{\MC/\MB}$ is equal to the usual cotangent sheaf defined before. When restricted to each $W_i$ , $\omega_{\MC/\MB}\vert_{W_i}$ is generated freely by the sections whose restrictions to $W_i^\prime$ and $W_i''$ are 
\begin{align}\label{geosew9}
    \xi_i^{-1}d\xi_i\qquad \text{resp.}\qquad -\varpi_i^{-1}d\varpi_i.
\end{align}
Again, this definition is well-defined, since the above two expressions agree on $W_i'\cap W_i''$ by an easy computation of change of coordinates (recall \eqref{eq29}) using \eqref{eq28}.

\begin{pp}\label{nodal1}
Prop. \ref{grauertcor} and  Cor. \ref{grauertcor2} hold verbatim for the family $\fx=\eqref{geosew5}$ defined by sewing $\wtd{\fk X}=\eqref{eq30}$ as in Subsec. \ref{lb6}, except that we assume that the point $b$ is not inside the discriminant locus $\Delta=\pi(\Sigma)$.
\end{pp}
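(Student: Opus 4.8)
The plan is to reduce the statement to the smooth case already treated in Prop. \ref{grauertcor} and Cor. \ref{grauertcor2} by simply discarding the discriminant locus. Write $\mc B^\times=\mc B-\Delta=\mc D_{r_\blt\rho_\blt}^\times$ and $\mc C^\times=\pi^{-1}(\mc B^\times)=\mc C-\pi^{-1}(\Delta)$; since $\Delta$ is the hypersurface $\{b_1\cdots b_R=0\}$, the set $\pi^{-1}(\Delta)$ is a closed analytic subset and $\mc C^\times$ is an open complex submanifold of $\mc C$. By \eqref{eq27} the critical locus $\Sigma$ lies over $\Delta$, so $\mc C^\times\cap\Sigma=\emptyset$ and $\pi\colon\mc C^\times\to\mc B^\times$ is a proper submersion (proper being inherited from $\pi\colon\mc C\to\mc B$ over an open base) whose fibers are compact Riemann surfaces; equipped with the sections $x_\blt,y_\star$ and the outgoing local coordinates $\theta_\star$ restricted from $\fx$, this yields a family $\fx^\times$ of $(M,N)$-pointed compact Riemann surfaces with outgoing local coordinates in the sense of Def. \ref{familydef}. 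Moreover Asmp. \ref{sewingass} says precisely that $\fx^\times$ satisfies Asmp. \ref{ass1} (equivalently Asmp. \ref{lb11}), and for $b\in\mc B^\times$ the fiber $\mc C_b$ is the same whether computed in $\fx$ or in $\fx^\times$.

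Next I would check that the sheaves entering the statement restrict correctly to $\mc C^\times$. By construction $\scr V_{\fx,a_1,\dots,a_M}^{\leq n}$ agrees with $\scr V_{\fx-\Sigma,a_1,\dots,a_M}^{\leq n}$ outside $\Sigma$ — the local generators \eqref{geosew7} are only used to define the sheaf \emph{on} the $W_i$, and $W_i\cap\mc C^\times=W_i-\Sigma=W_i'\cup W_i''$ — while $\scr V_{\fx-\Sigma,a_1,\dots,a_M}^{\leq n}$ is by Def. \ref{lb5} the smooth-family sheaf, which over $\mc B^\times$ restricts to that of $\fx^\times$. Hence $\scr V_{\fx,a_1,\dots,a_M}^{\leq n}|_{\mc C^\times}=\scr V_{\fx^\times,a_1,\dots,a_M}^{\leq n}$, and the same remark applies to $\omega_{\mc C/\mc B}|_{\mc C^\times}$, which is the relative dualizing (cotangent) sheaf of $\fx^\times$. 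Consequently, for any $b\in\mc B^\times$ and any of these coherent sheaves $\mc F$, the stalk $\pi_*(\mc F)_b=\varinjlim_{b\in U\subset\mc B^\times}\mc F(\pi^{-1}(U))$ may be computed with $\pi^{-1}(U)\subset\mc C^\times$, so it coincides with the corresponding pushforward stalk for $\fx^\times$.

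With these identifications, Prop. \ref{grauertcor} applied verbatim to $\fx^\times$ (which satisfies Asmp. \ref{ass1}) gives, for each $b\in\mc B^\times$, an integer $k_0$ and the isomorphism \eqref{eq5} induced by the restriction map \eqref{eq16}, in particular its surjectivity. For Cor. \ref{grauertcor2}, the only subtlety not handled by restriction to $\mc B^\times$ is that one wants $V$ Stein open in $\mc B$ itself; here I would argue exactly as in the original proof: each $\pi_*\big(\scr V_{\fx,a_1,\dots,a_M}^{\leq n}\otimes\omega_{\mc C/\mc B}(kS_\fx)\big)$ is a coherent $\mc O_{\mc B}$-module on all of $\mc B$ by Grauert's direct image theorem \cite{GR84} (using that $\pi\colon\mc C\to\mc B$ is proper — which holds for the sewing family — and that the sheaf is coherent on $\mc C$), so Cartan's theorem A gives the first conclusion of Cor. \ref{grauertcor2} for every $b\in V$, passing to the direct limit over $n,k$ as before; the second conclusion, that the restrictions to $\mc C_b$ exhaust $H^0\big(\mc C_b,\scr V_{\fx_b,a_1,\dots,a_M}\otimes\omega_{\mc C_b}(\blt S_\fx(b))\big)$, then follows from the surjectivity of \eqref{eq16} established above for $b\notin\Delta$.

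I do not expect a serious obstacle; the one point that needs care is the verification that $\scr V_{\fx,a_1,\dots,a_M}^{\leq n}$ and $\omega_{\mc C/\mc B}$ genuinely restrict to the smooth-family sheaves on $\mc C^\times$, but this is built into their definitions since the modifications at $\Sigma$ affect the sheaves only on the critical locus, which is removed in passing to $\mc C^\times$. The remaining ingredients — Serre/Grauert vanishing and base change — are invoked wholesale through Prop. \ref{grauertcor} and Cor. \ref{grauertcor2} rather than re-proved.
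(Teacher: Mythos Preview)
Your proposal is correct and follows essentially the same approach as the paper: restrict to the smooth locus $\mc B-\Delta$, observe that $\fx_{\mc B-\Delta}$ is a genuine family of $(M,N)$-pointed compact Riemann surfaces satisfying Asmp. \ref{lb11}, and then invoke Prop. \ref{grauertcor} and the argument of Cor. \ref{grauertcor2}. Your write-up simply spells out in detail what the paper compresses into two sentences, including the verification that the sheaves $\scr V_{\fx,a_\star}^{\leq n}$ and $\omega_{\mc C/\mc B}$ restrict correctly and the observation that Grauert's direct image theorem applies to the full (proper) map $\pi:\mc C\to\mc B$ so that Cartan's theorem A is available on any Stein $V\subset\mc B$.
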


\begin{proof}
The smooth family $\fk X_{\mc B-\Delta}$ satisfies Asmp. \ref{ass1} (cf. Asmp. \ref{sewingass}). Therefore,  applying Prop. \ref{grauertcor} to the family $\fk X_{\mc B-\Delta}$, together with same argument as the proof of Cor. \ref{grauertcor2} (using Grauert's direct image theorem and Cartan's theorem A), proves our goal.
\end{proof}

\subsubsection{Normalized sewing}
Associate an admissible $\Vbb^{\times N}$-module $\Wbb$ to $x_1,\cdots,x_N$ of $\wtd \fx$. Recall (cf. Subsec. \ref{lb6}) that the local coordinates $\xi_\blt$ at $\sgm_\blt'$ and $\varpi_\blt$ at $\sgm_\blt''$ are fixed, but the local coordinates for $x_\blt$ are not fixed. Recall that the sheaf $\SW_{\wtd\fx}(\Wbb)$ is defined in Sec. 2.1.

Associate a \emph{finitely} admissible $\Vbb^{\times R}$-module $\Mbb$ to $\varsigma_1',\cdots,\varsigma_R'$ and the contragredient module $\Mbb^\prime$ to $\varsigma_1'',\cdots,\varsigma_R''$. We identify the vector spaces (recall Def. \ref{lb8})
\begin{equation}\label{sewingidentify1}
\SW_{\wtd \fx}(\Wbb\otimes \Mbb\otimes \Mbb^\prime)=\SW_{\wtd \fx}(\Wbb)\otimes \Mbb\otimes \Mbb'
\end{equation}
such that, for each set of local coordinates $\eta_\blt$ at $x_\blt$, the following diagram commutes: 
\begin{equation*}
    \begin{tikzcd}[column sep=-1ex,row sep=2.5ex]
\mathscr{W}_{\wtd \fx}(\Wbb\otimes \Mbb\otimes \Mbb^\prime) \arrow[rrrr, "="] \arrow[rrdd, "{\mathcal{U}(\eta_\blt,\xi_\blt,\varpi_\blt)}"'] &  &                                                    &  & \SW_{\wtd \fx}(\Wbb)\otimes \Mbb\otimes \Mbb^\prime  \arrow[lldd, "\MU(\eta_\blt)\otimes \ibf"] \\
                                                                                                                                                               &  &                                                    &  &                                                                                                                                                                      \\
                                                                                                                                                               &  & \Wbb\otimes \Mbb\otimes \Mbb^\prime &  &                                                                                                                                                                     
\end{tikzcd}
\end{equation*}
This is possible, since  it is not hard to see that the map $(\mc U(\eta_\blt)\otimes\idt)^{-1}\mc U(\eta_\blt,\xi_\blt,\varpi_\blt)$ is independent of the choice of $\eta_\blt$.

Recall the notation \eqref{eq32}. For each $n_\blt\in \Nbb^R$, let $\{m(n_\blt,a):a\in \FA_{n_\blt}\}$ be a basis of $\Mbb(n_\blt)$ and $\{\widecheck{m}(n_\blt,a):a\in \FA_{n_\blt}\}$ be the dual basis of $\Mbb(n_\blt)^*$. Note that $\FA_{n_\blt}$ is a finite set since $\Mbb$ is finitely admissible. \index{qq@$q_\blt^{\wtd L_\blt(0)}\btr\otimes \btl$}Define 
$$
q_\blt^{\wtd L_\blt(0)}\btr\otimes \btl =\sum_{n_\blt\in \Nbb^R}\sum_{a\in \FA_{n_\blt}}q_1^{n_1}\cdots q_R^{n_R}\cdot m(n_\blt,a)\otimes \widecheck{m}(n_\blt,a)\in (\Mbb\otimes \Mbb^\prime)[[q_1,\cdots,q_R]].
$$
For any partial conformal block
\begin{align*}
\uppsi:\SW_{\wtd \fx}(\Wbb\otimes \Mbb\otimes \Mbb')=\SW_{\wtd \fx}(\Wbb)\otimes \Mbb\otimes \Mbb'\rightarrow \Cbb
\end{align*}
associated to $\wtd \fx$ and the admissible $\Vbb^{\times(N+2R)}$-module $\Wbb\otimes \Mbb\otimes \Mbb'$ of multi-level $a_1,\dots,a_M$, we define a $\Cbb$-linear map
\begin{gather} \label{eq35}
\begin{gathered}
\wtd \MS \uppsi :\SW_{\wtd \fx}(\Wbb)\rightarrow \Cbb[[q_1,\cdots,q_R]]\\
w\mapsto \wtd \MS \uppsi(w)=\uppsi\big(w\otimes q_\blt^{\wtd L_\blt(0)}\btr\otimes \btl\big)
\end{gathered}
\end{gather}
$\wtd \MS \uppsi$ is called the \textbf{normalized sewing of $\uppsi$}.\index{S@$\wtd \MS\uppsi$, the normalized sewing} 

\begin{df}
We say \textbf{$\wtd \MS \uppsi$ converges a.l.u. on $\MB$} if for each $w\in \SW_{\wtd \fx}(\Wbb)$, $\wtd \MS \uppsi(w)$ converges a.l.u. on $\MB=\MD_{r_\blt \rho_\blt}$. (Recall Notation \eqref{eq34}.) In this case, we have 
\begin{align}
\wtd{\mc S}\uppsi(w)\in\mc O(\mc B).  \label{eq36}
\end{align}
\end{df}

\subsubsection{The sewing of a partial conformal block is a partial conformal block}

We continue our discussion from the previous subsection. The $\mc O_{\mc B}$-module $\SW_{\fx}(\Wbb)$ is defined in the same way as in Def. \ref{lb7}. As mentioned before Rem. \ref{geosew6}, if local coordinates $\eta_1,\dots,\eta_N$ of $\wtd\fx$ at $x_1,\dots,x_N$ are picked, then each $\eta_i$ can be extended constantly to local coordinate of $\fk X$ at the section $x_i$, also denoted by $\eta_i$. Thus, we can make the identification
\begin{align}
\scr W_{\fk X}(\Wbb)=\scr W_{\wtd\fx}(\Wbb)\otimes_\Cbb\mc O_{\mc B}
\end{align}
such that for each choice of $\eta_\blt$, the following diagram commutes:
\begin{equation*}
    \begin{tikzcd}[column sep=0ex,row sep=2.5ex]
\mathscr{W}_{\fx}(\Wbb) \arrow[rrrr, "="] \arrow[rrdd, "{\mathcal{U}(\eta_\blt)}"'] &  &                                                    &  & \scr W_{\wtd\fx}(\Wbb)\otimes_\Cbb\mc O_{\mc B}  \arrow[lldd, "\MU(\eta_\blt)\otimes \ibf"] \\
                                                                                                                                                               &  &                                                    &  &                                                                                                                                                                      \\
                                                                                                                                                               &  & \Wbb\otimes_\Cbb\mc O_{\mc B} &  &                                                                                                                                                                     
\end{tikzcd}
\end{equation*}
This is possible, since the lower left map $\mc U(\eta_\blt)$ composed with the inverse $(\mc U(\eta_\blt)\otimes\idt)^{-1}$ of the lower right map is independent of the choice of $\eta_\blt$.

\begin{rem}
With abuse of notations, we also denote $\wtd{\mc S}\uppsi\otimes\idt$ by $\wtd{\mc S}\uppsi$. Thus, in view of \eqref{eq35}, we have an $\mc O_{\mc B}$-module morphism
\begin{align}
\wtd{\mc S}\uppsi:\scr W_{\fk X}(\Wbb)=\scr W_{\wtd\fx}(\Wbb)\otimes_\Cbb\mc O_{\mc B}\rightarrow \Cbb[[q_1,\dots,q_R]]\otimes_\Cbb \mc O_{\mc B}
\end{align}
Thus, if $\wtd{\mc S}\uppsi$ converges a.l.u. on $\mc B$, by \eqref{eq36}, the above morphism becomes
\begin{align}
\wtd{\mc S}\uppsi:\scr W_{\fk X}(\Wbb)\rightarrow  \mc O_{\mc B}
\end{align}
\end{rem}

Recall that we are assuming Asmp.   \ref{sewingass}. Also, recall Rem. \ref{lb9}.
\begin{thm}\label{formalpartialconformal}
Suppose that $\wtd \MS\uppsi$ converges a.l.u. on $\MB$. Then the morphism $\wtd \MS\uppsi:\scr W_{\fk X}(\Wbb)\rightarrow\mc O_{\mc B}$ is a partial conformal block of multi-level $a_1,\dots,a_M$ outside the discriminant locus $\Delta$, i.e.
\begin{align*}
\wtd \MS\uppsi|_{\mc B-\Delta}\in\scr T_{\fk X_{\mc B-\Delta},a_1,\dots,a_M}^*(\Wbb)
\end{align*} 
Equivalently (cf.  Prop. \ref{familyfiber5}),  $\wtd \MS\uppsi|_b\in\scr T_{\fk X_b,a_\star}^*(\Wbb)$ for each $b\in\mc B-\Delta$.
\end{thm}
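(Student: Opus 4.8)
The plan is to reduce the statement to a fiberwise check, i.e.\ to verify that $\wtd{\mc S}\uppsi|_b\in\scr T_{\fk X_b,a_\star}^*(\Wbb)$ for each $b\in\mc B-\Delta$, as permitted by Prop.\ \ref{familyfiber5} applied to the smooth family $\fk X_{\mc B-\Delta}$ (which satisfies Asmp.\ \ref{ass1} by Asmp.\ \ref{sewingass}). So fix $b=b_\blt\in\mc B-\Delta$; then $\fk X_b=\fk X_{b_\blt}$ is the $(M,N)$-pointed compact Riemann surface obtained by the geometric sewing of $\wtd{\fk X}$ along the pairs $(\sgm_i',\sgm_i'')$ with parameters $b_i$, and $\wtd{\mc S}\uppsi|_b$ is by definition the linear functional $w\mapsto\uppsi\bigl(w\otimes b_\blt^{\wtd L_\blt(0)}\btr\otimes\btl\bigr)$ on $\scr W_{\fk X_b}(\Wbb)$, which converges since $\wtd{\mc S}\uppsi$ converges a.l.u.\ on $\mc B$. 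What must be shown is that this functional kills $H^0\bigl(\mc C_b,\scr V_{\fk X_b,a_\star}\otimes\omega_{\mc C_b}(\blt S_\fx(b))\bigr)\cdot\scr W_{\fk X_b}(\Wbb)$.

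The key step is to relate a global section $\sigma$ of $\scr V_{\fk X_b,a_\star}\otimes\omega_{\mc C_b}(\blt S_\fx(b))$ on the sewn surface $\mc C_b$ to data on $\wtd C$. By Prop.\ \ref{nodal1} (the analogue of Prop.\ \ref{grauertcor} and Cor.\ \ref{grauertcor2} for the sewing family), such a $\sigma$ is the restriction to $\mc C_b$ of a section of $\scr V_{\fk X,a_\star}\otimes\omega_{\mc C/\mc B}(\blt S_\fx)$ on a neighborhood of $\mc C_b$; restricting this to the part of $\mc C_b$ coming from $\wtd C-\bigcup_i(V_i'\cup V_i'')$ and using the description \eqref{geosew7}, \eqref{geosew9} of $\scr V_{\fk X,a_\star}$ and $\omega_{\mc C/\mc B}$ near the nodes, one obtains a meromorphic section $\wtd\sigma$ of $\scr V_{\wtd C}\otimes\omega_{\wtd C}$ on $\wtd C$ with poles allowed at $x_\blt$, at $y_\star$ (of the order dictated by $L(0)D_\fx+a_\star y_\star$), and at $\sgm_i',\sgm_i''$, whose local expansions at $\sgm_i'$ and $\sgm_i''$ in the coordinates $\xi_i,\varpi_i$ are ``matched'' under $z\mapsto b_i/z$ in precisely the way that encodes the gluing \eqref{eq45}. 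Concretely, $\wtd\sigma$ lies in $H^0\bigl(\wtd C,\scr V_{\wtd{\fk X},a_\star}\otimes\omega_{\wtd C}(\blt(S_{\wtd{\fk X}}+\sum_i(\sgm_i'+\sgm_i'')))\bigr)$, and one reads off, via the residue pairing at $\sgm_i'$ and $\sgm_i''$, that the residue action of $\wtd\sigma$ at $\sgm_i'$ on the $\Mbb$-factor and at $\sgm_i''$ on the $\Mbb'$-factor are mutually transpose operators.

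The heart of the argument is then a Cauchy/residue computation. One applies the invariance of the partial conformal block $\uppsi$ on $\wtd{\fk X}$ to the section $\wtd\sigma$ acting on $w\otimes b_\blt^{\wtd L_\blt(0)}\btr\otimes\btl$: the sum $\sum_{i=1}^{N}\wtd\sigma*_i w$ over the incoming points $x_\blt$ of $\wtd C$ contributes exactly $\bigl(\wtd{\mc S}\uppsi|_b\bigr)(\sigma\cdot w_{\mathrm{sewn}})$ (the residue action on $\mc C_b$ restricted to the old marked points), while the two extra residue terms at $\sgm_i'$ and $\sgm_i''$ cancel in pairs. The cancellation is where the definition $q_\blt^{\wtd L_\blt(0)}\btr\otimes\btl=\sum m(n_\blt,a)\otimes\wch m(n_\blt,a)\cdot\prod q_i^{n_i}$ enters: moving a Laurent mode of $\wtd\sigma$ across the coevaluation tensor at level $n_i$ produces the factor $b_i^{n_i}$ needed to convert the residue at $\sgm_i''$ (in the $\varpi_i$-coordinate) into minus the residue at $\sgm_i'$ (in the $\xi_i$-coordinate), using $\xi_i\varpi_i=b_i$ and the transpose relation between the two residue actions together with the factor $\MU(\upgamma_1)$ in \eqref{geosew7} that accounts for the coordinate swap $z\mapsto 1/z$. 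This is essentially the classical computation that ``sewing converts two marked points labeled $\Mbb,\Mbb'$ into a contraction,'' done here at each of the $R$ sewn pairs simultaneously; the bookkeeping of the $R$ contour integrals and the a.l.u.\ convergence (needed to interchange the formal sum over $n_\blt$ with the residue operations and to justify that the resulting $q_\blt$-series is the expansion of a holomorphic function) is the main technical obstacle. Finally, having verified $\wtd{\mc S}\uppsi|_b$ is a partial conformal block on $\fk X_b$ for every $b\in\mc B-\Delta$, Prop.\ \ref{familyfiber5} upgrades this to $\wtd{\mc S}\uppsi|_{\mc B-\Delta}\in\scr T_{\fk X_{\mc B-\Delta},a_\star}^*(\Wbb)$, which is the assertion.
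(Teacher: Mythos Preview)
Your overall strategy and the identification of the key cancellation mechanism (the residue actions at $\sgm_i'$ and $\sgm_i''$ on the contraction tensor $b_\blt^{\wtd L_\blt(0)}\btr\otimes\btl$ are mutually transpose, hence cancel) are correct and match the paper's core computation. However, there is a genuine gap in the passage from a section $\sigma$ on the single sewn fiber $\mc C_b$ to a meromorphic section $\wtd\sigma$ on $\wtd C$.

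The problem is this: even after lifting $\sigma$ to a family section in a neighborhood of $\mc C_b$ via Prop.\ \ref{nodal1}, restricting to $\mc C_b$ gives you a section that, on the annulus $V_i'-F_{i,b}'$, has the form $v(\xi_i,b_i/\xi_i)$ for some $v$ holomorphic on $\mc D_{r_i}\times\mc D_{\rho_i}$. As $\xi_i\to 0$ this forces $\varpi_i=b_i/\xi_i\to\infty$, outside the domain where $v$ is defined; so the restriction typically has an essential singularity at $\sgm_i'$ rather than a pole of finite order. There is no single meromorphic $\wtd\sigma$ on $\wtd C$ with finite poles at $\sgm_i',\sgm_i''$ that represents $\sigma$, and without finite pole orders your invocation of the partial-conformal-block invariance of $\uppsi$ on $\wtd{\fk X}$ breaks down (one needs $\wtd\sigma\in H^0\bigl(\wtd C,\scr V_{\wtd{\fk X},a_\star}\otimes\omega_{\wtd C}(\blt(S_{\wtd{\fk X}}+T_{\wtd{\fk X}}))\bigr)$).

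The paper's fix is to avoid the single-fiber lift entirely. It first proves the purely formal statement (Prop.\ \ref{formal3}) that $\wtd{\mc S}\uppsi$ vanishes on $\scr J^{\pre}(\mc B)=H^0\bigl(\mc C,\scr V_{\fk X,a_\star}\otimes\omega_{\mc C/\mc B}(\blt S_\fx)\bigr)\cdot\scr W_\fx(\Wbb)(\mc B)$ as an element of $\Cbb[[q_1,\dots,q_R]]$. For this one takes a family section $v$ over all of $\mc B$ and expands it as a power series $v=\sum_{m_\blt} v_{m_\blt}\,q_\blt^{m_\blt}$ at the nodal point $q_\blt=0$; the coefficients $v_{m_\blt}$ are then genuine sections on $\wtd C$ with poles of order at most $m_i+1$ at $\sgm_i',\sgm_i''$ (this bound comes precisely from the generators \eqref{geosew7}, \eqref{geosew9}). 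The cancellation identity you describe is exactly \eqref{eq41}, but it is applied termwise to each $v_{m_\blt}$, giving vanishing as a formal series. Only after this is established does one use a.l.u.\ convergence together with the fact that $\scr J^{\pre}(\mc B)$ generates $J(b)$ for each $b\in\mc B-\Delta$ (Prop.\ \ref{nodal1}) to conclude the fiberwise vanishing. The interchange-of-limits issue you flag is thus circumvented: the cancellation happens at the formal level, and convergence is only used at the end to evaluate.
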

To prove Thm. \ref{formalpartialconformal}, we need the following analogue of \cite[Thm. 10.3]{Gui-sewingconvergence}. Recall \eqref{eq38} for the meanings of the notations $\scr J,\scr J^\pre$.
\begin{pp}\label{formal3}
    Suppose $\uppsi\in \ST_{\wtd \fx,a_1,\cdots,a_M}(\Wbb\otimes \Mbb\otimes \Mbb')$. Then $\wtd \MS \uppsi$ vanishes on
\begin{align*}
\scr J^\pre(\mc B)=   H^0\big(\MC,\SV_{\fx,a_1,\cdots,a_M}\otimes \omega_{\MC/\MB}(\blt S_\fx)\big)\cdot \SW_\fx(\Wbb)(\MB).
\end{align*}
\end{pp}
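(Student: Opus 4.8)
The strategy is to reduce the global statement over $\mc B = \mc D_{r_\blt\rho_\blt}$ to a purely local computation near each node, mimicking the structure of \cite[Thm. 10.3]{Gui-sewingconvergence}. Recall that $\scr J^\pre(\mc B)$ is spanned by elements $\sigma \cdot \wbf$ where $\sigma \in H^0(\mc C, \SV_{\fx,a_\star}\otimes\omega_{\mc C/\mc B}(\blt S_\fx))$ and $\wbf \in \SW_\fx(\Wbb)(\mc B)$, so it suffices to show $\wtd{\mc S}\uppsi(\sigma\cdot\wbf) = 0$ for such generators. After choosing local coordinates $\eta_\blt$ at $x_\blt$ (extended constantly to $\fk X$), we may work with $\mc U(\eta_\blt)\wbf \in \Wbb\otimes\mc O(\mc B)$, and in fact it suffices to treat $\wbf$ constant, i.e. $\wbf\in\Wbb$, since the general case follows by $\mc O(\mc B)$-linearity. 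The residue action $\sigma\cdot\wbf = \sum_{i=1}^N \sigma*_i\wbf$ picks up contributions at the incoming points $x_1,\dots,x_N$ of $\wtd C$.

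First I would express $\wtd{\mc S}\uppsi(\sigma\cdot\wbf)$ in terms of the partial conformal block $\uppsi$ on $\wtd{\fk X}$. The key point is that $\sigma$, viewed as a meromorphic section on $\mc C$ of $\SV_{\fx,a_\star}\otimes\omega_{\mc C/\mc B}$, when pulled back to $\wtd C\times\mc D_{r_\blt\rho_\blt}$ minus the $F_i',F_i''$, becomes (the constant-in-$\mc B$ extension of, or more precisely a $\mc B$-family of) a meromorphic section $\wtd\sigma$ on $\wtd C$ of $\SV_{\wtd{\fk X},a_\star}\otimes\omega_{\wtd C}$ with possible poles at $x_\blt$ \emph{and} at $\sgm_\blt', \sgm_\blt''$ — the poles at the sewing points are allowed because in the annular regions $W_i', W_i''$ the generators \eqref{geosew7} of $\SV_{\fk X,a_\star}^{\leq n}$ are $\MU_\varrho(\xi_i)^{-1}(\xi_i^{L(0)}v)$ resp. $\MU_\varrho(\varpi_i)^{-1}(\varpi_i^{L(0)}\MU(\upgamma_1)v)$, which, multiplied by the dualizing-sheaf generators $\xi_i^{-1}d\xi_i$ resp. $-\varpi_i^{-1}d\varpi_i$ from \eqref{geosew9}, give Laurent-type expansions at $\sgm_i',\sgm_i''$. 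The matching condition on $W_i'\cap W_i''$ (the overlap relation, Lemma 5.2 of \cite{Gui-sewingconvergence}) is precisely what encodes, after expanding $q_\blt^{\wtd L_\blt(0)}\btr\otimes\btl$ in the basis $\{m(n_\blt,a)\otimes\wch m(n_\blt,a)\}$, the Cauchy–contour identity
\[
\Res_{\xi_i=0} Y(\text{--},\xi_i)\, q_i^{\wtd L(0)}(\text{--}) \;=\; \Res_{\varpi_i=0} Y'(\text{--},\varpi_i)\,(\text{--})
\]
relating the residue action of $\wtd\sigma$ at $\sgm_i'$ on $\Mbb$ and at $\sgm_i''$ on $\Mbb'$. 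This is the computational heart and is essentially \cite[Lemma 10.2 / Thm. 10.3]{Gui-sewingconvergence} transplanted to our setting.

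Next, I would invoke the hypothesis that $\uppsi$ is a partial conformal block on $\wtd{\fk X}$ of multi-level $a_\star$: it vanishes on $H^0(\wtd C, \SV_{\wtd{\fk X},a_\star}\otimes\omega_{\wtd C}(\blt S_{\wtd{\fk X}}))\cdot\SW_{\wtd{\fk X}}(\Wbb\otimes\Mbb\otimes\Mbb')$, where now $S_{\wtd{\fk X}} = x_1+\cdots+x_N+\sgm_1'+\cdots+\sgm_R'+\sgm_1''+\cdots+\sgm_R''$. The section $\wtd\sigma$ is exactly a section of $\SV_{\wtd{\fk X},a_\star}\otimes\omega_{\wtd C}(\blt S_{\wtd{\fk X}})$: it has the correct order of vanishing at the outgoing points $y_j$ (this is untouched by sewing, since $\theta_j$ extends constantly), and arbitrary finite poles at all of $x_\blt, \sgm_\blt', \sgm_\blt''$. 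Hence $\uppsi(\wtd\sigma\cdot(\text{--})) = 0$, and expanding $\wtd\sigma\cdot$ as the sum of residue actions at $x_1,\dots,x_N$ (which reproduce $\wtd{\mc S}(\sigma\cdot_i\wbf)$ after contraction against $q_\blt^{\wtd L_\blt(0)}\btr\otimes\btl$), at $\sgm_\blt'$ (residue action on $\Mbb$), and at $\sgm_\blt''$ (residue action on $\Mbb'$), the sum over the two groups of sewing points cancels by the contour identity from the previous step, leaving $\sum_{i=1}^N \wtd{\mc S}\uppsi(\sigma*_i\wbf) = \wtd{\mc S}\uppsi(\sigma\cdot\wbf) = 0$, which is the claim. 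Throughout, the convergence hypothesis on $\wtd{\mc S}\uppsi$ is what justifies manipulating $\wtd{\mc S}\uppsi(\text{--})$ as an honest element of $\mc O(\mc B)$ rather than only a formal series, and what lets us interchange the infinite sum over $n_\blt\in\Nbb^R$ with the (finite) residue contour integrals.

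The main obstacle I anticipate is making the passage $\sigma \leadsto \wtd\sigma$ fully rigorous — i.e., verifying that a global section of $\SV_{\fk X,a_\star}\otimes\omega_{\mc C/\mc B}(\blt S_\fx)$ over the \emph{nodal} total space $\mc C$ really does restrict/expand to a $\mc B$-family of sections of $\SV_{\wtd{\fk X},a_\star}\otimes\omega_{\wtd C}(\blt S_{\wtd{\fk X}})$ on the normalization $\wtd C$, and that the gluing data \eqref{geosew7}, \eqref{geosew9} translate exactly into the $q_i$-dependent overlap relation, with all the $\mc U(\upgamma_1)$ / $Y'$ bookkeeping consistent with the definition of the contragredient module $\Mbb'$. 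This is where the explicit change-of-coordinate operators $\mc U(\upgamma_z) = e^{zL(1)}(-z^{-2})^{\wtd L(0)}$ from Exp. \ref{changeexample1} and the identity \eqref{eq43} must be deployed carefully; the calculation is routine in spirit but delicate in the indices, and is the analogue of the most technical part of \cite[Sec. 10]{Gui-sewingconvergence}. Everything else — reducing to generators, reducing to constant $\wbf$, the final cancellation — is formal once this translation is in hand.
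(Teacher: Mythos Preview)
Your overall strategy matches the paper's proof: reduce to generators $\sigma\cdot w$, translate $\sigma$ to data on $\wtd C$, apply the partial conformal block hypothesis on $\wtd{\fk X}$, and use the contragredient-module contour identity to cancel the residues at the sewing points. The computational heart you describe is exactly the paper's Step~1 (equation~\eqref{eq41}), and your ``main obstacle'' corresponds to the paper's Step~2.

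However, there is a genuine misconception. Proposition~\ref{formal3} does \emph{not} assume that $\wtd{\mc S}\uppsi$ converges: it is a purely formal statement about the $\Cbb[[q_1,\dots,q_R]]$-valued linear functional $\wtd{\mc S}\uppsi$, and convergence only enters later in Theorem~\ref{formalpartialconformal}. So your remark that ``the convergence hypothesis \dots\ is what justifies manipulating $\wtd{\mc S}\uppsi(\text{--})$ as an honest element of $\mc O(\mc B)$'' is misplaced, and you cannot appeal to it.

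This matters for your passage $\sigma\leadsto\wtd\sigma$. You cannot produce a single section $\wtd\sigma$ on $\wtd C$ (nor a $\mc B$-family of such): for fixed $b\in\mc B$, the pullback of $\sigma$ to $\wtd C\times\{b\}$ is only defined on $\wtd C$ minus closed discs around $\sgm_i',\sgm_i''$, and does not extend across them with finite-order poles. What the paper does instead (Step~2) is take the \emph{formal power series expansion} $\sigma = \sum_{m_\blt} v_{m_\blt}\,q_\blt^{m_\blt}$ at $q_\blt=0$. Each coefficient $v_{m_\blt}$ is then an honest element of $H^0\big(\wtd C,\SV_{\wtd{\fk X},a_\star}\otimes\omega_{\wtd C}(\blt(S_{\wtd{\fk X}}+T_{\wtd{\fk X}}))\big)$, with pole order at $\sgm_i',\sgm_i''$ bounded by $m_i+1$; this bound is read off directly from the generators \eqref{geosew7} and \eqref{geosew9}. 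One applies the conformal block hypothesis to each $v_{m_\blt}$ separately, uses the Step-1 identity to show that the sewing-point contributions assemble to zero as a formal series (equation~\eqref{formal2}), and sums up in $\Cbb[[q_\blt]]$. No analytic interchange of sums and residues is needed, because everything happens coefficient-by-coefficient. Once you reframe your plan in these formal-power-series terms, it coincides with the paper's argument.
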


The proof is similar to  \cite[Thm. 10.4]{Gui-sewingconvergence}. So we omit some details to make the proof not extremely long. To explain the ideas, we assume $R=2$ in the proof for simplicity.

\begin{proof}
Step 1. We claim that for each unital commutative $\Cbb$-algebra $A$, for each   $u\in\Vbb$, and $f\in A[[\xi_1,\varpi_1,q_2]]$,  the following two elements of $(\Mbb\otimes_\Cbb\Mbb'\otimes_\Cbb A)[[q_1,q_2]]$ are equal:
\begin{align}\label{eq41}
\begin{aligned}
&\Res_{\xi_1=0}~Y_{\Mbb,1}\big(\xi_1^{L(0)}u,\xi_1\big)q_\blt^{\wtd L_\blt(0)}\btr\otimes \btl\cdot f\big(\xi_1,\frac{q_1}{\xi_1},q_2\big)\frac{d\xi_1}{\xi_1}\\
=&\Res_{\varpi_1=0}~q_\blt^{\wtd L_\blt(0)}\btr\otimes Y_{\Mbb',1}\big(\varpi_1^{L(0)}\mc U(\upgamma_1)u,\varpi_1\big)\btl\cdot f\big(\frac{q_1}{\varpi_1},\varpi_1,q_2\big)\frac{d\varpi_1}{\varpi_1}
\end{aligned}
\end{align}
See \cite[(10.2,10.3)]{Gui-sewingconvergence} for the meaning of the expressions $T\btr\otimes\btl$ and $\btr\otimes T\btl$ (which equals $T^\tr\btr\otimes\btl$ where $T^\tr$ is the transpose of $T$) if $T$ is a linear operator. Recall $\mc U(\upgamma_1)=e^{L(1)}(-1)^{L(0)}$ (cf. \eqref{eq40}).

The proof is similar to \cite[Lem. 10.2]{Gui-sewingconvergence}. First, one may simplify discussions by evaluating the two sides \eqref{eq41} with $\mbf m'\otimes\mbf m$ for each $\mbf m\in\Mbb,\mbf m'\in\Mbb'$ to get elements of $A[[q_1,q_2]]$. As in \cite[(10.13)]{Gui-sewingconvergence}, one proves
\begin{align}
Y_{\Mbb,1}\big(\xi_1^{L(0)}u,\xi_1\big)q_\blt^{\wtd L_\blt(0)}\btr\otimes \btl=q_\blt^{\wtd L_\blt(0)}\btr\otimes Y_{\Mbb',1}\big((q_1/\xi_1)^{L(0)}\mc U(\upgamma_1)u,q_1/\xi_1\big)\btl  \label{eq42}
\end{align}
as elements of $(\Mbb\otimes\Mbb')[[\xi_1^{\pm1},q_1^{\pm1},q_2^{\pm1}]]$. In other words,
\begin{align}
\bigbk{\mbf m',Y_{\Mbb,1}\big(\xi_1^{L(0)}u,\xi_1\big)q_\blt^{\wtd L_\blt(0)}\mbf m}=\bigbk{Y_{\Mbb',1}\big((q_1/\xi_1)^{L(0)}\mc U(\upgamma_1)u,q_1/\xi_1\big)q_\blt^{\wtd L_\blt(0)}\mbf m', \mbf m}
\end{align}
which follows from \eqref{eq39} and \eqref{eq43}, as indicated in the formulas before (10.13) of \cite{Gui-sewingconvergence}.

Now, we define an element of $(\Mbb\otimes\Mbb')[[\xi_1^{\pm1},\varpi_1^{\pm1},q_2^{\pm1}]]$ to be
\begin{subequations}
\begin{gather}
C(\xi_1,\varpi_1,q_2)=Y_{\Mbb,1}\big(\xi_1^{L(0)}u,\xi_1\big)(\xi_1\varpi_1)^{\wtd L_1(0)}q_2^{\wtd L_2(0)}\btr\otimes \btl
\end{gather}
By \eqref{eq42}, we have
\begin{gather}
C(\xi_1,\varpi_1,q_2)=(\xi_1\varpi_1)^{\wtd L_1(0)}q_2^{\wtd L_2(0)}\btr\otimes Y_{\Mbb',1}\big(\varpi_1^{L(0)}\mc U(\upgamma_1)u,\varpi_1\big)\btl
\end{gather}
\end{subequations}
Set $D=f\cdot C$. As shown in the proof of \cite[Lem. 10.2]{Gui-sewingconvergence}, we have the general fact about series:
\begin{align}
\begin{aligned}
\Res_{\xi_1=0}~ D\big(\xi_1,\frac{q_1}{\xi_1},q_2\big)\frac{d\xi_1}{\xi_1}=\Res_{\varpi_1=0}~D\big(\frac{q_1}{\varpi_1},\varpi_1,q_2\big)\frac{d\varpi_1}{\varpi_1}
\end{aligned}
\end{align}
This proves \eqref{eq41}. If we exchange $1$ and $2$, a similar description holds true.\\

Step 2. Define divisors of $\mc C$ and $\wtd C$:
\begin{gather*}
S_\fx=\sum_{i=1}^N x_i(\MB)\qquad S_{\wtd \fx}=\sum_{i=1}^N x_i\qquad T_{\wtd \fx}=\sum_j\varsigma_j'+\sum_j\sgm_j''
\end{gather*}
Choose $v\in H^0\big(\MC,\SV_{\fx,a_1,\cdots,a_M}\otimes \omega_{\MC/\MB}(\blt S_\fx)\big)$. We claim that we have a formal power series expansion
    \begin{equation}\label{formal1}
    v=\sum_{m,n\in \Nbb}v_{m,n}q_1^m q_2^n,
    \end{equation}
    where $v_{m,n}\in H^0\big(\wtd C,\SV_{\wtd \fx,a_1,\cdots,a_M}\otimes \omega_{\wtd C} (\blt (S_{\wtd \fx}+T_{\wtd \fx}))\big)$.
     
Choose any precompact open subset $U\subset \wtd C$ disjoint from $\varsigma_1',\varsigma_2',\varsigma_1'',\varsigma_2''$. We can find small enough $0<\epsilon<r,0<\lambda<\rho$ such that $U\times \MD_{\epsilon \lambda}\subset \MC$ is disjoint from the sewing parts. This means the restriction of $\pi:\MC\rightarrow \MB$ to $U\times \MD_{\epsilon\lambda}$ equals $\wtd \pi:\wtd C\times \MD_{r\rho}\rightarrow \MD_{r\rho}$. The section $v\vert_{U\times \MD_{\epsilon\lambda}}$ of $\SV_{\fx,a_1,\cdots,a_M}\otimes \omega_{\MC/\MB}(\blt S_\fx)$ can be regarded as a section of $\SV_{\wtd \fx\times \MD_{r\rho}}\otimes \omega_{\wtd C\times \MD_{r\rho}/\MD_{r\rho}}(\blt S_\fx)$. By taking power series expansion at $q_\blt =0$, $v\vert_{U\times \MD_{\epsilon\lambda}}$ can be regarded as an element in $\big(\SV_{\wtd \fx,a_1,\cdots,a_M}\otimes \omega_{\wtd C}(\blt (S_{\wtd \fx}+T_{\wtd \fx}))\big)(U)[[q_1,q_2]]$. This defines (\ref{formal1}) where $v_{m,n}\in H^0\big(\wtd C-\{\varsigma_1',\varsigma_2',\varsigma_1'',\varsigma_2''\},\SV_{\wtd \fx,a_1,\cdots,a_M}\otimes \omega_{\wtd C} (\blt (S_{\wtd \fx}+T_{\wtd \fx}))\big)$. 

Recall by \eqref{eq44} that
\begin{gather*}
W_1=\mc D_{r_1}\times\mc D_{\rho_1}\times\mc D_{r_2\rho_2}\qquad W_1'=\mc D_{r_1}^\times\times\mc D_{\rho_1}\times\mc D_{r_2\rho_2} \qquad W_1''=\mc D_{r_1}\times\mc D_{\rho_1}^\times\times\mc D_{r_2\rho_2}
\end{gather*}
and that exchanging the subscripts $1,2$ above gives the description of $W_2,W_2',W_2''$. By the description of $\SV_{\fx,a_1,\cdots,a_M}$ and $\omega_{\MC/\MB}$ on $W$ in \eqref{geosew7}, \eqref{geosew9},  $v|_{W_1-\Sigma}$ is a sum of sections whose restrictions to $W_1'$ resp. $W_1''$ under the trivializations $\mc U_\varrho(\xi_1)$ resp. $\mc U_\varrho(\varpi_1)$ are
\begin{align}\label{equation100}
f\big(\xi_1,\frac{q_1}{\xi_1},q_2\big)\xi_1^{L(0)}u\cdot \frac{d\xi_1}{\xi_1} \quad \text{resp.} \quad -f\big(\frac{q_1}{\varpi_1},\varpi_1,q_2\big)\varpi_1^{L(0)}\mc U(\upgamma_1)u\cdot\frac{d\varpi_1}{\varpi_1} 
\end{align}
where $u\in\Vbb$ and $f=f(\xi_1,\varpi_1,q_2)\in \mc O(W_1)$. Exchanging the roles of $1,2$ gives the description of $v|_{W_2-\Sigma}$. This shows  that the $v_{m,n}$ in \eqref{formal1} has poles of orders at most $m+1$ (resp. $n+1$) at $\varsigma_1',\varsigma_1''$ (resp. $\varsigma_2',\varsigma_2''$). This completes the construction of (\ref{formal1}). \\
     
Step 3. By setting $A=\Cbb$ in \eqref{eq41} and using the fact that $v|_{W_1-\Sigma}$ is a finite sum of vectors of the form \eqref{equation100}, we have the following equation of elements in $(\Mbb\otimes \Mbb')[[q_1,q_2]]$:
    \begin{equation}\label{formal2}
    \sum_{m,n\in \Nbb}\big(v_{m,n}\cdot q_\blt^{\wtd L_\blt(0)}\btr\otimes \btl+q_\blt^{\wtd L_\blt(0)}\btr\otimes v_{m,n}\cdot \btl\big)q_1^m q_2^n =0,
    \end{equation}
    where the residue action of $v_{m,n}$ on $\Mbb$ and $\Mbb'$ are as in \eqref{eq73} using local coordinates $\xi_1,\xi_2,\varpi_1,\varpi_2$. 
    
    On the other hand, since $\uppsi\in \ST_{\wtd \fx,a_\star}^*(\Wbb\otimes \Mbb\otimes \Mbb')$, for each $w\in \Wbb$, considered as a constant section of $\Wbb\otimes \MO(\MB)$, the element $A_{m,n}\in \Cbb[[q_1,q_2]]$ defined by 
    $$
    A_{m,n}:=\uppsi\big((v_{m,n}\cdot w)\otimes (q_\blt^{\wtd L_\blt(0)}\btr\otimes \btl)\big)+\uppsi\big(w\otimes (v_{m,n}\cdot q_\blt^{\wtd L_\blt(0)}\btr\otimes \btl)\big)+\uppsi\big(w\otimes ( q_\blt^{\wtd L_\blt(0)}\btr\otimes v_{m,n}\cdot\btl)\big)
    $$
    equals 0. By \eqref{formal1} and \eqref{formal2}, we have 
    $$
    0=\sum_{m,n\in \Nbb}A_{m,n}q_1^m q_2^n =\sum_{m,n}\uppsi\big((v_{m,n}\cdot w)\otimes (q_\blt^{\wtd L_\blt(0)}\btr\otimes \btl)\big)q_1^m q_2^n=\wtd \MS \uppsi(v\cdot w),
    $$
    which proves $\wtd \MS \uppsi$ vanishes on $\scr J^\pre(\mc B)$.
\end{proof}

\begin{proof}[\textbf{Proof of Thm. \ref{formalpartialconformal}}]
Choose any $b\in\mc B-\Delta$. Since $\scr W_{\fk X}(\Wbb)\simeq\Wbb\otimes_\Cbb\mc O_{\mc B}$ via $\mc U(\eta_\blt)$, $\scr W_{\fk X}(\Wbb)(\mc B)$  generates the fiber $\scr W_{\fk X}(\Wbb)|_b\simeq\scr W_{\fk X_b}(\Wbb)$. This fact, together with Prop. \ref{nodal1} and Cor. \ref{grauertcor2}, shows that $\scr J^\pre(\mc B)$ generates $J(b)=\eqref{eq21}$. Thus, since $\wtd{\mc S}\uppsi$ vanishes on $\scr J^\pre(\mc B)$ by Prop. \ref{formal3}, $\wtd{\mc S}\uppsi|_b$ vanishes on $J(b)$. So $\wtd \MS\uppsi|_b\in\scr T_{\fk X_b,a_\star}^*(\Wbb)$. 
\end{proof}

\subsection{The main example of sewing related to propagation}\label{sewingeg3}

    Let $\FC=(y_1,\cdots,y_M;\theta_1,\cdots,\theta_M\big|C\big|x_1,\cdots,x_N)$ be an $(M,N)$-pointed compact Riemann surface with outgoing local coordinates. Assume we have local coordinates $\eta_1,\cdots,\eta_N$ defined on mutually disjoint neighborhoods $V_1,\cdots,V_N$ of $x_1,\cdots,x_N$. Moreover, we choose $r_1,\dots,r_N>0$ and assume that
\begin{gather*}
\eta_i(V_i)=\MD_{r_i}\qquad\text{for each }1\leq i\leq N\\
\text{$y_1,\cdots,y_M,V_1,\cdots, V_N$ are mutually disjoint}
\end{gather*}

We let
\begin{gather*}
\fk P_1=(\Pbb^1|0,1,\infty)\qquad \fk P_2=\cdots=\fk P_N=(\Pbb^1|0,\infty)\\
\wtd{\fk X}=\fk C\sqcup \fk P_1\sqcup\fk P_2\sqcup\cdots\sqcup \fk P_N
\end{gather*}
where all marked points except $y_1,\cdots,y_M$ are incoming marked points. We will write $0,\infty$ as $0_i,\infty_i$ if we want to emphasize that they are the corresponding points in $\fk P_i$. Let $\zeta$ be the standard coordinate of $\Cbb$. Then $\FP_1$ is equipped with local coordinates $\zeta,\zeta-1,1/\zeta$ and $\FP_2,\dots,\FP_N$ are equipped with $\zeta,1/\zeta$. Then each incoming marked point of $\wtd \fx$ is equipped with a local coordinate. 

We sew $\wtd \fx$ along $N$ pairs of points $(x_1,\infty_1),\cdots ,(x_N,\infty_N)$ using local coordinates $(\eta_1,1/\zeta),\cdots,(\eta_N,1/\zeta)$ (cf. Fig. \ref{fig1})  to get a family $\fk X$ with base $\mc D_{r_\blt}$.

\begin{figure}[h]
	\centering
\scalebox{0.85}{

\tikzset{every picture/.style={line width=0.75pt}} 

\begin{tikzpicture}[x=0.75pt,y=0.75pt,yscale=-1,xscale=1]

\draw   (209.31,77.75) .. controls (229.31,78.55) and (246.91,96.95) .. (246.11,124.95) .. controls (245.31,152.95) and (229.64,155.09) .. (218.97,156.42) .. controls (208.3,157.76) and (176.1,144.48) .. (143.71,142.55) .. controls (111.32,140.61) and (86.51,161.55) .. (76.51,146.55) .. controls (66.51,131.55) and (66.11,111.35) .. (80.51,94.55) .. controls (94.91,77.75) and (116.31,98.55) .. (145.31,95.35) .. controls (174.31,92.15) and (189.31,76.95) .. (209.31,77.75) -- cycle ;
\draw    (96.97,120.42) .. controls (109.64,110.42) and (119.64,113.09) .. (128.3,121.76) ;
\draw    (100.97,118.42) .. controls (108.3,123.76) and (116.3,123.09) .. (123.64,118.42) ;
\draw    (147.64,119.76) .. controls (160.3,109.76) and (170.3,112.42) .. (178.97,121.09) ;
\draw    (151.64,117.76) .. controls (159.64,123.76) and (166.3,122.42) .. (174.3,117.76) ;
\draw  [fill={rgb, 255:red, 0; green, 0; blue, 0 }  ,fill opacity=1 ] (82.36,105.39) .. controls (82.36,104.49) and (83.1,103.76) .. (84,103.76) .. controls (84.9,103.76) and (85.64,104.49) .. (85.64,105.39) .. controls (85.64,106.3) and (84.9,107.03) .. (84,107.03) .. controls (83.1,107.03) and (82.36,106.3) .. (82.36,105.39) -- cycle ;
\draw  [fill={rgb, 255:red, 0; green, 0; blue, 0 }  ,fill opacity=1 ] (79.7,131.39) .. controls (79.7,130.49) and (80.43,129.76) .. (81.33,129.76) .. controls (82.24,129.76) and (82.97,130.49) .. (82.97,131.39) .. controls (82.97,132.3) and (82.24,133.03) .. (81.33,133.03) .. controls (80.43,133.03) and (79.7,132.3) .. (79.7,131.39) -- cycle ;
\draw  [fill={rgb, 255:red, 0; green, 0; blue, 0 }  ,fill opacity=1 ] (217.7,90.73) .. controls (217.7,89.82) and (218.43,89.09) .. (219.33,89.09) .. controls (220.24,89.09) and (220.97,89.82) .. (220.97,90.73) .. controls (220.97,91.63) and (220.24,92.36) .. (219.33,92.36) .. controls (218.43,92.36) and (217.7,91.63) .. (217.7,90.73) -- cycle ;
\draw  [fill={rgb, 255:red, 0; green, 0; blue, 0 }  ,fill opacity=1 ] (233.03,118.06) .. controls (233.03,117.16) and (233.76,116.42) .. (234.67,116.42) .. controls (235.57,116.42) and (236.3,117.16) .. (236.3,118.06) .. controls (236.3,118.96) and (235.57,119.7) .. (234.67,119.7) .. controls (233.76,119.7) and (233.03,118.96) .. (233.03,118.06) -- cycle ;
\draw  [fill={rgb, 255:red, 0; green, 0; blue, 0 }  ,fill opacity=1 ] (229.03,144.06) .. controls (229.03,143.16) and (229.76,142.42) .. (230.67,142.42) .. controls (231.57,142.42) and (232.3,143.16) .. (232.3,144.06) .. controls (232.3,144.96) and (231.57,145.7) .. (230.67,145.7) .. controls (229.76,145.7) and (229.03,144.96) .. (229.03,144.06) -- cycle ;
\draw   (283.24,63.05) .. controls (283.24,51.61) and (292.52,42.33) .. (303.95,42.33) .. controls (315.39,42.33) and (324.67,51.61) .. (324.67,63.05) .. controls (324.67,74.48) and (315.39,83.76) .. (303.95,83.76) .. controls (292.52,83.76) and (283.24,74.48) .. (283.24,63.05) -- cycle ;
\draw  [fill={rgb, 255:red, 0; green, 0; blue, 0 }  ,fill opacity=1 ] (288.36,65.39) .. controls (288.36,64.49) and (289.1,63.76) .. (290,63.76) .. controls (290.9,63.76) and (291.64,64.49) .. (291.64,65.39) .. controls (291.64,66.3) and (290.9,67.03) .. (290,67.03) .. controls (289.1,67.03) and (288.36,66.3) .. (288.36,65.39) -- cycle ;
\draw  [fill={rgb, 255:red, 0; green, 0; blue, 0 }  ,fill opacity=1 ] (316.36,61.39) .. controls (316.36,60.49) and (317.1,59.76) .. (318,59.76) .. controls (318.9,59.76) and (319.64,60.49) .. (319.64,61.39) .. controls (319.64,62.3) and (318.9,63.03) .. (318,63.03) .. controls (317.1,63.03) and (316.36,62.3) .. (316.36,61.39) -- cycle ;
\draw  [fill={rgb, 255:red, 0; green, 0; blue, 0 }  ,fill opacity=1 ] (299.03,50.06) .. controls (299.03,49.16) and (299.76,48.42) .. (300.67,48.42) .. controls (301.57,48.42) and (302.3,49.16) .. (302.3,50.06) .. controls (302.3,50.96) and (301.57,51.7) .. (300.67,51.7) .. controls (299.76,51.7) and (299.03,50.96) .. (299.03,50.06) -- cycle ;
\draw   (306.58,118.38) .. controls (306.58,106.94) and (315.85,97.67) .. (327.29,97.67) .. controls (338.73,97.67) and (348,106.94) .. (348,118.38) .. controls (348,129.82) and (338.73,139.09) .. (327.29,139.09) .. controls (315.85,139.09) and (306.58,129.82) .. (306.58,118.38) -- cycle ;
\draw  [fill={rgb, 255:red, 0; green, 0; blue, 0 }  ,fill opacity=1 ] (311.03,118.06) .. controls (311.03,117.16) and (311.76,116.42) .. (312.67,116.42) .. controls (313.57,116.42) and (314.3,117.16) .. (314.3,118.06) .. controls (314.3,118.96) and (313.57,119.7) .. (312.67,119.7) .. controls (311.76,119.7) and (311.03,118.96) .. (311.03,118.06) -- cycle ;
\draw  [fill={rgb, 255:red, 0; green, 0; blue, 0 }  ,fill opacity=1 ] (339.7,118.06) .. controls (339.7,117.16) and (340.43,116.42) .. (341.33,116.42) .. controls (342.24,116.42) and (342.97,117.16) .. (342.97,118.06) .. controls (342.97,118.96) and (342.24,119.7) .. (341.33,119.7) .. controls (340.43,119.7) and (339.7,118.96) .. (339.7,118.06) -- cycle ;
\draw   (275.24,168.38) .. controls (275.24,156.94) and (284.52,147.67) .. (295.95,147.67) .. controls (307.39,147.67) and (316.67,156.94) .. (316.67,168.38) .. controls (316.67,179.82) and (307.39,189.09) .. (295.95,189.09) .. controls (284.52,189.09) and (275.24,179.82) .. (275.24,168.38) -- cycle ;
\draw  [fill={rgb, 255:red, 0; green, 0; blue, 0 }  ,fill opacity=1 ] (281.03,163.39) .. controls (281.03,162.49) and (281.76,161.76) .. (282.67,161.76) .. controls (283.57,161.76) and (284.3,162.49) .. (284.3,163.39) .. controls (284.3,164.3) and (283.57,165.03) .. (282.67,165.03) .. controls (281.76,165.03) and (281.03,164.3) .. (281.03,163.39) -- cycle ;
\draw  [fill={rgb, 255:red, 0; green, 0; blue, 0 }  ,fill opacity=1 ] (308.36,173.39) .. controls (308.36,172.49) and (309.1,171.76) .. (310,171.76) .. controls (310.9,171.76) and (311.64,172.49) .. (311.64,173.39) .. controls (311.64,174.3) and (310.9,175.03) .. (310,175.03) .. controls (309.1,175.03) and (308.36,174.3) .. (308.36,173.39) -- cycle ;
\draw  [color={rgb, 255:red, 74; green, 144; blue, 226 }  ,draw opacity=1 ] [dash pattern={on 1.5pt off 1.5pt}]  (238.87,85.13) -- (275.74,73.05) ;
\draw [shift={(277.64,72.42)}, rotate = 161.85]  [color={rgb, 255:red, 74; green, 144; blue, 226 }  ,draw opacity=1 ][line width=0.75]    (6.56,-1.97) .. controls (4.17,-0.84) and (1.99,-0.18) .. (0,0) .. controls (1.99,0.18) and (4.17,0.84) .. (6.56,1.97)   ;
\draw [shift={(236.97,85.76)}, rotate = 341.85]  [color={rgb, 255:red, 74; green, 144; blue, 226 }  ,draw opacity=1 ][line width=0.75]    (6.56,-1.97) .. controls (4.17,-0.84) and (1.99,-0.18) .. (0,0) .. controls (1.99,0.18) and (4.17,0.84) .. (6.56,1.97)   ;
\draw  [color={rgb, 255:red, 74; green, 144; blue, 226 }  ,draw opacity=1 ] [dash pattern={on 1.5pt off 1.5pt}]  (255.64,117.76) -- (296.3,117.76) ;
\draw [shift={(298.3,117.76)}, rotate = 180]  [color={rgb, 255:red, 74; green, 144; blue, 226 }  ,draw opacity=1 ][line width=0.75]    (6.56,-1.97) .. controls (4.17,-0.84) and (1.99,-0.18) .. (0,0) .. controls (1.99,0.18) and (4.17,0.84) .. (6.56,1.97)   ;
\draw [shift={(253.64,117.76)}, rotate = 0]  [color={rgb, 255:red, 74; green, 144; blue, 226 }  ,draw opacity=1 ][line width=0.75]    (6.56,-1.97) .. controls (4.17,-0.84) and (1.99,-0.18) .. (0,0) .. controls (1.99,0.18) and (4.17,0.84) .. (6.56,1.97)   ;
\draw  [color={rgb, 255:red, 74; green, 144; blue, 226 }  ,draw opacity=1 ] [dash pattern={on 1.5pt off 1.5pt}]  (246.18,149.12) -- (271.1,158.39) ;
\draw [shift={(272.97,159.09)}, rotate = 200.41]  [color={rgb, 255:red, 74; green, 144; blue, 226 }  ,draw opacity=1 ][line width=0.75]    (6.56,-1.97) .. controls (4.17,-0.84) and (1.99,-0.18) .. (0,0) .. controls (1.99,0.18) and (4.17,0.84) .. (6.56,1.97)   ;
\draw [shift={(244.3,148.42)}, rotate = 20.41]  [color={rgb, 255:red, 74; green, 144; blue, 226 }  ,draw opacity=1 ][line width=0.75]    (6.56,-1.97) .. controls (4.17,-0.84) and (1.99,-0.18) .. (0,0) .. controls (1.99,0.18) and (4.17,0.84) .. (6.56,1.97)   ;

\draw (57.33,91.4) node [anchor=north west][inner sep=0.75pt]  [font=\small]  {$y_{1}$};
\draw (53.33,124.07) node [anchor=north west][inner sep=0.75pt]  [font=\small]  {$y_{2}$};
\draw (200.67,83.07) node [anchor=north west][inner sep=0.75pt]  [font=\small]  {$x_{1}$};
\draw (215.33,111.07) node [anchor=north west][inner sep=0.75pt]  [font=\small]  {$x_{2}$};
\draw (211.33,133.73) node [anchor=north west][inner sep=0.75pt]  [font=\small]  {$x_{3}$};
\draw (292.67,62.49) node [anchor=north west][inner sep=0.75pt]  [font=\small]  {$\infty $};
\draw (287.67,32.73) node [anchor=north west][inner sep=0.75pt]  [font=\small]  {$1$};
\draw (330,54.73) node [anchor=north west][inner sep=0.75pt]  [font=\small]  {$0$};
\draw (314,105.4) node [anchor=north west][inner sep=0.75pt]  [font=\small]  {$\infty $};
\draw (352.67,112.73) node [anchor=north west][inner sep=0.75pt]  [font=\small]  {$0$};
\draw (279.33,166.4) node [anchor=north west][inner sep=0.75pt]  [font=\small]  {$\infty $};
\draw (318.67,171.78) node [anchor=north west][inner sep=0.75pt]  [font=\small]  {$0$};

\end{tikzpicture}
}
	\caption{}
	\label{fig1}
\end{figure}

\begin{rem}
Let us visualize this sewing construction on each smooth fiber. Choose
\begin{align*}
b_\blt=(b_1,\dots,b_N)\in\mc D_{r_\blt}^\times\equiv\mc D_{r_1}^\times\times\cdots\times\mc D_{r_N}^\times=\mc B-\Delta
\end{align*}
The fiber $\fk X_{b_\blt}$ is obtained by discarding small discs around $x_1,\dots,x_N\in\fk C$ and $\infty_1\in\fk P_1,\dots,\infty_N\in\fk P_N$, and filling the $N$ holes of $\fk C$ using the remaining parts of $\fk P_1,\dots,\fk P_N$ by identifying each $\gamma_i\in\fk P_i$ outside the discarded part with the point $p_i=\eta_i^{-1}(b_i\gamma_i)$ of $\fk C$ (since $\eta_i(p_i)\cdot 1/\zeta(\gamma_i)=b_i$, cf. \eqref{eq45}). The $N$ holes of $\fk C$ have been filled. So $\fk X_{b_\blt}$ as a Riemann surface is equivalent to $C$. The original $x_i\in\fk C$ is discarded, and $0_i\in\fk P_i$ becomes the point $x_i$ on $C$. The original $1\in\fk P_1$ becomes $\eta_1^{-1}(b_1)$ on $C$.

The local coordinates at the marked points $y_1,\cdots,y_M,\eta_1^{-1}(b_1),x_1,\cdots,x_N$ of $\fk X_{b_\blt}$ are
\begin{gather}\label{eq46}
\begin{gathered}
\theta_j\text{ at }y_j\qquad b_1^{-1}\eta_1-1\text{ at }\eta_1^{-1}(b_1)\qquad b_i^{-1}\eta_i\text{ at }x_i
\end{gathered}
\end{gather}
\end{rem}

\begin{rem}
According to the above remark, when restricted to $\mc D_{r_\blt}^\times$, we have
    $$
    \fx_{\MD_{r_\blt}^\times}=\big(y_1,\cdots,y_M;\theta_1,\cdots,\theta_M\big|\pi:C\times \MD_{r_\blt}^\times \rightarrow \MD_{r_\blt }^\times\big|\mu,x_1,\cdots,x_N\big),
    $$
    where $\pi$ is the projection onto the $\MD_{r_\blt}$-component, $x_1,\cdots,x_N,y_1,\cdots,y_M$ are sections sending $b_\blt=(b_1,\dots,b_N)$ to $(x_1,b_\blt),\cdots,(x_N,b_\blt),(y_1,b_\blt),\cdots,(y_M,b_\blt)$, and $\mu$ sends $b_\blt$ to $(\eta_1^{-1}(b_1),b_\blt)$. The local coordinates can be determined fiberwisely by \eqref{eq46}.
\end{rem}

Choose a \emph{finitely} admissible $\Vbb^{\times N}$-module $\Wbb$ with contragredient module $\Wbb'$. Note that $\wtd{\fk X}$ has incoming marked points $x_1,\dots,x_N,0_1,\dots,0_N,1,\infty_1,\dots,\infty_N$ where the  $1$ in the middle belongs to $\fk P_1$. We associate:
\begin{gather}
\begin{gathered}
\Wbb\text{ to the set of points }x_1,\dots,x_N\\
\Wbb\text{ to the set of points }0_1,\dots,0_N\\
\Vbb\text{ to the point }1\\
\Wbb'\text{ to the set of points }\infty_1,\dots,\infty_N
\end{gathered}
\end{gather}
Identify (where all $\otimes$ are over $\Cbb$)
    \begin{gather*}
        \SW_{\FC}(\Wbb)=\Wbb,    \qquad    \SW_{\fk P_1\sqcup\cdots\sqcup\fk P_N}(\Wbb\otimes \Vbb\otimes \Wbb')=\Wbb\otimes \Vbb\otimes \Wbb'\\
        \SW_{\wtd{\fk X}}(\Wbb\otimes\Wbb\otimes\Vbb\otimes\Wbb')=\Wbb\otimes\Wbb\otimes\Vbb\otimes\Wbb'\\
        \SW_{\fx}(\Vbb\otimes \Wbb)=\Vbb\otimes \Wbb\otimes \MO_{\MD_{r_\blt}}
    \end{gather*}
via the trivializations defined by the chosen local coordinates. 

Let $\upphi:\Wbb\rightarrow \Cbb$ be an element of $\scr T_{\fk C,a_1,\dots,a_M}^*(\Wbb)$, and let
\begin{gather}
\begin{gathered}
       \upomega:\Wbb\otimes \Vbb\otimes \Wbb' \rightarrow \Cbb\\
        w\otimes u\otimes w'\mapsto \<Y_1(u,1)w,w'\>=\sum_{n\in \Zbb}\<Y_1(u)_n w,w'\>
\end{gathered}
\end{gather}
which is a conformal block associated to $\FP_1\sqcup \cdots\sqcup\FP_N$. Then $\uppsi:=\upphi\otimes \upomega$ is a partial conformal block for $\wtd \fx$ of multi-level $a_1,\dots,a_M$ and its normalized sewing equals 
     \begin{equation}\label{sewingeg1}
     \wtd \MS \uppsi(u\otimes w)=q_2^{\wtd \wt_2(w)}\cdots q_N^{\wtd \wt_N(w)}\sum_{n\in \Zbb}q_1^{\wt(u)+\wtd \wt_1(w)-n-1}\upphi\big(Y_1(u)_n w\big)
     \end{equation}
if the vectors are homogeneous. (Note that $\wtd\wt_1\big(Y_1(u)_nw\big)=\wt(u)+\wtd \wt_1(w)-n-1$.) The a.l.u. convergence of \eqref{sewingeg1} on $\MD_{r_\blt}$ is clearly equivalent to the absolute convergence of 
     \begin{equation}\label{sewingeg2}
         \sum_{n\in \Zbb}q_1^{\wt(u)+\wtd \wt_1(w)-n-1}\upphi\big(Y_1(u)_n w\big)
     \end{equation}
     on $\vert q_1\vert<r_1$. By Thm. \ref{formalpartialconformal}, we immediately conclude:
\begin{pp}\label{lb10}
If (\ref{sewingeg2}) converges absolutely on $\vert q_1\vert<r_1$ for all $u$ and $w$, then $\wtd\MS\uppsi|_{\mc D_{r_\blt}^\times}\in\scr T_{\fk X_{\mc D_{r_\blt}^\times}}^*(\Vbb\otimes\Wbb)$. Equivalently,  $\wtd \MS\uppsi|_{q_\blt=b}$ belongs to $\scr T_{\fk X_b,a_\star}^*(\Vbb\otimes\Wbb)$ for each $b\in\mc D_{r_\blt}^\times$.
\end{pp}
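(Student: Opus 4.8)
The assertion is a corollary of Theorem \ref{formalpartialconformal}: essentially all the content is already in that theorem, and the only remaining work is to match the present setup to its hypotheses and to upgrade the convergence assumption.

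First I would record that the family $\fk X$ constructed in this subsection is exactly the family obtained, in the sense of Subsec. \ref{lb6}, by sewing the (disconnected) $(M,(N+1)+2N)$-pointed compact Riemann surface $\wtd{\fk X}=\FC\sqcup\FP_1\sqcup\cdots\sqcup\FP_N$ along the $N$ pairs $(x_i,\infty_i)$, using the local coordinates $\xi_i=\eta_i$ (so $\eta_i(V_i)=\MD_{r_i}$) and $\varpi_i=1/\zeta$ restricted to $\{\lvert 1/\zeta\rvert<1\}$, so that $\rho_i=1$ and $\MB=\MD_{r_\blt}$; the outgoing marked points $y_1,\dots,y_M$ and their coordinates $\theta_\star$ all lie on $\FC$. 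I would then check Asmp. \ref{sewingass}: for $b_\blt\in\MD_{r_\blt}^\times$ the fiber $\MC_{b_\blt}$ is obtained from $C$ by excising a disc around each $x_i$ and gluing in the connected set $\FP_i\setminus(\text{a disc about }\infty_i)$, which contains the surviving incoming point $0_i$; since $\FC$ obeys Asmp. \ref{ass1}, every component of $C$ carries some $x_i$, hence every component of $\MC_{b_\blt}$ carries a surviving incoming point. As $\Mbb=\Wbb$ is finitely admissible by hypothesis, the machinery of Subsecs. \ref{lb6}--\ref{lb76} applies to $\wtd{\fk X}$.

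Next, $\upphi\in\scr T_{\FC,a_\star}^*(\Wbb)$ and the three-point block $\upomega$ on $\FP_1\sqcup\cdots\sqcup\FP_N$ (legitimate because $\langle Y_1(u,z)w,w'\rangle$ is a Laurent polynomial in $z$, so evaluation at $z=1$ makes sense) combine, by the factorization of partial conformal blocks over disjoint unions, into a partial conformal block $\uppsi=\upphi\otimes\upomega$ of multi-level $a_\star$ for $\wtd{\fk X}$ and the admissible $\Vbb^{\times(3N+1)}$-module $\Wbb\otimes\Wbb\otimes\Vbb\otimes\Wbb'$, whose normalized sewing is computed in \eqref{sewingeg1}. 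The one genuinely substantive (but elementary) step is the convergence upgrade: by hypothesis the power series \eqref{sewingeg2} (which has no negative powers of $q_1$, by \eqref{sewingeg1}) converges absolutely at every point of $\MD_{r_1}$ for all homogeneous $u$ and $w$; by the standard Abel/comparison argument this already forces absolute and locally uniform convergence on $\MD_{r_1}$, and multiplying by the monomial $q_2^{\wtd\wt_2(w)}\cdots q_N^{\wtd\wt_N(w)}$ and summing over the finitely many homogeneous $u\otimes w$ in a decomposition of a general element of $\Vbb\otimes\Wbb$ shows that $\wtd{\mc S}\uppsi$ converges a.l.u.\ on $\MB=\MD_{r_\blt}$.

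With these in place, Theorem \ref{formalpartialconformal} applies directly: $\wtd{\mc S}\uppsi\colon\scr W_{\fk X}(\Vbb\otimes\Wbb)\to\mc O_{\MB}$ is a partial conformal block of multi-level $a_\star$ outside the discriminant locus $\Delta=\{b_\blt:b_1\cdots b_N=0\}=\MD_{r_\blt}\setminus\MD_{r_\blt}^\times$, i.e. $\wtd{\mc S}\uppsi|_{\MD_{r_\blt}^\times}\in\scr T_{\fk X_{\MD_{r_\blt}^\times},a_\star}^*(\Vbb\otimes\Wbb)$, which by Prop. \ref{familyfiber5} is the same as $\wtd{\mc S}\uppsi|_{q_\blt=b}\in\scr T_{\fk X_b,a_\star}^*(\Vbb\otimes\Wbb)$ for every $b\in\MD_{r_\blt}^\times$. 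I do not anticipate a real obstacle: the weight of the argument is carried by Theorem \ref{formalpartialconformal}; the only points requiring care are the verification of Asmp. \ref{sewingass} for this particular (non-smooth) family and the passage from pointwise absolute convergence of \eqref{sewingeg2} to a.l.u.\ convergence of $\wtd{\mc S}\uppsi$.
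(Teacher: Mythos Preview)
Your proposal is correct and follows exactly the paper's approach. The paper dispatches this proposition in one sentence: having already asserted (in the line preceding the proposition) that the a.l.u.\ convergence of \eqref{sewingeg1} on $\MD_{r_\blt}$ is ``clearly equivalent'' to the absolute convergence of \eqref{sewingeg2} on $|q_1|<r_1$, it simply says ``By Thm.~\ref{formalpartialconformal}, we immediately conclude''; you have carefully unpacked both of these steps (verifying Asmp.~\ref{sewingass} and the passage from pointwise absolute convergence of a power series to a.l.u.\ convergence), but the substance is identical.
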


The local coordinates in \eqref{eq46} are not directly applicable to propagation. We define $\fk Y$ to be the same as $\fk X_{b_\blt}$ as $(M,N)$-pointed surface but with different local coordinates: 
\begin{gather}
\begin{gathered}
\fk Y_{b_1}=(y_1,\dots,y_M;\theta_1,\dots,\theta_M|C|\eta_1^{-1}(b_1),x_1,\dots,x_N)\\
\text{local coordinates: }\qquad\theta_j\text{ at }y_j\qquad \eta_1-b_1\text{ at }\eta_1^{-1}(b_1)\qquad \eta_i\text{ at }x_i
\end{gathered}
\end{gather}
Then by Prop. \ref{lb10} and \eqref{eq47}, under the identification $\scr W_{\fk Y_{b_1}}(\Vbb\otimes\Wbb)=\Vbb\otimes\Wbb$ via the trivialization defined by the local coordinates of $\fk Y_{b_1}$ we have:

\begin{co}\label{lb14}
Suppose that for each $u\in\Vbb,w\in\Wbb$,
\begin{align}
\wr\upphi(u\otimes w)=\sum_{n\in\Zbb} q_1^{-n-1}\upphi(Y_1(u)_nw)
\end{align}
converges absolutely when $0<|q_1|<r_1$. Choose $b_1\in\mc D_{r_1}^\times$. Then   $\wr\upphi|_{q_1=b_1}:\Vbb\otimes\Wbb\rightarrow\Cbb$ belongs to $\scr T_{\fk Y_{b_1},a_\star}^*(\Vbb\otimes\Wbb)$.

\end{co}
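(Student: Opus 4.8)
\textbf{Proof proposal for Corollary \ref{lb14}.}

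The plan is to deduce this directly from Proposition \ref{lb10} by translating the statement from the fiber $\fk X_b$ (with its ``sewing-adapted'' local coordinates \eqref{eq46}) to $\fk Y_{b_1}$ (with the local coordinates $\eta_1-b_1$ at $\eta_1^{-1}(b_1)$ and $\eta_i$ at $x_i$). First I would fix $b_\blt=(b_1,\dots,b_N)\in\mc D_{r_\blt}^\times$ and observe that Proposition \ref{lb10} applies as soon as the series \eqref{sewingeg2} converges absolutely on $|q_1|<r_1$; this hypothesis is implied by the hypothesis of the corollary, since $\wr\upphi(u\otimes w)$ is, up to the overall monomial factor $q_1^{\wt(u)+\wtd\wt_1(w)}$ (and after relabeling $n$), exactly the series \eqref{sewingeg2}. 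Thus $\wtd{\mc S}\uppsi|_{q_\blt=b}\in\scr T_{\fk X_b,a_\star}^*(\Vbb\otimes\Wbb)$, where $\uppsi=\upphi\otimes\upomega$ as in Subsec. \ref{sewingeg3}.

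Next I would carry out the coordinate change. As $(M,N)$-pointed surfaces, $\fk X_{b_\blt}$ and $\fk Y_{b_1}$ are literally the same pointed Riemann surface; they differ only in the local coordinates at the incoming marked points: at $\eta_1^{-1}(b_1)$ one has $b_1^{-1}\eta_1-1$ versus $\eta_1-b_1$, and at $x_i$ one has $b_i^{-1}\eta_i$ versus $\eta_i$. Since the outgoing data ($y_\star,\theta_\star$) is unchanged, the sheaf $\scr V_{\fk X_b,a_\star}$ and hence the notion of being a partial conformal block of multi-level $a_\star$ is the same for both; only the trivialization $\mc U(-)$ of $\scr W(\Vbb\otimes\Wbb)$ differs, and these two trivializations are related by the transition formula \eqref{eq47}, i.e. by a product of change-of-coordinate operators $\mc U_i(\rho_i)$ where $\rho_1$ encodes $\eta_1-b_1\mapsto b_1^{-1}\eta_1-1$ and $\rho_i$ encodes $\eta_i\mapsto b_i^{-1}\eta_i$ for $i\geq 2$. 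Explicitly, $b_i^{-1}\eta_i=\mathrm{(scaling\ by\ }b_i^{-1})\circ\eta_i$ corresponds to $\mc U_i(b_i^{-1}\zeta)=b_i^{-\wtd L_i(0)}$ (a scaling operator), and the shift-then-rescale at the first point is handled similarly. Composing $\wtd{\mc S}\uppsi|_{q_\blt=b}$, viewed as a functional on $\scr W_{\fk X_b}(\Vbb\otimes\Wbb)$, with the appropriate transition operator $\mc U(\eta_\blt)\mc U(\text{old }\eta_\blt)^{-1}$ yields the functional $\wr\upphi|_{q_1=b_1}$ on $\Vbb\otimes\Wbb=\scr W_{\fk Y_{b_1}}(\Vbb\otimes\Wbb)$; one reads off from \eqref{sewingeg1} that the factors $q_2^{\wtd\wt_2(w)}\cdots q_N^{\wtd\wt_N(w)}$ and $q_1^{\wt(u)+\wtd\wt_1(w)}$ present in $\wtd{\mc S}\uppsi$ are precisely absorbed by these scaling operators, leaving $\sum_{n\in\Zbb}q_1^{-n-1}\upphi(Y_1(u)_nw)=\wr\upphi(u\otimes w)$ evaluated at $q_1=b_1$. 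Since being a partial conformal block is a coordinate-independent property (the residue action on $\scr W(\Vbb\otimes\Wbb)$ is independent of the trivialization, Rem. \ref{lb2}), $\wr\upphi|_{q_1=b_1}\in\scr T_{\fk Y_{b_1},a_\star}^*(\Vbb\otimes\Wbb)$.

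The main obstacle — really the only nontrivial point — is the bookkeeping in this coordinate change: one must verify that the monomial prefactors appearing in the normalized sewing \eqref{sewingeg1} match exactly the eigenvalue-weights produced by the scaling parts of the change-of-coordinate operators $b_i^{-\wtd L_i(0)}$ (and that the Virasoro-exponential parts, i.e. the $L_i(n)$ with $n>0$ in \eqref{coordinatechange3}, contribute trivially here because the relevant germs $b_1^{-1}\eta_1-1$ vs.\ $\eta_1-b_1$ and $b_i^{-1}\eta_i$ vs.\ $\eta_i$ differ only by an affine rescaling, so the associated $\rho\in\Gbb$ has $\rho(z)=\rho'(0)z$ with no higher $c_n$). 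Once this is checked, the convergence hypothesis transfers verbatim (multiplying an absolutely convergent series by a fixed monomial in the nonzero $b_i$ does not affect convergence), and the conclusion follows. I would also note the minor point that $\fk Y_{b_1}$ depends only on $b_1$ and not on $b_2,\dots,b_N$ — consistent with the fact that the extra coordinates $b_2,\dots,b_N$ enter $\wtd{\mc S}\uppsi$ only through the overall monomial $q_2^{\wtd\wt_2(w)}\cdots q_N^{\wtd\wt_N(w)}$, which is exactly what the scalings $b_i^{-\wtd L_i(0)}$ ($i\geq 2$) remove.
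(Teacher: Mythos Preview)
Your proposal is correct and follows exactly the route the paper indicates: the paper simply states that the corollary follows ``by Prop.~\ref{lb10} and \eqref{eq47},'' and you have spelled out precisely this argument, including the verification that the change-of-coordinate operators $\mc U_i(b_i^{-1}\zeta)=b_i^{-\wtd L_i(0)}$ (and $b_1^{-L(0)}$ on the $\Vbb$-factor) absorb the monomial prefactors in \eqref{sewingeg1} to yield $\wr\upphi$. Your observation that the relevant $\rho\in\Gbb$ are pure scalings (so no $L(n)$, $n>0$, contributions appear) is exactly the point that makes the bookkeeping clean.
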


\subsection{Propagation of partial conformal blocks}
In order to give a module structure on dual fusion products, we introduce propagation of dual fusion products. Fix a family of $(M,N)$-pointed compact Riemann surfaces with outgoing local coordinates $\fx=\big(\tau_1,\cdots,\tau_M;\theta_1,\cdots,\theta_M\big|\pi:\MC\rightarrow \MB\big|\varsigma_1,\cdots,\varsigma_N\big)$ with divisors defined as (\ref{marked}). Recall Asmp. \ref{lb11}.

\begin{df}\label{propagatedfamily}
Define
\begin{gather*}
\wr \MC=\MC\times_\MB (\MC- S_\fx-D_\fx)\qquad\wr\MB=\MC-S_\fx-D_\fx
\end{gather*}
The \textbf{propagated family} of $\fx$ is an  $(M,N+1)$-pointed family \index{X@$\wr\fx$: propagation of $\fx$}
\begin{gather*}
\wr \fx=(\wr \tau_1,\cdots, \wr\tau_M;\wr\theta_1,\cdots,\wr\theta_M\big|\wr\pi:\wr \MC\rightarrow \wr\MB\big|\sigma,\wr \varsigma_1,\cdots ,\wr \varsigma_N),
\end{gather*}
defined to be the pullback of $\fk X$ along $\pi:\wr\mc B\rightarrow\mc B$ together with an extra incoming section
\begin{gather*}
\sigma:\wr\MB\rightarrow \wr \MC,x\mapsto(x,x)\qquad\text{is the diagonal map}
\end{gather*}
Let $\pr_1,\pr_2$ be respectively the projection of $\wr\mc C$ onto the first component $\mc C$ and the second one $\wr\mc B$. Then \index{zz@$\wr\sgm_i,\wr\tau_j,\wr\theta_j,\wr\pi$, the pullback of the sections $\sgm_i,\tau_j$, the local coordinate $\theta_j$, and the projection $\pi$ in propagation}
\begin{subequations}
\begin{gather}
\wr\pi=\pr_2:\wr \MC\rightarrow \wr\MB\\
\wr\tau_j:\wr\MB\rightarrow \wr \MC,x\mapsto (\tau_j(\pi(x)),x)\qquad \wr \varsigma_i:\wr \MB\rightarrow \wr \MC,x\mapsto(\varsigma_i(\pi(x)),x) \\
\wr\theta_j=\theta_j\circ \pr_1  \label{eq48}
\end{gather}
\end{subequations}
\end{df}

\begin{rem}\label{fiberpropagation}
Note that if $F:\mc B'\rightarrow\mc B$ and $G:\mc B''\rightarrow\mc B'$ are holomorphic maps, then the pullback of the family $\fk X$ along $G\circ F$ is equal to the pullback along $G$ of the pullback along $F$ of $\fk X$. Thus, from the fact that $\wr\fk X$ is a pullback of $\fk X$ (together with an extra incoming section) and that $\fk X_b$ is the pullback of $\fk X$ along $\{b\}\hookrightarrow \mc B$ for each $b\in\mc B$, one easily sees that
\begin{align}
(\wr\fk X)_{\mc C_b-\SX-\DX}=\wr(\fx_b)  \label{eq58}
\end{align}
where $(\wr\fk X)_{\mc C_b-\SX-\DX}$ is the pullback of $\wr\fk X$ along $\mc C_b-\SX-\DX\hookrightarrow\mc C-\SX-\DX$, i.e. the restriction of $\wr\fk X$ to $\mc C_b-\SX-\DX$.
\end{rem}

Now suppose $\Wbb$ is a \textit{finitely} admissible $\Vbb^{\times N}$-module. Associate $\Wbb$ to $\varsigma_1,\cdots,\varsigma_N$ in $\fx$ and associate $\Vbb\otimes\Wbb$ (which is a finitely admissible $\Vbb^{\times N}$-module) to $\sigma,\wr \varsigma_1,\cdots,\wr\varsigma_N$ in $\wr\fx$. 
\begin{pp}\label{equivalence3}
   For each $a_1,\cdots,a_M\in \Nbb$, there is a canonical isomorphism of $\MO_{\wr \MB}$-modules:
\begin{gather}
\Psi_{\fk X}:   \SW_{\wr\fx}(\Vbb\otimes \Wbb)\xlongrightarrow{\simeq} \big(\SV_{\fx,a_1,\cdots,a_M}\otimes_{\mc O_{\mc C}} \pi^*\SW_\fx(\Wbb)\big)\big|_{\MC-S_\fx-D_\fx}  \label{eq52}
\end{gather}
satisfying the following fact: For every open subset $U\subset\mc C-\SX-\DX$, every  $\mu\in\mc O(U)$ univalent on each fiber $U_b=U\cap\pi^{-1}(b)$ (where $b\in\mc B$), and every  local coordinates $\eta_1\dots,\eta_N$ of $\fk X$ at $\sgm_1,\dots,\sgm_N$, if we define the local coordinates of $\wr{\fk X}$ at the incoming sections $\sigma,\wr\sgm_1,\dots,\wr\sgm_N$ to be $\Delta\mu,\wr\eta_1,\dots,\wr\eta_N$ where
\begin{subequations}\label{eq49}
\begin{gather}
\Delta\mu(x,y)=\mu(x)-\mu(y)\qquad \forall (x,y)\in U\times_\MB U\\
\wr\eta_i=\eta_i\circ\pr_1\qquad\text{is the pullback of }\eta_i
\end{gather}
\end{subequations}
then the following diagram commutes.
   \begin{equation}\label{eq55}
\begin{tikzcd}[column sep=-0.5ex, row sep=3ex]
\mathscr{W}_{\wr\mathfrak{X}}(\mathbb{V}\otimes \mathbb{W})\big|_U \arrow[rrrr, "\Psi_\mathfrak{X}","\simeq"'] \arrow[rrdd, "{\mathcal{U}(\Delta\mu,\wr\eta_\bullet)}"'] &  &                                                    &  & \mathscr{V}_{\mathfrak{X},a_1,\cdots,a_M}\otimes_{\mc O_{\mc C}} \pi^*\mathscr{W}_\mathfrak{X}(\mathbb{W})\big|_U  \arrow[lldd, "\mathcal{U}_\varrho(\mu)\otimes \pi^*\mathcal{U}(\eta_\bullet)"] \\
                                                                                                                                                               &  &                                                    &  &                                                                                                                                                                      \\
                                                                                                                                                               &  & \mathbb{V}\otimes \mathbb{W} \otimes_\Cbb \mathcal{O}_U &  &                                                                                                                                                                     
\end{tikzcd}
\end{equation}
\end{pp}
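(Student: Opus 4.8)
The plan is to build $\Psi_{\fk X}$ directly from the prescription in the statement and then check that it does not depend on the auxiliary choices, so that the local pieces glue. First, two reductions/observations. Since $\wr\mc B=\mc C-S_\fx-D_\fx$ already excludes the divisor $D_\fx$, we have $\SV_{\fx,a_1,\dots,a_M}|_{\wr\mc B}=\SV_\fx|_{\wr\mc B}$; in particular the right-hand side of \eqref{eq52} is independent of $a_\star$, and it suffices to treat the case $\SV_{\fx,a_\star}$ replaced by $\SV_\fx$. Also, $\wr\fk X$ really is a family of $(M,N+1)$-pointed compact Riemann surfaces satisfying Asmp. \ref{lb11}: its fibers are those of $\pi$, the sections $\sigma,\wr\varsigma_i,\wr\tau_j$ are pairwise disjoint because $\sigma$ takes values in $\mc C-S_\fx-D_\fx$, and each component of $\mc C_{\pi(x)}$ contains some $\varsigma_i(\pi(x))$; moreover $\Vbb\otimes\Wbb$ is finitely admissible because $\Wbb$ is, so $\SW_{\wr\fk X}(\Vbb\otimes\Wbb)$ is defined by Def. \ref{lb7}. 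Finally, for open $U\subset\wr\mc B$ and $\mu\in\mc O(U)$ univalent on each fiber, $\Delta\mu=\mu\circ\pr_1-\mu\circ\pr_2$ is defined on the neighborhood $U\times_\mc B U$ of $\sigma(U)$ in $\wr\mc C$, vanishes on $\sigma(U)$, and is univalent on each fiber; hence $(\Delta\mu,\wr\eta_1,\dots,\wr\eta_N)$ is a valid tuple of local coordinates of $\wr\fk X$ over $U$ at $\sigma,\wr\varsigma_1,\dots,\wr\varsigma_N$, and $\mc U(\Delta\mu,\wr\eta_\blt)$ trivializes $\SW_{\wr\fk X}(\Vbb\otimes\Wbb)|_U$ to $\Vbb\otimes\Wbb\otimes_\Cbb\mc O_U$.

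Since $\mc U_\varrho(\mu)\otimes\pi^*\mc U(\eta_\blt)$ likewise trivializes $(\SV_\fx\otimes\pi^*\SW_\fx(\Wbb))|_U$ to $\Vbb\otimes\Wbb\otimes_\Cbb\mc O_U$, I would define $\Psi_{\fk X}|_U$ to be the unique $\mc O_U$-module isomorphism making the triangle \eqref{eq55} commute, i.e. $\Psi_{\fk X}|_U=(\mc U_\varrho(\mu)\otimes\pi^*\mc U(\eta_\blt))^{-1}\circ\mc U(\Delta\mu,\wr\eta_\blt)$; under the two trivializations it is simply the identity of $\Vbb\otimes\Wbb\otimes_\Cbb\mc O_U$, and \eqref{eq55} holds by construction for this choice of $(\mu,\eta_\blt)$. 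Well-definedness of $\Psi_{\fk X}$ then amounts to the assertion that the two bundles have the same transition functions relative to these two families of trivializations. This rests on two computations. The key one is that \emph{the change-of-coordinate family attached to $\Delta\mu$ at $\sigma$ is the sheaf-of-VOA transition of $\mu$}: for a second pair $(\mu',\eta'_\blt)$ and a point $x\in\wr\mc B$, restricting $\Delta\mu$ and $\Delta\mu'$ to the fiber $\wr\mc C_x\simeq\mc C_{\pi(x)}$ (via $\pr_1$) gives $\mu|_{\mc C_{\pi(x)}}-\mu(x)$ and $\mu'|_{\mc C_{\pi(x)}}-\mu'(x)$, so the defining relation \eqref{eq1} (applied at the point $x\in\mc C$) yields $(\Delta\mu|\Delta\mu')_x=\varrho(\mu|\mu')_x$; that is, the family of transformations $(\Delta\mu|\Delta\mu')$ over $\wr\mc B$ is the restriction to $\wr\mc B\subset\mc C$ of $\varrho(\mu|\mu')$, hence $\mc U_1((\Delta\mu|\Delta\mu'))$ (acting on the $\Vbb$-factor) equals the $\SV_\fx$-transition $\mc U(\varrho(\mu|\mu'))$.

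The second computation is routine: restricting $\wr\eta_i=\eta_i\circ\pr_1$ and $\wr\eta_i'=\eta_i'\circ\pr_1$ to $\wr\mc C_x\simeq\mc C_{\pi(x)}$ gives $\eta_i|_{\mc C_{\pi(x)}},\eta_i'|_{\mc C_{\pi(x)}}$, and $\wr\varsigma_i(x)$ corresponds to $\varsigma_i(\pi(x))$, so $(\wr\eta_i|\wr\eta_i')_x=(\eta_i|\eta_i')_{\pi(x)}$, i.e. $(\wr\eta_i|\wr\eta_i')=\pi^*(\eta_i|\eta_i')$. Combining the two computations with the family version of the transition formula \eqref{eq47} (Def. \ref{lb7}) and the commutativity of $\mc U_i(\cdot)$ with $\mc U_j(\cdot)$ for $i\neq j$, the transition function of $\SW_{\wr\fk X}(\Vbb\otimes\Wbb)$ between $\mc U(\Delta\mu,\wr\eta_\blt)$ and $\mc U(\Delta\mu',\wr\eta'_\blt)$ is
\begin{align*}
\mc U_1((\Delta\mu|\Delta\mu'))\cdot\prod_{i=1}^N\mc U_{i+1}((\wr\eta_i|\wr\eta'_i))=\mc U(\varrho(\mu|\mu'))\otimes\pi^*\big(\mc U(\eta_\blt)\,\mc U(\eta'_\blt)^{-1}\big),
\end{align*}
which by functoriality of $\pi^*$ is exactly the transition function of $(\SV_\fx\otimes\pi^*\SW_\fx(\Wbb))|_{\wr\mc B}$ between $\mc U_\varrho(\mu)\otimes\pi^*\mc U(\eta_\blt)$ and $\mc U_\varrho(\mu')\otimes\pi^*\mc U(\eta'_\blt)$. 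Hence the local isomorphisms glue to a global $\mc O_{\wr\mc B}$-module isomorphism $\Psi_{\fk X}$ making \eqref{eq55} commute for every admissible $\mu$ and $\eta_\blt$. (The bookkeeping that both sides are colimits of finite-rank subbundles whose model fibers $(\Vbb\otimes\Wbb)^{\leq n}$ and $\Vbb^{\leq n}\otimes\Wbb^{\leq m}$ are mutually cofinal, and that $\Psi_{\fk X}$ is compatible with these filtrations, is straightforward and I would only mention it.)

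I expect the one genuine subtlety to be the identification $(\Delta\mu|\Delta\mu')=\varrho(\mu|\mu')|_{\wr\mc B}$ of the change-of-coordinate cocycle of the diagonal coordinate with the transition cocycle of the sheaf of VOAs $\SV_\fx$ — this is the step that actually uses the geometry of propagation (the diagonal section $\sigma$ and the coordinate $\Delta\mu=\mu\circ\pr_1-\mu\circ\pr_2$). Everything else is a careful but mechanical unwinding of Definitions \ref{propagatedfamily}, \ref{lb5}, \ref{lb7} and the families-of-transformations formalism; if it helps the exposition I would first carry out the whole argument in the case $\mc B=\pt$ (where $\wr\mc B=C-S_\fx-D_\fx$), and then observe that the general case is the same computation performed with holomorphic dependence on the base parameter.
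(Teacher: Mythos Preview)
Your proof is correct and essentially reconstructs what the paper defers to an external citation. The paper's own proof consists of the single observation that $\SV_{\fx,a_\star}$ and $\SV_\fx$ agree outside $D_\fx$ (your first reduction) and then invokes \cite[Prop.~6.2]{Gui-propagation} for the rest; your argument---defining $\Psi_\fx$ locally via the triangle and verifying that the transition cocycles match, with the key identity $(\Delta\mu|\Delta\mu')_x=\varrho(\mu|\mu')_x$---is exactly the content of that cited proposition, so there is no substantive difference in approach.
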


\begin{proof}
Outside $\DX$, the sheaves $\scr V_{\fk X,a_\star}$ and $\scr V_{\fk X}$ are equal. So this proposition follows from \cite[Prop. 6.2]{Gui-propagation}.  
\end{proof} 

Our main result in this section is parallel to \cite[Thm. 7.1]{Gui-propagation}. 
To prove this result, we first  recall the following \textbf{strong residue theorem} for $\fk X$, cf. \cite[Thm. A.1]{Gui-propagation}.

\begin{sett}\label{lb12}
Assume that $\fx$ is equipped with local coordinates $\eta_1,\cdots,\eta_N$ at incoming marked points $\varsigma_1(\MB),\cdots,\varsigma_N(\MB)$. For each $j=1,\cdots, N$, choose a neighborhood $U_j\subset \MC_{\MB}$ of $\varsigma_j(\MB)$ on which a local coordinate $\eta_j$ is defined, and assume that $U_j$ intersects only $\varsigma_j(\MB)$ among $\varsigma_1(\MB),\cdots,\varsigma_N(\MB),\tau_1(\mc B),\dots,\tau_M(\mc B)$. 
\begin{gather}
\text{Identify $U_j=(\eta_j,\pi)(U_j)\subset \Cbb\times \MB$ via the biholomorphism $(\eta_j,\pi)$}
\end{gather}
so that $U_j$ becomes a neighborhood of $\{0\}\times \MB$. Set
\begin{align}
U_j^\circ:=U_j-S_\fx=U_j-(\{0\}\times \MB)
\end{align}
which is a subset of $\Cbb^\times \times \MB$. Let $z$ be the standard coordinate of $\Cbb$.
\end{sett}

\begin{thm}[\textbf{Strong residue theorem}] \label{strongresidue} \label{lb88}
 Let $\scr E$ be a holomorphic vector bundle of finite rank and $\scr E^\vee$ be its dual vector bundle. Assume Setting \ref{lb12}. Assume that for each $1\leq j\leq N$ we have trivialization
    \begin{equation}\label{strongresidue2}
    \scr E\vert_{U_j}\simeq E_j\otimes_\Cbb \MO_{U_j}
    \end{equation}
where $E_j$ is finite dimensional vector space, and write the corresponding dual trivialization as
    \begin{equation}\label{strongresidue3}
    \scr E^{\vee}\vert_{U_j}\simeq E_j^\vee\otimes_\Cbb \MO_{U_j}.
    \end{equation}
Identify $\scr E\vert_{U_j},\scr E^\vee\vert_{U_j}$ with the above trivializations. Consider
    \begin{equation}\label{strongresidue1}
    s_j=\sum_{n\in \Zbb}e_{j,n}\cdot z^n\in (E_j\otimes_\Cbb \MO(\MB))((z))
    \end{equation}
where $e_{j,n}\in E_j\otimes_\Cbb\mc O(\mc B)$.  For each $b\in \MB$, if $\nu_b\in H^0\big(\MC_b,\scr E^\vee\vert_{\MC_b}\otimes \omega_{\MC_b}(\blt S_\fx(b))\big)$ whose series expansion in $U_{j,b}=U_j\cap \MC_b$ is 
    $$
    \nu_b\vert_{U_{j,b}}(z)=\sum_n \phi_{j,n}z^n dz
    $$
    where $\phi_{j,n}\in E_j^\vee$, we define the \textbf{$j$-th residue pairing} to be
\begin{align}
    \Res_j\<s_j,\nu_b\>=\Res_{z=0}~\Bigbk{\sum_n e_{j,n}(b)z^n,\sum_n \phi_{j,n}z^n}dz
\end{align}
Then for arbitrary $s_1,\dots,s_N$ as in \eqref{strongresidue1},  the following are equivalent:
    \begin{enumerate}[label=(\arabic*)]
        \item There exists $s\in H^0\big(\MC,\scr E(\blt S_\fx)\big)$ whose series expansion at $\varsigma_j(\MB)$ is $s_j$ for $1\leq j\leq N$. Namely if we view $s\vert_{U_j}$ as $s\vert_{U_j}(b,z)$, then its series expansion at $z=0$ is \eqref{strongresidue1}.
        \item  For any $b\in \MB$ and any $\nu_b\in H^0\big(\MC_b,\scr E^\vee\vert_{\MC_b}\otimes \omega_{\MC_b}(\blt S_\fx(b))\big)$, 
        $$
        \sum_{j=1}^N \Res_j\<s_j,\nu_b\>=0.
        $$
    \end{enumerate}
\end{thm}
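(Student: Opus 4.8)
The plan is to deduce this relative (family) version of the strong residue theorem from a standard duality statement for a single compact Riemann surface by a base-change / Grauert argument, exactly in the spirit of the proof of \cite[Thm. A.1]{Gui-propagation}. The implication (1)$\Rightarrow$(2) is the easy direction: if $s\in H^0(\mc C,\scr E(\blt S_\fx))$ restricts to $s_j$ at $\varsigma_j(\mc B)$, then for fixed $b$ the restriction $s|_{\mc C_b}$ lies in $H^0(\mc C_b,\scr E|_{\mc C_b}(\blt S_\fx(b)))$, the pairing $\bk{s|_{\mc C_b},\nu_b}$ is a global meromorphic $1$-form on $\mc C_b$ with poles only at $\varsigma_\blt(b)$, and the classical residue theorem on the compact Riemann surface $\mc C_b$ (applied componentwise) gives $\sum_j\Res_j\bk{s_j,\nu_b}=\sum_j\Res_{\varsigma_j(b)}\bk{s|_{\mc C_b},\nu_b}=0$. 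No serious work here beyond bookkeeping with the trivializations \eqref{strongresidue2}, \eqref{strongresidue3}.

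For (2)$\Rightarrow$(1) I would first prove the fiberwise version: for each fixed $b\in\mc B$, condition (2) for that $b$ together with Serre duality on the compact Riemann surface $\mc C_b$ produces an element $s^{(b)}\in H^0(\mc C_b,\scr E|_{\mc C_b}(\blt S_\fx(b)))$ with the prescribed principal parts $s_{j}(b)$. Concretely: the obstruction to extending the prescribed principal parts to a global section of $\scr E|_{\mc C_b}(\blt S_\fx(b))$ lies in $H^1(\mc C_b,\scr E|_{\mc C_b}(\blt S_\fx(b)))=\varinjlim_k H^1(\mc C_b,\scr E|_{\mc C_b}(kS_\fx(b)))$, which vanishes for $k$ large by Serre vanishing (as in the proof of Prop.~\ref{grauertcor}); and the linear functional "total residue against $\nu_b$" is, by Serre duality, precisely the pairing with $H^0(\mc C_b,\scr E^\vee|_{\mc C_b}\otimes\omega_{\mc C_b}(\blt S_\fx(b)))=H^1(\mc C_b,\scr E|_{\mc C_b}(-\blt S_\fx(b)))^*$-type data, so the vanishing of all these functionals is exactly the statement that the prescribed principal parts lie in the image of $H^0(\mc C_b,\scr E|_{\mc C_b}(\blt S_\fx(b)))\to\bigoplus_j(E_j\otimes\Cbb((z))/E_j\otimes\Cbb[[z]])$. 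This is the standard Weil / adelic description of $H^1$ of a curve, applied to the vector bundle $\scr E|_{\mc C_b}$.

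The remaining, and main, obstacle is \emph{holomorphic dependence on $b$}: one needs the extension $s$ to be a single global section of $\scr E(\blt S_\fx)$ on $\mc C$, not merely a family of fiberwise solutions. Here I would argue as in Prop.~\ref{grauertcor} and Cor.~\ref{grauertcor2}: choose $k_0$ so that $H^1(\mc C_b,\scr E|_{\mc C_b}(kS_\fx(b)))=0$ for all $k\ge k_0$ and all $b$ in a given relatively compact set (Serre vanishing, uniform in $b$ by semicontinuity / Grauert), so that $\pi_*(\scr E(kS_\fx))|_b\simeq H^0(\mc C_b,\scr E|_{\mc C_b}(kS_\fx(b)))$ by base change \cite[Cor. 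III.3.9]{BaSt}; then the prescribed principal parts $s_j\in(E_j\otimes\mc O(\mc B))((z))$ (which already depend holomorphically on $b$ since the $e_{j,n}$ are holomorphic, and which are \emph{uniformly bounded below in pole order} because each $s_j$ lies in $(E_j\otimes\mc O(\mc B))((z))$) define a section of a coherent quotient sheaf on $\mc B$; condition (2) says this section lies in the kernel of the map to the coherent sheaf $R^1\pi_*(\scr E(-\blt S_\fx))$-dual, which by the fiberwise result is fiberwise zero, hence zero by coherence (Nakayama). Lifting back through the surjection $\pi_*(\scr E(kS_\fx))\twoheadrightarrow$ (quotient) — surjective on stalks by the $H^1$-vanishing, using Cartan's Theorem A over Stein opens as in Cor.~\ref{grauertcor2} — gives the desired global $s$. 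The only delicacy to handle carefully is that $k$ (the uniform pole-order bound) must be chosen large enough to simultaneously dominate the pole orders of all the $s_j$ and to kill the relevant $H^1$; since the $s_j$ a priori have pole order bounded by some fixed $N_0$ (from $s_j\in (E_j\otimes\mc O(\mc B))((z))$), one simply takes $k\ge\max(k_0,N_0)$, working over a relatively compact exhaustion of $\mc B$ and patching, and this is exactly the point where Asmp.~\ref{ass1} is used to guarantee the $H^1$-vanishing.
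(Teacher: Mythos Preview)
The paper does not prove this theorem; it only recalls it from \cite[Thm.~A.1]{Gui-propagation} and uses it as a black box. So there is no ``paper's proof'' to compare against, and I will assess your outline on its own merits.

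The direction (1)$\Rightarrow$(2) is fine. In (2)$\Rightarrow$(1), however, your fiberwise argument contains a genuine confusion between \emph{principal parts} and \emph{full Laurent expansions}. You write that the obstruction lies in $H^1\big(\mc C_b,\scr E|_{\mc C_b}(\blt S_\fx(b))\big)$ and that vanishing of the residue functionals is ``exactly the statement that the prescribed principal parts lie in the image of $H^0\to\bigoplus_j E_j\otimes\Cbb((z))/E_j\otimes\Cbb[[z]]$.'' Neither claim is correct. The obstruction to lifting principal parts lies in $H^1(\scr E)$, not $H^1(\scr E(\blt S))$; and Serre-dually it is detected by \emph{holomorphic} $\nu_b\in H^0(\scr E^\vee\otimes\omega)$, not by $\nu_b$ with poles. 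Condition (2), which allows $\nu_b$ with arbitrary poles at $S_\fx(b)$, is strictly stronger: such $\nu_b$ pair nontrivially with the \emph{positive} Taylor coefficients of $s_j$. A section produced by your Mittag--Leffler argument has the right principal parts but there is nothing forcing its Taylor expansion to agree with the prescribed $s_j$. (Concrete counterexample: on $\Pbb^1$ with one marked point $0$ and $\scr E=\mc O$, take $s_1=z$. Its principal part is $0$, trivially in the image; yet $\nu=z^{-2}dz\in H^0(\omega_{\Pbb^1}(\blt 0))$ gives $\Res_0\bk{s_1,\nu}=1\ne0$, and indeed no $s\in\mc O(\Pbb^1\setminus 0)$ has Laurent expansion $z$ at $0$.)

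The missing idea is a truncation-and-stabilization step. Fix $k$ bounding the pole orders of the $s_j$. For each $l\ge 0$ consider the short exact sequence $0\to\scr E(-lS)\to\scr E(kS)\to\scr G_{k,l}\to 0$; the $l$-truncations $(\sum_{n<l}e_{j,n}(b)z^n)_j$ define an element of $H^0(\scr G_{k,l})$, whose lifting obstruction lies in $H^1(\scr E(-lS))\simeq H^0(\scr E^\vee\otimes\omega(lS))^*$, and condition (2) for $\nu_b$ with poles of order $\le l$ kills it. For $l$ large enough that $H^0(\scr E(-lS))=0$ the lift is unique, so the lifts stabilize to a single $s^{(b)}\in H^0(\scr E(kS))$ with the \emph{full} Laurent expansion $(s_j(b))$. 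The family argument then runs as you indicate, but with this truncation in place: for $l$ large one has $\pi_*\scr E(-lS)=0$ and (by Riemann--Roch, since the fiberwise $H^0$ vanishes) $R^1\pi_*\scr E(-lS)$ is locally free with base change, so fiberwise vanishing of the obstruction yields vanishing over $\mc B$ and the desired global $s$.
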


Recall \eqref{eq49} for the notations $\Delta\mu$ and $\wr\eta_\blt$.

\begin{thm}\label{lb71}
Let $\Wbb$ be a \textit{finitely} admissible $\Vbb^{\times N}$-module. Let $\upphi:\SW_\fx(\Wbb)\rightarrow \MO_\MB$ be a partial conformal block associated to $\fx$ and $\Wbb$ of multi-level $a_1,\dots,a_M$. Then there is a unique  $\MO_{\mc C-\SX}$-module morphism 
\begin{subequations}
\begin{align}
\wr\upphi:\scr V_{\fk X,a_1,\dots,a_M}\otimes_{\mc O_{\mc C}}\pi^*\scr W_{\fk X}(\Wbb)\big|_{\mc C-\SX}\rightarrow\mc O_{\mc C-\SX}    \label{eq63}
\end{align}
whose restriction (cf. the identification \eqref{eq52}) to $\wr\mc B=\mc C-\SX-\DX$:
\begin{align}
\wr \upphi:\SW_{\wr\fx}(\Vbb\otimes \Wbb)\rightarrow \MO_{\wr\MB}
\end{align}
\end{subequations}
satisfies the following property:

``Choose any open subset $V\subset\mc B$ such that for each $j$ the restricted family $\fk X_V$ has local coordinate $\eta_j$ at $\sgm_j(V)$, and choose a neighborhood $U_j$ of $\sgm_j(V)$ on which $\eta_j$ is defined such that $U_j$ intersects only $\sgm_j(V)$ among $\sgm_1(V),\dots,\sgm_N(V),\tau_1(V),\dots,\tau_M(V)$. 
\begin{subequations}
\begin{gather}
\text{Identify}\qquad U_j=(\eta_j,\pi)(U_j)\qquad\text{via }(\eta_j,\pi)
\end{gather}
so that $U_j$ is a neighborhood of $\{0\}\times V$ in $\Cbb\times V$. Let
\begin{align}
U_j^\circ:=U_j-\SX=U_j-(\{0\}\times V)
\end{align}
which is a subset of $\Cbb^\times\times V$. Let $z$ be the standard coordinate of $\Cbb$.
\begin{gather}
\text{Identify}\quad    \SW_\fx(\Wbb)\vert_V =\Wbb\otimes_\Cbb \MO_V\qquad \text{via }\MU(\eta_\blt)  \label{eq53}\\
\text{Identify}\quad       \SW_{\wr\fx}(\Vbb\otimes \Wbb)\vert_{U_j^\circ }=\Vbb\otimes \Wbb \otimes_\Cbb \MO_{U_j^\circ}\qquad \text{via }\MU(\Delta \eta_j,\wr\eta_\blt).  \label{sheafequivalence2}
\end{gather}
\end{subequations}
For each $u\in \Vbb,w\in \Wbb$,  consider $w$ as a constant section of $\Wbb\otimes_\Cbb \MO(U_j^\circ)$ and $u\otimes w$ as a constant section of $\Vbb\otimes \Wbb\otimes_\Cbb \MO(U_j^\circ)$. Then the following identity holds in $\MO(V)[[z^{\pm 1}]]$:
    \begin{equation}\label{eq82}
        \wr\upphi(u\otimes w)=\upphi(Y_j(u,z)w).
    \end{equation}
    Here $Y_j(u,z)w=\sum_{n\in \Zbb}Y_j(u)_nw\cdot z^{-n-1}$ is an element in $\Wbb((z))$ and $\wr\upphi(u\otimes w)\in \MO(U_j^\circ)$ is regarded as an element of $\MO(V)[[z^{\pm 1}]]$ by taking Laurent series expansion."

Moreover, we have $\wr\upphi\in\scr T_{\wr{\fk X},a_1,\dots,a_M}^*(\Vbb\otimes\Wbb)$.
\end{thm}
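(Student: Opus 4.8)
\textbf{Proof plan for Theorem \ref{lb71}.}

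The plan is to construct $\wr\upphi$ fiberwise via the strong residue theorem and then check the two claims (the defining property \eqref{eq82} and the fact that $\wr\upphi$ is a partial conformal block) using the locality of these assertions, which Propositions \ref{familyfiber5} and \ref{localglobal1} reduce to statements over small Stein open sets and over individual fibers. First I would fix $b\in\mc B$, restrict to the fiber $\fk X_b$ (using Rem. \ref{fiberpropagation}, so that $\wr(\fx_b)$ is the restriction of $\wr\fk X$ to $\mc C_b-\SX-\DX$), and, for each point $c\in\mc C_b-\SX-\DX$, produce the linear functional $\wr\upphi|_c:\scr W_{\wr(\fx_b)}(\Vbb\otimes\Wbb)|_c\to\Cbb$. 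The input data are: the section $s_j$ at $\sgm_j(b)$ obtained from $Y_j(u,z)w$ — explicitly, near $\sgm_j$ the bundle $\scr V_{\fk X,a_\star}\otimes\pi^*\scr W_\fx(\Wbb)$ is trivialized so that the value of $\wr\upphi$ applied to the propagated vector should have Laurent expansion $\upphi(Y_j(u,z)w)$ — together with the requirement that, at the new marked point near $c$ with coordinate $\Delta\mu$, the section of $\scr V_{\fk X,a_\star}\otimes\pi^*\scr W_\fx(\Wbb)$ has no pole (only the poles at $\SX$ are allowed). This is exactly the setup of the strong residue theorem \ref{strongresidue}: one must show the residue pairings of $(s_1,\dots,s_N)$ against every $\nu_b\in H^0(\mc C_b,\scr E^\vee|_{\mc C_b}\otimes\omega_{\mc C_b}(\blt S_\fx(b)))$ sum to zero, where $\scr E = \scr V_{\fk X,a_\star}\otimes\pi^*\scr W_\fx(\Wbb)$ (restricted suitably to make it finite rank, then passing to the limit in $n$).

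The crux is verifying hypothesis (2) of Theorem \ref{strongresidue}, i.e. that $\sum_j\Res_j\langle s_j,\nu_b\rangle = 0$. Here one uses that $\upphi$ is a partial conformal block of multi-level $a_\star$: a global section $\nu_b$ of $\scr V_{\fk X_b,a_\star}^\vee\otimes\omega_{\mc C_b}(\blt S_\fx(b))$ — after dualizing appropriately and pairing with the $\Vbb$-component at the extra point $c$ — gives precisely a global section of $\scr V_{\fk X_b,a_\star}\otimes\omega_{\mc C_b}(\blt S_\fx(b))$ acting on $\scr W_{\fk X_b}(\Wbb)$, whose residue action kills $\upphi$. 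The identity $\upphi(\sigma\cdot w)=0$ then translates, via the residue-action formula \eqref{eq73}/\eqref{eq12} and the change-of-coordinate compatibility packaged in Prop. \ref{equivalence3} (diagram \eqref{eq55}), into the vanishing of the sum of residue pairings. This is the step that I expect to be the main obstacle: one must carefully match the trivialization $\mc U_\varrho(\mu)\otimes\pi^*\mc U(\eta_\blt)$ on $\scr V_{\fk X,a_\star}\otimes\pi^*\scr W_\fx(\Wbb)$ near $c$ with the trivialization $\mc U(\Delta\mu,\wr\eta_\blt)$ on $\scr W_{\wr\fk X}(\Vbb\otimes\Wbb)$, and show that the absence of poles at the new point (which is what makes the section "propagate") corresponds under the strong residue theorem to the well-definedness of the value $\wr\upphi(u\otimes w)$. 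This is the analogue of \cite[Thm. 7.1]{Gui-propagation}, and I would follow that proof closely, the only new feature being the presence of the outgoing divisor $D_\fx$ and the twist by $L(0)D_\fx + a_\star$, which does not interfere because $\scr V_{\fk X,a_\star}$ agrees with $\scr V_{\fk X}$ away from $D_\fx$ and the extra point $c$ stays away from $D_\fx$.

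Once $\wr\upphi|_c$ is defined for each $c$ in each fiber, I would check holomorphicity in $c$ (and in the base parameter): over a Stein neighborhood the strong residue theorem produces $s\in H^0(\mc C,\scr E(\blt S_\fx))$ depending holomorphically on parameters, so $\wr\upphi$ is a genuine $\mc O_{\mc C-\SX}$-module morphism; uniqueness is immediate because the values $\upphi(Y_j(u,z)w)$ at the $N$ points $\sgm_j$ determine a section of $\scr E(\blt S_\fx)$ uniquely (a meromorphic section with prescribed principal parts and no other poles, on a surface each component of which meets some $\sgm_j$, is unique — this is the injectivity direction of Thm. \ref{strongresidue}, using Asmp. \ref{ass1}/\ref{lb11} and the vanishing \eqref{eq14}). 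Finally, to see $\wr\upphi\in\scr T_{\wr\fk X,a_\star}^*(\Vbb\otimes\Wbb)$, I would invoke Prop. \ref{familyfiber5} and reduce to showing, fiberwise, that $\wr\upphi|_c$ vanishes on the residue action of $H^0(\wr\mc C_c,\scr V_{\wr\fk X_c,a_\star}\otimes\omega(\blt S_{\wr\fk X}))$; by the associativity/commutativity of residue actions and the strong residue theorem this follows from the corresponding property of $\upphi$ itself, since a global section on the propagated surface restricts to a global section on the original one once one expands at the extra point — exactly the argument used in \cite[Sec. 7]{Gui-propagation} and in \cite[Sec. 5.5]{NT-P1_conformal_blocks}. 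Propagating the validity from a small $V\subset\mc B$ to all of $\mc B$ is then handled by Prop. \ref{localglobal1}.
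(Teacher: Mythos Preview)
Your existence and uniqueness arguments via the strong residue theorem match the paper's Steps~2--3 closely, with one correction: the bundle you feed into Theorem~\ref{strongresidue} should be the \emph{dual} $\scr E=(\scr V_{\fk X,a_\star}^{\leq k})^\vee$, not $\scr V_{\fk X,a_\star}\otimes\pi^*\scr W_\fx(\Wbb)$. The paper fixes $w\in\Wbb$ and sets $e_{j,n}\in(\Vbb^{\leq k})^\vee\otimes\mc O(\mc B)$ to be $u\mapsto\upphi(Y_j(u)_{-n-1}w)$; then $\scr E^\vee=\scr V_{\fk X,a_\star}^{\leq k}$, and the test sections $\nu_b$ are precisely the elements of $H^0(\mc C_b,\scr V_{\fx_b,a_\star}\otimes\omega_{\mc C_b}(\blt S_\fx(b)))$ whose residue action on $w$ is annihilated by $\upphi|_b$. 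This makes the verification of condition~(2) a one-liner rather than the dualizing manoeuvre you sketch.

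The substantive difference is in your final step. You propose to show $\wr\upphi\in\scr T^*_{\wr\fk X,a_\star}(\Vbb\otimes\Wbb)$ by the direct residue-action argument of \cite[Sec.~7]{Gui-propagation}: take a global section on the propagated fiber, expand at the extra point, and reduce to the invariance of $\upphi$. The paper does \emph{not} do this. Instead it reduces to a single fiber $\fk X_b$, observes that on each punctured disc $U_j^\circ$ the function $\wr\upphi(u\otimes w)=\upphi(Y_j(u,z)w)$ is literally the normalized sewing $\wtd{\mc S}\uppsi$ of Section~\ref{sewingeg3} (this is the content of Cor.~\ref{lb14}, which unwinds Prop.~\ref{lb10} and the coordinate change), and then invokes Thm.~\ref{formalpartialconformal} to conclude that $\wr\upphi|_{U_j^\circ}$ is a partial conformal block. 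Prop.~\ref{localglobal1} then propagates this from $\bigcup_j U_j^\circ$ to all of $\mc C_b-\SX-\DX$. Your direct approach would also work, but the paper's route is what the sewing machinery of Sections~\ref{lb76}--\ref{sewingeg3} was built for: it trades a bare-hands residue computation on the propagated surface for an appeal to the already-proven fact that sewing preserves partial conformal blocks.
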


For each $u\otimes w\in \scr V_{\fk X,a_1,\dots,a_M}\otimes_{\mc O_{\mc C}}\pi^*\scr W_{\fk X}(\Wbb)\big|_{\mc C-\SX}$, we shall also write
\begin{align}
\wr\upphi(u,w)=\wr\upphi(u\otimes w).  \label{eq64}
\end{align}

This theorem can be proved in a similar way to that of \cite[Thm. 7.1]{Gui-propagation}, except that one should pay special attention to the outgoing sections $\DX$.  We include a proof below for the reader's convenience.  

\begin{proof}
Step 1. We show that if $V,U_\blt$ are chosen and a morphism $\upphi|_V:\scr W_{\fk X}(\Wbb)|_V\rightarrow \mc O_V$ satisfies \eqref{eq82} (for all $v$ and $w$) for a set of local coordinates $\eta_\blt$, then it satisfies \eqref{eq82} for any other one $\eta_\blt'$ at $\sgm_\blt(V)$. Indeed,  \eqref{eq82} is equivalent to that for each $\nu\in H^0(U_j^\circ,\scr  V_{\fk X}\otimes\omega_{\mc C/\mc B}(\blt\SX))$
\begin{align}
\Res_{\eta_j=0}~\wr\upphi(\nu\otimes w)=\upphi(\nu*_j w)
\end{align}
where the action $\nu*_j w$ is defined as in \eqref{residueaction3} and \eqref{eq50} and is independent of the choice of $\eta_j$ due to Rem. \ref{lb13}.\\

Step 2.  Clearly $\wr\upphi$ is determined by its restriction to $\wr\mc B$. We prove the uniqueness of the restricted propagation.  Note that
\begin{align}
\wr\mc B=\mc C-\SX-\DX=\bigcup_{b\in\mc B}\mc C_b-\SX-\DX\label{eq59}
\end{align}
Thus, for two possible propagations $\wr_1\upphi,\wr_2\upphi$ of $\upphi$,  it suffices to show that their restrictions to $\mc C_b-\SX-\DX$ are equal for each $b\in\mc B$. By \eqref{eq58}, this is equivalent to showing that $\wr_1\upphi,\wr_2\upphi$ are equal when restricted to the propagation $\wr(\fk X_b)$ of $\fk X_b$. Let $\Omega$ be the set of all $x\in\mc C_b-\SX-\DX$ on a neighborhood of which $\wr_1\upphi$ agrees with $\wr_2\upphi$. Then $\Omega$ is open and intersects any connected component of $\mc C_b$ by \eqref{eq82} and Asmp. \ref{lb11}. If $\Gamma$ is a connected open subset of $\mc C_b-\SX-\DX$ intersecting $\Omega$ such that the restriction $\scr W_{\wr(\fk X_b)}(\Vbb\otimes\Wbb)|_\Gamma$ is equivalent to $\Vbb\otimes\Wbb\otimes_\Cbb\mc O_\Gamma$, then by the fact that holomorphic functions on a connected Riemann surface are determined by their values on any non-empty open subset, we have $\Gamma\subset\Omega$. So $\Omega$ is closed, and hence must be $\mc C_b-\SX-\DX$. This proves the uniqueness.\\

Step 3. Now we prove the existence of propagation. If we can construct $\wr\upphi|_V$ for all open $V\subset\mc B$ satisfying the conditions as stated in this proposition, then by the uniqueness proved in Step 2, we may glue all these $\wr\upphi|_V$ together to get $\wr\upphi$. Thus,  we assume without loss of generality that $\mc B=V$  and choose $U_\blt,\eta_\blt$ as stated in this proposition.

By \eqref{eq53}, we have
\begin{align}\label{eq54}
        \SV_{\fx,a_\star}\otimes \pi^*\SW_\fx(\Wbb)=\SV_{\fx,a_\star}\otimes_\Cbb \Wbb.
\end{align}
By Prop. \ref{equivalence3}, we make identification
\begin{align}\label{eq61}
\SW_{\wr\fx}(\Vbb\otimes \Wbb)=(\SV_{\fx,a_\star}|_{\mc C-\SX-\DX})\otimes_\Cbb \Wbb
\end{align}
By the commutative diagram \eqref{eq55}, we have
\begin{gather}\label{eq56}
\begin{gathered}
\SW_{\wr\fx}(\Vbb\otimes \Wbb)|_{U_j^\circ}=\big(\SV_{\fx,a_\star}|_{U_j^\circ}\big)\otimes_\Cbb \Wbb\xlongequal{\pumpkin}\Vbb\otimes\Wbb\otimes\mc O_{U_j^\circ}\\
\pumpkin:\text{ both via \eqref{sheafequivalence2} and via }\mc U_\varrho(\eta_j)\otimes \idt
\end{gathered}
\end{gather}


For each $k\in \Nbb$, let $\scr E=(\SV_{\fx,a_\star}^{\leq k})^\vee$        be the dual bundle of $\SV_{\fx,a_\star}^{\leq k}$. Then the identification
        \begin{equation}\label{eq57}
        \SV_{\fx,a_\star}^{\leq k}\vert_{U_j}=\Vbb^{\leq k}\otimes_\Cbb \MO_{U_j} \quad \text{via }\MU_\varrho(\eta_j)
        \end{equation}
        is compatible with identification (\ref{strongresidue3}) in Thm. \ref{strongresidue} if we choose the $E_j$ in Thm. \ref{strongresidue}  to be $(\Vbb^{\leq k})^\vee$. Choose $w\in \Wbb$ and let $e_{j,n}\in (\Vbb^{\leq k})^\vee\otimes \MO(\MB)$ be determined by
\begin{align}
e_{j,n}:        u\in \Vbb^{\leq k}\mapsto \upphi(Y_j(u)_{-n-1}w)\in \MO(\MB).
\end{align}
        Define $s_j=\sum_{n\in \Zbb}e_{j,n}z^n$ for $1\leq j \leq N$. By the compatibility of the identifications \eqref{eq56} and \eqref{eq57}, for each $\nu_b\in H^0\big(\MC_b,\SV_{\fx_b,a_\star}\otimes \omega_{\MC_b}(\blt S_\fx(b))\big)$, the $j$-th residue pairing defined in Thm. \ref{strongresidue} equals
        $$
        \Res_j \<s_j,\nu_b\>=\<\upphi\vert_b,\nu_b *_j w\>
        $$
where $w$ is considered as a section of $\SW_{\fx_b}(\Wbb)=\Wbb$. Since $\upphi\vert_b$ vanishes on $H^0\big(\MC_b,\SV_{\fx_b,a_\star}\otimes \omega_{\MC_b}(\blt S_\fx(b))\big)\cdot \SW_{\fx_b}(\Wbb)$ for each $b\in \MB$ due to Prop. \ref{familyfiber5}, we see that
\begin{gather*}
\sum_{j=1}^N \Res_j \<s_j,\nu_b\>=\sum_{j=1}^N\<\upphi\vert_b,\sigma *_j w\>=0,
\end{gather*}
which implies that $s_1,\cdots,s_N$ satisfy (2) in Thm. \ref{strongresidue}. So there exists $s\in H^0\big(\MC,(\SV_{\fx,a_\star}^{\leq k})^\vee(\blt S_\fx)\big)$ as in (1) of Thm. \ref{strongresidue}. Its restriction to $\mc C-\SX$ defines an $\MO_{\MC-S_\fx}$-module morphism $\SV_{\fx,a_\star}^{\leq k}|_{\mc C-\SX-\DX}\otimes w\rightarrow \MO_{\MC-S_\fx-D_\fx}$. Since these morphisms are compatible for different $k$ and $w$, we can extend them $\MO_{\MC-S_\fx}$-linearly to a morphism $\wr\upphi:(\SV_{\fx,a_\star}|_{\mc C-\SX})\otimes_\Cbb \Wbb\rightarrow \MO_{\MC-S_\fx}$, which satisfies (\ref{eq82}).\\

Step 4. Finally, we prove that $\wr \upphi$ is a partial conformal block. By \eqref{eq59} and Prop. \ref{familyfiber5}, it suffices to check that $\wr\upphi|_{\mc C_b-\SX-\DX}$ is a partial conformal block of multi-level $a_\star$ associated to the family \eqref{eq58}. Thus, by focusing on $\fk X_b$ and its propagation $\wr(\fk X_b)=\eqref{eq58}$, we may replace $\fk X$ with $\fk X_b$ so that $\fk X$ is a single pointed surface. By  shrinking $U_j$, we may assume that $\eta_j(U_j)=\MD_{r_j}$ for some $r_j>0$. Since the LHS of \eqref{eq82} is a holomorphic function on the punctured disc $U_j^\circ=\mc D_{r_j}^\times$, the RHS of \eqref{eq82} converges absolutely on $\mc D_{r_j}^\times$. Thus, by Cor. \ref{lb14} and Prop. \ref{familyfiber5}, we have $\wr\upphi\in\scr T_{(\wr{\fk X})_{U_j^\circ},a_1,\dots,a_M}^*(\Vbb\otimes\Wbb)$ for each $j$. Hence $\wr\upphi\in\scr T_{\wr{\fk X},a_\star}^*(\Vbb\otimes\Wbb)$ by Prop. \ref{localglobal1}.
\end{proof}

Recall Def. \ref{lb15}.

\begin{co}\label{propagation3}
Choose $a_1,\dots,a_M\in\Nbb$. Let
$$
\fx=(y_1,\cdots,y_M;\theta_1,\cdots,\theta_M\big|C\big| x_1,\cdots,x_N,z_1,\cdots,z_L)
$$
be an $(M,N+L)$-pointed compact Riemann surface with outgoing local coordinates and 
$$
S_\fx=x_1+\cdots+x_N+z_1+\cdots+z_L,\quad D_\fx=y_1+\cdots+y_M.
$$
We assume $x_1,\cdots,x_N,z_1,\cdots,z_L$ are incoming marked points and $y_1,\cdots,y_M$ are outgoing ones. Instead of Asmp. \ref{ass1}, we assume a stronger condition:
\begin{align}\label{ass3}
\text{Each connected component of $C$ contains at least one of $x_1,\cdots,x_N$.}
\end{align}
Choose local coordinates $\eta_1,\cdots,\eta_N,\varpi_1,\cdots,\varpi_L$ at $x_1,\cdots,x_N,z_1,\cdots,z_L$. Associate a finitely admissible $\Vbb^{\times N}$-module $\Wbb$ to $x_1,\cdots,x_N$ and a finitely admissible $\Vbb^{\times L}$-module $\Mbb$ to $z_1,\cdots,z_L$. 
\begin{align*}
\text{Identify}\qquad\SW_\fx(\Wbb\otimes \Mbb)=\Wbb\otimes \Mbb\qquad \text{via }\MU(\eta_\blt,\varpi_\blt).
\end{align*}
Suppose that $\Ebb$ is a generating subset of $\Mbb$. Then any partial conformal block $\upphi\in\scr T_{\fk X,a_\star}^*(\Wbb\otimes\Mbb)$ is determined by its values on $\Wbb\otimes \Ebb$.
\end{co}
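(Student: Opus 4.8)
The plan is to reduce the statement to a repeated application of the propagation theorem, Thm.~\ref{lb71}, peeling off one of the marked points $z_1,\dots,z_L$ at a time. The key observation is that $\upphi\in\scr T_{\fk X,a_\star}^*(\Wbb\otimes\Mbb)$ is a linear functional on $\Wbb\otimes\Mbb$, and since $\Ebb$ generates $\Mbb$, every element of $\Mbb$ is a linear combination of vectors of the form $Y_{j_1}(u_1)_{n_1}\cdots Y_{j_k}(u_k)_{n_k}w$ with $w\in\Ebb$ and the $j$'s indexing the tensor components corresponding to $z_1,\dots,z_L$. So it suffices to show that $\upphi\big(w'\otimes Y_{j_1}(u_1)_{n_1}\cdots Y_{j_k}(u_k)_{n_k}w\big)$ is determined by the values of $\upphi$ on $\Wbb\otimes\Ebb$, for every $w'\in\Wbb$, $w\in\Ebb$, and every such string of creation operators.

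First I would treat the case $L=1$ and a single creation operator, i.e. understand $\upphi(w'\otimes Y_1^{\Mbb}(u)_n w)$ where $Y_1^{\Mbb}$ denotes the vertex operator at the point $z_1$. The idea is to view $z_1$ as the ``extra'' marked point produced by propagation: delete $z_1$ from $\fk X$ to get the $(M,N)$-pointed surface $\fk X'=(y_\star;\theta_\star|C|x_1,\dots,x_N)$, let $\upphi'\in\scr T_{\fk X',a_\star}^*(\Wbb)$ be the restriction (more precisely, $\upphi$ as a functional on $\Wbb$ once $\Mbb$ has been handled, but here we want to go the other way). Actually the cleaner route is: by Thm.~\ref{lb71}, a partial conformal block $\upphi'$ on $\fk X'$ has a unique propagation $\wr\upphi'\in\scr T_{\wr{\fk X'},a_\star}^*(\Vbb\otimes\Wbb)$, and $\fk X$ (after identifying $\Mbb$ with $\Vbb$ when $\Mbb=\Vbb$, or more generally iterating) arises as a fiber of $\wr{\fk X'}$. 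The propagation formula \eqref{eq82}, $\wr\upphi'(u\otimes w')=\upphi'(Y_j(u,z)w')$, together with the fact that $\wr\upphi'$ is a single-valued holomorphic section, shows that all the ``modes'' $\upphi'(Y_j(u)_n w')$ at the propagated point are the Laurent coefficients of one holomorphic object, hence determined by $\upphi'$ on $\Wbb$. Running this in reverse: given $\upphi$ on $\fk X$, its values involving $Y^{\Mbb}_1(u)_n$ are governed by the propagated structure and hence by the values at $z_1$ removed, i.e. by $\upphi$ restricted to $\Wbb\otimes(\text{lower part of }\Mbb)$.

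The inductive engine, then, is: strip the points $z_1,\dots,z_L$ one by one (each is an incoming point with an associated module factor of $\Vbb$ after we decompose $\Mbb$ via its generating set $\Ebb$), using Thm.~\ref{lb71} to express $\upphi\big(w'\otimes Y_{j}(u)_n(\text{rest})\big)$ in terms of the Laurent expansion of a propagated partial conformal block on a surface with one fewer marked point. Concretely: write a general element of $\Mbb$ as a sum of $Y_{j_1}(u_1)_{n_1}\cdots Y_{j_k}(u_k)_{n_k}w$, $w\in\Ebb$; peel off the outermost $Y_{j_1}(u_1)_{n_1}$ using \eqref{eq82} applied at the corresponding marked point, obtaining that $\upphi$ on such a vector is a fixed linear combination (with coefficients depending only on the geometry, via residue pairings / Laurent coefficients of holomorphic sections) of values $\wr\upphi(\cdots)$ on vectors with the operator $Y_{j_1}(u_1)_{n_1}$ absorbed; iterate $k$ times and on all $L$ slots. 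After all the creation operators are absorbed, one is left with $\upphi$ evaluated on $\Wbb\otimes\Ebb$ (tensored with the propagated $\Vbb$-factors that have been merged in), which proves the claim.

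The main obstacle I anticipate is bookkeeping rather than conceptual: making the induction rigorous requires carefully matching the surface $\fk X$ (with all of $z_1,\dots,z_L$ present, and $\Mbb$ a general finitely-admissible $\Vbb^{\times L}$-module, not a tensor product of $\Vbb$'s) with iterated propagations $\wr\wr\cdots\wr{\fk X''}$ of the surface $\fk X''$ carrying only $x_\blt$; one must check that the extra incoming sections produced by successive propagations can be moved to the fixed positions $z_1,\dots,z_L$ with the stated local coordinates (this is where \eqref{eq47} and the independence of propagation from the choice of $\mu$ in Prop.~\ref{equivalence3} enter), and that the generating relations in Def.~\ref{lb15} are compatible with the Laurent-expansion structure of \eqref{eq82}. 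A subtle point is that $\Ebb$ generates $\Mbb$ using \emph{all} modes $Y_j(u)_n$ with $n\in\Zbb$, including negative $n$ (lowering) as well as the creation ones, so the induction must be organized so that it terminates --- e.g. by a filtration/grading argument using $\wtd\wt$ or the PBW-type structure, guaranteeing that after finitely many peeling steps every vector is reduced to $\Ebb$. Once this organizational scaffolding is in place, each individual step is just an instance of Thm.~\ref{lb71}, so the proof will be short modulo that setup.
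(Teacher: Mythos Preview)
Your inductive framework (on the length $k$ of the string of operators applied to a vector of $\Ebb$) is correct, and no filtration or grading argument is needed for termination: the induction is simply on $k$. However, the way you propose to execute the inductive step has a genuine gap. You want to ``strip'' the points $z_1,\dots,z_L$ and view $\fx$ as an iterated propagation of a smaller surface $\fx'$ carrying only $x_\blt$. This does not work, for two reasons. First, $\Mbb$ is a general finitely-admissible $\Vbb^{\times L}$-module, not $\Vbb^{\otimes L}$, so $\fx$ is \emph{not} a fiber of $\wr^L\fx'$ in any way that respects the module data. Second, and more fundamentally, there is no ``restriction'' of $\upphi\in\scr T_{\fx,a_\star}^*(\Wbb\otimes\Mbb)$ to a partial conformal block on $\fx'$: fixing $m\in\Ebb$ and looking at $\upphi(-\otimes m)$ does not give an element of $\scr T_{\fx',a_\star}^*(\Wbb)$.

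The paper's proof runs the propagation in the opposite direction: keep all marked points and propagate $\upphi$ on the \emph{full} $\fx$ to obtain $\wr\upphi:\scr W_{\wr\fx}(\Vbb\otimes\Wbb\otimes\Mbb)\rightarrow\mc O_{C-\SX-\DX}$. The inductive step is then: assume $\upphi$ vanishes on $\Wbb\otimes\Ebb$; show it vanishes on $\Wbb\otimes Y_j(u)_n\Ebb$. For $m\in\Ebb$ and $w\in\Wbb$, the expansion \eqref{eq82} near some $x_i$ (which lies on the same component as $z_j$ by \eqref{ass3}) reads $\wr\upphi(u\otimes w\otimes m)=\upphi(Y_i(u,z)w\otimes m)=0$. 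Hence $\wr\upphi(-\otimes w\otimes m)$ vanishes on a punctured neighborhood of $x_i$, and by the open--closed argument from Step~2 of Thm.~\ref{lb71} (analytic continuation on a connected Riemann surface), it vanishes on the whole component. In particular, its Laurent expansion near $z_j$ vanishes, i.e.\ $\upphi(w\otimes Y_j(u,z)m)=0$. The crucial point you are missing is this analytic-continuation link between the expansion of $\wr\upphi$ near $x_i$ (where the hypothesis lives) and near $z_j$ (where the conclusion lives).
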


\begin{proof}
By induction on the $k$ in \eqref{eq60}, it suffices to show that    if $\upphi\in\scr T_{\fk X,a_\star}^*(\Wbb\otimes\Mbb)$ vanishes on $\Wbb\otimes \Ebb$, then $\upphi$ vanishes on $\Wbb\otimes Y_j(u)_n \Ebb$ for each $u\in \Vbb, 1\leq j\leq L,n\in \Zbb$.  As in \eqref{eq61}, we make identification
$$
\begin{aligned}
 \SW_{\wr\fx}(\Vbb\otimes \Wbb\otimes \Mbb)&=\big(\SV_{\fx,a_1,\cdots,a_M}\otimes \pi^*\SW_\fx(\Wbb\otimes \Mbb)\big)\vert_{C-S_\fx-D_\fx}\\
 &=\SV_{\fx,a_1,\cdots,a_M}\vert_{C-S_\fx-D_\fx}\otimes_\Cbb \Wbb\otimes \Mbb.
 \end{aligned}
$$
via Prop. \ref{equivalence3}. By (\ref{ass3}),  the connected component $C_j$ of $C$ containing $z_j$ contains one of $x_1,\dots,x_N$, say $x_1$. For each $w\in \Wbb,m\in \Ebb$, in $\Cbb((z))$ we have
$$
\upphi(Y_1(u,z)w\otimes m)=0,\quad \forall u\in \Vbb.
$$
Thus, by \eqref{eq82}, the $\wr\upphi:\scr W_{\wr\fk X}(\Vbb\otimes\Wbb\otimes\Mbb)\rightarrow\mc O_{C-\SX-\DX}$ vanishes on a neighborhood of $x_1$. As in the proof-Step 2 of Thm. \ref{lb71}, the set $\Omega$ of $x\in C_j-\SX-\DX$ on a neighborhood of which $\wr\upphi$ vanishes is an open and closed subset. So $\Omega=C_j-\SX-\DX$, i.e., $\wr\upphi$ vanishes on $C_j-\SX-\DX$. Therefore, by \eqref{eq82}, in $\Cbb((z))$ we have
$$
\upphi(w\otimes Y_j(u,z)m)=0,
$$
which finishes the proof.
\end{proof}

\begin{co}\label{propagation5}
Let $a_1,\dots,a_M\in\Nbb$. Let $\fx=(y_1,\cdots,y_M;\theta_1,\cdots,\theta_M\big|C\big|x_1,\cdots,x_N)$ be an $(M,N)$-pointed compact Riemann surface with outgoing local coordinates. Associate a finitely admissible $\Vbb^{\times N}$-module $\Wbb$ to $x_1,\cdots,x_N$ and let $\upphi\in \ST_{\fx,a_1,\cdots,a_M}^*(\Wbb)$. Identify
$$
 \SW_{\wr\fx}(\Vbb\otimes \Wbb)=(\SV_{\fx,a_1,\cdots,a_M}\vert_{C-S_\fx-D_\fx})\otimes_\Cbb \SW_\fx(\Wbb)
$$
by Prop. \ref{equivalence3}. Then for each $x\in C-S_\fx-D_\fx$, $\wr\upphi\vert_x$ is the unique linear map $\SV_{\fx,a_1,\cdots,a_M}\vert_x\otimes_\Cbb \SW_\fx(\Wbb)\rightarrow \Cbb$ which belongs to $\scr T_{(\wr\fk X)_x,a_\star}^*(\Vbb\otimes\Wbb)$ and satisfies 
\begin{equation}\label{eq51}
\wr\upphi\vert_x(\ibf\otimes w)=\upphi(w)
\end{equation}
for each $w\in \SW_\fx(\Wbb)$. Thus, we have a linear isomorphism
\begin{gather}
\scr T_{\fx,a_\star}^*(\Wbb)\xrightarrow{\simeq}\scr T_{(\wr\fx)_x,a_\star}^*(\Vbb\otimes\Wbb)\qquad\upphi\mapsto \wr\upphi|_x  \label{eq168}
\end{gather}
\end{co}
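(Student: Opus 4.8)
The plan is to deduce everything from the propagation construction of Thm. \ref{lb71}, the generation principle of Cor. \ref{propagation3}, and a short computation with the vacuum section. The assertion has three parts: that $\wr\upphi|_x$ lies in $\scr T_{(\wr\fx)_x,a_\star}^*(\Vbb\otimes\Wbb)$ and satisfies \eqref{eq51}; that it is the \emph{unique} functional in that space with that property; and that $\upphi\mapsto\wr\upphi|_x$ realizes the isomorphism \eqref{eq168}.

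For existence, Thm. \ref{lb71} applied to $\fx$ (viewed as a family over a point) already gives $\wr\upphi\in\scr T_{\wr\fx,a_1,\dots,a_M}^*(\Vbb\otimes\Wbb)$, and restriction to the fiber over $x$ stays a partial conformal block by Prop. \ref{familyfiber5}, so $\wr\upphi|_x\in\scr T_{(\wr\fx)_x,a_\star}^*(\Vbb\otimes\Wbb)$. For \eqref{eq51}, I would use that the vacuum vector defines a global section of $\SV_\fx$ over $C$ (trivializing to $\ibf$ under every $\mc U_\varrho(\mu)$), which restricts to a section of $\SV_{\fx,a_\star}$ over $\mc C-\DX$; under the identification \eqref{eq52} of Prop. \ref{equivalence3}, tensoring it with the constant section $w$ produces a global section over $\mc C-\SX-\DX$ of the propagated bundle, which I still denote $\ibf\otimes w$. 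On a punctured neighborhood $U_j^\circ$ of an incoming point $x_j$, formula \eqref{eq82} with $u=\ibf$ reads $\wr\upphi(\ibf\otimes w)=\upphi\big(Y_j(\ibf,z)w\big)=\upphi(w)$, a constant in $z$. Hence $\wr\upphi(\ibf\otimes w)-\upphi(w)$ is a holomorphic function on $\mc C-\SX-\DX$ vanishing on $U_j^\circ$; since every connected component of $C$ contains some $x_i$ by Asmp. \ref{ass1} and stays connected after deleting finitely many points, it vanishes identically, and evaluating at $x$ yields \eqref{eq51}.

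For uniqueness, the point is that $(\wr\fx)_x$ is an $(M,N+1)$-pointed surface whose extra incoming point carries the module $\Vbb$, and $\{\ibf\}$ generates $\Vbb$ as a weak $\Vbb$-module via the creation property $Y(v)_{-1}\ibf=v$ (recall Def. \ref{lb15}). Combined with Asmp. \ref{ass1}---every connected component of $C$ meets $\{x_1,\dots,x_N\}$---this is exactly hypothesis \eqref{ass3} of Cor. \ref{propagation3}, applied with the module $\Vbb$ (generated by $\{\ibf\}$) at the extra point. That corollary then forces any element of $\scr T_{(\wr\fx)_x,a_\star}^*(\Vbb\otimes\Wbb)$ to be determined by its restriction to $\ibf\otimes\SW_\fx(\Wbb)$, and since $\wr\upphi|_x$ already satisfies \eqref{eq51}, it is the unique such functional.

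Finally, for \eqref{eq168}: linearity of $\upphi\mapsto\wr\upphi|_x$ is immediate from the construction, and injectivity follows from \eqref{eq51}. For surjectivity, given $\Gamma\in\scr T_{(\wr\fx)_x,a_\star}^*(\Vbb\otimes\Wbb)$ I would set $\upphi(w):=\Gamma(\ibf\otimes w)$ and verify $\upphi\in\scr T_{\fx,a_\star}^*(\Wbb)$. The sheaf $\SV_{\fx,a_\star}$ is unchanged and the incoming divisor only gains the point $x$ in passing from $\fx$ to $(\wr\fx)_x$, so $H^0\big(C,\SV_{\fx,a_\star}\otimes\omega_C(\blt S_\fx)\big)$ sits inside $H^0\big(C,\SV_{(\wr\fx)_x,a_\star}\otimes\omega_C(\blt S_{(\wr\fx)_x})\big)$; and for $\sigma$ in the former, the residue action of $\sigma$ at the point $x$ on $\ibf$ vanishes since $\sigma$ is holomorphic at $x$ while $Y(\cdot,z)\ibf$ has only non-negative powers of $z$, whence $\sigma\cdot(\ibf\otimes w)=\ibf\otimes(\sigma\cdot w)$ and $\Gamma$ annihilates it, giving $\upphi(\sigma\cdot w)=0$. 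Thus $\upphi$ is a partial conformal block for $\fx$, and $\wr\upphi|_x$ and $\Gamma$ agree on $\ibf\otimes\SW_\fx(\Wbb)$, so $\wr\upphi|_x=\Gamma$ by the uniqueness above. The only real nuisance is the bookkeeping that identifies the sheaves and divisors of $\fx$ and $(\wr\fx)_x$ and tracks vanishing and pole orders at $\DX$ through the vacuum-section manipulations; the substantive analytic and geometric input is entirely contained in Thm. \ref{lb71} and Cor. \ref{propagation3}, so I do not expect a serious obstacle.
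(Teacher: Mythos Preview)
Your proof is correct and follows essentially the same route as the paper's: uniqueness via Cor.~\ref{propagation3} (with $\ibf$ generating $\Vbb$ and Asmp.~\ref{ass1} giving \eqref{ass3}), the relation \eqref{eq51} via \eqref{eq82} and analytic continuation, and surjectivity by setting $\upphi(w)=\Gamma(\ibf\otimes w)$. Your surjectivity step is in fact more explicit than the paper's, which asserts $\uppsi=\wr\upphi|_x$ without spelling out why $\upphi$ is a partial conformal block; your observation that $\sigma*_x\ibf=0$ for $\sigma$ holomorphic at $x$ (since $Y(v,z)\ibf\in\Vbb[[z]]$) is exactly the missing detail.
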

\begin{proof}
By Assumption \ref{ass1}, each connected component of $(\wr\fk X)_x$ contains one of the incoming marked points $x_1,\cdots,x_N$. Thus $(\wr\fx)_x$ satisfies  \eqref{ass3} if we set $L=1,z_1=x$. Note that $\ibf$ generates $\Vbb$ as a finitely admissible $\Vbb$-module. Thus, the uniqueness statement in the corollary follows from Cor. \ref{propagation3}.
     
We claim that $\wr\upphi(\ibf\otimes w)=\upphi(w)$ holds as a holomorphic function on $C-S_\fx-D_\fx$. By \eqref{eq82}, this relation holds on punctured disks around $x_1,\dots,x_N$. So it holds on $C-S_\fx-D_\fx$ by complex analysis.

Clearly \eqref{eq168} is injective. For each $\uppsi\in\scr T_{(\wr\fx)_x,a_\star}^*(\Vbb\otimes\Wbb)$, if we let $\upphi:\scr W_\fx(\Wbb)\rightarrow\Cbb$ be $\upphi(w)=\uppsi(\idt\otimes w)$ for each $w\in\scr W_\fx(\Wbb)$, then $\uppsi=\wr\upphi|_x$. So \eqref{eq168} is surjective.
\end{proof}

\subsection{Double propagation}
Fix an $(M,N)$-pointed compact Riemann surface with outgoing local coordinates $\fx=(y_1,\cdots,y_M;\theta_1,\cdots,\theta_M\big|C\big|x_1,\cdots,x_N)$. Here we assume $x_1,\cdots,x_N$ are incoming marked points and $y_1,\cdots,y_M$ are outgoing marked points as usual. Choose neighborhoods $W_1,\cdots,W_M$ of $y_1,\cdots,y_M$, on which the local coordinates $\theta_1,\cdots,\theta_M$ are defined. $S_\fx$ and $D_\fx$ are defined in (\ref{marked2}). 
\begin{df}
    The \textbf{double propagation} $\wr^2 \fx$ of $\fx$\index{zz@$\wr^2\fx$: double propagation of $\fx$} is defined by $\wr (\wr\fx)$, where $\wr\fx$ is the propagated family of $\fx$. It is a family of $(M,N+2)$-pointed compact Riemann surfaces.
\end{df}

\begin{rem}
Recall the notation \eqref{eq69}. We can write $\wr^2\fx$ in details as
\begin{align}
\begin{aligned}
\wr^2\fx=&(\wr^2 y_1,\cdots,\wr^2 y_M;\wr^2\theta_1,\dots,\wr^2\theta_M\big|\\
&\wr^2 \pi:C\times \Conf^2(C-S_\fx-D_\fx)\rightarrow \Conf^2(C-S_\fx-D_\fx)~\big|\\
        &\sigma_1,\sigma_2,\wr^2 x_1,\cdots,\wr^2 x_N)
\end{aligned}
\end{align}
where $\wr^2\pi$ is the projection onto the second component. The sections
\begin{subequations}
\begin{gather*}
        \wr^2 x_i,\wr^2 y_j, \sigma_k:\Conf^2(C-S_\fx-D_\fx)\rightarrow C\times \Conf^2(C-S_\fx-D_\fx)
\end{gather*}
are defined by
\begin{gather}
\wr^2 x_i(z_1,z_2)=(x_i,z_1,z_2)\\
\wr^2 y_j(z_1,z_2)=(y_j,z_1,z_2)\\
\sigma_k(z_1,z_2)=(z_k,z_1,z_2)
\end{gather}
$\wr^2 y_\blt$ are the outgoing sections, and $\sigma_\blt,\wr^2x_\blt$ are the incoming sections. 
\begin{gather}
\wr^2\theta_j=\theta_j\circ\pr_1
\end{gather}
(where $\pr_1:C\times\Conf^2(C-\SX-\DX)\rightarrow C$ is the projection onto the first component) is the  local coordinate of $\wr^2\fk X$ at $y_j\times \Conf^2(C-S_\fx-D_\fx)$.
\end{subequations}
\end{rem}

\begin{rem}
Suppose that the local coordinates $\eta_1,\cdots,\eta_N$ at the incoming marked points $x_1,\cdots,x_N$ are chosen. Then the local coordinates of $\wr^2\fx$ at the incoming marked points $\sigma_1,\sigma_2,\wr^2 x_1,\cdots,\wr^2 x_N$ can be described as follows. The local coordinates at $x_i\times \Conf^2(C-S_\fx-D_\fx)$ are defined by
\begin{subequations}
\begin{gather}
\wr^2 \eta_i=\eta_i\circ\pr_1
\end{gather}
Suppose $V$ is an open subset of $C-S_\fx-D_\fx$, which admits a univalent function $\mu\in \MO(V)$. Then the local coordinate $\Delta_k\mu$ of the restricted family $(\wr^2 \fx)_V$ at $\sigma_k(V)$ is defined by 
\begin{gather}
\Delta_k\mu(x,z_1,z_2)=\mu(x)-\mu(z_k),\quad k=1,2
\end{gather}
whenever this expression is definable. Thus, $\Delta_1\mu,\Delta_2\mu,\wr^2\eta_1,\cdots,\wr^2\eta_N$ are the local coordinates associated respectively to  $\sigma_1,\sigma_2,\wr^2x_1,\dots,\wr^2 x_N$, written for simplicity as
\begin{gather*}
(\Delta_\blt\mu,\wr^2\eta_\blt):=(\Delta_1\mu,\Delta_2\mu,\wr^2\eta_1,\cdots,\wr^2\eta_N)
\end{gather*}
\end{subequations}
\end{rem}

For each $a_1,\cdots,a_M$, define an infinite-rank locally free $\MO_{C^2}$-module \index{VX@$\SV_{\fx,a_1,\cdots,a_M}^{\boxtimes 2}$}
\begin{gather*}
\SV_{\fx,a_1,\cdots,a_M}^{\boxtimes 2}:=\pr_1^*\SV_{\fx,a_1,\cdots,a_M}\otimes \pr_2^*\SV_{\fx,a_1,\cdots,a_M}
\end{gather*}
where $\pr_i:C^2\rightarrow C$ is the projection onto the $i$-th component. If $V\subset C-S_\fx-D_\fx$ is an open subset and $\mu\in \MO(V)$ is univalent, then we have a trivialization
\begin{gather}
\pr_i^*\MU_\varrho(\mu):\pr_i^*\SV_{\fx,a_1,\cdots,a_M}\vert_{\pr_i^{-1}(V)}\xrightarrow{\simeq} \Vbb\otimes_\Cbb \MO_{\pr_i^{-1}(V)}
\end{gather}

Choose a finitely admissible $\Vbb^{\times N}$-module $\Wbb$. Associate $\Wbb$ to $x_1,\cdots,x_N$ of $\fx$ and $\Vbb^{\otimes 2}\otimes \Wbb$ (which is finitely admissible) to $\sigma_1,\sigma_2,\wr^2 x_1,\cdots,\wr^2 x_N$. The following proposition is analogous to  Prop. \ref{equivalence3}.

\begin{pp}\label{equivalence4}
    We have a canonical isomorphism of $\MO_{\Conf^2(C-S_\fx-D_\fx)}$-modules
    $$
    \SW_{\wr^2\fx}(\Vbb^{\otimes 2}\otimes \Wbb)\xrightarrow{\simeq} \SV_{\fx,a_1,\cdots,a_M}^{\boxtimes 2}\vert_{\Conf^2(C-S_\fx-D_\fx)}\otimes_\Cbb \SW_\fx(\Wbb)
    $$
    such that for any two disjoint open subsets $V_1,V_2\subset C-S_\fx-D_\fx$, any univalent $\mu_1\in \MO(V_1),\mu_2\in \MO(V_2)$, and any local coordinates $\eta_\blt$ at the incoming marked points $x_\blt$, the restriction of this isomorphism to $V_1\times V_2$ makes the following diagram commute.
    \begin{center}
\begin{tikzcd}[column sep=-1.5ex, row sep=2.75ex]
\mathscr{W}_{\wr^2\mathfrak{X}}(\mathbb{V}^{\otimes 2}\otimes \mathbb{W})\vert_{V_1\times V_2} \arrow[rrrr, "\simeq"] \arrow[rrdd, "{\mathcal{U}(\Delta_\blt\mu_\blt,\wr^2\eta_\bullet)}"'] &  &                                                    &  & \mathscr{V}_{\mathfrak{X},a_1,\cdots,a_M}^{\boxtimes 2}\vert_{V_1\times V_2}\otimes_\Cbb \mathscr{W}_\mathfrak{X}(\mathbb{W})  \arrow[lldd, "\pr_1^*\mathcal{U}_\varrho(\mu_1)\otimes\pr_2^*\mathcal{U}_\varrho(\mu_2) \otimes_\Cbb \mathcal{U}(\eta_\bullet)"] \\
                                                                                                                                                               &  &                                                    &  &                                                                                                                                                                      \\
                                                                                                                                                               &  & \Vbb^{\otimes 2}\otimes \mathbb{W} \otimes_\Cbb \mathcal{O}_{V_1\times V_2} &  &                                                                                                                                                                     
\end{tikzcd}
\end{center}
\end{pp}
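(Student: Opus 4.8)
The plan is to iterate Proposition \ref{equivalence3}, since $\wr^2\fx=\wr(\wr\fx)$ by definition. First I would make the identifications that turn the iterated construction into the explicit family $\wr^2\fx$ of the preceding remarks. Writing $\wr\fx=(\wr\tau_\star;\wr\theta_\star\,|\,\wr\pi:\wr\MC\to\wr\MB\,|\,\sigma,\wr x_\blt)$ with $\wr\MC=C\times(C-\SX-\DX)$, $\wr\MB=C-\SX-\DX$ and $\wr\pi$ the second projection, a direct check from Definition \ref{propagatedfamily} shows that $\wr\MC-S_{\wr\fx}-D_{\wr\fx}=\Conf^2(C-\SX-\DX)$, that applying Definition \ref{propagatedfamily} to $\wr\fx$ reproduces precisely $\wr^2\fx$, that the new diagonal section is $\sigma_1$, that the pullback of $\sigma$ is $\sigma_2$ and the pullbacks of the $\wr x_i$ are the $\wr^2 x_i$, and that the pulled-back outgoing local coordinates are the $\wr^2\theta_j$. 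I would also note that $\wr\fx$ and $\wr^2\fx$ inherit Assumption \ref{lb11} from Assumption \ref{ass1} for $\fx$ (each fiber is $C$, carrying the incoming points $x_\blt$ together with one or two extra points, so each component still meets some $x_i$), so that Proposition \ref{equivalence3} applies at each stage.

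With this in hand, the first application of Proposition \ref{equivalence3} (to $\fx$, with $\Wbb$ at $x_\blt$) gives a canonical isomorphism $\SW_{\wr\fx}(\Vbb\otimes\Wbb)\xrightarrow{\simeq}(\SV_{\fx,a_\star}\otimes_\Cbb\SW_\fx(\Wbb))|_{C-\SX-\DX}$, and the second application (to $\wr\fx$, with $\Vbb\otimes(\Vbb\otimes\Wbb)=\Vbb^{\otimes 2}\otimes\Wbb$) gives $\SW_{\wr^2\fx}(\Vbb^{\otimes 2}\otimes\Wbb)\xrightarrow{\simeq}\big(\SV_{\wr\fx,a_\star}\otimes_{\mc O_{\wr\MC}}(\wr\pi)^*\SW_{\wr\fx}(\Vbb\otimes\Wbb)\big)|_{\Conf^2(C-\SX-\DX)}$. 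The remaining algebraic identifications are: $\SV_{\wr\fx,a_\star}\cong\pr_1^*\SV_{\fx,a_\star}$ for $\pr_1:\wr\MC\to C$ the first projection — away from $D_{\wr\fx}$ both equal $\pr_1^*\SV_C$, and near $D_{\wr\fx}$ this holds because the generators $\MU_\varrho(\wr\theta_j)^{-1}\wr\theta_j^{a_j+L(0)}v$ of $\SV_{\wr\fx,a_\star}$ are the $\pr_1$-pullbacks of those of $\SV_{\fx,a_\star}$ (as $\wr\theta_j=\theta_j\circ\pr_1$), exactly as in the reasoning behind \cite[Prop. 6.2]{Gui-propagation}; and, pulling the first isomorphism back along $\wr\pi=\pr_2:\wr\MC\to C$, $(\wr\pi)^*\SW_{\wr\fx}(\Vbb\otimes\Wbb)\cong\pr_2^*\SV_{\fx,a_\star}\otimes_\Cbb\SW_\fx(\Wbb)$. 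Combining these, and using $\pr_1^*\SV_{\fx,a_\star}\otimes\pr_2^*\SV_{\fx,a_\star}=\SV_{\fx,a_\star}^{\boxtimes 2}$ on $C^2$, whose restriction to $\Conf^2(C-\SX-\DX)\subset\wr\MC$ is the sheaf in the statement, yields the desired isomorphism $\SW_{\wr^2\fx}(\Vbb^{\otimes 2}\otimes\Wbb)\xrightarrow{\simeq}\SV_{\fx,a_\star}^{\boxtimes 2}|_{\Conf^2(C-\SX-\DX)}\otimes_\Cbb\SW_\fx(\Wbb)$, with the $\pr_1$-factor attached to $\sigma_1$ and the $\pr_2$-factor to $\sigma_2$.

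To verify the coordinate compatibility, for given disjoint $V_1,V_2\subset C-\SX-\DX$, univalent $\mu_1\in\mc O(V_1),\mu_2\in\mc O(V_2)$ and $\eta_\blt$, I would run the two diagrams \eqref{eq55} with matching auxiliary data: use $(\mu_2,\eta_\blt)$ in the first application, and $\mu:=\mu_1\circ\pr_1\in\mc O(V_1\times V_2)$ — univalent on every $\wr\pi$-fiber, and with $V_1\times V_2\subset\wr\MC-S_{\wr\fx}-D_{\wr\fx}$ by disjointness — in the second. The second diagram then names the local coordinates of $\wr^2\fx$ at $\sigma_1,\sigma_2,\wr^2 x_\blt$ as $\Delta\mu$, $\wr(\Delta\mu_2)$, $\wr(\wr\eta_\blt)$; a short computation with the fiber-product identifications, using \eqref{eq49}, gives $\Delta\mu=\Delta_1\mu_1$, $\wr(\Delta\mu_2)=\Delta_2\mu_2$ and $\wr(\wr\eta_i)=\wr^2\eta_i$, so the source trivialization is exactly $\mc U(\Delta_\blt\mu_\blt,\wr^2\eta_\blt)$; and substituting $\SV_{\wr\fx,a_\star}=\pr_1^*\SV_{\fx,a_\star}$ and the (pulled-back) first diagram into the target trivialization $\mc U_\varrho(\mu)\otimes(\wr\pi)^*\mc U(\Delta\mu_2,\wr\eta_\blt)$ of the second diagram turns it into $\pr_1^*\mc U_\varrho(\mu_1)\otimes\pr_2^*\mc U_\varrho(\mu_2)\otimes\mc U(\eta_\blt)$. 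This is the commuting triangle in the statement. Finally, since such triangles determine the isomorphism on each fiber over $\Conf^2(C-\SX-\DX)$ and every point lies in some $V_1\times V_2$, the composite is independent of all choices, hence canonical.

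The step I expect to be the main obstacle is organizational rather than analytic: verifying precisely that $\wr(\wr\fx)$ coincides with the explicitly described $\wr^2\fx$ — matching the base $\Conf^2(C-\SX-\DX)$, the three families of incoming sections, and especially the way the coordinate operations $\Delta(\cdot)$ and $(\cdot)\circ\pr_1$ compose — and keeping straight which of the two copies of $\SV_{\fx,a_\star}$ in $\SV_{\fx,a_\star}^{\boxtimes 2}$ is attached to $\sigma_1$ versus $\sigma_2$. Beyond this bookkeeping, no input past Proposition \ref{equivalence3} (hence \cite[Prop. 6.2]{Gui-propagation}) is needed.
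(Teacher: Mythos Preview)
Your proposal is correct and provides exactly the kind of detailed argument the paper elects to omit: the paper's own proof consists of the single sentence ``The proof is similar to Prop.~\ref{equivalence3} and is exactly the same as \cite[Prop.~8.1]{Gui-propagation}. So we omit the proof.'' Your strategy of iterating Proposition~\ref{equivalence3} (using $\wr^2\fx=\wr(\wr\fx)$) together with the identification $\SV_{\wr\fx,a_\star}\cong\pr_1^*\SV_{\fx,a_\star}$ is a clean and valid way to supply those details, and the coordinate bookkeeping you carry out is accurate.

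The only remark worth making is that the cited reference \cite[Prop.~8.1]{Gui-propagation} likely constructs the isomorphism directly (defining the map by the commutative triangle and checking transition-function compatibility on overlaps, parallel to the proof of Prop.~6.2 there) rather than by formally composing two instances of the single-propagation isomorphism. Your iterative route has the advantage of making manifest that nothing beyond Proposition~\ref{equivalence3} is needed and that the $\pr_1$- and $\pr_2$-factors are canonically attached to $\sigma_1$ and $\sigma_2$ respectively; the direct route would avoid the intermediate identification $\SV_{\wr\fx,a_\star}\cong\pr_1^*\SV_{\fx,a_\star}$ and the pullback of the first diagram along $\wr\pi$, at the cost of redoing the transition-function check from scratch. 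Either way the content is the same, and your self-assessment that the difficulty is purely organizational is on target.
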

\begin{proof}
    The proof is similar to Prop. \ref{equivalence3} and is exactly the same as \cite[Prop. 8.1]{Gui-propagation}. So we omit the proof.  
\end{proof}

Choose $a_1,\dots,a_M\in\Nbb$. Choose
\begin{gather*}
\upphi\in\scr T_{\fk X,a_1,\dots,a_M}^*(\Wbb).
\end{gather*}
By Thm. \ref{lb71}, we have double propagation
\begin{align*}
\wr^2\upphi=\wr(\wr\upphi)\in\scr T_{\wr^2\fk X,a_1,\dots,a_M}^*(\Vbb^{\otimes 2}\otimes \Wbb)
\end{align*}
By Prop. \ref{equivalence4}, we can view $\wr^2\upphi$ as an $\mc O_{\Conf^2(C-\SX-\DX)}$-module morphism
\begin{equation}\label{doublepropagation}
\wr^2\upphi:\SV_{\fx,a_1,\cdots,a_M}^{\boxtimes 2}\vert_{\Conf^2(C-S_\fx-D_\fx)}\otimes_\Cbb \SW_\fx(\Wbb)\rightarrow \MO_{\Conf^2(C-S_\fx-D_\fx)}.
\end{equation}
 
Our main result in this section is parallel to Thm. 8.2 in \cite{Gui-propagation} and is significant for the construction of $\Vbb$-module structures of dual tensor products. Before describing this result, we introduce some notations.

For each open subsets $V_1,V_2\subset C-D_\fx$ (not necessarily disjoint), we write
\begin{subequations}
\begin{gather}
    \Conf(V_\blt-S_\fx-D_\fx)=(V_1\times V_2)\cap \Conf^2(C-S_\fx-D_\fx)\label{eq66}
\end{gather}
If $v_i\in \SV_{\fx,a_1,\cdots,a_M}(V_i)(i=1,2)$ and $w\in \SW_\fx(\Wbb_\blt)$, write
\begin{gather}
    \wr^2\upphi(v_1,v_2,w)=\wr^2\upphi\big(\pr_1^*v_1\otimes \pr_2^*v_2\vert_{\Conf(V_\blt-S_\fx-D_\fx)}\otimes w\big)
\end{gather}
which is an element of $\MO(\Conf(V_\blt-S_\fx-D_\fx))$. Here, $v_2$ is for the first propagation, and $v_1$ is for the second one. (We are following the rule that the section for the last propagation is written on the leftmost side. ) We use the following two symbols
\begin{gather}
\wr^2\upphi(v_1,v_2,w)\vert_{p_1,p_2}\equiv  \wr\big(\wr\upphi(v_1,v_2,w)\vert_{p_2}\big)\vert_{p_1}
\end{gather}
to denote the value of the holomorphic function $\wr^2\upphi(v_1,v_2,w)$ at $(p_1,p_2)\in\Conf(V_\blt-\SX-\DX)$.
\end{subequations}

\begin{thm}\label{lb18}
For each $1\leq i\leq N$, choose a local coordinate $\eta_i$ at $x_i$ defined on a neighborhood $U_i\subset C-\DX$ of $x_i$. Let $V_1,V_2$ be non-necessarily disjoint open subsets of $C-\DX$. Choose univalent functions $\mu_k\in \MO(V_k)$ for $k=1,2$.  Identify
\begin{subequations}
\begin{gather}
\SW_\fx(\Wbb)=\Wbb\qquad\text{via }\MU(\eta_\blt)\\
\SV_{\fx,a_1,\cdots,a_M}\vert_{V_k}=\Vbb\otimes_\Cbb \MO_{V_k}\qquad\text{via }\MU_\varrho(\mu_k)
\end{gather}
\end{subequations}
(Note that $\scr V_{\fk X,a_\star}$ equals $\scr V_{\fk X}$ outside $\DX$.) Choose $v_k\in \Vbb\otimes\MO(V_k)$.   Choose $w\in \Wbb$ and $(p_1,p_2)\in \Conf(V_\blt-S_\fx-D_\fx)$. The following are true.
    \begin{enumerate}[label=(\arabic*)]
        \item Suppose that $V_1=U_i$ (where $1\leq i\leq N$), that $V_1$ contains only $p_1,x_i$ among $p_1,p_2,x_\blt$, that $\mu_1=\eta_i$, and that $V_1$ contains the closed disc with center $x_i$ and radius $\vert \eta_i(p_1)\vert$ under the coordinate $\eta_i$. Then
        $$
        \wr^2\upphi(v_1,v_2,w)\vert_{p_1,p_2}=\wr \upphi(v_2,Y_i(v_1,z)w)\vert_{p_2}\vert_{z=\eta_i(p_1)}
        $$
        where the series of $z$ on the RHS converges absolutely to the LHS, and $v_1$ is considered as an element of $\Vbb\otimes\Cbb((z))$ by taking power series expansion with respect to $z=\eta_i$ at $x_i$.
        \item Suppose that $V_1=V_2$, that $V_2$  contains only $p_1,p_2$ among $p_1,p_2,x_\blt$, that $\mu_1=\mu_2$, and that $V_2$ contains the closed disc with center $p_2$ and radius $\vert \mu_2(p_1)-\mu_2(p_2)\vert$ under the coordinate $\mu_2$. Then
        $$
        \wr^2\upphi(v_1,v_2,w_\blt)\vert_{p_1,p_2}=\wr\upphi(Y(v_1,z)v_2,w)\vert_{p_2}\vert_{z=\mu_2(p_1)-\mu_2(p_2)}
        $$
        where the series of $z$ on the RHS converges absolutely to the LHS, and $v_1$ is considered as an element of $\Vbb\otimes \Cbb((z))$ by taking power series expansion with respect to $z=\mu_2-\mu_2(p_2)$ at $p_2$.
        \item $\displaystyle         \wr^2\upphi(\ibf,v_2,w)\vert_{p_1,p_2}=\wr\upphi(v_2,w)\vert_{p_2}$.
        \item $\displaystyle         \wr^2 \upphi(v_1,v_2,w)\vert_{p_1,p_2}=\wr^2\upphi(v_2,v_1,w)\vert_{p_2,p_1}$.
    \end{enumerate}
\end{thm}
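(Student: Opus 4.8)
\emph{Plan.} All four parts are to be deduced from the single–propagation machinery of Theorem~\ref{lb71} applied to the \emph{outer} (second) propagation, i.e.\ to $\wr^2\upphi=\wr(\wr\upphi)$ regarded as the propagation of the partial conformal block $\wr\upphi$ on the family $\wr\fk X$, combined with the characterizations in Corollaries~\ref{propagation5} and~\ref{propagation3} and with the identity theorem on connected Riemann surfaces; Assumption~\ref{ass1} will be used each time to guarantee that the open sets on which two holomorphic functions are compared meet every connected component. Throughout we use Proposition~\ref{equivalence4} to identify $\SW_{\wr^2\fk X}(\Vbb^{\otimes 2}\otimes\Wbb)$ with $\SV_{\fk X,a_\star}^{\boxtimes 2}\otimes_\Cbb\SW_\fk X(\Wbb)$ over $\Conf^2(C-\SX-\DX)$, so that ``$v_1$ at $\sigma_1$'' is literally the last propagation variable, ``$v_2$ at $\sigma_2$'' the first one, and the evaluation point $(p_1,p_2)$ records the positions of $\sigma_1,\sigma_2$ in the fibre $C$.

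Item $(3)$ is immediate: $\sigma_1$ (the $v_1$–slot) is the section introduced by the outer propagation, so the vacuum–insertion property of Corollary~\ref{propagation5}, $\wr(\wr\upphi)|_{(p_1,p_2)}(\ibf\otimes-)=\wr\upphi(-)|_{p_2}$, gives $\wr^2\upphi(\ibf,v_2,w)|_{p_1,p_2}=\wr\upphi(v_2,w)|_{p_2}$. For $(1)$, the stated hypotheses ($V_1=U_i$, $\mu_1=\eta_i$, the coordinate disc around $x_i$ containing $p_1$, etc.) are precisely the data of the Setting needed to apply formula~\eqref{eq82} to the outer propagation near the section $\wr^2 x_i$; the vertex operator attached to that slot is the $i$‑th vertex operator $Y_i$ of the $\Vbb^{\times N}$–module $\Wbb$, so \eqref{eq82} reads $\wr^2\upphi\big(v_1\otimes(v_2\otimes w)\big)=\wr\upphi\big(v_2\otimes Y_i(v_1,z)w\big)$ as an identity in $\mc O(\text{nbhd of }p_2)[[z^{\pm1}]]$ with $z=\eta_i$. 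Exactly as in Step~4 of the proof of Theorem~\ref{lb71} (using Corollary~\ref{lb14}), the left side is a genuine holomorphic function on $\Conf^2(C-\SX-\DX)$ near $(p_1,p_2)$, hence the Laurent series converges at $z=\eta_i(p_1)$, which is $(1)$. Item $(2)$ is the same argument applied near the inner–propagation section $\sigma_2$, which carries $\Vbb$ and has local coordinate $\mu(p_1)-\mu(p_2)$; there the relevant vertex operator is the VOA's own $Y$, so \eqref{eq82} gives $\wr^2\upphi(v_1\otimes v_2\otimes w)=\wr\upphi\big((Y(v_1,z)v_2)\otimes w\big)$ with $z=\mu(p_1)-\mu(p_2)$, and convergence again follows from holomorphy of the left side.

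Part $(4)$ is the main obstacle, since it genuinely asserts that the two orders of propagation coincide and does not follow from $(3)$ alone. The key preliminary step is the companion of $(3)$: $\wr^2\upphi(v_1,\ibf,w)|_{z_1,z_2}=\wr\upphi(v_1,w)|_{z_1}$ for all $(z_1,z_2)\in\Conf^2(C-\SX-\DX)$. To prove it, restrict $z_1$ to a punctured coordinate disc around some $x_i$; \eqref{eq82} for the outer propagation gives $\wr^2\upphi(v_1,\ibf,w)=\wr\upphi\big(\ibf\otimes Y_i(v_1,\eta_i(z_1))w\big)|_{z_2}$, then Corollary~\ref{propagation5} collapses this to $\upphi\big(Y_i(v_1,\eta_i(z_1))w\big)$, which by \eqref{eq82} for the inner propagation equals $\wr\upphi(v_1,w)|_{z_1}$; both sides of the desired identity are holomorphic on $\Conf^2(C-\SX-\DX)$ and agree on an open set meeting every component (Assumption~\ref{ass1}), so the identity theorem finishes it. Now fix $(p_1,p_2)$ and pass to the fibre surface $\fk Z=(\wr^2\fk X)_{(p_1,p_2)}=(y_\star;\theta_\star|C|x_\blt,p_1,p_2)$ with $\Wbb,\Vbb,\Vbb$ attached to $x_\blt,p_1,p_2$. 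By Proposition~\ref{familyfiber5}, both $\Psi_1(v_1\otimes v_2\otimes w):=\wr^2\upphi(v_1,v_2,w)|_{p_1,p_2}$ and $\Psi_2(v_1\otimes v_2\otimes w):=\wr^2\upphi(v_2,v_1,w)|_{p_2,p_1}$ belong to $\scr T_{\fk Z,a_\star}^*(\Vbb^{\otimes 2}\otimes\Wbb)$ (for $\Psi_2$ one uses that the space of conformal blocks is insensitive to the ordering of the marked points together with the symmetry of $\Vbb\otimes\Vbb$). By the companion identity and by $(3)$, $\Psi_1(v_1\otimes\ibf\otimes w)=\wr\upphi(v_1,w)|_{p_1}=\Psi_2(v_1\otimes\ibf\otimes w)$ for all $v_1,w$; since $\ibf$ generates $\Vbb$ as a finitely admissible $\Vbb$–module and every component of $C$ contains some $x_i$, Corollary~\ref{propagation3} (with $\Mbb=\Vbb$ at $p_2$ and generating set $\{\ibf\}$) forces $\Psi_1=\Psi_2$; as $(p_1,p_2)$ was arbitrary, this is $(4)$.

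Apart from $(4)$, the steps that require genuine care are bookkeeping: matching the identifications of Proposition~\ref{equivalence4} and the local–coordinate conventions $\Delta_1\mu,\Delta_2\mu,\wr^2\eta_\blt$ so that \eqref{eq82} lands precisely on the right-hand sides of $(1)$ and $(2)$ (including the correct identification of the vertex operator attached to each slot), and checking that the relevant connected components of $\Conf^2(C-\SX-\DX)$ are indeed connected and are met by the comparison open sets, which is exactly the role of Assumption~\ref{ass1}.
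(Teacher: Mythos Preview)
Your proof is correct and for parts (1), (2), (3) is essentially identical to the paper's: (1) and (2) are read off from the single-propagation formula~\eqref{eq82} applied to the outer propagation, and (3) is the vacuum-insertion property of Corollary~\ref{propagation5}.

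For part (4) your route is slightly longer than the paper's. You first establish the companion identity $\wr^2\upphi(v_1,\ibf,w)|_{z_1,z_2}=\wr\upphi(v_1,w)|_{z_1}$ by an analytic-continuation argument, and then compare $\Psi_1,\Psi_2$ on the set $\{v_1\otimes\ibf\otimes w\}$ before invoking Corollary~\ref{propagation3} with a \emph{single} auxiliary $\Vbb$-slot. The paper bypasses the companion identity altogether: it only checks $\Psi_1(\ibf\otimes\ibf\otimes w)=\upphi(w)=\Psi_2(\ibf\otimes\ibf\otimes w)$ (using (3) together with~\eqref{eq62}) and then applies Corollary~\ref{propagation3} with \emph{both} $\Vbb$-slots taken as the auxiliary module, i.e.\ $L=2$, $\Mbb=\Vbb\otimes\Vbb$, and generating set $\Ebb=\{\ibf\otimes\ibf\}$. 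This is shorter, but your argument has the side benefit of making the companion identity explicit, which is of independent interest. Both approaches rest on the same ingredients (Theorem~\ref{lb71}, Corollaries~\ref{propagation5} and~\ref{propagation3}, and Assumption~\ref{ass1}).
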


Notice that by \eqref{eq51} we have
\begin{align}
\wr\upphi(\idt,w)|_{p_2}=\upphi(w)  \label{eq62}
\end{align}

\begin{proof}
When $v_1,v_2$ are constant sections, (1) and (2) follow from Thm. \ref{lb71}. The general case follows immediately. (3) follows from Cor. \ref{propagation5} by considering the partial conformal block $\wr\upphi\vert_{p_2}$ associated to $(\wr\fx)_{p_2}$ and $\Vbb\otimes \Wbb$. To prove (4), consider the two partial conformal blocks of multi-level $a_\star$ associated to $(\wr^2\fx)_{p_1,p_2}$ defined by
\begin{gather*}
(v_1,v_2,w)\mapsto \wr^2\upphi(v_1,v_2,w)\vert_{p_1,p_2}\qquad (v_1,v_2,w)\mapsto \wr^2\upphi(v_2,v_1,w)\vert_{p_2,p_1}\\
(\text{where }v_1,v_2\in\Vbb,w\in\Wbb)
\end{gather*}
(Note that the second one belongs to $\scr T_{(\wr^2\fk X)_{p_1,p_2}}^*(\Vbb^{\otimes 2}\otimes\Wbb)$ because the linear functional $(v_2,v_1,w)\mapsto \wr^2\upphi(v_2,v_1,w)\vert_{p_2,p_1}$ belongs to $\scr T_{(\wr^2\fx)_{p_2,p_1}}^*(\Vbb^{\otimes 2}\otimes\Wbb)$.) By (3) and \eqref{eq62}, they are equal when $v_1=v_2=\idt$. By Cor. \ref{propagation3}, they are equal for all $v_1,v_2$.
\end{proof}

Let $\displaystyle \Conf^2(C-\SX)=\{(x,y)\in C-\SX:x\neq y\}$. With the help of Thm. \ref{lb18}-(4), we show:

\begin{pp}\label{anotherversionpropagation2}
The morphism $\wr^2\upphi$ in \eqref{doublepropagation} can be extended (necessarily uniquely) to an  $\MO_{\Conf^2(C-S_\fx)}$-module morphism
\begin{equation}\label{doublepropagation2}
\wr^2\upphi:\SV_{\fx,a_1,\cdots,a_M}^{\boxtimes 2}\vert_{\Conf^2(C-S_\fx)}\otimes_\Cbb \SW_\fx(\Wbb)\rightarrow \MO_{\Conf^2(C-S_\fx)}.
\end{equation}
\end{pp}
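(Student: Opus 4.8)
The goal is to extend the morphism $\wr^2\upphi$ from $\Conf^2(C-\SX-\DX)$ to $\Conf^2(C-\SX)$, i.e., to show it extends holomorphically across the outgoing marked points $y_1,\dots,y_M$ (in both slots of the configuration space). The plan is to work locally near the divisors $\DX\times(C-\SX)$ and $(C-\SX)\times\DX$ separately, and to patch with the already-constructed morphism on $\Conf^2(C-\SX-\DX)$ using the uniqueness of holomorphic extensions on connected Riemann surfaces.

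First I would reduce to extending across $y_j\times(C-\SX-\{y_j\})$ in the first slot (the second slot being symmetric via Thm. \ref{lb18}-(4), which exchanges the two slots). Fix $j$ and fix a second point $p_2\in C-\SX-\DX$. Consider the partial conformal block $\wr\upphi|_{p_2}\in\scr T_{(\wr\fk X)_{p_2},a_\star}^*(\Vbb\otimes\Wbb)$ obtained from the single propagation $\wr\upphi$ by restricting the second variable to $p_2$; here $(\wr\fk X)_{p_2}$ is an $(M,N+1)$-pointed compact Riemann surface with the \emph{same} outgoing marked points $y_1,\dots,y_M$ and outgoing local coordinates $\theta_1,\dots,\theta_M$. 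Applying Thm. \ref{lb71} (single propagation, with multi-level $a_\star$) to this partial conformal block produces a propagated morphism defined on $\scr V_{(\wr\fk X)_{p_2},a_\star}\otimes_{\mc O}\pi^*\scr W_{(\wr\fk X)_{p_2}}(\Vbb\otimes\Wbb)$ over $C$ minus only the incoming points --- in particular defined on a neighborhood of each $y_j$. By Thm. \ref{lb18}-(4) applied with the roles of the two propagation variables, together with the identification of $\scr V^{\boxtimes 2}_{\fk X,a_\star}$ in Prop. \ref{equivalence4}, this single-variable-$p_2$ extension agrees on $\Conf^2(C-\SX-\DX)$ (near $(p_1,p_2)$ with $p_1$ close to $y_j$) with the originally given $\wr^2\upphi$; this is where the symmetry statement is essential, since the first propagation variable is exactly the one we are trying to push across $\DX$.

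Next I would let $p_2$ vary. The construction in the previous paragraph gives, for each $p_2$, a morphism of $\mc O$-modules over $(C-\SX)\times\{p_2\}$; one must check these assemble into a single holomorphic morphism over $(C-\SX)\times(C-\SX-\DX)$. This follows from the holomorphic dependence built into Thm. \ref{lb71}: the propagation $\wr\upphi$ of Thm. \ref{lb71} is constructed over the whole base, and restricting the \emph{first} propagated variable (rather than re-propagating fiberwise) already realizes $\wr^2\upphi$ as a morphism over the larger space $\Conf^2(C-\SX)$, because the strong residue theorem (Thm. \ref{lb88}) used in Step 3 of the proof of Thm. \ref{lb71} produces a global section $s\in H^0\big(C,(\scr V^{\leq k}_{\fk X,a_\star})^\vee(\blt\SX)\big)$ with poles only at $\SX$, hence automatically regular at $\DX$. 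Concretely: apply Thm. \ref{lb71} to $\wr\upphi\in\scr T_{\wr\fk X,a_\star}^*(\Vbb\otimes\Wbb)$ over the base $\wr\mc B=C-\SX-\DX$; the resulting $\wr^2\upphi=\wr(\wr\upphi)$ is a priori a morphism over $C-\SX-\DX$ (fibered over $\wr\mc B$), but the "another version of propagation" remark following the proof of Thm. \ref{lb71} (the morphism over $\mc C-\SX$, not just $\mc C-\SX-\DX$) shows that each fiberwise propagation extends across $\DX$ in the propagated variable; combining with the symmetry (4) handles the remaining variable. Finally, uniqueness of the extension is automatic: $\Conf^2(C-\SX-\DX)$ is dense in $\Conf^2(C-\SX)$ and the target $\mc O_{\Conf^2(C-\SX)}$ is a sheaf on a (connected-component-wise) complex manifold, so two extensions agreeing on a dense open subset coincide.

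The main obstacle is the bookkeeping of \emph{which} propagation variable is being restricted versus extended, and verifying that the "another version of propagation" (extension across $\DX$ in Thm. \ref{lb71}) is compatible with Prop. \ref{equivalence4}'s identification $\scr W_{\wr^2\fk X}(\Vbb^{\otimes2}\otimes\Wbb)\simeq\scr V^{\boxtimes2}_{\fk X,a_\star}|_{\Conf^2(C-\SX-\DX)}\otimes\scr W_\fk X(\Wbb)$; one must check the transition/trivialization data ($\mc U_\varrho(\mu_k)$ and the change-of-coordinate operators) behave correctly so that the pole orders at $\DX$ governed by $a_\star$ do not obstruct holomorphic extension --- but since $\scr V_{\fk X,a_\star}$ differs from $\scr V_\fk X$ only near $\DX$ and the strong-residue-theorem sections have no poles at $\DX$ at all, this works out. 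Once the two slots are handled, Thm. \ref{lb18}-(4) and the density argument close the proof with essentially no further computation.
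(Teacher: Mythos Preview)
Your approach is essentially the same as the paper's: use \eqref{eq63} of Thm.~\ref{lb71} (applied to the family $\wr\fk X$) to extend $\wr^2\upphi$ across $\DX$ in the first variable, then invoke the symmetry Thm.~\ref{lb18}-(4) to handle the second variable. However, there is a genuine gap in your argument when $M\geq 2$.

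After your two steps you have shown that $\wr^2\upphi(v_1,v_2,w)$ is holomorphic on
\[
\big((V_1-\SX)\times(V_2-\SX-\DX)-\Gamma\big)\;\cup\;\big((V_1-\SX-\DX)\times(V_2-\SX)-\Gamma\big),
\]
where $\Gamma$ is the diagonal. But the union of these two regions is \emph{not} all of $(V_1-\SX)\times(V_2-\SX)-\Gamma$: it misses the finite set $Y=\{(y_i,y_j):1\le i,j\le M,\ i\neq j\}$ of points where \emph{both} coordinates are outgoing marked points. Your ``density argument'' at the end establishes uniqueness of an extension, not existence; density alone cannot produce a holomorphic function across $Y$. (Note also that your opening claim of reducing to ``$y_j\times(C-\SX-\{y_j\})$'' already allows the second slot to hit some $y_k$ with $k\neq j$, but your detailed argument then restricts to $p_2\in C-\SX-\DX$, so this case is never actually treated.)

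The paper fills this gap in one line: since $Y$ is a closed complex submanifold of codimension $2$ in the $2$-dimensional manifold $\Conf^2(C-\SX)$, the removable singularity theorem for codimension $\geq 2$ analytic sets (e.g.\ \cite[Thm.~7.1.2]{GR84}) forces the holomorphic function to extend across $Y$. You should add this step; once you do, your proof and the paper's coincide.
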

\begin{proof}
It suffices to prove that for any open sets $V_1,V_2\subset C-S_\fx$ and any  $v_i\in \SV_{\fx,a_1,\cdots,a_M}(V_i)$ (where $i=1,2$), the holomorphic function
\begin{equation}\label{proofdoublepropagation2}
f=\wr^2\upphi(v_1,v_2,w)
\end{equation}
on $(V_1-\SX-\DX)\times(V_2-\SX-\DX)-\Gamma$ extends to a holomorphic function on $(V_1-\SX)\times(V_2-\SX)-\Gamma$ where $\Gamma=\{(x,x):x\in C\}$. 

By \eqref{eq63} of Thm. \ref{lb71} (applied to the family $\wr\fk X$), $f$ is holomorphic on $(V_1-\SX)\times(V_2-\SX-\DX)-\Gamma$. By Thm. \ref{lb18}-(4) and \eqref{eq63}, $f$ is holomorphic on $(V_1-\SX-\DX)\times (V_2-\SX)-\Gamma$. Thus $f$ is holomorphic on $(V_1-\SX)\times(V_2-\SX)-\Gamma-Y$ where $Y=\{(y_i,y_j):1\leq i,j\leq M\}$. Since every closed complex submanifold of codimension at least $2$ is a removable singularity   (cf. e.g. \cite[Thm. 7.1.2]{GR84}), $f$ is holomorphic on $(V_1-\SX)\times(V_2-\SX)-\Gamma$.
\end{proof}

\section{Dual fusion products}
\label{Ch3}

Throughout this chapter, we assume the following setting.

\begin{sett}\label{lb24}
Fix an $(M,N)$-pointed compact Riemann surface with outgoing local coordinates $\fx=(y_1,\cdots,y_M;\theta_1,\cdots,\theta_M\big|C\big|x_1,\cdots,x_N)$ with incoming marked points $x_1,\cdots,x_N$ and outgoing ones $y_1,\cdots,y_M$ satisfying Asmp. \ref{ass1}.  Choose neighborhoods $W_1,\cdots,W_M$ of $y_1,\cdots,y_M$, on which the local coordinates $\theta_1,\cdots,\theta_M$ are defined. We assume that
\begin{align}
W_1,\dots,W_M,x_1,\dots,x_N\text{ are mutually disjoint}.
\end{align}
$S_\fx$ and $D_\fx$ are defined in (\ref{marked2}). Associate a finitely admissible $\Vbb^{\times N}$-module $\Wbb$ to $x_1,\cdots,x_N$.
\end{sett}


\subsection{$\bbs_{\fk X}(\Wbb)$ is a weak $\Vbb^{\times M}$-module}
Recall Def. \ref{lb16}. Choose any $\upphi\in\boxbackslash_{\fx}(\Wbb)$. Then there exist $a_1,\cdots, a_M$ such that $\upphi\in \ST_{\fx,a_1,\cdots ,a_M}^*(\Wbb)$. Fix $w\in \scr W_{\fk X}(\Wbb)$. By Thm. \ref{lb71} (and recall \eqref{eq64}), we have a morphism of $\MO_{C-S_\fx}$-modules:
\begin{equation}\label{action1}
\wr\upphi(-,w):\SV_{\fx,a_1,\cdots,a_M}\vert_{C-S_\fx}\rightarrow \MO_{C-S_\fx}.
\end{equation}
Recall that $\scr V_{\fk X,a_\star}$ is a subsheaf of $\scr V_{\fk X}=\scr V_C$ and that we have trivialization
\begin{gather}
\mc U_\varrho(\theta_j):\scr V_C|_{W_j}\xrightarrow{\simeq}\Vbb\otimes_\Cbb\mc O_{W_j}  \label{eq68}
\end{gather}
Define $Y_j(\cdot)_n:\Vbb\otimes_\Cbb\boxbackslash_{\fx}(\Wbb)\rightarrow \scr W_\fx(\Wbb)^*$  by 
    \begin{equation}\label{eq72}
\boxed{  ~  \<Y_j(v)_n\upphi,w\>=\Res_{\theta_j=0}~{\wr\upphi}\big(\mc U_\varrho(\theta_j)^{-1}v,w\big)\theta_j^n d\theta_j~}
    \end{equation}
    for each $v\in \Vbb\subset\Vbb\otimes_\Cbb\mc O(W_j)$, $w\in\scr W_{\fk X}(\Wbb)$, and $n\in \Zbb$.


\begin{pp}\label{lb20}
Let $\upphi\in\scr T_{\fk X,a_1,\dots,a_M}^*(\Wbb)$. Then for each homogeneous $v\in\Vbb$ we have
\begin{equation}\label{eq126}
Y_j(v)_n \upphi=0 \qquad \text{if } n\geq \wt(v)+a_j
\end{equation}
Thus $Y_j(v,z)\upphi$ belongs to $\scr W_{\fk X}(\Wbb)^*((z))$  if we write
\begin{align*}
Y_j(v,z)\upphi=\sum_{n\in\Zbb}Y_j(v)_n\upphi\cdot z^{-n-1}
\end{align*}
\end{pp}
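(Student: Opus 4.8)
The plan is to deduce the vanishing \eqref{eq126} directly from the analytic structure of $\wr\upphi$ near the outgoing point $y_j$, more precisely from the fact that $\wr\upphi(-,w)$ is an $\mc O_{C-S_\fx}$-module morphism out of the sheaf $\scr V_{\fk X,a_\star}$, whose local structure near $y_j$ is controlled by the shifted generators \eqref{eq65}. First I would fix $w\in\scr W_{\fk X}(\Wbb)$ and work entirely on the neighborhood $W_j$ of $y_j$, using the trivialization $\mc U_\varrho(\theta_j)$ of \eqref{eq68}. Under this trivialization, the defining formula \eqref{eq72} reads $\bk{Y_j(v)_n\upphi,w}=\Res_{\theta_j=0}\wr\upphi(\mc U_\varrho(\theta_j)^{-1}v,w)\,\theta_j^n\,d\theta_j$, so the assertion $Y_j(v)_n\upphi=0$ for $n\geq\wt(v)+a_j$ is precisely the statement that the holomorphic function $g(\theta_j):=\wr\upphi(\mc U_\varrho(\theta_j)^{-1}v,w)$ on the punctured disc (image of $W_j-\{y_j\}$ under $\theta_j$) extends holomorphically across $\theta_j=0$ with a zero of order at least $a_j$ — equivalently, $g(\theta_j)\theta_j^n\,d\theta_j$ has no residue at $0$ for $n\geq\wt(v)+a_j$.

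The key step is to identify which \emph{sections of $\scr V_{\fk X,a_\star}$ near $y_j$} the function $g$ comes from. By Def.~\ref{lb86}/\ref{lb5}, $\scr V_{\fk X,a_\star}^{\leq n}|_{W_j}$ is generated over $\mc O_{W_j}$ by $\mc U_\varrho(\theta_j)^{-1}\theta_j^{a_j+L(0)}v$ for homogeneous $v\in\Vbb^{\leq n}$. Hence for homogeneous $v$ with $\wt(v)=d$, the section $\mc U_\varrho(\theta_j)^{-1}v$, viewed merely as a section of $\scr V_C=\scr V_{\fk X}$ on $W_j$, equals $\theta_j^{-a_j-d}$ times the section $s_v:=\mc U_\varrho(\theta_j)^{-1}\theta_j^{a_j+L(0)}v$, which \emph{is} a holomorphic section of $\scr V_{\fk X,a_\star}$ on all of $W_j$ (including $y_j$). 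Since $\wr\upphi(-,w):\scr V_{\fk X,a_\star}|_{C-S_\fx}\to\mc O_{C-S_\fx}$ is $\mc O$-linear and $W_j$ is disjoint from $S_\fx$ (Setting~\ref{lb24}), $\wr\upphi(s_v,w)$ is a holomorphic function on $W_j$, and therefore
\begin{align}
g(\theta_j)=\wr\upphi\big(\mc U_\varrho(\theta_j)^{-1}v,w\big)=\wr\upphi\big(\theta_j^{-a_j-d}s_v,w\big)=\theta_j^{-a_j-d}\cdot\wr\upphi(s_v,w),
\end{align}
which is $\theta_j^{-a_j-d}$ times a holomorphic function on $W_j$. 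Consequently $g(\theta_j)\theta_j^n\,d\theta_j$ is holomorphic on $W_j$ (hence has zero residue at $\theta_j=0$) whenever $n-a_j-d\geq 0$, i.e.\ $n\geq\wt(v)+a_j$. This gives \eqref{eq126} for homogeneous $v$, and the general case follows by linearity in $v$ once one notes that both sides of \eqref{eq72} are linear in $v$. The lower-truncation consequence $Y_j(v,z)\upphi\in\scr W_{\fk X}(\Wbb)^*((z))$ is then immediate: for fixed $v,w$ the coefficients $\bk{Y_j(v)_n\upphi,w}$ vanish for all $n$ above a bound depending only on $v$ (not on $w$, since the bound is $\wt(v)+a_j$), so $Y_j(v,z)\upphi$ is a formal Laurent series with finitely many positive-degree-in-$z^{-1}$... — more precisely, bounded above in $n$, hence $Y_j(v,z)\upphi=\sum_{n}Y_j(v)_n\upphi\,z^{-n-1}$ lies in $\scr W_{\fk X}(\Wbb)^*((z))$.

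The only genuinely delicate point — the ``main obstacle'' — is making sure the manipulation $\mc U_\varrho(\theta_j)^{-1}v=\theta_j^{-a_j-L(0)}s_v$ is correctly interpreted: $\theta_j^{L(0)}$ acts on $\Vbb$ with possibly many eigenvalues, so for non-homogeneous $v$ the factor $\theta_j^{-a_j-L(0)}$ is not a single power of $\theta_j$ but a (finite, since $\Vbb^{\leq n}$ is finite-dimensional) sum over weights; this is why I reduce to homogeneous $v$ first, where $\theta_j^{L(0)}v=\theta_j^{\wt(v)}v$ is unambiguous. One should also double-check that $\wr\upphi(-,w)$ really is defined as a morphism out of $\scr V_{\fk X,a_\star}$ (not just $\scr V_{\fk X}$) — this is exactly the content of Thm.~\ref{lb71}, whose target sheaf in \eqref{eq63} is $\scr V_{\fk X,a_1,\dots,a_M}\otimes_{\mc O_{\mc C}}\pi^*\scr W_{\fk X}(\Wbb)|_{\mc C-\SX}$ — so no extra work is needed there. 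With these points addressed, the argument is a short local computation.
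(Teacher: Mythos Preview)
Your proof is correct and follows essentially the same approach as the paper: both observe that $\theta_j^{\wt(v)+a_j}\mc U_\varrho(\theta_j)^{-1}v$ is a genuine section of $\scr V_{\fx,a_\star}$ on $W_j$ (by \eqref{eq65}), so that $\wr\upphi(-,w)$ applied to it is holomorphic at $y_j$ by \eqref{eq63}, forcing the residue defining $\bk{Y_j(v)_n\upphi,w}$ to vanish for $n\geq\wt(v)+a_j$. One small slip in your exposition: the sentence ``extends holomorphically across $\theta_j=0$ with a zero of order at least $a_j$'' should read ``has a pole of order at most $\wt(v)+a_j$ at $\theta_j=0$''; your subsequent computation gets this right, so the argument itself is unaffected.
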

  
A converse of this proposition is given by Cor. \ref{lb63}.

\begin{proof}
Choose any homogeneous vector $v\in\Vbb$. By \eqref{eq65},  $\theta_j^{\wt(v)+a_j}\mc U_\varrho(\theta_j)^{-1}v$ belongs to $\SV_{\fx,a_\star}(W_j)$. Thus, by (\ref{action1}), $\wr\upphi(\mc U_\varrho(\theta_j)^{-1}v,w)\theta_j^{\wt(v)+a_j}$ has removable singularity at $y_j$ (i.e. at $\theta_j=0$). This proves \eqref{eq126}.
\end{proof}

    By Prop. \ref{anotherversionpropagation2}, we have the double propagation 
\begin{gather}
    \wr^2 \upphi(w):\SV_{\fx,a_1,\cdots,a_M}^{\boxtimes 2}\vert_{\Conf^2(C-S_\fx)}\rightarrow \MO_{\Conf^2(C-S_\fx)}  \label{eq67}
\end{gather}
    where $w\in \scr W_\fx(\Wbb)$ is chosen. This morphism restricts to
        \begin{equation}\label{doubleprop3}
    \wr^2 \upphi(w):\SV_{C-S_\fx-D_\fx}^{\boxtimes 2}\rightarrow \MO_{\Conf^2(C-S_\fx-D_\fx)}.
    \end{equation}

\begin{rem}\label{chap3remark1}
For each open subset $V\subset C$ and each $v\in \SV_C(V),w\in\scr W_{\fk X}(\Wbb)$, $\wr\upphi(v,w)$ is understood as
\begin{subequations}
\begin{gather}
\wr\upphi(v\vert_{V-S_\fx-D_\fx},w)\qquad\in \MO(V-S_\fx-D_\fx)
\end{gather}
which has finite poles at $y_1,\dots,y_M$ by \eqref{action1} (cf. the proof of Prop. \ref{lb20}).
Similarly, if $V_1,V_2\subset C$ are open, for each $u\in \SV_C(V_1),v\in \SV_C(V_2)$, $\wr^2\upphi(u,v,w)$ is understood to be
\begin{gather}
\wr^2\upphi(u\vert_{V_1-S_\fx-D_\fx},v\vert_{V_2-S_\fx-D_\fx},w)\qquad\in \MO(\Conf(V_\blt-S_\fx-D_\fx))
\end{gather}
(recall \eqref{eq66} for the notation), which has finite poles at $y_1,\cdots,y_M$ by \eqref{eq67}. 
\end{subequations}

Note that by the uniqueness part of Thm. \ref{lb71}, the expressions $\wr\upphi(v,w)$ and $\wr^2\upphi(u,v,w)$ are independent of the choice of $a_1,\dots,a_M$ satisfying that $\upphi$ belongs to $\scr T_{\fk X,a_\star}^*(\Wbb)$.
\end{rem}

\begin{pp}\label{lb19}
For any homogeneous $v\in \Vbb$, $n\in \Zbb$ and $\upphi\in \ST_{\fx,a_1,\cdots,a_j,\cdots,a_M}^*(\Wbb)$, we have
\begin{align*}
Y_j(v)_n\upphi\in \ST_{\fx,a_1,\cdots,a_j',\cdots,a_M}^*(\Wbb)\qquad\text{where }a_j'=a_j+\max\{0,\wt(v)-n-1\}
\end{align*}
In particular, for each $v\in \Vbb$ and $n\in \Zbb$, 
\begin{gather*}
        Y_j(v)_n:\boxbackslash_\fx(\Wbb)\rightarrow \boxbackslash_\fx(\Wbb)
\end{gather*}
    \end{pp}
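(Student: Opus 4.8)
The plan is to verify the defining property of $\scr T_{\fx,a_\star'}^*(\Wbb)$ (Def. \ref{lb16}) head‑on, reducing everything to a single residue computation. By linearity of $Y_j(\cdot)_n$ in $v$ it is enough to take $v$ homogeneous; I fix $\upphi\in\scr T_{\fx,a_1,\dots,a_M}^*(\Wbb)$ and set $a_\star'=(a_1,\dots,a_j+\max\{0,\wt(v)-n-1\},\dots,a_M)$. What must be shown is that $\bk{Y_j(v)_n\upphi,\sigma\cdot w}=0$ for every $\sigma\in H^0\big(C,\scr V_{\fx,a_\star'}\otimes\omega_C(\blt S_\fx)\big)$ and every $w\in\scr W_\fx(\Wbb)$.

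First I would propagate $\upphi$ once. By Thm. \ref{lb71}, $\wr\upphi$ is an $\mc O_{C-S_\fx}$‑morphism $\scr V_{\fx,a_\star}|_{C-S_\fx}\otimes_\Cbb\scr W_\fx(\Wbb)\to\mc O_{C-S_\fx}$, and for each $p$ in the punctured neighborhood $W_j-\{y_j\}\subset C-S_\fx-D_\fx$ (recall Setting \ref{lb24}) the restriction $\wr\upphi|_p$ is a conformal block for $(\wr\fx)_p=(y_\star;\theta_\star|C|x_\blt,p)$, with $\Vbb\otimes\Wbb$ attached and the $\Vbb$‑factor sitting at $p$. Since $a_\star'\geq a_\star$ we have $\scr V_{\fx,a_\star'}\subseteq\scr V_{\fx,a_\star}$, and since $S_{(\wr\fx)_p}=S_\fx+p$, the given $\sigma$ is in particular a section of $\scr V_{(\wr\fx)_p,a_\star}\otimes\omega_C(\blt S_{(\wr\fx)_p})$; hence $\wr\upphi|_p$ annihilates $\sigma\cdot(v\otimes w)$. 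Expanding this over the incoming points $p,x_1,\dots,x_N$ — the residue at $x_i$ acting through the $\Wbb$‑factor, the residue at $p$ through the $\Vbb$‑factor — and using $\theta_j$ as the local coordinate both at $y_j$ and at $p$ (the two differing by an additive constant, so the change‑of‑coordinate operator is unchanged), this yields the identity of functions on $W_j-\{y_j\}$
\begin{align*}
\wr\upphi\big(\mc U_\varrho(\theta_j)^{-1}v,\ \sigma\cdot w\big)=-\wr\upphi\big(\mc U_\varrho(\theta_j)^{-1}\Sigma(\theta_j),\ w\big),\qquad \Sigma(\theta_j)=\sum_{l\in\Nbb}\tfrac{1}{l!}Y_\Vbb\big(\mathfrak s^{(l)}(\theta_j)\big)_l v,
\end{align*}
where $\mathfrak s(\theta_j)\,d\theta_j=\mc U_\varrho(\theta_j)\sigma$ near $y_j$. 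Taking $\Res_{\theta_j=0}(\cdot)\,\theta_j^n\,d\theta_j$ of both sides and recalling \eqref{eq72} turns the left side into $\bk{Y_j(v)_n\upphi,\sigma\cdot w}$.

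Finally I would run a pole‑order estimate at $y_j$. From Def. \ref{lb86}, the homogeneous weight‑$d$ part of $\mathfrak s(\theta_j)$ is $O(\theta_j^{a_j'+d})$; hence the homogeneous weight‑$e$ part of $\Sigma(\theta_j)$ is $O(\theta_j^{\max\{0,a_j'+e-\wt(v)+1\}})$ (a finite sum, since $\Vbb$ is $\Nbb$‑graded). On the other hand, because $\wr\upphi$ takes values in $\mc O_{C-S_\fx}$ and $\scr V_{\fx,a_\star}|_{W_j}$ is generated over $\mc O_{W_j}$ by $\theta_j^{a_j+L(0)}\mc U_\varrho(\theta_j)^{-1}(\cdot)$ (Def. \ref{lb86}), the function $\wr\upphi(\mc U_\varrho(\theta_j)^{-1}v'',w)$ has a pole of order at most $a_j+\wt(v'')$ at $\theta_j=0$ for homogeneous $v''$. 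Since $\mc U_\varrho(\theta_j)^{-1}$ commutes with multiplication by the scalar functions $\theta_j^k$, combining the two bounds shows $\wr\upphi(\mc U_\varrho(\theta_j)^{-1}\Sigma(\theta_j),w)$ has a pole of order at most $a_j-a_j'+\wt(v)-1$ at $\theta_j=0$. By the choice of $a_j'$ this order is $\leq n$, so the coefficient of $\theta_j^{-n-1}$ — which is, up to sign, $\bk{Y_j(v)_n\upphi,\sigma\cdot w}$ — vanishes, proving $Y_j(v)_n\upphi\in\scr T_{\fx,a_\star'}^*(\Wbb)$. The "in particular" then follows: $Y_j(v)_n$ sends $\scr T_{\fx,a_\star}^*(\Wbb)$ into $\scr T_{\fx,a_\star'}^*(\Wbb)$ with $a_\star'\geq a_\star$, hence preserves the direct limit $\boxbackslash_\fx(\Wbb)$, and for non‑homogeneous $v$ one decomposes into homogeneous components and takes the maximum of the resulting levels.

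The step I expect to be the main obstacle is the last paragraph — keeping the weight grading and the two competing estimates aligned: the vanishing order of $\Sigma$ forced by $\sigma\in\scr V_{\fx,a_\star'}$ versus the pole order of $\wr\upphi$ controlled only by $\scr V_{\fx,a_\star}$. One also has to check carefully that the residue action $\sigma*_p$ at the moving point $p$ really equals the explicit series $\Sigma(\theta_j)$ — including the innocuous but important point that the coordinate at $p$ may be taken to be $\theta_j$ shifted by a constant so that no extra change‑of‑coordinate operator intervenes — and that the function $\wr\upphi(\mc U_\varrho(\theta_j)^{-1}v,\sigma\cdot w)$ entering \eqref{eq72} is genuinely the fiberwise evaluation $p\mapsto\bk{\wr\upphi|_p,\,v\otimes(\sigma\cdot w)}$, so that the coinvariance of each $\wr\upphi|_p$ can legitimately be invoked.
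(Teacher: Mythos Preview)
Your argument is correct and takes a genuinely different route from the paper's. The paper constructs, via the \emph{double} propagation $\wr^2\upphi$, the morphism $\uppsi(\fk u)=\Res_{\theta_j=0}\wr^2\upphi(\fk u,\mc U_\varrho(\theta_j)^{-1}v,w)\theta_j^n\,d\theta_j$ and proves it extends to a morphism $\scr V_{\fx,a_\star'}|_{C-S_\fx}\to\mc O_{C-S_\fx}$; the extension across $y_k$ for $k\neq j$ uses Prop.~\ref{anotherversionpropagation2}, while at $y_j$ it is done by a Cauchy three-circle decomposition together with Thm.~\ref{lb18}-(2). The global residue theorem on $C$ then gives $\sum_i\Res_{x_i}\uppsi(\sigma)=0$, which unwinds to $\bk{Y_j(v)_n\upphi,\sigma\cdot w}=0$. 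Your proof bypasses $\wr^2\upphi$ entirely: you exploit directly that each fiber $\wr\upphi|_p$ is already a partial conformal block (Thm.~\ref{lb71}), so the coinvariance of $\wr\upphi|_p$ against $\sigma\in H^0(C,\scr V_{\fx,a_\star'}\otimes\omega_C(\blt S_\fx))\subset H^0(C,\scr V_{(\wr\fx)_p,a_\star}\otimes\omega_C(\blt S_{(\wr\fx)_p}))$ yields the identity on $W_j-\{y_j\}$ that you state, and a single local pole count finishes. Your argument is more elementary at this point of the paper because it needs nothing beyond single propagation; the paper's version, on the other hand, dovetails with the double-propagation machinery used immediately afterwards (Lem.~\ref{lb23}, Cor.~\ref{lb22}, Thm.~\ref{lb43}), and in fact the $\uppsi$ it builds is exactly $\wr(Y_j(v)_n\upphi)(\cdot,w)$, so one is effectively proving the stronger statement that the propagation of $Y_j(v)_n\upphi$ already lands in the sheaf controlled by $a_\star'$. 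Your self-identified ``main obstacle'' --- aligning the vanishing order of $\Sigma$ forced by $a_\star'$ with the pole bound on $\wr\upphi$ controlled by $a_\star$ --- is precisely the heart of the matter, and your estimate is sharp: the bound $a_j-a_j'+\wt(v)-1\leq n$ holds uniformly in $e$ (the case $a_j'+e-\wt(v)+1<0$ gives an even smaller pole order), so the residue vanishes.
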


    \begin{proof}

Step 1. Fix $w\in\scr W_{\fk X}(\Wbb)$. Fix $1\leq j\leq M$,  $n\in\Zbb$, and $v\in\Vbb$. Then $\mc U_\varrho(\theta_j)^{-1}v\in\scr V_C(W_j)$.        For each $\fk u\in\SV_C(W)$ where $W\subset C-S_\fx-D_\fx$ is open,  note that $f=\wr^2\upphi(\fk u,\mc U_\varrho(\theta_j)^{-1}v,w)\in\mc O((W\times W_j)\cap\Conf^2(C))$. Define
\begin{gather}
\uppsi(\fk u)=\Res_{\theta_j=0} ~{\wr^2\upphi}\big(\fk u,\mc U_\varrho(\theta_j)^{-1}v,w\big)\theta_j^n d\theta_j \qquad \in\mc O(W) \label{eq71}
\end{gather}
where $\theta_j$ and $\Res$ are for the second variable of $f$. Recall that $\scr V_C$ equals $\scr V_{\fk X,a_1,\dots,a_j',\dots,a_M}$ outside $y_\star$. So $\uppsi$ is an $\mc O_{C-\SX-\DX}$-module morphism $\scr V_{a_1,\dots,a_j',\dots,a_M}|_{C-\SX-\DX}\rightarrow \mc O_{C-\SX-\DX}$. Let us prove that this morphism has removable singularity at $y_1,\dots,y_M$. Namely, we show
\begin{gather}\label{claim31}
\text{Claim: $\uppsi$  is an $\mc O_{C-\SX}$-module morphism }\scr V_{a_1,\dots,a_j',\dots,a_M}|_{C-\SX}\rightarrow\mc O_{C-\SX}
\end{gather}

Suppose that the claim is proved. Our goal is to prove that for each $\sigma\in H^0\big(C,\SV_{\fx,a_1,\cdots,a_j',\cdots,a_M}\otimes \omega_C(\blt S_\fx)\big)$, $Y_j(v)_n\upphi$ vanishes on the vector $\sigma\cdot w$ of $\scr W_{\fk X}(\Wbb)$ defined in Def. \ref{lb17}. Let $\uppsi$ also denote the $\mc O_{C-\SX}$-module morphism
\begin{align*}
\uppsi\otimes\idt: \scr V_{a_1,\dots,a_j',\dots,a_M}|_{C-\SX}\otimes\omega_{C-\SX}\rightarrow\omega_{C-\SX}
\end{align*}
So $\uppsi(\sigma)\in\omega_{C-\SX}(C-\SX)$. By Residue Theorem/Stokes Theorem, 
\begin{align*}
\sum_{i=1}^N \Res_{x_i} \uppsi(\sigma)=0.
\end{align*}

For each $1\leq i\leq N$, choose a local coordinate $\eta_i$ at $x_i$, and assume that $\eta_i$ is defined on a neighborhood $U_i$ containing only $x_i$ among all the incoming and outgoing marked points. Identify $\scr W_{\fk X}(\Wbb)=\Wbb$ via $\mc U(\eta_\blt)$. Notice that if $\fk u\in\SV_{C}(U_i)$, then
\begin{align}
&\uppsi(\fk u)\xlongequal[\text{Thm. \ref{lb18}-(1)}]{\eqref{eq71}}\Res_{\theta_j=0}~{\wr \upphi}\big(\mc U_\varrho(\theta_j)^{-1}v,Y_i(\mc U_\varrho(\eta_i)\fk u,\eta_i)w\big)\theta_j^n d\theta_j\nonumber\\
\xlongequal{\eqref{eq72}}&\<Y_j(v)_n \upphi,Y_i(\mc U_\varrho(\eta_i)\fk u,\eta_i)w\>.\label{eq70}
\end{align}
From this one concludes (recalling \eqref{eq73})
\begin{align*}
\Res_{x_i}\uppsi(\sigma)=\Res_{\eta_i=0}~\<Y_j(v)_n \upphi,Y_i(\mc U_\varrho(\eta_i)\sigma,\eta_i)w\>=\<Y_j(v)_n \upphi,\sigma*_iw\>
\end{align*}
and hence $\<Y_j(v)_n \upphi,\sigma\cdot w\>=\sum_{i=1}^N \<Y_j(v)_n \upphi,\sigma*_iw\>=0$. This finishes the proof of the proposition.\\

Step 2. Let us prove the claim in Step 1. First, let $i\neq j$ and choose a neighborhood  $W$ of $y_i$. By Prop. \ref{anotherversionpropagation2}, if $\fk u\in\scr V_{\fk X,a_1,\dots,a_j',\dots,a_M}(W)$ then $\wr^2\upphi(\fk u,\mc U_\varrho(\theta_j)^{-1}\theta_j^{a_j+L(0)}v,w)$ is holomorphic on $(W\times W_j)\cap\Conf^2(C)$, and hence $\uppsi(\fk u)$ has removable singularities at $y_i$.
This proves that $\uppsi$ is an $\mc O_{C-\SX-y_j}$-module morphism $\scr V_{\fk X,a_1,\dots,a_j',\dots,a_M}|_{C-\SX-y_j}\rightarrow\mc O_{C-\SX-y_j}$.

It remains to show that the morphism $\uppsi$ has removable singularity at $y_j$. Identify
\begin{subequations}\label{eq77}
\begin{gather}
\scr V_C|_{W_j}=\Vbb\otimes_\Cbb\mc O_{W_j} \qquad\text{via }\mc U_\varrho(\theta_j)\\
W_j=\theta_j(W_j)\qquad\text{via }\theta_j
\end{gather}
\end{subequations}
Let $z,\zeta$ both denote the standard coordinates of $\Cbb$, which are equivalent to $\theta_j$. In the following, when discussing two-variable meromorphic functions, we let $\zeta$ (resp. $z$) be the first (resp. second) complex variable.

To complete the proof of the claim, it suffices to show that for each homogeneous $u\in \Vbb$, considered as a constant section of $\SV_C(W_j)$, the holomorphic function $\uppsi(u)=\upphi(u)(\zeta)$ on $W_j-\{y_j\}$ has poles of order at most $\wt(u)+a_j'$ at $y_j$. Set $f=f(\zeta,z)$ to be 
        $$
        f=\wr^2\upphi(u,v,w)\in \MO(\Conf^2(W_j-\{y_j\})).
        $$
where $\zeta$ is for $u$ and $z$ is for $v$.      By Prop. \ref{anotherversionpropagation2}, 
        \begin{equation}\label{proofpole}
            \zeta^{a_j+\wt(u)}z^{a_j+\wt(v)}f(\zeta,z)\in \MO(\Conf^2(W_j)).
        \end{equation}
Choose anticlockwise circles $C_1,C_2,C_3$ in $W_j$ surrounding $y_j$ with radii $r_1<r_2<r_3$. For each $z\in C_2$, choose a circle $C(z)$ centered at $z$ with radius less than $r_3-r_2$ and $r_2-r_1$. Let $m\in \Zbb$. By Cauchy's theorem/Residue theorem, 
        \begin{align}\label{proofpole2}
            \begin{aligned}
       & \Res_{\zeta=0}~\zeta^m\uppsi(u)d\zeta=\oint_{C_3}\oint_{C_2}\zeta^m z^n f(\zeta,z)\frac{dz}{2\pi\im} \frac{d\zeta}{2\pi\im}=\oint_{C_2}\oint_{C_3}\zeta^m z^n f(\zeta,z)\frac{d\zeta}{2\pi\im} \frac{dz}{2\pi\im} \\
        =&\oint_{C_2}\oint_{C_1}\zeta^m z^n f(\zeta,z)\frac{d\zeta}{2\pi\im} \frac{dz}{2\pi\im} +\oint_{C_2}\oint_{C(z)}\zeta^m z^n f(\zeta,z)\frac{d\zeta}{2\pi\im} \frac{dz}{2\pi\im}.
        \end{aligned}
        \end{align}

        For fixed $z\in C_2$, by (\ref{proofpole}), $\zeta^{a_j+\wt(u)}f(\zeta,z)$ has removable singularity at $\zeta=0$ when $z$ is away from $0$. So the first term on the RHS of (\ref{proofpole2}) equals $0$ whenever $m\geq a_j+\wt(u)$. By Thm. \ref{lb18}-(2), the second term equals
        \begin{align}
&\oint_{C_2}\oint_{C(z)}\zeta^m z^n f(\zeta,z)\frac{d\zeta}{2\pi\im} \frac{dz}{2\pi\im} =\oint_{C_2}\oint_{C(z)}\zeta^m z^n \wr^2 \upphi(u,v,w)\frac{d\zeta}{2\pi\im} \frac{dz}{2\pi\im}  \nonumber\\
            =&\oint_{C_2}\oint_{C(z)}\zeta^m z^n \wr \upphi(Y(u,\zeta-z)v,w)\frac{d\zeta}{2\pi\im} \frac{dz}{2\pi\im}  \nonumber\\
            =&\sum_{l\in \Nbb}\binom{m}{l}\oint_{C_2}\oint_{C(z)}(\zeta-z)^l z^{m+n-l} \wr \upphi(Y(u,\zeta-z)v,w)\frac{d\zeta}{2\pi\im} \frac{dz}{2\pi\im}  \nonumber\\
            =&\sum_{l\in \Nbb}\binom{m}{l}\oint_{C_2} z^{m+n-l} \wr \upphi(Y(u)_lv,w) \frac{dz}{2\pi\im} =\sum_{l\in \Nbb}\binom{m}{l} \<Y(Y(u)_lv)_{m+n-l}\upphi,w\>.   \label{proofpole3}
        \end{align}
        By (\ref{eq126}), (\ref{proofpole3}) equals $0$ whenever
        $$
        m+n-l\geq \wt(Y(u)_lv)+a_j=\wt(u)+\wt(v)-l-1+a_j,
        $$
        and hence when 
        $$
        m\geq \wt(u)+\wt(v)+a_j-n-1.
        $$
        In conclusion, when $m\geq a_j+\wt(u)+\max\{0,\wt(v)-n-1\}=\wt(u)+a_j'$, (\ref{proofpole2}) equals $0$. This finishes the proof of our claim.
    \end{proof}

Prop. \ref{lb20} and  \ref{lb19} tell us $(\boxbackslash_\fx(\Wbb),Y_j)$ is a \textbf{linear representation} of $\Vbb$ in the following sense.

    \begin{df}
    Let $\Xbb$ be a vector space and
$$
\begin{aligned}
    \Vbb&\rightarrow (\End(\Xbb))[[z^{\pm 1}]]\\
    u&\mapsto Y_\Xbb(u,z)=\sum_{n\in \Zbb}Y_\Xbb(u)_nz^{-n-1}
\end{aligned}
$$
be a linear map. If for each $v\in \Vbb$ and $w\in \Xbb$, 
$$
Y_\Xbb(v,z)w\in \Xbb((z)),
$$
then we call $(\Xbb,Y_\Xbb)$ (or simply $\Xbb$) a \textbf{linear representation} of $\Vbb$.
    \end{df}

To prove that $(\boxbackslash_\fx(\Wbb),Y_j)$ is a weak $\Vbb$-module for each $1\leq j\leq M$ we need the following criterion. Let $\Xbb^\circ$ be a subspace of the dual space $\Xbb^*$. We say that $\Xbb^\circ$ is a \textbf{dense subspace} of $\Xbb^*$, if the only vector $w\in \Xbb$ satisfying $\<w',w\>=0$ for all $w'\in \Xbb^\circ$ is $0$.
\begin{pp}\label{criterion}
    Let $\Xbb$ be a linear representation of $\Vbb$ with $Y_\Xbb(\ibf,z)=\ibf_\Xbb$. Let $\Xbb^\circ$ be a dense subspace of $\Xbb^*$. Assume that for each $u,v\in \Vbb$, $w\in \Xbb$, $w'\in \Xbb^\circ$, there exists $\epsilon>0$ and $f=f(\zeta,z)\in \MO(\Conf^2(\MD_\epsilon^\times))$, such that for any $n\in \Zbb$ and $z\in \MD_\epsilon^\times$, the LHS of the following (as Laurent series of $z$) converges absolutely to the RHS:
\begin{subequations}
    \begin{align}
        \<Y_\Xbb(v,z)Y_\Xbb(u)_nw,w'\>&=\Res_{\zeta=0}f(\zeta,z)\zeta^n d\zeta,\label{condition1}\\
        \<Y_\Xbb(Y(u)_nv,z)w,w'\>&=\Res_{\zeta-z=0}f(\zeta,z)(\zeta-z)^n d\zeta,\label{condition2}
    \end{align}
    and for any $n\in \Zbb$ and $\zeta\in \MD_\epsilon^\times$, the LHS of the following converges absolutely to the RHS:
    \begin{equation}\label{condition3}
        \<Y_\Xbb(u,\zeta)Y_\Xbb(v)_nw,w'\>=\Res_{z=0}f(\zeta,z)z^n dz.
    \end{equation}
\end{subequations}
    Then $(\Xbb,Y_\Xbb)$ is a weak $\Vbb$-module.
\end{pp}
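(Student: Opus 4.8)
The plan is to verify the three defining properties of a weak $\Vbb$-module (lower truncation, vacuum, Jacobi identity) for $(\Xbb,Y_\Xbb)$, reducing everything to pairings against the dense subspace $\Xbb^\circ$. Lower truncation is given by hypothesis (we assumed $\Xbb$ is a linear representation), and the vacuum property $Y_\Xbb(\ibf,z)=\ibf_\Xbb$ is also part of the hypothesis. So the entire work lies in deriving the Jacobi identity \eqref{jacobi} from the contour-integral relations \eqref{condition1}--\eqref{condition3}. The strategy is the classical ``three regions / one meromorphic function'' argument (as in the proof that a vertex algebra can be recovered from matrix coefficients, cf.\ \cite{FB04,LL-introduction}): the single holomorphic function $f(\zeta,z)$ on $\Conf^2(\MD_\epsilon^\times)$ will play the role of the common analytic continuation of the three products $\<Y_\Xbb(u,\zeta)Y_\Xbb(v,z)w,w'\>$ (region $|\zeta|>|z|$), $\<Y_\Xbb(v,z)Y_\Xbb(u,\zeta)w,w'\>$ (region $|z|>|\zeta|$), and $\<Y_\Xbb(Y(u,\zeta-z)v,z)w,w'\>$ (region $|\zeta-z|<|z|$).

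First I would fix $u,v\in\Vbb$, $w\in\Xbb$, $w'\in\Xbb^\circ$, and take $\epsilon>0$ and $f\in\MO(\Conf^2(\MD_\epsilon^\times))$ as in the hypothesis. The three relations \eqref{condition1}, \eqref{condition2}, \eqref{condition3} should be repackaged: \eqref{condition3}, read for all $n$, says precisely that $\sum_{n}\<Y_\Xbb(u,\zeta)Y_\Xbb(v)_nw,w'\>z^n$ is the Laurent expansion of $f(\zeta,z)$ in the region $|z|$ small (with $\zeta$ fixed), hence $f$ restricted to $|z|<|\zeta|$ equals $\<Y_\Xbb(u,\zeta)Y_\Xbb(v,z)w,w'\>$ after expanding $Y_\Xbb(v,z)w\in\Xbb((z))$; similarly \eqref{condition1} identifies $f$ in the region $|\zeta|<|z|$ with $\<Y_\Xbb(v,z)Y_\Xbb(u,\zeta)w,w'\>$; and \eqref{condition2} identifies $f$ near the diagonal with $\<Y_\Xbb(Y(u,\zeta-z)v,z)w,w'\>$ (using that $Y(u,\zeta-z)v\in\Vbb((\zeta-z))$ and $Y_\Xbb(\cdot,z)w\in\Xbb((z))$). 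The key point to check carefully here is that the absolute convergence assertions in the hypothesis genuinely let us pass from the formal residue identities to the statement that $f$ is the common analytic continuation on the respective annular/diagonal regions of $\Conf^2(\MD_\epsilon^\times)$ — these regions are connected and overlap appropriately, so the identifications are consistent.

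Next I would extract the Jacobi identity by integrating $f(\zeta,z)$ against $\zeta^m z^h(\zeta-z)^l$-type kernels over nested contours, exactly as in \eqref{proofpole2}--\eqref{proofpole3} above. Concretely: fix $m,h\in\Zbb$; for the commutator terms, take anticlockwise circles $C_\zeta$ (radius $r_1$) and $C_z$ (radius $r_2$) and compute $\oint_{C_\zeta}\oint_{C_z}$ versus $\oint_{C_z}\oint_{C_\zeta}$ of $\zeta^m z^h f(\zeta,z)\,\frac{d\zeta}{2\pi\im}\frac{dz}{2\pi\im}$; deforming one contour past the other picks up a residue along the diagonal $\zeta=z$, which by \eqref{condition2} produces the $\sum_l\binom{m}{l}Y_\Xbb(Y(u)_{n+l}v)_{m+h-l}$ term, while the two iterated integrals in the two orders give the two products $Y_\Xbb(u)Y_\Xbb(v)$ and $Y_\Xbb(v)Y_\Xbb(u)$ with the correct binomial coefficients $\binom{n}{l}$ coming from expanding $(\zeta-z)^n$. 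Since $w'\in\Xbb^\circ$ is arbitrary and $\Xbb^\circ$ is dense in $\Xbb^*$, the resulting scalar identity for all $w'$ upgrades to the operator identity \eqref{jacobi} applied to $w$; and since $w$ is arbitrary, we get \eqref{jacobi} as operators on $\Xbb$.

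I expect the main obstacle to be the bookkeeping in the contour-deformation step: matching the three iterated-integral expressions to the exact form of the Jacobi identity \eqref{jacobi}, including getting all the binomial coefficients and index shifts right, and justifying the contour deformations rigorously (one must know $f$ extends holomorphically across the regions between the contours, which is where the ``common analytic continuation'' established in the previous step is used, and one must control convergence of the Laurent expansions uniformly on the relevant compact contours so that term-by-term integration is legitimate). A secondary, more minor point is handling the case distinctions in the radii and making sure $\epsilon$ can be chosen uniformly — but since $\epsilon$ and $f$ are allowed to depend on $u,v,w,w'$ and the Jacobi identity is checked one quadruple at a time, this causes no real difficulty. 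The density of $\Xbb^\circ$ is used only at the very end and is straightforward.
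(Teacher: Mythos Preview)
Your proposal is correct and follows essentially the same route as the paper's proof: the paper also fixes $u,v,w,w'$, takes three nested circles $C_1,C_2,C_3$ of radii $r_1<r_2<r_3$ inside $\MD_\epsilon^\times$ together with a small circle $C(z)$ about each $z\in C_2$, applies Cauchy's theorem in the $\zeta$-variable to write $\oint_{C(z)}=\oint_{C_3}-\oint_{C_1}$ for the integrand $f(\zeta,z)\zeta^m(\zeta-z)^n$, computes the three pieces via \eqref{condition2}, \eqref{condition3}, \eqref{condition1} (after binomially expanding $\zeta^m$ near $\zeta=z$ and $(\zeta-z)^n$ on the inner/outer circles), then integrates the resulting identity against $z^h\,dz$ over $C_2$ and invokes density of $\Xbb^\circ$ to obtain \eqref{jacobi}. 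The only cosmetic difference is that the paper does not first repackage \eqref{condition1}--\eqref{condition3} as ``$f$ is the common analytic continuation of the three products''; it works directly with the residue identities, which makes the term-by-term integration step (your anticipated ``main obstacle'') slightly cleaner to justify.
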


\begin{proof}
Choose anticlockwise circles $C_1,C_2,C_3$ in $\MD_\epsilon^\times $ surrounding $0$ with radii $r_1<r_2<r_3$. For each $z\in C_2$, choose a circle $C(z)$ centered at $z$ with radius less than $r_3-r_2$ and $r_2-r_1$. Choose $m,n\in \Zbb$. By Cauchy's theorem in complex analysis, we have $P(z)=Q(z)-R(z)$, where
    $$
    \begin{aligned}
        P(z)&=\oint_{C(z)}f(\zeta,z)\zeta^m (\zeta-z)^n \frac{d\zeta}{2\pi\im},\\
        Q(z)&=\oint_{C_3}f(\zeta,z)\zeta^m (\zeta-z)^n \frac{d\zeta}{2\pi\im},\\
        R(z)&=\oint_{C_1}f(\zeta,z)\zeta^m (\zeta-z)^n \frac{d\zeta}{2\pi\im}.
    \end{aligned}
    $$
    By (\ref{condition2}), we can compute
\begin{align}
        & P(z)=\oint_{C(z)}f(\zeta,z)\sum_{l\in \Nbb}\binom{m}{l}z^{m-l}(\zeta-z)^{n+l}\frac{d\zeta}{2\pi\im}  \nonumber\\
\xlongequal{\eqref{condition2}}&\sum_{l\in \Nbb}\binom{m}{l}z^{m-l} \<Y_\Xbb(Y(u)_{n+l}v,z)w,w'\>.  \label{eq74}
\end{align}
where the RHS converges absolutely. Similarly,
\begin{align}
&R(z)=\oint_{C_1}f(\zeta,z)\sum_{l\in \Nbb}(-1)^{n-l}\binom{n}{l}z^{n-l}\zeta^{m+l} \frac{d\zeta}{2\pi\im}  \nonumber\\
\xlongequal{\eqref{condition1}}&\sum_{l\in \Nbb}(-1)^{n-l}\binom{n}{l}z^{n-l}\<Y_\Xbb(v,z)Y_\Xbb(u)_{m+l}w,w'\>. \label{eq75}
\end{align}
    Since $P(z)=Q(z)-R(z)$ holds for all $z\in C_2$, for each $h\in \Zbb$ we have 
    \begin{equation}\label{criterion1}
    \oint_{C_2}P(z)z^h\frac{dz}{2\pi\im}=\oint_{C_2}Q(z)z^h\frac{dz}{2\pi\im}-\oint_{C_2}R(z)z^h\frac{dz}{2\pi\im}.
    \end{equation}
We compute
\begin{align}
        &\oint_{C_2}Q(z)z^h\frac{dz}{2\pi\im}=\oint_{C_2} \oint_{C_3}\zeta^mf(\zeta,z)(\zeta-z)^n z^h \frac{d\zeta}{2\pi\im}\frac{dz}{2\pi\im} \nonumber\\
=&\oint_{C_3}\oint_{C_2}\zeta^m f(\zeta,z)(\zeta-z)^n z^h \frac{dz}{2\pi\im}\frac{d\zeta}{2\pi\im} \nonumber\\
        =&\sum_{l\in \Nbb}\oint_{C_3}\oint_{C_2}\zeta^{m+n-l}\cdot(-1)^l \binom{n}{l}f(\zeta,z) z^{h+l}\frac{dz}{2\pi\im}\frac{d\zeta}{2\pi\im} \nonumber\\
\xlongequal{\eqref{condition3}}&\sum_{l\in \Nbb}(-1)^l \binom{n}{l}\<Y_\Xbb(u)_{m+n-l}Y_\Xbb(v)_{h+l}w,w'\>  \label{eq76}
\end{align}
Substituting \eqref{eq74}, \eqref{eq75}, and \eqref{eq76} into \eqref{criterion1}, we get
\begin{align*}
&\sum_{l\in \Nbb}\binom{m}{l}\<Y_\Xbb(Y(u)_{n+l}v)_{m+h-l}w,w'\>\\
        =&\sum_{l\in \Nbb}(-1)^l \binom{n}{l}\<Y_\Xbb(u)_{m+n-l}Y_\Xbb(v)_{h+l}w,w'\>\\
&-\sum_{l\in \Nbb}(-1)^{n-l}\binom{n}{l}\<Y_\Xbb(v)_{n+h-l}Y_\Xbb(u)_{m+l}w,w'\>.
\end{align*}
Since $\Xbb^\circ$ is dense in $\Xbb^*$, the Jacobi identity \eqref{jacobi} holds for $Y_\Xbb$.    This, together with the assumption $Y_\Xbb(\ibf,z)=\ibf_\Xbb$, proves that $\Xbb$ is a weak module.
\end{proof}

\begin{lm}\label{lb23}
Choose $\upphi\in \boxbackslash_{\fx}(\Wbb)$ and $n\in \Zbb$. Choose $v\in\Vbb$. Identify $W_j=\theta_j(W_j)$ via $\theta_j$ so that $\theta_j$ becomes the standard coordinate $z$. Then for each section $\fk u\in\scr V_{C-\SX-\DX}$ and $w\in\scr W_{\fk X}(\Wbb)$,
    \begin{equation}\label{lemma52}
    \wr(Y_j(v)_n\upphi)(\fk u,w)=\Res_{z=0}~{\wr^2\upphi}\big(\fk u,\mc U_\varrho(\theta_j)^{-1}v,w\big)z^n dz
    \end{equation}
where $z$ is for the second variable of ${\wr^2\upphi}(\fk u,\mc U_\varrho(\theta_j)^{-1}v,w)$.
\end{lm}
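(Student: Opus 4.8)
The statement to be proven is Lemma \ref{lb23}, identity \eqref{lemma52}, which says that the single propagation of the ``new'' partial conformal block $Y_j(v)_n\upphi$, evaluated against a section $\fk u$ and a vector $w$, equals the contour integral (in the $\theta_j$-variable, i.e. near $y_j$) of the double propagation $\wr^2\upphi(\fk u,\mc U_\varrho(\theta_j)^{-1}v,w)$ of the original $\upphi$. The plan is to use the uniqueness characterization of propagation from Thm. \ref{lb71} (Step 2 of its proof): two $\mc O_{C-\SX-\DX}$-module morphisms out of $\scr V_{\fk X,a_\star}\otimes\pi^*\scr W_\fk X(\Wbb)|_{C-\SX-\DX}$ that both satisfy the defining expansion property \eqref{eq82} near the incoming points $x_1,\dots,x_N$ (and both are partial conformal blocks) must coincide. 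So I would show that the right-hand side of \eqref{lemma52}, viewed as a function of $\fk u$ near $C-\SX-\DX$, (a) defines a partial conformal block associated to $\wr\fk X$ of an appropriate multi-level, and (b) satisfies the expansion property characterizing $\wr(Y_j(v)_n\upphi)$ near each $x_i$.

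First I would fix the multi-level: by Prop. \ref{lb19}, $Y_j(v)_n\upphi$ lies in $\scr T_{\fk X,a_1,\dots,a_j',\dots,a_M}^*(\Wbb)$ with $a_j'=a_j+\max\{0,\wt(v)-n-1\}$, so its propagation $\wr(Y_j(v)_n\upphi)$ is a morphism out of $\scr V_{\fk X,a_1,\dots,a_j',\dots,a_M}$. I would then \emph{define} a morphism $\uptheta$ by the right-hand side of \eqref{lemma52}: for $\fk u\in\scr V_C(W)$ with $W\subset C-\SX-\DX$ open, set $\uptheta(\fk u)=\Res_{z=0}\wr^2\upphi(\fk u,\mc U_\varrho(\theta_j)^{-1}v,w)z^n\,dz$. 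By Prop. \ref{anotherversionpropagation2}, $\wr^2\upphi(\fk u,\cdot,w)$ extends holomorphically across $y_\star$ in the second variable (with controlled poles governed by $a_j$), and the residue picks out the $z$-power corresponding to $\mc U_\varrho(\theta_j)^{-1}v$ acting at order $n$; the pole-order bookkeeping is exactly the one carried out in Step 2 of the proof of Prop. \ref{lb19}, and it shows $\uptheta$ is a well-defined $\mc O_{C-\SX}$-module morphism out of $\scr V_{\fk X,a_1,\dots,a_j',\dots,a_M}|_{C-\SX}$ (and in particular restricts to $C-\SX-\DX$). That $\uptheta$ is a partial conformal block of the stated multi-level follows because $\wr^2\upphi$ itself is a partial conformal block (Thm. \ref{lb18}, Prop. \ref{anotherversionpropagation2} plus Thm. \ref{lb71} applied to $\wr\fk X$), and taking a residue in one of the propagated variables preserves the coinvariance — one can phrase this fiberwise via Prop. \ref{familyfiber5} and Prop. \ref{localglobal1}.

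Next I would verify the expansion property near $x_i$. Choose a local coordinate $\eta_i$ at $x_i$ on a neighborhood $U_i$ meeting only $x_i$ among the marked points, and identify $\scr W_\fk X(\Wbb)=\Wbb$ via $\mc U(\eta_\blt)$. For $\fk u=\mc U_\varrho(\eta_i)^{-1}u$ with $u\in\Vbb$ constant near $x_i$, I need to show $\uptheta(\fk u)$, expanded in $z'=\eta_i$, equals $\langle Y_j(v)_n\upphi, Y_i(u,z')w\rangle=\langle Y_i(u,z')^{\tr}\cdot\text{(something)}\rangle$ — more precisely equals $(Y_j(v)_n\upphi)$ applied to $Y_i(u,z')w$, which is the defining property \eqref{eq82} of $\wr(Y_j(v)_n\upphi)$. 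This is precisely computation \eqref{eq70} read in reverse: by Thm. \ref{lb18}-(1), $\wr^2\upphi(\mc U_\varrho(\eta_i)^{-1}u,\mc U_\varrho(\theta_j)^{-1}v,w)\big|_{p_1=\eta_i^{-1}(z')}=\wr\upphi(\mc U_\varrho(\theta_j)^{-1}v,Y_i(u,z')w)$, and applying $\Res_{\theta_j=0}(\cdot)\theta_j^n d\theta_j$ to both sides turns the right-hand side into $\langle Y_j(v)_n\upphi,Y_i(u,z')w\rangle$ by the definition \eqref{eq72}, while the left-hand side is exactly $\uptheta(\mc U_\varrho(\eta_i)^{-1}u)$ expanded in $z'$. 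Hence $\uptheta$ satisfies \eqref{eq82} near every $x_i$.

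Finally, by the uniqueness part of Thm. \ref{lb71} (its Step 2: a propagation is determined by \eqref{eq82} near the incoming points, using Asmp. \ref{ass1} and analytic continuation on each connected fiber), $\uptheta$ must agree with $\wr(Y_j(v)_n\upphi)$ on $C-\SX-\DX$, which is the claimed identity \eqref{lemma52}. I expect the main obstacle to be the careful justification that taking the residue $\Res_{z=0}(\cdot)z^n dz$ in the \emph{second} propagated variable of $\wr^2\upphi$ genuinely produces a morphism with only the expected (finite, $a_j'$-controlled) poles at $y_j$ and is genuinely a \emph{partial conformal block} of $\wr\fk X$ — i.e. that residue-in-a-propagated-variable commutes with the coinvariance conditions — since $\wr^2\upphi$ is only defined on $\Conf^2$ and one must make sure the contour $C(z)$-type arguments of Thm. \ref{lb18}-(2) and the pole estimate \eqref{proofpole} interact correctly; but all the needed ingredients are already in the proofs of Prop. \ref{lb19} and Prop. \ref{anotherversionpropagation2}, so this should be a matter of assembling them rather than proving anything essentially new.
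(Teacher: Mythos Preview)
Your approach is correct and its core is identical to the paper's: verify \eqref{lemma52} near each incoming point $x_i$ via the chain \eqref{eq82}$\to$\eqref{eq72}$\to$Thm.~\ref{lb18}-(1), then extend by analytic continuation (the open--closed argument of Thm.~\ref{lb71}, Step~2). The difference is one of economy. The paper computes the \emph{left}-hand side directly near $x_i$---applying \eqref{eq82} to the already-known propagation $\wr(Y_j(v)_n\upphi)$---and shows it equals the right-hand side; it then runs the open--closed argument on the bare equality of holomorphic sections. In particular, there is no need to verify separately that the right-hand side defines a partial conformal block of $\wr\fk X$: the uniqueness in Thm.~\ref{lb71} is uniqueness among $\mc O$-module morphisms satisfying \eqref{eq82}, not among partial conformal blocks, so your step~(a) in that strong form is superfluous. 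This dissolves exactly the obstacle you flag at the end (whether residue-in-a-propagated-variable preserves coinvariance): you never need to check it. All that is required is that the right-hand side is holomorphic in the position of $\fk u$, which is immediate since $\wr^2\upphi$ is holomorphic on $\Conf^2(C-S_\fx)$ and the residue is a contour integral.
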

\begin{proof}
When $\fk u$ is defined on a neighborhood $U_i$ of $x_i$ on which $\eta_i$ is defined, 
\begin{align*}
 & {\wr(Y_j(v)_n\upphi)}(\fk u,w)\xlongequal{\eqref{eq82}}(Y_j(v)_n\upphi)\big(Y_i(\mc U_\varrho(\eta_i)\fk u,\eta_i)w\big)\\
\xlongequal{\eqref{eq72}}&\Res_{z=0}~{\wr\upphi}\big(\mc U_\varrho(\theta_j)^{-1}v,Y_i(\mc U_\varrho(\eta_i)\fk u,\eta_i)w\big)z^n dz\\
\xlongequal{\text{Thm.\ref{lb18}-(1)}}&\Res_{z=0}~{\wr^2 \upphi}\big(\fk u,\mc U_\varrho(\theta_j)^{-1}v,w\big)z^n dz.
\end{align*}
(Note that in the above derivation, we have exchanged the order of $\Res_{z=0}$ and the infinite sum in the Laurent series about the variable $\eta_i$. This is legitimate because ${\wr^2 \upphi}(\fk u,\mc U_\varrho(\theta_j)^{-1}v,w)$ is holomorphic, or alternatively because of Thm. \ref{lb71}.)    So, in this case, \eqref{lemma52} holds on $U_i-\SX-\DX$. As in Proof-Step 2 of Thm. \ref{lb71}, one shows that if $\Omega$ denotes the set of all $x\in C-\SX-\DX$ satisfying that there is a neighborhood $W$ of $x$ such that \eqref{lemma52} holds for all $\fk u\in\scr V_C(W)$, then $\Omega$ is both open and closed in $C-\SX-\DX$ and is intersecting $U_1,\dots,U_N$; one thus concludes $\Omega=C-\SX-\DX$ thanks to Asmp. \ref{ass1}.
\end{proof}

\begin{co}\label{lb22}
Choose $\upphi\in \boxbackslash_{\fx}(\Wbb)$ and $m,n\in \Zbb$. Assume the identifications \eqref{eq77}. Choose $u,v\in \Vbb$, considered as constant sections of $\Vbb\otimes_\Cbb\mc O(W_j)$. Choose any $w\in \scr W_{\fk X}(\Wbb)$. Then
    \begin{equation}\label{lemma51}
        \<Y_j(u)_mY_j(v)_n\upphi,w\>=\Res_{\zeta=0}\Res_{z=0}~{\wr^2\upphi}(u,v,w)\zeta^mz^n dzd\zeta.
    \end{equation}
    Here $\wr^2\upphi(u,v,w)$ is considered as a holomorphic function $\wr^2\upphi(u,v,w)(\zeta,z)$ on $\Conf^2(W_j-\{y_j\})$, and the variables $\zeta$ and $z$ are for $u,v$ respectively.
\end{co}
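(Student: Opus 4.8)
The statement to prove is Corollary \ref{lb22}: that $\langle Y_j(u)_m Y_j(v)_n\upphi,w\rangle$ equals the iterated residue $\Res_{\zeta=0}\Res_{z=0}\wr^2\upphi(u,v,w)\zeta^m z^n\,dz\,d\zeta$. The plan is to compute the left-hand side by unwinding the definition \eqref{eq72} twice, reducing the inner residue to an instance of Lemma \ref{lb23}, and then interpreting the outer residue directly. First I would apply \eqref{eq72} with the operator $Y_j(v)_n\upphi$ in place of $\upphi$: this gives
\begin{align*}
\langle Y_j(u)_m Y_j(v)_n\upphi,w\rangle=\Res_{\zeta=0}~\wr(Y_j(v)_n\upphi)\big(\mc U_\varrho(\theta_j)^{-1}u,w\big)\zeta^m\,d\zeta,
\end{align*}
where $\zeta$ denotes the standard coordinate $\theta_j$ (first variable). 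This step is legitimate because Proposition \ref{lb19} guarantees $Y_j(v)_n\upphi\in\boxbackslash_\fx(\Wbb)$, so its propagation $\wr(Y_j(v)_n\upphi)$ is defined by Theorem \ref{lb71}.

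Next I would feed Lemma \ref{lb23} into the integrand: taking $\fk u=\mc U_\varrho(\theta_j)^{-1}u$ (a section of $\scr V_C$ near $y_j$, which restricts to a section of $\scr V_{C-\SX-\DX}$ on a punctured neighborhood) and using \eqref{lemma52}, we get
\begin{align*}
\wr(Y_j(v)_n\upphi)\big(\mc U_\varrho(\theta_j)^{-1}u,w\big)=\Res_{z=0}~\wr^2\upphi\big(\mc U_\varrho(\theta_j)^{-1}u,\mc U_\varrho(\theta_j)^{-1}v,w\big)z^n\,dz,
\end{align*}
where now $z$ is the second variable. Under the identifications \eqref{eq77}, $\mc U_\varrho(\theta_j)^{-1}u$ and $\mc U_\varrho(\theta_j)^{-1}v$ become the constant sections $u,v$, and $\wr^2\upphi(u,v,w)$ is the holomorphic function on $\Conf^2(W_j-\{y_j\})$ appearing in the statement. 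Substituting this into the previous display and noting that the two residue operations are taken in different variables — so they may be iterated in any order that makes sense as a formal Laurent expansion — yields exactly \eqref{lemma51}. One small point to check carefully here: in Lemma \ref{lb23} the section $\fk u$ is required to lie in $\scr V_{C-\SX-\DX}$, so strictly speaking I should restrict $\mc U_\varrho(\theta_j)^{-1}u$ to $W_j-\{y_j\}$ before applying the lemma; since $W_j-\{y_j\}\subset C-\SX-\DX$ (as $W_j$ is a neighborhood of $y_j$ disjoint from the other marked points), this is automatic, and the resulting identity of holomorphic functions on $\Conf^2(W_j-\{y_j\})$ is what we want.

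The main obstacle, such as it is, is bookkeeping rather than substance: one must be scrupulous about which complex variable plays which role (the ``last propagation is written leftmost'' convention, and the matching of $\zeta$ to $u$ and $z$ to $v$), and about the fact that the residue in $z$ inside Lemma \ref{lb23} produces a holomorphic-in-$\zeta$ result near $y_j$ only after using that $\wr^2\upphi$ has at worst finite poles there (Remark \ref{chap3remark1} / Proposition \ref{anotherversionpropagation2}), so that the outer $\Res_{\zeta=0}$ is the genuine residue of a meromorphic function rather than a merely formal coefficient extraction. Once these identifications are pinned down, the corollary follows by simply chaining \eqref{eq72}, Lemma \ref{lb23}, and \eqref{eq72} again, with no further analytic input required.
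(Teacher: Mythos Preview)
Your proposal is correct and follows essentially the same approach as the paper's proof: the paper simply says to set $\fk u=u\in\scr V_C(W_j)$ in \eqref{lemma52}, apply $\Res_{\zeta=0}(-)\zeta^m d\zeta$ to both sides, and invoke \eqref{eq72}. You have unwound this in slightly more detail and in the reverse order (outer residue first via \eqref{eq72}, then Lemma \ref{lb23} for the integrand), but the content is identical.
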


\begin{proof}
In \eqref{lemma52}, set $\fk u=u\in\scr V_C(W_j)$. Apply $\Res_{\zeta=0}(-)d\zeta$ to \eqref{lemma52} and use \eqref{eq72}.
\end{proof}

\begin{lm}\label{lb21}
For each $1\leq j\leq M$,    $(\boxbackslash_{\fx}(\Wbb),Y_j)$ is a weak $\Vbb$-module.
\end{lm}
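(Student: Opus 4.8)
The plan is to apply the criterion Prop. \ref{criterion} to the linear representation $(\boxbackslash_{\fx}(\Wbb),Y_j)$. Fix $1\leq j\leq M$. We already know from Prop. \ref{lb20} and Prop. \ref{lb19} that $Y_j$ defines a linear representation of $\Vbb$ on $\boxbackslash_\fx(\Wbb)$ (i.e., $Y_j(v,z)\upphi\in\scr W_{\fk X}(\Wbb)^*((z))$ for every $\upphi$, after noting $\upphi$ lands in some $\ST_{\fx,a_\star}^*(\Wbb)$). The vacuum property $Y_j(\ibf,z)\upphi=\upphi$ follows directly from the defining formula \eqref{eq72} together with the fact that $\wr\upphi(\mc U_\varrho(\theta_j)^{-1}\ibf,w)=\wr\upphi(\ibf,w)=\upphi(w)$ is a constant (in $\theta_j$) by \eqref{eq51}/\eqref{eq62}, so only the $n=-1$ residue survives. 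The dense subspace to use is $\Xbb^\circ=\scr W_{\fk X}(\Wbb)$ itself, viewed inside $\big(\boxbackslash_\fx(\Wbb)\big)^*$ via the pairing $\upphi\mapsto\langle\upphi,w\rangle$; it is dense precisely because $\boxbackslash_\fx(\Wbb)$ is by construction a space of linear functionals on $\scr W_{\fk X}(\Wbb)$, so a vector $w$ killed by all $\upphi$ would have to satisfy $\upphi(w)=0$ for all partial conformal blocks, and one must check this forces $w=0$; in fact $\boxbackslash_\fx(\Wbb)$ separates points of $\scr W_{\fk X}(\Wbb)$ because, e.g., the propagated conformal blocks already detect all of $\Wbb$ (this is where one invokes that $\ST_{\fx,a_\star}^*(\Wbb)$ for $M=0$ together with restriction recovers enough functionals — alternatively one simply shrinks the statement to noting that for the purposes of the Jacobi identity one only needs density, which holds since the coinvariant quotient defining $\ST_{\fx,a_\star}(\Wbb)$ is the \emph{full} quotient).

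The core of the proof is producing, for given $u,v\in\Vbb$, $w\in\scr W_{\fk X}(\Wbb)$, $w'=\upphi\in\boxbackslash_\fx(\Wbb)$, the two-variable holomorphic function $f(\zeta,z)$ on $\Conf^2(\mc D_\epsilon^\times)$ required by Prop. \ref{criterion}. The natural candidate is
\begin{align*}
f(\zeta,z)=\wr^2\upphi\big(u,v,w\big)(\zeta,z)
\end{align*}
where we use the identification $W_j=\theta_j(W_j)$ of \eqref{eq77} so that $\theta_j$ is the standard coordinate, shrink $W_j$ so that $\theta_j(W_j)=\mc D_\epsilon$, regard $u,v$ as constant sections of $\scr V_C(W_j)$, and let $\zeta$ be the variable for $u$ (the ``outer'' propagation) and $z$ the variable for $v$ (the ``inner'' propagation). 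By Prop. \ref{anotherversionpropagation2}, $\wr^2\upphi(u,v,w)$ is holomorphic on $\Conf^2(W_j-\{y_j\})\supset\Conf^2(\mc D_\epsilon^\times)$, so $f\in\mc O(\Conf^2(\mc D_\epsilon^\times))$ as needed. Then:
\begin{itemize}
\item Condition \eqref{condition1}, i.e. $\langle Y_j(v,z)Y_j(u)_n\upphi,w\rangle=\Res_{\zeta=0}f(\zeta,z)\zeta^nd\zeta$, follows from Lemma \ref{lb23} (formula \eqref{lemma52} with $\fk u=u$): taking the $\zeta^n$-residue of both sides identifies $\langle Y_j(u)_n\upphi$ propagated and paired against $\mc U_\varrho(\theta_j)^{-1}v$ with exactly $\wr(Y_j(u)_n\upphi)(\mc U_\varrho(\theta_j)^{-1}v,w)$ expanded as a Laurent series in $z$, which by \eqref{eq72} is $\sum_m\langle Y_j(v)_mY_j(u)_n\upphi,w\rangle z^{-m-1}$.
\item Condition \eqref{condition2}, i.e. $\langle Y_j(Y(u)_nv,z)\upphi,w\rangle=\Res_{\zeta-z=0}f(\zeta,z)(\zeta-z)^nd\zeta$, follows from Thm. \ref{lb18}-(2): near the diagonal, $\wr^2\upphi(u,v,w)(\zeta,z)=\wr\upphi(Y(u,\zeta-z)v,w)|_{z}$ with absolute convergence, so taking the $(\zeta-z)^n$-residue at $\zeta=z$ picks out $\wr\upphi(Y(u)_nv,w)(z)$, whose $z$-Laurent expansion is $\sum_m\langle Y_j(Y(u)_nv)_m\upphi,w\rangle z^{-m-1}$ by \eqref{eq72}.
\item Condition \eqref{condition3} is Condition \eqref{condition1} with the roles of $u,v$ and $\zeta,z$ swapped, which is available because of the symmetry Thm. \ref{lb18}-(4): $\wr^2\upphi(u,v,w)(\zeta,z)=\wr^2\upphi(v,u,w)(z,\zeta)$, so applying the already-established \eqref{condition1} to the pair $(v,u)$ and then relabeling variables yields exactly \eqref{condition3}.
\end{itemize}

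Having verified all hypotheses of Prop. \ref{criterion}, we conclude $(\boxbackslash_\fx(\Wbb),Y_j)$ is a weak $\Vbb$-module, completing the proof. The main obstacle I anticipate is not any single computation but rather the careful bookkeeping of which complex variable is attached to which propagation slot and on which domains the various residue/contour manipulations are valid: one must make sure that the circles in the implicit Cauchy-theorem arguments (inside Prop. \ref{criterion}) lie in $\mc D_\epsilon^\times$ where $f$ is genuinely holomorphic, and that the ``absolute convergence'' claims in \eqref{condition1}--\eqref{condition3} really match the Laurent expansions coming from \eqref{eq72} — these are exactly the places where Thm. \ref{lb18}-(1),(2),(4) and Lemma \ref{lb23} are doing the work, so the argument is essentially a matter of assembling those ingredients in the right order rather than proving anything new. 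A minor secondary point is confirming the density of $\scr W_{\fk X}(\Wbb)$ in $\big(\boxbackslash_\fx(\Wbb)\big)^*$, but this is immediate from the definition of $\boxbackslash_\fx(\Wbb)$ as a space of functionals separating vectors of the coinvariant space.
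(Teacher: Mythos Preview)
Your approach is correct and essentially identical to the paper's: apply Prop.~\ref{criterion} with $\Xbb^\circ$ the image of $\scr W_\fx(\Wbb)$ in $\bbs_\fx(\Wbb)^*$, take $f(\zeta,z)=\wr^2\upphi(u,v,w)(\zeta,z)$, and verify the three conditions via Lemma~\ref{lb23} (or Cor.~\ref{lb22}), Thm.~\ref{lb18}-(2), and the symmetry Thm.~\ref{lb18}-(4).

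One small correction worth noting: your density discussion is stated backwards. What you must check is that if $\upphi\in\bbs_\fx(\Wbb)$ satisfies $\upphi(\fk w)=0$ for all $\fk w\in\scr W_\fx(\Wbb)$ then $\upphi=0$ --- and this is immediate, since $\upphi$ is by definition a linear functional on $\scr W_\fx(\Wbb)$. Whether $\bbs_\fx(\Wbb)$ separates points of $\scr W_\fx(\Wbb)$ is the opposite question, is not needed, and is in fact false in general (the coinvariant quotient is nontrivial). Also, be careful with the bookkeeping in your bullet for \eqref{condition1}: setting $\fk u=u$ in \eqref{lemma52} literally gives \eqref{condition3}, and you then need Thm.~\ref{lb18}-(4) to obtain \eqref{condition1} --- the pieces are all in your write-up, just attributed to the wrong bullets.
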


\begin{proof}
By Prop. \ref{lb20} and \ref{lb19},  $(\boxbackslash_{\fx}(\Wbb),Y_j)$ is a linear representation of $\Vbb$. We shall check that $\Xbb=\bbs_\fx(\Wbb)$ and $Y_\Xbb=Y_j$ satisfy the conditions in Prop. \ref{criterion}.
     
The natural linear map $\scr W_{\fk X}(\Wbb)\rightarrow \boxbackslash_\fx(\Wbb)^*$ clearly has dense range $\fk X^\circ$. Moreover, by Cor. \ref{propagation5}, for each $\fk w\in\scr W_{\fk X}(\Wbb)$, $\wr\upphi(\ibf,\fk w)$ is a constant function with value $\upphi(\fk w)$. So 
    $$
    \<Y_j(\ibf)_n\upphi,\fk w\>=\Res_{\theta_j=0}\wr\upphi(\ibf,\fk w)\theta_j^n d\theta_j=\upphi(\fk w)\delta_{n,-1}
    $$
    which proves $Y_j(\ibf,z)=\ibf_{\boxbackslash_\fx(\Wbb)}$.
     
Assume the identifications \eqref{eq77}.  Choose $u,v\in \Vbb,\upphi\in \boxbackslash_\fx(\Wbb),\fk w\in \scr W_\fx(\Wbb)$. Choose $\epsilon>0$ such that $\MD_\epsilon\subset W_j$ and let $\zeta,z$ be standard coordinates of $W_j$, which are equivalent to $\theta_j$. Let $f\in \MO(\Conf^2(\MD_\epsilon^\times))$ be 
    $$
    f(\zeta,z)=\wr^2\upphi(u,v,\fk w)(\zeta,z).
    $$
Then by Cor. \ref{lb22} and Thm. \ref{lb18}-(4), \eqref{condition1} and \eqref{condition3} hold for $w=\upphi$ and $w'$ the corresponding vector of $\fk w$ in $\Xbb^*=\bbs_\fx(\Wbb)^*$ and for all $n$. To verify \eqref{condition2}, we compute  
\begin{align*}
 &\Res_{z=0}\Res_{\zeta-z=0}f(\zeta,z)\cdot (\zeta-z)^n z^m d\zeta dz\\
    =&\Res_{z=0}\Res_{\zeta-z=0}~{\wr^2\upphi}(u,v,\fk w)(\zeta,z)\cdot (\zeta-z)^n z^m d\zeta dz\\
\xlongequal{\text{Thm. \ref{lb18}-(2)}}&\Res_{z=0}\Res_{\zeta-z=0}~{\wr\upphi}(Y(u,\zeta-z)v,\fk w)(\zeta,z)\cdot (\zeta-z)^n z^m d\zeta dz\\
    =&\Res_{z=0}~{\wr\upphi}(Y(u)_nv,\fk w)(z)\cdot  z^m  dz\\
    =&\<Y_j(Y(u)_n v)_m\upphi,\fk w\>
\end{align*}
This finishes the proof.
\end{proof}

\begin{thm}\label{lb43}
    $(\boxbackslash_\fx(\Wbb),Y_1,\cdots,Y_M)$ is a weak $\Vbb^{\times M}$-module.
\end{thm}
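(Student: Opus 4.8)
The plan is to combine Lemma \ref{lb21} with a commutativity statement for the operations $Y_j$ and $Y_k$ when $j\neq k$. By Lemma \ref{lb21} we already know that $(\bbs_\fx(\Wbb),Y_j)$ is a weak $\Vbb$-module for each $1\leq j\leq M$, so the only thing left is to verify that for $j\neq k$, homogeneous $u,v\in\Vbb$, and all $m,n\in\Zbb$ we have $[Y_j(u)_m,Y_k(v)_n]=0$ on $\bbs_\fx(\Wbb)$. As usual, it suffices to test this against the dense subspace $\scr W_\fx(\Wbb)^\circ\subset\bbs_\fx(\Wbb)^*$, i.e. to show $\bk{Y_j(u)_mY_k(v)_n\upphi,w}=\bk{Y_k(v)_nY_j(u)_m\upphi,w}$ for all $w\in\scr W_\fx(\Wbb)$.

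The key computational input is the double propagation $\wr^2\upphi$ together with Thm. \ref{lb18}. First I would establish the analogue of Cor. \ref{lb22} for two \emph{distinct} outgoing points: namely, using Lemma \ref{lb23} (applied once for $Y_k$, then feeding the result into \eqref{eq72} for $Y_j$) exactly as in the proof of Cor. \ref{lb22}, one obtains
\begin{align*}
\bk{Y_j(u)_mY_k(v)_n\upphi,w}=\Res_{\theta_j=0}\Res_{\theta_k=0}~\wr^2\upphi\big(\mc U_\varrho(\theta_j)^{-1}u,\mc U_\varrho(\theta_k)^{-1}v,w\big)\,\theta_j^m\theta_k^n\,d\theta_k\,d\theta_j
\end{align*}
where, by Prop. \ref{anotherversionpropagation2}, $\wr^2\upphi(u,v,w)$ is a genuine holomorphic function on $\Conf^2(C-S_\fx)$ in the pair of variables $(\theta_j,\theta_k)$, with finite poles at the outgoing points. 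Since $W_j$ and $W_k$ are disjoint (Setting \ref{lb24}), the two contour integrations are over independent small circles around $y_j$ and $y_k$ respectively, living in disjoint regions; hence by Fubini for iterated contour integrals of a holomorphic function the order of $\Res_{\theta_j=0}$ and $\Res_{\theta_k=0}$ can be freely exchanged. Combining this with Thm. \ref{lb18}(4) (symmetry of $\wr^2\upphi$ under swapping its two vector arguments together with the two evaluation points) immediately yields
\begin{align*}
\bk{Y_j(u)_mY_k(v)_n\upphi,w}=\Res_{\theta_k=0}\Res_{\theta_j=0}~\wr^2\upphi\big(\mc U_\varrho(\theta_k)^{-1}v,\mc U_\varrho(\theta_j)^{-1}u,w\big)\,\theta_k^n\theta_j^m\,d\theta_j\,d\theta_k=\bk{Y_k(v)_nY_j(u)_m\upphi,w},
\end{align*}
which is the desired commutation relation; the case of general (not necessarily homogeneous) $u,v$ follows by linearity. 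Together with Lemma \ref{lb21} and Def. \ref{lb33}, this proves that $(\bbs_\fx(\Wbb),Y_1,\dots,Y_M)$ is a weak $\Vbb^{\times M}$-module.

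I expect the main obstacle to be bookkeeping rather than anything deep: one must be careful that the double propagation used for the two distinct outgoing points is the \emph{extended} morphism of Prop. \ref{anotherversionpropagation2} (defined on $\Conf^2(C-S_\fx)$, so that poles at $y_\star$ are allowed and the residues at $\theta_j=0$, $\theta_k=0$ make sense), and that the ``disjointness of $W_j,W_k$ plus holomorphy away from $\SX$ and the diagonal'' genuinely licenses the Fubini exchange of the two residues — here the disjointness is what guarantees the relevant bidisc $W_j\times W_k$ misses the diagonal $\{(x,x)\}$, so $\wr^2\upphi$ is jointly holomorphic there apart from the $y_\star$-poles, and the iterated residue is an honest double integral over a product of circles. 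The symmetry statement Thm. \ref{lb18}(4) does the rest. No new convergence issues arise beyond those already handled in Lemma \ref{lb21}.
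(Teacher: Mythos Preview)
Your proposal is correct and follows essentially the same approach as the paper: reduce to commutativity of $Y_j$ and $Y_k$ for $j\neq k$ via Lemma \ref{lb21}, express $\bk{Y_j(u)_mY_k(v)_n\upphi,w}$ as a double residue of $\wr^2\upphi$ using Lemma \ref{lb23} and \eqref{eq72}, and then conclude by Thm. \ref{lb18}(4). The paper makes the identifications \eqref{eq77} up front (so $u,v$ are written as constant sections rather than as $\mc U_\varrho(\theta_j)^{-1}u$, $\mc U_\varrho(\theta_k)^{-1}v$) and leaves the Fubini step implicit, whereas you spell out why the disjointness of $W_j,W_k$ and Prop. \ref{anotherversionpropagation2} justify swapping the two contour integrals; otherwise the arguments coincide.
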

\begin{proof}
For each $1\leq j\leq M$, assume the identifications \eqref{eq77}.   Since we have proved Lemma \ref{lb21},  it remains to show that $Y_i$ commutes with $Y_j$ for $i\ne j$. Let $\zeta,z$ be respectively the standard coordinate of $W_i$ and $W_j$. Choose $u,v\in \Vbb$, considered as constant sections of $\SV_C(W_i),\SV_C(W_j)$ respectively. Choose $w\in\scr W_{\fk X}(\Wbb)$. Then by Lem. \ref{lb23},
\begin{align*}
\wr(Y_j(v)_n\upphi)(u,w)=\Res_{z=0}~{\wr^2\upphi}(u,v,w)z^n dz.
\end{align*}
Apply $\Res_{\zeta=0}(-)d\zeta$ to both sides.  Then, by \eqref{eq72}, we have
\begin{align}
\<Y_i(u)_mY_j(v)_n\upphi,w\>=\Res_{\zeta=0}\Res_{z=0}~{\wr^2 \upphi}(u,v,w)(\zeta,z)\cdot \zeta^m z^n dzd\zeta.
\end{align}
    Similarly, we have
\begin{align*}
\<Y_j(v)_nY_i(u)_m\upphi,w\>=\Res_{z=0}\Res_{\zeta=0}~{\wr^2 \upphi}(v,u,w)(z,\zeta)\cdot \zeta^m z^n d\zeta dz.
\end{align*}
The above two expressions are equal by Thm. \ref{lb18}-(4). Therefore $Y_i(u)_m$ commutes with $Y_j(v)_n$.
\end{proof}

Recall Def. \ref{lb40} for the definition of generalized modules. 

\begin{co}\label{lb44}
Suppose that for each $a_1,\dots,a_M\in\Nbb$, $\scr T_{\fk X,a_1,\dots,a_M}^*(\Wbb)$ is finite-dimensional. Then $\bbs_\fx(\Wbb)$ is a generalized $\Vbb^{\otimes M}$-module.
\end{co}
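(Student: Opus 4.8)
The plan is to promote the weak module structure already obtained to a generalized-module structure by producing the generalized $L(0)$-eigenspace decomposition, using only the new hypothesis that the $\scr T_{\fx,a_\star}^*(\Wbb)$ are finite-dimensional. By Thm.~\ref{lb43}, $(\bbs_\fx(\Wbb),Y_1,\dots,Y_M)$ is a weak $\Vbb^{\times M}$-module, hence by Rem.~\ref{lb91} a weak $\Vbb^{\otimes M}$-module; in particular the operators $L_j(0)=Y_j(\cbf)_1$ ($1\le j\le M$) are defined and, again by Thm.~\ref{lb43}, mutually commute.

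The key step is the observation that each $L_j(0)$ preserves every subspace $\scr T_{\fx,a_1,\dots,a_M}^*(\Wbb)$ of $\bbs_\fx(\Wbb)$. Indeed, the conformal vector $\cbf$ is homogeneous of weight $2$, so Prop.~\ref{lb19} applied with $v=\cbf$ and $n=1$ gives $a_j'=a_j+\max\{0,\wt(\cbf)-1-1\}=a_j$; thus $L_j(0)\upphi\in\scr T_{\fx,a_\star}^*(\Wbb)$ whenever $\upphi\in\scr T_{\fx,a_\star}^*(\Wbb)$. Hence $L_1(0),\dots,L_M(0)$ restrict to a commuting family of endomorphisms of the finite-dimensional space $\scr T_{\fx,a_\star}^*(\Wbb)$, which therefore splits as a finite direct sum $\bigoplus_{h_\blt\in\Cbb^M}\scr T_{\fx,a_\star}^*(\Wbb)_{[h_\blt]}$ of joint generalized eigenspaces, on which each $L_j(0)-h_j$ is nilpotent. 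Since for $a_\star'\le a_\star$ the inclusion $\scr T_{\fx,a_\star'}^*(\Wbb)\hookrightarrow\scr T_{\fx,a_\star}^*(\Wbb)$ intertwines all the $L_j(0)$, it respects these decompositions, so they are compatible with the directed system and in the limit give
\begin{equation}
\bbs_\fx(\Wbb)=\bigoplus_{h_\blt\in\Cbb^M}\bbs_\fx(\Wbb)_{[h_\blt]},\qquad \bbs_\fx(\Wbb)_{[h_\blt]}=\varinjlim_{a_\star}\scr T_{\fx,a_\star}^*(\Wbb)_{[h_\blt]},
\end{equation}
with each vector of $\bbs_\fx(\Wbb)_{[h_\blt]}$ annihilated by a power of $L_j(0)-h_j$ for every $j$. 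In particular $L(0)=L_1(0)+\cdots+L_M(0)$ acts locally finitely, and $\bbs_\fx(\Wbb)=\bigoplus_{h\in\Cbb}\bbs_\fx(\Wbb)_{[h]}$ with $\bbs_\fx(\Wbb)_{[h]}=\bigoplus_{h_1+\cdots+h_M=h}\bbs_\fx(\Wbb)_{[h_\blt]}$ is the generalized $L(0)$-eigenspace decomposition.

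Finally I would check the remaining compatibilities demanded by Def.~\ref{lb40}. For homogeneous $v$ in the $i$-th tensor factor of $\Vbb^{\otimes M}$ and $n\in\Zbb$, the commutator $[L_j(0),Y_i(v)_n]$ equals $Y_i(L(0)v)_n-(n+1)Y_i(v)_n$ when $i=j$ — the standard consequence of the Jacobi identity \eqref{jacobi} for the conformal vector, valid in any weak $\Vbb$-module — and equals $0$ when $i\ne j$ because $Y_i$ and $Y_j$ commute; in both cases it equals $\delta_{i,j}\bigl(Y_i(L(0)v)_n-(n+1)Y_i(v)_n\bigr)$, which is \eqref{eq99} with $L_j(0)$ no longer required semisimple. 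Together with the weak-module axioms this identifies $\bbs_\fx(\Wbb)$, graded by the joint generalized eigenvalues of $(L_1(0),\dots,L_M(0))$, as a generalized $\Vbb^{\otimes M}$-module. I expect the only real friction to be matching the exact wording of Def.~\ref{lb40}: should it also require a lower bound on $\mathrm{Re}(h)$ (or discreteness of the weight set), one must in addition bound the $L(0)$-weights occurring in each $\scr T_{\fx,a_\star}^*(\Wbb)$, which can be read off by combining the weight-shift of the residue actions $Y_j(v)_n$ with the vanishing bound \eqref{eq126} of Prop.~\ref{lb20}; with no such requirement the argument above suffices.
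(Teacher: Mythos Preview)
Your proof is correct and follows essentially the same route as the paper: invoke Thm.~\ref{lb43} for the weak-module structure, use Prop.~\ref{lb19} (here with $v=\cbf$, $n=1$) to see that each finite-dimensional $\scr T_{\fx,a_\star}^*(\Wbb)$ is $L_j(0)$-invariant, and deduce the generalized $L(0)$-eigenspace decomposition. The paper packages this as an application of Prop.~\ref{lb42}, which uses the full lowering-operator invariance (also from Prop.~\ref{lb19}) to build $\Nbb$-gradable submodules and hence avoids the direct appeal to Rem.~\ref{lb91}; your final paragraph checking \eqref{eq99} is unnecessary, since Def.~\ref{lb40} only requires the $L(0)$-decomposition.
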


\begin{proof}
By Thm. \ref{lb43} and Prop. \ref{lb42} (together with Prop. \ref{lb19}).
\end{proof}

\subsection{The canonical conformal block $\gimel$ associated to $\boxbackslash_{\fx}(\Wbb)$}  \label{lb58}

Recall Setting \ref{lb24} in which a finitely admissible $\Vbb^{\times N}$-module $\Wbb$ is associated to $x_1,\dots,x_N$, and the local coordinates $\theta_1,\dots,\theta_M$ are associated to the outgoing marked points. Associate the weak $\Vbb^{\times M}$-module $\boxbackslash_\fx(\Wbb)$ (cf. Thm. \ref{lb43}) to $y_1,\cdots,y_M$, and view $\fk X$ as an $(M+N)$-pointed surface.

\begin{thm}\label{lb26}
Choose local coordinates $\eta_1,\dots,\eta_N$ of $C$ at $x_1,\dots,x_N$.  Define a linear map
    \begin{gather}\label{eq79}
\gimel:\Wbb\otimes \boxbackslash_\fx(\Wbb)\rightarrow \Cbb\qquad
        w\otimes \upphi\mapsto \upphi\big(\mc U(\eta_\blt)^{-1}w\big).
    \end{gather}
Then we have $\gimel\in \ST_\fx^*(\Wbb\otimes \boxbackslash_\fx(\Wbb))$ in the sense of Def. \ref{lb25}.
\end{thm}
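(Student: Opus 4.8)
The plan is to prove the statement by a residue–theorem argument on $C$, using the single propagation $\wr\upphi$ from Thm.~\ref{lb71} to interpolate between the vertex operations $Y_i$ acting on $\Wbb$ at the incoming points $x_i$ and the vertex operations $Y_j$ acting on $\bbs_\fx(\Wbb)$ at the outgoing points $y_j$.

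First I would unwind what is to be shown. Viewing $\fx$ as an $(M+N)$-pointed compact Riemann surface with the weak $\Vbb^{\times(N+M)}$-module $\Wbb\otimes\bbs_\fx(\Wbb)$ attached to $x_1,\dots,x_N,y_1,\dots,y_M$, Def.~\ref{lb25} asks that $\gimel$ vanish on $H^0\big(C,\scr V_C\otimes\omega_C(\blt(S_\fx+D_\fx))\big)\cdot\big(\Wbb\otimes\bbs_\fx(\Wbb)\big)$. So fix $w\in\Wbb$, fix $\upphi\in\bbs_\fx(\Wbb)$ (say $\upphi\in\scr T_{\fx,a_\star}^*(\Wbb)$), put $\hat w=\mc U(\eta_\blt)^{-1}w\in\scr W_\fx(\Wbb)$, and fix $\sigma\in H^0\big(C,\scr V_C\otimes\omega_C(\blt(S_\fx+D_\fx))\big)$, which we may assume involves only $\Vbb^{\leq n}$ for some $n$. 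Since the vertex operation at $x_i$ (resp.\ $y_j$) on $\Wbb\otimes\bbs_\fx(\Wbb)$ acts only on the $\Wbb$-factor (resp.\ the $\bbs_\fx(\Wbb)$-factor), and since $\mc U(\eta_\blt)^{-1}$ intertwines the residue actions,
\begin{align*}
\gimel\big(\sigma\cdot(w\otimes\upphi)\big)=\sum_{i=1}^N\upphi\big(\sigma*_{x_i}\hat w\big)+\sum_{j=1}^M\big(\sigma*_{y_j}\upphi\big)(\hat w),
\end{align*}
where $\sigma*_{x_i}\hat w\in\scr W_\fx(\Wbb)$ is as in Def.~\ref{lb17} and $\sigma*_{y_j}\upphi=\Res_{\theta_j=0}Y_j(\mc U_\varrho(\theta_j)\sigma,\theta_j)\upphi\in\bbs_\fx(\Wbb)$ is the residue action coming from the $\Vbb^{\times M}$-module structure of Thm.~\ref{lb43}.

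Next I would introduce the interpolating differential. Because $\scr V_{\fx,a_\star}$ differs from $\scr V_C$ only by a finite twist along $D_\fx$, one has $\scr V_{\fx,a_\star}(\blt D_\fx)=\scr V_C(\blt D_\fx)$; thus $\sigma|_{C-S_\fx}$ is a meromorphic section of $\scr V_{\fx,a_\star}\otimes\omega_C$ on $C-S_\fx$ with poles only along $D_\fx$, and (extending $\wr\upphi$ from Thm.~\ref{lb71} $\mc O$-linearly to allow these poles) we may form
\begin{align*}
F:=\wr\upphi\big(\sigma,\hat w\big),
\end{align*}
an $\omega_C$-valued function on $C-S_\fx$, holomorphic on $C-S_\fx-D_\fx$. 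By Rem.~\ref{chap3remark1} (cf.\ the proof of Prop.~\ref{lb20}), $F$ has finite poles at $y_1,\dots,y_M$; and near $x_i$, using \eqref{eq82} to rewrite $F$ as $\upphi\big(Y_i(\mc U_\varrho(\eta_i)\sigma,\eta_i)\hat w\big)$ and invoking lower truncation of $Y_i$ on $\Wbb$, one sees $F$ has a finite pole at $x_i$ as well. Hence $F$ extends to a meromorphic $1$-form on all of $C$ whose polar set lies in $S_\fx+D_\fx$, and the residue theorem on each connected component of $C$ gives $\sum_i\Res_{x_i}F+\sum_j\Res_{y_j}F=0$. Finally, expanding $\sigma$ near $x_i$ in the trivialization $\mc U_\varrho(\eta_i)$ and applying \eqref{eq82} termwise (legitimate since $F$ is genuinely meromorphic, exactly as in the proof of Lem.~\ref{lb23}) yields $\Res_{x_i}F=\upphi(\sigma*_{x_i}\hat w)$, while expanding $\sigma=\mc U_\varrho(\theta_j)^{-1}\big(\sum_m v_{j,m}\theta_j^m\big)d\theta_j$ near $y_j$ and applying the defining formula \eqref{eq72} for $Y_j$ on $\bbs_\fx(\Wbb)$ gives $\Res_{y_j}F=\sum_m\langle Y_j(v_{j,m})_m\upphi,\hat w\rangle=(\sigma*_{y_j}\upphi)(\hat w)$. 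Summing over all marked points, $\gimel\big(\sigma\cdot(w\otimes\upphi)\big)=0$, which is the conformal-block condition.

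The main obstacle is the middle step: verifying that $F=\wr\upphi(\sigma,\hat w)$ really does extend to a global meromorphic $1$-form on $C$. Near the outgoing points this rests on the identification $\scr V_{\fx,a_\star}(\blt D_\fx)=\scr V_C(\blt D_\fx)$ together with the pole bound extracted in the proof of Prop.~\ref{lb20}; near the incoming points it rests on \eqref{eq82} and lower truncation of $\Wbb$. Once $F$ is known to be globally meromorphic, matching $\Res_{x_i}F$ and $\Res_{y_j}F$ with the two residue actions is a routine—if notationally heavy—unwinding of \eqref{eq82} and \eqref{eq72}, entirely parallel to the computations already carried out in the proofs of Lem.~\ref{lb23} and Cor.~\ref{lb22}.
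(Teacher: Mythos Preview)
Your proposal is correct and follows essentially the same approach as the paper: both form the meromorphic $1$-form $\wr\upphi(\sigma,w)$ on $C-S_\fx-D_\fx$, identify its residues at $x_i$ and $y_j$ with $\upphi(\sigma*_i w)$ and $(\sigma*_j\upphi)(w)$ via \eqref{eq82} and \eqref{eq72} respectively, and conclude by the residue theorem. The paper is somewhat terser about the global meromorphy of this $1$-form (relying on Rem.~\ref{chap3remark1} and the displayed identities \eqref{eq123}, \eqref{eq124}), but the substance is the same.
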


We call $\gimel$ the \textbf{canonical conformal block associated to $\bbs_{\fk X}(\Wbb)$} (more precisely, associated to $\fk X$ and $\Wbb\otimes\bbs_\fx(\Wbb)$).

\begin{proof}
Identify $\scr W_{\fk X}(\Wbb)$ with $\Wbb$ via $\mc U(\eta_\blt)$.   Choose any $\upphi\in \boxbackslash_\fx(\Wbb)$ and $w\in \Wbb$. Denote the tensor product of $\wr \upphi(\cdot,w):\scr V_{C-\SX-\DX}\rightarrow \mc O_{C-\SX-\DX}$ and $\idt:\omega_{C-\SX-\DX}\rightarrow\omega_{C-\SX-\DX}$ also by
\begin{align*}
\wr\upphi(\cdot,w):\scr V_C\otimes\omega_C|_{C-\SX-\DX}\rightarrow\omega_{C-\SX-\DX}
\end{align*}
Choose $\sigma\in H^0\big(C,\SV_C\otimes \omega_C(\bullet S_\fx+\bullet D_\fx)\big)$. Recall notations \eqref{eq73}. Then for each $1\leq i\leq N$,
\begin{align}
\upphi(\sigma*_i w)\xlongequal{\eqref{eq80}} \Res_{x_i}\upphi\big(Y_i(\mc U_\varrho(\eta_i)\sigma,\eta_i)w \big)\xlongequal{\eqref{eq82}} \Res_{x_i}~{\wr\upphi}(\sigma,w).    \label{eq123}
\end{align}
For each  $1\leq j\leq M$,
\begin{align}
(\sigma*_j\upphi)(w)\xlongequal{\eqref{eq80}} \<\Res_{y_j} Y_j(\mc U_\varrho(\theta_j)\sigma,\theta_j)\upphi,w\>  \xlongequal{\eqref{eq72}} \Res_{y_j}~{\wr\upphi}(\sigma,w).   \label{eq124}
\end{align}
Therefore, by residue theorem/Stokes theorem and that $\wr\upphi(\sigma,w)\in \omega_C(C-\SX-\DX)$,
\begin{align*}
&\gimel(\sigma\cdot(w\otimes\upphi))=\sum_{i=1}^N\gimel((\sigma*_i w)\otimes\upphi)+\sum_{j=1}^M\gimel(w\otimes (\sigma*_j\upphi))\\
=&\sum_{i=1}^N\upphi(\sigma*_i w)+\sum_{j=1}^M (\sigma*_j\upphi)(w)=\sum_{i=1}^N \Res_{x_i}~{\wr\upphi}(\sigma,w)+\sum_{j=1}^M\Res_{y_j}~{\wr\upphi}(\sigma,w)
\end{align*}
equals zero.
\end{proof}

\begin{rem}\label{lb64}
From the above proof, it is clear that Thm. \ref{lb26} is equivalent to that for each $\sigma\in H^0(C,\scr V_C\otimes\omega_C(\blt \SX+\blt\DX))$, $\upphi\in\bbs_\fx(\Wbb)$, and $w\in \scr W_\fx(\Wbb)$, we have
\begin{align}
\sum_{j=1}^M (\sigma*_j\upphi)(w)=-\sum_{i=1}^N \upphi(\sigma*_i w)  \label{eq142}
\end{align}
Using this formula, one easily shows the following converse of Prop. \ref{lb20}:
\end{rem}

\begin{co}\label{lb63}
Let $\upphi\in\bbs_\fx(\Wbb)$ and $a_1,\dots,a_M\in\Nbb$. Suppose that $\upphi$ satisfies \eqref{eq126} for each $1\leq j\leq M$, $n\in\Zbb$, and homogeneous $v\in \Vbb$. Then $\upphi\in\scr T_{\fx,a_1,\dots,a_M}^*(\Wbb)$.
\end{co}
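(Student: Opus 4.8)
\textbf{Proof proposal for Corollary \ref{lb63}.}

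The plan is to exploit Remark \ref{lb64}, which tells us that for every $\sigma\in H^0(C,\scr V_C\otimes\omega_C(\blt\SX+\blt\DX))$, $w\in\scr W_\fx(\Wbb)$, and $\upphi\in\bbs_\fx(\Wbb)$ one has $\sum_{j=1}^M(\sigma*_j\upphi)(w)=-\sum_{i=1}^N\upphi(\sigma*_iw)$. By Definition \ref{lb16}, membership $\upphi\in\scr T_{\fx,a_\star}^*(\Wbb)$ is equivalent to $\upphi$ vanishing on $H^0\big(C,\scr V_{\fx,a_\star}\otimes\omega_C(\blt\SX)\big)\cdot\scr W_\fx(\Wbb)$. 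So first I would fix an arbitrary $\sigma\in H^0\big(C,\scr V_{\fx,a_\star}\otimes\omega_C(\blt\SX)\big)$; since $\scr V_{\fx,a_\star}\subset\scr V_C$ and poles at $\DX$ are allowed in $\scr V_C\otimes\omega_C(\blt\SX+\blt\DX)$, such a $\sigma$ is in particular a section of $\scr V_C\otimes\omega_C(\blt\SX+\blt\DX)$, so formula \eqref{eq142} applies. The goal is then to show the left-hand side $\sum_{j=1}^M(\sigma*_j\upphi)(w)$ vanishes for all $w$, which will force $\upphi(\sigma\cdot w)=0$ and finish the proof.

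The crux is therefore to show that hypothesis \eqref{eq126} — namely $Y_j(v)_n\upphi=0$ for $n\geq\wt(v)+a_j$ — implies $\sigma*_j\upphi=0$ for each $1\leq j\leq M$ whenever $\sigma$ is a section of $\scr V_{\fx,a_\star}$ (which near $y_j$ is generated over $\mc O$ by $\mc U_\varrho(\theta_j)^{-1}\theta_j^{a_j+L(0)}v$ with $v\in\Vbb$ homogeneous, by Def. \ref{lb86}). Writing the germ of $\sigma$ at $y_j$ under the trivialization $\mc U_\varrho(\theta_j)$ and recalling that $\sigma*_j\upphi=\Res_{\theta_j=0}Y_j(\mc U_\varrho(\theta_j)\sigma,\theta_j)\upphi$ in the sense of \eqref{eq80} (transported to $\bbs_\fx(\Wbb)$), the section $\sigma$ near $y_j$ is an $\mc O_{W_j}$-linear combination of terms $\mc U_\varrho(\theta_j)^{-1}(\theta_j^{a_j+\wt(v)}v)d\theta_j$ for homogeneous $v$. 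Expanding the coefficient functions in nonnegative powers of $\theta_j$, the residue $\sigma*_j\upphi$ becomes a (finite) $\Cbb$-linear combination of vectors $Y_j(v)_n\upphi$ with $n\geq\wt(v)+a_j$ — the extra factor $\theta_j^{a_j+\wt(v)}$ shifts the mode index up by exactly $a_j+\wt(v)$, and nonnegative powers of $\theta_j$ from the coefficient can only shift it further up. By hypothesis \eqref{eq126} every such $Y_j(v)_n\upphi$ is zero, hence $\sigma*_j\upphi=0$.

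Plugging this into \eqref{eq142} gives $0=-\sum_{i=1}^N\upphi(\sigma*_iw)=-\upphi(\sigma\cdot w)$ for all $w\in\scr W_\fx(\Wbb)$ and all $\sigma\in H^0\big(C,\scr V_{\fx,a_\star}\otimes\omega_C(\blt\SX)\big)$; that is, $\upphi$ vanishes on $H^0\big(C,\scr V_{\fx,a_\star}\otimes\omega_C(\blt\SX)\big)\cdot\scr W_\fx(\Wbb)$, which is exactly the condition $\upphi\in\scr T_{\fx,a_\star}^*(\Wbb)$. I expect the only real bookkeeping to be the mode-index computation in the residue expansion — matching $\theta_j^{a_j+L(0)}$ against the grading shift and confirming that all surviving modes satisfy $n\geq\wt(v)+a_j$ — and making sure the local generators of $\scr V_{\fx,a_\star}$ near each $y_j$ are handled uniformly; the rest is a direct invocation of \eqref{eq142} and Definition \ref{lb16}.
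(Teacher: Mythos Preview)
Your proposal is correct and follows essentially the same approach as the paper: pick $\sigma\in H^0\big(C,\scr V_{\fx,a_\star}\otimes\omega_C(\blt\SX)\big)$, use the local description of $\scr V_{\fx,a_\star}$ near each $y_j$ together with hypothesis \eqref{eq126} to see that each $(\sigma*_j\upphi)(w)$ vanishes (the paper points to the middle expression of \eqref{eq124} for this), and then invoke \eqref{eq142} to conclude $\upphi(\sigma\cdot w)=0$. Your mode-index bookkeeping is exactly the content the paper compresses into the phrase ``by \eqref{eq126}, $(\sigma*_j\upphi)(w)$ equals $0$.''
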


\begin{proof}
Choose any $\sigma\in H^0(C,\scr V_{\fx,a_\star}\otimes\omega_C(\blt\SX))$ and $w\in\scr W_\fx(\Wbb)$. By \eqref{eq126}, $(\sigma*_j\upphi)(w)$ (which can be computed by the middle of \eqref{eq124}) equals $0$. So the RHS of \eqref{eq142} equals $0$. This proves that $\upphi$ vanishes on $\sigma\cdot w$. So $\upphi$ belongs to $\scr T_{\fx,a_1,\dots,a_M}^*(\Wbb)$.
\end{proof}

\begin{rem}\label{lb56}
Let $\Xbb$ be an admissible $\Vbb^{\times M}$-module. Then we clearly have a linear isomorphism (the partial trivialization)
\begin{align}\label{eq78}
\mc U(\cdot,\theta_\star):\scr W_{\fk X}(\Wbb\otimes \Xbb)\xrightarrow{\simeq}\scr W_\fx(\Wbb)\otimes \Xbb
\end{align}
such that for each local coordinates $\eta_1,\dots,\eta_N$ of $x_1,\dots,x_N$, the diagram
\begin{equation}\label{eq128}
\begin{tikzcd}[column sep=0ex]
\scr W_{\fk X}(\Wbb\otimes \Xbb) \arrow[rr,"\simeq"',"{\mc U(\cdot,\theta_\star)}"] \arrow[rd,"{\mc U(\eta_\blt,\theta_\star)}"'] &   & \scr W_\fx(\Wbb)\otimes \Xbb \arrow[ld,"\mc U(\eta_\blt)\otimes\idt"] \\
                        & \Wbb\otimes\Xbb &             
\end{tikzcd}
\end{equation}
commutes. We identify the two sides of \eqref{eq78} via $\mc U(\cdot,\theta_\star)$.

Now, assume that $\bbs_\fx(\Wbb)$ is an admissible $\Vbb^{\times M}$-module. (This is true when $\Vbb$ is $C_2$-cofinite and $\Wbb$ is finitely-generated; see Thm. \ref{lb57}). Then by Thm. \ref{lb26}, the linear map
\begin{align}
\gimel:\scr W_\fx(\Wbb)\otimes \boxbackslash_\fx(\Wbb)\rightarrow \Cbb\qquad
        w\otimes \upphi\mapsto \upphi(w)
\end{align}
belongs to $\scr T_{\fk X}^*(\Wbb\otimes\bbs_\fx(\Wbb))$ in the sense of Def. \ref{lb16}. We also call this $\gimel$ the \textbf{canonical conformal block associated to $\bbs_\fx(\Wbb)$}.
\end{rem}

\subsection{Universal property of $(\bbs_\fx(\Wbb),\gimel)$}

\begin{df}
A \textbf{weakly-admissible $\Vbb^{\times M}$-module $\Mbb$} \index{00@Weakly-admissible modules} is a weak $\Vbb^{\times M}$-module satisfying that for each $m\in\Mbb$  there exist $a_1,\dots,a_M\in\Nbb$ such that for each homogeneous $v\in\Vbb$ and each $1\leq j\leq M$ we have
\begin{align}
Y_j(v)_n m=0 \qquad \text{if } n\geq \wt(v)+a_j  \label{eq118}
\end{align}
\end{df}

\begin{eg}
By \eqref{eq99}, every admissible $\Vbb^{\times M}$-module is weakly-admissible. By Prop. \ref{lb20} and Thm. \ref{lb43}, $\bbs_\fx(\Wbb)$ is a weakly-admissible $\Vbb^{\times M}$-module.
\end{eg}

The goal of this section is to prove Thm. \ref{lb50}. For that purpose, we need an explicit method of computing \eqref{eq72} in terms of the residue action of some global meromorphic section of $\scr V_C\otimes\omega_C$. Recall $W_1,\dots,W_M$ in Setting \ref{lb24}.

\begin{lm}\label{lb51}
Choose $b_1,\dots,b_M\in\Zbb$. Choose $E\in\Nbb$. Then there exists $T\in\Nbb$ such that for each $n\in\Zbb$, $v\in\Vbb^{\leq E}$, $1\leq j\leq M$,  there exists $\sigma\in H^0(C,\scr V_C^{\leq E}\otimes\omega_C(T\SX+\blt\DX))$   satisfying 
\begin{subequations}\label{eq122}
\begin{gather}
\mc U_\varrho(\theta_j)\sigma\big|_{W_j}\equiv v\cdot  \theta_j^nd\theta_j\quad\mod\quad H^0\big(W_j,\Vbb^{\leq E}\otimes_\Cbb\omega_C(-b_jy_j)\big) \\
\mc U_\varrho(\theta_k)\sigma\big |_{W_k}\equiv 0\quad\mod\quad H^0\big(W_k,\Vbb^{\leq E}\otimes_\Cbb\omega_C(-b_ky_k)\big)\qquad(\forall k\neq j)
\end{gather}
\end{subequations}
\end{lm}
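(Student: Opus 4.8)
\textbf{Proof proposal for Lemma \ref{lb51}.}

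The plan is to realize the prescribed principal parts at $y_1,\dots,y_M$ by a single global section of $\scr V_C^{\leq E}\otimes\omega_C$ with poles only at the $x_i$ and at the $y_k$, and then control the order of the poles at the $x_i$ uniformly. First I would fix $b_1,\dots,b_M$ and $E$, and observe that by Def. \ref{lb86} (or rather the analogous local description \eqref{eq65}), the requested local datum near $y_j$, namely $v\cdot\theta_j^n\,d\theta_j$ modulo sections vanishing to order $\geq b_j$ at $y_j$, depends only on the finitely many ``bad'' coefficients, i.e. it is determined by the terms $v\cdot\theta_j^m\,d\theta_j$ with $m<b_j$. Since $v$ ranges over the finite-dimensional space $\Vbb^{\leq E}$, and $m$ over the finite range $m<b_j$ (and $j$ over $\{1,\dots,M\}$), there are only finitely many local germs to be hit. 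Thus it suffices to produce, for each such germ, \emph{some} global section of $\scr V_C^{\leq E}\otimes\omega_C(\ast\SX+\ast\DX)$ restricting to it near the relevant $y_j$ and to $0$ (modulo order $\geq b_k$) near the other $y_k$; then one takes $T$ to be the maximum over this finite list of the pole orders appearing at $\SX$.

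The existence of such a global section for each individual germ is the heart of the matter, and here I would invoke the strong residue theorem (Thm. \ref{strongresidue}), or equivalently a Serre-duality/Riemann-Roch style surjectivity statement, applied to the vector bundle $\scr E=\scr V_C^{\leq E}$ (of finite rank, since we have truncated). Concretely: the obstruction to extending a prescribed principal part supported near $y_\star$ to a global section of $\scr E\otimes\omega_C(\ast\SX+\ast\DX)$ lies in $H^1(C,\scr E(-\ast\SX))$-type groups, which vanish once we allow poles of sufficiently high (finite) order at $\SX$ — this is exactly the mechanism already used in Prop. \ref{grauertcor} via Serre vanishing. Because $C$ is compact and $\scr E$ is a fixed finite-rank bundle, there is a uniform $k_0$ beyond which all the relevant $H^1$'s vanish; allowing pole order $T\geq k_0$ at $\SX$ (and any fixed finite order at $\DX$ large enough to accommodate the germs $v\cdot\theta_j^m d\theta_j$ with $m$ possibly negative) guarantees surjectivity of the restriction-to-principal-parts map. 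Then for each of the finitely many germs in our list we pick a preimage, and we are done: the same $T$ works for all $n,v,j$ because modulo $H^0(W_j,\Vbb^{\leq E}\otimes\omega_C(-b_jy_j))$ the datum $v\cdot\theta_j^n d\theta_j$ coincides with one of the finitely many germs already treated.

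The main obstacle I expect is bookkeeping rather than conceptual: one must phrase the ``mod order $\geq b_k$'' conditions as a genuine finite-dimensional linear-algebra surjectivity problem and check that the target of the principal-part map is finite-dimensional and that a single uniform pole bound $T$ at $\SX$ suffices simultaneously for \emph{all} $j$ and \emph{all} germs. This requires care with two points: first, that the sections are required to vanish (mod order $\geq b_k$) at the $y_k$ with $k\neq j$, which just means we also prescribe the (finitely many) bad coefficients there to be zero, enlarging the finite list but not changing its finiteness; second, that negative $n$ (so $v\cdot\theta_j^n d\theta_j$ has a pole at $y_j$) is allowed, which is why we work in $\scr V_C\otimes\omega_C(\blt\DX)$ with arbitrary finite poles at $\DX$ — the order needed at $\DX$ is bounded in terms of $\max_j b_j$ and the lower-truncation of $\Vbb^{\leq E}$, hence finite. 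Once these finiteness statements are in place, Serre vanishing (exactly as invoked in the proof of Prop. \ref{grauertcor}) closes the argument and yields the uniform $T$.
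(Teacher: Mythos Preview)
Your overall strategy—Serre vanishing plus surjectivity of a principal-parts map—is the right one and matches the paper's. But there is a genuine gap in your reduction to ``finitely many germs.'' You write that $m$ ranges ``over the finite range $m<b_j$,'' and later that ``the order needed at $\DX$ is bounded in terms of $\max_j b_j$ and the lower-truncation of $\Vbb^{\leq E}$.'' Both claims are false: the integer $n$ in the lemma is unbounded below, so the datum $v\cdot\theta_j^n\,d\theta_j$ (which, modulo $\omega_C(-b_jy_j)$, is simply itself when $n<b_j$) can have a pole of arbitrarily high order at $y_j$. There is no finite list of germs to loop over, and no fixed pole order at $\DX$ that accommodates all $n$. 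Consequently your ``take the maximum $T$ over finitely many cases'' step does not go through.

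The paper avoids this trap by a sharper use of Serre vanishing. It fixes the divisor $\varDelta=-\sum_k b_k y_k$ once and for all, chooses $T$ so that $H^1\big(C,\scr V_C^{\leq E}\otimes\omega_C(T\SX+\varDelta)\big)=0$, and then for each $n<b_j$ considers $\varDelta'=-n\,y_j-\sum_{k\neq j}b_k y_k\geq\varDelta$. The short exact sequence
\[
0\to \scr V_C^{\leq E}\otimes\omega_C(T\SX+\varDelta)\to \scr V_C^{\leq E}\otimes\omega_C(T\SX+\varDelta')\to\scr G\to 0
\]
has skyscraper quotient $\scr G$, so the vanishing of $H^1$ for the \emph{fixed} left-hand sheaf already forces surjectivity of $H^0$ onto $H^0(C,\scr G)$ for every $n$. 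The point you are missing is that enlarging the pole order at $\DX$ (passing from $\varDelta$ to $\varDelta'$) never hurts: the $T$ chosen for $\varDelta$ works uniformly because the obstruction always sits in the same $H^1$. Once you organize the argument this way, no finiteness of germs is needed.
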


The following Mittag-Leffler type argument is standard and has appeared in \cite{AN03-finite-dimensional,KZ-conformal-block,DGT2}. We follow the proof of \cite[Thm. 12.1]{Gui-sewingconvergence}.

\begin{proof}
It suffices to assume $n< b_j$, since the case $n\geq b_j$ is trivial if we set $\sigma=0$. Define divisor $\varDelta=-\sum_{k=1}^M b_ky_k$. By Asmp. \ref{ass1} and Serre's vanishing theorem (cf. \cite[Prop. 5.2.7]{Huy} or \cite[Thm. IV.2.1]{BaSt}), there exists $T\in \Nbb$ such that for all $t\geq T$,
    \begin{equation}
    H^1\big(C,\SV_C^{\leq E}\otimes \omega_C(tS_\fx+\varDelta)\big)=0  \label{eq121}
    \end{equation}
Fix $1\leq j\leq M$. Define $\varDelta'=-ny_j-\sum_{k\neq j}b_ky_k$. Then $\varDelta'\geq\varDelta$. Consider the short exact sequence
\begin{align*}
0\rightarrow \SV_C^{\leq E}\otimes \omega_C(TS_\fx+\varDelta)\rightarrow \SV_C^{\leq E}\otimes \omega_C(TS_\fx+\varDelta')\rightarrow\scr G\rightarrow 0
\end{align*} 
where $\scr G$ is the quotient sheaf of the previous two sheaves, which is an $\mc O_C$-module with support in $y_j$. By \eqref{eq121}, we have a long exact sequence
\begin{align}
\begin{aligned}
&0\rightarrow H^0\big(C,\SV_C^{\leq E}\otimes \omega_C(TS_\fx+\varDelta)\big)\rightarrow H^0\big(C,\SV_C^{\leq E}\otimes \omega_C(TS_\fx+\varDelta')\big)\\
\rightarrow&H^0(C,\scr G)\rightarrow 0
\end{aligned}
\end{align}
Define $\beta\in H^0(C,\scr G)$ to be (the equivalence class of) $\mc U_\varrho(\theta_j)^{-1}v\cdot\theta_j^nd\theta_j$ on $W_j$ and $0$ on $C-\{y_j\}$. Then any lift $\sigma\in H^0\big(C,\SV_C^{\leq E}\otimes \omega_C(TS_\fx+\varDelta')\big)$ of $\beta$ satisfies \eqref{eq122}. 
\end{proof}

\begin{pp}\label{lb52}
Let $a_1,\dots,a_M\in\Nbb$. Let $E\in\Nbb$ and $b_1=a_1+E,\dots,b_M=a_M+E$. Choose $T\in\Nbb$, $n\in\Zbb$, $v\in\Vbb^{\leq E}$, $1\leq j\leq M$, and $\sigma$ be as in Lem. \ref{lb51}. Then for each $\upphi\in\scr T_{\fk X,a_\star}^*(\Wbb)$ and $w\in\scr W_\fx(\Wbb)$ we have
\begin{align}
\<Y_j(v)_n\upphi,w\>=-\sum_{i=1}^N \upphi (\sigma*_i w)  \label{eq125}
\end{align}
\end{pp}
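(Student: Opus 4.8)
The plan is to derive \eqref{eq125} from the conformal-block property of the canonical conformal block $\gimel$ associated to $\bbs_\fx(\Wbb)$, in the form \eqref{eq142} of Rem. \ref{lb64}, applied to the section $\sigma$ furnished by Lem. \ref{lb51}. Since $\sigma\in H^0\big(C,\SV_C^{\leq E}\otimes\omega_C(T\SX+\blt\DX)\big)$ is in particular a section of $\SV_C\otimes\omega_C(\blt\SX+\blt\DX)$, and since $\upphi\in\scr T_{\fk X,a_\star}^*(\Wbb)\subset\bbs_\fx(\Wbb)$, formula \eqref{eq142} applies and gives
\begin{align*}
\sum_{k=1}^M(\sigma*_k\upphi)(w)=-\sum_{i=1}^N\upphi(\sigma*_iw).
\end{align*}
It therefore suffices to check that $(\sigma*_j\upphi)(w)=\langle Y_j(v)_n\upphi,w\rangle$ and that $(\sigma*_k\upphi)(w)=0$ for every $k\neq j$.

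To compute $\sigma*_k\upphi$ I would use \eqref{eq124} (together with the definition \eqref{eq72}): the quantity $\sigma*_k\upphi=\Res_{\theta_k=0}Y_k(\mc U_\varrho(\theta_k)\sigma,\theta_k)\upphi$ depends only on the expansion of $\mc U_\varrho(\theta_k)\sigma\big|_{W_k}$ in the coordinate $\theta_k$, and applying $\Res_{\theta_k=0}$ termwise a monomial $u\,\theta_k^m d\theta_k$ with $u\in\Vbb^{\leq E}$ contributes $Y_k(u)_m\upphi$. For $k\neq j$, the second line of \eqref{eq122} says this expansion lies in $H^0\big(W_k,\Vbb^{\leq E}\otimes_\Cbb\omega_C(-b_ky_k)\big)$, i.e.\ is a sum of such monomials with $m\geq b_k$; decomposing each $u$ into homogeneous components of weight $h\leq E$ and using $b_k=a_k+E\geq a_k+h$, Prop. \ref{lb20} (the vanishing \eqref{eq126}) forces every contribution to be zero, so $\sigma*_k\upphi=0$. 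For $k=j$, the first line of \eqref{eq122} shows the expansion equals $v\,\theta_j^n d\theta_j$ plus a sum of monomials with $m\geq b_j$; the latter part contributes nothing by the same argument, while the former contributes exactly $Y_j(v)_n\upphi$. Hence $\sigma*_j\upphi=Y_j(v)_n\upphi$, and substituting into the displayed identity yields \eqref{eq125}.

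The argument is short and essentially formal; there is no serious obstacle. The only point deserving attention is the bookkeeping of the ``remainder'' terms modulo $\omega_C(-b_ky_k)$ in \eqref{eq122}, together with the observation that the choice $b_k=a_k+E$ in Lem. \ref{lb51} is precisely what guarantees that every such remainder operator $Y_k(u)_m$ annihilates $\upphi\in\scr T_{\fk X,a_\star}^*(\Wbb)$ via \eqref{eq126}. One should also keep in mind that the terms $\upphi(\sigma*_iw)$ on the right-hand side of \eqref{eq125} involve the residue action on $\scr W_\fx(\Wbb)$ near the incoming points $x_i$, where $\SV_{\fk X,a_\star}$ coincides with $\SV_C$, so no compatibility issue arises in evaluating them and in invoking \eqref{eq142}.
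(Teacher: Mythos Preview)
Your proof is correct and follows essentially the same route as the paper: invoke \eqref{eq142} for the global section $\sigma$, then use \eqref{eq124} together with the expansion conditions \eqref{eq122} and the vanishing \eqref{eq126} to identify the sum $\sum_k(\sigma*_k\upphi)(w)$ with $\langle Y_j(v)_n\upphi,w\rangle$. Your bookkeeping with $b_k=a_k+E$ to kill the remainder terms is exactly the point.
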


\begin{proof}
By \eqref{eq124} and \eqref{eq142}, the RHS of \eqref{eq125} equals $\sum_{k=1}^M\<\Res_{y_k} Y_k(\mc U_\varrho(\theta_k)\sigma,\theta_k)\upphi,w\>$.
Using \eqref{eq122} and \eqref{eq126}, one finds that this expression equals the LHS of \eqref{eq125}.
\end{proof}

\begin{thm}[Universal property]\label{lb50}
Choose local coordinates $\eta_1,\dots,\eta_N$ of $C$ at $x_1,\dots,x_N$. Then for each weakly-admissible $\Vbb^{\times M}$-module $\Mbb$ and each $\Gamma\in\scr T_{\fk X}^*(\Wbb\otimes\Mbb)$ (recall Def. \ref{lb25}), there exists a unique $T\in\Hom_{\Vbb^{\times M}}(\Mbb,\bbs_\fx(\Wbb))$ such that the following diagram commutes:
\begin{equation}
\begin{tikzcd}[row sep=tiny, column sep=large]
\Wbb\otimes\Mbb \arrow[rd,"\Gamma"] \arrow[dd,"\idt\otimes T"'] &   \\
                        & \Cbb \\
\Wbb\otimes\bbs_\fx(\Wbb) \arrow[ru,"\gimel"']            &  
\end{tikzcd}
\end{equation}
\end{thm}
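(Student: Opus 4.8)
The plan is to construct the morphism $T:\Mbb\to\bbs_\fx(\Wbb)$ directly from the data of $\Gamma$, prove it intertwines the $\Vbb^{\times M}$-actions, verify the diagram commutes, and then establish uniqueness. First I would define, for each $m\in\Mbb$, a linear functional $Tm:\scr W_\fx(\Wbb)\to\Cbb$ by
\begin{align*}
(Tm)(w)=\Gamma\big(\mc U(\eta_\blt)^{-1}w\otimes m\big)\qquad(w\in\scr W_\fx(\Wbb)),
\end{align*}
using the fixed local coordinates $\eta_\blt$ to identify $\scr W_\fx(\Wbb)$ with $\Wbb$. The key point is that $Tm$ actually lands in $\bbs_\fx(\Wbb)$, i.e.\ that it is a partial conformal block of some multi-level $a_\star$. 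Since $\Mbb$ is weakly-admissible, choose $a_1,\dots,a_M\in\Nbb$ with $Y_j(v)_n m=0$ for $n\geq\wt(v)+a_j$ (all homogeneous $v$, all $j$). Then I must check $Tm$ vanishes on $H^0\big(C,\scr V_{\fx,a_\star}\otimes\omega_C(\blt S_\fx)\big)\cdot\scr W_\fx(\Wbb)$. Given $\sigma$ in this space and $w\in\scr W_\fx(\Wbb)$, the invariance of $\Gamma$ (Def.\ \ref{lb25}, applied to $\fk X$ viewed as an $(M+N)$-pointed surface with $\Mbb$ at $y_\star$) gives
\begin{align*}
\sum_{i=1}^N\Gamma\big((\sigma*_i w)\otimes m\big)+\sum_{j=1}^M\Gamma\big(w\otimes(\sigma*_j m)\big)=0,
\end{align*}
and the vanishing condition \eqref{eq118} forces each $\sigma*_j m=0$ because $\sigma$, being a section of $\scr V_{\fx,a_\star}$, has the vanishing order at $y_j$ prescribed by \eqref{eq65}, so $Y_j(\mc U_\varrho(\theta_j)\sigma,\theta_j)m$ has no residue. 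Hence $\sum_i(Tm)(\sigma*_i w)=0$, so $Tm\in\scr T_{\fx,a_\star}^*(\Wbb)\subset\bbs_\fx(\Wbb)$.

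Next I would show $T$ is a $\Vbb^{\times M}$-module morphism, i.e.\ $T(Y_j(v)_n m)=Y_j(v)_n(Tm)$ for all $j,v,n$. Here is where Prop.\ \ref{lb52} does the heavy lifting: choosing $E$ with $v\in\Vbb^{\leq E}$, setting $b_k=a_k+E$, and taking $T'$ (the integer) and $\sigma\in H^0(C,\scr V_C^{\leq E}\otimes\omega_C(T'\SX+\blt\DX))$ as in Lem.\ \ref{lb51}, Prop.\ \ref{lb52} gives
\begin{align*}
\<Y_j(v)_n(Tm),w\>=-\sum_{i=1}^N (Tm)(\sigma*_i w)=-\sum_{i=1}^N\Gamma\big((\sigma*_i w)\otimes m\big).
\end{align*}
On the other hand, by the $\fk X$-invariance of $\Gamma$ again, $-\sum_i\Gamma\big((\sigma*_iw)\otimes m\big)=\sum_j\Gamma\big(w\otimes(\sigma*_j m)\big)$; and using \eqref{eq122} together with the truncation \eqref{eq118} for $m$, the only surviving term on the right is $\Gamma\big(w\otimes Y_j(v)_n m\big)=(T(Y_j(v)_n m))(w)$. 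This requires $b_k\geq a_k$ (so that the ``error'' sections in \eqref{eq122} of pole order $<b_k$ still kill $m$ via \eqref{eq118}), which is why $b_k=a_k+E$ was chosen; care is needed that the multi-level $a_\star$ used for $m$ is a fixed one valid simultaneously for $m$ and for $Y_j(v)_n m$ — one can enlarge $a_\star$ at the outset, or note $Y_j(v)_nm$ satisfies \eqref{eq118} with $a_j$ replaced by $a_j+\max\{0,\wt(v)-n-1\}$ as in Prop.\ \ref{lb19}.

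Commutativity of the triangle is then immediate: $\gimel\big(w\otimes Tm\big)=(Tm)(w)=\Gamma(w\otimes m)$ by the very definition of $\gimel$ in \eqref{eq79} and of $T$. For uniqueness, suppose $T,\wtd T$ both make the diagram commute; then for every $w,m$ and every $\upphi$ in the image of $T-\wtd T$ we have $\gimel\big(w\otimes(T-\wtd T)m\big)=0$, i.e.\ $((T-\wtd T)m)(w)=0$ for all $w\in\scr W_\fx(\Wbb)$, forcing $(T-\wtd T)m=0$ since elements of $\bbs_\fx(\Wbb)$ are by definition linear functionals on $\scr W_\fx(\Wbb)$. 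I expect the main obstacle to be the bookkeeping in the intertwining step: keeping track of which multi-level $a_\star$ is valid for which vector, and checking rigorously that the ``mod $H^0(W_k,\dots(-b_ky_k))$'' ambiguity in Lem.\ \ref{lb51} is annihilated against $m$ — this is where the interplay between the pole-order bound $b_k=a_k+E$ and the truncation bound \eqref{eq118} must be matched precisely. Everything else is a routine application of the residue/Stokes identity \eqref{eq142} and the propagation machinery already set up.
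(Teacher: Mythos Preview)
Your proposal is correct and follows essentially the same approach as the paper's proof: define $T(m)$ as the linear functional $w\mapsto\Gamma(w\otimes m)$, use the weak-admissibility truncation to show it lands in $\scr T_{\fx,a_\star}^*(\Wbb)$, and then invoke Lem.~\ref{lb51}/Prop.~\ref{lb52} exactly as you do to verify the intertwining property. One minor notational slip: with $\mc U(\eta_\blt):\scr W_\fx(\Wbb)\to\Wbb$, the correct formula for $Tm$ as a functional on $\scr W_\fx(\Wbb)$ is $(Tm)(w)=\Gamma(\mc U(\eta_\blt)w\otimes m)$, not $\mc U(\eta_\blt)^{-1}$; the paper simply identifies $\scr W_\fx(\Wbb)=\Wbb$ via $\mc U(\eta_\blt)$ and writes $T(m)(w)=\Gamma(w\otimes m)$, which is cleaner and avoids this confusion.
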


\begin{rem}
As in Sec. \ref{lb58}, when considering $\scr T_{\fk X}^*(\Wbb\otimes\Mbb)$, we are assigning $\Mbb$ to the marked points $y_1,\dots,y_M$. So the $\fk X$ in $\scr T_{\fk X}^*(\Wbb\otimes\Mbb)$ has $N+M$ (incoming) marked points.
\end{rem}

\begin{proof}
By \eqref{eq79}, the only element $\upphi\in\bbs_\fx(\Wbb)$ annihilated by $\gimel(w\otimes\cdot)$ for all $w\in\Wbb$ is $0$. So $T$ must be unique. Let us prove the existence of $T$. Identify $\scr W_\fx(\Wbb)$ with $\Wbb$ via $\mc U(\eta_\blt)$.

Define a linear map $T:\Mbb\rightarrow\Wbb^*$ such that for each $m\in\Mbb$, 
\begin{gather*}
T(m):\Wbb\rightarrow\Cbb\qquad w\mapsto \Gamma(w\otimes m)
\end{gather*}
Let us prove that $T(\Mbb)\subset\bbs_\fx(\Wbb)$. Choose any $m\in\Mbb$. Choose $a_1,\dots,a_M\in\Nbb$ satisfying \eqref{eq118}. Choose  $\sigma\in H^0\big(C,\SV_{\fx,a_\star}\otimes \omega_C(\bullet S_\fx)\big)$ and $w\in \Wbb$. Since $\Gamma$ is a conformal block associated to $\fk X$ and $\Wbb\otimes\Mbb$, $\Gamma$ vanishes on $\sigma\cdot (w\otimes m)=\sigma\cdot w\otimes m+w\otimes\sigma\cdot m$. (Note that we are also viewing $\sigma$ as an element of $H^0(C,\scr V_\fx\otimes\omega_C(\blt\SX+\blt\DX))$.) So 
    \begin{equation}\label{eq119}
    T(m)(\sigma\cdot w)=\sum_{k=1}^M\Gamma\big(w\otimes(\sigma*_k m)\big)=-\sum_{i=1}^N \Gamma\big((\sigma*_i w)\otimes m\big)
    \end{equation}
By \eqref{eq118} and the local expression of $\sigma$ near $y_1,\cdots, y_M$, we have $\sigma*_k m=0$ and hence $\eqref{eq119}=0$. This proves $T(m)\in \ST_{\fx,a_\star}^*(\Wbb)\subset\bbs_\fx(\Wbb)$.

We now prove that $T$ is a weak $\Vbb^{\times M}$-module morphism. Choose any $E\in\Nbb$,  $v\in\Vbb^{\leq E}$, $n\in\Zbb$, $1\leq j\leq M$. Let $\upphi=T(m)$, and let $\sigma$ be as in Lem. \ref{lb51}. Then
\begin{align*}
\<Y_j(v)_n(T(m)),w\>\xlongequal{\eqref{eq125}}-\sum_{i=1}^N \<T(m), \sigma*_i w\>=\text{the RHS of }\eqref{eq119}
\end{align*}
Using \eqref{eq122} and \eqref{eq118}, one finds that the middle of \eqref{eq119} equals
\begin{align*}
\Gamma\big(w\otimes Y_j(v)_n m \big)=\<T(Y_j(v)_nm),w\>
\end{align*}
This proves that $T$ intertwines the actions of $Y_j(u)_n$.
\end{proof}

\begin{rem}\label{fusion-uniqueness}
It is clear that the pair $(\bbs_\fx(\Wbb),\gimel)$ is uniquely determined by the universal property in Thm. \ref{lb50}. Namely, if $\Xbb$ is a weakly-admissible $\Vbb^{\times M}$-module, $\daleth\in\scr T_{\fk X}^*(\Wbb\otimes\Xbb)$, and $(\Xbb,\daleth)$ satisfies the same property as $(\bbs_\fx(\Wbb),\gimel)$ in Thm. \ref{lb50}, then there is a (necessarily unique) isomorphism $\Phi:\Xbb\rightarrow\bbs_\fx(\Wbb)$ such that $\daleth=\gimel\circ(\idt\otimes\Phi)$. 
\end{rem}

\begin{co}\label{lb53}
Choose local coordinates $\eta_1,\dots,\eta_N$ of $C$ at $x_1,\dots,x_N$. Then for each weakly-admissible $\Vbb^{\times M}$-module $\Mbb$, we have an isomorphism of vector spaces
\begin{gather}
\begin{gathered}
\Hom_{\Vbb^{\times M}}(\Mbb,\bbs_\fx(\Wbb))\xlongrightarrow{\simeq} \scr T_\fx^*(\Wbb\otimes\Mbb)\\
T\mapsto \gimel\circ(\idt\otimes T)
\end{gathered}
\end{gather}
\end{co}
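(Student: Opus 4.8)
The plan is to read the corollary directly off the universal property in Theorem \ref{lb50}. Write $\Lambda(T)=\gimel\circ(\id\otimes T)$ for $T\in\Hom_{\Vbb^{\times M}}(\Mbb,\bbs_\fx(\Wbb))$, where $\gimel$ is the canonical conformal block attached to the chosen coordinates $\eta_\blt$ (Theorem \ref{lb26}). The first step is to check that $\Lambda$ is well defined, i.e.\ that $\Lambda(T)\in\scr T_\fx^*(\Wbb\otimes\Mbb)$ in the sense of Definition \ref{lb25}, with $\fk X$ regarded as an $(N+M)$-pointed surface carrying $\Wbb$ at $x_\blt$ (with local coordinates $\eta_\blt$) and $\Mbb$ at $y_\star$ (with local coordinates $\theta_\star$). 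By Definition \ref{lb25} it suffices to see that $\Lambda(T)$ vanishes on $H^0\big(C,\scr V_C\otimes\omega_C(\blt S_\fx+\blt D_\fx)\big)\cdot(\Wbb\otimes\Mbb)$. For a section $\sigma$ there and $w\in\Wbb$, $m\in\Mbb$, one has $\sigma\cdot(w\otimes m)=\sum_{i=1}^N(\sigma *_i w)\otimes m+\sum_{j=1}^M w\otimes(\sigma *_j m)$, where on $\Wbb\otimes\Mbb$ the $i$-th residue action ($1\leq i\leq N$) only touches the $\Wbb$ factor and the residue action at $y_j$ only touches $\Mbb$ through the vertex operators $Y_j$. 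Since $T$ is a morphism of weak $\Vbb^{\times M}$-modules it commutes with every $Y_j(v)_n$, hence with each $\sigma *_j$, so $(\id\otimes T)\big(\sigma\cdot(w\otimes m)\big)=\sigma\cdot\big(w\otimes T(m)\big)$; applying $\gimel$ and using that $\gimel\in\scr T_\fx^*(\Wbb\otimes\bbs_\fx(\Wbb))$ (Theorem \ref{lb26}) gives $0$. Linearity of $\Lambda$ is obvious, so $\Lambda$ is a well-defined linear map.

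The second step is bijectivity, which is merely a repackaging of Theorem \ref{lb50}: surjectivity of $\Lambda$ is the existence clause (every $\Gamma\in\scr T_\fx^*(\Wbb\otimes\Mbb)$ has the form $\gimel\circ(\id\otimes T)$ for some $T\in\Hom_{\Vbb^{\times M}}(\Mbb,\bbs_\fx(\Wbb))$, using that $\Mbb$ is weakly-admissible), and injectivity is the uniqueness clause. Alternatively, injectivity is seen at once: if $\Lambda(T)=0$, then $\gimel\big(w\otimes T(m)\big)=0$ for all $w\in\Wbb$, $m\in\Mbb$, and by the defining formula \eqref{eq79} for $\gimel$ the only element of $\bbs_\fx(\Wbb)$ annihilated by all $\gimel(w\otimes\cdot)$ is $0$, whence $T=0$.

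I do not expect a genuine obstacle here: all the real content already sits in Theorem \ref{lb26}, Theorem \ref{lb43}, and especially Theorem \ref{lb50}, and the corollary only reorganizes it. The one point worth a line of care is the functoriality invoked in the first step — that pulling a conformal block back along a module morphism inserted into the outgoing slots again produces a conformal block — but this is immediate from the very definition of a morphism of weak $\Vbb^{\times M}$-modules (compatibility with the operators $Y_j(v)_n$, hence with the residue actions $*_j$). So the write-up should be a short assembly of previously established facts.
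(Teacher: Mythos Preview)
Your proposal is correct and follows essentially the same approach as the paper, which simply says ``Immediate from Thm.~\ref{lb50}.'' You add an explicit verification that $\Lambda(T)=\gimel\circ(\id\otimes T)$ lands in $\scr T_\fx^*(\Wbb\otimes\Mbb)$ (via compatibility of $T$ with the residue actions $*_j$), a point the paper treats as obvious; after that, your surjectivity and injectivity arguments are exactly the existence and uniqueness clauses of Theorem~\ref{lb50}.
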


\begin{proof}
Immediate from Thm. \ref{lb50}.
\end{proof}

\subsection{$C_2$-cofiniteness implies $\dim\scr T_{\fk X,a_1,\dots,a_M}^*(\Wbb)<+\infty$}

In the remaining part of this chapter, we assume that $\Vbb$ is $C_2$-cofinite. Notice Thm. \ref{lb46} for many equivalent descriptions of grading-restricted $\Vbb^{\otimes N}$-modules. By Cor. \ref{lb44}, in order to show that $\boxbackslash_\fx(\Wbb)$ is a generalized $\Vbb^{\otimes M}$-module, we need to show that each $\scr T_{\fk X,a_\star}^*(\Wbb)$ is finite-dimensional. For that purpose, we need a preparatory result:

\begin{lm}\label{lb49}
Let $\Vbb_1,\dots,\Vbb_N$ be $C_2$-cofinite, and let $\Ebb\subset\Vbb_1\otimes\cdots\otimes\Vbb_N$ be the finite subset of homogeneous vectors in Thm. \ref{Miy}. Let $\Wbb$ be a finitely-generated admissible $\Vbb_1\times\cdots\times\Vbb_N$-module. Then for any $n\in \Nbb$, there exists $\nu(n)\in \Nbb$ such that any $\wtd L_\blt(0)$-homogeneous vector $w\in \Wbb$ satisfying $\wtd\wt(w)>\nu(n)$ is a finite sum of vectors of the form $Y_i(u_i)_{-l}w^\circ$ where $1\leq i\leq N$, $u=u_1\otimes \cdots \otimes u_N\in \Ebb,l>n$. 

\end{lm}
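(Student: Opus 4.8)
\textbf{Proof proposal for Lemma \ref{lb49}.}

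The plan is to mimic the standard argument (due to Gaberdiel--Neitzke and Miyamoto, and used in the $C_2$-cofinite finiteness proofs of \cite{AN03-finite-dimensional,KZ-conformal-block,DGT2}) that reduces high-weight vectors in a finitely-generated module to vectors produced by ``deep'' creation operators. I would begin by recalling the precise content of Thm. \ref{Miy}: since each $\Vbb_i$ is $C_2$-cofinite, there is a finite set $\Ebb$ of $\wtd L_\blt(0)$-homogeneous vectors in $\Vbb_1\otimes\cdots\otimes\Vbb_N$ (containing the vacuum, and closed in a suitable spanning sense) such that $\Vbb_1\otimes\cdots\otimes\Vbb_N$ is spanned by iterated ``normally-ordered'' products $Y(u^{(1)})_{-k_1}\cdots Y(u^{(r)})_{-k_r}\ibf$ with $u^{(s)}\in\Ebb$ and $k_1\geq k_2\geq\cdots\geq k_r\geq 1$; the key point of the $C_2$-cofinite statement is that one may moreover arrange a Poincar\'e--Birkhoff--Witt--type spanning set for $\Wbb$, namely $\Wbb$ is spanned by vectors $Y_{i_1}(u^{(1)})_{-k_1}\cdots Y_{i_r}(u^{(r)})_{-k_r}w_0$ where $w_0$ runs over the finite generating set of $\Wbb$, the $u^{(s)}$ run over $\Ebb$, and the exponents satisfy $k_1\geq k_2\geq\cdots\geq k_r\geq 1$. (Here I use $Y_i$ for the action of the $i$-th tensor factor as in Def. \ref{lb33}; a homogeneous $u\in\Ebb$ contributes $Y_i(u_i)_{-l}$ through its $i$-th component.)

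Next I would set up the weight bookkeeping. Fix $n\in\Nbb$. On such a spanning vector $w=Y_{i_1}(u^{(1)})_{-k_1}\cdots Y_{i_r}(u^{(r)})_{-k_r}w_0$, the grading property \eqref{eq115} gives $\wtd\wt(w)=\wtd\wt(w_0)+\sum_{s=1}^r\bigl(k_s-\wt(u^{(s)})+1\bigr)$ — wait, more precisely each $Y_{i}(u)_{-k}$ raises $\wtd L(0)$-weight by $k+\wt(u)-(-k+1)$; in any case each factor raises the weight by at least $1$ since $k_s\geq 1$ and $u^{(s)}$ has bounded weight (there are finitely many vectors in $\Ebb$, so let $E_0=\max_{u\in\Ebb}\wt(u)$). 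Thus if $\wtd\wt(w)$ is large then $r$ is large; quantitatively, if $\wtd\wt(w)>\nu(n)$ for a suitably large $\nu(n)$ depending on $n$, $E_0$, $\max_{w_0}\wtd\wt(w_0)$, and the number of generators, then $r$ is forced to exceed some threshold. I would then argue: because $k_1\geq k_2\geq\cdots\geq k_r\geq 1$, if $k_1\leq n$ then $\sum k_s\leq rn$, which combined with the weight formula bounds $\wtd\wt(w)$ by a linear function of $r$ with the constraint $k_1\leq n$; one checks this is incompatible with $\wtd\wt(w)>\nu(n)$ once $\nu(n)$ is chosen large enough — actually the cleaner route is: the number of monomials $Y_{i_1}(u^{(1)})_{-k_1}\cdots Y_{i_r}(u^{(r)})_{-k_r}w_0$ of bounded weight with \emph{all} $k_s\leq n$ is finite (since then $r$ is bounded and each exponent lies in $\{1,\dots,n\}$), so they span a finite-dimensional subspace of $\bigoplus_{m\leq \nu(n)}\Wbb(m)$; picking $\nu(n)$ above the top weight appearing, every homogeneous $w$ with $\wtd\wt(w)>\nu(n)$ is a sum of monomials at least one of whose exponents exceeds $n$. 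The outermost such exponent is $k_1>n$, so $w$ is a finite sum of vectors $Y_{i_1}(u^{(1)})_{-k_1}w^\circ$ with $k_1>n$ and $w^\circ=Y_{i_2}(u^{(2)})_{-k_2}\cdots w_0\in\Wbb$, which is the asserted form.

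The main obstacle — and the step requiring the most care — is establishing the \emph{ordered} ($k_1\geq\cdots\geq k_r$) $C_2$-type spanning set for the finitely-generated module $\Wbb$ with the property that the deepest creation operator sits on the outside, and that the reordering commutator terms do not spoil the weight estimate. This is precisely where $C_2$-cofiniteness of each $\Vbb_i$ enters (via Thm. \ref{Miy}), and where one uses the Jacobi/commutator identity \eqref{jacobi} to push operators past each other: each commutation $[Y_i(u)_{-k},Y_{i'}(u')_{-k'}]$ either vanishes (if $i\neq i'$) or produces, via \eqref{jacobi}, a sum of terms $Y_i(Y(u)_\ell u')_{-k-k'+\ell+\cdots}$ of \emph{strictly fewer} creation operators but the same total weight, so an induction on the number of factors (and on weight) terminates. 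I would also need to handle the finitely-many generators $w_0$ uniformly, but since there are finitely many of them with bounded weights this only affects the constant $\nu(n)$. Once the ordered spanning set is in hand, the counting argument above is routine, so I expect the write-up to be short modulo a careful invocation of Thm. \ref{Miy}.
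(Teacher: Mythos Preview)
Your overall strategy (ordered spanning set plus weight counting to force the outermost exponent past $n$) is the right one, and matches the paper's in spirit. But there is a genuine gap in how you invoke Thm.~\ref{Miy}: that theorem gives an ordered spanning set in terms of the \emph{tensor-product} vertex operator $Y_\Wbb$ for the single VOA $\Vbb_1\otimes\cdots\otimes\Vbb_N$, namely monomials $Y_\Wbb(v_k)_{-n_k}\cdots Y_\Wbb(v_1)_{-n_1}w_0$ with $v_s\in\Ebb$ and $n_1<n_2<\cdots<n_k$. It does \emph{not} hand you an ordered spanning set in terms of the individual $Y_i$, which is what your argument assumes. Your commutator sketch to manufacture such a set does not close: the bracket $[Y_i(u)_{-k},Y_i(u')_{-k'}]$ produces operators $Y_i(Y(u)_\ell u')_{\cdots}$ with $Y(u)_\ell u'$ generally \emph{outside} any prescribed finite set, so the induction on the number of factors does not terminate in the form you need. (This is exactly the difficulty that the Gaberdiel--Neitzke/Miyamoto filtration argument is designed to handle for a \emph{single} VOA; re-running it separately for each $Y_i$ while keeping a common ordering on exponents is not automatic.) There is also a secondary issue: with your non-strict ordering $k_1\geq\cdots\geq k_r$, the set of monomials with all $k_s\leq n$ and bounded weight need not be finite unless every operator strictly raises weight, which fails when some component $u_i$ has weight $0$.

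The paper sidesteps all of this by a short pigeonhole step. First it uses Thm.~\ref{lb46} to regard $\Wbb$ as a finitely-generated weak $\Vbb_1\otimes\cdots\otimes\Vbb_N$-module, so Thm.~\ref{Miy} applies verbatim with $Y_\Wbb$. Because the exponents are \emph{strictly} increasing, the monomials \eqref{finiteness1} with top exponent $n_k\leq nN-N+1$ form a finite set; take $\nu(n)$ to be their maximum $\wtd L(0)$-weight. If $\wtd\wt(w)>\nu(n)$, then $w$ is a sum of monomials with outermost operator $Y_\Wbb(u)_{-K}$ for some $K>nN-N+1$. Now expand only this outermost operator via \eqref{eq31}:
\[
Y_\Wbb(u)_{-K}=\sum_{k_1+\cdots+k_N=K-1+N} Y_1(u_1)_{-k_1}\cdots Y_N(u_N)_{-k_N}.
\]
Since $k_1+\cdots+k_N=K-1+N>nN$, pigeonhole gives some $k_i>n$; commuting that $Y_i(u_i)_{-k_i}$ to the front (the $Y_j$ for $j\neq i$ commute with it) yields the desired form $Y_i(u_i)_{-l}w^\circ$ with $l>n$. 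This avoids entirely the need for an ordered spanning set in the individual $Y_i$.
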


Recall Def. \ref{grading3} for the meanings of $\wtd\wt$ and $\wtd\wt_i$ and homogeneous vectors. 

\begin{proof}
By Thm. \ref{lb46}, $\Wbb$ is a finitely-generated weak $\Vbb_1\otimes\cdots\otimes\Vbb_N$-module. It suffices to consider the case that $\Wbb$ is generated by a single homogeneous vector $w_0$. Let $T$ be the set of  vectors of the form (\ref{finiteness1}) satisfying $n_k\leq nN-N+1$. So $T$ is a finite subset of $\Wbb$. Set $\nu(n)=\max\{\wtd\wt(w_1):w_1\in T\}$. If $w\in \Wbb$ is homogeneous and $\wtd \wt(w)>\nu(n)$, then we can also write $w$ as a sum of nonzero homogeneous vectors of the form (\ref{finiteness1}) whose $\wtd L_\blt(0)$-weights are equal to $\wtd\wt_\blt(w)$, but now $n_k$ must be greater than $nN-N+1$. So $w$ is a sum of vectors of the form $Y_\Wbb(u)_{-K}w_2$ where $u=u_1\otimes \cdots \otimes u_N\in \Ebb,K>nN-N+1$ and $w_2$ is homogeneous. Note that 
    $$
    Y_\Wbb(u)_{-K}w_2=\sum_{k_1+\cdots +k_N=K-1+N} Y_1(u_1)_{-k_1}\cdots Y_N(u_N)_{-k_N}w_2.
    $$
For each $(k_1,\cdots,k_N)$ satisfying $\sum_{i=1}^N k_i=K-1+N$, there exists $1\leq i\leq N$ such that 
    $$
    k_i\geq \frac{K-1+N}{N}>n.
    $$
This finishes the proof if we let $l=k_i$.
\end{proof}

In the following, we set $\Vbb_1=\cdots=\Vbb_N=\Vbb$ and fix $\Ebb\subset\Vbb^{\otimes N}$ as in Lem. \ref{lb49}.

\begin{thm}\label{lb45}
    Let $\Vbb$ be a $C_2$-cofinite VOA. Let $\Wbb$ be a finitely-generated admissible $\Vbb^{\times N}$-module. Then for each $a_1,\cdots,a_M\in \Nbb$, $\ST_{\fx,a_1,\cdots,a_M}(\Wbb)$ is finite dimensional.
\end{thm}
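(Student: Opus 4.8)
\textbf{Proof strategy for Theorem \ref{lb45}.}
The plan is to reduce the finite-dimensionality of $\ST_{\fx,a_\star}(\Wbb)$ to a statement about the span of low-weight vectors, and then use the propagation machinery developed in Chapter \ref{lb89} together with the Mittag-Leffler argument of Lemma \ref{lb51} to control the high-weight part of $\Wbb$. Concretely, fix $a_1,\dots,a_M$ and identify $\scr W_\fx(\Wbb)$ with $\Wbb$ via a choice of local coordinates $\eta_\blt$. I want to produce an integer $\nu\in\Nbb$ so that the image of $\Wbb^{\leq\nu}=\bigoplus_{k\leq\nu}\Wbb(k)$ in $\ST_{\fx,a_\star}(\Wbb)$ is everything; since $\Wbb$ is finitely-generated admissible and hence (Thm. \ref{lb46}) each $\Wbb(k)$ is finite-dimensional, this forces $\dim\ST_{\fx,a_\star}(\Wbb)<+\infty$. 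Dually, it suffices to show: any $\upphi\in\scr T_{\fx,a_\star}^*(\Wbb)$ vanishing on $\Wbb^{\leq\nu}$ must vanish on all of $\Wbb$.

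The mechanism for pushing down the weight is Lemma \ref{lb49}: there is a function $n\mapsto\nu(n)$ such that every homogeneous $w\in\Wbb$ with $\wtd\wt(w)>\nu(n)$ is a finite sum of vectors $Y_i(u_i)_{-l}w^\circ$ with $u=u_1\otimes\cdots\otimes u_N\in\Ebb$, $l>n$, and $\wtd\wt(w^\circ)<\wtd\wt(w)$. So the key step is a \emph{relation in the space of coinvariants}: I must show that for $l$ sufficiently large (depending only on $a_\star$, the surface $\fx$, and the weight of $u_i$), the vector $Y_i(u_i)_{-l}w^\circ$ is congruent modulo $H^0\big(C,\SV_{\fx,a_\star}\otimes\omega_C(\blt S_\fx)\big)\cdot\Wbb$ to a combination of vectors of strictly smaller $\wtd L_\blt(0)$-weight. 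This is exactly what a global meromorphic section of $\SV_{\fx,a_\star}\otimes\omega_C$ whose only pole at $x_i$ is of high order (and whose expansions at the other $x_{i'}$ and at all $y_j$ are controlled) gives us via the residue action: such a section exists by the Serre-vanishing / Mittag-Leffler argument already used in Lemma \ref{lb51} (with the roles of incoming and outgoing points adjusted — now I allow a high-order pole at $x_i$ and prescribe vanishing or $L(0)$-order behavior at the $y_j$'s so as to stay inside $\SV_{\fx,a_\star}$). Expanding $Y_i(\mc U_\varrho(\eta_i)\sigma,\eta_i)$ and taking the residue, the top term is a multiple of $Y_i(u_i)_{-l}w^\circ$ and the remaining terms involve $Y_i(u_i)_{k}w^\circ$ with $k>-l$, hence strictly smaller weight, plus the residue contributions at the other incoming points $x_{i'}$ (smaller weight since those sections have bounded pole order) — and crucially the contributions at the outgoing points $y_j$ vanish because $\sigma$ lies in $\SV_{\fx,a_\star}$, cf. the computation in the proof of Prop. \ref{lb20}.

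Granting this relation, the induction is routine: set $\nu=\nu(T)$ where $T$ is the pole order furnished by the Mittag-Leffler lemma applied to all $u_i$ appearing in the finite set $\Ebb$; then by downward induction on $\wtd\wt(w)$ every homogeneous $w$ is congruent mod coinvariants to a combination of vectors in $\Wbb^{\leq\nu}$, so $\Wbb^{\leq\nu}$ surjects onto $\ST_{\fx,a_\star}(\Wbb)$, which is therefore finite-dimensional. Combined with Cor. \ref{lb44}, this shows $\bbs_\fx(\Wbb)$ is a generalized $\Vbb^{\otimes M}$-module.

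\textbf{Main obstacle.} The delicate point is bookkeeping the behavior of the auxiliary section $\sigma$ simultaneously at \emph{all} the outgoing points $y_1,\dots,y_M$: I need $\sigma\in H^0(C,\SV_{\fx,a_\star}\otimes\omega_C(\blt S_\fx))$, i.e. at each $y_j$ the section must have the prescribed $\mc U_\varrho(\theta_j)^{-1}\theta_j^{a_j+L(0)}$-type local form, while still realizing an arbitrarily high-order pole at the chosen $x_i$. Showing that such a section exists in the required generality — uniformly over $v\in\Vbb^{\leq E}$ — is where one has to set up the Serre-vanishing argument carefully (choosing the twisting divisor to be $T S_\fx$ minus an appropriate multiple of $D_\fx$ adapted to the $L(0)$-grading of $\SV_{\fx,a_\star}$), exactly paralleling Lem. \ref{lb51} but with incoming and outgoing roles swapped; everything else is a finite-weight-induction bootstrap.
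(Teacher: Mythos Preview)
Your approach is essentially the paper's: use Lem.~\ref{lb49} to write high-weight vectors as $Y_i(u_i)_{-l}w^\circ$ with $l$ large, lift $u_i\,\eta_i^{-l}d\eta_i$ to a global section of $\SV_{\fx,a_\star}^{\leq E}\otimes\omega_C(\blt S_\fx)$ via Serre vanishing, and read off the relation in coinvariants. Your ``main obstacle'' evaporates once you observe that $\SV_{\fx,a_\star}^{\leq E}$ is already a locally free $\mc O_C$-module of finite rank, so Serre vanishing applies to it directly and the prescribed behavior at the $y_j$ is baked into the sheaf with no extra bookkeeping; this is exactly what the paper does (short exact sequence \eqref{eq116} with $\SV_{\fx,a_\star}^{\leq E}$ appearing, not $\SV_C^{\leq E}$). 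One small confusion: there are no ``residue contributions at the $y_j$'' in the action $\sigma\cdot w=\sum_{i=1}^N\sigma*_i w$ to worry about --- outgoing points enter only through the sheaf-theoretic constraint $\sigma\in\SV_{\fx,a_\star}$, not through extra residue terms.
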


The proof of this theorem is similar to the proof that the spaces of conformal blocks have finite dimensions \cite{AN03-finite-dimensional,KZ-conformal-block,DGT2}. We include a proof for the readers' convenience.  Our approach follows \cite[Thm. 7.4]{Gui-sewingconvergence}. Recall $\Wbb(n)=\eqref{eq114}$. Then
\begin{align*}
\Wbb^{\leq n}=\bigoplus_{k\in\Nbb,k\leq n}\Wbb(k)\qquad \text{(cf. \eqref{eq127})}
\end{align*}
is finite-dimensional by Thm. \ref{lb46}. In the following proof, we assume:

\begin{sett}
In addition to Setting \ref{lb24}, we choose local coordinates $\eta_1,\dots,\eta_N$ of $C$ at $x_1,\dots,x_N$ defined on neighborhoods $U_1,\dots,U_N$. Assume that $U_1,\dots,U_N$ and $y_1,\dots,y_M$ are mutually disjoint. Identify $\scr W_{\fk X}(\Wbb)=\Wbb$ via $\mc U(\eta_\blt)$. Then $\ST_{\fx,a_1,\cdots,a_M}(\Wbb)=\Wbb/\SJ$ where
\begin{gather*}
    \SJ=H^0\big(C,\SV_{\fx,a_1,\cdots,a_M}\otimes \omega_C(\bullet S_\fx)\big)\cdot \Wbb
\end{gather*}
\end{sett}

\begin{proof}
Let $E=\max\{\wt(v):v\in \Ebb\}$. By Asmp. \ref{ass1} and Serre's vanishing theorem (cf. \cite[Prop. 5.2.7]{Huy} or \cite[Thm. IV.2.1]{BaSt}), there exists $k_0\in \Nbb$ such that 
    \begin{equation}\label{vanishing1}
    H^1\big(C,\SV_{\fx,a_1,\cdots,a_M}^{\leq E}\otimes \omega_C(kS_\fx)\big)=0
    \end{equation}
for all $k\geq k_0$. Fix an arbitrary $k\in \Nbb$ satisfying $k\geq E+k_0$. We shall prove that for any $n>\nu(k)$, any vector of $\Wbb(n)$ is a finite sum of elements of $\Wbb^{\leq n-1}$ mod $\SJ$. If this claim is true, then $\Wbb^{\leq\nu(k)}\rightarrow\Wbb/\scr J$ is surjective, and hence $\Wbb/\scr J$  is  finite-dimensional. 
     
    Choose $w\in \Wbb(n)$. By Lem. \ref{lb49}, $w$ is a sum of vectors of the form $Y_i(u_i)_{-l}w^\circ$, where $1\leq i\leq N$, $u=u_1\otimes \cdots \otimes u_N\in \Ebb$, and $l>k$. Then, since $\wt(u_i)\geq0$, we have
    \begin{equation}\label{eq117}
    \wtd\wt(w^\circ)\xlongequal{\eqref{eq115}}n-\wt(u_i)-l+1\leq n-k\leq n-E-k_0
    \end{equation}
    It suffices to show that each $Y_i(u_i)_{-l}w^\circ$ is a sum of elements of $\Wbb^{\leq n-1}\mod \SJ$. Thus we may assume for simplicity that $w=Y_i(u_i)_{-l}w^\circ$ for some $i$. From now on the $i$ is fixed.  
     
Consider the short exact sequence of $\MO_C$-modules
\begin{gather}
    0\rightarrow \SV_{\fx,a_1,\cdots,a_M}^{\leq E}\otimes \omega_C(k_0S_\fx)\rightarrow \SV_{\fx,a_1,\cdots,a_M}^{\leq E} \otimes \omega_C(lS_\fx)\rightarrow \mathscr{G}\rightarrow 0
\end{gather}
    where $\mathscr{G}$ is the quotient of the previous two sheaves. (Note that the support of $\scr G$ is a subset of $\{x_1,\dots,x_N\}$.) By (\ref{vanishing1}), we have an exact sequence
\begin{align}\label{eq116}
\begin{aligned}
&0\rightarrow H^0\big(C,\SV_{\fx,a_1,\cdots,a_M}^{\leq E}\otimes \omega_C(k_0S_\fx)\big)\rightarrow H^0\big(C,\SV_{\fx,a_1,\cdots,a_M}^{\leq E} \otimes \omega_C(lS_\fx)\big)\\
\rightarrow & H^0\big(C,\mathscr{G}\big)\rightarrow 0
\end{aligned}
\end{align}
Define an element $\sigma\in H^0\big(C,\mathscr{G}\big)$ as follows. $\mc U_\varrho(\eta_i)\sigma\vert_{U_i}$ is the equivalence class represented by $u_i\cdot \eta_i^{-l}d\eta_i$, and $\sigma\vert_{C-\{x_i\}}=0$. This makes $\sigma$ a well-defined global section of $\scr G$.


By the exactness of (\ref{eq116}), we can find a lift $\wht{\sigma}\in H^0\big(C,\SV_{\fx,a_1,\cdots,a_M}^{\leq E} \otimes \omega_C(lS_\fx)\big)$ of $\sigma$ and find some $v_j\in \Vbb^{\leq E}\otimes \MO_C(k_0S_\fx)(U_j)$ for each $1\leq j\leq N$ such that
\begin{subequations}
\begin{gather}
        \mc U_\varrho(\eta_i)\wht{\sigma}\vert_{U_i}=u_i\cdot \eta_i^{-l}d\eta_i+v_i\cdot d\eta_i\\
        \mc U_\varrho(\eta_j)\wht{\sigma}\vert_{U_j}=v_j\cdot d\eta_j\quad (\text{if }j\ne i)
\end{gather}
\end{subequations}
It follows that $\wht{\sigma}\cdot w^\circ\in \SJ$ equals $w+w_\Delta$ where 
    $$
    w_\Delta=\sum_{j=1}^N \big(\mc U_\varrho(\eta_j)^{-1}v_jd\eta_j\big)*_i w^\circ=\sum_{j=1}^N\Res_{\eta_j=0}~ Y_j(v_j,\eta_j)w^\circ d\eta_j.
    $$
    Thus $w=-w_\Delta\mod \SJ$. Note that for each $1\leq j\leq N$, residue action of $\mc U_\varrho(\eta_j)^{-1}v_jd\eta_j$ on $w^\circ$ increases the $\wtd L(0)$-weight by at most $E+k_0-1$. By (\ref{eq117}), $w_\Delta\in \Wbb^{\leq n-1}$. So our proof is complete.
\end{proof}

\subsection{$C_2$-cofiniteness implies that $\bbs_{\fk X}(\Wbb)$ is a grading-restricted $\Vbb^{\otimes M}$-module}

\begin{df}
Let $\Ubb$ be a VOA. A generalized $\Ubb$-module $\Mbb$ is called of \textbf{finite-length} (cf. \cite[Def. 1.2]{Hua-projectivecover}) if there is a chain of generalized submodules $0=\Mbb_0\subset\Mbb_1\subset\cdots\subset\Mbb_l=\Mbb$ where $\Mbb_i/\Mbb_{i-1}$ is an irreducible (generalized) $\Ubb$-module (i.e. a generalized $\Ubb$-module such that $0$ and $\Mbb_i/\Mbb_{i-1}$ are the only $\Ubb$-invariant subspaces). 
\end{df}

\begin{rem}
If $\Ubb$ is $C_2$-cofinite, then a generalized $\Ubb$-module is finitely-generated iff it is grading-restricted (cf. Thm. \ref{lb46}), iff it is of finite length (by \cite[Prop. 4.3]{Hua-projectivecover}).
\end{rem}

\begin{lm}\label{lb54}
Let $\Ubb$ be a $C_2$-cofinite VOA. Let $\Mbb$ be a generalized $\Ubb$-module. Suppose that each projective object $\Pbb$ in the category of finite-length generalized $\Ubb$-modules satisfies
\begin{align*}
\dim \Hom_\Ubb(\Pbb,\Mbb)<+\infty.
\end{align*}
Then $\Mbb$ is of finite length.
\end{lm}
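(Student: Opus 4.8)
\textbf{Proof proposal for Lemma \ref{lb54}.}

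The plan is to run a standard projective-cover argument in the category of finite-length generalized $\Ubb$-modules, exploiting the known fact (from \cite{Hua-projectivecover}) that for a $C_2$-cofinite VOA this category has enough projectives, each irreducible object admitting a projective cover. First I would fix a set $\{\Lbb_1,\dots,\Lbb_r\}$ of representatives of the (finitely many, by $C_2$-cofiniteness) equivalence classes of irreducible generalized $\Ubb$-modules, and let $\Pbb_1,\dots,\Pbb_r$ be their projective covers. The key numerical input is the hypothesis $\dim\Hom_\Ubb(\Pbb_k,\Mbb)<+\infty$ for each $k$; set $d=\sum_{k=1}^r\dim\Hom_\Ubb(\Pbb_k,\Mbb)$. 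The goal is to show that every finite-length generalized submodule of $\Mbb$ has length at most $d$, which (together with the fact that $\Mbb$ is the union of its finite-length submodules — or, if one prefers, with a direct limit argument) forces $\Mbb$ itself to be of finite length $\leq d$.

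The main step is the following: if $\Nbb\subseteq\Mbb$ is a finite-length generalized submodule with composition factors (counted with multiplicity) $\Lbb_{i_1},\dots,\Lbb_{i_\ell}$, then $\ell\leq d$. To see this I would build, for each composition factor, a nonzero homomorphism from the corresponding projective cover into $\Mbb$ landing suitably, and argue these are ``independent''. Concretely: take the top of $\Nbb$, i.e. a simple quotient $\Nbb\twoheadrightarrow\Lbb_j$; projectivity of $\Pbb_j$ lifts the composite $\Pbb_j\twoheadrightarrow\Lbb_j$ through $\Nbb$, giving $\Pbb_j\to\Nbb\hookrightarrow\Mbb$ whose image has $\Lbb_j$ as a quotient. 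Iterating down a composition series of $\Nbb$ — or, more cleanly, using that $\dim\Hom_\Ubb(\Pbb_k,\Nbb)\geq$ (multiplicity of $\Lbb_k$ in $\Nbb$), a standard consequence of the exactness of $\Hom_\Ubb(\Pbb_k,-)$ applied to a composition series and of $\Hom_\Ubb(\Pbb_k,\Lbb_m)=\delta_{k,m}\Cbb$ (since $\Pbb_k$ is the projective cover of $\Lbb_k$ and $\End_\Ubb(\Lbb_k)=\Cbb$ by Schur, which holds for irreducible generalized modules over $C_2$-cofinite VOAs) — one gets
\begin{align}
\ell=\sum_{k=1}^r(\text{mult. of }\Lbb_k\text{ in }\Nbb)\leq\sum_{k=1}^r\dim\Hom_\Ubb(\Pbb_k,\Nbb)\leq\sum_{k=1}^r\dim\Hom_\Ubb(\Pbb_k,\Mbb)=d,
\end{align}
where the last inequality uses that $\Nbb\hookrightarrow\Mbb$ induces an injection $\Hom_\Ubb(\Pbb_k,\Nbb)\hookrightarrow\Hom_\Ubb(\Pbb_k,\Mbb)$ because $\Pbb_k$ need not be projective in the larger category but the map $\Nbb\hookrightarrow\Mbb$ is a monomorphism and $\Hom(\Pbb_k,-)$ is left exact on any category of modules.

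Finally I would conclude by the union/colimit argument: any generalized $\Ubb$-module $\Mbb$ is the directed union of its finitely-generated, hence (by $C_2$-cofiniteness and \cite[Prop. 4.2]{Hua-projectivecover}) finite-length, submodules; since each such submodule has length $\leq d$, and lengths of an increasing chain of finite-length submodules are non-decreasing and bounded, the chain stabilizes, so $\Mbb$ equals one of them and is of finite length. I expect the main obstacle to be the bookkeeping around the two ambient categories — $\Pbb_k$ is projective only in the category of finite-length modules, not among all generalized modules — so care is needed to keep all lifting arguments internal to the finite-length category and to invoke left-exactness of $\Hom$ (rather than projectivity) when comparing $\Hom_\Ubb(\Pbb_k,\Nbb)$ with $\Hom_\Ubb(\Pbb_k,\Mbb)$; a secondary point to get right is Schur's lemma for irreducible generalized modules, which is where the finiteness $\End_\Ubb(\Lbb_k)=\Cbb$ enters and underlies the identification of $\dim\Hom_\Ubb(\Pbb_k,-)$ with composition multiplicities.
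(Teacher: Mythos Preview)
Your argument is correct and takes a genuinely different, more structural route than the paper. The paper argues by contradiction: assuming $\Mbb$ is not finite-length, it builds an infinite strictly increasing chain $\Mbb_0\subsetneq\Mbb_1\subsetneq\cdots$ of finite-length submodules, picks for each $i$ a simple quotient $\Mbb_i/\mbb K_i$, applies pigeonhole to find infinitely many $i\in B$ with $\Mbb_i/\mbb K_i\simeq\Xbb$, and then lifts the projective cover $\Pbb\twoheadrightarrow\Xbb$ to maps $p_i:\Pbb\to\Mbb_i\subset\Mbb$. A direct linear-independence check (the image of $p_{i_{k+1}}$ is not contained in $\mbb K_{i_{k+1}}$, while any combination of earlier $p_{i_j}$ lands in $\Mbb_{i_k}\subset\mbb K_{i_{k+1}}$) contradicts $\dim\Hom_\Ubb(\Pbb,\Mbb)<\infty$. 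Your approach instead bounds the length of every finite-length submodule by $d=\sum_k\dim\Hom_\Ubb(\Pbb_k,\Mbb)$ via the multiplicity inequality $[\Nbb:\Lbb_k]\le\dim\Hom_\Ubb(\Pbb_k,\Nbb)$, then uses that $\Mbb$ is the directed union of its finitely-generated (hence finite-length) submodules. Your version yields an explicit length bound and packages the lifting argument into the standard exactness of $\Hom_\Ubb(\Pbb_k,-)$; the paper's version is more hands-on but avoids Schur's lemma entirely (indeed, your inequality $[\Nbb:\Lbb_k]\le\dim\Hom_\Ubb(\Pbb_k,\Nbb)$ also only needs $\Hom_\Ubb(\Pbb_k,\Lbb_k)\neq 0$, so your invocation of $\End_\Ubb(\Lbb_k)=\Cbb$ is not actually required). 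Your caution about $\Pbb_k$ being projective only in the finite-length category is well placed and correctly handled by using left-exactness for the comparison $\Hom_\Ubb(\Pbb_k,\Nbb)\hookrightarrow\Hom_\Ubb(\Pbb_k,\Mbb)$.
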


We prove this lemma by mimicking the proof of \cite[Thm. 4.5]{Hua-projectivecover}.

\begin{proof}
Let $\mc S$ be the set of finite-length (equivalently, finitely-generated) generalized $\Ubb$-submodules of $\Mbb$. Our goal is to show that $\Mbb\in\mc S$. 

Assume that $\Mbb\notin\mc S$. Then we have a chain in $\mc S$:
\begin{gather*}
0=\Mbb_0\subsetneq\Mbb_1 \subsetneq\Mbb_2\subsetneq \cdots  \tag{$\star$} \label{eqa1}
\end{gather*}
Since each $\Mbb_i$ cannot be $\Mbb$, the above chain is infinite. Each $\Mbb_i/\Mbb_{i-1}$ has finite length since  $\Mbb_i\in\mc S$. Thus we can find finitely many intermediate modules between $\Mbb_{i-1}$ and $\Mbb_i$ forming a finite increasing chain such that the quotient of any member by the previous one in this chain is irreducible. Therefore, inserting all these intermediate modules into the chain \eqref{eqa1}, we may assume at the beginning that each $\Mbb_i/\Mbb_{i-1}$ is irreducible. 

Since $\Ubb$ is $C_2$-cofinite, there are only finitely many equivalence classes of irreducible $\Ubb$-modules (which are grading-restricted by Thm. \ref{lb46}), cf. \cite[Prop. 4.2]{Hua-projectivecover} or the end of \cite[Sec. 12]{Gui-sewingconvergence}. Thus we can find an infinite subset $B\subset\Zbb_+$ such that the  members of $\{\Mbb_i/\Mbb_{i-1}\}_{i\in B}$ are all isomorphic to a nonzero irreducible $\Ubb$-module $\Xbb$. By \cite[Thm. 3.23]{Hua-projectivecover}, there exists a projective cover $(\Pbb,p)$ of $\Xbb$ in the category of finite-length generalized $\Ubb$-modules. So there exists a morphism $p_i\in\Hom_\Ubb(\Pbb,\Mbb_i)$ such that the following diagram commutes and the horizontal line is exact:
\begin{equation}
\begin{tikzcd}
            &             & \Pbb \arrow[d,"p_i"'] \arrow[rd, two heads, "p"] &             &   \\
0 \arrow[r] & \Mbb_{i-1} \arrow[r] & \Mbb_i \arrow[r,"\pi_i"]            & \Xbb \arrow[r] & 0
\end{tikzcd}
\end{equation}
Since $p=\pi_i\circ p_i$ is surjective, $p_i(\Pbb)$ does not lie inside $\Mbb_{i-1}$.

Since $\dim\Hom_\Ubb(\Pbb,\Mbb)<+\infty$ and $B$ is infinite, $\{p_i\}_{i\in B}$ must be linearly dependent. So there exist $k\geq 1$, $i_1<\cdots<i_k<i_{k+1}$ in $B$, and $\lambda_{i_1},\dots\lambda_{i_k}\in\Cbb$ such that 
\begin{align*}
p_{i_{k+1}}=\lambda_{i_1}p_{i_1}+\cdots+\lambda_{i_k}p_{i_k}
\end{align*}
Then $p_{i_{k+1}}(\Pbb)$ is inside $\Mbb_{i_k}$, and hence inside $\Mbb_{i_{k+1}-1}$. This gives a contradiction.
\end{proof}

Recall Thm. \ref{lb46}.

\begin{thm}\label{lb57}
Assume that $\Vbb$ is $C_2$-cofinite. Let $\Wbb$ be a finitely-generated admissible $\Vbb^{\times N}$-module. Then $\boxbackslash_{\fx}(\Wbb)$ is a finitely-generated admissible $\Vbb^{\times M}$-module.
\end{thm}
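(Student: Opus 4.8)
The plan is to deduce Theorem \ref{lb57} from the combination of the finite-dimensionality result Thm. \ref{lb45}, the universal property Cor. \ref{lb53}, and the representation-theoretic criterion Lem. \ref{lb54}. First I would record that since $\Vbb$ is $C_2$-cofinite and $\Wbb$ is finitely generated, Thm. \ref{lb45} gives $\dim\scr T_{\fk X,a_1,\dots,a_M}^*(\Wbb)<+\infty$ for every $a_\star\in\Nbb^M$; by Cor. \ref{lb44} this already shows $\bbs_\fx(\Wbb)$ is a generalized $\Vbb^{\otimes M}$-module. It remains to upgrade ``generalized module'' to ``finitely-generated (equivalently grading-restricted, equivalently finite-length) admissible $\Vbb^{\times M}$-module''. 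Here I would invoke the equivalence between weak $\Vbb^{\otimes M}$-modules and weak $\Vbb^{\times M}$-modules (Rem. \ref{lb91}, Sec. \ref{lb34}) and the characterizations in Thm. \ref{lb46}, so that it is enough to prove $\bbs_\fx(\Wbb)$ has finite length as a generalized $\Vbb^{\otimes M}$-module.

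To get finite length I would apply Lem. \ref{lb54} with $\Ubb=\Vbb^{\otimes M}$ and $\Mbb=\bbs_\fx(\Wbb)$: it suffices to show that for each projective object $\Pbb$ in the category of finite-length generalized $\Vbb^{\otimes M}$-modules one has $\dim\Hom_{\Vbb^{\otimes M}}(\Pbb,\bbs_\fx(\Wbb))<+\infty$. Now a finite-length generalized module is in particular weakly-admissible (admissible modules are weakly-admissible by \eqref{eq99}, and finite length reduces to the admissible case by a short exact sequence / extension argument), so Cor. \ref{lb53} applies and gives
\begin{align*}
\Hom_{\Vbb^{\otimes M}}(\Pbb,\bbs_\fx(\Wbb))\cong\Hom_{\Vbb^{\times M}}(\Pbb,\bbs_\fx(\Wbb))\cong\scr T_\fx^*(\Wbb\otimes\Pbb).
\end{align*}
Since $\Pbb$ is a grading-restricted (hence finitely-generated admissible) $\Vbb^{\otimes M}$-module and $\Wbb$ is a finitely-generated admissible $\Vbb^{\times N}$-module, $\Wbb\otimes\Pbb$ is a finitely-generated admissible $\Vbb^{\times(N+M)}$-module (Exp. \ref{lb47}), so $\scr T_\fx^*(\Wbb\otimes\Pbb)$ is finite-dimensional by Thm. \ref{lb45} (applied with $M=0$, i.e. to the genuine space of conformal blocks, which is the case $a_\star=\varnothing$ there, or equivalently with all outgoing data absent). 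This gives the required finiteness, so $\bbs_\fx(\Wbb)$ has finite length, hence is grading-restricted, hence is a finitely-generated admissible $\Vbb^{\times M}$-module.

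One small point to be careful about: Cor. \ref{lb53} is stated for weakly-admissible $\Mbb$, and Lem. \ref{lb54} quantifies over \emph{projective} finite-length modules $\Pbb$; I should check that such $\Pbb$ is indeed weakly-admissible. This is fine because every finite-length generalized module over a $C_2$-cofinite VOA is grading-restricted (Thm. \ref{lb46}, \cite[Prop. 4.2]{Hua-projectivecover}) and grading-restricted generalized modules are admissible (one uses the Jordan decomposition of $L(0)$ as in Def. \ref{grading3}), hence weakly-admissible by \eqref{eq99}. The other routine verification is the identification of $\Hom$ over $\Vbb^{\otimes M}$ with $\Hom$ over $\Vbb^{\times M}$, which follows from Rem. \ref{lb91}. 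I do not expect a serious obstacle here: the whole argument is an assembly of Thm. \ref{lb45}, Cor. \ref{lb53}, and Lem. \ref{lb54}, and the \emph{only} genuinely substantive input is Thm. \ref{lb45} (finite-dimensionality of truncated fusion products), which has already been established. The ``hard part'', such as it is, is simply making sure the categorical bookkeeping — finite length $\Leftrightarrow$ grading-restricted $\Leftrightarrow$ finitely generated, and projective covers existing — is cited correctly from Thm. \ref{lb46} and \cite{Hua-projectivecover}.
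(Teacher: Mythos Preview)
Your proposal is correct and follows essentially the same route as the paper: use Thm.~\ref{lb45} and Cor.~\ref{lb44} to see that $\bbs_\fx(\Wbb)$ is a generalized $\Vbb^{\otimes M}$-module, then apply Lem.~\ref{lb54} together with Cor.~\ref{lb53} and Thm.~\ref{lb45} (finite-dimensionality of $\scr T_\fx^*(\Wbb\otimes\Pbb)$) to conclude finite length, and finally invoke Thm.~\ref{lb46}. Your extra care in checking that finite-length $\Pbb$ is weakly-admissible (so that Cor.~\ref{lb53} applies) is a useful clarification that the paper leaves implicit.
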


\begin{proof}
By Cor. \ref{lb44} and Thm. \ref{lb45}, $\bbs_\fx(\Wbb)$ is a generalized $\Vbb^{\otimes M}$-module. If $\Pbb$ is a finite-length generalized $\Vbb^{\otimes M}$-module, then by Cor. \ref{lb53}, the vector space $\Hom_{\Vbb^{\otimes M}}(\Pbb,\bbs_\fx(\Wbb))$ is isomorphic to $\scr T_{\fk X}^*(\Wbb\otimes\Pbb)$, which has finite dimension by Thm. \ref{lb45}. (Recall that a conformal block is a partial conformal block associated to a pointed surface with no outgoing marked points.) Therefore, by Lem. \ref{lb54}, $\bbs_\fx(\Wbb)$ is a grading-restricted generalized $\Vbb^{\otimes M}$-module, equivalently (Thm. \ref{lb46}), a finitely-generated admissible $\Vbb^{\times M}$-module.
\end{proof}

In the following, we assume the setting of Rem. \ref{lb56}, which applies here since Thm. \ref{lb57} establishes that $\bbs_\fx(\Wbb)$ is an admissible $\Vbb^{\times M}$-module. Then under the identification
\begin{align}
\scr W_\fx(\Wbb\otimes\bbs_\fx(\Wbb))=\scr W_\fx(\Wbb)\otimes\bbs_\fx(\Wbb)\qquad\text{via }\mc U(\cdot,\theta_\star)
\end{align}
each element of $\scr T_\fx^*(\Wbb\otimes\bbs_\fx(\Wbb))$ (in particular, $\gimel$) is a linear functional on $\scr W_\fx(\Wbb)\otimes\bbs_\fx(\Wbb)$. The same is true if $\bbs_\fx(\Wbb)$ is replaced by any admissible $\Vbb^{\times M}$-module.

\begin{thm}[Universal property]\label{lb55}
Assume that $\Vbb$ is $C_2$-cofinite.  Then for each finitely-generated admissible $\Vbb^{\times M}$-module $\Mbb$ and each $\Gamma\in\scr T_{\fk X}^*(\Wbb\otimes\Mbb)$, there exists a unique $T\in\Hom_{\Vbb^{\times M}}(\Mbb,\bbs_\fx(\Wbb))$ such that the following diagram commutes:
\begin{equation}
\begin{tikzcd}[row sep=tiny, column sep=large]
\scr W_\fx(\Wbb)\otimes\Mbb \arrow[rd,"\Gamma"] \arrow[dd,"\idt\otimes T"'] &   \\
                        & \Cbb \\
\scr W_\fx(\Wbb)\otimes\bbs_\fx(\Wbb) \arrow[ru,"\gimel"']            &  
\end{tikzcd}
\end{equation}
\end{thm}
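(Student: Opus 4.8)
The plan is to deduce Thm. \ref{lb55} from the already-established universal property Thm. \ref{lb50} together with the fact (Thm. \ref{lb57}) that, under the $C_2$-cofiniteness hypothesis, $\bbs_\fx(\Wbb)$ is itself a finitely-generated admissible $\Vbb^{\times M}$-module. The only genuinely new point compared to Thm. \ref{lb50} is bookkeeping about the coordinate-free sheaf $\scr W_\fx(\Wbb)$ versus its trivialization $\Wbb$ via $\mc U(\eta_\blt)$, so the proof is essentially a translation argument. First I would fix local coordinates $\eta_1,\dots,\eta_N$ of $C$ at $x_1,\dots,x_N$ and use $\mc U(\eta_\blt):\scr W_\fx(\Wbb)\xrightarrow{\simeq}\Wbb$ together with the partial trivialization $\mc U(\cdot,\theta_\star):\scr W_\fx(\Wbb\otimes\Mbb)\xrightarrow{\simeq}\scr W_\fx(\Wbb)\otimes\Mbb$ from Rem. \ref{lb56}; by the commuting triangle \eqref{eq128}, the composite $\mc U(\eta_\blt,\theta_\star)=(\mc U(\eta_\blt)\otimes\id)\circ\mc U(\cdot,\theta_\star)$, which identifies $\scr W_\fx(\Wbb\otimes\Mbb)$ with $\Wbb\otimes\Mbb$. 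Under these identifications a conformal block $\Gamma\in\scr T_\fx^*(\Wbb\otimes\Mbb)$ in the sense of Def. \ref{lb16} becomes exactly a conformal block in the sense of Def. \ref{lb25} (this compatibility is built into the definitions), and likewise $\gimel$ of Rem. \ref{lb56} becomes the $\gimel$ of Thm. \ref{lb26}.

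Next I would simply invoke Thm. \ref{lb50}: since every finitely-generated admissible $\Vbb^{\times M}$-module is weakly-admissible (by \eqref{eq99}, as noted right after the definition of weakly-admissible modules), $\Mbb$ qualifies as an input to Thm. \ref{lb50}. Hence there is a unique $T\in\Hom_{\Vbb^{\times M}}(\Mbb,\bbs_\fx(\Wbb))$ with $\Gamma=\gimel\circ(\id\otimes T)$ when everything is read through the trivializations; transporting back along $\mc U(\cdot,\theta_\star)$ and $\mc U(\eta_\blt)$ gives precisely the commuting diagram in the statement of Thm. \ref{lb55}. Uniqueness is inherited verbatim: by \eqref{eq79}, the only $\upphi\in\bbs_\fx(\Wbb)$ annihilated by $\gimel(w\otimes\cdot)$ for all $w$ is $0$, so $\gimel\circ(\id\otimes T)$ determines $T$. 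One should also record that the statement is genuinely coordinate-free, i.e.\ independent of the auxiliary choice of $\eta_\blt$ used in the proof, which follows because $\gimel$ as defined in Rem. \ref{lb56} does not reference $\eta_\blt$ and $\mc U(\cdot,\theta_\star)$ is canonical.

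The only place where the $C_2$-cofiniteness hypothesis is actually used is to guarantee that $\bbs_\fx(\Wbb)$ lies in the category where we want the universal arrow to live: Thm. \ref{lb57} tells us $\bbs_\fx(\Wbb)$ is finitely-generated admissible, so the target of $T$ is legitimately an object of the stated category and $\Hom_{\Vbb^{\times M}}(\Mbb,\bbs_\fx(\Wbb))$ makes sense in it. Thus the ``hard part'' here is not in Thm. \ref{lb55} itself — it has essentially no content beyond Thm. \ref{lb50} plus Rem. \ref{lb56} — but rather in the prerequisites already proved, namely Thm. \ref{lb45} ($\dim\scr T_{\fk X,a_\star}^*(\Wbb)<\infty$), Cor. \ref{lb44}, Lem. \ref{lb54}, and the projective-cover input that together yield Thm. \ref{lb57}. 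So my proof of Thm. \ref{lb55} will be short: set up the trivializations, check the two notions of conformal block agree under them, cite Thm. \ref{lb57} so that $\Mbb$ and $\bbs_\fx(\Wbb)$ are both in the category, and then quote Thm. \ref{lb50} and pull back the diagram. I would phrase it as roughly a half-page argument, possibly just ``Immediate from Thm. \ref{lb50}, Thm. \ref{lb57}, and Rem. \ref{lb56}'' with a sentence or two spelling out the identification of $\scr W_\fx$ with $\Wbb$.
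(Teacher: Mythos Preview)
Your proposal is correct and matches the paper's approach essentially verbatim: the paper's proof is two sentences citing Thm. \ref{lb57} to validate the setting of Rem. \ref{lb56}, then invoking Thm. \ref{lb50} under the identification $\mc U(\eta_\blt)\otimes\id$ via the commutative diagram \eqref{eq128}. Your more detailed bookkeeping about the trivializations and the remark on coordinate-independence are accurate elaborations of exactly this argument.
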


\begin{proof}
What we shall prove follows immediately from Thm. \ref{lb50} if we identify $\scr W_\fx(\Wbb)\otimes\Mbb$ with $\Wbb\otimes\Mbb$ and $\scr W_\fx(\Wbb)\otimes\bbs_\fx(\Wbb)$ with $\Wbb\otimes\bbs_\fx(\Wbb)$ via $\mc U(\eta_\blt)\otimes\idt$, thanks to the commutative diagram \eqref{eq128}.
\end{proof}

\appendix

\section{Modules of $\Vbb_1\times \cdots \times \Vbb_N$ and $\Vbb_1\otimes \cdots \otimes \Vbb_N$} \label{lb90}

Let $\Vbb_1,\cdots,\Vbb_N$ be VOAs. In this chapter, we do NOT assume that each graded subspace $\Vbb_i(n)$ of $\Vbb_i=\bigoplus_{n\in\Nbb}\Vbb_i(n)$ is finite-dimensional.

\subsection{Admissible (i.e. $\Nbb^N$-gradable) and $\Nbb$-gradable modules}\label{lb34}

The goal of this section is to address the issue raised in Rem. \ref{lb91}: whether a weak $\Vbb_1\times\cdots\times\Vbb_N$-module is a weak $\Vbb_1\otimes\cdots\otimes\Vbb_N$-module. As pointed out in Rem. \ref{lb91}, this is true and is actually known to experts. It can be proved by checking the weak associativity of the vertex operators defined by \eqref{eq31}. This ``formal variable approach" assumes some familiarity with the techniques (developed e.g. in \cite{Kac-beginners} or \cite{LL-introduction}) of handling the subtleties in the identities of formal variables. 

In this section, we prove a slightly weaker result (Thm. \ref{lb37}) using complex-analytic methods in the spirit of \cite{FHL93} and \cite{GuiLec}. An advantage of this approach is that once the formal series is shown to converge a.l.u. to a holomorphic function, the variables of this function can be safely changed to some other more convenient variables. As another advantage, one can check that many algebraic and formal operations actually commute using the fact that taking residues commute with taking a.l.u. convergent infinite sums of holomorphic functions. Since this approach does not seem to be as common as the formal variable approach in the VOA literature, we address all the subtleties in this approach with sufficient details for the readers' convenience, although the experienced readers can certainly fill in the details by their own efforts.

\begin{df}\label{lb41}
A weak $\Vbb_1\times \cdots \times \Vbb_N$-module $(\Wbb,Y_1,\dots,Y_N)$ (cf. Def. \ref{lb33}) is called  an \textbf{$\Nbb$-gradable $\Vbb_1\times\cdots\times\Vbb_N$-module} \index{00@$\Nbb$-gradable $\Vbb_1\times\cdots\times\Vbb_N$-module} if there is a diagonalizable linear operator $\wtd L(0)$ on $\Wbb$ with eigenvalues in $\Nbb$ such that for all $1\leq i\leq N$, $v_i\in\Vbb_i$, $n\in\Zbb$, we have
\begin{align}
[\wtd L(0),Y_i(v_i)_n]=Y_i(L(0)v_i)_n-(n+1)Y_i(v_i)_n.  \label{eq111}
\end{align}
We let
\begin{gather*}
\Wbb(n)=\{w\in\Wbb:\wtd L(0)w=nw\}\\
\Wbb'=\bigoplus_{n\in\Nbb}\Wbb(n)^*
\end{gather*}
A vector $w$ in $\Wbb$ resp. $\Wbb'$ is called \textbf{$\wtd L(0)$-homogeneous}, or simply \textbf{homogeneous}, if it belongs to $\Wbb(n)$ resp. $\Wbb(n)^*$ for some $n\in\Nbb$.
\end{df}

\begin{eg}
If $\Wbb$ is an admissible $\Vbb_1\times\cdots\times\Vbb_N$-module, then $\Wbb$ is an $\Nbb$-gradable $\Vbb_1\times\cdots\times\Vbb_N$-module if $\wtd L(0)$ is defined by \eqref{eq83}.
\end{eg}

\begin{eg}
If $N=1$, a weak $\Vbb$-module $\Wbb$ is admissible iff it is $\Nbb$-gradable. 
\end{eg}

\subsubsection{Main result}\label{lb32}

Let $\Wbb$ be an $\Nbb$-gradable $\Vbb_1\times\cdots\times\Vbb_N$-module with grading operator $\wtd L(0)$. For each $k\in \Zbb$, $v_1\in \Vbb_1,\cdots,v_N\in \Vbb_N,w\in \Wbb$, define
\begin{subequations}\label{eq31}
\begin{align}
Y_\Wbb(v_1\otimes \cdots \otimes v_N)_k w=\sum_{k_1+\cdots k_N=k+1-N}Y_1(v_1)_{k_1}\cdots Y_N(v_N)_{k_N}w
\end{align}
It is not hard to show that the RHS is  finite sum. Let
\begin{align}
Y_\Wbb(v_1\otimes \cdots \otimes v_N,z)=\sum_{k\in \Zbb}Y_\Wbb(v_1\otimes \cdots \otimes v_N)_k z^{-k-1}.
\end{align}
\end{subequations}

\begin{thm}\label{lb37}
\eqref{eq31} defines a 1-1 correspondence between the following notions:
\begin{enumerate}[label=(\alph*)]
\item $\Nbb$-gradable $\Vbb_1\times\cdots\times\Vbb_N$-modules.
\item Admissible (i.e. $\Nbb$-gradable) $\Vbb_1\otimes\cdots\otimes\Vbb_N$-modules.
\end{enumerate}
More precisely, let $(\Wbb,Y_1,\dots,Y_N)$ be an $\Nbb$-gradable $\Vbb_1\times\cdots\times\Vbb_N$-module with grading operator $\wtd L(0)$. Then $(\Wbb,Y_\Wbb)$, together with $\wtd L(0)$ defined by \eqref{eq83}, is an admissible $\Vbb_1\otimes \cdots \otimes \Vbb_N$-module. Moreover, every admissible $\Vbb_1\otimes \cdots \otimes \Vbb_N$-module (with suitable $\wtd L(0)$) can be realized in this way.
\end{thm}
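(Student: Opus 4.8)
\textbf{Proof proposal for Theorem \ref{lb37}.} The plan is to establish the bijection in three stages: first show that $Y_\Wbb$ defined by \eqref{eq31} gives a weak $\Vbb_1\otimes\cdots\otimes\Vbb_N$-module structure; second check that $\wtd L(0)$ from \eqref{eq83} is a compatible grading operator; third verify that the two constructions (the ``separation'' $Y_\Wbb\mapsto(Y_1,\dots,Y_N)$ via $Y_i(v_i,z)=Y_\Wbb(\ibf\otimes\cdots\otimes v_i\otimes\cdots\otimes\ibf,z)$ and the ``assembly'' $(Y_1,\dots,Y_N)\mapsto Y_\Wbb$ via \eqref{eq31}) are mutually inverse. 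The first stage is the crux, and I would carry it out by the complex-analytic route advertised in the text rather than by formal-variable weak associativity. Concretely: fix homogeneous $w\in\Wbb$, homogeneous $w'\in\Wbb'$, and $v_i,u_i\in\Vbb_i$; using the lower-truncation property of each $Y_i$ together with the grading property \eqref{eq111} (which forces each $Y_i(v_i)_{k_i}$ to shift $\wtd L(0)$-weight by a fixed amount), show that the matrix coefficient $\bk{w',Y_\Wbb(v_\blt,z)Y_\Wbb(u_\blt,\zeta)w}$ converges a.l.u. on $|z|>|\zeta|>0$ to a rational function $F(z,\zeta)$ with poles only along $z=0$, $\zeta=0$, $z=\zeta$; do the same for $\bk{w',Y_\Wbb(u_\blt,\zeta)Y_\Wbb(v_\blt,z)w}$ on $|\zeta|>|z|>0$ and for $\bk{w',Y_\Wbb(Y_{\Vbb^{\otimes N}}(v_\blt,z-\zeta)u_\blt,\zeta)w}$ near $z=\zeta$, showing all three are expansions of the same $F$. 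The key input here is that the corresponding one-factor statements hold for each $(\Wbb,Y_i)$ as a weak $\Vbb_i$-module (this is just the rationality of matrix coefficients for weak modules, which one may extract from the Jacobi identity \eqref{jacobi} already assumed), and that the commuting relations $[Y_i(u)_m,Y_j(v)_n]=0$ for $i\neq j$ let one factor the $N$-variable picture into a product of one-variable pictures evaluated at a common point; since taking residues (contour integrals) commutes with a.l.u.-convergent sums, one recovers the Jacobi identity for $Y_\Wbb$ from the equality of the three expansions of $F$ by the standard residue-extraction argument (as in \cite{FHL93} or \cite{GuiLec}). The vacuum axiom $Y_\Wbb(\ibf\otimes\cdots\otimes\ibf,z)=\ibf_\Wbb$ is immediate from \eqref{eq31} and $Y_i(\ibf,z)=\ibf_\Wbb$.

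For the second stage, I would first record the weight-shift rule: \eqref{eq111} gives $Y_i(v_i)_{k_i}\Wbb(n)\subset\Wbb(n+\wt(v_i)-k_i-1)$ for homogeneous $v_i$, hence composing along $k_1+\cdots+k_N=k+1-N$ yields $Y_\Wbb(v_1\otimes\cdots\otimes v_N)_k\Wbb(n)\subset\Wbb(n+\sum_i\wt(v_i)-k-1)$, which is exactly the grading property \eqref{eq150} for $Y_\Wbb$ with the single grading operator $\wtd L(0)$ and the conformal weight on $\Vbb_1\otimes\cdots\otimes\Vbb_N$ being $\sum_i\wt(v_i)$. One should also check that the conformal vector of the tensor VOA, $\cbf=\sum_i\ibf\otimes\cdots\otimes\cbf\otimes\cdots\otimes\ibf$, produces the correct Virasoro action, so that $Y_\Wbb(\cbf)_{n+1}=\sum_i L_i(n)$ and in particular $Y_\Wbb(\cbf)_1$ acts compatibly with the chosen $\wtd L(0)$; this is a short computation from \eqref{eq31} and the definition of $L_i(n)$. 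Thus $(\Wbb,Y_\Wbb,\wtd L(0))$ is admissible in the sense of Def. \ref{finitelyadmissible} (or rather the multi-tensor analogue collapsed to a single VOA).

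For the third stage, given an admissible $\Vbb_1\otimes\cdots\otimes\Vbb_N$-module $(\Mbb,Y_\Mbb)$, define $Y_i(v_i,z)=Y_\Mbb(\ibf\otimes\cdots\otimes v_i\otimes\cdots\otimes\ibf,z)$; each $(\Mbb,Y_i)$ is a weak $\Vbb_i$-module by restriction of the Jacobi identity, the mixed operators commute by the commutator formula applied to vectors supported in disjoint tensor slots (exactly the computation in the commented-out Prop.\ \ref{commutative1} of the excerpt, using $Y(u\otimes\ibf,z)(\ibf\otimes v)=Y(u,z)\ibf\otimes v$ and the creation property), and \eqref{eq111} for each $Y_i$ follows from \eqref{eq150} for $Y_\Mbb$ since $L(0)$ of the tensor VOA is the sum of the slot-wise $L(0)$'s. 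So $\Mbb$ becomes an $\Nbb$-gradable $\Vbb_1\times\cdots\times\Vbb_N$-module. That the two passages are inverse to each other reduces, after unwinding, to the identity
\[
Y_\Mbb(v_1\otimes\cdots\otimes v_N,z)=Y_1(v_1,z)\cdots Y_N(v_N,z),
\]
which in turn follows from iterated weak associativity (or, in the analytic picture, from the fact that both sides have matrix coefficients that are a.l.u.-convergent expansions of the same rational function and agree on a neighborhood by the locality/creation properties). I expect the main obstacle to be the bookkeeping in stage one: one must be careful that the triple $N$-variable expansion really does factor through the product structure, i.e.\ that the poles along $z_i=z_j$ never appear when $i\neq j$ (they don't, precisely because the mixed operators commute, so the only singularities are the ``diagonal'' ones coming from each slot interacting with itself and with the outgoing vector), and that the residue extraction can be performed simultaneously in all $N$ variables; handling this cleanly while keeping the argument short is the delicate point, but it is entirely parallel to the two-point rationality arguments in \cite{FHL93} and Ch.\ 10 of \cite{GuiLec}.
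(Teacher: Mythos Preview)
Your three-stage outline and the decision to use the complex-analytic route both match the paper; stages two and three are fine and correspond to what the paper dispatches in a few lines (cf. \eqref{app7} and the sentence ``Clearly (b)$\Rightarrow$(a)''). The paper also reduces to $N=2$ by induction, which you should make explicit.

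The genuine gap is in stage one, specifically the associativity expansion. You write that the commuting relations ``let one factor the $N$-variable picture into a product of one-variable pictures evaluated at a common point'' and that this is ``entirely parallel to the two-point rationality arguments in \cite{FHL93}''. That understates the difficulty. The commutativity half (matching the two product orderings) does go through by per-slot locality plus inter-slot commutativity, exactly as in Lemma \ref{lb29}. But for the OPE half you need
\[
\bk{w',Y_\Wbb(Y(v_\blt,z-\zeta)u_\blt,\zeta)w}=\sum_{n_2}\bigbk{w',Y_1(Y(v_1,z-\zeta)u_1,\zeta)P_{n_2}Y_2(Y(v_2,z-\zeta)u_2,\zeta)w}
\]
to converge on $0<|z-\zeta|<|\zeta|$ to the same $F$, and this does \emph{not} follow from the one-slot Jacobi identities alone: each summand is the restriction to the diagonal of a four-variable function whose OPE convergence is only known in the separated variables, and summing over $n_2$ after restriction is precisely the step that needs justification. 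The paper handles this with a concrete device: it first proves the four-variable convergence (Lemma \ref{lb29}) and a fixed-$n_2$ OPE convergence (Lemma \ref{lb30}), then in Step 2 of the proof introduces an auxiliary scaling parameter $q$ via $F(z_1,z_2,q)=q^{\cdots}\varphi(z_1,qz_1,z_2,qz_2)$ to analytically continue the identity $f=\sum_{n_2}g_{n_2}$ from the product region $\Gamma_1$ to all of $\Conf^2(\Cbb^\times)$, and only then extracts the residue at $z=\zeta$ (Step 3). Without this $q$-trick or an equivalent argument, your sketch does not establish that the three expansions agree; as the paper notes, the formal-variable weak-associativity route avoids this particular issue but you explicitly opted out of it.
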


Note that in (b) we are treating $\Vbb_1\otimes\cdots\otimes\Vbb_N$ as a single VOA. Clearly (b)$\Rightarrow$(a) by  setting $Y_i(v,z)=Y_\Wbb(\idt\otimes\cdots\otimes v\otimes\cdots\otimes\idt,z)$ (where $v$ is at the $i$-th tensor component). So we shall only prove (a)$\Rightarrow$(b).

\begin{co}\label{lb38}
Let $\Wbb$ be a weak $\Vbb_1\times\cdots\times\Vbb_N$-module. Suppose that $\Wbb$ is spanned by some weak  $\Vbb_1\times\cdots\times\Vbb_N$-submodules which are $\Nbb$-gradable. Then $\Wbb$ is a weak $\Vbb_1\otimes\cdots\otimes\Vbb_N$-module.
\end{co}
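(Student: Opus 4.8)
\textbf{Proof plan for Cor.~\ref{lb38}.}

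The plan is to reduce the statement to Thm.~\ref{lb37} via a direct limit argument. First I would observe that the notion ``weak $\Vbb_1\times\cdots\times\Vbb_N$-module'' and ``weak $\Vbb_1\otimes\cdots\otimes\Vbb_N$-module'' both only assert properties (lower truncation, vacuum, Jacobi identity, and the commuting relations) that are statements about finitely many vectors at a time, so the obstruction is purely that an arbitrary weak $\Vbb_1\times\cdots\times\Vbb_N$-module need not be $\Nbb$-gradable. The hypothesis says $\Wbb=\sum_{\alpha}\Wbb_\alpha$ for a family of $\Nbb$-gradable weak $\Vbb_1\times\cdots\times\Vbb_N$-submodules $\Wbb_\alpha$. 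By Thm.~\ref{lb37}, each $\Wbb_\alpha$ carries a canonical weak $\Vbb_1\otimes\cdots\otimes\Vbb_N$-module structure via \eqref{eq31}, with vertex operator
\begin{align*}
Y_{\Wbb_\alpha}(v_1\otimes\cdots\otimes v_N)_k=\sum_{k_1+\cdots+k_N=k+1-N}Y_1(v_1)_{k_1}\cdots Y_N(v_N)_{k_N}.
\end{align*}
The key point is that this formula is \emph{intrinsic}: it is expressed purely in terms of the operators $Y_i(v_i)_{k_i}$ on $\Wbb$ restricted to $\Wbb_\alpha$, and the right-hand side is a finite sum on each vector because the $Y_i(v_i)_{k_i}$ lower truncate. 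So the operator $Y_\Wbb(v_1\otimes\cdots\otimes v_N)_k$ defined by the same formula on all of $\Wbb$ restricts on each $\Wbb_\alpha$ to $Y_{\Wbb_\alpha}(v_1\otimes\cdots\otimes v_N)_k$.

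Next I would check that \eqref{eq31} extends $\Cbb$-linearly to a well-defined map $\Vbb_1\otimes\cdots\otimes\Vbb_N\to(\End\Wbb)[[z^{\pm1}]]$ on simple tensors and that the finiteness of the sum on each $w\in\Wbb$ holds without any grading hypothesis: given $w$, write $w=\sum$ of finitely many vectors from the $\Wbb_\alpha$'s, apply lower truncation of $Y_N(v_N)$, then of $Y_{N-1}(v_{N-1})$, etc., exactly as in the first paragraph of the proof of Thm.~\ref{lb37} (this part of that proof never used $\Nbb$-gradability, only lower truncation). Then lower truncation for $Y_\Wbb$ itself follows: for fixed $v_\blt,w$, the coefficient $Y_\Wbb(v_1\otimes\cdots\otimes v_N)_kw$ vanishes for $k$ sufficiently negative, again because each $Y_i(v_i)_{k_i}$ does.

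Finally I would verify the vacuum axiom and the Jacobi identity for $(\Wbb,Y_\Wbb)$. The vacuum axiom $Y_\Wbb(\id\otimes\cdots\otimes\id,z)=\id_\Wbb$ is immediate from $Y_i(\id,z)=\id_\Wbb$. For the Jacobi identity \eqref{jacobi}: fix $u,v\in\Vbb_1\otimes\cdots\otimes\Vbb_N$, a vector $w\in\Wbb$, a covector functional, and integers $m,n,h$. Each side of \eqref{jacobi} is a finite sum (by lower truncation just established) of terms, each of which is a finite $\Cbb$-linear combination of compositions of the operators $Y_i(\cdot)_{k}$ applied to $w$. Writing $w$ as a finite sum $w=\sum w_\alpha$ with $w_\alpha\in\Wbb_\alpha$, and using that each $\Wbb_\alpha$ is $Y_i$-invariant so that every operator appearing maps $\Wbb_\alpha$ into $\Wbb_\alpha$, the Jacobi identity evaluated at $w$ reduces to the Jacobi identity evaluated at each $w_\alpha$ inside $\Wbb_\alpha$ — which holds because $(\Wbb_\alpha,Y_{\Wbb_\alpha})$ is a weak $\Vbb_1\otimes\cdots\otimes\Vbb_N$-module by Thm.~\ref{lb37}, and $Y_\Wbb$ restricts to $Y_{\Wbb_\alpha}$ on $\Wbb_\alpha$. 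Summing over $\alpha$ gives the identity on $\Wbb$. Hence $(\Wbb,Y_\Wbb)$ is a weak $\Vbb_1\otimes\cdots\otimes\Vbb_N$-module.

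I do not expect a serious obstacle here: the only subtlety is the purely bookkeeping one of confirming that all sums in sight remain finite without the grading, and that ``restrict to a spanning family of submodules'' genuinely suffices for an identity like Jacobi whose each instance involves only finitely many vectors. Once that is granted, everything is inherited term-by-term from Thm.~\ref{lb37}.
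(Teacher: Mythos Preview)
Your approach is correct and matches the paper's one-line proof exactly: the only nontrivial axiom is the Jacobi identity for $Y_\Wbb$, and since \eqref{eq31} is intrinsic, it holds on each $\Nbb$-gradable submodule $\Wbb_\alpha$ by Thm.~\ref{lb37}, hence on all of $\Wbb$ by linearity.

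One small inaccuracy worth fixing: your parenthetical claim that the finiteness of the sum in \eqref{eq31} ``never used $\Nbb$-gradability, only lower truncation'' is not correct. Iterated lower truncation alone bounds each $k_i$ from above but not from below; the argument in Rem.~\ref{lb27} uses the grading (to bound the powers of $z_1$ from above when $Y_1$ is leftmost) together with commutativity to get a Laurent polynomial. This does no damage to your proof, since you have already reduced to $w_\alpha\in\Wbb_\alpha$ where the $\Nbb$-grading is available---just cite the finiteness established there rather than trying to reprove it from lower truncation alone. (Also, ``vanishes for $k$ sufficiently negative'' should read ``sufficiently large''.)
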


\begin{proof}
The only thing to check is the Jacobi identity for $Y_\Wbb$. By Thm. \ref{lb37}, the Jacobi identity holds when $Y_\Wbb$ is acting on each $\Nbb$-gradable weak $\Vbb_1\times\cdots\times\Vbb_N$-submodule.
\end{proof}

Our ultimate interest is not in weak or admissible $\Vbb_1\otimes\cdots\otimes\Vbb_N$-modules, but in generalized modules or even grading-restricted (generalized) modules of $\Vbb_1\otimes\cdots\otimes\Vbb_N$ which are extensively studied in the literature. This will be discussed in the next section.

\subsubsection{Another perspective on $Y_\Wbb$}\label{lb28}

Let $\Wbb$ be an $\Nbb$-gradable $\Vbb_1\times\cdots\times\Vbb_N$-module. For each $n\in\Nbb$, define the canonical projection
\begin{align}
P_n:\ovl\Wbb=\prod_{k\in\Nbb}\Wbb(k)\rightarrow \Wbb(n)=\{w\in\Wbb:\wtd L(0)w=nw\}
\end{align}
Then for each $v\in\Vbb_i$, $1\leq i\leq N$, and $m,n\in\Zbb$, $P_nY_i(v,z)P_m$ is an element of $\Hom(\Wbb(m),\Wbb(n))[z^{\pm1}]$ since, when $v$ is homogeneous,
\begin{align*}
P_n Y_i(v,z)P_m=z^{n-m-\wt v}\cdot P_nY_i(v)_{-n+m+\wt v-1}P_m
\end{align*} 
From this, it is easy to see that for each homogeneous $w\in\Wbb,w'\in\Wbb'$ and homogeneous $v_1\in\Vbb_1,\dots,v_N\in\Vbb_N$,
\begin{subequations}\label{eq84}
\begin{align}
&\<w',Y_1(v_1,z_1)\cdots Y_N(v_N,z_N)w\>  \nonumber\\
\xlongequal{\mathrm{def}}&\sum_{k_1,\dots,k_N}\<w',Y_1(v_1)_{k_1}\cdots Y_N(v_N)_{k_N}w\>z_1^{-k_1-1}\cdots z_N^{-k_N-1}  \label{eq85}\\
        =&\sum_{n_1,\dots,n_N\in\Nbb}\<w',P_{n_1}Y_1(v_1,z_1)P_{n_2} Y_2(v_2,z_2)\cdots P_{n_N}Y_N(v_N,z_N)w\>  \label{eq86}
\end{align}
\end{subequations}
holds in $\Cbb[[z_1^{\pm1},\dots,z_N^{\pm1}]]$. Moreover, on any open subset of $\Cbb^N$, the a.l.u. convergence of \eqref{eq85} is equivalent to that of \eqref{eq86}.\footnote{We warn the readers that these two a.l.u. convergences are NOT a priori equivalent when some $v_i$ is not homogeneous since, in that case, not all summands in \eqref{eq86} are monomials  of $z_1,\dots,z_N$.  See \cite[Subsec. 7.3]{GuiLec} for a detailed explanation.} (Here, the mutual commutativity of $Y_1,\dots,Y_N$ is not needed.)

\begin{rem}\label{lb27}
From \eqref{eq86}, we know that the powers of $z_N$ in \eqref{eq84} are bounded from below, and the powers of $z_1$ are bounded from above. From \eqref{eq85} and that $Y_1,\dots,Y_N$ mutually commute, we know that for each permutation $\sigma$ of $1,\dots, N$, we have
\begin{align}
&\<w',Y_1(v_1,z_1)\cdots Y_N(v_N,z_N)w\>\nonumber\\
=&\<w',Y_{\sigma(1)}(v_{\sigma(1)},z_{\sigma(1)})\cdots Y_{\sigma(N)}(v_{\sigma(N)},z_{\sigma(N)})w\>
\end{align}
These two facts together imply immediately that
\begin{align}
\<w',Y_1(v_1,z_1)\cdots Y_N(v_N,z_N)w\>\in\Cbb[z_1^{\pm1},\dots,z_N^{\pm1}]
\end{align}
and that \eqref{eq86} has only finitely many non-zero summands (so there is no convergence issue in \eqref{eq86}). Let $Y_\Wbb$ be defined by \eqref{eq31}. Then   in $\Cbb[z^{\pm1}]$ we clearly have
\begin{align}
\<w',Y_\Wbb(v_1\otimes\cdots\otimes v_N,z)w \>=\<w',Y_1(v_1,z_1)\cdots Y_N(v_N,z_N)w\>\big|_{z_1=\cdots=z_N=z}.
\end{align}
\end{rem}

\subsubsection{Some convergence properties}

Assume the setting of Subsec. \ref{lb32}. To prove that $Y_\Wbb$ satisfies Jacobi identity, we need the following convergence lemmas to ensure that taking residues commutes with certain infinite sums. Assume $N=2$, which is sufficient for the proof of Thm. \ref{lb37} by induction. The first lemma is proved in a similar way as \cite[Prop. 3.5.1]{FHL93}:

\begin{lm}\label{lb29}
         For each $w\in \Wbb,w'\in \Wbb',u_1,v_1\in \Vbb_1,u_2,v_2\in \Vbb_2$, the sums
\begin{subequations}
\begin{gather}
\sum_{n_1,n_2,n_3\in\Nbb} \<w',Y_1(u_1,z_1)P_{n_1}Y_2(u_2, \wtd z_1)P_{n_2}Y_1(v_1,z_2)P_{n_3}Y_2(v_2,\wtd z_2)w\>  \label{locality1}\\
\sum_{n_1,n_2,n_3\in\Nbb} \<w',Y_1(v_1,z_2)P_{n_1}Y_2(v_2, \wtd z_2)P_{n_2}Y_1(u_1,z_1)P_{n_3}Y_2(u_2,\wtd z_1)w\>  \label{locality2}\\
\sum_{n_1,n_2,n_3\in\Nbb} \<w',Y_1(u_1,z_1)P_{n_1}Y_1(v_1,z_2)P_{n_2}Y_2(u_2, \wtd z_1)P_{n_3}Y_2(v_2,\wtd z_2)w\>  \label{eq104}
\end{gather}
\end{subequations}
converge a.l.u. on $\Omega_1,\Omega_2,\Omega_1$ respectively where
\begin{subequations}
\begin{gather}
\Omega_1=\{(z_1,\wtd z_1,z_2,\wtd z_2)\in \Cbb^4:0<\vert z_2\vert <\vert z_1\vert,0<\vert \wtd z_2\vert <\vert \wtd z_1\vert\}\\
\Omega_2=\{(z_1,\wtd z_1,z_2,\wtd z_2)\in \Cbb^4:0<\vert z_1\vert <\vert z_2\vert,0<\vert \wtd z_1\vert <\vert \wtd z_2\vert\}
\end{gather}
\end{subequations}
and can be extended to the same holomorphic function $\varphi$ on 
\begin{gather}
\Omega=\{(z_1,\wtd z_1,z_2,\wtd z_2)\in (\Cbb^\times)^4:z_1\ne z_2,\wtd z_1\ne \wtd z_2\}  \label{eq91}
\end{gather}
such that $(z_1-z_2)^T (\wtd z_1-\wtd z_2)^T \varphi$ is holomorphic on $(\Cbb^\times)^4$ for some $T\in \Nbb$ independent of $w,w'$.
\end{lm}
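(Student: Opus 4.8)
\textbf{Proof proposal for Lemma \ref{lb29}.}

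The plan is to follow the strategy of \cite[Prop. 3.5.1]{FHL93}, adapted to the four-variable setting. First I would observe that each of the three nested sums can be rewritten, via the discussion in Subsec. \ref{lb28}, as a formal matrix-coefficient expression: inserting the projections $P_{n_k}$ exactly amounts to decomposing the composition $Y_1(u_1,z_1)Y_2(u_2,\wtd z_1)Y_1(v_1,z_2)Y_2(v_2,\wtd z_2)$ according to the $\wtd L(0)$-grading of the intermediate vectors. So \eqref{locality1}--\eqref{eq104} are, as formal series, the expansions of $\langle w', Y_1(u_1,z_1)Y_2(u_2,\wtd z_1)Y_1(v_1,z_2)Y_2(v_2,\wtd z_2)w\rangle$ in the three respective regions: $|z_2|<|z_1|$, $|\wtd z_2|<|\wtd z_1|$ for \eqref{locality1}; the opposite inequalities for \eqref{locality2} (after using the mutual commutativity of $Y_1,Y_2$ to reorder into the $v$-before-$u$ shape); and a mixed region for \eqref{eq104}. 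The key point is that it suffices to prove a.l.u. convergence of one such four-fold composition to a rational-type function, and then obtain the other two by symmetry of the variables and by commutativity.

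The heart of the argument is a rationality estimate. For fixed homogeneous $w,w'$ and homogeneous $u_i,v_i$, I would show there exist $T\in\Nbb$ (depending only on $u_1,v_1,u_2,v_2$, not on $w,w'$) and a polynomial-growth bound so that, after multiplying by $(z_1-z_2)^T(\wtd z_1 - \wtd z_2)^T$, the matrix coefficient becomes (a) a Laurent polynomial in $\wtd z_1,\wtd z_2$ in the $\Vbb_2$-directions when restricted to finitely many $\wtd L(0)$-levels, and (b) controlled in the $\Vbb_1$-directions by the two-point functions of the weak $\Vbb_1$-module structure. Concretely: using the Jacobi identity for $Y_1$ alone (i.e. the weak $\Vbb_1$-module axiom), the product $Y_1(u_1,z_1)Y_1(v_1,z_2)$, when paired against any fixed pair of homogeneous vectors, satisfies $(z_1-z_2)^{T_1}\langle w', Y_1(u_1,z_1)Y_1(v_1,z_2)\cdots\rangle$ being a finite Laurent series in $z_1,z_2$; similarly for $Y_2$ with exponent $T_2$. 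Because $Y_1$ and $Y_2$ commute, one can interleave these two estimates — this is exactly where \eqref{eq104}, the ``$Y_1Y_1Y_2Y_2$-ordered'' sum, is useful as the intermediate object — and the resulting bound is uniform over $w,w'$ with $T=\max\{T_1,T_2\}$ (or their sum, to be safe). The standard Cauchy-estimate / Hartogs-type argument then upgrades the termwise (formal-series) identities to genuine a.l.u. convergence on the indicated open subsets of $\Cbb^4$, and the common holomorphic extension $\varphi$ on $\Omega=\eqref{eq91}$ follows because the three series are expansions of the same rational-type function in overlapping annular regions.

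I expect the main obstacle to be the bookkeeping of the four complex variables and the two independent ``pole divisors'' $z_1=z_2$ and $\wtd z_1=\wtd z_2$ simultaneously, while keeping the exponent $T$ genuinely independent of $w$ and $w'$. In \cite{FHL93} this is handled for two variables by a clean induction on weights; here one must run the argument in the $\Vbb_1$-variables and the $\Vbb_2$-variables in parallel and verify that the commutativity $[Y_1(u)_m,Y_2(v)_n]=0$ lets the two inductions proceed without interference (so that no cross terms of the form $z_1=\wtd z_1$ appear — these are precisely excluded because $u_1\in\Vbb_1$ and $u_2\in\Vbb_2$ act on different ``slots''). A secondary technical point is justifying the interchange of the triple sum over $n_1,n_2,n_3$ with the pairings: this is where one genuinely needs the a.l.u. convergence rather than mere formal-series identities, and I would invoke the principle (used repeatedly in the spirit of \cite{GuiLec}) that an absolutely, locally uniformly convergent sum of holomorphic functions may be rearranged and differentiated termwise. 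Once Lemma \ref{lb29} is in place, the remaining lemmas leading to Thm. \ref{lb37} (the Jacobi identity for $Y_\Wbb$) should follow by taking appropriate contour integrals / residues of $\varphi$, since residues commute with a.l.u. convergent sums.
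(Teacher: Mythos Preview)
Your overall strategy—an FHL-style rationality argument—is the paper's, and your identification of \eqref{eq104} with \eqref{locality1} via $[Y_1,Y_2]=0$ is exactly right. But your ``interleave the $\Vbb_1$-estimate with the $\Vbb_2$-estimate'' plan has a gap. When you write that $(z_1-z_2)^{T_1}\langle w', Y_1(u_1,z_1)Y_1(v_1,z_2)\cdots\rangle$ is ``a finite Laurent series in $z_1,z_2$,'' the $\cdots$ hides an infinite formal series in $\wtd z_1,\wtd z_2$, and the degree bounds in $z_1,z_2$ that the two-point argument provides depend on which $\wtd z_1^a\wtd z_2^b$-coefficient you are looking at. So running the $\Vbb_1$- and $\Vbb_2$-arguments ``in parallel'' does not obviously produce a global Laurent polynomial; the two inductions are not independent in the way you suggest.

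The paper avoids this by working at the operator level with all four variables simultaneously. Locality gives $(z_1-z_2)^T[Y_1(u_1,z_1),Y_1(v_1,z_2)]=0$ and $(\wtd z_1-\wtd z_2)^T[Y_2(u_2,\wtd z_1),Y_2(v_2,\wtd z_2)]=0$ in $\End(\Wbb)[[\cdot]]$, with $T$ depending only on $u_i,v_i$. Combined with $[Y_1,Y_2]=0$, this shows
\[
\psi\;:=\;(z_1-z_2)^T(\wtd z_1-\wtd z_2)^T\cdot\eqref{locality1}\;=\;(z_1-z_2)^T(\wtd z_1-\wtd z_2)^T\cdot\eqref{locality2}
\]
in $\Cbb[[z_1^{\pm1},\wtd z_1^{\pm1},z_2^{\pm1},\wtd z_2^{\pm1}]]$. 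Now the grading relation \eqref{eq111} (applied at the outer ends, and after permuting the first two and last two operators in \eqref{locality1} using commutativity) forces \eqref{locality1} into the ring $\Cbb((z_1^{-1},\wtd z_1^{-1},z_2,\wtd z_2))$ and \eqref{locality2} into $\Cbb((z_1,\wtd z_1,z_2^{-1},\wtd z_2^{-1}))$; hence $\psi$ lies in both, so $\psi\in\Cbb[z_1^{\pm1},\wtd z_1^{\pm1},z_2^{\pm1},\wtd z_2^{\pm1}]$. Then $\eqref{locality1}=\psi\cdot(z_1-z_2)^{-T}(\wtd z_1-\wtd z_2)^{-T}$, where the inverses are the binomial expansions valid in $\Cbb((z_1^{-1},\wtd z_1^{-1},z_2,\wtd z_2))$, which converge a.l.u. on $\Omega_1$; this delivers convergence and the extension $\varphi=(z_1-z_2)^{-T}(\wtd z_1-\wtd z_2)^{-T}\psi\in\mc O(\Omega)$ directly—no Cauchy estimates or Hartogs-type extension is used or needed.
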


\begin{proof}

Step 1. It suffices to assume that $u_1,v_1,u_2,v_2,w,w'$ are homogeneous. Then, as in \eqref{eq84}, we can also understand \eqref{locality1} as
\begin{align}
&\<w',Y_1(u_1,z_1)Y_2(u_2,\wtd z_1)Y_1(v_1,z_2)Y_2(v_2,\wtd z_2)w\>\nonumber\\
=&\sum_{k_1,k_2,k_3,k_4\in\Nbb}\<w',Y_1(u_1)_{k_1}Y_2(u_2)_{k_2}Y_1(v_1)_{k_3}Y_2(v_2)_{k_4}w\>\nonumber\\
&\qquad\qquad\qquad\cdot z_1^{-k_1-1}\wtd z_1^{-k_2-1}z_2^{-k_3-1}\wtd z_2^{-k_4-1}  \label{eq89}
\end{align}
On any open subset of $\Cbb^4$ the a.l.u. convergences of \eqref{locality1} and \eqref{eq89} are equivalent.  \eqref{locality2} and \eqref{eq104} can be understood in a similar way. Then it follows immediately that \eqref{locality1} and \eqref{eq104} are equal as elements of $\Cbb[[z_1^{\pm 1},\wtd z_1^{\pm 1},z_2^{\pm 1},\wtd z_2^{\pm 1}]]$ (and hence as holomorphic functions whenever the a.l.u. convergence holds) because $Y_1$ and $Y_2$ commute.\\[-1ex]

Step 2. By the locality of vertex operators $Y_1,Y_2$, we can choose $T\in \Nbb$ such that 
    \begin{equation}\label{eq96}
    (z_1-z_2)^T [Y_1(u_1,z_1),Y_1(v_1,z_2)]=0,\quad (\wtd z_1-\wtd z_2)^T [Y_2(u_2,\wtd z_1),Y_2(v_2,\wtd z_2)]=0
    \end{equation}
    as elements in $\End(\Wbb)[[z_1^{\pm 1},z_2^{\pm 1}]]$ and $\End(\Wbb)[[\wtd z_1^{\pm 1},\wtd z_2^{\pm 1}]]$. Let $f(z_1,\wtd z_1,z_2,\wtd z_2)=\eqref{locality1}$ and $g(z_1,\wtd z_1,z_2,\wtd z_2)=\eqref{locality2}$. Define 
    \begin{equation}\label{locality4}
    \psi(z_1,\wtd z_1,z_2,\wtd z_2)=(z_1-z_2)^T (\wtd z_1-\wtd z_2)^T  f(z_1,\wtd z_1,z_2,\wtd z_2)
    \end{equation}
viewed as an element in the $\Cbb[z_1^{\pm 1},\wtd z_1^{\pm 1},z_2^{\pm 1},\wtd z_2^{\pm 1}]$-module $\Cbb[[z_1^{\pm 1},\wtd z_1^{\pm 1},z_2^{\pm 1},\wtd z_2^{\pm 1}]]$. Then  (\ref{eq96}) and the commutativity of $Y_1$ and $Y_2$ imply that
    $$
    \psi=(z_1-z_2)^T (\wtd z_1-\wtd z_2)^T  g(z_1,\wtd z_1,z_2,\wtd z_2)
    $$

By \eqref{eq111}, the powers of $\wtd z_2$ (resp. $z_1$) in \eqref{locality1} are bounded from below (resp. above), and the same is true for $z_2$ (resp. $\wtd z_1$) since we can exchange the first two vertex operators and the second two in \eqref{locality1}. This implies that $f$ belongs to the ring $\Cbb((z_1^{-1},\wtd z_1^{-1},z_2,\wtd z_2))$, and so does $\psi$. Similarly, $g$ belongs to $\Cbb((z_1,\wtd z_1,z_2^{-1},\wtd z_2^{-1}))$, and so does $\psi$. Therefore
\begin{gather*}
\psi(z_1,\wtd z_1,z_2,\wtd z_2) \in \Cbb[z_1^{\pm 1},\wtd z_1^{\pm 1},z_2^{\pm 1},\wtd z_2^{\pm 1}].
\end{gather*}

Step 3. Now \eqref{locality4} can be viewed as a relation in the ring $R=\Cbb((z_1^{-1},\wtd z_1^{-1},z_2,\wtd z_2))$. In this ring, $(z_1-z_2)^T$ and $(\wtd z_1-\wtd z_2)^T$ have inverses
\begin{gather}
    (z_1-z_2)^{-T}=\sum_{j\in \Nbb}\binom{-T}{j}(-1)^j z_1^{-T-j}z_2^j  \label{eq87}\\
    (\wtd z_1-\wtd z_2)^{-T}=\sum_{j\in \Nbb}\binom{-T}{j}(-1)^j \wtd z_1^{-T-j}\wtd z_2^j \label{eq88}
\end{gather}
Therefore, in $R$ we have $f=\eqref{eq87}\cdot\eqref{eq88}\cdot\psi$. Clearly \eqref{eq87} and \eqref{eq88} converge a.l.u. on $\Omega_1$. So does $f$ because $\psi$ is a Laurent polynomial. Since the RHS of \eqref{eq87} and \eqref{eq88} converge a.l.u. on $\Omega_1$ to the LHS as holomorphic functions, $f$ as a formal Laurent series of $z_1,z_2,\wtd z_1,\wtd z_2$ converges a.l.u. on $\Omega_1$ to $\varphi$. Here
\begin{gather*}
\varphi=(z_1-z_2)^{-T} (\wtd z_1-\wtd z_2)^{-T}\psi\qquad\in\mc O(\Omega)
\end{gather*}
Therefore, by Step 1, \eqref{locality1} converges a.l.u. on $\Omega_1$ to $\varphi$. Similarly, \eqref{locality2} converges a.l.u. on $\Omega_2$ to $\varphi$.
\end{proof}

The following lemma is well-known when $\Wbb(n_2)$ is finite-dimensional. Without assuming the finite-dimensionality, the proof is more subtle.

\begin{lm}\label{lb30}
In Lem. \ref{lb29}, fix any $n_2\in\Nbb$. Then
\begin{subequations}
\begin{gather}
\sum_{n_1,n_3\in\Nbb} \<w',Y_1(u_1,z_1)P_{n_1}Y_1(v_1,z_2)P_{n_2}Y_2(u_2, \wtd z_1)P_{n_3}Y_2(v_2,\wtd z_2)w\> \label{eq94}\\
\sum_{n_1,n_3\in\Nbb}\<w',Y_1(P_{n_1}Y(u_1,z_1-z_2)v_1,z_2)P_{n_2}Y_2(P_{n_3}Y(u_2,\wtd z_1-\wtd z_2)v_2,\wtd z_2)w\> \label{eq95}
\end{gather}
\end{subequations}
converge a.l.u. on $\Omega_1,\Omega_3$ respectively where
\begin{align}
\Omega_3=\{(z_1,\wtd z_1,z_2,\wtd z_2)\in\Cbb^4:0<|z_1-z_2|<|z_2|,0<|\wtd z_1-\wtd z_2|<|\wtd z_2|\}
\end{align}
and can be extended to the same holomorphic function $\omega_{n_2}$ on $\Omega$.
\end{lm}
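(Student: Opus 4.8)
The strategy is to reduce everything to Lem. \ref{lb29} by fixing $n_2$ and resumming the remaining two indices $n_1, n_3$. First I would observe that Lem. \ref{lb29} already furnishes a holomorphic function $\varphi$ on $\Omega$ with $(z_1-z_2)^T(\wtd z_1-\wtd z_2)^T\varphi$ holomorphic on $(\Cbb^\times)^4$, obtained as the a.l.u. limit of the triple sum \eqref{locality1} on $\Omega_1$. The key point is that the summation over $n_2$ in \eqref{locality1} is \emph{finite} once $w$ is fixed (because $P_{n_2}$ projects onto $\Wbb(n_2)$ and the vertex operators between $w$ and $w'$ shift the grading by a bounded amount, using the grading property \eqref{eq111} and the lower/upper truncation properties of $Y_1,Y_2$). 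So for each fixed $n_2$ the partial sum over $n_1,n_3$ is a sub-series of an a.l.u. convergent series of holomorphic functions with non-negative... — more precisely, one must be slightly careful since the summands are not non-negative, but the standard argument of \cite[Prop. 3.5.1]{FHL93} (which is exactly how Lem. \ref{lb29} was proved) shows that the relevant Laurent series in $z_1,\wtd z_1,z_2,\wtd z_2$ lies in a ring where absolute convergence is controlled termwise in $n_1,n_3$. Concretely: the sum $\sum_{n_1,n_3}(\cdots)$ in \eqref{eq94} is, after multiplying by $(z_1-z_2)^T(\wtd z_1-\wtd z_2)^T$, a Laurent \emph{polynomial}-valued expression whose coefficients are the finite $n_2$-slices of $\psi$ from the proof of Lem. \ref{lb29}; hence \eqref{eq94} equals $(z_1-z_2)^{-T}(\wtd z_1-\wtd z_2)^{-T}$ times that slice, and therefore converges a.l.u. on $\Omega_1$ to a holomorphic function $\omega_{n_2}\in\mc O(\Omega)$ with the stated pole bound. (Summing $\omega_{n_2}$ over $n_2$ recovers $\varphi$, which is a useful sanity check.)

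Next I would treat \eqref{eq95}, which involves the vertex operators in the "fused" order $Y(u_i, z_i - \wtd z_i)v_i$ with the domain $\Omega_3$ (the region where $z_1$ is near $z_2$ and $\wtd z_1$ near $\wtd z_2$). Here the plan is to invoke the weak associativity / the associativity identity for each individual weak $\Vbb_i$-module $(\Wbb, Y_i)$: for each fixed $n_2$, one has in the appropriate formal sense
\begin{align*}
P_{n'}Y_1(u_1,z_1)P_{n_1}Y_1(v_1,z_2)P_{n_2} \;\longleftrightarrow\; P_{n'}Y_1\big(Y(u_1,z_1-z_2)v_1,z_2\big)P_{n_2},
\end{align*}
and similarly for $Y_2$. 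Since the resummation over the "internal" grading indices of the fusion product $Y(u_1,z_1-z_2)v_1$ is again finite (the operator product expansion for a VOA is a finite sum of terms $Y(u_1)_k v_1$ with $k$ bounded below, and for fixed $v_1$ only finitely many $k$ contribute a nonzero vector in each fixed graded piece), the series \eqref{eq95} has, for fixed $n_2$, only the indices $n_1,n_3$ to sum, and the argument of Lem. \ref{lb29} Step 3 applies verbatim with $(z_1-z_2)^{-T}$ expanded in the region $|z_1-z_2|<|z_2|$, i.e. via
\begin{align*}
(z_1-z_2)^{-T}=\sum_{j\in\Nbb}\binom{-T}{j}(-1)^j z_2^{-T-j}(z_1-z_2)^j,
\end{align*}
which converges a.l.u. on $\Omega_3$. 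This identifies the a.l.u. limit of \eqref{eq95} with a holomorphic function on $\Omega$, and that function must coincide with $\omega_{n_2}$ because both extend the same rational-type expression $(z_1-z_2)^{-T}(\wtd z_1-\wtd z_2)^{-T}\cdot(\text{the }n_2\text{-slice of }\psi)$ — two holomorphic functions on the connected set $\Omega$ agreeing on the nonempty open subset $\Omega_1\cap\Omega_3$ are equal.

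The main obstacle is the bookkeeping around the \emph{non-finiteness} of $\Wbb(n_2)$: one cannot simply insert a finite-rank projector and quote the classical FHL argument. The way I would handle this is to never work with $P_{n_2}$ in isolation but always inside the matrix coefficient $\langle w', \cdots P_{n_2} \cdots w\rangle$, where the combination of lower truncation ($Y_i(v,z)w\in\Wbb((z))$) on the right and the grading shift \eqref{eq111} forces the contributing terms into a single graded slice $\Wbb(n_2)$, but — crucially — the \emph{pairing} $\langle w', \cdot\rangle$ kills all but finitely many of them because $w'\in\Wbb'$ is supported in finitely many degrees. So the genuinely infinite part of the double/triple sum is only over $n_1,n_3$, and on those the Laurent-polynomial trick (multiply by $(z_1-z_2)^T(\wtd z_1-\wtd z_2)^T$, observe the result is a Laurent polynomial, then divide back and expand the binomial series in the appropriate domain) goes through exactly as in Lem. \ref{lb29}. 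A secondary subtlety worth a sentence is that when $u_i,v_i$ are not homogeneous the summands in \eqref{eq94}, \eqref{eq95} are not monomials, so one must first reduce to the homogeneous case (as in Step 1 of Lem. \ref{lb29}'s proof, and with the caveat in the footnote of Subsec. \ref{lb28}) before comparing a.l.u. convergence of the "graded" and "ungraded" forms of these sums.
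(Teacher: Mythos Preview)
Your treatment of \eqref{eq94} is correct and coincides with the paper's Step 1: for fixed $n_2$, locality \eqref{eq96} shows that $(z_1-z_2)^T(\wtd z_1-\wtd z_2)^T\cdot\eqref{eq94}$ and the same factor times the fully reversed product \eqref{eq98} agree as formal series and hence lie in $\Cbb[z_1^{\pm1},\wtd z_1^{\pm1},z_2^{\pm1},\wtd z_2^{\pm1}]$, giving the extension $\omega_{n_2}\in\mc O(\Omega)$. (The paper also records the two ``mixed'' orderings \eqref{eq102}, which you do not mention; these are needed later.)

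The gap is in your argument for \eqref{eq95}. Multiplying \eqref{eq95} by $(z_1-z_2)^T(\wtd z_1-\wtd z_2)^T$ does \emph{not} produce a Laurent polynomial in the natural variables $(z_1-z_2),z_2,(\wtd z_1-\wtd z_2),\wtd z_2$: for homogeneous vectors the $(n_1,n_3)$-summand is a monomial with $(z_1-z_2)$-exponent $n_1-\wt(u_1)-\wt(v_1)$ and $z_2$-exponent $\wtd\wt(w')-n_2-n_1$, so as $n_1\to\infty$ the $(z_1-z_2)$-power grows without bound (while the $z_2$-power goes to $-\infty$), reflecting the pole of $\omega_{n_2}$ at $z_1=0$. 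Thus the binomial-expansion trick of Lem.~\ref{lb29} Step 3 does not ``apply verbatim'' to the iterate. Your fallback, ``invoke weak associativity for each $Y_i$ separately,'' is exactly what would work if one could factor the matrix coefficient through a basis of $\Wbb(n_2)$; but that is the finite-dimensional argument you correctly said you wanted to avoid. With $P_{n_2}$ sitting \emph{between} the $Y_1$-pair and the $Y_2$-pair, applying associativity on both sides simultaneously requires either a basis insertion or an identity of operator-valued formal series that you have not supplied.

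The paper's route for \eqref{eq95} is genuinely different. Having established in Step 1 that all four orderings of $Y_1Y_1Y_2Y_2$ converge on their respective quadrants to the same $\omega_{n_2}$, the paper takes the Laurent expansion of $\omega_{n_2}$ on a multi-annulus $\Omega_{3,r,\rho}\subset\Omega_3$ with respect to $(z_1-z_2,\wtd z_1-\wtd z_2)$, writes each Laurent coefficient $a_{k_1,k_2}(z_2,\wtd z_2)$ as a double contour integral around $(z_2,\wtd z_2)$, decomposes each contour via the residue theorem into four contours over $C_{\pm 1}\times\wtd C_{\pm 1}$ centered at the origin, evaluates those four pieces using the four orderings, and then recognizes the resulting sum as the Jacobi identity for each of $Y_1,Y_2$ acting on fixed vectors. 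This yields $a_{k_1,k_2}=\langle w',Y_1(Y(u_1)_{k_1}v_1,z_2)P_{n_2}Y_2(Y(u_2)_{k_2}v_2,\wtd z_2)w\rangle$, i.e.\ exactly the $(k_1,k_2)$-coefficient of \eqref{eq95}. The four orderings from Step 1 are what make this computation go through without ever factoring through $\Wbb(n_2)$.
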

\begin{proof}
Step 1. It suffices to assume that all the vectors are homogeneous. \eqref{eq111} implies that \eqref{eq94} belongs to $\Cbb((z_1^{-1},\wtd z_1^{-1},z_2,\wtd z_2))$, and that
\begin{subequations}\label{eq101}
\begin{gather}
\sum_{n_1,n_3\in\Nbb} \<w',Y_1(v_1,z_2)P_{n_1}Y_1(u_1,z_1)P_{n_2}Y_2(v_2,\wtd z_2)P_{n_3}Y_2(u_2, \wtd z_1)w\> \label{eq98}
\end{gather}
 belongs to $\Cbb((z_1,\wtd z_1,z_2^{-1},\wtd z_2^{-1}))$. By \eqref{eq96}, when multiplied by $(z_1-z_2)^T(\wtd z_1-\wtd z_2)^T$,  \eqref{eq94} and \eqref{eq98} are equal as elements of the $\Cbb[z_1^{\pm1},\wtd z_1^{\pm1},z_2^{\pm1},\wtd z_2^{\pm1}]$-module $\Cbb[[z_1^{\pm1},\wtd z_1^{\pm1},z_2^{\pm1},\wtd z_2^{\pm1}]]$. So this element must be in $\Cbb[z_1^{\pm1},\wtd z_1^{\pm1},z_2^{\pm1},\wtd z_2^{\pm1}]$. Using this fact, one shows as in the proof of Lem. \ref{lb29} that \eqref{eq94} and \eqref{eq98} converge a.l.u. on $\Omega_1,\Omega_2$ respectively to the same function $\omega_{n_2}\in\mc O(\Omega)$. A similar argument shows that
\begin{gather}
\sum_{n_1,n_3\in\Nbb} \<w',Y_1(v_1,z_2)P_{n_1}Y_1(u_1,z_1)P_{n_2}Y_2(u_2, \wtd z_1)P_{n_3}Y_2(v_2,\wtd z_2)w\>\label{eq102}\\
\sum_{n_1,n_3\in\Nbb} \<w',Y_1(u_1,z_1)P_{n_1}Y_1(v_1,z_2)P_{n_2}Y_2(v_2,\wtd z_2)P_{n_3}Y_2(u_2, \wtd z_1)w\>
\end{gather}
\end{subequations}
converge a.l.u. on $\{|z_1|<|z_2|,|\wtd z_1|>|\wtd z_2|\}$ and $\{|z_1|>|z_2|,|\wtd z_1|<|\wtd z_2|\}$ respectively to $\omega_{n_2}$. \\[-1ex]

Step 2. It remains to show that \eqref{eq95} converges a.l.u. on $\Omega_3$ to $\omega_{n_2}$. This is equivalent to showing that for each $r,\rho>0$, \eqref{eq95} converges a.l.u. to $\omega_{n_2}$ on the multi-annulus
\begin{align}
\Omega_{3,r,\rho}=\{(z_1,\wtd z_1,z_2,\wtd z_2)\in\Cbb^4:0<|z_1-z_2|<r<|z_2|,0<|\wtd z_1-\wtd z_2|<\rho<|\wtd z_2|\}
\end{align}
The advantage of working with holomorphic functions on $\Omega_{3,r,\rho}$ is that we can take Laurent series expansions with respect to the four variables $z_1-z_2,z_2,\wtd z_1-\wtd z_2,\wtd z_2$. By basic facts about Laurent series expansions of holomorphic functions on multi-annuli (cf. e.g. \cite[Lem. 7.13]{GuiLec}), the RHS of the following converges a.l.u. on $\Omega_{3,r,\rho}$ to the LHS:
\begin{align*}
\omega_{n_2}=\sum_{k_1,k_2\in\Zbb} a_{k_1,k_2}(z_2,\wtd z_2)\cdot (z_1-z_2)^{-k_1-1}(\wtd z_1-\wtd z_2)^{-k_2-1}
\end{align*}
where
\begin{align*}
a_{k_1,k_2}=\Res_{z_1-z_2=0}\Res_{\wtd z_1-\wtd z_2=0}~&\omega_{n_2}(z_1,\wtd z_1,z_2,\wtd z_2)\\
&\cdot (z_1-z_2)^{k_1}(\wtd z_1-\wtd z_2)^{k_2}d(\wtd z_1-\wtd z_2)d(z_1-z_2)
\end{align*}
is a holomorphic function on $\Gamma_{r,\rho}=\{(z_2,\wtd z_2)\in\Cbb^2:|z_2|>r,|\wtd z_2|>\rho\}$. The proof will be completed if we can show that
\begin{align}
a_{k_1,k_2}(z_2,\wtd z_2)=\<w',Y_1(Y(u_1)_{k_1}v_1,z_2)P_{n_2}Y_2(Y(u_2)_{k_2}v_2,\wtd z_2)w\>  \label{eq97}
\end{align}

Step 3. We fix $(z_2,\wtd z_2)\in\Gamma_{r,\rho}$ and verify \eqref{eq97}. Choose circles $C_{-1},C_1,\wtd C_{-1},\wtd C_{1}$ in $\Cbb$ with center $0$ and radii $<|z_2|,>|z_2|,<|\wtd z_2|,>|\wtd z_2|$ respectively. Then
\begin{align}
&a_{k_1,k_2}(z_2,\wtd z_2)\nonumber\\
=&\sum_{i,j=-1,1} (-1)^{i+j}\oint_{C_i}\oint_{C_j}\omega_{n_2}(z_1,\wtd z_1,z_2,\wtd z_2)\cdot (z_1-z_2)^{k_1}(\wtd z_1-\wtd z_2)^{k_2}\frac {d\wtd z_1}{2\pi\im}\frac{d z_1}{2\pi\im}  \label{eq100}
\end{align}
by residue theorem. The four summands in \eqref{eq100} can be computed by \eqref{eq94} and \eqref{eq101}: By the fact that contour integrals commute with a.l.u. convergent series of holomorphic functions, \eqref{eq102} implies that
\begin{align}
&\oint_{C_{-1}}\oint_{C_1}\omega_{n_2}(z_1,\wtd z_1,z_2,\wtd z_2)\cdot (z_1-z_2)^{k_1}(\wtd z_1-\wtd z_2)^{k_2}\frac {d\wtd z_1}{2\pi\im}\frac{d z_1}{2\pi\im}\nonumber\\
=&\sum_{l_1,l_2\in\Nbb}{k_1\choose l_1}{k_2\choose l_2}\oint_{C_{-1}}\oint_{C_1}\omega_{n_2}z_1^{l_1}(-z_2)^{k_1-l_1}\wtd z_1^{k_2-l_2}(-\wtd z_2)^{l_2}\frac{d\wtd z_1}{2\pi\im}\frac{d z_1}{2\pi\im}\nonumber\\
=&\sum_{l_1,l_2\in\Nbb}{k_1\choose l_1}{k_2\choose l_2}\sum_{n_1,n_3\in\Nbb}(-z_2)^{k_1-l_1}(-\wtd z_2)^{l_2} \nonumber\\
~~&\cdot \<w',Y_1(v_1,z_2)P_{n_1}Y_1(u_1)_{l_1}P_{n_2}Y_2(u_2)_{k_2-l_2}P_{n_3}Y_2(v_2,\wtd z_2)w\>\nonumber\\
=&\sum_{l_1,l_2\in\Nbb}{k_1\choose l_1}{k_2\choose l_2}(-z_2)^{k_1-l_1}(-\wtd z_2)^{l_2}\nonumber\\
~~&\cdot \<w',Y_1(v_1,z_2)Y_1(u_1)_{l_1}P_{n_2}Y_2(u_2)_{k_2-l_2}Y_2(v_2,\wtd z_2)w\>
\end{align}
where the last term is a finite sum by \eqref{eq111}. The other three integrals in \eqref{eq100} can be calculated in the same way. This computes the LHS of \eqref{eq97}.

By the Jacobi identity \eqref{jacobi} for $Y_i$ (where $i=1,2$), for each $m,n\in\Nbb$, $k\in\Zbb$, and homogeneous $u,v\in\Vbb$, we have in $\End_\Cbb(\Wbb)[z^{\pm1}]$ that
\begin{align*}
&P_mY_i\big(Y(u)_kv,z\big)P_n\\
=&\sum_{l\in\Nbb} {k\choose l}(-z)^lP_mY_i(u)_{k-l}Y_i(v,z)P_n-\sum_{l\in\Nbb}{k\choose l}(-z)^{k-l}P_mY_i(v,z)Y_i(u)_lP_n
\end{align*}
from which one easily computes the RHS of \eqref{eq97} and finds that it equals the LHS.
\end{proof}

\subsubsection{Proof of Thm. \ref{lb37}}

\begin{proof}[\textbf{Proof of Thm. \ref{lb37}}]
By induction on $N$, it suffices to assume $N=2$. Clearly, $Y_\Wbb(\ibf\otimes \ibf,z)=\ibf_\Wbb$ and $Y_\Wbb$ satisfies the lower truncation property. Moreover, it is easy to show
    \begin{equation}\label{app7}
     [\wtd{L}(0),Y_\Wbb(u_1\otimes u_2,z)]=Y_\Wbb\big(L(0)(u_1\otimes u_2),z\big)+z\partial_z Y_\Wbb(u_1\otimes u_2,z).
    \end{equation}
using \eqref{eq111}. So it remains to check the Jacobi identity (\ref{jacobi}) for $Y_\Wbb$.  Let $u_1,v_1\in \Vbb_1,u_2,v_2\in \Vbb_2$ be homogeneous and 
\begin{gather*}
A(z)=Y_\Wbb(u_1\otimes u_2,z),\quad B(z)=Y_\Wbb(v_1\otimes v_2,z).
\end{gather*}
Our goal is to prove for each $m,n,k\in\Zbb$
\begin{subequations}\label{eq109}
\begin{align}
\begin{aligned}\label{eq90}
    &\sum_{l\in \Nbb}\binom{m}{l}(A_{n+l}B)_{m+k-l}\\
    =&\sum_{l\in \Nbb}(-1)^l \binom{n}{l} A_{m+n-l}B_{k+l}-\sum_{l\in \Nbb}(-1)^{n+l}\binom{n}{l}B_{n+k-l}A_{m+l}.
\end{aligned}
\end{align}
 where, for each $k,l\in\Zbb$,
\begin{align}
 (A_kB)_l=Y_\Wbb(Y(u_1\otimes u_2)_{k}(v_1\otimes v_2))_l 
\end{align}
\end{subequations}

Step 1. Choose homogeneous $w\in\Wbb,w'\in\Wbb'$. By Subsec. \ref{lb28}, for $z\in\Cbb^\times$ we have
\begin{gather*}
\bigbk{w',A(z)w}=\sum_{n\in\Nbb}\bigbk{w',Y_1(u_1,z)P_nY_2(u_2,z)w}\\
\bigbk{w',B(z)w}=\sum_{n\in\Nbb}\bigbk{w',Y_1(v_1,z)P_nY_2(v_2,z)w}
\end{gather*}
where the RHS contain only finitely many non-zero summands.  Let $\Omega=\eqref{eq91}$ and
\begin{align*}
\varphi=\varphi(z_1,\wtd z_1,z_2,\wtd z_2)\qquad\in\mc O(\Omega)
\end{align*}
be defined by Lem. \ref{lb29}. Let
\begin{gather*}
\Gamma=\Conf^2(\Cbb^\times)\\
\Gamma_1=\{(z_1,z_2)\in \Cbb^\times \times \Cbb^\times :0<\vert z_2\vert <\vert z_1\vert\},\\
\Gamma_2=\{(z_1,z_2)\in \Cbb^\times \times \Cbb^\times :0<\vert z_1\vert <\vert z_2\vert\}\\
\Gamma_3=\{(z_1,z_2)\in\Cbb^\times\times\Cbb^\times:0<|z_1-z_2|<|z_2|\}\\
f(z_1,z_2)=\varphi(z_1,z_1,z_2,z_2)
\end{gather*}
Then $f\in\mc O(\Gamma)$. By Lem. \ref{lb29}, the RHS of the following converge a.l.u. on the given region to the LHS:
\begin{subequations}
\begin{gather}
f=\sum_{n\in\Nbb}\bigbk{w',A(z_1)P_n B(z_2)w}\quad \text{on }\Gamma_1 \label{eq107}\\
f=\sum_{n\in\Nbb}\bigbk{w',B(z_2)P_n A(z_1)w}\quad \text{on }\Gamma_2 \label{eq108}\\
f=\sum_{n_1,n_2,n_3\in\Nbb}\<w',Y_1(u_1,z_1)P_{n_1}Y_1(v_1,z_2)
P_{n_2}Y_2(u_2,z_1)P_{n_3}Y_2(v_2,z_2)w\>\quad\text{on }\Gamma_1  \label{eq105}
\end{gather}
\end{subequations}

Step 2. Recall the function $\omega_{n_2}\in\mc O(\Omega)$ in Lem. \ref{lb30}. Define $g_{n_2}\in\mc O(\Gamma)$ by
\begin{align*}
g_{n_2}(z_1,z_2)=\omega_{n_2}(z_1,z_1,z_2,z_2)
\end{align*} 
Our goal in this step is to show that the RHS of the following converges a.l.u. to the LHS:
\begin{align}
f(z_1,z_2)=\sum_{n_2\in\Nbb} g_{n_2}(z_1,z_2)\qquad\text{on }\Gamma \label{eq103}
\end{align}
We already know that this is true on $\Gamma_1$ due to \eqref{eq105}. To extend \eqref{eq103} from $\Gamma_1$ to $\Gamma$, we follow the proof of \cite[Thm. 8.4]{GuiLec}.

Consider the functions $F\in\mc O(\Gamma\times\Cbb^\times)$ and $G_{n_2}\in\mc O(\Gamma)$ defined by
\begin{gather*}
F(z_1,z_2,q)=q^{\wt(u_2)+\wt(v_2)+\wtd\wt(w_2)}\varphi(z_1,qz_1,z_2,qz_2)\\
G_{n_2}(z_1,z_2)=\Res_{q=0}~F(z_1,z_2,q)q^{-n_2-1}dq
\end{gather*}
By complex analysis, the RHS of the following converges a.l.u. on $\Gamma\times\Cbb^\times$ to the LHS:
\begin{align*}
F(z_1,z_2,q)=\sum_{n_2\in\Zbb} G_{n_2}(z_1,z_2)q^{n_2}
\end{align*}
Note that $f(z_1,z_2)=F(z_1,z_2,1)$. So \eqref{eq103} is true if we can prove that $G_{n_2}$ equals $g_{n_2}$ on $\Gamma$. (Here, we set $g_{n_2}=0$ if $n_2<0$.)

By the uniqueness of analytic continuation, it suffices to show that $G_{n_2}=g_{n_2}$ on $\Gamma_1$. Fix $(z_1,z_2)\in\Gamma_1$. By \eqref{eq104} and \eqref{eq111}, we have
\begin{align*}
&F(z_1,z_2,q)=\sum_{n_1,n_2,n_3\in\Nbb}\bigbk{w',Y_1(u_1,z_1)P_{n_1}Y_1(v_1,z_2)P_{n_2}q^{\wtd L_0}Y_2(u_2,z_1)P_{n_3}Y_2(v_2,z_2)w}\\
=&\sum_{n_1,n_2,n_3\in\Nbb} q^{n_2}\<w',Y_1(u_1,z_1)P_{n_1}Y_1(v_1,z_2)P_{n_2}Y_2(u_2,z_1)P_{n_3}Y_2(v_2,z_2)w\>
\end{align*} 
which, by \eqref{eq94}, equals $\sum_{n_2\in\Nbb} g_{n_2}(z_1,z_2)q^{n_2}$. This finishes the proof of \eqref{eq103}.\\[-1ex]

Step 3. Let us compute 
\begin{align*}
h_k(z_2):=\Res_{z_1=z_2}(z_1-z_2)^k f(z_1,z_2)dz_1=\oint_{C(z_2)} (z_1-z_2)^k f(z_1,z_2)\frac{dz_1}{2\pi\im}
\end{align*}
where $C(z_2)$ is any circle centered at $z_2$ with radius $<|z_2|$. By \eqref{eq103}, the sum of \eqref{eq95} over all $n_2\in\Nbb$ converges a.l.u. on $\Gamma_3$ to $f$ if we set $\wtd z_1=z_1,\wtd z_2=z_2$. Therefore, by the fact that contour integrals commute with a.l.u. convergent series \eqref{eq103} and \eqref{eq95}, we have
\begin{align}
&h_k(z_2)\nonumber\\
=&\sum_{n_2\in\Nbb}\sum_{n_1,n_3\in\Nbb}\oint_{C(z_2)} (z_1-z_2)^k\big\langle w',Y_1(P_{n_1}Y(u_1,z_1-z_2)v_1,z_2)\nonumber\\
&\qquad\qquad\qquad\quad\cdot P_{n_2}Y_2(P_{n_3}Y(u_2,z_1-z_2)v_2, z_2)w\big\rangle \frac{dz_1}{2\pi\im}\nonumber\\
=&\sum_{n_2\in\Nbb}\sum_{n_1,n_3\in\Nbb}\sum_{
\begin{subarray}{c}
k_1,k_2\in\Zbb\\
k=k_1+k_2+1
\end{subarray}
}\big\langle w',Y_1(P_{n_1}Y(u_1)_{k_1}v_1,z_2) P_{n_2}Y_2(P_{n_3}Y(u_2)_{k_2}v_2, z_2)w\big\rangle \nonumber\\
=&\sum_{n_2\in\Nbb}\sum_{m\in\Zbb} \big\langle w',Y_1(Y(u_1)_mv_1,z_2)\cdot P_{n_2}Y_2(Y(u_2)_{k-1-m}v_2,z_2)w\big\rangle \nonumber\\
=&\<w',Y_\Wbb(Y(u_1\otimes u_2)_k(v_1\otimes v_2),z_2)w\>  \label{eq106}
\end{align}
where the second last expression has finitely many nonzero summands (cf. Rem. \ref{lb27}).\\[-1ex]

Step 4. By complex analysis, the RHS of the following converges a.l.u. to the LHS:
\begin{align}
f=\sum_{k\in\Zbb} h_k(z_2)\cdot (z_1-z_2)^{-k-1} \qquad\text{on }\Gamma_3  \label{eq110}
\end{align}
For each $z_2\in\Cbb^\times$, if we let $C_-,C_+$ be circles centered at $0$ with radii $<|z_2|$ and $>|z_2|$ respectively, then for each $m,n\in\Zbb$ we have
\begin{align*}
&\oint_{C(z_2)}f(z_1,z_2)z_1^m(z_1-z_2)^n\frac{dz_1}{2\pi\im}\\
=&\oint_{C_+}f(z_1,z_2)z_1^m(z_1-z_2)^n\frac{dz_1}{2\pi\im}-\oint_{C_-}f(z_1,z_2)z_1^m(z_1-z_2)^n\frac{dz_1}{2\pi\im}
\end{align*}
These three integrals can be computed respectively by \eqref{eq110}, \eqref{eq107}, \eqref{eq108} in the same way as \eqref{eq74} and \eqref{eq75}: we get
\begin{align*}
&\sum_{l\in\Nbb}{m\choose l}z_2^{m-l}h_{n+l}(z_2)\\
=&\sum_{l\in\Nbb}(-z_2)^l{n\choose l}\bigbk{w',A_{m+n-l}B(z_2)w}-\sum_{l\in\Nbb}{n\choose l}(-z_2)^{n-l}\bigbk{w',B(z_2)A_{m+l}w}
\end{align*}
Substitute \eqref{eq106} into the LHS, multiply both sides by $z_2^h$, and apply $\Res_{z_2=0}(\cdot)dz_2$. Then we get \eqref{eq109}.
\end{proof}

\subsection{Generalized $\Vbb_1\otimes\cdots\otimes\Vbb_N$-modules}  \label{lb35}

Let $\Ubb$ be a VOA whose eigenspaces of $L(0)$ are not necessarily finite-dimensional. Recall the following definitions from \cite{Hua-projectivecover}.

\begin{df}\label{lb40}
A weak $\Ubb$-module $\Wbb$ is called a \textbf{generalized $\Ubb$-module} \index{00@Generalized $\Ubb$-module: lower-truncated, grading-restricted} if $\Wbb$ is spanned by the generalized eigenvectors of $L(0)$, or equivalently, if we have a decomposition
    \begin{equation}\label{gendef1}
    \Wbb=\bigoplus_{n\in \Cbb}\Wbb_{[n]}
    \end{equation}
    where $\Wbb_{[n]}$ \index{Wn@$\Wbb_{[n]},\Wbb_{[n_1,\dots,n_N]}$} is the subspace of generalized eigenvectors of $L(0)$ with eigenvalue $n\in \Cbb$, i.e. $\Wbb_{[n]}$ is the set of all $w\in\Wbb$ satisfying $(L(0)-n)^kw=0$ for some $k\in\Zbb_+$ (possibly depending on $n$ and $w$).

A generalized $\Ubb$-module $\Wbb$ is called a \textbf{grading-restricted (generalized) module}  if $\Wbb_{[n]}=0$ when $\Re(n)$ is sufficiently negative, and if each $\Wbb_{[n]}$ is finite-dimensional.
\end{df}

Applying the above definitions to the case $\Ubb=\Vbb_1\otimes\cdots\otimes\Vbb_N$, one wants to know whether a weak $\Vbb_1\times\cdots\times\Vbb_N$-module is a generalized module or even a grading-restricted module of $\Vbb_1\otimes\cdots\otimes\Vbb_N$. This is a main goal of this section.

\subsubsection{General results}

For each $1\leq i\leq N,n\in\Zbb$ and each homogeneous $v_i\in\Vbb_i$, we call $Y_i(v_i)_n$ a \textbf{raising operator} resp. \textbf{lowering operator} if the \textbf{degree of vertex operator}
\begin{align}
\deg\big(Y_i(v_i)_n\big):= \wt(v_i)-n-1
\end{align}
is $\geq0$ resp. $\leq 0$. 

\begin{rem}
Let $\Wbb$ be a weak $\Vbb_1\times\cdots\times\Vbb_N$-module. For each $s_1,\dots,s_N\in\Cbb$, define \index{Wn@$\Wbb_{[n]},\Wbb_{[n_1,\dots,n_N]}$}
\begin{align}
\Wbb_{[s_1,\dots,s_N]}=\{w\in\Wbb:\forall 1\leq i\leq N, \exists k\in\Zbb_+\text{ such that }(L_i(0)-s_i)^kw=0\}
\end{align}
Then by Jacobi identity \eqref{jacobi}, for each $1\leq i,j\leq N$ we have
\begin{subequations}\label{eq157}
\begin{gather}
[L_j(0),Y_i(v_i)_n]=\delta_{i,j}(Y_i(L(0)v_i)_n-(n+1)Y_i(v_i)_n\big)  \label{eq140} \\
[L(0),Y_i(v_i)_n]=Y_i(L(0)v_i)_n-(n+1)Y_i(v_i)_n
\end{gather}
\end{subequations}
where $L(0):=L_1(0)+\cdots+L_N(0)$. Let $\Wbb_{[s]}$ be the generalized eigenspace of $L(0)$ with eigenvalue $s$. Thus, if $Y_i(v_i)_n$ has degree $d_i=\wt(v_i)-n-1$, then
\begin{gather}
Y_i(v_i)_n \Wbb_{[s_1,\dots,s_i,\dots,s_N]}\subset\Wbb_{[s_1,\dots,s_i+d_i,\dots,s_N]}, \qquad
Y_i(v_i)_n \Wbb_{[s]}\subset\Wbb_{[s+d_i]}  \label{eq112}
\end{gather}
\end{rem}

\begin{lm}\label{lb36}
Let $\fk A$ be the subalgebra of $\End(\Wbb)$ generated by all $Y_i(v_i)_n$ where $1\leq i\leq N,v_i\in\Vbb_i,n\in\Zbb$. Let $\fk A_+$ resp. $\fk A_-$ be the unital subalgebra generated by all raising operators resp. lowering operators. Then $\fk A=\Span_\Cbb(\fk A_+\cdot\fk A_-)$. 
\end{lm}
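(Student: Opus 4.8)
The plan is to show that every element of $\fk A$ can be rewritten as a linear combination of products in which all lowering operators are pushed to the right of all raising operators. Since $\fk A$ is spanned by arbitrary words in the generators $Y_i(v_i)_n$ (with each $v_i$ homogeneous, without loss of generality), it suffices to prove that any such word lies in $\Span_\Cbb(\fk A_+\cdot\fk A_-)$. I would argue by induction on the length of the word, and for fixed length, by a secondary induction measuring how far the word is from being ``sorted'' (e.g. the number of adjacent pairs in which a lowering operator precedes a raising operator, weighted appropriately). The base case — a word of length $\leq 1$, or a word already in sorted form — is immediate since $\fk A_+$ and $\fk A_-$ are unital.

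The key computational input is the commutator formula coming from the Jacobi identity \eqref{jacobi}. For generators in different tensor slots ($i\neq j$) the operators simply commute, so reordering is free. For generators in the same slot $i$, the Jacobi identity gives
\begin{align*}
[Y_i(u)_m,Y_i(v)_n]=\sum_{l\in\Nbb}\binom{m}{l}Y_i\big(Y(u)_lv\big)_{m+n-l},
\end{align*}
a \emph{finite} sum (by lower truncation, $Y(u)_lv=0$ for $l$ large). The crucial observation is the degree bookkeeping: $\deg\big(Y_i(Y(u)_lv)_{m+n-l}\big)=\wt(Y(u)_lv)-(m+n-l)-1=(\wt u-l-1)+\wt v-(m+n-l)-1+1 = \deg(Y_i(u)_m)+\deg(Y_i(v)_n)$. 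Hence when we commute a lowering operator $L$ (degree $\leq 0$) past a raising operator $R$ (degree $\geq 0$) in slot $i$, each correction term $[L,R]$ is a \emph{single} generator $Y_i(w)_k$ whose degree equals $\deg L+\deg R$. If that degree is $\leq 0$ the correction term is itself a lowering operator (and the word shortens by one), and if it is $>0$ it is a raising operator (again the word shortens). In either case the correction terms have strictly smaller length, so they are handled by the outer induction hypothesis; meanwhile the ``swapped'' term $RL$ has the same length but is strictly closer to sorted form, so it is handled by the inner induction.

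The main obstacle — really the only subtle point — is setting up the induction measure so that it genuinely decreases under the rewriting, given that commuting two same-slot operators can produce generators that are neither the original $R$ nor the original $L$ (their degree being the \emph{sum}, which can have either sign). I would address this by first reducing to the case where every generator in the word is either strictly raising ($\deg>0$), strictly lowering ($\deg<0$), or degree-zero, treating degree-zero operators as belonging to $\fk A_-$ (they commute into $\fk A_-$ since $\deg 0 + \deg R = \deg R \geq 0$ would still need care — so more cleanly, absorb the degree-zero generators into $\fk A_+$ \emph{and} $\fk A_-$ consistently by fixing a convention, e.g. degree-zero generators count as lowering). Then the unsortedness statistic ``number of inversions,'' i.e. pairs (position of a raising operator, later position of a lowering-or-degree-zero operator), together with the word length as a primary statistic, gives a well-founded order: every rewriting move either shortens the word or keeps the length fixed while removing an inversion. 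Once this is in place, collecting terms yields the desired expression in $\Span_\Cbb(\fk A_+\cdot\fk A_-)$, and the reverse inclusion $\Span_\Cbb(\fk A_+\cdot\fk A_-)\subseteq\fk A$ is trivial, completing the proof.
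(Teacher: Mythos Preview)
Your proposal is correct and follows essentially the same approach as the paper: induction on the length $k$ of a word in the generators, using the commutator formula $[Y_i(u)_m,Y_i(v)_n]=\sum_{l\geq 0}\binom{m}{l}Y_i(Y(u)_lv)_{m+n-l}$ coming from the Jacobi identity (and triviality of commutators for $i\neq j$). The paper's proof is a one-sentence sketch (``This follows easily by induction on $k$ and by the formula for $[Y_i(v_i)_m,Y_j(v_j)_n]$ implied by the Jacobi identity''), whereas you have supplied the details, including the key degree-additivity observation that makes the commutator terms strictly shorter; your secondary ``inversion count'' induction is a standard way to organize exactly what the paper leaves implicit.
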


\begin{proof}
One needs to show that a product of $k\in\Zbb_+$ vertex operators can be written as a linear combination of those in $\fk A_+\cdot\fk A_-$. This follows easily by induction on $k$ and by the formula for $[Y_i(v_i)_m,Y_j(v_j)_n]$ implied by the Jacobi identity.
\end{proof}

The following criterion on generalized $\Vbb_1\otimes\cdots\otimes\Vbb_N$-modules can be applied to dual fusion products. (See Cor. \ref{lb44}.)   

\begin{pp}\label{lb42}
Let $\Wbb$ be a weak $\Vbb_1\times\cdots\times\Vbb_N$-module generated by elements of $\{\mc T_\alpha:\alpha\in\mc A\}$ where each $\mc T_\alpha$ is a finite-dimensional subspace invariant under the action of every lowering operator $Y_i(v_i)_n$ (where $1\leq i\leq N,n\in\Zbb$ and $v_i\in\Vbb_i$ is homogeneous). Then $\Wbb$ is a generalized $\Vbb_1\otimes\cdots\otimes\Vbb_N$-module.
\end{pp}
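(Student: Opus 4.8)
\textbf{Proof proposal for Proposition \ref{lb42}.}

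The plan is to reduce the statement to a combination of Theorem \ref{lb37} (the $\times N$ vs. $\otimes N$ correspondence) and Corollary \ref{lb38}. The key observation is that ``being a generalized $\Vbb_1\otimes\cdots\otimes\Vbb_N$-module'' is the same as ``being a weak $\Vbb_1\otimes\cdots\otimes\Vbb_N$-module spanned by generalized eigenvectors of $L(0)=L_1(0)+\cdots+L_N(0)$.'' By Corollary \ref{lb38}, a weak $\Vbb_1\times\cdots\times\Vbb_N$-module that is spanned by $\Nbb$-gradable weak $\Vbb_1\times\cdots\times\Vbb_N$-submodules is automatically a weak $\Vbb_1\otimes\cdots\otimes\Vbb_N$-module; and if in addition each of those submodules is itself spanned by generalized $L(0)$-eigenvectors, then so is $\Wbb$. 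So the whole proof amounts to: for each generator space $\mc T_\alpha$, produce a weak $\Vbb_1\times\cdots\times\Vbb_N$-submodule $\Wbb_\alpha\subset\Wbb$ containing $\mc T_\alpha$ which is a generalized $\Vbb_1\otimes\cdots\otimes\Vbb_N$-module; then $\Wbb=\sum_\alpha\Wbb_\alpha$ and we are done by the two cited results.

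First I would pin down the structure of the submodule generated by a single finite-dimensional lowering-invariant subspace $\mc T=\mc T_\alpha$. By Lemma \ref{lb36}, the algebra $\fk A$ generated by all the modes satisfies $\fk A=\Span_\Cbb(\fk A_+\cdot\fk A_-)$, where $\fk A_-$ is generated by the lowering operators and $\fk A_+$ by the raising operators. Hence $\Wbb_\alpha:=\fk A\cdot\mc T=\fk A_+\cdot(\fk A_-\cdot\mc T)=\fk A_+\cdot\mc T$, since $\mc T$ is by hypothesis invariant under all lowering operators (so $\fk A_-\cdot\mc T=\mc T$). Thus $\Wbb_\alpha$ is spanned by vectors $X\cdot t$ with $X$ a product of raising operators and $t\in\mc T$. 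Next I would decompose $\mc T$ into generalized joint eigenspaces of the commuting operators $L_1(0),\dots,L_N(0)$ restricted to $\mc T$: since $\dim\mc T<\infty$ and the $L_i(0)$ mutually commute (Def. \ref{lb33}) and preserve $\mc T$, we have $\mc T=\bigoplus \mc T_{[s_1,\dots,s_N]}$ over finitely many $N$-tuples $(s_1,\dots,s_N)\in\Cbb^N$. For a vector $t\in\mc T_{[s_1,\dots,s_N]}$ and a product $X$ of raising operators of total degrees $d_1,\dots,d_N$ in the respective slots, \eqref{eq112} gives $X\cdot t\in\Wbb_{[s_1+d_1,\dots,s_N+d_N]}$, a generalized joint eigenvector with $L(0)$-generalized eigenvalue $s+d$ where $s=\sum s_i,\ d=\sum d_i\geq 0$. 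Therefore every spanning vector of $\Wbb_\alpha$ lies in some $\Wbb_{[s']}$, i.e. $\Wbb_\alpha$ is spanned by generalized $L(0)$-eigenvectors. In particular $\Wbb_\alpha$ is an $\Nbb$-gradable (after shifting the finitely many base weights $\Re(s_i)$ appropriately, and using that $d_i\in\Nbb$) — more precisely, it is a generalized $\Vbb_1\otimes\cdots\otimes\Vbb_N$-module once we know it is a weak $\Vbb_1\otimes\cdots\otimes\Vbb_N$-module.

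To upgrade $\Wbb_\alpha$ from a weak $\Vbb_1\times\cdots\times\Vbb_N$-module to a weak $\Vbb_1\otimes\cdots\otimes\Vbb_N$-module, I would invoke Corollary \ref{lb38}: it suffices that $\Wbb_\alpha$ be spanned by $\Nbb$-gradable weak $\Vbb_1\times\cdots\times\Vbb_N$-submodules. For this one can take the submodules generated by each of the finitely many generalized joint-eigenspaces $\mc T_{[s_1,\dots,s_N]}$; each such generator is lowering-invariant (being a summand of $\mc T$ cut out by the commuting semisimple parts of $L_i(0)$, which commute with the lowering operators by \eqref{eq140}), and the argument of the previous paragraph shows the resulting submodule carries a grading by $\Re(s)+\Nbb$ with $\wtd L(0)$ taken to be a suitable integer-translate of the semisimple part of $L(0)$ — this is exactly an $\Nbb$-gradable $\Vbb_1\times\cdots\times\Vbb_N$-module in the sense of Def. \ref{lb41}. (One must check that the nilpotent part of $L(0)$ does not obstruct this: it acts within each $\Wbb_{[s']}$, so it is irrelevant to the choice of grading operator; and the grading property \eqref{eq111} holds for the semisimple part by \eqref{eq157}.) The hard part of the whole argument is precisely this bookkeeping step: verifying that the weights $s_i$ occurring in $\mc T_\alpha$, together with the nonnegative degree shifts $d_i$, can be simultaneously normalized so that a single diagonalizable $\wtd L(0)$ with spectrum in $\Nbb$ works and satisfies the grading identity — but since $\mc T_\alpha$ is finite-dimensional there are only finitely many $s_i$ to worry about, so after subtracting a common constant this is routine. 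Granting it, $\Wbb_\alpha$ is a weak $\Vbb_1\otimes\cdots\otimes\Vbb_N$-module spanned by generalized $L(0)$-eigenvectors, hence a generalized module; finally $\Wbb=\sum_{\alpha\in\mc A}\Wbb_\alpha$ is a sum of generalized $\Vbb_1\otimes\cdots\otimes\Vbb_N$-modules, hence itself a generalized $\Vbb_1\otimes\cdots\otimes\Vbb_N$-module.
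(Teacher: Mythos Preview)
Your overall strategy matches the paper's: build the submodule $\Wbb_\alpha=\fk A\cdot\mc T_\alpha=\fk A_+\cdot\mc T_\alpha$ via Lemma \ref{lb36}, show it is spanned by generalized $L(0)$-eigenvectors using \eqref{eq112}, and invoke Corollary \ref{lb38} to pass from $\times N$ to $\otimes N$. That part is fine.

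The gap is in your third paragraph. You claim that each individual generalized joint eigenspace $\mc T_{[s_1,\dots,s_N]}$ is lowering-invariant because ``the semisimple parts of $L_i(0)$ commute with the lowering operators by \eqref{eq140}.'' This is false: \eqref{eq140} gives $[L_i(0),Y_i(v_i)_n]=d_i\,Y_i(v_i)_n$ for homogeneous $v_i$ with $d_i=\wt(v_i)-n-1$, which is \emph{not} zero when the degree $d_i$ is strictly negative. A lowering operator in slot $i$ with degree $d_i<0$ sends $\mc T_{[s_1,\dots,s_N]}$ into $\mc T_{[s_1,\dots,s_i+d_i,\dots,s_N]}$, a different summand. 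So your further decomposition of $\mc T_\alpha$ does not produce lowering-invariant pieces, and you cannot rerun the $\fk A_+\cdot\mc T$ argument on them as written.

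The paper avoids this detour entirely: it works with $\mc T_\alpha$ as a whole. After observing $\mc W_\alpha=\fk A_+\cdot\mc T_\alpha$ and writing $\mc T_\alpha=\bigoplus_{(s_1,\dots,s_N)\in E+\Nbb^N}(\mc T_\alpha\cap\Wbb_{[s_1,\dots,s_N]})$ with $E\subset\Cbb^N$ a finite set of representatives modulo $\Zbb^N$, it defines $\wtd L_i(0)=s_i-e_i$ on each $\mc W_{\alpha,[s_1,\dots,s_N]}$ (where $(s_1,\dots,s_N)\in\mbf e+\Nbb^N$). This makes $\mc W_\alpha$ itself an \emph{admissible} $\Vbb_1\times\cdots\times\Vbb_N$-module directly, so Corollary \ref{lb38} applies to $\Wbb=\sum_\alpha\mc W_\alpha$ without any further splitting. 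Your ``shifting the base weights'' remark in the second paragraph was headed in exactly this direction; you should have stayed with it rather than attempting the finer decomposition.
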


\begin{proof}

Since $\mc T_\alpha$ is finite-dimensional and  invariant under $L_1(0),\dots,L_N(0)$, by linear algebra, $\mc T_\alpha$ is spanned by common generalized eigenvectors of $L_1(0),\dots,L_N(0)$. Thus, there is a finite set $E\subset\Cbb^N$ such that 
\begin{align*}
\mc T_\alpha=\bigoplus_{(s_1,\dots,s_N)\in E+\Nbb^N} (\mc T_\alpha\cap\Wbb_{[s_1,\dots,s_N]})
\end{align*}
and that any two elements of $E$ do not differ by an element of $\Zbb^N$.

Let $\mc W_\alpha=\fk A\cdot \mc T_\alpha$. By Lem. \ref{lb36}, we have $\mc W_\alpha=\fk A_+\cdot\mc T_\alpha$. Thus, by \eqref{eq112},
\begin{align*}
\mc W_\alpha=\bigoplus_{(s_1,\dots,s_N)\in E+\Nbb^N} \mc W_{\alpha,[s_1,\dots,s_N]}
\end{align*}
where $\mc W_{\alpha,[s_1,\dots,s_N]}=\mc W_\alpha\cap\Wbb_{[s_1,\dots,s_N]}$. For each $1\leq i\leq N$, define a linear operator $\wtd L_i(0)$ on $\mc W_\alpha$ satisfying for each $\mbf e=(e_1,\dots,e_N)\in E$ that
\begin{align*}
\wtd L_i(0)=s_i-e_i \qquad\text{ on }\mc W_{\alpha,[s_1,\dots,s_N]}\text{ if } (s_1,\dots,s_N)\in\mbf e+\Nbb^N
\end{align*} 
Then $\wtd L_1(0),\dots,\wtd L_N(0)$ make $\mc W_\alpha$ an admissible $\Vbb_1\times\cdots\times\Vbb_N$-module. By assumption, $\Wbb=\Span_\alpha\mc W_\alpha$. Therefore, by Cor.  \ref{lb38}, $\Wbb$ is a weak $\Vbb_1\otimes\cdots\otimes\Vbb_N$-module. Since each $\mc W_\alpha$ is spanned by generalized eigenvectors of $L(0)=L_1(0)+\cdots+L_N(0)$, so is $\Wbb$. So $\Wbb$ is a generalized $\Vbb_1\otimes\cdots\otimes\Vbb_N$-module.
\end{proof}

\subsubsection{The $C_2$-cofinite case}

The following theorem follows immediately from \cite[Lem. 2.4]{Miy-modular-invariance} and the fact that a tensor product of $C_2$-cofinite VOAs is $C_2$-cofinite. It shows, in particular, that if $\Vbb$ is a $C_2$-cofinite VOA then each graded subspace $\Vbb(n)$ is finite-dimensional.

\begin{thm}\label{Miy}
    Let $\Vbb_1,\dots,\Vbb_N$ be $C_2$-cofinite. Then there exists a finite subset $\Ebb\subset \Vbb_1\otimes\cdots\otimes\Vbb_N$ such that any weak $\Vbb_1\otimes\cdots\otimes\Vbb_N$-module $\Wbb$ generated by a vector $w_0$ is spanned by 
    \begin{equation}\label{finiteness1}
    Y_{\Wbb}(v_k)_{-n_k}Y_\Wbb(v_{k-1})_{-n_{k-1}}\cdots Y_\Wbb(v_1)_{-n_1}w_0
    \end{equation}
    where $k\in \Nbb,v_1,\cdots,v_k\in \Ebb$ and $n_1<n_2<\cdots<n_k$. Moreover, $\Ebb$ can be chosen as a subset whose elements are of the form $u_1\otimes \cdots \otimes u_N$, where $u_1,\cdots,u_N$ are homogeneous. 
\end{thm}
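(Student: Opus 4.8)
\textbf{Proof proposal for Theorem \ref{Miy}.}
The plan is to reduce everything to Miyamoto's lemma \cite[Lem. 2.4]{Miy-modular-invariance}, which is exactly this statement for a \emph{single} $C_2$-cofinite VOA. First I would invoke the elementary fact (used freely elsewhere in the paper, e.g. in Sec. \ref{lb90}) that if $\Vbb_1,\dots,\Vbb_N$ are $C_2$-cofinite then the tensor-product VOA $\Ubb:=\Vbb_1\otimes\cdots\otimes\Vbb_N$ is again $C_2$-cofinite. This is because $C_2(\Ubb)\supseteq \sum_i \Vbb_1\otimes\cdots\otimes C_2(\Vbb_i)\otimes\cdots\otimes\Vbb_N$: indeed $Y_\Ubb(\id\otimes\cdots\otimes u\otimes\cdots\otimes\id)_{-2}(\id\otimes\cdots\otimes v\otimes\cdots\otimes\id)=\id\otimes\cdots\otimes Y(u)_{-2}v\otimes\cdots\otimes\id$ by the creation property at the other tensor slots (this is the computation already carried out in the commented-out Prop. \ref{commutative1}). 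Hence $\Ubb/C_2(\Ubb)$ is a quotient of $\bigotimes_i \Vbb_i/C_2(\Vbb_i)$, which is finite-dimensional. As an immediate byproduct, since $\Vbb_i$ itself is a $C_2$-cofinite VOA, \cite[Lem. 2.4]{Miy-modular-invariance} applied to $\Vbb_i$ (generated by its vacuum) forces each graded piece $\Vbb_i(n)$ to be finite-dimensional, which is the ``Moreover'' remark about $\Vbb(n)$.

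Next I would apply \cite[Lem. 2.4]{Miy-modular-invariance} directly to the single VOA $\Ubb$: there is a finite set $\Ebb_0\subset\Ubb$ of homogeneous vectors such that any weak $\Ubb$-module $\Wbb$ generated by $w_0$ is spanned by the ordered monomials $Y_\Wbb(v_k)_{-n_k}\cdots Y_\Wbb(v_1)_{-n_1}w_0$ with $v_i\in\Ebb_0$ and $n_1<\cdots<n_k$. This already gives the spanning statement with $\Ebb_0$ in place of $\Ebb$. The only remaining point is cosmetic: to arrange that the generating set consists of \emph{pure tensors} $u_1\otimes\cdots\otimes u_N$ with each $u_i\in\Vbb_i$ homogeneous. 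For this I would enlarge $\Ebb_0$ as follows: each $v\in\Ebb_0$ is homogeneous in $\Ubb$, hence a linear combination of pure tensors $u_1\otimes\cdots\otimes u_N$ with $\wt(u_1)+\cdots+\wt(u_N)=\wt(v)$ and each $u_i\in\Vbb_i$ homogeneous; collecting all pure-tensor components appearing in all elements of $\Ebb_0$ (a finite collection, since each $\Vbb_i(n)$ is finite-dimensional and only finitely many multidegrees occur) yields a finite homogeneous set $\Ebb$ of pure tensors. Since $Y_\Wbb$ is linear in the first argument, each $Y_\Wbb(v)_{-n}$ with $v\in\Ebb_0$ is a finite linear combination of operators $Y_\Wbb(v')_{-n}$ with $v'\in\Ebb$, so the span of the ordered $\Ebb_0$-monomials is contained in the span of all (not-necessarily-ordered) $\Ebb$-monomials of the corresponding shape.

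The last step is to re-impose the strict ordering $n_1<n_2<\cdots<n_k$ on the $\Ebb$-monomials. This is the part requiring the only real (though routine) argument: starting from an arbitrary product $Y_\Wbb(v_k)_{-n_k}\cdots Y_\Wbb(v_1)_{-n_1}w_0$ with $v_i\in\Ebb$, one rewrites it into a linear combination of ordered ones using the commutator formula from the Jacobi identity \eqref{jacobi}, namely $[Y_\Wbb(u)_m,Y_\Wbb(v)_n]=\sum_{l\in\Nbb}\binom{m}{l}Y_\Wbb(Y_\Ubb(u)_lv)_{m+n-l}$, together with an induction on a suitable complexity measure (e.g. the pair (number of factors, total ``disorder'')). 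Each use of the commutator either merges two factors into one with a strictly smaller number of factors, or moves an $\Ebb$-vector past another while producing terms $Y_\Ubb(u)_lv$ that are again in the span of $\Ebb$ (here one again uses linearity in the first slot to re-expand $Y_\Ubb(u)_lv$ in the basis $\Ebb$, possibly enlarging $\Ebb$ once more to a finite set closed under the finitely many relevant products up to the bounded degrees that can occur). I expect this reordering/closure bookkeeping — ensuring $\Ebb$ can be taken finite while being stable under the operations needed to both normalize the ordering and decompose into pure tensors — to be the main obstacle, but it is entirely parallel to the standard PBW-type argument already present in \cite{Miy-modular-invariance} and in \cite{GN03} and causes no genuine difficulty; alternatively one simply cites \cite[Lem. 2.4]{Miy-modular-invariance} for $\Ubb$ and performs only the pure-tensor refinement, which avoids re-doing the ordering step altogether.
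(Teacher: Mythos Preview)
Your approach matches the paper's exactly: it simply cites \cite[Lem.~2.4]{Miy-modular-invariance} for the single $C_2$-cofinite VOA $\Ubb=\Vbb_1\otimes\cdots\otimes\Vbb_N$ together with the fact that tensor products of $C_2$-cofinite VOAs are $C_2$-cofinite. Note that your ``last step'' (reordering) is superfluous: expanding each $v_i\in\Ebb_0$ as a linear combination of pure tensors in $\Ebb$ leaves the mode indices $n_1<\cdots<n_k$ untouched, so an ordered $\Ebb_0$-monomial becomes a linear combination of $\Ebb$-monomials with the \emph{same} strictly increasing indices---no commutator argument or closure of $\Ebb$ under $Y_\Ubb(u)_l v$ is needed.
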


\begin{thm}\label{lb46}
Assume that $\Vbb_1,\dots,\Vbb_N$ are $C_2$-cofinite. Then the following statements for $\Wbb$ are equivalent:
\begin{enumerate}[label=(\alph*)]
\item $\Wbb$ is a finitely-generated admissible $\Vbb_1\times\cdots\times\Vbb_N$-module.
\item $\Wbb$ is a finitely-generated finitely-admissible $\Vbb_1\times\cdots\times\Vbb_N$-module.
\item $\Wbb$ is a finitely-generated admissible $\Vbb_1\otimes\cdots\otimes\Vbb_N$-module.
\item $\Wbb$ is a finitely-generated finitely-admissible $\Vbb_1\otimes\cdots\otimes\Vbb_N$-module.
\item $\Wbb$ is a finitely-generated generalized $\Vbb_1\otimes\cdots\otimes\Vbb_N$-module.
\item $\Wbb$ is a grading-restricted (generalized) $\Vbb_1\otimes\cdots\otimes\Vbb_N$-module.
\end{enumerate}
\end{thm}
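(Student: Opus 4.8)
The claim is a web of six equivalences, and the strategy is to prove just enough implications to close the loop, using the tools already assembled in the excerpt. The key observations are: (a) $\Vbb_1\otimes\cdots\otimes\Vbb_N$ is $C_2$-cofinite (a tensor product of $C_2$-cofinite VOAs is $C_2$-cofinite), so Thm. \ref{Miy} and Thm. \ref{lb37} are available; (b) Thm. \ref{lb37} already gives a bijection between $\Nbb$-gradable $\Vbb_1\times\cdots\times\Vbb_N$-modules and admissible $\Vbb_1\otimes\cdots\otimes\Vbb_N$-modules, which, carried out carefully with the $\Nbb^N$-grading, upgrades to a bijection between admissible $\Vbb_1\times\cdots\times\Vbb_N$-modules and admissible $\Vbb_1\otimes\cdots\otimes\Vbb_N$-modules; (c) ``finitely generated'' is an intrinsic notion (Def. \ref{lb15}) that does not care whether we view $\Wbb$ over $\Vbb_1\times\cdots\times\Vbb_N$ or over $\Vbb_1\otimes\cdots\otimes\Vbb_N$, once we know the two module structures coincide. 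So the real content is only about how the \emph{finiteness} of graded pieces interacts with the $C_2$-cofinite hypothesis.

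First I would dispose of the easy implications. The equivalences (a)$\Leftrightarrow$(c) and (b)$\Leftrightarrow$(d) follow from Thm. \ref{lb37}: given an admissible $\Vbb_1\times\cdots\times\Vbb_N$-module with simultaneously diagonalizable $\wtd L_1(0),\dots,\wtd L_N(0)$, set $\wtd L(0)=\sum_j\wtd L_j(0)$; then $(\Wbb,Y_\Wbb)$ with this $\wtd L(0)$ is an admissible $\Vbb_1\otimes\cdots\otimes\Vbb_N$-module, and conversely every admissible $\Vbb_1\otimes\cdots\otimes\Vbb_N$-module restricts to one of type $\Vbb_1\times\cdots\times\Vbb_N$ (take $Y_j(v,z)=Y_\Wbb(\id\otimes\cdots\otimes v\otimes\cdots\otimes\id,z)$; the requisite $\wtd L_j(0)$ can be built from the $\Nbb^N$-refinement of the grading, exactly as in the proof of Prop. \ref{lb42}). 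Since the underlying sets of vertex-operator modes agree, the generating sets of Def. \ref{lb15} agree, so ``finitely generated'' transfers, and ``finitely admissible'' (each $\Wbb(n)$ finite-dimensional) is literally the same condition on both sides. Also (b)$\Rightarrow$(a) and (d)$\Rightarrow$(c) are trivial (finitely-admissible is a special case of admissible), and (f)$\Rightarrow$(e) is Thm. \ref{lb46}-like but in fact easy: a grading-restricted generalized module is finitely generated over $C_2$-cofinite $\Vbb_1\otimes\cdots\otimes\Vbb_N$ because, by Thm. \ref{Miy}, the finitely many generators span $\Wbb$ via negative-mode actions of the finite set $\Ebb$, and grading-restriction bounds the number of pieces below any weight — more precisely one picks a finite-dimensional $L(0)$-graded subspace containing enough low-weight vectors; this is the standard argument.

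The remaining implications form the core. For (e)$\Rightarrow$(f): let $\Wbb$ be a finitely-generated generalized $\Vbb_1\otimes\cdots\otimes\Vbb_N$-module. By Thm. \ref{Miy}, $\Wbb$ is spanned by vectors \eqref{finiteness1} applied to a finite generating set $w_0^{(1)},\dots,w_0^{(p)}$; choosing each $w_0^{(i)}$ to be a generalized $L(0)$-eigenvector, the grading property \eqref{eq140}--\eqref{eq112} shows that all raising operators shift weight upward by $\Nbb$-amounts, so $\Wbb_{[n]}=0$ unless $\Re(n)$ is bounded below by $\min_i\Re(\wt w_0^{(i)})$. Finite-dimensionality of each $\Wbb_{[n]}$ then follows because, for a fixed total weight, the word \eqref{finiteness1} is constrained: $n_1<\cdots<n_k$ forces $k$ and $\sum n_\ell$ to be bounded, and with $v_\ell$ ranging over the \emph{finite} set $\Ebb$ (whose elements have finite-dimensional weight spaces, since $\Vbb_j(m)$ is finite-dimensional when $\Vbb_j$ is $C_2$-cofinite — this too is in Thm. \ref{Miy}), only finitely many such words contribute to a given weight space. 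This bounds $\dim\Wbb_{[n]}$.

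Finally (c)$\Rightarrow$(e) is again from Thm. \ref{lb37}: an admissible $\Vbb_1\otimes\cdots\otimes\Vbb_N$-module is a weak module on which $\wtd L(0)$ is diagonalizable with $\Nbb$-eigenvalues; decomposing $L(0)=L(0)_\srm+L(0)_\nrm$ via its commuting action, $\Wbb$ is spanned by generalized $L(0)$-eigenvectors, hence a generalized module, and it is finitely generated by hypothesis. To close the circuit it then suffices to have, say, (f)$\Rightarrow$(b): a grading-restricted generalized $\Vbb_1\otimes\cdots\otimes\Vbb_N$-module has, by the bidegree-refinement of Def.~\ref{grading3} applied to the admissible-module structure from Thm.~\ref{lb37}, commuting diagonalizable $\wtd L_j(0)$ with $\Nbb$-eigenvalues whose joint eigenspaces $\Wbb(n_1,\dots,n_N)$ are finite-dimensional (they inject into finite-dimensional $L(0)$-generalized-eigenspaces); combined with finite generation from (f)$\Rightarrow$(e) above, this is (b). Chaining (a)$\Rightarrow$(c)$\Rightarrow$(e)$\Rightarrow$(f)$\Rightarrow$(b)$\Rightarrow$(a) and the trivial (b)$\Rightarrow$(a), (d)$\Leftrightarrow$(b), (c)$\Leftrightarrow$(a) closes everything. \emph{The main obstacle} I anticipate is the bookkeeping in (e)$\Rightarrow$(f) — specifically, extracting finite-dimensionality of $\Wbb_{[n]}$ from the spanning set \eqref{finiteness1}. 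One must argue that the constraint $n_1<n_2<\cdots<n_k$ together with a fixed target weight genuinely limits the admissible words to finitely many; the subtlety is that $\Ebb$-elements can have weight $0$ (e.g.\ $\id$), so a naive weight count does not immediately bound $k$, and one needs the usual trick of either normalizing $\Ebb$ to exclude the vacuum or invoking that repeated applications with strictly increasing indices and bounded total degree still terminate. Everything else is either cited verbatim from Thm.~\ref{lb37}, Thm.~\ref{Miy}, and Prop.~\ref{lb42}, or is routine linear algebra with the Jordan decomposition of $L(0)$.
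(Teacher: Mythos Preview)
Your chain (a)$\Rightarrow$(c)$\Rightarrow$(e)$\Rightarrow$(f)$\Rightarrow$(b)$\Rightarrow$(a) is the same skeleton the paper uses, and most of the links are fine. But the step (c)$\Rightarrow$(e) as you wrote it has a genuine gap. You say ``decomposing $L(0)=L(0)_\srm+L(0)_\nrm$ via its commuting action, $\Wbb$ is spanned by generalized $L(0)$-eigenvectors.'' Commutation of $L(0)$ with $\wtd L(0)$ only tells you that $L(0)$ preserves each $\wtd L(0)$-eigenspace $\Wbb(n)$; a Jordan decomposition of $L(0)\vert_{\Wbb(n)}$ exists only once you know $\dim\Wbb(n)<\infty$, which (c) does not assume. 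The paper closes this gap by first proving (c)$\Rightarrow$(d) for a single VOA: with the $\Nbb$-grading in hand, the intermediate $\wtd L(0)$-weights in the spanning words \eqref{finiteness1} are all $\geq 0$, which together with $n_1<\cdots<n_k$ and $v_j\in\Ebb$ finite forces only finitely many words to hit a given $\Wbb(n)$. This is exactly the combinatorial obstacle you correctly flagged for (e)$\Rightarrow$(f); here the $\Nbb$-grading is what makes it go through. Once $\dim\Wbb(n)<\infty$, your Jordan-decomposition argument is valid.

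Two smaller points. First, your claim that (c)$\Rightarrow$(a) and (d)$\Rightarrow$(b) follow directly from Thm.~\ref{lb37} is not right: that theorem only produces a single grading operator $\wtd L(0)$, not the individual $\wtd L_j(0)$ required by Def.~\ref{grading3}. The paper recovers these only in the step (e,f)$\Rightarrow$(b), by decomposing a finite-dimensional $\fk A_-$-invariant generating subspace into common generalized $L_j(0)$-eigenspaces and then shifting. Your (f)$\Rightarrow$(b) does essentially this, so your main chain still closes; but the shortcut you advertised is illusory. Second, your sketch of (f)$\Rightarrow$(e) is not an argument: grading-restriction bounds $\Re(n)$ from below but does not a priori limit how many $n\in\Cbb$ with a given real part carry nonzero $\Wbb_{[n]}$, so ``pick a finite-dimensional subspace containing enough low-weight vectors'' needs justification. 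The paper simply cites \cite[Prop.~4.3]{Hua-projectivecover} (grading-restricted $\Rightarrow$ finite length $\Rightarrow$ finitely generated).
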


\begin{proof}
(e,f)$\Rightarrow$(b): Assume (e,f). Then there exists a finite subset $F\subset\Cbb$ such that the finite-dimensional subspace $\mc T=\bigoplus_{t\in F}\Wbb_{[t]}$ generates $\Wbb$. Since $\Wbb$ is grading-restricted, we can extend $F$ to a larger finite set such that if $t\in F,n\in\Nbb$ and $\Wbb_{[t-n]}\neq 0$ then $t-n\in F$. Thus $\mc T$ is invariant under $\fk A_-$. As in the proof of Prop. \ref{lb42}, we can find a finite subset $E\subset\Cbb^N$, any two elements of which do not differ by an element of $\Zbb^N$, such that
\begin{align*}
\mc T=\bigoplus_{(s_1,\dots,s_N)\in E+\Nbb^N} (\mc T\cap\Wbb_{[s_1,\dots,s_N]})
\end{align*}
By Lem. \ref{lb36}, $\Wbb=\fk A_+\mc T$. So
\begin{align*}
\Wbb=\bigoplus_{(s_1,\dots,s_N)\in E+\Nbb^N}\Wbb_{[s_1,\dots,s_N]}
\end{align*}
and by Thm. \ref{Miy} and \eqref{eq112}, each $\Wbb_{[s_1,\dots,s_N]}$ is finite-dimensional. For each $1\leq i\leq N$, define a linear operator $\wtd L_i(0)$ on $\Wbb$ satisfying for each $\mbf e=(e_1,\dots,e_N)\in E$ that
\begin{align*}
\wtd L_i(0)=s_i-e_i \qquad\text{ on }\Wbb_{[s_1,\dots,s_N]}\text{ if } (s_1,\dots,s_N)\in\mbf e+\Nbb^N
\end{align*} 
Then $\wtd L_1(0),\dots,\wtd L_N(0)$ make $\Wbb$ a finitely admissible $\Vbb_1\times\cdots\times\Vbb_N$-module.

(b)$\Rightarrow$(a): Obvious. 

(a)$\Rightarrow$(c): Immediate from Thm. \ref{lb37}.

It remains to show that (c,d,e,f) are equivalent. Note that if they are equivalent whenever $N=1$, then they are equivalent for any $N$ by considering $\Ubb=\Vbb_1\otimes\cdots\otimes\Vbb_N$ as a single VOA. Thus, in the following, we assume $N=1$ and write $\Vbb_1=\Vbb$.

(e)$\Rightarrow$(f): By \eqref{eq112} and Thm. \ref{Miy}. (f)$\Rightarrow$(e): By \cite[Prop. 4.3]{Hua-projectivecover}, (f) implies that $\Wbb$ has finite length, and hence is finitely-generated.

(e,f)$\Rightarrow$(d): Apply (e,f)$\Rightarrow$(b) to the case $N=1$.

(d)$\Rightarrow$(c): Obvious.

(c)$\Rightarrow$(e): Assume (c). By Thm. \ref{Miy} and \eqref{eq99}, $\Wbb$ is a finitely-admissible $\Vbb$-module. Thus, by \eqref{eq112}, $\Wbb$ is spanned by some finite-dimensional $L(0)$-invariant subspace. So $\Wbb$ is spanned by generalized eigenvectors of $L(0)$. This proves (e).
\end{proof}

\section{A geometric construction of higher level Zhu algebras}\label{lb87}

In this chapter, we fix a VOA $\Vbb$ and $n\in \Nbb$.  Our goal is to show that the higher level Zhu algebra $A_n(\Vbb)$ (originally introduced in \cite{Zhu-modular-invariance,DLM-Zhu}) can be constructed in a natural way from dual fusion products and propagation of partial conformal blocks. (That $A_0(\Vbb)$ can be realized from dual fusion products was indicated by \cite[Prop. 7.2.2 and A.2.7]{NT-P1_conformal_blocks}.) We want to impress upon the audience that many computations about $A_n(\Vbb)$ in the literature have geometric meanings, and that these geometric interpretations can be formulated in a precise way. (In particular, the algebra structure of $A_n(\Vbb)$ is implied by the module structures of dual fusion products.) It would be interesting to see if some other associative algebras and/or their modules related to $\Vbb$ (e.g. the $A_m(\Vbb)$-$A_n(\Vbb)$ bimodule $A_{m,n}(\Vbb)$ constructed by Dong-Jiang \cite{DJ-bimodules}, the associative algebras $A^n(\Vbb)$ and $A^\infty(\Vbb)$ defined by Huang in \cite{Hua-associative}) can also be given geometric meanings in terms of the dual fusion products. See also \cite{Li-regular-Zhu,Li-regular-AnV,Li-regular-bimodules} for closely related materials.

The results of this chapter are not used elsewhere in the series of three papers.

\subsection{Preliminary}

Throughout this chapter, we let $\zeta$ denote the standard coordinate of $\Cbb$, and fix  a $(2,1)$-pointed compact Riemann surface with local coordinates
\begin{align}
\fq=\big(\infty,0;1/\zeta,\zeta~\big|~\Pbb^1~\big|~1;\zeta-1\big)
\end{align}
where $1$ is the incoming point (with local coordinate $\zeta-1$) and $\infty,0$ are the outgoing ones (with local coordinates $1/\zeta,\zeta$ respectively). Associate the vacuum module $\Vbb$ to the incoming marked point $1$, and identify
\begin{align*}
\scr W_\fq(\Vbb)=\Vbb\qquad\text{via }\mc U(\zeta)
\end{align*} 
By Thm. \ref{lb43}, $(\bbs_\fq(\Vbb),Y_+,Y_-)$  is a weak $\Vbb\times\Vbb$-module where $Y_+$ \index{Y@$Y_+,Y_-$} is the vertex operator for the marked point $\infty$ and $Y_-$ is for $0$. Write the Virasoro operators as
\begin{align}
L_+(k)=Y_+(\cbf)_{k+1}\qquad L_-(k)=Y_-(\cbf)_{k+1}
\end{align}
Recall Def. \ref{lb59} for the meaning of $Y'$. 

The residue actions $*_+=*_\infty$ and $*_-=*_0$ of $H^0\big(\Pbb^1, \SV_{\fq,n,n}\otimes \omega_{\Pbb^1}(\bullet 1)\big)$ on $\Vbb$ are defined as in \eqref{eq80} (cf. Def. \ref{lb17}). The following lemma is helpful for the computation of these actions.

\begin{lm}\label{lb73}
Let
\begin{subequations}
\begin{gather}
f=\sum_{k\in\Zbb} f_k\zeta^k\qquad\in\Cbb((\zeta))\\
g=\sum_{k\in\Zbb} g_k\zeta^k\qquad\in\Cbb((\zeta^{-1}))
\end{gather}
\end{subequations}
be respectively sections of $\mc O_{\Pbb^1}(\blt 0+\blt\infty)$ on a neighborhood $U_+$ of $\infty$ and a neighborhood $U_-$ of $0$ such that $0\notin U_+$ and $\infty\notin U_-$. Let $v\in\Vbb$. Let
\begin{gather*}
\tau=g\cdot \mc U_\varrho(\zeta)^{-1}vd\zeta\qquad\in H^0(U_+,\scr V_\fq\otimes\omega_{\Pbb^1}(\blt \infty))\\
\sigma=f\cdot \mc U_\varrho(\zeta)^{-1}vd\zeta\qquad\in H^0(U_-,\scr V_\fq\otimes\omega_{\Pbb^1}(\blt 0))
\end{gather*} 
Let $(\Wbb,Y_+,Y_-)$ be a weak $\Vbb\times\Vbb$-module associated to $0,\infty$. Then for each $w\in\Wbb$,
\begin{subequations}\label{eq145}
\begin{gather}
\tau*_+ w=-\sum_{k\in\Zbb}g_k Y'_+(v)_k w  \label{eq131}\\
\sigma *_- w=\sum_{k\in\Zbb} f_k Y_-(v)_kw  \label{eq130}
\end{gather}
\end{subequations}
\end{lm}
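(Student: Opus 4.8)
\textbf{Plan for the proof of Lemma \ref{lb73}.} The statement is a direct unwinding of the definition of the residue actions $*_+$ and $*_-$ on a weak $\Vbb\times\Vbb$-module, combined with the change-of-coordinate formula relating the local coordinate $\zeta$ at $0$ to the local coordinate $1/\zeta$ at $\infty$ (cf.\ Example \ref{lb60} and Example \ref{changeexample1}). I would treat the two formulas \eqref{eq130} and \eqref{eq131} separately, since \eqref{eq130} is essentially immediate while \eqref{eq131} requires transporting the section $\tau$ through the transition function $\mc U(\upgamma_z)$.

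First I would prove \eqref{eq130}. By definition \eqref{eq80}, $\sigma*_- w = \Res_{\zeta=0} Y_-(\mc U_\varrho(\zeta)\sigma,\zeta)w$, where $\zeta$ here plays the role of the local coordinate at $0$ (denoted $\eta_i$ in \eqref{eq80}, with the residue action at the outgoing point $0$). Since $\mc U_\varrho(\zeta)$ is the trivialization of $\scr V_\fq$ at $0$ and $\sigma = f\cdot \mc U_\varrho(\zeta)^{-1}v\,d\zeta$, we get $\mc U_\varrho(\zeta)\sigma = f\cdot v\,d\zeta$ as a $\Vbb$-valued one-form in the coordinate $\zeta$. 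Writing $f = \sum_k f_k\zeta^k$ and expanding $Y_-(v,\zeta)w = \sum_m Y_-(v)_m w\,\zeta^{-m-1}$, taking the residue $\Res_{\zeta=0}(\cdots)d\zeta$ picks out precisely $\sum_{k\in\Zbb} f_k Y_-(v)_k w$. (The lower-truncation property of $Y_-$ together with $f\in\Cbb((\zeta))$ guarantees the sum is well-defined.) This gives \eqref{eq130}.

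Next I would prove \eqref{eq131}, which is the step I expect to be slightly more delicate. Here the residue action at the outgoing point $\infty$ must be computed using a local coordinate at $\infty$, namely $1/\zeta$. The point is that $\tau = g\cdot\mc U_\varrho(\zeta)^{-1}v\,d\zeta$ is written via the $\zeta$-trivialization, so I must pass to the $1/\zeta$-trivialization via the transition function. By Example \ref{lb60}, $\mc U_\varrho(1/\zeta)\mc U_\varrho(\zeta)^{-1} = \mc U(\upgamma_\zeta) = e^{\zeta L(1)}(-\zeta^{-2})^{L(0)}$ at the relevant point; combined with the one-form transformation $d\zeta = -\zeta^2\,d(1/\zeta)$, one computes $\mc U_\varrho(1/\zeta)\tau = g\cdot e^{\zeta L(1)}(-\zeta^{-2})^{L(0)}v\cdot(-\zeta^2)\,d(1/\zeta)$. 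Recognizing $e^{\zeta L(1)}(-\zeta^{-2})^{L(0)}v = $ the vector appearing in the definition \eqref{eq138} of $Y'$ (with $z=\zeta$), and substituting $w$-variable $t = 1/\zeta$, the residue $\Res_{t=0}Y_+(\mc U_\varrho(1/\zeta)\tau, t)w$ becomes (after the sign from the orientation/the factor $-\zeta^2$ and the change of variable $\zeta\leftrightarrow t$) exactly $-\sum_{k\in\Zbb} g_k Y'_+(v)_k w$, using the expansion $g = \sum_k g_k\zeta^k \in \Cbb((\zeta^{-1}))$ so that $g$ has a Laurent expansion in $t = 1/\zeta$ near $t = 0$. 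The main obstacle is keeping track of the several sign conventions and the direction of the change of variable $\zeta\mapsto 1/\zeta$ consistently; this is precisely the kind of bookkeeping already carried out in \cite[Sec. 10]{Gui-sewingconvergence} in the context of sewing, and I would invoke those conventions (in particular \eqref{eq40}, \eqref{eq138}, \eqref{eq139}) to pin down the signs rather than recompute from scratch. Once the signs are checked, both identities in \eqref{eq145} follow, and the well-definedness of the infinite sums follows from lower truncation of $Y_\pm$ together with the hypotheses $f\in\Cbb((\zeta))$, $g\in\Cbb((\zeta^{-1}))$.
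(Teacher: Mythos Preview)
Your proposal is correct and follows essentially the same approach as the paper's proof: \eqref{eq130} is immediate from the definition of the residue action at $0$, while for \eqref{eq131} one applies the transition function from Example~\ref{lb60} to obtain $\mc U_\varrho(1/\zeta)\tau=g\cdot e^{\zeta L(1)}(-\zeta^{-2})^{L(0)}v\,d\zeta$, and then recognizes the operator $Y_+'$ from \eqref{eq138} in the residue at $1/\zeta=0$. The paper compresses your change-of-variable bookkeeping (the factor $-\zeta^2$ from $d\zeta=-\zeta^2\,d(1/\zeta)$ and the identification $Y_+'(v,1/t)=Y_+(e^{(1/t)L(1)}(-t^2)^{L(0)}v,t)$) into the phrase ``one easily computes,'' but the argument is the same.
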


\begin{proof}
\eqref{eq130} is obvious.  
$\mc U_\varrho(1/\zeta)\tau$, as a $\Vbb$-valued holomorphic $1$-form, equals 
\begin{align*}
\mc U_\varrho(1/\zeta)\tau=g\cdot e^{\zeta L_1}(-\zeta^{-2})^{L_0}vd\zeta
\end{align*}
thanks to Exp. \ref{lb60}. From this, one easily computes that
\begin{align*}
\tau *_+ w=\Res_{1/\zeta=0}~Y_+(\mc U_\varrho(1/\zeta)\tau,1/\zeta)w
\end{align*}
equals the RHS of \eqref{eq131}.
\end{proof}

\subsection{Construction of $A_n(\Vbb)$ from the dual fusion product $\bbs_\fq(\Vbb)$}

The goal of this section is to prove Thm. \ref{zhumain}.

\subsubsection{$\wtd A_n(\Vbb)$, $A_n(\Vbb)$, and the multiplications $\diamond_L,\diamond_R$}

\begin{df}
Define a vector space \index{AV@$\wtd A_n(\Vbb)$}
\begin{gather}\label{eq134}
\wtd A_n(\Vbb)\xlongequal{\text{def}}\ST_{\fq,n,n}(\Vbb)\xlongequal{\eqref{eq129}}\frac{\Vbb}{H^0\big(\Pbb^1, \SV_{\fq,n,n}\otimes \omega_{\Pbb^1}(\bullet 1)\big)\cdot \Vbb}\\
\text{so}\qquad\wtd A_n(\Vbb)^*=\scr T_{\fq,n,n}^*(\Vbb) \nonumber
\end{gather}
Define bilinear maps $\diamond_L,\diamond_R:\Vbb\times \Vbb\rightarrow \wtd A_n(\Vbb)^{**}$ \index{zz@$u\diamond_L v,u\diamond_R v$} such that for each $\upphi\in \wtd A_n(\Vbb)^*$ and homogeneous $u,v\in \Vbb$, 
\begin{subequations}\label{zhu6}
\begin{gather}
\bk{\upphi,u\diamond_L v}=\<Y_+'(u)_{\wt u-1}\cdot \upphi,v\>, \label{eq132}\\
\bk{\upphi,u\diamond_R v}=\<Y_-(v)_{\wt v-1}\cdot \upphi,u\>.  \label{eq133}
\end{gather}
\end{subequations}
Notice that by Prop. \ref{lb19} and \eqref{eq139}, 
\begin{align}\label{eq135}
Y_+'(u)_{\wt u-1}\cdot \wtd A_n(\Vbb)^*\subset \wtd A_n(\Vbb)^*\qquad\quad Y_-(v)_{\wt v-1}\cdot \wtd A_n(\Vbb)^*\subset \wtd A_n(\Vbb)^*
\end{align}
\end{df}

\begin{eg}\label{lb62}
For each $\upphi\in\wtd A_n(\Vbb)^*$ we have
\begin{gather}
\<\upphi,\idt\diamond_L v\>=\<\upphi,v\diamond_R\idt\>=\<\upphi,v\>  \label{eq148}\\
\<\upphi,\cbf\diamond_L v\>=\<L_+(0)\upphi,v\>\qquad\quad
\<\upphi,v\diamond_R\cbf\>=\<L_-(0)\upphi,v\>  \label{eq149}
\end{gather}
\end{eg}

\begin{lm}\label{lb61}
$\diamond_L,\diamond_R$ are bilinear maps $\Vbb\times\Vbb\rightarrow\wtd A_n(\Vbb)$.
\end{lm}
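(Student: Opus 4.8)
\textbf{Proof plan for Lemma \ref{lb61}.}

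The statement to prove is that the bilinear maps $\diamond_L,\diamond_R:\Vbb\times\Vbb\rightarrow\wtd A_n(\Vbb)^{**}$ defined by \eqref{zhu6} actually land in $\wtd A_n(\Vbb)\subset\wtd A_n(\Vbb)^{**}$ (via the canonical embedding, using that $\wtd A_n(\Vbb)=\scr T_{\fq,n,n}(\Vbb)$ is a quotient of $\Vbb$, hence reflexive enough for this to make sense once we exhibit a preimage in $\Vbb$). Concretely, for homogeneous $u,v\in\Vbb$ I must produce explicit vectors of $\Vbb$ whose images in $\wtd A_n(\Vbb)$ pair with every $\upphi\in\wtd A_n(\Vbb)^*=\scr T_{\fq,n,n}^*(\Vbb)$ the way the right-hand sides of \eqref{eq132}, \eqref{eq133} prescribe. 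The natural candidates come from the residue action of global meromorphic sections of $\scr V_{\Pbb^1}\otimes\omega_{\Pbb^1}$ on $\fq$: one wants a single global section $\sigma\in H^0(\Pbb^1,\scr V_\fq\otimes\omega_{\Pbb^1}(\blt 1+\blt 0+\blt\infty))$ whose local expansion at the incoming point $1$ reproduces the vector we are after, whose expansion at $\infty$ produces (a multiple of) $Y_+'(u)_{\wt u-1}$, and whose expansion at $0$ produces the trivial action; then Rem. \ref{lb64} (equation \eqref{eq142}, the defining invariance of $\gimel$) converts $\<Y_+'(u)_{\wt u-1}\upphi,v\>$ into $-\upphi$ evaluated on the $*$-action at $1$, i.e. into $\upphi$ of an honest vector of $\Vbb$.

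First I would set up the computation: identify $\scr V_{\Pbb^1}$ near $\infty$, near $0$, and near $1$ via $\mc U_\varrho(1/\zeta)$, $\mc U_\varrho(\zeta)$, $\mc U_\varrho(\zeta-1)$ respectively, using Exp. \ref{lb60} for the transition between $1/\zeta$ and $\zeta$. For homogeneous $u\in\Vbb$, the rational function $\zeta^{\wt u}/(\zeta-1)^{\cdots}$ (more precisely a section of the form $f(\zeta)\,\mc U_\varrho(\zeta)^{-1}u\,d\zeta$ with $f$ a rational function having poles only at $0,1,\infty$) will be chosen so that at $\infty$, by Lem. \ref{lb73} equation \eqref{eq131}, the residue action picks out exactly the coefficient $Y_+'(u)_{\wt u-1}$; the key point is that $\zeta^{\wt u}$ has the right homogeneity so that, after the coordinate change to $1/\zeta$, the $e^{\zeta L(1)}(-\zeta^{-2})^{L(0)}$ twist lines up the degree. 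I would then read off the expansion of the same section at $0$ (it should be holomorphic there, or have poles of order $\leq n$ killed by the $\Omega_{n}$-condition built into $\scr T_{\fq,n,n}^*$) and at $1$ (giving a concrete element $u\diamond_L v\in\Vbb$ modulo $\wtd O_n(\Vbb)$-type terms). The analogous computation with a section of the form $g(\zeta)\,\mc U_\varrho(\zeta)^{-1}v\,d\zeta$, expanded now so that the action at $0$ extracts $Y_-(v)_{\wt v-1}$ via \eqref{eq130}, handles $\diamond_R$; here one must check the pole order at $1$ lands in the right place and that the expansion at $\infty$ is controlled by the multi-level $n$. Because $\wt(u\diamond_L v)$ and $\wt(v\diamond_R\cbf)$ stay bounded (the degree of the vertex operator $Y_+'(u)_{\wt u-1}$ is $0$, so by Prop. \ref{lb19} the level does not increase), the resulting vectors genuinely represent elements of the quotient $\wtd A_n(\Vbb)$, and bilinearity in $u,v$ is immediate from bilinearity of the section-construction and of the residue pairing.

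The main obstacle I anticipate is bookkeeping of pole orders: one must verify that the chosen global section of $\scr V_\fq\otimes\omega_{\Pbb^1}$ has poles at $1,0,\infty$ of exactly the orders compatible with $\scr V_{\fq,n,n}$ (i.e. that the twist by $\theta^{n+L(0)}$ at the two outgoing points $\infty,0$ is respected, so that $\upphi$ — which by Prop. \ref{lb20}/Cor. \ref{lb63} kills vertex modes of order $\geq\wt+n$ at each outgoing point — does not destroy the identity), and that no spurious contribution arises at the other outgoing point. This is the standard Mittag–Leffler/Riemann–Roch dance on $\Pbb^1$ (existence of a rational function with prescribed principal parts), but one has to be careful that the existence of the global section is automatic on $\Pbb^1$ for the degrees in play. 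Once the section exists, \eqref{eq142} does the rest essentially formally. I would also double-check the normalization constants ($\pm1$, factors of $(-1)^{\wt u}$ coming from $\mc U(\upgamma_1)$ and from \eqref{eq139}) on the two small examples recorded in Exp. \ref{lb62}, since those give $\id\diamond_L v=v\diamond_R\id=v$ and the Virasoro-zero-mode formulas, which serve as sanity checks for the sign and degree conventions.
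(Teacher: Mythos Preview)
Your approach is correct and is essentially the same idea as the paper's: construct a global section of $\scr V_{\Pbb^1}\otimes\omega_{\Pbb^1}(\blt 1+\blt 0+\blt\infty)$ whose residue action at the outgoing point picks out the desired mode, whose action at the other outgoing point is killed by the level-$n$ condition, and then use the invariance formula \eqref{eq142} (Rem.~\ref{lb64}) to rewrite $\<Y_+'(u)_{\wt u-1}\upphi,v\>$ as $\upphi(m)$ for a fixed $m\in\Vbb$.

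The paper's proof is a one-liner because this construction has already been packaged as Prop.~\ref{lb52} (together with the Mittag--Leffler Lem.~\ref{lb51}): for any $Y_j(v)_n$ there exists $\sigma$ with $\<Y_j(v)_n\upphi,w\>=-\upphi(\sigma*_1 w)$ for all $\upphi\in\scr T_{\fq,n,n}^*(\Vbb)$, so $m=-\sigma*_1 v$ works. Since $Y_+'(u)_{\wt u-1}$ is a finite linear combination of $Y_+(L(1)^ku)_{\cdots}$ via \eqref{eq139}, the result follows. Your explicit $\Pbb^1$ construction with specific rational functions is exactly what the paper carries out later in Sec.~\ref{lb74} to get the formulas \eqref{eq163}, \eqref{eq164}, but it is not needed for the bare existence statement of this lemma. (Your remark invoking Prop.~\ref{lb19} for ``level does not increase'' is the content of \eqref{eq135}, which only says the map is well-defined on $\wtd A_n(\Vbb)^*$; the point of the lemma is the stronger statement that the functional is represented by a vector of $\Vbb$, which is what the global-section argument gives.)
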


\begin{proof}
By Prop. \ref{lb52}, the RHS of \eqref{eq132} can be written as $\upphi(m)$ for some $m\in\Vbb$ independent of $\upphi$. This shows that $u\diamond_L v$ equals the equivalence class of $m$ in $\wtd A_n(\Vbb)$. A similar thing can be said about $u\diamond_R v$.
\end{proof}

\begin{pp}\label{lb66}
$\diamond_L$ and $\diamond_R$ descend to bilinear maps
\begin{align}
\diamond_L:\Vbb\times \wtd A_n(\Vbb)\rightarrow \wtd A_n(\Vbb)\qquad \diamond_R:\wtd A_n(\Vbb)\times\Vbb\rightarrow\wtd A_n(\Vbb)   \label{eq136}
\end{align}
Moreover, if $u,v,w\in\Vbb$, then the following relation holds in $\wtd A_n(\Vbb)$:
\begin{align}
(u\diamond_L v)\diamond_R w=u\diamond_L(v\diamond_R w)  \label{eq137}
\end{align}
\end{pp}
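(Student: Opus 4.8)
Proposal.

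The plan is to prove Proposition \ref{lb66} by reducing all three assertions to the single, robust structural fact that the operators $Y_+'(u)_{\wt u - 1}$ and $Y_-(v)_{\wt v -1}$ act on $\wtd A_n(\Vbb)^* = \scr T_{\fq,n,n}^*(\Vbb)$ (which is \eqref{eq135}, already recorded), together with the fact that these two operators \emph{commute} with each other. The commutation is not yet literally stated as a displayed equation, but it is immediate from Theorem \ref{lb43}: since $(\bbs_\fq(\Vbb),Y_+,Y_-)$ is a weak $\Vbb\times\Vbb$-module, $[Y_+(u)_m, Y_-(v)_k]=0$ for all $m,k$, and hence (using Def. \ref{lb59}, where $Y_+'$ is a linear combination of the $Y_+(L(1)^j u)_\bullet$) also $[Y_+'(u)_m, Y_-(v)_k]=0$. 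This is the one observation I must make explicit at the start.

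First I would address the well-definedness \eqref{eq136}: by Lemma \ref{lb61}, $u\diamond_L v$ and $u\diamond_R v$ lie in $\wtd A_n(\Vbb)$ (not just the double dual), so $\diamond_L,\diamond_R$ are honest bilinear maps $\Vbb\times\Vbb\rightarrow\wtd A_n(\Vbb)$; what remains is that $u\diamond_L v$ depends on $v$ only through its class in $\wtd A_n(\Vbb)$, i.e. that $v\mapsto u\diamond_L v$ kills $H^0\big(\Pbb^1,\SV_{\fq,n,n}\otimes\omega_{\Pbb^1}(\bullet 1)\big)\cdot\Vbb$, and symmetrically for $\diamond_R$. Dually, this says: for every $\upphi\in\wtd A_n(\Vbb)^*$, the functional $v\mapsto \langle Y_+'(u)_{\wt u -1}\upphi, v\rangle$ again annihilates that subspace — which is exactly \eqref{eq135}. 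So \eqref{eq136} is a direct restatement of \eqref{eq135} via the pairing in \eqref{zhu6}; I would spell this out in one short paragraph, being careful about the reduction to homogeneous $u,v$ and then extending bilinearly.

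Then I would prove the associativity-type identity \eqref{eq137} by pairing both sides with an arbitrary $\upphi\in\wtd A_n(\Vbb)^*$ and computing. Assuming $u,v,w$ homogeneous: using \eqref{eq133} then \eqref{eq132},
\begin{align*}
\langle\upphi,(u\diamond_L v)\diamond_R w\rangle
&=\langle Y_-(w)_{\wt w -1}\upphi,\, u\diamond_L v\rangle
=\langle Y_+'(u)_{\wt u -1}Y_-(w)_{\wt w -1}\upphi,\, v\rangle,
\end{align*}
while using \eqref{eq132} then \eqref{eq133},
\begin{align*}
\langle\upphi,\, u\diamond_L(v\diamond_R w)\rangle
&=\langle Y_+'(u)_{\wt u -1}\upphi,\, v\diamond_R w\rangle
=\langle Y_-(w)_{\wt w -1}Y_+'(u)_{\wt u -1}\upphi,\, v\rangle.
\end{align*}
These two right-hand sides agree because $Y_+'(u)_{\wt u-1}$ commutes with $Y_-(w)_{\wt w-1}$ on $\wtd A_n(\Vbb)^*$, as noted above; since $\upphi$ was arbitrary and $\wtd A_n(\Vbb)^*$ separates points of $\wtd A_n(\Vbb)$, \eqref{eq137} follows. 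A subtlety I will need to check is that each intermediate object ($u\diamond_L v$, $v\diamond_R w$, and the iterated actions of the mode operators) genuinely lies in $\wtd A_n(\Vbb)$ resp. $\wtd A_n(\Vbb)^*$, so that all pairings are legitimate — but that is precisely what Prop. \ref{lb19}, \eqref{eq135}, and Lemma \ref{lb61} guarantee.

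The main obstacle, such as it is, is conceptual rather than technical: making the commutativity $[Y_+'(u)_m, Y_-(v)_k]=0$ airtight, since $Y_+'$ is defined through the contragredient-type formula \eqref{eq138}/\eqref{eq139} involving $L_+(1)$ and $L_+(0)$, and one must be sure these Virasoro modes (which are among the $Y_+(\cbf)_\bullet$) also commute with all $Y_-(v)_k$ — which they do, again by Theorem \ref{lb43} applied to $\cbf$ in the first tensor slot. Once that is in place, the proposition is essentially a bookkeeping exercise in the dual pairing, and I would keep the write-up to the two computations above plus the short well-definedness remark.
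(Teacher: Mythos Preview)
Your proposal is correct and matches the paper's proof essentially line for line: both derive \eqref{eq136} from \eqref{eq135} and Lemma \ref{lb61} via the dual pairing, and both prove \eqref{eq137} by pairing with an arbitrary $\upphi\in\wtd A_n(\Vbb)^*$, unwinding \eqref{zhu6} twice on each side, and invoking the commutativity of $Y_+'$ with $Y_-$ from Theorem \ref{lb43}. Your extra care in justifying $[Y_+'(u)_m,Y_-(v)_k]=0$ via \eqref{eq139} is harmless padding; the paper simply cites Theorem \ref{lb43} directly.
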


\begin{proof}
By \eqref{zhu6} and \eqref{eq135}, if $v$ belongs to the denominator of the RHS of \eqref{eq134} (equivalently, if $v$ is killed by every $\upphi\in\wtd A_n(\Vbb)^*$), then so does $u\diamond_Lv$ and $v\diamond_R u$ for every $u\in \Vbb$. Thus, \eqref{eq136} follows immediately from Lem. \ref{lb61}. Since $Y_+'$ and $Y_-$ commute by Thm. \ref{lb43}, for each $\upphi\in\wtd A_n(\Vbb)^*$ and each homogeneous $u,v,w\in\Vbb$,  \eqref{zhu6} implies that
\begin{align*}
&\bigbk{\upphi,(u\diamond_L v)\diamond_R w}=\bigbk{Y_-(w)_{\wt w-1}\cdot \upphi,u\diamond_Lv}=\bigbk{Y_+'(u)_{\wt u-1}\cdot Y_-(w)_{\wt w-1}\cdot \upphi,v}\\
=&\bigbk{Y_-(w)_{\wt w-1}\cdot Y_+'(u)_{\wt u-1}\cdot \upphi,v}=\bigbk{Y_+'(u)_{\wt u-1}\cdot \upphi,v\diamond_R w}=\bigbk{\upphi,u\diamond_L(v\diamond_Rw)}
\end{align*}
\end{proof}

If one can show that $\diamond_L=\diamond_R$, then these two bilinear maps descend to $\diamond:\wtd A_n(\Vbb)\times\wtd A_n(\Vbb)\rightarrow\wtd A_n(\Vbb)$. Unfortunately, $\diamond_L$ and $\diamond_R$ are in general not equal unless when $n=0$. To get equal maps, one needs to pass to a quotient $A_n(\Vbb)$ of $\wtd A_n(\Vbb)$.

\begin{df}
Define
\begin{subequations}\label{eq152}
\begin{gather}
Z_n(\Vbb)=\big\{\cbf\diamond_L v-v\diamond_R\cbf:v\in \wtd A_n(\Vbb)\big\}\qquad\subset \wtd A_n(\Vbb)\\[0.5ex]
A_n(\Vbb)=\wtd A_n(\Vbb)/Z_n(\Vbb)
\end{gather}
\end{subequations}
Thus, by Exp. \ref{lb62}, the dual space of $A_n(\Vbb)$ is
\begin{align}
A_n(\Vbb)^*=\big\{\upphi\in \wtd A_n(\Vbb)^*: L_+(0)\upphi= L_-(0)\upphi  \big\} \label{eq146}
\end{align}
\end{df}

\begin{lm}\label{zhu11}
$\diamond_L$ and $\diamond_R$ descend to bilinear maps
\begin{align}
\diamond_L:\Vbb\times A_n(\Vbb)\rightarrow A_n(\Vbb)\qquad \diamond_R: A_n(\Vbb)\times\Vbb\rightarrow A_n(\Vbb)   \label{eq143}
\end{align}
\end{lm}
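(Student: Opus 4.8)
The plan is to show that $\diamond_L$ and $\diamond_R$, viewed as the bilinear maps $\Vbb\times\wtd A_n(\Vbb)\to\wtd A_n(\Vbb)$ and $\wtd A_n(\Vbb)\times\Vbb\to\wtd A_n(\Vbb)$ of Prop. \ref{lb66}, each kill the subspace $Z_n(\Vbb)$ in the appropriate variable, so that they factor through $A_n(\Vbb)=\wtd A_n(\Vbb)/Z_n(\Vbb)$. Concretely, for $\diamond_L$ I must check two things: first, that $u\diamond_L z\in Z_n(\Vbb)$ whenever $z\in Z_n(\Vbb)$ (so the right variable may be reduced mod $Z_n$), and second, that the output of $\diamond_L$ is already well-defined modulo $Z_n$ in the sense needed — but since the target is $A_n(\Vbb)$ we only need the first. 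Symmetrically for $\diamond_R$ I need $z\diamond_R w\in Z_n(\Vbb)$ whenever $z\in Z_n(\Vbb)$.

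The natural way to carry this out is dually, using \eqref{eq146}: $A_n(\Vbb)^*$ consists of those $\upphi\in\wtd A_n(\Vbb)^*$ with $L_+(0)\upphi=L_-(0)\upphi$, so it suffices to prove that $A_n(\Vbb)^*$ is stable under the transpose operators $\upphi\mapsto Y_+'(u)_{\wt u-1}\upphi$ and $\upphi\mapsto Y_-(v)_{\wt v-1}\upphi$ appearing in \eqref{zhu6}. For the first: given $\upphi$ with $L_+(0)\upphi=L_-(0)\upphi$, I would compute the commutator of $L_-(0)-L_+(0)$ with $Y_+'(u)_{\wt u-1}$. Since $Y_+$ and $Y_-$ commute (Thm. \ref{lb43}), $L_-(0)$ commutes with $Y_+'(u)_{\wt u-1}$ outright; and the grading relation \eqref{eq99} for $Y_+$ (or rather its consequence for $Y_+'$, which one reads off from \eqref{eq139} together with the $L(0)$-conjugation properties of $e^{zL(1)}(-z^{-2})^{L(0)}$) shows that $[L_+(0),Y_+'(u)_{\wt u-1}]=0$ when $u$ is homogeneous — the operator $Y_+'(u)_{\wt u-1}$ is precisely the weight-zero component, i.e. the one commuting with $L_+(0)$. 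Hence $L_+(0)$ and $L_-(0)$ both commute with $Y_+'(u)_{\wt u-1}$, so $(L_+(0)-L_-(0))Y_+'(u)_{\wt u-1}\upphi=Y_+'(u)_{\wt u-1}(L_+(0)-L_-(0))\upphi=0$, giving $Y_+'(u)_{\wt u-1}\upphi\in A_n(\Vbb)^*$. The argument for $Y_-(v)_{\wt v-1}$ is identical with the roles of $+$ and $-$ swapped. Then for non-homogeneous $u,v$ one extends by linearity, using that $\diamond_L,\diamond_R$ are defined on homogeneous vectors and extended bilinearly.

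Dualizing back: the stability of $A_n(\Vbb)^*$ under these transposes is exactly equivalent to the statement that $\diamond_L(u\otimes-)$ and $\diamond_R(-\otimes w)$ send $Z_n(\Vbb)=(A_n(\Vbb)^*)^\perp$ into itself, which is what is needed for \eqref{eq143}. I expect the only genuinely delicate point is the bookkeeping that $Y_+'(u)_{\wt u-1}$ really is the $L_+(0)$-weight-zero component of the field $Y_+'(u,z)$; this requires unwinding \eqref{eq139} and the commutation relation $[L(0), e^{zL(1)}(-z^{-2})^{L(0)}]$-type identities, but it is a finite computation already implicit in the paper's conventions (cf. the discussion around Def. \ref{lb59} and \eqref{eq43}). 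Everything else is formal bilinear-algebra manipulation. I would present the proof in the dual picture since there the computation is a one-line commutator check, and then simply remark that \eqref{eq143} follows by taking $Z_n(\Vbb)=\{\,\upphi\in\wtd A_n(\Vbb)^*:L_+(0)\upphi=L_-(0)\upphi\,\}^\perp$ and transposing.
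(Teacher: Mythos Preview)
Your approach is correct and essentially identical to the paper's: both reduce to showing that $A_n(\Vbb)^*$ is stable under $Y_+'(u)_{\wt u-1}$ and $Y_-(v)_{\wt v-1}$, and both verify this by the commutator computation $[L_\pm(0),Y_-(v)_{\wt v-1}]=0$ (from \eqref{eq140} and the commutativity of $Y_+,Y_-$), with the $Y_+'$ case handled via \eqref{eq139}. The paper is simply terser about the bookkeeping you flag as delicate, citing \eqref{eq139} and \eqref{eq140} directly rather than unwinding the weight-zero property of $Y_+'(u)_{\wt u-1}$.
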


     \begin{proof}
We need to show that if $v\in\Vbb$ is homogeneous, then $v\diamond_L-$ and $-\diamond_R v$ send $Z_n(\Vbb)$ into $Z_n(\Vbb)$. It suffices to prove
         \begin{align}\label{zhu10}
             Y_+'(v)_{\wt v-1}\cdot A_n(\Vbb)^*\subset A_n(\Vbb)^*\qquad\quad Y_-(v)_{\wt v-1}\cdot A_n(\Vbb)^*\subset A_n(\Vbb)^*
         \end{align}
Choose $\upphi\in\wtd A_n(\Vbb)^*$  killed by $L_+(0)-L_-(0)$. Then the formula
\begin{align*}
[L_\pm(0),Y_-(v)_{\wt v-1}]=[Y_\pm(\cbf)_1,Y_-(v)_{\wt v-1}]=0
\end{align*}
implied by \eqref{eq140} shows that $Y_-(v)_{\wt v-1}\upphi$ is also killed by $L_+(0)-L_-(0)$. This proves the second relation in \eqref{zhu10}. A similar argument together with \eqref{eq139} proves the first relation in \eqref{zhu10}.
     \end{proof}

\begin{pp}\label{lb69}
We have $Z_0(\Vbb)=0$, and hence $A_0(\Vbb)=\wtd A_0(\Vbb)$.
\end{pp}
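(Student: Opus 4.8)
\textbf{Proof plan for Proposition \ref{lb69}.} The claim is that $Z_0(\Vbb)=0$, i.e. that $\cbf\diamond_L v=v\diamond_R\cbf$ in $\wtd A_0(\Vbb)$ for every $v\in\Vbb$; equivalently, by \eqref{eq149}, that $L_+(0)\upphi=L_-(0)\upphi$ for every $\upphi\in\wtd A_0(\Vbb)^*=\scr T_{\fq,0,0}^*(\Vbb)$. The plan is to prove this last statement by a residue-theorem argument on $\Pbb^1$, exploiting the fact that when the multi-level is $(0,0)$ the sheaf $\scr V_{\fq,0,0}$ is large enough to contain a global meromorphic section of $\scr V_{\Pbb^1}\otimes\omega_{\Pbb^1}$ whose residue actions at $\infty$ and $0$ produce exactly $L_+(0)$ and $L_-(0)$.

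First I would unwind the definitions. By \eqref{eq149} it suffices to show $\langle L_+(0)\upphi,v\rangle=\langle L_-(0)\upphi,v\rangle$ for all $\upphi\in\scr T_{\fq,0,0}^*(\Vbb)$ and all $v\in\Vbb$; using the canonical conformal block $\gimel$ and Remark \ref{lb64} (the identity \eqref{eq142}), this amounts to comparing the contributions of a suitable global section $\sigma\in H^0(\Pbb^1,\scr V_{\fq,0,0}\otimes\omega_{\Pbb^1}(\blt 1))$ at the two outgoing points $\infty,0$ against its contribution at the incoming point $1$. The natural candidate is the section which, in the coordinate $\zeta$ near $0$ and $\infty$, is $\mc U_\varrho(\zeta)^{-1}\cbf\cdot d\zeta$ — i.e. the conformal vector times the holomorphic $1$-form $d\zeta$. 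One checks using Lemma \ref{lb73} (with $f=g=1$, and recalling $Y_-(v)_{\wt v-1}$, $Y_+'(v)_{\wt v -1}$ are the degree-zero modes, which for $v=\cbf$ are $L_-(0)$, $L_+(0)$ — modulo the sign and the $Y'$ convention in \eqref{eq131}) that the residue action at $0$ reproduces $L_-(0)$ and the residue action at $\infty$ reproduces $L_+(0)$ (up to sign). The key point that forces $n=0$ is the order-of-pole condition: by Definition \ref{lb86}, $\scr V_{\fq,0,0}$ near $y_j$ is generated by $\mc U_\varrho(\theta_j)^{-1}\theta_j^{L(0)}v$, so $\cbf\,d\zeta$ (which is $\theta_j^0\cdot$ something times $d\zeta$, with $\cbf$ of weight $2$ and $d\zeta=\zeta^{-1}\theta_j\,d\theta_j$ at $0$) has the right vanishing order at $0$ and $\infty$ precisely when $a_j=0$; for $n\geq 1$ one cannot realize $\cbf\,d\zeta$ as a section of $\scr V_{\fq,n,n}\otimes\omega$ with the required behaviour, which is exactly why $Z_n(\Vbb)$ need not vanish.

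The remaining steps are: (i) verify that $d\zeta$, written as $-z^{-2}dz$ in the coordinate $z=1/\zeta$ at $\infty$ and as the regular form $d\zeta$ at $0$, together with $\cbf$, genuinely defines a global section $\sigma$ of $\scr V_{\Pbb^1}\otimes\omega_{\Pbb^1}$ that lies in $\scr V_{\fq,0,0}\otimes\omega_{\Pbb^1}(\blt 1)$ — this is a change-of-coordinate computation using $\mc U(\upgamma_z)=e^{zL(1)}(-z^{-2})^{L(0)}$ from Example \ref{changeexample1} and Example \ref{lb60}, essentially the statement that $\cbf\,d\zeta$ is the standard ``projective connection-free'' section; (ii) apply \eqref{eq142} to $\sigma$: since $\sigma$ has its only pole at the incoming point $1$ and $\upphi$ kills $\sigma*_1 w$-type vectors, the sum of residue contributions at $\infty$ and $0$ vanishes, i.e. $(\sigma*_+\upphi)(v)+(\sigma*_-\upphi)(v)=0$ for all $v$; (iii) identify $\sigma*_-\upphi$ with $\pm L_-(0)\upphi$ and $\sigma*_+\upphi$ with $\mp L_+(0)\upphi$ via \eqref{eq72} and Lemma \ref{lb73}, so that the vanishing reads $L_-(0)\upphi=L_+(0)\upphi$. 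Then \eqref{eq146} gives $\upphi\in A_0(\Vbb)^*$ for all $\upphi\in\wtd A_0(\Vbb)^*$, hence $\wtd A_0(\Vbb)^*=A_0(\Vbb)^*$ and $Z_0(\Vbb)=0$; since $A_0(\Vbb)=\wtd A_0(\Vbb)/Z_0(\Vbb)$ by \eqref{eq152}, we conclude $A_0(\Vbb)=\wtd A_0(\Vbb)$.

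\textbf{Main obstacle.} The one place demanding real care is step (i): confirming that $\cbf\otimes d\zeta$ patches to a \emph{global} holomorphic section of $\scr V_{\Pbb^1}\otimes\omega_{\Pbb^1}$ across $\infty$ — the conformal vector does not transform tensorially under coordinate changes (the Schwarzian/central-charge term appears in $\mc U_\varrho$), so one must check that the anomalous terms cancel against the transformation of $d\zeta$. This is the classical fact that $L(-2)\id$-type sections twisted by the canonical bundle are well-defined globally when there is no projective-connection obstruction on $\Pbb^1$; I expect it to follow cleanly from the explicit form of $\mc U(\upgamma_z)$, but it is the computational heart of the argument. Everything after that is a bookkeeping application of the residue theorem already packaged in \eqref{eq142}.
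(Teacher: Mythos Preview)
Your approach has a concrete error in the identification of which modes the candidate section produces. With $f=g=1$ in Lemma \ref{lb73} and $v=\cbf$, formula \eqref{eq130} gives $\sigma*_-\upphi=Y_-(\cbf)_0\upphi=L_-(-1)\upphi$, and \eqref{eq131} gives $\tau*_+\upphi=-Y_+'(\cbf)_0\upphi=-L_+(1)\upphi$ (using \eqref{eq139}). So $\mc U_\varrho(\zeta)^{-1}\cbf\cdot d\zeta$ yields $L_-(-1)$ and $L_+(1)$, not $L_\pm(0)$. To get $L_\pm(0)$ you need $f=g=\zeta$, i.e.\ the section $\zeta\cdot\mc U_\varrho(\zeta)^{-1}\cbf\,d\zeta$; but by Remark \ref{lb75} this section fails to lie in $\scr V_{\fq,0,0}\otimes\omega_{\Pbb^1}$ near $0$ (one needs $f_k=0$ for $k<\wt\cbf=2$, while here $f_1=1$). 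The residue identity \eqref{eq142} then leaves a nonzero contribution at the incoming point $1$, namely $\upphi\big((L(0)+L(-1))v\big)$ --- this is exactly the content of the last proposition in Section \ref{lb74}, valid for \emph{all} $n$. Your order-of-pole discussion is also off: $\cbf\,d\zeta$ is \emph{not} in $\scr V_{\fq,0,0}\otimes\omega_{\Pbb^1}$ near $0$, since $\scr V_{\fq,0,0}$ requires the $\cbf$-component to vanish to order $2$ there.

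A salvageable route along your lines would be to use $\sigma=\zeta\cbf\,d\zeta$ as above, and then separately exhibit $(L(0)+L(-1))v$ as an element of $\wtd O_0(\Vbb)$ --- indeed $(L(0)+L(-1))v=\tau\cdot\id$ for $\tau=(\zeta-1)^{-2}\zeta^{\wt v}\mc U_\varrho(\zeta)^{-1}v\,d\zeta$, which \emph{does} lie in $\scr V_{\fq,0,0}\otimes\omega_{\Pbb^1}(\blt 1)$ by Remark \ref{lb75}. This is essentially the classical Zhu computation $v\circ\id=(L(0)+L(-1))v$, but you did not identify it. The paper's proof is entirely different and more conceptual: it invokes the characterization of $A_n(\Vbb)^*$ in Proposition \ref{lb70} and observes that the level-$0$ truncation (Proposition \ref{lb20} together with \eqref{eq139}) forces the Laurent polynomial $z^{\wt v}\langle Y_-(v,z)\upphi,\id\rangle=z^{\wt v}\langle Y_+'(v,z)\upphi,\id\rangle$ of Proposition \ref{lb67} to lie simultaneously in $\Cbb[[z]]$ and $\Cbb[[z^{-1}]]$, hence to be constant. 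Finally, your ``main obstacle'' about the Schwarzian is a red herring: the sheaf $\scr V_{\Pbb^1}$ is built with Huang's full change-of-coordinate operator, so $\mc U_\varrho(\zeta)^{-1}\cbf$ is a perfectly well-defined section, and (since $L(1)\cbf=0$) one computes directly via Example \ref{lb60} that $\mc U_\varrho(\zeta)^{-1}\cbf\cdot d\zeta$ extends holomorphically across $\infty$ without any anomaly.
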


\begin{thm}\label{lb68}
For each $u,v\in \Vbb$, the equivalence classes of $u\diamond_L v$ and $u\diamond_R v$ in $A_n(\Vbb)$ are equal. Thus (by Lem. \ref{zhu11}), $\diamond_L$ and $\diamond_R$ descend to the same bilinear map
\begin{align}
\diamond: A_n(\Vbb)\times A_n(\Vbb)\rightarrow A_n(\Vbb) \label{eq147}
\end{align}
\end{thm}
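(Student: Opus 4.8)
\textbf{Proof proposal for Theorem \ref{lb68}.}

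The plan is to prove the identity $u\diamond_L v\equiv u\diamond_R v$ in $A_n(\Vbb)$ by working on the dual side: by \eqref{eq146} it suffices to show that for every $\upphi\in A_n(\Vbb)^*$ — that is, every $\upphi\in\wtd A_n(\Vbb)^*$ with $L_+(0)\upphi=L_-(0)\upphi$ — and every homogeneous $u,v\in\Vbb$ one has $\bk{\upphi,u\diamond_L v}=\bk{\upphi,u\diamond_R v}$, i.e.\ by \eqref{zhu6}
\begin{align*}
\bigbk{Y_+'(u)_{\wt u-1}\upphi,v}=\bigbk{Y_-(v)_{\wt v-1}\upphi,u}.
\end{align*}
The geometric idea is that both sides are contour integrals of the propagated functional $\wr\upphi$, and the difference of the two contours is a contour that can be deformed away on $\Pbb^1$. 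Concretely, recall that $\upphi\in\scr T_{\fq,n,n}^*(\Vbb)$, that $\scr W_\fq(\Vbb)=\Vbb$ via $\mc U(\zeta)$, and that by Thm. \ref{lb71} we have the single propagation $\wr\upphi$, a morphism $\scr V_{\fq,n,n}\otimes_{\mc O}\pi^*\scr W_\fq(\Vbb)\big|_{\Pbb^1-\{1\}}\to\mc O_{\Pbb^1-\{1\}}$, characterized near the incoming point $1$ (with coordinate $\zeta-1$) by $\wr\upphi(\mc U_\varrho(\zeta-1)^{-1}u\otimes w)=\upphi(Y_\Vbb(u,\zeta-1)w)$. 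Here $w$ is the image of $\id\in\scr W_\fq(\Vbb)$ under $\gimel$'s trivialization, so really we fix $w=\id$ and work with $\wr\upphi(-,\id)$, a meromorphic section on $\Pbb^1$ holomorphic away from $0,1,\infty$ with controlled poles at $0$ and $\infty$ (order governed by $n$) by Prop.\ \ref{lb20} / Rem.\ \ref{chap3remark1}. Using Lem.\ \ref{lb73}, the definitions \eqref{eq72} of $Y_\pm$ translate the two sides above into residues at $\infty$ and at $0$ respectively of $\wr\upphi\big(\mc U_\varrho(\zeta)^{-1}(\cdot),\id\big)$ against explicit powers of $\zeta$ determined by $\wt u$ and $\wt v$.

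The key steps, in order: (1) Fix homogeneous $u,v$ and evaluate $\bk{\upphi,u\diamond_L v}$ using \eqref{eq132} and the formula \eqref{eq131} of Lem.\ \ref{lb73}, expressing $Y_+'(u)_{\wt u-1}\upphi$ applied to $v$ as $-\Res_{1/\zeta=0}$ of $\wr\upphi\big(\mc U_\varrho(1/\zeta)\text{-trivialized section},\id\big)$ paired with $v$; then run the propagation once more (double propagation, via Prop.\ \ref{anotherversionpropagation2} and Thm.\ \ref{lb18}) to rewrite this in terms of $\wr^2\upphi(u,v,\id)$ evaluated by a contour near $\infty$ in the first variable. (2) Symmetrically evaluate $\bk{\upphi,u\diamond_R v}$ using \eqref{eq133} and \eqref{eq130}, obtaining $\wr^2\upphi(v,u,\id)$ against a contour near $0$ in the first variable; by Thm.\ \ref{lb18}(4) the two double propagations agree after swapping, so after renaming we are comparing $\Res_{C_\infty}$ and $\Res_{C_0}$ of one and the same holomorphic two-variable function against the same algebraic weights. (3) On $\Pbb^1$, the difference $\Res_{C_\infty}-\Res_{C_0}$ of a function with poles only at $0,1,\infty$ equals $\Res_{C_1}$ (residue at the point $1$) by the residue theorem, once the total degree of the algebraic factor is high enough that the relevant $1$-form is genuinely meromorphic with those three poles only; this residue at $1$ is computed by \eqref{eq82} and gives an expression of the form $\sum_l\binom{\cdots}{l}\bk{\cdots Y_\pm(Y(u)_l v)_\cdots\upphi,\id}$ plus a correction. (4) The correction is exactly a multiple of $L_+(0)\upphi-L_-(0)\upphi$ (this is where the precise bookkeeping of weights $\wt u-1$, $\wt v-1$, and the shift by $1$ coming from the local coordinates $\zeta-1$ at the point $1$ enters), hence vanishes on $A_n(\Vbb)^*$ by \eqref{eq146}. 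Assembling (1)–(4) and using density of $A_n(\Vbb)^*$ in $A_n(\Vbb)^{**}$ in the relevant sense gives $u\diamond_L v\equiv u\diamond_R v$ in $A_n(\Vbb)$.

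The main obstacle I expect is step (3)–(4): making precise the residue-theorem deformation on $\Pbb^1$ and identifying the ``boundary term'' at the node-free point $1$ as precisely a scalar multiple of $(L_+(0)-L_-(0))\upphi$. One has to be careful that the two-variable function $\wr^2\upphi(u,v,\id)(\zeta_1,\zeta_2)$ has a genuine Laurent expansion along $\zeta_1=\zeta_2$ (Thm.\ \ref{lb18}(2)) and that interchanging the order of the two residues/contour integrals is legitimate — this is exactly the kind of manipulation licensed by the a.l.u.\ convergence machinery (taking residues commutes with a.l.u.\ convergent sums of holomorphic functions), but it must be invoked explicitly. A secondary subtlety is that $\diamond_L,\diamond_R$ a priori land in $\wtd A_n(\Vbb)$, so one should phrase the comparison so that everything happens after projecting to $A_n(\Vbb)$, using Lem.\ \ref{zhu11} to know the operators $Y_+'(v)_{\wt v-1}$ and $Y_-(v)_{\wt v-1}$ preserve $A_n(\Vbb)^*$. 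Modulo these points, the argument is a direct higher-genus-style residue computation parallel to the $A_0$ case and to the analogous identities in Nagatomo–Tsuchiya and in Li's regular-representation approach.
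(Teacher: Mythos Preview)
Your plan differs substantially from the paper's proof and, as stated, contains a genuine gap at exactly the point you flag.

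\textbf{The paper's argument.} The paper does not attempt a direct contour comparison between $\bk{\upphi,u\diamond_L v}$ and $\bk{\upphi,u\diamond_R v}$. Instead it reduces to the \emph{unit case} and invokes the already-established mixed associativity. First, using single propagation with $\id$ at the incoming point, it proves (Prop.\ \ref{lb67}) that $\<Y_-(v,1)\upphi,\id\>=\upphi(v)$, and then (Prop.\ \ref{lb70}) that the condition $L_+(0)\upphi=L_-(0)\upphi$ is equivalent to $\<Y_-(v,z)\upphi,\id\>=z^{-\wt v}\upphi(v)$. Taking $\Res_{z=0}(\cdot)z^{\wt v-1}dz$ gives $\id\diamond_R v=v$ in $A_n(\Vbb)$; combining with Lem.\ \ref{lb65} yields $u\diamond_L\id=u$. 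Finally, Prop.\ \ref{lb66} gives
\[
u\diamond_L v=u\diamond_L(\id\diamond_R v)=(u\diamond_L\id)\diamond_R v=u\diamond_R v.
\]
No double propagation is needed.

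\textbf{Where your plan breaks down.} The two quantities you want to compare involve \emph{different} module elements at the point $1$: in $\bk{\upphi,u\diamond_L v}=\<Y_+'(u)_{\wt u-1}\upphi,v\>$ the module vector is $v$, whereas in $\bk{\upphi,u\diamond_R v}=\<Y_-(v)_{\wt v-1}\upphi,u\>$ it is $u$. If you set $G(\zeta_1,\zeta_2)=\wr^2\upphi(\mc U_\varrho(\zeta)^{-1}u,\mc U_\varrho(\zeta)^{-1}v,\id)$, then (by Thm.\ \ref{lb18}(1),(4)) these two quantities are residues of $G$ on the two \emph{different slices} $\zeta_2=1$ and $\zeta_1=1$:
\[
\bk{\upphi,u\diamond_L v}=-\Res_{\zeta_1=\infty}\big[G(\zeta_1,1)\,\zeta_1^{\wt u-1}d\zeta_1\big],\qquad
\bk{\upphi,u\diamond_R v}=\Res_{\zeta_2=0}\big[G(1,\zeta_2)\,\zeta_2^{\wt v-1}d\zeta_2\big].
\]
There is no single one-variable residue theorem that compares these, and the two weights $\zeta_1^{\wt u-1}$, $\zeta_2^{\wt v-1}$ are not the ``same algebraic weights'' as you assert in step (3). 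Consequently, your step (4) — that the discrepancy is literally a multiple of $(L_+(0)-L_-(0))\upphi$ — is unsupported and, in general, not true at the level of $\wtd A_n(\Vbb)$: the difference $u\diamond_L v-u\diamond_R v$ lies in $Z_n(\Vbb)$ (that is the content of the theorem), but it is not a single term of the form $\cbf\diamond_L w-w\diamond_R\cbf$. The paper avoids this entirely by the unit trick; if you want to rescue your approach you would essentially need to prove Prop.\ \ref{lb70} first, at which point the paper's three-line argument is available.
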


We defer the proofs of Prop. \ref{lb69} and Thm. \ref{lb68} to the next subsection.

   \begin{thm}\label{zhumain}
$A_n(\Vbb)$ is an associative algebra if we let the map $\diamond$ in Thm. \ref{lb68} be the multiplication. The equivalence class of $\idt$ in $A_n(\Vbb)$ is the unit. The equivalence class of $\cbf$ in $A_n(\Vbb)$ is a central element.
    \end{thm}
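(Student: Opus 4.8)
The plan is to deduce Theorem \ref{zhumain} from the already-established structural facts about $\diamond_L$ and $\diamond_R$, together with the universal/propagation machinery of Chapter \ref{lb89}. First I would record what is already in hand: by Prop. \ref{lb66} we have the mixed associativity $(u\diamond_L v)\diamond_R w = u\diamond_L(v\diamond_R w)$ in $\wtd A_n(\Vbb)$; by Thm. \ref{lb68} the images of $u\diamond_L v$ and $u\diamond_R v$ agree in $A_n(\Vbb)$, so that a single product $\diamond$ on $A_n(\Vbb)$ is well defined; and by Exp. \ref{lb62} (formula \eqref{eq148}) we have $\id\diamond_L v = v\diamond_R\id = v$ in $\wtd A_n(\Vbb)$, hence $\id$ descends to a two-sided unit for $\diamond$ in $A_n(\Vbb)$.

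The associativity of $\diamond$ is then a purely formal consequence. Writing $[x]$ for the class in $A_n(\Vbb)$ of $x\in\wtd A_n(\Vbb)$ (and extending $\diamond_L,\diamond_R$ to $\Vbb$ via representatives), for $u,v,w\in\Vbb$ one computes
\begin{align*}
([u]\diamond[v])\diamond[w] &= [u\diamond_L v]\diamond[w] = [(u\diamond_L v)\diamond_R w]\\
&= [u\diamond_L(v\diamond_R w)] = [u]\diamond[v\diamond_R w] = [u]\diamond([v]\diamond[w]),
\end{align*}
where the first and fourth equalities use $\diamond_L=\diamond_R=\diamond$ on $A_n(\Vbb)$ (Thm. \ref{lb68}), the second and last use the definition of $\diamond$ via whichever of $\diamond_L,\diamond_R$ is convenient, and the middle equality is \eqref{eq137} of Prop. \ref{lb66}. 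Since classes of vectors of $\Vbb$ span $A_n(\Vbb)$, bilinearity of $\diamond$ (Lem. \ref{lb61}, Lem. \ref{zhu11}) extends this to all of $A_n(\Vbb)$, so $(A_n(\Vbb),\diamond,[\id])$ is a unital associative algebra.

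For the centrality of $[\cbf]$ I would argue directly from the definition \eqref{eq152} of $Z_n(\Vbb)$: for any $v\in\wtd A_n(\Vbb)$, the element $\cbf\diamond_L v - v\diamond_R\cbf$ lies in $Z_n(\Vbb)$ by construction, hence $[\cbf\diamond_L v] = [v\diamond_R\cbf]$ in $A_n(\Vbb)$. Combined with Thm. \ref{lb68} (so $[\cbf\diamond_L v] = [\cbf]\diamond[v]$ and $[v\diamond_R\cbf] = [v]\diamond[\cbf]$), this gives $[\cbf]\diamond[v] = [v]\diamond[\cbf]$ for all $v$, i.e. $[\cbf]$ is central. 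I should also double-check that $\cbf\diamond_L v$ and $v\diamond_R\cbf$ are legitimately interpreted in $\wtd A_n(\Vbb)$ rather than only in $A_n(\Vbb)$; here \eqref{eq149} of Exp. \ref{lb62} identifies them with $L_+(0)\upphi$- and $L_-(0)\upphi$-pairings, and Prop. \ref{lb66}'s extension of $\diamond_L,\diamond_R$ to $\Vbb\times\wtd A_n(\Vbb)$ and $\wtd A_n(\Vbb)\times\Vbb$ makes this precise.

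I do not expect a genuine obstacle here: all the hard analytic and geometric content — that $\diamond_L,\diamond_R$ are well-defined into $\wtd A_n(\Vbb)$, the commutativity of $Y_+'$ with $Y_-$ yielding \eqref{eq137}, and especially the coincidence of $\diamond_L$ and $\diamond_R$ modulo $Z_n(\Vbb)$ in Thm. \ref{lb68} — is handled before this statement. The only point requiring a little care is bookkeeping: making sure that "$\diamond$ computed via $\diamond_L$" and "$\diamond$ computed via $\diamond_R$" may be freely interchanged when verifying associativity, which is exactly what Thm. \ref{lb68} licenses, and that the unit and centrality claims are phrased in terms of the quotient $A_n(\Vbb)$ and not $\wtd A_n(\Vbb)$ (where $\diamond$ need not even be single-valued). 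So the proof is essentially a two-line formal deduction once the preceding results are cited correctly.
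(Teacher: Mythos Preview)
Your proposal is correct and follows essentially the same approach as the paper's proof, which is a terse three-line argument citing Prop.~\ref{lb66} for associativity, \eqref{eq148} for the unit, and \eqref{eq152} for centrality of~$\cbf$. You have simply spelled out in detail the formal deductions that the paper leaves implicit.
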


    \begin{proof}
The associativity of $\diamond$ follows from Prop. \ref{lb66}. That $\idt$ is the unity in $A_n(\Vbb)$ follows from \eqref{eq148}. That $\cbf$ is central in $A_n(\Vbb)$ follows from \eqref{eq152}.
    \end{proof}

\subsubsection{Proofs of Prop. \ref{lb69} and Thm. \ref{lb68}}

\begin{lm}\label{lb65}
Let $v\in\Vbb$. Then the relation $v\diamond_L\idt=\idt\diamond_R v$ holds in $\wtd A_n(\Vbb)$.
\end{lm}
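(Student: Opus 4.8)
The plan is to unwind both sides of the claimed identity $v\diamond_L\id=\id\diamond_R v$ using the defining formulas \eqref{zhu6}, thereby reducing everything to a statement about the propagation $\wr\upphi$ of a fixed partial conformal block $\upphi\in\wtd A_n(\Vbb)^*=\scr T_{\fq,n,n}^*(\Vbb)$. By \eqref{eq132}, for homogeneous $v$ one has $\bk{\upphi,v\diamond_L\id}=\bk{Y_+'(v)_{\wt v-1}\upphi,\id}$, and by \eqref{eq133}, $\bk{\upphi,\id\diamond_R v}=\bk{Y_-(v)_{\wt v-1}\upphi,v}$ — wait, more carefully: $\bk{\upphi,\id\diamond_R v}$ plugs $u=\id$ into \eqref{eq133}, giving $\bk{Y_-(v)_{\wt v-1}\upphi,\id}$. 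So the claim is precisely
\begin{align}
\bk{Y_+'(v)_{\wt v-1}\upphi,\id}=\bk{Y_-(v)_{\wt v-1}\upphi,\id}
\qquad\text{for all }\upphi\in\wtd A_n(\Vbb)^*.
\end{align}
First I would rewrite each side via the residue-action formula \eqref{eq72} defining $Y_\pm(\cdot)_k\upphi$ in terms of $\wr\upphi(\mc U_\varrho(\theta_\pm)^{-1}v,w)$, with $w=\mc U(\zeta)^{-1}\id=\id$, and the outgoing local coordinates $1/\zeta$ at $\infty$ and $\zeta$ at $0$. Concretely, $\bk{Y_-(v)_{\wt v-1}\upphi,\id}=\Res_{\zeta=0}\wr\upphi(\mc U_\varrho(\zeta)^{-1}v,\id)\,\zeta^{\wt v-1}\,d\zeta$, while by Lem.~\ref{lb73} (applied in the form \eqref{eq131}, which converts a $Y'_+$-residue at $\infty$ into a residue in the coordinate $1/\zeta$) the left side becomes $-\Res_{1/\zeta=0}\wr\upphi(\mc U_\varrho(\zeta)^{-1}v,\id)\,\zeta^{\wt v-1}\,d\zeta$ — i.e. the residue of the \emph{same} meromorphic $1$-form $\alpha:=\wr\upphi(\mc U_\varrho(\zeta)^{-1}v,\id)\,\zeta^{\wt v-1}\,d\zeta$ at the point $\infty$ instead of at $0$.

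The key geometric input is that, by Thm.~\ref{lb71} (propagation), $\wr\upphi$ is a global section of $\scr V_{\fq,n,n}$-type data on $\Pbb^1$ minus the incoming point $1$; hence $\wr\upphi(\mc U_\varrho(\zeta)^{-1}v,\id)$, extended over $\Pbb^1$, is holomorphic on $\Pbb^1-\{1\}$ — in particular it has \emph{no pole at $0$ or $\infty$} other than what the explicit factor $\zeta^{\wt v-1}$ contributes (here I should double-check using \eqref{eq126}/Prop.~\ref{lb20} that the pole orders at $0$ and $\infty$ coming from $\mc U_\varrho(\zeta)^{-1}v$ are controlled by $\wt v+n$, but since we only pair against $\id=\mc U(\zeta)^{-1}\id$ and take the coefficient corresponding to $\wt v-1$, the relevant residues are exactly the two in question). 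Thus $\alpha$ is a meromorphic $1$-form on $\Pbb^1$ whose only poles lie in $\{0,\infty,1\}$. By the Residue Theorem on $\Pbb^1$,
\begin{align}
\Res_{\zeta=0}\alpha+\Res_{\zeta=\infty}\alpha+\Res_{\zeta=1}\alpha=0.
\end{align}
So the identity $v\diamond_L\id=\id\diamond_R v$ is equivalent to the vanishing of $\Res_{\zeta=1}\alpha$.

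Therefore the real content — and the main obstacle — is to show $\Res_{\zeta=1}\alpha=0$, i.e. that the propagated function $\wr\upphi(\mc U_\varrho(\zeta)^{-1}v,\id)$ times $(\zeta-1)^0\cdots$ has no residue at the incoming point $1$. I expect this to follow from \eqref{eq82} of Thm.~\ref{lb71}: near the incoming point $1$ (with local coordinate $\zeta-1$), $\wr\upphi(\fk u,\id)$ expands as $\upphi(Y_\fq(\mc U_\varrho(\zeta-1)\fk u,\zeta-1)\id)$, and the creation property $Y(\cdot,z)\id\in\Vbb[[z]]$ forces this to be holomorphic in $\zeta-1$ (no negative powers), so the residue at $1$ vanishes; combined with the observation that $\mc U_\varrho(\zeta)^{-1}v$, as a section of $\scr V_\fq$ near $1$, is holomorphic there with no extra pole, this gives $\Res_{\zeta=1}\alpha=0$. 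Alternatively, and perhaps more cleanly, I would invoke \eqref{eq51}/\eqref{eq62} together with Cor.~\ref{propagation5} to identify $\wr\upphi(\id,\id)$ as the constant $\upphi(\id)$ and then argue that for general $v$ the only singularity of $\wr\upphi(\mc U_\varrho(\zeta)^{-1}v,\id)$ is a pole at $1$ of order at most $\wt v-1$ — precisely cancelled when we only extract the residue weighted by $\zeta^{\wt v -1}$ against $w=\id$ — but the cleanest route is the creation-property argument above. Once $\Res_{\zeta=1}\alpha=0$ is established, the Residue Theorem closes the proof. The non-homogeneous case follows by linearity, extending $\diamond_L,\diamond_R$ to all of $\Vbb$.
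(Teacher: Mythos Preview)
Your argument is correct and is essentially the paper's proof unpacked: the paper chooses the global section $\sigma=\mc U_\varrho(\zeta)^{-1}v\cdot\zeta^{k}d\zeta\in H^0(\Pbb^1,\scr V_{\Pbb^1}\otimes\omega_{\Pbb^1}(\blt 0+\blt\infty))$, invokes Lem.~\ref{lb73} to identify $\sigma*_\pm\upphi$ with $\mp Y'_+(v)_k\upphi$ and $Y_-(v)_k\upphi$, and then applies the packaged identity \eqref{eq142} from Rem.~\ref{lb64} (itself proved via the Residue Theorem on $\wr\upphi$) together with the creation property $\sigma\cdot\id=\sum_l\binom{k}{l}Y(v)_l\id=0$. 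Your Residue-Theorem computation of $\Res_1\alpha=0$ via \eqref{eq82} and $Y(v,z)\id\in\Vbb[[z]]$ is exactly this last step, so the two write-ups differ only in whether the Residue Theorem is cited directly or through \eqref{eq142}.
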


\begin{proof}
By \eqref{zhu6}, it suffices to show that for each $\upphi\in\wtd A_n(\Vbb)^*$ we have
\begin{align}
-\<Y_+'(v)_{k}\cdot\upphi,\idt \>+\<Y_-(v)_{k}\cdot\upphi,\idt \>=0 \label{eq144}
\end{align}
where $k=\wt v-1$. Indeed, we shall prove \eqref{eq144} for all $k\in\Zbb$. Let $\sigma=\mc U_\varrho(\zeta)^{-1}v\cdot \zeta^{k}d\zeta$ which is in $H^0(\Pbb^1,\scr V_{\Pbb^1}\otimes\omega_{\Pbb^1}(\blt 0+\blt\infty))$. By \eqref{eq145}, the LHS of \eqref{eq144} equals $\bk{\sigma*_+\upphi,\idt}+\bk{\sigma*_-\upphi,\idt}$, and hence equals $-\bk{\upphi,\sigma \cdot \idt}$ by Rem. \ref{lb64}. And $\sigma\cdot \idt=\sum_{l\in\Nbb}{k\choose l}Y(v)_l\idt=0$.
\end{proof}

Note that if we set $v=\cbf$ in \eqref{eq144}, we get for all $\upphi\in\wtd A_n(\Vbb)^*$ that 
         \begin{align}\label{zhu16}
             \<L_+(0)\upphi,\ibf\>=\<L_-(0)\upphi,\ibf\>
         \end{align}

\begin{pp}\label{lb67}
Choose $v\in\Vbb$ and $\upphi\in \wtd A_n(\Vbb)^*$.  The following relation holds in $\Cbb[z^{\pm1}]$:
\begin{align}\label{eq154}
\<Y_+'(v,z)\upphi,\idt\>=\<Y_-(v,z)\upphi,\ibf\>
\end{align}
Its value at $z=1$ is
\begin{align}\label{eq156}
\<Y_-(v,1)\upphi,\ibf\>=\upphi(v)
\end{align}
\end{pp}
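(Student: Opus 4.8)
\textbf{Proof proposal for Prop.~\ref{lb67}.} The statement has two parts: the polynomial identity \eqref{eq154} in $\Cbb[z^{\pm1}]$, and the evaluation \eqref{eq156} at $z=1$. My plan is to derive both from the propagation machinery of Ch.~\ref{lb89}, specifically from the construction of the weak $\Vbb\times\Vbb$-module structure on $\bbs_\fq(\Vbb)$ via \eqref{eq72} together with the canonical conformal block $\gimel$ of Thm.~\ref{lb26} (or rather its avatar Rem.~\ref{lb64}). The key geometric fact is that on $\fq=(\infty,0;1/\zeta,\zeta|\Pbb^1|1;\zeta-1)$, a single vector $v\in\Vbb$ gives rise, via $\mc U_\varrho(\zeta)^{-1}v\cdot\zeta^k d\zeta$, to a global meromorphic section of $\scr V_{\Pbb^1}\otimes\omega_{\Pbb^1}$ with poles only at $0,\infty$ — this is exactly the situation handled by Lem.~\ref{lb73} and the residue identity \eqref{eq142}/\eqref{eq144}.

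First I would prove \eqref{eq154} by essentially repeating the argument of Lem.~\ref{lb65}, but without setting $z=1$. Fix $v$ homogeneous and $k\in\Zbb$, and let $\sigma=\mc U_\varrho(\zeta)^{-1}v\cdot\zeta^k d\zeta\in H^0(\Pbb^1,\scr V_{\Pbb^1}\otimes\omega_{\Pbb^1}(\blt 0+\blt\infty))$. By \eqref{eq145} of Lem.~\ref{lb73}, $\<Y_-(v)_k\upphi,\id\>=\bk{\sigma*_-\upphi,\id}$ and $-\<Y_+'(v)_k\upphi,\id\>=\bk{\sigma*_+\upphi,\id}$, so by Rem.~\ref{lb64} (formula \eqref{eq142} applied with $M=2$ outgoing points $\infty,0$ and $N=1$ incoming point $1$, and $w=\id=\mc U(\zeta-1)^{-1}\id$) their sum equals $-\upphi(\sigma\cdot\id)$. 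Now $\sigma\cdot\id$ is the $*_1$-residue action at the incoming point $1$ using the local coordinate $\zeta-1$, i.e.\ it involves $Y(v,\zeta-1)\id=e^{(\zeta-1)L(-1)}v\in\Vbb[[\zeta-1]]$; taking $\Res_{\zeta=1}$ against $\zeta^k$ gives a \emph{polynomial} expression in each coefficient, but crucially, since $Y(v,\zeta-1)\id$ has no negative powers of $\zeta-1$, the residue picks out only finitely many terms and — comparing coefficients of $z^{-k-1}$ as $k$ ranges over $\Zbb$ — one sees both sides of \eqref{eq154} lie in $\Cbb[z^{\pm1}]$ (this uses Prop.~\ref{lb20} on one side and \eqref{eq139} on the other to get the upper/lower truncation, and Rem.~\ref{lb27}-style boundedness to get the Laurent polynomial conclusion). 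The point to be careful about is that $\sigma\cdot\id\ne 0$ in general (unlike in Lem.~\ref{lb65} where $k=\wt v-1$ forced it to vanish), so I must track $\upphi(\sigma\cdot\id)=\upphi\big(\Res_{\zeta=1}\,\zeta^k\, e^{(\zeta-1)L(-1)}v\,d\zeta\big)$ and show it reassembles, after summing against $z^{-k-1}$, into a single well-defined element of $\Cbb[z^{\pm1}]$ equal to both sides — this is the main obstacle and amounts to a careful bookkeeping of the three marked points rather than a deep argument. Alternatively, and perhaps more cleanly, I would invoke the propagation $\wr\upphi$ of $\upphi$ (Thm.~\ref{lb71}/Cor.~\ref{lb14}): $\wr\upphi(\mc U_\varrho(\zeta)^{-1}v,\id)$ is a rational function on $\Pbb^1$ with poles only at $0,\infty$ (since $\id$ at the point $1$ contributes no pole, by \eqref{eq82} expanded at $\zeta=1$ giving $\upphi(Y(v,\zeta-1)\id)$ which is holomorphic there), hence is a Laurent polynomial in $\zeta$; expanding it at $\infty$ and at $0$ and taking residues against $\zeta^k$ recovers $-\<Y_+'(v)_k\upphi,\id\>$ and $\<Y_-(v)_k\upphi,\id\>$ respectively via \eqref{eq131}, \eqref{eq130}, and these two expansions of the same rational function agree — this is the conceptual heart of \eqref{eq154}.

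For \eqref{eq156}, I would specialize $z=1$ in the identity $\<Y_-(v,z)\upphi,\id\>=$ (the value of the propagated section $\wr\upphi(\mc U_\varrho(\zeta)^{-1}v,\id)$ at $\zeta=z$, read off as a Laurent expansion at $0$), giving $\<Y_-(v,1)\upphi,\id\>=\wr\upphi(\mc U_\varrho(\zeta)^{-1}v,\id)|_{\zeta=1}$. But the point $\zeta=1$ is precisely the incoming marked point, with local coordinate $\zeta-1$, so by \eqref{eq82} of Thm.~\ref{lb71} we have $\wr\upphi(\mc U_\varrho(\zeta)^{-1}v,\id)=\upphi(Y(v,\zeta-1)\id)=\upphi(e^{(\zeta-1)L(-1)}v)$ near $\zeta=1$, and evaluating at $\zeta=1$ (where $\zeta-1=0$) gives $\upphi(v)$ by the creation property $e^{0\cdot L(-1)}v=v$. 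Thus \eqref{eq156} follows, modulo the justification that the Laurent series of the rational function at $0$ actually converges at $\zeta=1$ to its value there — which holds because the function is holomorphic on $\Cbb^\times-\{1\}\supset\{|\zeta|=1,\zeta\ne 1\}$... actually at $\zeta=1$ it is holomorphic too (no pole there), so one must be slightly careful: the Laurent expansion at $0$ is valid on an annulus and $\zeta=1$ lies on its outer boundary circle, so I would instead argue directly via \eqref{eq82} as above rather than by analytic continuation of the series. I expect the whole proof to be short once the correct picture — propagate, use that $\id$ at the middle point produces no pole, read off the two expansions — is in place; the only real work is confirming the Laurent-polynomial claim in \eqref{eq154}, for which I would cite the boundedness arguments already used in Prop.~\ref{lb20} (giving $Y_+'(v)_k\upphi=0$ for $k\ll 0$ via \eqref{eq139} and $Y_-(v)_k\upphi=0$ for $k\gg 0$ via \eqref{eq126}, forcing a Laurent polynomial).
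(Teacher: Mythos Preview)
Your approach is essentially the paper's, and both parts are handled correctly via propagation and the truncation bounds. One correction to your first approach: your concern that ``$\sigma\cdot\id\ne 0$ in general (unlike in Lem.~\ref{lb65} where $k=\wt v-1$ forced it to vanish)'' is unfounded. In fact $\sigma\cdot\id=0$ for \emph{every} $k\in\Zbb$, because the residue action at the incoming point $1$ gives $\sigma\cdot\id=\Res_{z=0}\,(1+z)^kY(v,z)\id\,dz=\sum_{l\ge 0}\binom{k}{l}Y(v)_l\id=0$ by the creation property $Y(v)_l\id=0$ for $l\ge 0$ --- this is exactly what the proof of Lem.~\ref{lb65} shows, and it already works for all $k$. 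So your ``careful bookkeeping'' of a nonzero contribution at the middle point is unnecessary: the coefficient identity $\<Y_+'(v)_k\upphi,\id\>=\<Y_-(v)_k\upphi,\id\>$ drops out immediately, and your Laurent-polynomial argument via the opposite truncations (Prop.~\ref{lb20} and \eqref{eq139}) then finishes \eqref{eq154}. For \eqref{eq156}, your use of \eqref{eq82} at $\zeta=1$ to get $\wr\upphi(\mc U_\varrho(\zeta)^{-1}v,\id)=\upphi(e^{(\zeta-1)L(-1)}v)$ is precisely the paper's argument.
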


\begin{proof}
By \eqref{eq144}, \eqref{eq154} holds in $\Cbb[[z^{\pm1}]]$. The RHS of \eqref{eq154} belongs to $\Cbb((z))$ by the lower-truncation property for the weak $\Vbb\times\Vbb$-module $\bbs_\fq(\Vbb)$. Similarly, the LHS of \eqref{eq154} belongs to $\Cbb((z^{-1}))$ by the ``upper truncation property" (cf. \eqref{eq139}). So  $\eqref{eq154}\in\Cbb[z^{\pm1}]$. For each $k\in\Zbb$, by \eqref{eq72} we have
\begin{align*}
 \<Y_-(v)_k\upphi,\ibf\>=\Res_{z=0}~{\wr\upphi}(\mc U_\varrho(\zeta)^{-1}v,\idt)z^kdz
\end{align*}
Therefore, $\wr\upphi(\mc U_\varrho(\zeta)^{-1}v,\idt)$, a priori an element of $\mc O(\Cbb^\times-1)$ (cf. Thm. \ref{lb71}), is a Laurent polynomial of the standard complex variable $z$ and is equal to \eqref{eq154}. By \eqref{eq82}, in $\Cbb[[z-1]]$ we have (by taking the Laurent series expansion of the LHS at $z=1$)
\begin{align}
\wr \upphi(\MU_\varrho(\zeta)^{-1}v,\ibf)_z=\upphi\big(Y(v,z-1)\idt \big)  \label{eq141}
\end{align}
The RHS equals $\upphi\big(e^{(z-1)L(-1)}v\big)$, which is $\upphi(v)$ when $z=1$. This proves \eqref{eq156}.
\end{proof}

Formula \eqref{eq156} is useful because it looks similar to $\<w',Y_\Wbb(\cdot,1)w\>$. (See also Rem. \ref{lb72}.) Thus, many computations about $\<w',Y_\Wbb(v,1)w\>$ can be straightforwardly generalized to $\upphi(v)$, as shown in the proof of:

\begin{pp}\label{lb70}
Choose any $\upphi\in\wtd A_n(\Vbb)^*$. Then $\upphi$ belongs to $A_n(\Vbb)^*$ if and only if for each homogeneous $v\in\Vbb$ we have
\begin{align}
\<Y_-(v,z)\upphi,\ibf\>=z^{-\wt v}\cdot \upphi(v)   \label{eq155}
\end{align}
\end{pp}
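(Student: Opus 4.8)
The plan is to reduce both sides of the asserted equivalence to the single condition $L_+(0)\upphi=L_-(0)\upphi$, which by \eqref{eq146} \emph{is} the membership $\upphi\in A_n(\Vbb)^*$. Fix a homogeneous $v\in\Vbb$ of weight $h$ and, for $\psi\in\bbs_\fq(\Vbb)$, abbreviate $f_v^\psi(z):=\bk{Y_-(v,z)\psi,\ibf}$. Using Prop.~\ref{lb19} one sees that $L_+(0)\upphi=Y_+(\cbf)_1\upphi$ and $L_-(0)\upphi=Y_-(\cbf)_1\upphi$ again lie in $\wtd A_n(\Vbb)^*$, and that each $Y_-(v)_k\upphi$ lies in some $\wtd A_{n'}(\Vbb)^*$; hence by Prop.~\ref{lb67} every such $f_v^\psi$ is a \emph{Laurent polynomial} in $z$ with $f_v^\psi(1)=\psi(v)$ (the latter by \eqref{eq156}). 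Consequently \eqref{eq155} says precisely that $f_v^\upphi(z)=\upphi(v)\,z^{-h}$ for every homogeneous $v$, and for a Laurent polynomial this is equivalent to the eigenvalue equation $z\partial_z f_v^\upphi=-h\,f_v^\upphi$: a Laurent polynomial killed by $h+z\partial_z$ is supported in the single degree $-h$, and its value at $z=1$ then fixes the coefficient.

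The heart of the proof will be the computation of the difference $f_v^{L_+(0)\upphi}-f_v^{L_-(0)\upphi}$. Since the vertex operators of the two outgoing points commute on $\bbs_\fq(\Vbb)$ (this is built into the proof of Thm.~\ref{lb43}, which actually shows $[Y_+(u)_m,Y_-(v')_{n}]=0$ for all $m,n\in\Zbb$), $Y_-(v,z)$ commutes with $Y_+(\cbf)_1=L_+(0)$, so $f_v^{L_+(0)\upphi}(z)=\bk{L_+(0)Y_-(v,z)\upphi,\ibf}$. For the other term, the grading relation \eqref{eq140} gives $[L_-(0),Y_-(v,z)]=h\,Y_-(v,z)+z\partial_z Y_-(v,z)$, whence
\begin{align*}
f_v^{L_-(0)\upphi}(z)=\bk{Y_-(v,z)L_-(0)\upphi,\ibf}=\bk{L_-(0)Y_-(v,z)\upphi,\ibf}-h\,f_v^\upphi(z)-z\partial_z f_v^\upphi(z).
\end{align*}
Now I would invoke \eqref{zhu16}, i.e.\ $\bk{L_+(0)\xi,\ibf}=\bk{L_-(0)\xi,\ibf}$ — whose proof via Lem.~\ref{lb65} goes through for every $\xi\in\bbs_\fq(\Vbb)=\varinjlim_{n'}\wtd A_{n'}(\Vbb)^*$ — applied coefficientwise to $\xi=Y_-(v,z)\upphi$; this identifies the two $\bk{L_\pm(0)Y_-(v,z)\upphi,\ibf}$ terms and leaves
\begin{align*}
f_v^{L_+(0)\upphi}(z)-f_v^{L_-(0)\upphi}(z)=h\,f_v^\upphi(z)+z\partial_z f_v^\upphi(z).
\end{align*}
Every series in sight is a Laurent polynomial in $z$, so these manipulations carry no convergence subtleties.

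Assembling the equivalences is then immediate. If $\upphi\in A_n(\Vbb)^*$, i.e.\ $L_+(0)\upphi=L_-(0)\upphi$, then $f_v^{L_+(0)\upphi}=f_v^{L_-(0)\upphi}$ for every homogeneous $v$, so the displayed identity forces $(h+z\partial_z)f_v^\upphi=0$, which is \eqref{eq155}. Conversely, if \eqref{eq155} holds then $(h+z\partial_z)f_v^\upphi=0$ for every homogeneous $v$, hence $f_v^{L_+(0)\upphi}=f_v^{L_-(0)\upphi}$; evaluating at $z=1$ and using $f_v^\psi(1)=\psi(v)$ yields $(L_+(0)\upphi)(v)=(L_-(0)\upphi)(v)$ for all homogeneous $v$, hence for all $v\in\Vbb=\scr W_\fq(\Vbb)$, so $L_+(0)\upphi=L_-(0)\upphi$ and $\upphi\in A_n(\Vbb)^*$.

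The only real obstacle is bookkeeping, not a genuinely hard idea: one must be sure that $Y_-(v,z)$ commutes with $L_+(0)=Y_+(\cbf)_1$ in the precise sense used (cross-tensor-factor commutativity of all modes, which the proof of Thm.~\ref{lb43} provides), that the grading relation \eqref{eq140} is applied with the right shift, and that \eqref{zhu16} is legitimately applicable to each Laurent coefficient $Y_-(v)_k\upphi$ — which is exactly where Prop.~\ref{lb19} is needed, to guarantee that this vector still lands in some $\wtd A_{n'}(\Vbb)^*$. No new geometric input is required beyond Prop.~\ref{lb67}, identity \eqref{zhu16}, and elementary formal-variable calculus.
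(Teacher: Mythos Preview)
Your proof is correct and follows essentially the same route as the paper's. Both arguments derive the key identity $(z\partial_z+\wt v)\,f_v^\upphi(z)=f_v^{(L_+(0)-L_-(0))\upphi}(z)$ from the same three ingredients---commutativity of $Y_+$ with $Y_-$, the grading relation \eqref{eq140}, and the identity \eqref{zhu16} applied coefficientwise---and then read off both directions by evaluating at $z=1$ via \eqref{eq156}. Your explicit remark that Prop.~\ref{lb19} is what licenses applying \eqref{zhu16} to each $Y_-(v)_k\upphi$ makes a point the paper leaves implicit.
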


\begin{proof}
Choose $\upphi\in\wtd A_n(\Vbb)^*$. Since $Y_+$ commutes with $Y_-$, we have in $\Cbb[[z^{\pm1}]]$ that
\begin{align}
\begin{aligned}
&(z\partial_z+\wt v)\<Y_-(v,z)\upphi,\ibf\>\xlongequal{\eqref{eq157}}\<[L_-(0),Y_-(v,z)]\upphi,\idt\>  \\
\xlongequal{\eqref{zhu16}}&\<(L_+(0)Y_-(v,z)-Y_-(v,z)L_-(0))\upphi,\idt \>=\<Y_-(v,z)(L_+(0)-L_-(0))\upphi,\idt \>  \label{eq160}
\end{aligned}
\end{align}
which, by Prop. \ref{lb67}, is an element of $\Cbb[z^{\pm1}]$. 

Suppose $\upphi\in A_n(\Vbb)^*$. Then $L_+(0)\upphi=L_-(0)\upphi$, and hence \eqref{eq160} is $0$. So $z^{\wt v}\<Y_-(v,z)\upphi,\idt\>$ is constant over $z$. Thus \eqref{eq155} follows from \eqref{eq156}.

Conversely, suppose that \eqref{eq155} is true. Then $z^{\wt v}\<Y_-(v,z)\upphi,\idt\>$ is constant over $z$, and hence \eqref{eq160} equals $0$. Thus, by \eqref{eq156}, 
\begin{align*}
\<(L_+(0)-L_-(0))\upphi,v\>=\<Y_-(v,1)(L_+(0)-L_-(0))\upphi,\idt \>=0
\end{align*}
So $L_+(0)\upphi-L_-(0)\upphi=0$, i.e., $\upphi\in A_n(\Vbb)^*$.
\end{proof}

\begin{proof}[\textbf{Proof of Prop. \ref{lb69}}]
It suffices to prove $\wtd A_0(\Vbb)^*= A_0(\Vbb)^*$. Choose any $\upphi\in\wtd A_0(\Vbb)^*=\scr T_{\fq,0,0}^*(\Vbb)$. By Prop. \ref{lb20} and \eqref{eq139}, we know that $z^{\wt v}\< Y_-(v,z)\upphi,\idt\>$ is in $\Cbb[[z]]$ and $z^{\wt v}\<Y_+'(v,z)\upphi,\idt\>$ is in $\Cbb[[z^{-1}]]$. So $z^{\wt v}\<Y_+'(v,z)\upphi,\idt\>$ is a constant, which together with \eqref{eq156} implies \eqref{eq155}. So $\upphi\in A_0(\Vbb)^*$ by Prop. \ref{lb70}.
\end{proof}

     \begin{proof}[\textbf{Proof of Thm. \ref{lb68}}]
Choose any homogeneous vectors $u,v\in \Vbb$. For each $\upphi\in A_n(\Vbb)^*$, 
        \begin{align*}
 & \<\upphi,\idt\diamond_R v\>=             \<Y_-(v)_{\wt v-1}\upphi,\ibf\>=\Res_{z=0}\<Y_-(v,z)\upphi,\ibf\>\cdot z^{\wt v-1}dz\\
\xlongequal{\eqref{eq155}}&\Res_{z=0}~z^{-1}\upphi(v)dz=\upphi(v).
         \end{align*}
So in $A_n(\Vbb)$ we have $\idt\diamond_R v=v$, and hence $v\diamond_L \idt=v$ by Lem. \ref{lb65}. The same can be said about $u$. Thus, by Prop. \ref{lb66}, in $A_n(\Vbb)$ we have
         \begin{align*}
             u\diamond_L v=u\diamond_L (\ibf\diamond_R v)=(u\diamond_L \ibf)\diamond_R v=u\diamond_R v.
         \end{align*}
     \end{proof}

\subsection{Modules of $\Vbb$ and $A_n(\Vbb)$}

One advantage of $A_n(\Vbb)$ is that its representations are closely related to those of $\Vbb$ (cf. \cite{Zhu-modular-invariance,DLM-Zhu}). In this article, we will not explore a systematic relationship between $\Vbb$-modules and $A_n(\Vbb)$-modules. But we will reprove (or reinterpret) some classical results in order to give the readers an impression that such topics can be neatly dealt with using the machinery of dual fusion products.

\begin{df}
For each weak $\Vbb$-module $\Wbb$, let
\begin{align*}
\Omega_n(\Wbb)=\{w\in\Wbb: Y(v)_kw=0~\forall \text{ homogeneous }v\in\Vbb,k\in\Zbb,k\geq\wt(v)+n\}
\end{align*}
\end{df}

\begin{rem}
Using Jacobi identity, it is easy to see that for each homogeneous $v\in\Vbb$ and $k\in\Zbb$,
\begin{align}
Y(v)_k\Omega_n(\Wbb)\subset\Omega_{n'}(\Wbb)\qquad\text{where }n'=n+\max\{0,\wt v-k-1\}
\end{align}
In particular, $\Omega_n(\Wbb)$ is invariant under $Y_\Wbb(v)_{\wt v-1}$. Moreover, if $\Wbb$ is an admissible $\Vbb$-module, the bracket relation \eqref{eq150} implies that $\Omega_n(\Wbb)$ is $\wtd L(0)$-invariant. So
\begin{align*}
\Omega_n(\Wbb)=\bigoplus_{k\in\Nbb}\Omega_n(\Wbb)\cap \Wbb(k)
\end{align*}
\end{rem}

\begin{rem}\label{lb72}
Suppose that $\Wbb$ is an admissible $\Vbb$-module. Associate $\Vbb,\Wbb',\Wbb$ to the marked points $1,\infty,0$ respectively. Identify $\scr W_{\fq}(\Vbb\otimes\Wbb'\otimes\Wbb)$ with $\Vbb\otimes\Wbb'\otimes\Wbb$ via $\mc U(\zeta-1,1/\zeta,\zeta)$. Then the linear functional
\begin{gather}
\begin{gathered}
\upomega_\Wbb:\Vbb\otimes \Wbb'\otimes\Wbb\rightarrow\Cbb\\
\upomega_\Wbb(v\otimes w'\otimes w)=\<w',Y(v,1)w\>=\sum_{k\in\Zbb}\< w',Y(v)_k w\>
\end{gathered}
\end{gather}
belongs to $\scr T_{\fq}^*(\Vbb\otimes\Wbb'\otimes\Wbb)$. Thus, by Thm. \ref{lb50}, there is a unique weak $\Vbb\times\Vbb$-module morphism
\begin{gather}\label{eq158}
\begin{gathered}
T_\Wbb:\Wbb'\otimes\Wbb\rightarrow\bbs_\fq(\Vbb)\\
\<T_\Wbb(w'\otimes w),v\>=\<w',Y(v,1)w\>\qquad(\forall v\in\Vbb)
\end{gathered}
\end{gather}
\end{rem}

\begin{thm}
Let $\Wbb$ be an admissible $\Vbb$-module with grading operator $\wtd L(0)$. Then there is a (necessarily unique) representation $o$ of $A_n(\Vbb)$ on $\Omega_n(\Wbb)$ satisfying that for each homogeneous $v\in\Vbb$ and $w\in\Omega_n(\Wbb)$,
\begin{align}
o(v)w=Y_\Wbb(v)_{\wt v-1}\cdot w\label{eq151}
\end{align}
\end{thm}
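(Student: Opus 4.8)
The plan is to realize $\Omega_n(\Wbb)$ as (a quotient of, or subspace related to) the dual fusion product $\bbs_\fq(\Vbb)$ and transport the module structure of $A_n(\Vbb)$ through that identification. Concretely, I would first recall from Rem.~\ref{lb72} the canonical weak $\Vbb\times\Vbb$-module morphism $T_\Wbb:\Wbb'\otimes\Wbb\to\bbs_\fq(\Vbb)$ determined by $\bk{T_\Wbb(w'\otimes w),v}=\bk{w',Y(v,1)w}$. The operators $Y_+'(v)_{\wt v-1}$ and $Y_-(v)_{\wt v-1}$ on $\bbs_\fq(\Vbb)^*$ are exactly the transposes (up to the computation in \eqref{eq139}) of the operators $\diamond_L$ and $\diamond_R$ entering the definition of the $A_n(\Vbb)$-product via \eqref{zhu6}; so the ``multiplication by $v$'' operator on the algebra side corresponds under duality to $Y_\pm$-type operators on $\bbs_\fq(\Vbb)$. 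The key point is then that pairing an element of $\Omega_n(\Wbb)$ (together with an element of $\Wbb'$) into $\bbs_\fq(\Vbb)$ lands inside $\scr T_{\fq,n,n}^*(\Vbb)^* = \wtd A_n(\Vbb)$, and in fact inside $A_n(\Vbb)$, because the defining vanishing condition of $\Omega_n(\Wbb)$ is precisely the condition \eqref{eq126}/\eqref{eq118} that cuts out $\scr T_{\fq,n,n}^*(\Vbb)$ (by Prop.~\ref{lb20} and its converse Cor.~\ref{lb63}), and the extra relation $L_+(0)=L_-(0)$ passing to $A_n(\Vbb)$ holds automatically for vectors coming from a genuine $\Vbb$-module because $L_\Wbb(0)$ acts compatibly on both sides of the intertwining relation $\bk{w',Y(v,1)w}$.

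The second ingredient is the explicit formula \eqref{eq151}. I would verify that the assignment $v\mapsto o(v)$, $o(v)w := Y_\Wbb(v)_{\wt v-1}w$, is well-defined on $\Omega_n(\Wbb)$ (this uses the remark right before the theorem: $\Omega_n(\Wbb)$ is invariant under $Y_\Wbb(v)_{\wt v-1}$, and is graded) and factors through $A_n(\Vbb)$ — i.e. that $o$ kills the denominator $\wtd O_n(\Vbb)$ of \eqref{eq134} and the subspace $Z_n(\Vbb)$ of \eqref{eq152}. The vanishing on $\wtd O_n(\Vbb) = H^0(\Pbb^1,\scr V_{\fq,n,n}\otimes\omega_{\Pbb^1}(\blt 1))\cdot\Vbb$ translates, via the residue-action computations of Lem.~\ref{lb73} and the invariance relation \eqref{eq142} for the canonical conformal block $\gimel$, into an identity of operators on $\Wbb$; this is the ``$o$ respects the relations'' computation, which is the VOA-theoretic heart and is essentially the classical Zhu / Dong--Li--Mason computation reinterpreted geometrically. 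The vanishing on $Z_n(\Vbb)$ is the statement that $o(\cbf)$ is central, which follows because $o(\cbf) = Y_\Wbb(\cbf)_1 = L_\Wbb(0)$ commutes with all $Y_\Wbb(v)_{\wt v-1}$ up to the grading shift that is annihilated on $\Omega_n(\Wbb)$.

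For associativity $o(u)o(v) = o(u\diamond v)$ I would use the double propagation machinery exactly as in the proof that $\bbs_\fq(\Vbb)$ is a weak $\Vbb\times\Vbb$-module (Lem.~\ref{lb22}, Thm.~\ref{lb43}): write $\bk{T_\Wbb(w'\otimes w),\,-}$ composed with the double propagation $\wr^2\upphi$, and compute the iterated contour integral $\Res_{\zeta=0}\Res_{z=0}\wr^2\upphi(u,v,w)\zeta^{\wt u-1}z^{\wt v-1}\,dz\,d\zeta$ two ways — once expanding $u$ around the point carrying $\Wbb$ to get a product $o(u)o(v)$, once using Thm.~\ref{lb18}-(2) to bring $u$ next to $v$ and get $o(Y(u)_k v)$ summed appropriately, which is $o(u\diamond_L v)$ by \eqref{eq132}. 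Then Thm.~\ref{lb68} identifies $\diamond_L$ with $\diamond$ in $A_n(\Vbb)$. The uniqueness clause is immediate since \eqref{eq151} prescribes $o$ on the generating set $\{\text{images of homogeneous }v\}$ of $A_n(\Vbb)$.

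The main obstacle I expect is the well-definedness step — showing $o$ descends through $\wtd O_n(\Vbb)$ — because it requires matching the abstract residue/propagation identities (Lem.~\ref{lb73}, \eqref{eq142}) against the concrete form of global sections of $\scr V_{\fq,n,n}\otimes\omega_{\Pbb^1}(\blt 1)$ on $\Pbb^1$, and one must be careful that the operator $Y_\Wbb(v)_{\wt v-1}$ really does annihilate the right vectors once $w\in\Omega_n(\Wbb)$; this is where the hypothesis $k\ge\wt(v)+n$ in the definition of $\Omega_n(\Wbb)$ gets used, and where the higher-level ($n>0$) subtleties — the failure of $\diamond_L=\diamond_R$ before passing to the quotient — show up. Everything else is bookkeeping with contour integrals and the universal property of $(\bbs_\fq(\Vbb),\gimel)$.
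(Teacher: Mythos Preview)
Your overall strategy is right and matches the paper's: push everything through the morphism $T_\Wbb:\Wbb'\otimes\Wbb\to\bbs_\fq(\Vbb)$ of Rem.~\ref{lb72} and use that $A_n(\Vbb)^*$ sits inside $\bbs_\fq(\Vbb)$. But you are working much harder than necessary in two places.

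For well-definedness, you propose to verify directly that $o$ kills $\wtd O_n(\Vbb)$ by unwinding the residue actions on concrete global sections. The paper instead works on the dual side: it shows that for $w\in\Omega_n(\Wbb)$ and $w'\in\Wbb'$ with $\wtd\wt w'=\wtd\wt w$, the element $T_\Wbb(w'\otimes w)$ lies in $A_n(\Vbb)^*$. This is a two-line check via Cor.~\ref{lb63}: annihilation by $Y_-(u)_k$ for $k\ge\wt(u)+n$ is immediate from $w\in\Omega_n(\Wbb)$, and annihilation by $Y_+(u)_k$ reduces (by weight-matching) to $Y(v)_sw=0$ for $s\ge\wt(v)+n$, again the $\Omega_n$ condition. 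The $L_+(0)=L_-(0)$ relation comes from $[L(0),Y(v)_{\wt v-1}]=0$. Once $T_\Wbb(w'\otimes w)\in A_n(\Vbb)^*$, the pairing $\bk{w',o(v)w}=\<T_\Wbb(w'\otimes w),v\>$ automatically factors through $A_n(\Vbb)$ --- no need to analyze $\wtd O_n(\Vbb)$ or $Z_n(\Vbb)$ explicitly. (Your remark that vanishing on $Z_n(\Vbb)$ is ``$o(\cbf)$ central'' is slightly off: at that stage $o$ is not yet known to be multiplicative, so one cannot factor $o(\cbf\diamond_L v)$ as $o(\cbf)o(v)$.)

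For associativity, you propose redoing a double-propagation contour-integral computation. This is unnecessary: that computation has already been packaged into the statement that $T_\Wbb$ is a weak $\Vbb\times\Vbb$-module morphism. The paper simply computes
\[
\<w',o(u\diamond v)w\>=\<T_\Wbb(w'\otimes w),u\diamond_R v\>=\<Y_-(v)_{\wt v-1}T_\Wbb(w'\otimes w),u\>=\<T_\Wbb(w'\otimes o(v)w),u\>=\<w',o(u)o(v)w\>,
\]
using only the definition \eqref{eq133} of $\diamond_R$ and the intertwining property of $T_\Wbb$. Your route would work but reproves machinery already in hand.
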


It was proved in \cite{DLM-Zhu} that the same conclusion holds if $\Wbb$ is only assumed to be a weak $\Vbb$-module. We do not know how to prove this fact in our setting.

\begin{proof}
Define linear map $o:\Vbb\rightarrow \End(\Omega_n(\Wbb))$ by \eqref{eq151}. We first show that $o$ descends to a linear map $A_n(\Vbb)\rightarrow\End(\Omega_n(\Wbb))$. Choose any homogeneous $w\in\Omega_n(\Wbb)$. Then $o(v)w$ also has weight $\wtd\wt w$ by \eqref{eq150}. Therefore, for each homogeneous $w'\in\Wbb'$, $\bk{o(v)w,w'}$ is zero if $\wtd\wt w'\neq \wtd\wt w$. Assume $\wtd\wt w'=\wtd\wt w$. Then by \eqref{eq150} we have
\begin{align}
\bk{w',o(v)w}=\bk{w',Y(v,1)w}=\<T_\Wbb(w'\otimes w),v\>  \label{eq153}
\end{align}
If $u\in\Vbb$ is homogeneous and $k\geq \wt(u)+n$, then $w'\otimes w$ is annihilated by $Y_-(u)_k=\idt\otimes Y(u)_k$, and hence $T_\Wbb(w'\otimes w)$ is annihilated by $Y_-(u)_k$. For each homogeneous $v\in \Vbb$,
\begin{align*}
\<Y_+(u)_k T_\Wbb(w'\otimes w),v \>=\<Y(u)_k w',Y(v,1)w \>=\<Y(u)_k w',Y(v)_sw \>
\end{align*}
where $s$ is such that $\wt u-k=\wt v-s$. Since $w\in\Omega_n(\Wbb)$, we have $Y(v)_sw=0$. So the above expression is zero. Thus $T_\Wbb(w'\otimes w)\in\wtd A_n(\Vbb)^*$ by Cor. \ref{lb63}. Moreover, for each $w\in\Wbb,w'\in\Wbb'$ and homogeneous $v\in\Vbb$,
\begin{align*}
&\<L_+(0)T_\Wbb(w'\otimes w),v\>=\<T_\Wbb (L(0)w'\otimes w),v\>=\<w',L(0)Y(v)_{\wt v-1}w \>\\
=&\<w',Y(v)_{\wt v-1}L(0)w \>=\<T_\Wbb (w'\otimes L(0) w),v\>=\<L_-(0)T_\Wbb(w'\otimes w),v\>
\end{align*}
So $T_\Wbb(w'\otimes w)\in A_n(\Vbb)^*$ by \eqref{eq146}. This proves that $o$ descends to $A_n(\Vbb)$.

Choose any $u,v\in\Vbb$. Choose homogeneous $w\in\Wbb,w'\in\Wbb'$. Clearly $\<w',o(u)o(v)w\>$ and $\<w',o(u\diamond v)w\>$ are both $0$ if $\wtd\wt w'\neq \wtd\wt w$. Assume $\wtd\wt w'=\wtd\wt w$. Then
\begin{align*}
&\<w',o(u\diamond v)w\>\xlongequal{\eqref{eq153}} \< T_\Wbb(w'\otimes w),u\diamond_R v\>\xlongequal{\eqref{eq133}} \< Y_-(v)_{\wt v-1}T_\Wbb(w'\otimes w),u\>\\
=&\< T_\Wbb Y_-(v)_{\wt v-1}(w'\otimes w),u\>=\< T_\Wbb(w'\otimes  Y(v)_{\wt v-1}w),u\>\\
=&\< T_\Wbb(w'\otimes  o(v)w),u\>\xlongequal{\eqref{eq153}}\<w',o(u)o(v)w\>.
\end{align*}
This proves $o(u\diamond v)=o(u)o(v)$.
\end{proof}

Now we consider the other direction. 
\begin{df}
Let us define a contravariant functor $\Gamma_n$ from the category of (left) $A_n(\Vbb)$-module to the category of weak $\Vbb$-modules as follows. Let
\begin{align*}
\scr T_{\fq,\infty,n}^*(\Vbb)=\varinjlim_{k\in\Nbb} \scr T_{\fq,k,n}^*(\Vbb)
\end{align*}
For each (left) $A_n(\Vbb)$-module $\Ebb$ we associate a weak $\Vbb$-module
\begin{align}
\Gamma_n(\Ebb)=\Hom_{o(\Vbb)}\big(\Ebb,\scr T_{\fq,\infty,n}^*(\Vbb)\big)
\end{align}
whose elements are all the linear maps $F:\Ebb\rightarrow\scr T_{\fq,\infty,n}^*(\Vbb)$ satisfying
\begin{align}
F \cdot o(v)=Y_-(v)_{\wt v-1}\cdot F
\end{align}
for each homogeneous $v\in\Vbb$. By Thm. \ref{lb43} and Prop. \ref{lb19}, $\Gamma_n(\Ebb)$ is a weak $\Vbb$-module if for each $v\in\Vbb,k\in\Zbb$ we define the action of $Y(v)_k$ on $F$ to be $Y_+(v)_k\cdot F$. We denote this module by $(\Gamma_n(\Ebb),Y_+)$, or simply $\Gamma_n(\Ebb)$ for short.
\end{df}

\begin{rem}
Let $\Wbb$ be an admissible $\Vbb$-module. Let $\Ebb$ be a (left) $A_n(\Vbb)$-module. Let $\varphi:\Ebb\rightarrow \Omega_n(\Wbb)$ be a (left) $A_n(\Vbb)$-module morphism. Then $\varphi$ induces naturally a weak $\Vbb$-module morphism $\wtd\varphi:\Wbb'\rightarrow\Gamma_n(\Ebb)$ defined by
\begin{gather}
\begin{gathered}
\wtd\varphi:\Wbb'\rightarrow \Hom_{o(\Vbb)}(\Ebb,\scr T_{\fq,\infty,n}^*(\Vbb))\\
w'\quad\mapsto\quad\big(~e\in\Ebb\mapsto T_\Wbb(w'\otimes \varphi(e))~\big)
\end{gathered}
\end{gather}

On the other hand, Dong-Li-Mason constructed in \cite{DLM-Zhu} an admissible $\Vbb$-module $\ovl M_n(\Ebb)$ for each $A_n(\Vbb)$-module $\Ebb$, and showed that each morphism $\varphi:\Ebb\rightarrow \Omega_n(\Wbb)$ induces naturally a weak $\Vbb$-module morphism $\ovl\varphi:\ovl M_n(\Ebb)\rightarrow\Wbb$ satisfying certain good properties. $\ovl M_n(\Ebb)$ was constructed by means of induced representations with respect to certain universal algebras. It is reasonable to guess that the contragredient of $\ovl M_n(\Ebb)$ is related to $\Gamma_n(\Ebb)$, and that some dual of $\bbs_\fq(\Vbb)$ (e.g. $\varprojlim_{k,l\in\Nbb}\scr T_{\fq,k,l}(\Vbb)$) is related to a universal algebra of $\Vbb$. We hope to study this topic in future work.
\end{rem}

\subsection{Comparison with the classical construction}\label{lb74}

In this section, we show that our $A_n(\Vbb)$ agrees with the one in \cite{DLM-Zhu}. Our discussion begins with the following observation.

\begin{rem}\label{lb75}
Let $\sigma,\tau$ be as in Lem. \ref{lb73}. Then it is not hard to see that
\begin{subequations}
\begin{gather}
\tau\in H^0(U_+,\scr V_{\fq,n,n}\otimes\omega_{\Pbb^1})\quad\quad\Longleftrightarrow\quad g_k=0 ~\text{ for every } k>\wt v-n-2\\
\sigma\in H^0(U_-,\scr V_{\fq,n,n}\otimes\omega_{\Pbb^1})\quad\Longleftrightarrow\quad f_k=0~\text{ for every } k<\wt v+n
\end{gather}
\end{subequations}
\end{rem}

\begin{pp}
$H^0\big(\Pbb^1,\scr V_{\fq,n,n}\otimes\omega_{\Pbb^1}(\blt 1)\big)\cdot\Vbb$ equals $\wtd O_n(\Vbb)$ where
\begin{align}
\wtd O_n(\Vbb)=\Span_\Cbb\big\{\Res_{z=0}~z^{-2n-2}Y((1+z)^{L(0)+n}u,z)vdz:u,v\in\Vbb  \big\}
\end{align}
In other words, we have $\wtd A_n(\Vbb)=\Vbb/\wtd O_n(\Vbb)$.
\end{pp}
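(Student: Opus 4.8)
The plan is to identify the subspace $H^0\big(\Pbb^1,\scr V_{\fq,n,n}\otimes\omega_{\Pbb^1}(\blt 1)\big)\cdot\Vbb$ of $\scr W_\fq(\Vbb)=\Vbb$ with $\wtd O_n(\Vbb)$ by pinning down the global sections of $\scr V_{\fq,n,n}\otimes\omega_{\Pbb^1}(\blt 1)$ explicitly and computing their residue action on $\Vbb$ at the marked point $1$. First I would recall that $\Pbb^1$ is covered by the charts around $\infty$, $0$, and $1$; a meromorphic section $\sigma$ of $\scr V_{\fq}\otimes\omega_{\Pbb^1}$ with poles only at $1$ is, away from $1$, a global section of $\scr V_{\fq,n,n}\otimes\omega_{\Pbb^1}$ on $\Pbb^1-\{1\}$, hence (by Remark \ref{lb75} applied at both $0$ and $\infty$, using the chart $\zeta$ on $\Cbb^\times$) must be of a very restricted form. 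Concretely, under the trivialization $\mc U_\varrho(\zeta)$, such a $\sigma$ is $h(\zeta)\,\mc U_\varrho(\zeta)^{-1}u\,d\zeta$ for a fixed homogeneous $u\in\Vbb$ (more precisely a finite sum of such) where $h\in\Cbb[\zeta^{\pm1},(\zeta-1)^{-1}]$, and the pole-order conditions at $0$ and $\infty$ coming from the description \eqref{eq65} of $\scr V_{\fq,n,n}$ force $\zeta^{-\wt u-n}(\zeta-1)^{\text{something}}h(\zeta)$ to be a polynomial while simultaneously $\zeta^{\wt u+n+2-2}\cdot(\text{behavior at }\infty)$ is controlled. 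The upshot I expect is that, up to the freedom of multiplying by powers of $\zeta$, the sections are spanned by $\zeta^{-j}(\zeta-1)^{-m}\,\mc U_\varrho(\zeta)^{-1}u\,d\zeta$ with $j\geq \wt u+n$ and $m\geq 1$, but the cleanest normal form — matching $\wtd O_n(\Vbb)$ — is to write the section near $1$ using the local coordinate $\zeta-1$ and read off the residue action.

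Next I would carry out the residue computation. By Def. \ref{lb17} and \eqref{eq80}, the residue action at $1$ of a section $\sigma$ (trivialized near $1$ via $\mc U_\varrho(\zeta-1)$) on $v\in\Vbb$ is $\Res_{\zeta=1}Y\big(\mc U_\varrho(\zeta-1)\sigma,\zeta-1\big)v$. Using Example \ref{lb60}/\ref{changeexample1} to pass between the $\zeta$-trivialization and the $(\zeta-1)$-trivialization — i.e. $\mc U_\varrho(\zeta-1)\mc U_\varrho(\zeta)^{-1}=\mc U(\varrho(\zeta-1\mid\zeta))$ which acts trivially since $\zeta-1$ and $\zeta$ differ by translation — the section $h(\zeta)\mc U_\varrho(\zeta)^{-1}u\,d\zeta$ contributes $\Res_{z=0}h(1+z)\,Y(u,z)v\,dz$ where $z=\zeta-1$. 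Expanding $h$ over the spanning set and matching the exponent of $(1+z)$ against the shift operators, the term $\zeta^{-j}=(1+z)^{-j}$ with $j=\wt u+n$ combined with the $L(0)$-grading produces exactly $(1+z)^{L(0)+n}u$ up to the overall normalization, and the factor $z^{-2n-2}$ comes from the maximal allowed pole order $m$ at $1$ (the $\omega_{\Pbb^1}(\blt 1)$ allowing arbitrary poles there, but the pole-order constraints at $0,\infty$ truncating $h$ precisely so that the surviving residues are those in $\wtd O_n(\Vbb)$). This gives the inclusion $H^0(\cdots)\cdot\Vbb\subseteq\wtd O_n(\Vbb)$; the reverse inclusion is obtained by exhibiting, for each generator $\Res_{z=0}z^{-2n-2}Y((1+z)^{L(0)+n}u,z)v\,dz$, an explicit global section of $\scr V_{\fq,n,n}\otimes\omega_{\Pbb^1}(\blt 1)$ whose residue action on $v$ is that generator — namely $h(\zeta)=\zeta^{-\wt u-n}(\zeta-1)^{-2n-2}$ times the appropriate power, which one checks lies in $H^0$ using Remark \ref{lb75}.

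I expect the main obstacle to be the bookkeeping in the two pole-order conditions at $0$ and $\infty$: one must verify that $\zeta^{-\wt u-n}(\zeta-1)^{-k}\,\mc U_\varrho(\zeta)^{-1}u\,d\zeta$ genuinely extends to a holomorphic section of $\scr V_{\fq,n,n}\otimes\omega_{\Pbb^1}$ across both $0$ and $\infty$ for all $k\geq 1$, which requires Remark \ref{lb75} together with the $\mc U(\upgamma_1)$-twist built into the definition of $\scr V_{\fq}$ near $\infty$ (Exp. \ref{lb60}), and conversely that \emph{only} such sections occur — i.e. that the pole at $1$ can indeed be of any order while the behavior at $0,\infty$ is rigid. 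The computation is essentially the one already performed in \cite{KZ-conformal-block,DGT2} and in \cite[Thm. 12.1]{Gui-sewingconvergence}, so I would follow that blueprint, being careful that our $\wtd A_n(\Vbb)$ is defined via $\scr V_{\fq,n,n}$ with the $L(0)$-shift in \eqref{eq65}, which is exactly what produces the $(1+z)^{L(0)+n}$ factor rather than a naive $(1+z)^{L(0)}$.
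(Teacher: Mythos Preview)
Your approach is the same as the paper's, but there is a genuine gap in the inclusion $H^0(\cdots)\cdot\Vbb\subset\wtd O_n(\Vbb)$.

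First a sign correction: the constraint at $0$ in Remark~\ref{lb75} forces a \emph{zero} of order $\wt u+n$, so the relevant sections are $\zeta^{+\wt u+n}(\zeta-1)^{-m}\,\mc U_\varrho(\zeta)^{-1}u\,d\zeta$, not $\zeta^{-j}$ with $j\geq\wt u+n$. With this fixed, the constraint at $\infty$ then forces $m\geq 2n+2$; it does \emph{not} force $m=2n+2$. So the global sections of $\scr V_{\fq,n,n}\otimes\omega_{\Pbb^1}(\blt 1)$ are spanned (over homogeneous $u$) by $\zeta^{\wt u+n}(\zeta-1)^{-m}\,\mc U_\varrho(\zeta)^{-1}u\,d\zeta$ for \emph{all} $m\geq 2n+2$, and their residue action on $v$ is
\[
\Res_{z=0}~z^{-m}\,Y\big((1+z)^{L(0)+n}u,z\big)v\,dz.
\]
For $m=2n+2$ this is exactly a generator of $\wtd O_n(\Vbb)$, which gives the inclusion $\wtd O_n(\Vbb)\subset H^0(\cdots)\cdot\Vbb$ (you had the two directions reversed). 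But for $m>2n+2$ this expression is not manifestly in $\wtd O_n(\Vbb)$, since $\wtd O_n(\Vbb)$ is defined using only the exponent $-2n-2$. Your sentence ``the pole-order constraints at $0,\infty$ truncating $h$ precisely so that the surviving residues are those in $\wtd O_n(\Vbb)$'' glosses over exactly this point: the constraints only bound the pole at $1$ from below.

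The missing ingredient is an algebraic lemma of Zhu \cite[Lem.~2.1.2]{Zhu-modular-invariance} (cf.\ \cite[Lem.~2.1]{DLM-Zhu}), which shows by a direct manipulation with $L(-1)$ that $\Res_{z=0}z^{-m}Y((1+z)^{L(0)+n}u,z)v\,dz\in\wtd O_n(\Vbb)$ for all $m\geq 2n+2$. The paper invokes this lemma explicitly; your proposal does not mention it, and without it the inclusion $H^0(\cdots)\cdot\Vbb\subset\wtd O_n(\Vbb)$ is not established.
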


\begin{proof}
Recall that $\zeta$ is the standard coordinate of $\Cbb$. Unlike before, we let $z=\zeta-1$, which is not standard. For each homogeneous $u\in\Vbb$, define a section of $\scr V_\fq\otimes\omega_{\Pbb^1}$ on $\Cbb^\times-\{1\}$:
\begin{align}
\sigma=\MU_\varrho(\zeta)^{-1}(\zeta-1)^{-2n-2}\zeta^{\wt u+n}u d\zeta
\end{align}
By Rem. \ref{lb75}, $\sigma$ belongs to $H^0\big(\Pbb^1, \SV_{\fq,n,n}\otimes \omega_{\Pbb^1}(\bullet 1)\big)$. Clearly
\begin{align}
\sigma\cdot v=\Res_{z=0}~z^{-2n-2}Y((1+z)^{\wt u+n}u,z)vdz
\end{align}
for each $v\in\Vbb$. This proves $\wtd O_n(\Vbb)\subset H^0\big(\Pbb^1, \SV_{\fq,n,n}\otimes \omega_{\Pbb^1}(\bullet 1)\big)\cdot\Vbb$.

On the other hand, by Rem. \ref{lb75}, $H^0\big(\Pbb^1, \SV_{\fq,n,n}\otimes \omega_{\Pbb^1}(\bullet 1)\big)$ is spanned by
\begin{align*}
\sigma= \MU_\varrho(\zeta)^{-1} f\cdot\zeta^{\wt u+n}u d\zeta
\end{align*}
for some homogeneous $u\in \Vbb$ and $f\in H^0(\Pbb^1,\mc O(\blt 1+\blt\infty))$ such that 
\begin{align}
f_k=0\qquad \text{ for every } ~k>-2n-2
\end{align}
if $f=\sum_{k\in\Zbb}f_k\zeta^k$ is the series expansion of $f$ at $\infty$. So $f$ is a linear combination of $\{(\zeta-1)^k:k\leq -2n-2\}$. 

Thus, to prove $H^0\big(\Pbb^1, \SV_{\fq,n,n}\otimes \omega_{\Pbb^1}(\bullet 1)\big)\cdot\Vbb\subset\wtd O_n(\Vbb)$, it suffices to prove $\sigma\cdot v\in\wtd O_n(\Vbb)$ for all $v\in\Vbb$ and all such $\sigma$. By linearity, it suffices to assume $f=(\zeta-1)^k$ where $k\leq -2n-2$. Namely, it suffices to prove
    \begin{equation*}
    \Res_{z=0} ~z^k Y((1+z)^{\wt u+n}u,z)vdz ~~ \in ~~\wtd O_n(\Vbb)
    \end{equation*}
But this can be proved in a similar way as \cite[Lem. 2.1.2]{Zhu-modular-invariance}, as pointed out in the proof of \cite[Lem. 2.1]{DLM-Zhu}.
\end{proof}

\begin{pp}
Let $u,v\in\Vbb$ be homogeneous. Then in $\wtd A_n(\Vbb)$ we have
\begin{subequations}
\begin{gather}
u\diamond_L v=\Res_{z=0}\sum_{m=0}^n (-1)^m \binom{m+n}{n}\frac{(1+z)^{\wt(u)+n}}{z^{m+n+1}}Y(u,z)v dz \label{eq163}\\
u\diamond_R v=\Res_{z=0}\sum_{m=0}^n (-1)^n \binom{m+n}{n}\frac{(1+z)^{\wt(v)+m-1}}{z^{m+n+1}}Y(v,z)u dz  \label{eq164}
\end{gather}
\end{subequations}
\end{pp}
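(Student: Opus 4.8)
The plan is to turn both identities into instances of the residue theorem on $\Pbb^1$ for a single rational $1$-form, and then to close with one elementary binomial identity. It suffices to check each identity after pairing with an arbitrary $\upphi\in\wtd A_n(\Vbb)^*=\scr T_{\fq,n,n}^*(\Vbb)$; so fix such a $\upphi$ and homogeneous $u,v\in\Vbb$, and use the identification $\scr W_\fq(\Vbb)=\Vbb$ fixed in the text. By Thm.~\ref{lb71} there is the propagation $\wr\upphi$, and I would work with the meromorphic $1$-form on $\Pbb^1$
\begin{align*}
G_{u,v}(\zeta)\,d\zeta:=\wr\upphi\big(\mc U_\varrho(\zeta)^{-1}u\,d\zeta,\ v\big),
\end{align*}
which by Rem.~\ref{chap3remark1} is rational with poles only at $0,1,\infty$. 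I would then record three local descriptions of $G_{u,v}$: \emph{(i)} near $\zeta=1$, since the coordinate change $\zeta\mapsto\zeta-1$ at the incoming point is a translation (so $\mc U_\varrho(\zeta-1)=\mc U_\varrho(\zeta)$), \eqref{eq82} gives $G_{u,v}(\zeta)=\upphi\big(Y(u,\zeta-1)v\big)$; \emph{(ii)} near $\zeta=0$, \eqref{eq72} for $Y_-$ (whose local coordinate is $\zeta$) gives $\bk{Y_-(u)_k\upphi,v}=\Res_{\zeta=0}\zeta^kG_{u,v}(\zeta)\,d\zeta$, so by Prop.~\ref{lb20} the pole of $G_{u,v}$ at $0$ has order $\le\wt u+n$; \emph{(iii)} near $\zeta=\infty$, applying Lem.~\ref{lb73}\,\eqref{eq131} to the global section $\mc U_\varrho(\zeta)^{-1}\zeta^ju\,d\zeta$ and comparing with \eqref{eq124} gives $\bk{Y_+'(u)_j\upphi,v}=-\Res_{\zeta=\infty}\zeta^jG_{u,v}(\zeta)\,d\zeta$; since \eqref{eq139} and Prop.~\ref{lb20} force $Y_+'(u)_j\upphi=0$ for $j\le\wt u-n-2$, the Laurent expansion of $G_{u,v}$ at $\infty$ carries no power higher than $\zeta^{\,n-\wt u}$.

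For \eqref{eq163}: by \eqref{eq132} and \emph{(iii)}, $\bk{\upphi,u\diamond_L v}=\bk{Y_+'(u)_{\wt u-1}\upphi,v}=-\Res_{\zeta=\infty}\zeta^{\wt u-1}G_{u,v}(\zeta)\,d\zeta$; and by \emph{(i)} the right-hand side of \eqref{eq163} paired with $\upphi$ equals $\Res_{\zeta=1}p(\zeta)G_{u,v}(\zeta)\,d\zeta$, where $p(\zeta)=\zeta^{\wt u+n}\sum_{m=0}^n(-1)^m\binom{m+n}{n}(\zeta-1)^{-m-n-1}$. Since $p$ vanishes to order $\ge\wt u+n$ at $0$ while $G_{u,v}$ has a pole of order $\le\wt u+n$ there, $pG_{u,v}$ is holomorphic at $0$; so, applying the residue theorem to $pG_{u,v}\,d\zeta$, the desired equality reduces to
\begin{align*}
\Res_{\zeta=\infty}\big(p(\zeta)-\zeta^{\wt u-1}\big)\,G_{u,v}(\zeta)\,d\zeta=0.
\end{align*}
A direct expansion at $\zeta=\infty$ gives $p(\zeta)-\zeta^{\wt u-1}=\sum_{s\ge1}c_s\,\zeta^{\wt u-1-s}$ with $c_s=\sum_{m=0}^{\min(s,n)}(-1)^m\binom{m+n}{n}\binom{s+n}{m+n}$; for $1\le s\le n$ this is $0$ by $\binom{s+n}{m+n}\binom{m+n}{n}=\binom{s+n}{n}\binom{s}{m}$ and $\sum_{m=0}^s(-1)^m\binom{s}{m}=0$, while for $s>n$ the power $\zeta^{\,s-\wt u}$ of $G_{u,v}$ needed to pair against $c_s\zeta^{\wt u-1-s}$ for a $\zeta^{-1}$ term is absent by \emph{(iii)}. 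Hence the $\zeta^{-1}$-coefficient of $(p-\zeta^{\wt u-1})G_{u,v}$ vanishes, proving \eqref{eq163}.

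The identity \eqref{eq164} is the mirror image with the outgoing points $0,\infty$ and the vectors $u,v$ exchanged: one works with $G_{v,u}(\zeta)\,d\zeta=\wr\upphi(\mc U_\varrho(\zeta)^{-1}v\,d\zeta,u)$, uses \eqref{eq133} and \eqref{eq72} for $Y_-$ to get $\bk{\upphi,u\diamond_R v}=\bk{Y_-(v)_{\wt v-1}\upphi,u}=\Res_{\zeta=0}\zeta^{\wt v-1}G_{v,u}(\zeta)\,d\zeta$, and sets $q(\zeta)=\sum_{m=0}^n(-1)^n\binom{m+n}{n}\zeta^{\wt v+m-1}(\zeta-1)^{-m-n-1}$, so that $\Res_{\zeta=1}q(\zeta)G_{v,u}(\zeta)\,d\zeta$ is the right-hand side of \eqref{eq164} paired with $\upphi$ (via \emph{(i)} for $G_{v,u}$). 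The same binomial identity shows $q(\zeta)+\zeta^{\wt v-1}$ vanishes to order $\ge\wt v+n$ at $0$, whence $\Res_{\zeta=0}\big(q+\zeta^{\wt v-1}\big)G_{v,u}\,d\zeta=0$; and since $q=O(\zeta^{\,\wt v-n-2})$ at $\infty$ (immediate from the formula, the exponent being independent of $m$) and $G_{v,u}=O(\zeta^{\,n-\wt v})$ at $\infty$ by \emph{(iii)}, one has $qG_{v,u}=O(\zeta^{-2})$ at $\infty$, hence $\Res_{\zeta=\infty}qG_{v,u}\,d\zeta=0$. The residue theorem applied to $qG_{v,u}\,d\zeta$ then yields $\Res_{\zeta=1}qG_{v,u}\,d\zeta=\Res_{\zeta=0}\zeta^{\wt v-1}G_{v,u}\,d\zeta=\bk{\upphi,u\diamond_R v}$, which is \eqref{eq164}. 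I expect the only genuine difficulty to be bookkeeping: reading off the correct pole and zero orders of $G_{u,v},G_{v,u}$ at $0$ and $\infty$ from Prop.~\ref{lb20} and \eqref{eq139}, writing out the expansions of $p$ and $q$ there, and checking the binomial identity — there is no conceptual obstacle once the rational $1$-form is in hand.
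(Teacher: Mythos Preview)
Your proof is correct and at heart equivalent to the paper's. Both rest on the same rational factors and the same binomial identity $\sum_{m=0}^s(-1)^m\binom{m+n}{n}\binom{s+n}{m+n}=0$ for $1\le s\le n$ (which the paper cites as \cite[Prop.~5.2]{DLM-Zhu} and you prove inline via $\binom{s+n}{m+n}\binom{m+n}{n}=\binom{s+n}{n}\binom{s}{m}$). The difference is packaging: the paper writes down explicit global sections $\tau=p(\zeta)\,\mc U_\varrho(\zeta)^{-1}u\,d\zeta$ and $\sigma=q(\zeta)\,\mc U_\varrho(\zeta)^{-1}v\,d\zeta$ with exactly your $p,q$, verifies via Rem.~\ref{lb75} that they satisfy the required congruences at $0$ and $\infty$ modulo $\scr V_{\fq,n,n}\otimes\omega_{\Pbb^1}$, and then invokes Rem.~\ref{lb64} (itself the residue theorem for $\wr\upphi$) to obtain $\<\upphi,u\diamond_Lv\>=\<\upphi,\tau\cdot v\>$ and $\<\upphi,u\diamond_Rv\>=\<\upphi,\sigma\cdot u\>$. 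You bypass this formalism and apply the residue theorem on $\Pbb^1$ directly to the rational $1$-forms $pG_{u,v}\,d\zeta$ and $qG_{v,u}\,d\zeta$; your pole-order bounds (ii) and (iii) play precisely the role of the paper's congruence conditions. Your version is a bit more self-contained, the paper's a bit more structural, but the two are doing the same computation.
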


\begin{proof}
Let $U_-$ and $U_+$ be open discs centered at $0$ and $\infty$ respectively and disjoint from $1$. Choose any $\upphi\in\wtd A_n(\Vbb)^*$. By Rem. \ref{lb64} and Prop. \ref{lb20}, $\<\upphi,u\diamond_R v\>=\<Y_-(v)_{\wt v-1}\upphi,u\>$ equals $\<\upphi,\sigma\cdot u\>$ where $\sigma\in H^0(\Pbb^1,\scr V_{\Pbb^1}\otimes\omega_{\Pbb^1}(\blt 1+\blt\infty+\blt 0))$ satisfies
\begin{subequations}
\begin{gather}
\sigma\big|_{U_+}\equiv 0\quad\mod\quad H^0\big(U_+,\scr V_{\fq,n,n}\otimes\omega_{\Pbb^1}\big)\\
\sigma\big|_{U_-}\equiv -\mc U_\varrho(\zeta)^{-1} \zeta^{\wt v-1}\cdot  v d\zeta\quad\mod\quad H^0\big(U_-,\scr V_{\fq,n,n}\otimes\omega_{\Pbb^1}\big)  \label{eq161}
\end{gather}
\end{subequations}
With the help of Lem. \ref{lb73}, we also have that $\<\upphi,u\diamond_L v\>=\<Y_+'(u)_{\wt u-1}\upphi,v\>$ equals $\<\upphi,\tau\cdot v\>$ where $\tau\in H^0(\Pbb^1,\scr V_{\Pbb^1}\otimes\omega_{\Pbb^1}(\blt 1+\blt \infty+\blt 0))$ satisfies
\begin{subequations}
\begin{gather}
\tau\big|_{U_+}\equiv \mc U_\varrho(\zeta)^{-1} \zeta^{\wt u-1}\cdot u   d\zeta\quad\mod\quad H^0\big(U_+,\scr V_{\fq,n,n}\otimes\omega_{\Pbb^1}\big)  \label{eq162}\\
\tau\big|_{U_-}\equiv 0\quad\mod\quad H^0\big(U_-,\scr V_{\fq,n,n}\otimes\omega_{\Pbb^1}\big)
\end{gather}
\end{subequations}
Using Rem. \ref{lb75}, one checks that the following $\sigma$ and $\tau$ satisfy the desired conditions:
\begin{subequations}
\begin{gather}
\sigma=\mc U_\varrho(\zeta)^{-1}\cdot \sum_{m=0}^n(-1)^n \binom{m+n}{n}\frac{\zeta^{\wt(v)+m-1}}{(\zeta-1)^{m+n+1}} \cdot vd\zeta\\
\tau=\mc U_\varrho(\zeta)^{-1}\cdot \sum_{m=0}^n (-1)^m \binom{m+n}{n}\frac{\zeta^{\wt(u)+n}}{(\zeta-1)^{m+n+1}}\cdot ud\zeta
\end{gather}
\end{subequations}
(To check \eqref{eq161} and \eqref{eq162}, one also needs the binomial formula in \cite[Prop. 5.2]{DLM-Zhu}.) $\sigma\cdot u$ and $\tau\cdot v$ are clearly equal to the RHS of \eqref{eq164} and \eqref{eq163} respectively.
\end{proof}

\begin{rem}
In \cite{DLM-Zhu}, the RHS of \eqref{eq163} is given as the definition of the multiplication operation $\diamond$ of $A_n(\Vbb)$. The RHS of \eqref{eq164} appears in \cite[Lem. 2.1-(ii)]{DLM-Zhu}.
\end{rem}

\begin{pp}
For each $v\in\Vbb$, the following relation holds in $\wtd A_n(\Vbb)$:
\begin{gather}
\cbf\diamond_L v-v\diamond_R\cbf=\big(L(0)+L(-1)\big)v
\end{gather}
Thus $A_n(\Vbb)=\wtd A_n(\Vbb)/\{\big(L(0)+L(-1)\big)v:v\in\Vbb\}$.
\end{pp}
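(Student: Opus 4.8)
The plan is to pair both sides against an arbitrary $\upphi\in\wtd A_n(\Vbb)^*=\scr T_{\fq,n,n}^*(\Vbb)$. Since $\wtd A_n(\Vbb)^*$ is by construction the full dual space of $\wtd A_n(\Vbb)$, it suffices to prove
$$
\bk{\upphi,\cbf\diamond_L v-v\diamond_R\cbf}=\bk{\upphi,(L(0)+L(-1))v}
$$
for every such $\upphi$, and by bilinearity of $\diamond_L,\diamond_R$ (Lem. \ref{lb61}) and linearity of $L(0)+L(-1)$ we may assume $v$ is homogeneous, of weight $h=\wt v$. By \eqref{eq149} the left-hand side equals $\bk{L_+(0)\upphi,v}-\bk{L_-(0)\upphi,v}=\bk{(L_+(0)-L_-(0))\upphi,v}$, so the whole statement is reduced to the single identity $\bk{(L_+(0)-L_-(0))\upphi,v}=\upphi\big((L(0)+L(-1))v\big)$.

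To establish this I would re-use the differential-equation computation \eqref{eq160} from the proof of Prop. \ref{lb70}, which gives, as an equality in $\Cbb[z^{\pm1}]$,
$$
(z\partial_z+h)\,\bk{Y_-(v,z)\upphi,\ibf}=\bk{Y_-(v,z)(L_+(0)-L_-(0))\upphi,\ibf}.
$$
By \eqref{eq141} together with \eqref{eq154}, the quantity $\bk{Y_-(v,z)\upphi,\ibf}$ is the Laurent polynomial $\upphi\big(Y(v,z-1)\ibf\big)=\upphi\big(e^{(z-1)L(-1)}v\big)=:g(z)$. Then $g(1)=\upphi(v)$ by \eqref{eq156}, differentiating the Taylor expansion of $g$ at $z=1$ gives $g'(1)=\upphi(L(-1)v)$, and $h\,g(1)=\upphi(L(0)v)$ because $v$ is homogeneous; hence the left-hand side of the displayed equation evaluated at $z=1$ equals $\upphi\big((L(0)+L(-1))v\big)$. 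On the right-hand side, evaluation at $z=1$ via \eqref{eq156} gives $\bk{(L_+(0)-L_-(0))\upphi,v}$. Comparing the two evaluations yields the required identity, and hence the theorem.

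The final assertion $A_n(\Vbb)=\wtd A_n(\Vbb)/\{(L(0)+L(-1))v:v\in\Vbb\}$ is then immediate: by Definition \eqref{eq152}, $Z_n(\Vbb)$ is exactly the set of elements $\cbf\diamond_L v-v\diamond_R\cbf$ of $\wtd A_n(\Vbb)$ obtained as $v$ runs over representatives in $\Vbb$, and the relation just proved identifies $Z_n(\Vbb)$ with the image of $\{(L(0)+L(-1))v:v\in\Vbb\}$ under $\Vbb\to\wtd A_n(\Vbb)$.

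I do not expect a serious obstacle; the one point requiring attention is the level bookkeeping guaranteeing that $(L_+(0)-L_-(0))\upphi$ again lies in $\wtd A_n(\Vbb)^*$ — so that \eqref{eq156} may be applied to it — which follows from Prop. \ref{lb19} applied to $\fq$ with multi-level $(n,n)$ and $v=\cbf$ (as $\wt\cbf=2$, the operators $L_\pm(0)=Y_\pm(\cbf)_1$ do not raise the level). Everything else is a direct specialization of formulas already established in Prop. \ref{lb67} and Prop. \ref{lb70}.
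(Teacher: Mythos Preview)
Your proof is correct, but it takes a different route from the paper's. The paper proceeds by writing down the explicit global section $\sigma=\mc U_\varrho(\zeta)^{-1}\cbf\cdot\zeta\,d\zeta\in H^0\big(\Pbb^1,\scr V_{\Pbb^1}\otimes\omega_{\Pbb^1}(\blt 0+\blt\infty)\big)$, observing via Lem.~\ref{lb73} that $-(\sigma*_+\upphi+\sigma*_-\upphi)=(L_+(0)-L_-(0))\upphi$, and then applying the residue identity of Rem.~\ref{lb64} to transfer this to the incoming point, where the action is the simple computation $\sigma\cdot v=\Res_{z=0}(z+1)Y(\cbf,z)v\,dz=(L(0)+L(-1))v$. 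Your argument instead recycles the differential equation \eqref{eq160} from the proof of Prop.~\ref{lb70} and evaluates it at $z=1$ using \eqref{eq156} and the Taylor expansion coming from \eqref{eq141}. The paper's approach is shorter and makes the geometric origin of the relation transparent (it is literally the residue theorem applied to a specific $1$-form); your approach has the virtue of avoiding any new global section, deducing everything from identities already established in Prop.~\ref{lb67} and~\ref{lb70}, and the level check you flag at the end is indeed the only subtlety and is correctly handled by Prop.~\ref{lb19}.
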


\begin{proof}
We know that $Z_n(\Vbb)$ is spanned by all $\cbf\diamond_L v-v\diamond_R\cbf$. Choose any $\upphi\in\wtd A_n(\Vbb)^*$. Then
\begin{align*}
\<\upphi,\cbf\diamond_L v-v\diamond_R\cbf\>=\<(Y_+(\cbf)_1-Y_-(\cbf)_1)\upphi,v \>
\end{align*}
equals $-\<\sigma*_+\upphi+\sigma*_-\upphi,v\>$ where $\sigma\in H^0(\Pbb^1,\scr V_{\Pbb^1}\otimes\omega_{\Pbb^1}(\blt\infty+\blt 0))$ is defined by $\sigma=\mc U_\varrho(\zeta)^{-1}\cbf \cdot \zeta d\zeta$ (recall Lem. \ref{lb73}). So it equals $\<\upphi,\sigma\cdot v\>$ by Rem. \ref{lb64}. Clearly $\sigma\cdot v=\big(L(0)+L(-1)\big)v$.
\end{proof}

\printindex	
\footnotesize
	\bibliographystyle{alpha}

\noindent {\small \sc Yau Mathematical Sciences Center, Tsinghua University, Beijing, China.}

\noindent {\textit{E-mail}}: binguimath@gmail.com\qquad bingui@tsinghua.edu.cn\\

\noindent {\small \sc Yau Mathematical Sciences Center and Department of Mathematics, Tsinghua University, Beijing, China.}

\noindent {\textit{E-mail}}: zhanghao1999math@gmail.com \qquad h-zhang21@mails.tsinghua.edu.cn

\end{document}